\theoremstyle{definition}
\newtheorem{assumption}{Assumption}[section]
\theoremstyle{plain}
\newtheorem{theorem}{Theorem}[section]     
\newtheorem{proposition}[theorem]{Proposition}
\newtheorem{lemma}[theorem]{Lemma}
\newtheorem{corollary}[theorem]{Corollary}
\theoremstyle{definition}
\newtheorem{definition}[theorem]{Definition}
\newtheorem{axiom}[theorem]{Assumption}
\newtheorem{example}[theorem]{Example}
\theoremstyle{remark}
\newtheorem{remark}[theorem]{Remark}
\newtheorem{convention}[theorem]{Convention}
\newcommand{\Tr}{\operatorname{Tr}}
\newcommand{\ran}{\operatorname{ran}}
\newcommand{\Hcal}{\mathcal H}
\newcommand{\Bcal}{\mathcal B}
\newcommand{\BH}{\mathcal B(\Hcal)}
\newcommand{\Tone}{\mathcal T_1(\Hcal)}
\newcommand{\R}{\mathbb R}
\newcommand{\C}{\mathbb C}
\newcommand{\Eig}{\mathbb Eig} 
\newcommand{\Var}{\mathcal Var}
\title{\textbf{Mathematical Foundations of Quantum Pricing Theory}}
\author[1,2]{Tian Xin
\thanks{Corresponding author: \texttt{xtian21@jh.edu}.}
\thanks{Acknowledgements. We are grateful to Professor Frank J.\ Fabozzi for helpful advice and continued encouragement. Tian Xin was the primary contributor to this manuscript.}}
\author[2]{Aoqin Liang}
\affil[1]{Johns Hopkins University}
\affil[2]{School of Physics, Peking University}
\date{\today}
\begin{document}
\maketitle

\begin{abstract}
We propose an operator-algebraic foundation for discrete-time pricing under classical information. The market is modeled by a von Neumann algebra $M$ together with an increasing family of abelian information subalgebras $(N_t)_{t=0}^T$ and faithful normal conditional expectations $E_t:M\to N_t$.
Discounted prices are represented as self-adjoint operators affiliated with $N_t$, and pricing is encoded by a dynamic family of linear maps $\Pi_t:M\to N_t$.

Our first result establishes that, under a no-arbitrage condition of Kreps--Yan type (NAFL$_\sigma$) and a suitable closedness assumption on attainable claims, there exists a normal pricing state $\phi^\star$ on $M$ yielding a dual representation of prices.
The associated pricing operators $\Pi_t$ are normal, completely positive, and $N_t$-bimodular, and they satisfy the num\'eraire normalization $\Pi_t(\mathbf 1)=B_t$, where $B_t\in N_t$ is the money-market account. Equivalently, the discounted operators $\widetilde\Pi_t(X):=B_t^{-1/2}\Pi_t(X)B_t^{-1/2}$ are normal, completely positive,
and unital. Under an additional modular-compatibility hypothesis (Takesaki-type invariance), we obtain existence and uniqueness of state-preserving conditional expectations consistent with the filtration and satisfying a tower property.

Finally, we develop an operator-valued information framework based on free Fisher information and prove an operator-valued Cram\'er--Rao inequality relative to $N_t$, yielding a quantitative constraint linking conditional second moments and noncommutative information content. In the commutative reduction $N_t\simeq L^\infty(\Omega,\Sigma_t,\mathbb P)$, the framework recovers the classical conditional-expectation pricing paradigm.

\medskip
\textbf{Keywords:} von Neumann algebras; conditional expectations; noncommutative martingales; dynamic pricing; information algebras; Fisher information; Cram\'er--Rao bounds.
\end{abstract}

\section{Introduction}

\subsection{Motivation and scope}

Classical arbitrage pricing theory is organized around conditional expectation and the induced Hilbert-space geometry: under a risk-neutral measure, discounted traded prices are martingales and derivative values are conditional expectations of discounted payoffs \cite{HarrisonKreps1979,HarrisonPliska1981,Bjork2009}. 
The purpose of this paper is to formulate an operator-algebraic analogue of this architecture. 
We model the \emph{global market} by a von Neumann algebra $M$, and we encode the \emph{realized information at time $t$} by an increasing family of \emph{abelian} von Neumann subalgebras $(N_t)_{t\in[0,T]}$ with
\[
N_t \subseteq M_t \subseteq M:=M_T,\qquad t\in[0,T],
\]
where $(M_t)_{t\in[0,T]}$ is a filtered family of von Neumann subalgebras. 
The abelianity of $N_t$ reflects the operational premise that realized information is classical (i.e.\ simultaneously measurable) even when the ambient market algebra is not.

A central functional-analytic obstruction is that, unlike the commutative case, a normal state-preserving conditional expectation onto a prescribed subalgebra does not exist automatically. 
Hence the existence of a filtration of normal $\varphi$-preserving conditional expectations
\[
E_t: M \to N_t,\qquad t\in[0,T],
\]
is a genuine \emph{structural} requirement, governed by modular invariance in the sense of Takesaki \cite{Umegaki1954,Takesaki1972,AccardiCecchini82,Takesaki1}. 
Our framework separates this operator-algebraic existence/uniqueness problem from the economic problem of deriving a pricing state from no-arbitrage considerations.

\subsection{Setup: states, affiliated observables, and \texorpdfstring{$L^2$}{L2}-geometry}

Fix a horizon $T>0$ and let $M$ act standardly on a separable Hilbert space. 
Let $\varphi_\rho$ be a faithful normal state on $M$. 
We consider the GNS Hilbert space $L^2(M,\varphi_\rho)$ with inner product
\[
\langle X,Y\rangle_\rho := \varphi_\rho(X^*Y),\qquad X,Y\in M.
\]
When they exist, we work with normal $\varphi_\rho$-preserving conditional expectations $E_t:M\to N_t$ compatible with the tower property $E_s\circ E_t=E_s$ for $s\le t$. 
To incorporate unbounded observables affiliated with $M$, we employ bounded functional-calculus truncations $f_n$ and formulate martingale/efficiency notions at the level of truncated observables. 
This yields a localization mechanism that is stable under domain issues and naturally compatible with noncommutative $L^2$-geometry and noncommutative integration \cite{Segal1953,Nelson1974,Takesaki1}.

\subsection*{Main contributions}

For the reader's convenience we indicate where results are proved; forward references in this Introduction are purely navigational and no statement is used before it is established.

\medskip
\noindent\textbf{(1) $L^2$-projection theory for state-preserving conditional expectations.}
We develop the $L^2$-geometry of normal state-preserving conditional expectations and identify them as best-approximation/projection maps in $L^2(M,\varphi_\rho)$.
We establish closure and regularity mechanisms which extend conditioning from bounded elements to affiliated observables via spectral truncations.
These results are proved in Section~\ref{sec:information_and_C.E}.

\medskip
\noindent\textbf{(2) Truncation-stable martingales and a local informational efficiency principle.}
We introduce a truncation-stable notion of martingales relative to the abelian information flow $(N_t,E_t)$ for self-adjoint affiliated observables and formulate a local informational efficiency principle requiring symmetrically discounted traded prices to be martingales relative to $(N_t,E_t)$.
This is developed in Sections~\ref{section:MF_of_QPT} and~\ref{sec:information_and_C.E}.

\medskip
\noindent\textbf{(3) Dynamic pricing operators as completely positive information projections.}
Assuming a pricing state $\varphi^\star$ together with a compatible filtration of normal $\varphi^\star$-preserving conditional expectations $(E_t^\star)$, we define a dynamic pricing (valuation) operator via symmetric discounting.
We prove its intrinsic operator-algebraic properties: normality, complete positivity, numéraire-normalization (Equivalently, the discounted operator $\widetilde{\Pi}_t(X):=B_t^{-1/2}\,\Pi_t(X)\,B_t^{-1/2}$ is unital), $N_t$-bimodularity, and time consistency. These results are established in Section~\ref{sec:information_and_C.E}.

\medskip
\noindent\textbf{(4) Modular existence/uniqueness and no-arbitrage existence of pricing states.}
We derive existence and uniqueness criteria for $(E_t^\star)$ from Takesaki's modular invariance characterization of normal state-preserving conditional expectations.
Separately, we establish existence of a normal pricing state from a no-arbitrage hypothesis via a separation argument in an appropriate locally convex topology.
These results are proved in Section~\ref{sec:duality}.

\medskip
\noindent\textbf{(5) Prediction theory and information-theoretic constraints.}
We formulate an $L^2$-prediction framework in which conditional expectations are best predictors and derive quantitative lower bounds on conditional mean-square prediction error in terms of innovations and operator-valued Fisher information.
The prediction and information-theoretic layer is developed in Sections~\ref{sec:information_and_C.E} and~\ref{sec:application-return}.

\subsection{Relation to the literature}

The operator-algebraic backbone of the present work is the theory of conditional expectations and noncommutative integration on von Neumann algebras.
Conditional expectations were introduced by Umegaki \cite{Umegaki1954} and developed systematically within the modern framework of von Neumann algebras; see, for instance, \cite{Sakai1971,KadisonRingroseI,Takesaki1}. 
In the non-tracial setting, existence and uniqueness of a normal state-preserving conditional expectation onto a given von Neumann subalgebra is governed by modular structure; in particular, Takesaki's characterization via invariance under the modular automorphism group \cite{Takesaki1972} (and refinements such as \cite{AccardiCecchini82}) provides the criterion underlying our standing assumptions on information projections.
For unbounded affiliated observables and noncommutative $L^p$-type integrability we rely on the noncommutative integration program initiated by Segal \cite{Segal1953} and further developed by Nelson \cite{Nelson1974}.

From the perspective of mathematical finance, the commutative reduction of our framework recovers the classical martingale approach to asset pricing and change-of-num\'eraire ideas \cite{Samuelson1965,HarrisonKreps1979,HarrisonPliska1981,DelbaenSchachermayer1994,DelbaenSchachermayer1995Numeraire,GemanElKarouiRochet1995,Bjork2009}.
There is also a literature proposing ``quantum'' or noncommutative extensions of valuation, including \cite{AccardiBoukas2006,Herscovich2016,AertsHavenSozzo2012,McCloud2014,Su2016}.
Our objective is not to introduce an additional model class, but to provide a rigorous von Neumann algebraic foundation in which (i) markets are operator-algebraic, (ii) realized information is encoded by an abelian subalgebra filtration, and (iii) dynamic valuation arises from normal state-preserving conditional expectations with intrinsic complete positivity.

Finally, the information-theoretic layer is motivated by noncommutative Fisher information and entropy in free and operator-valued probability initiated by Voiculescu \cite{Voiculescu93,Voiculescu94} and further developed in the operator-valued setting (see, e.g., \cite{MengGuoCao2004,Meng05PAMS,Meng06arXiv,Speicher98,NicaSpeicher06,NSS02}). 
We use operator-valued Fisher information as a quantitative constraint on conditional prediction error relative to the information algebra, connecting operator-algebraic conditioning with noncommutative information geometry.

\subsection{Organization of the paper}

Section~\ref{sec:MFQM} recalls the functional-analytic foundations used throughout (states, affiliated observables, spectral calculus, truncation, and basic measurement notions). 
Section~\ref{section:MF_of_QPT} develops the operator-algebraic market model, symmetric discounting, the local efficiency principle, and the commutative reduction to classical martingale pricing.
Section~\ref{sec:information_and_C.E} studies information algebras and normal state-preserving conditional expectations, including $L^2$-geometry and truncation-based martingales for affiliated observables, and introduces the dynamic pricing operator.
Section~\ref{sec:duality} derives existence of a normal pricing state under a no-arbitrage hypothesis and formulates modular conditions ensuring existence and uniqueness of $\varphi^\star$-preserving conditional expectations.
The remaining sections develop prediction and information-theoretic bounds and analyze explicit examples.

\section{Operator-Theoretic For Pricing Theory}~\label{sec:MFQM}
The core of this paper is operator-algebraic (von Neumann algebras, normal states,
state-preserving conditional expectations, and a truncation-based martingale principle).
Accordingly, we keep the operator-theoretic background to the minimum needed for later
references. Standard sources include \cite{ReedSimonI,ReedSimonII,KadisonRingroseI}.

Throughout, $\Hcal$ is a complex separable Hilbert space with inner product
$\langle\cdot\mid\cdot\rangle$ (linear in the second argument). We write $\BH:=\mathcal B(\Hcal)$
for bounded operators and $\Tone:=\mathcal T_1(\Hcal)$ for trace-class operators with trace norm
$\|\cdot\|_1$.

\subsection{Trace-class operators and the trace}

\begin{lemma}[Trace-class ideal property and cyclicity]\label{lem:trace-ideal}
If $T\in\BH$ and $S\in\Tone$, then $TS,ST\in\Tone$ and
\[
\|TS\|_1\le \|T\|\,\|S\|_1,\qquad \|ST\|_1\le \|T\|\,\|S\|_1.
\]
In particular, $\Tr(TS)$ and $\Tr(ST)$ are well-defined and $\Tr(TS)=\Tr(ST)$.
\end{lemma}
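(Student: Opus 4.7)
The plan is to reduce both claims to the singular value decomposition of trace-class operators. I would recall that a compact operator $S$ admits singular values $s_n(S)\downarrow 0$ with $\|S\|_1=\sum_n s_n(S)$, and that $S$ has an expansion $S=\sum_n s_n(S)\,\ketbra{\xi_n}{\eta_n}$ converging in trace norm for some orthonormal systems $\{\xi_n\},\{\eta_n\}$.

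For the ideal bound $\|TS\|_1\le \|T\|\,\|S\|_1$, I would invoke the min-max characterization $s_n(K)=\inf\{\|K-F\|:\operatorname{rank}(F)<n\}$ valid for compact $K$. Choosing near-optimal finite-rank approximants $F_k$ of $S$, the operator $TF_k$ still has rank $<k$, so $\|TS-TF_k\|\le \|T\|\,\|S-F_k\|$, and hence $s_k(TS)\le \|T\|\,s_k(S)$. Summing in $k$ gives the inequality and in particular $TS\in\Tone$. The companion bound $\|ST\|_1\le \|T\|\,\|S\|_1$ then follows from $(ST)^*=T^*S^*$, the adjoint invariance $\|X^*\|_1=\|X\|_1$ (singular values of $X$ and $X^*$ coincide), and $\|T^*\|=\|T\|$.

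For cyclicity $\Tr(TS)=\Tr(ST)$, I would first verify the identity for rank-one operators by direct computation: $\Tr\!\bigl(T\,\ketbra{\xi}{\eta}\bigr)=\langle\eta,T\xi\rangle=\Tr\!\bigl(\ketbra{\xi}{\eta}\,T\bigr)$. Writing $S_N:=\sum_{n\le N}s_n(S)\,\ketbra{\xi_n}{\eta_n}$, linearity gives $\Tr(TS_N)=\Tr(S_NT)$ for every $N$. I would then pass to the limit $N\to\infty$: the first part of the lemma, applied to $S-S_N$, shows that $\|TS-TS_N\|_1$ and $\|ST-S_NT\|_1$ both tend to zero; since the trace is a bounded linear functional on $\Tone$ with $|\Tr(A)|\le \|A\|_1$, both sides converge respectively to $\Tr(TS)$ and $\Tr(ST)$, yielding the identity.

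The main obstacle is the careful justification of these limits in the cyclicity step, namely confirming trace-norm convergence of $S_N\to S$ and using that left/right multiplication by a fixed bounded operator is $\|\cdot\|_1$-continuous — which is exactly what part~(i) supplies. Given these ingredients, every step is routine, but the order of limits in the cyclicity argument is the one place where a slip would be easy; invoking part~(i) before taking the trace is what makes the passage clean.
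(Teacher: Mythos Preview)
Your argument is correct. Both the ideal bound via the min--max/approximation characterization of singular values and the cyclicity via rank-one verification plus trace-norm continuity are sound, and you correctly flag that the limiting step in cyclicity rests on part~(i).

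The paper's own treatment (in the functional-analytic appendix) follows a different line. It adopts the \emph{nuclear} definition of $\Tone$: $S=\sum_k |u_k\rangle\langle v_k|$ with $\|S\|_1=\inf\sum_k\|u_k\|\,\|v_k\|$. For the ideal bound it simply rewrites $ASB=\sum_k |Au_k\rangle\langle B^*v_k|$ and estimates term by term, then takes the infimum over decompositions. For cyclicity it computes $\Tr(AS)=\sum_k\langle v_k|Au_k\rangle$ and $\Tr(SA)=\sum_k\langle A^*v_k|u_k\rangle$ directly from the same absolutely convergent series, with no separate limiting argument needed. Compared to your route, the paper's proof is shorter and entirely self-contained once the nuclear definition is in place, whereas your min--max argument yields the sharper intermediate inequality $s_k(TS)\le\|T\|\,s_k(S)$ on individual singular values, at the cost of invoking the approximation-number characterization. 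Your cyclicity proof and the paper's are essentially the same idea, but the paper bypasses the partial-sum/continuity step by working with an absolutely summable series from the start.
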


\begin{axiom}[Normal states on $\BH$]\label{ax:states}
A (normal) state is represented by a density operator $\rho\in\Tone$, i.e.\ $\rho\ge 0$ and
$\Tr(\rho)=1$. Expectations of bounded observables $T\in\BH$ are given by
$\varphi_\rho(T):=\Tr(\rho\,T)$.
\end{axiom}

\subsection{Spectral theorem (reference form)}

\begin{theorem}[Spectral theorem for self-adjoint operators]\label{thm:spectral}
Let $A$ be self-adjoint on $\Hcal$. There exists a unique projection-valued measure
$E_A:\Bcal(\R)\to\BH$ such that, in the sense of functional calculus,
\[
A=\int_{\R}\lambda\,E_A(d\lambda),
\qquad
f(A)=\int_{\R} f(\lambda)\,E_A(d\lambda)\in\BH
\quad
\text{for all bounded Borel } f:\R\to\C.
\]
\end{theorem}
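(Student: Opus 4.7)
The plan is to reduce the general self-adjoint case to the bounded case via the Cayley transform, with the bounded case handled through the continuous functional calculus extended to bounded Borel functions. Concretely, for a bounded self-adjoint $T\in\BH$ with $\sigma(T)\subseteq\R$, the assignment $p\mapsto p(T)$ on polynomials extends uniquely to an isometric $*$-homomorphism $C(\sigma(T))\to\BH$: self-adjointness yields $\|p(T)\|=\sup_{\lambda\in\sigma(T)}|p(\lambda)|$ via the spectral radius formula, and Stone--Weierstrass provides density of polynomials in $C(\sigma(T))$. This is the continuous functional calculus.

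Next I would upgrade to bounded Borel functions. For each pair $\xi,\eta\in\Hcal$, the functional $f\mapsto\langle\xi,f(T)\eta\rangle$ on $C(\sigma(T))$ is continuous of norm $\le\|\xi\|\|\eta\|$, so by Riesz--Markov--Kakutani it is represented by a unique complex Borel measure $\mu_{\xi,\eta}$ of total variation $\le\|\xi\|\|\eta\|$. For bounded Borel $f:\R\to\C$, define $f(T)$ via the bounded sesquilinear form $(\xi,\eta)\mapsto\int f\,d\mu_{\xi,\eta}$; dominated convergence on the measures yields that $f\mapsto f(T)$ is a normal $*$-homomorphism from the bounded Borel functions to $\BH$. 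Setting $E_T(B):=\chi_B(T)$ for $B\in\Bcal(\R)$ then gives orthogonal projections that form a PVM (countable additivity in the strong operator topology follows from monotone convergence of $\chi_{\bigcup B_n}$), and the identity $T=\int\lambda\,E_T(d\lambda)$ together with $f(T)=\int f\,dE_T$ for bounded Borel $f$ is a direct check against the measures $\mu_{\xi,\eta}$.

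For the unbounded case, I would form the Cayley transform $U:=(A-iI)(A+iI)^{-1}$, which is unitary with $1\notin\sigma_p(U)$, and apply the bounded functional calculus to $U$ (viewed as bounded normal, with spectrum on the unit circle minus possibly $\{1\}$) to obtain a PVM $E_U$. Pushing forward under the Cayley homeomorphism $\omega\mapsto i(1+\omega)(1-\omega)^{-1}$ between $\mathbb T\setminus\{1\}$ and $\R$ yields a PVM $E_A$ on $\Bcal(\R)$; the natural domain $\dom(A)=\{\xi:\int_\R\lambda^2\,d\langle\xi,E_A(\cdot)\xi\rangle<\infty\}$ is then checked to coincide with $\ran(A+iI)^{-1}\cdot\Hcal$ and the identity $A=\int\lambda\,E_A(d\lambda)$ follows by spectral mapping. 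Uniqueness is obtained by observing that any PVM $E'$ representing $A$ must yield the same bounded functional calculus on the resolvents $(A\pm iI)^{-1}$, hence on the $W^*$-algebra they generate, so $E'=E_A$ on Borel sets.

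The main obstacle I anticipate is not the continuous calculus but the passage from continuous to bounded Borel, where one must verify that the sesquilinear extension is actually given by a bounded operator (controlling $\|f(T)\|\le\|f\|_\infty$ from the measure-theoretic definition) and that multiplicativity survives the limiting procedure. A subsidiary but delicate point is the domain bookkeeping in the unbounded step: the identity $A\xi=\int\lambda\,E_A(d\lambda)\xi$ must hold precisely on $\dom(A)$ and nowhere else, which requires pairing the Cayley inversion with the spectral mapping for the function $\omega\mapsto i(1+\omega)(1-\omega)^{-1}$ that is unbounded near $\omega=1$.
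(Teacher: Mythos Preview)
The paper does not prove this theorem. Theorem~\ref{thm:spectral} is stated as background and explicitly deferred to the literature: the section ends with ``See \cite{ReedSimonI,ReedSimonII,KadisonRingroseI} for the spectral theorem\ldots'', and Remark~\ref{rem:spectral-use} emphasizes that the result is only invoked to justify bounded functional calculus and truncations. The same statement reappears verbatim in Appendix~\ref{app:qm} (Theorem~\ref{thm:spectral-app}), again without proof.

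Your sketch is a correct outline of one of the standard routes (essentially the Reed--Simon approach the paper cites): continuous calculus for bounded self-adjoint operators via Stone--Weierstrass and the spectral radius formula, extension to bounded Borel functions through the Riesz--Markov representation of the sesquilinear forms $f\mapsto\langle\xi,f(T)\eta\rangle$, and then the Cayley transform to handle the unbounded case. The two points you flag as delicate---that the sesquilinear extension actually defines a bounded operator with $\|f(T)\|\le\|f\|_\infty$ and that multiplicativity persists, and the domain matching $\dom(A)=\{\xi:\int\lambda^2\,d\mu_{\xi,\xi}<\infty\}$ under the Cayley inversion---are indeed the places where the work lies, and your identification of them is accurate. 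Since there is no proof in the paper to compare against, there is nothing further to say beyond noting that your approach is sound and matches the cited references.
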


\begin{remark}[How this is used later]\label{rem:spectral-use}
We only use Theorem~\ref{thm:spectral} to justify functional calculus and spectral projections
needed for truncations and for measurable functional transforms of (possibly unbounded)
self-adjoint operators; no further quantum-mechanical interpretation is required.
\end{remark}

\subsection{Optional: selective state update (only if used later)}

\begin{axiom}[Selective (L\"uders) update]\label{ax:luders}
Let $A$ be a self-adjoint operator with spectral measure $E_A$ as in
Theorem~\ref{thm:spectral}. For a state $\rho$ and $F\in\Bcal(\R)$ with
$\Tr(E_A(F)\rho)>0$, define the conditional post-update state
\[
\rho_F:=\frac{E_A(F)\,\rho\,E_A(F)}{\Tr(E_A(F)\rho)}.
\]
\end{axiom}

\begin{proposition}[Well-posedness of the selective update]\label{prop:luders-wellposed}
Under Assumption~\ref{ax:luders}, $\rho_F$ is a density operator, i.e.\ $\rho_F\ge 0$, $\rho_F\in\Tone$,
and $\Tr(\rho_F)=1$.
\end{proposition}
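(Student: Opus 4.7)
The plan is to reduce everything to three facts about the spectral projection $P:=E_A(F)$: it is a bounded self-adjoint idempotent ($P^*=P$, $P^2=P$, $\|P\|\le 1$), the product of a bounded operator and a trace-class operator is trace-class on either side, and the trace is cyclic. All of these are either built into Theorem~\ref{thm:spectral} (for $P$) or stated in Lemma~\ref{lem:trace-ideal} (for trace-class products and cyclicity). The positivity of $\rho$ and the normalization $\Tr(\rho)=1$ come from Assumption~\ref{ax:states}, and the denominator $\Tr(P\rho)>0$ is part of the hypothesis of Assumption~\ref{ax:luders}, so division is legitimate.

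First I would argue that $P\rho P\in\Tone$. Applying Lemma~\ref{lem:trace-ideal} with $T=P\in\BH$ and $S=\rho\in\Tone$ gives $P\rho\in\Tone$ with $\|P\rho\|_1\le\|P\|\,\|\rho\|_1\le 1$. Applying the lemma once more to $T=P$ and $S=P\rho$ (multiplication on the right) yields $(P\rho)P\in\Tone$ with $\|P\rho P\|_1\le\|P\|\,\|P\rho\|_1\le 1$. Dividing by the positive scalar $\Tr(P\rho)$ then gives $\rho_F\in\Tone$.

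Next, positivity: for any $\xi\in\Hcal$,
\[
\langle\xi\mid P\rho P\,\xi\rangle = \langle P\xi\mid \rho\,P\xi\rangle \ge 0,
\]
using $P^*=P$ and $\rho\ge 0$. Hence $P\rho P\ge 0$, and dividing by the positive scalar $\Tr(P\rho)$ preserves positivity, so $\rho_F\ge 0$.

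Finally, the trace normalization follows from cyclicity and idempotence. Writing $P\rho P=(P)(\rho P)$ and applying the cyclicity clause of Lemma~\ref{lem:trace-ideal} (with $T=P\in\BH$ and $S=\rho P\in\Tone$ by the argument above), together with $P^2=P$, gives
\[
\Tr(P\rho P) \;=\; \Tr((\rho P)P) \;=\; \Tr(\rho P^2) \;=\; \Tr(\rho P) \;=\; \Tr(P\rho),
\]
where the last equality is another use of cyclicity. Therefore $\Tr(\rho_F)=\Tr(P\rho P)/\Tr(P\rho)=1$. There is no genuine obstacle here; the only point requiring minor care is to invoke Lemma~\ref{lem:trace-ideal} twice so that $P\rho P$ is known to be trace-class before writing $\Tr(P\rho P)$, and to keep track of the fact that $P$ is bounded (with $\|P\|\le 1$) while $\rho$ carries the trace-class structure.
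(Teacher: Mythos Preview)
Your proof is correct and follows essentially the same approach as the paper: positivity from $P^*=P$ and $\rho\ge 0$, trace-class membership from the ideal property (Lemma~\ref{lem:trace-ideal}), and unit trace from cyclicity together with $P^2=P$. Your write-up is simply more explicit than the paper's terse version, which compresses the same three steps into a few lines.
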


\begin{proof}
Positivity is immediate since $E_A(F)$ is a projection. By Lemma~\ref{lem:trace-ideal},
$E_A(F)\rho E_A(F)\in\Tone$, hence $\rho_F\in\Tone$. Finally,
\[
\Tr(\rho_F)=\frac{\Tr(E_A(F)\rho E_A(F))}{\Tr(E_A(F)\rho)}
=\frac{\Tr(E_A(F)\rho)}{\Tr(E_A(F)\rho)}=1,
\]
using Lemma~\ref{lem:trace-ideal} and $E_A(F)^2=E_A(F)$.
\end{proof}

\subsection*{References for Section~\ref{sec:MFQM}}
See \cite{ReedSimonI,ReedSimonII,KadisonRingroseI} for the spectral theorem, trace-class ideals,
and normality properties of the trace.

\section{Mathematical Foundations of Quantum Pricing Theory}\label{section:MF_of_QPT}

This section introduces the operator-algebraic market architecture used throughout.
We specify the information flow, price observables, symmetric discounting, and a localized
martingale/efficiency principle relative to the \emph{classical} information algebras.
The $L^2$-projection theory and analytic extensions are developed in Section~\ref{sec:information_and_C.E}.

\medskip
\noindent\textbf{Standing convention (states).}
A \emph{state} means a normal state $\varphi\in M_\ast$ on a von Neumann algebra $M$.
When working in a concrete model $M\subseteq B(\mathcal H)$, we may implement $\varphi$ as
$\varphi(X)=\Tr(\rho X)$ for a density operator $\rho\in\mathcal T_1(\mathcal H)$.
This is a \emph{model assumption} and is not required for the abstract formulation.

\subsection{Market and information algebras}\label{subsec:market_info}

\begin{assumption}[Filtered market and classical information]\label{as:market_info}
Fix $T>0$. Let $\{M_t\}_{t\in[0,T]}$ be an increasing family of von Neumann subalgebras of $B(\mathcal H)$.
For each $t$, let $N_t\subseteq M_t$ be an \emph{abelian} von Neumann subalgebra, and assume
$N_s\subseteq N_t$ for $s\le t$.
\end{assumption}

\begin{remark}\label{rem:info_classical}
Abelianity of $N_t$ encodes the operational premise that \emph{realized information is classical}
(simultaneously measurable), even when the ambient market algebra $M_t$ is noncommutative.
\end{remark}

\begin{proposition}[Commutative realization of $N_t$]\label{prop:abelian_sigma}
Let $N$ be an abelian von Neumann algebra on $\mathcal H$.
Then there exist a measure space $(\Omega,\Sigma,\mu)$, a separable multiplicity space $\mathcal K$,
and a unitary $U:\mathcal H\to L^2(\Omega,\Sigma,\mu;\mathcal K)$ such that
\[
U N U^\ast=\{\,M_f\otimes I_{\mathcal K}: f\in L^\infty(\Omega,\Sigma,\mu)\,\}.
\]
In particular, up to normal $\ast$-isomorphism, specifying $N_t$ is equivalent to specifying
a classical $\sigma$-algebra $\Sigma_t$.
\end{proposition}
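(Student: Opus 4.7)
The plan is to apply the classical spectral and multiplicity theory for abelian von Neumann algebras on a separable Hilbert space---the Hahn--Hellinger package---whose proof proceeds by decomposing $\mathcal H$ into $N$-cyclic pieces, identifying each piece spectrally via the Gelfand representation, and then regrouping by multiplicity.

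First, by separability of $\mathcal H$, Zorn's lemma (or a direct recursion on a countable dense subset) produces a countable family $\{\mathcal H_i\}_{i\ge 1}$ of mutually orthogonal $N$-cyclic subspaces with cyclic vectors $\xi_i$; maximality together with abelianity $N\subseteq N'$ forces $\mathcal H=\bigoplus_{i\ge 1}\mathcal H_i$. On each $\mathcal H_i$, the vector state $\omega_i(n):=\langle\xi_i\mid n\xi_i\rangle$ is faithful on $N|_{\mathcal H_i}$, and combining the Gelfand representation of $N$ with the spectral theorem (Theorem~\ref{thm:spectral}) yields a finite Radon measure $\mu_i$ on the Gelfand spectrum $\Omega$ of $N$ and a unitary $V_i:\mathcal H_i\to L^2(\Omega,\mu_i)$ intertwining $N|_{\mathcal H_i}$ with the multiplication algebra $\{M_f:f\in L^\infty(\Omega,\mu_i)\}$.

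Next, the Hahn--Hellinger multiplicity theorem aggregates these components. Choose a common dominating probability measure $\mu$ on $\Omega$---for instance $\mu\propto\sum_i 2^{-i}\mu_i$---and let $m:\Omega\to\{1,2,\ldots,\infty\}$ denote the measurable multiplicity function. On each level set $\Omega_n:=m^{-1}(n)$ one obtains a canonical unitary equivalence with $L^2(\Omega_n,\mu|_{\Omega_n};\mathcal K_n)$, $\dim\mathcal K_n=n$, on which $N$ acts as $M_f\otimes I_{\mathcal K_n}$. Packaging into a single multiplicity space then amounts to fixing a separable $\mathcal K$ of sufficient dimension and realizing $\bigoplus_n L^2(\Omega_n,\mu;\mathcal K_n)$ as $L^2(\Omega,\mu;\mathcal K)$ via isometric embeddings $\mathcal K_n\hookrightarrow\mathcal K$, relabeling $(\Omega,\mu)$ to absorb the variable finite multiplicities. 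The ``in particular'' clause then follows from the canonical order-isomorphism between the projection lattice of $N\simeq L^\infty(\Omega,\mu)$ and the measurable sets mod $\mu$-null sets, so a normal $\ast$-isomorphism class of an abelian von Neumann algebra corresponds bijectively to a standard measurable $\sigma$-algebra.

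The principal obstacle is the uniform-multiplicity packaging above. In full generality $m$ is nonconstant, and writing the final representation literally as $M_f\otimes I_{\mathcal K}$ on \emph{all} of $L^2(\Omega,\mu;\mathcal K)$ requires one of: (i) restricting to constant-multiplicity components; (ii) passing to a direct integral $\int^{\oplus}_{\Omega}\mathcal K(\omega)\,d\mu(\omega)$ with varying fiber; or (iii) interpreting the proposition as a mild abuse of notation that absorbs $m$ into $\mathcal K$ via a relabeling of $(\Omega,\mu)$. Any of these conventions yields the stated conclusion, but the choice should be fixed explicitly to keep the intertwining identity literally correct.
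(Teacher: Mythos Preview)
The paper does not supply a proof of this proposition; it is stated as a background structural fact and used only through its ``in particular'' clause (identifying $N_t$ with an $L^\infty$-algebra over a classical $\sigma$-algebra). So there is no paper proof to compare against, and your sketch---cyclic decomposition, Gelfand/GNS identification on each cyclic piece, Hahn--Hellinger multiplicity regrouping---is exactly the standard route one would cite from \cite{KadisonRingroseI} or \cite{Takesaki1}.

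Your caveat at the end is well taken and is the only substantive point worth flagging. As literally written, the proposition asserts a \emph{single} separable $\mathcal K$ and a unitary onto all of $L^2(\Omega,\mu;\mathcal K)$, which forces uniform multiplicity. In general the multiplicity function $m:\Omega\to\{1,2,\dots,\infty\}$ is nonconstant, and the honest statement is either a direct integral $\int_\Omega^\oplus \mathcal K(\omega)\,d\mu(\omega)$ with $\dim\mathcal K(\omega)=m(\omega)$, or a disjoint-union relabeling of $(\Omega,\mu)$ across multiplicity strata. Your option (iii)---absorbing $m$ into a relabeled base space---does work, but then $\mathcal K$ is not canonically determined and the tensor notation $M_f\otimes I_{\mathcal K}$ is mildly abusive. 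Since the paper only ever uses the algebraic isomorphism $N_t\simeq L^\infty(\Omega,\Sigma_t,\mu)$ (never the explicit multiplicity space), the imprecision is harmless for the downstream arguments, but you are right that a fully rigorous proof would have to commit to one of your three readings.
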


\subsection{Price observables and symmetric discounting}\label{subsec:prices_discount}

\begin{definition}[Affiliated observables]\label{def:measurable_op}
Let $\mathcal A\subseteq B(\mathcal H)$ be a von Neumann algebra.
A self-adjoint operator $X$ on $\mathcal H$ is \emph{affiliated} with $\mathcal A$ if
its spectral projections satisfy $E_X(F)\in\mathcal A$ for all Borel sets $F\subseteq\mathbb R$.
\end{definition}

\begin{assumption}[Prices and num\'eraire]\label{as:prices_numeraire}
For each traded asset $i\in\{1,\dots,d\}$ and each $t\in[0,T]$:
\begin{enumerate}[label=(\roman*), itemsep=2pt, topsep=4pt]
\item $S_t^{(i)}$ is a self-adjoint operator affiliated with $N_t$;
\item there exists a strictly positive num\'eraire $(B_t)_{t\in[0,T]}$ such that each
$B_t\in N_t$ is positive and boundedly invertible.
\end{enumerate}
\end{assumption}

\begin{definition}[Symmetric discounting]\label{def:sym_discount}
Define the symmetrically discounted price observable
\[
\bar S_t^{(i)}:=B_t^{-1/2}\,S_t^{(i)}\,B_t^{-1/2}.
\]
In the commutative realization of Proposition~\ref{prop:abelian_sigma}, this equals multiplication by
$b_t^{-1}s_t^{(i)}$.
\end{definition}

\begin{remark}[Domain issues are resolved by commutativity]\label{rem:discount_domain}
Although $S_t^{(i)}$ may be unbounded, the expression in Definition~\ref{def:sym_discount}
is unambiguous because $S_t^{(i)}$ is affiliated with the \emph{abelian} algebra $N_t$:
in the commutative realization it is simply pointwise multiplication by $b_t^{-1}s_t^{(i)}$.
(We do not repeat the full domain verification here; see the discussion following Proposition~\ref{prop:abelian_sigma}.)
\end{remark}

\subsection{Information projections and a localized efficiency principle}\label{subsec:condexp_liep}

\begin{definition}[Truncations]\label{def:trunc}
For $n\in\mathbb N$ let $f_n:\mathbb R\to\mathbb R$ be the bounded truncation
$f_n(\lambda)=\max(-n,\min(\lambda,n))$.
For self-adjoint $X$, define $f_n(X)$ via functional calculus; then $f_n(X)\in B(\mathcal H)$.
\end{definition}

\begin{assumption}[State and conditional expectations]\label{as:condexp_exist}
Fix a faithful normal state $\varphi_\rho$ on $M:=M_T$.
Assume there exists a family of normal $\varphi_\rho$-preserving conditional expectations
$E_t:M\to N_t$ such that for $0\le s\le t\le T$,
\[
E_s\circ E_t=E_s.
\]
\end{assumption}
\begin{remark}[Conditioning vs.\ measurement]\label{rem:conditioning_vs_measurement}
In the main text, \emph{conditioning on available information} is represented by normal
conditional expectations $E_t:M\to N_t$ onto the (abelian) information algebras.
An optional interpretation of information arrival as a projective measurement and the
corresponding L\"uders selective update is provided in Appendix~\ref{app:qm:meas}.
\end{remark}

\begin{definition}[(Truncation-stable) $(N_t,E_t)$-martingale]\label{def:qmartingale}
Let $(X_t)_{t\in[0,T]}$ be self-adjoint operators with $X_t$ affiliated with $N_t$.
We call $(X_t)$ an \emph{$(N_t,E_t)$-martingale} if for all $0\le s\le t\le T$ and all $n\in\mathbb N$,
\[
E_s\!\big(f_n(X_t)\big)=f_n(X_s).
\]
\end{definition}

\begin{assumption}[Local Informational Efficiency Principle (LIEP)]\label{as:LIEP}
For each traded asset $i$, the discounted process $(\bar S_t^{(i)})_{t\in[0,T]}$
is an $(N_t,E_t)$-martingale in the sense of Definition~\ref{def:qmartingale}.
\end{assumption}

\begin{remark}\label{rem:local_vs_global}
If $N_t=M_t$ (full information), Assumption~\ref{as:LIEP} reduces to the usual global martingale/EMH postulate.
Here $N_t$ is typically a proper subalgebra, so LIEP encodes unpredictability \emph{relative to available information}.
\end{remark}

\subsection{Risk-neutral representation and the pricing operator}\label{subsec:pricing_operator}
\begin{assumption}[Pricing state and valuation expectations (standing hypothesis)]\label{as:RN}
Assume there exist a normal state $\varphi^\star$ on $M:=M_T$ (the \emph{pricing state})
and a family of normal $\varphi^\star$-preserving conditional expectations
$\{E_t^\star:M\to N_t\}_{t\in[0,T]}$ such that:
\begin{enumerate}[label=(\roman*), itemsep=2pt, topsep=4pt]
\item \textbf{(Tower)} $E_s^\star\circ E_t^\star=E_s^\star$ for $0\le s\le t\le T$;
\item \textbf{(Traded-asset consistency)} for each traded asset $i$,
the discounted prices $(\bar S_t^{(i)})_{t\in[0,T]}$ form an $(N_t,E_t^\star)$-martingale
in the sense of Definition~\ref{def:qmartingale}.
\end{enumerate}
\end{assumption}

\begin{definition}[Pricing operator on bounded payoffs]\label{def:pricing_operator}
Let $X\in M_T$ be bounded. Define its symmetrically discounted payoff
\[
\bar X:=B_T^{-1/2}XB_T^{-1/2},
\]
and set, for $t\in[0,T]$,
\[
\Pi_t(X):=B_t^{1/2}\,E_t^\star(\bar X)\,B_t^{1/2}\in N_t.
\]
Equivalently, the discounted pricing operator is
\[
\widetilde{\Pi}_t(X):=B_t^{-1/2}\Pi_t(X)B_t^{-1/2}=E_t^\star(\bar X)\in N_t.
\]
In the commutative case, $\Pi_t(X)=B_t\,\mathbb E^\star\!\left[B_T^{-1}X\mid\Sigma_t\right]$.
\end{definition}

\begin{proposition}[Structural properties]\label{prop:pricing_properties}
Under Assumption~\ref{as:RN}, for each $t$ the map $\Pi_t:M_T\to N_t$ is
normal, completely positive, and $N_t$-bimodular. Moreover, it is \emph{num\'eraire-normalized} in the sense that
\[
\Pi_t(\mathbf 1)=B_t.
\]
Equivalently, $\widetilde{\Pi}_t:M_T\to N_t$ is normal, completely positive, \emph{unital}, and $N_t$-bimodular.
Finally, $\Pi$ is time-consistent: for $0\le s\le t\le T$ and bounded $X\in M_T$,
\[
\Pi_s(X)=B_s^{1/2}\,E_s^\star\!\Big(B_t^{-1/2}\Pi_t(X)B_t^{-1/2}\Big)\,B_s^{1/2}.
\]
\end{proposition}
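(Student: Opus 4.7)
My strategy is to decompose $\Pi_t = \gamma_t \circ E_t^\star \circ \delta$, where $\delta(X) := B_T^{-1/2} X B_T^{-1/2}$ is the symmetric-discounting map and $\gamma_t(Y) := B_t^{1/2} Y B_t^{1/2}$ is the symmetric rescaling by the numéraire at time $t$. Each of $\delta$ and $\gamma_t$ is a Stinespring-type sandwich $X \mapsto V^\ast X V$ whose conjugating element is a bounded operator (since $B_T$ and $B_t$ are positive and boundedly invertible by Assumption~\ref{as:prices_numeraire}), hence each is normal and completely positive. By Assumption~\ref{as:RN}, $E_t^\star$ is normal; as a conditional expectation onto a $C^\ast$-subalgebra it is completely positive (Tomiyama) and $N_t$-bimodular. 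The claimed properties of $\Pi_t$ will then follow by combining the analogous properties of the three factors.

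Normality and complete positivity of $\Pi_t$ (and of $\widetilde{\Pi}_t = E_t^\star \circ \delta$) are immediate from composition of normal CP maps. For $N_t$-bimodularity, the essential observation is abelianity of $N_T$: given $a, b \in N_t \subseteq N_T$ and $X \in M_T$, one commutes $a$ and $b$ past $B_T^{\pm 1/2} \in N_T$ inside $\delta$, invokes the $N_t$-bimodularity of $E_t^\star$, and finally commutes $a, b$ past $B_t^{1/2} \in N_t$; the three commutations combine to give $\Pi_t(aXb) = a\,\Pi_t(X)\,b$. Numéraire-normalization $\Pi_t(\mathbf 1) = B_t$ is the specialization $X = \mathbf 1$ and is equivalent, via $\widetilde{\Pi}_t(\mathbf 1) = B_t^{-1/2}\Pi_t(\mathbf 1)B_t^{-1/2}$, to unitality of $\widetilde{\Pi}_t$.

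Time consistency is a direct algebraic consequence of the tower property $E_s^\star \circ E_t^\star = E_s^\star$: substituting the definition of $\Pi_t(X)$ into the inner slot of the claimed right-hand side, the adjacent $B_t^{-1/2} B_t^{1/2} = \mathbf 1$ pair collapses (valid since both factors lie in abelian $N_t$), leaving $B_s^{1/2} E_s^\star\!\bigl(E_t^\star(\delta(X))\bigr) B_s^{1/2}$, which the tower reduces to $B_s^{1/2} E_s^\star(\delta(X)) B_s^{1/2} = \Pi_s(X)$. The main obstacle throughout is essentially bookkeeping — tracking which elements live in $N_t$ versus $N_T$ versus $M_T$ and justifying the commutations involving $B_T^{\pm 1/2}$ — but once the bounded invertibility of $B_t, B_T$ and the abelianity of $N_T$ are exploited, everything reduces to composition of well-behaved maps; the deep content is entirely in Assumption~\ref{as:RN}.
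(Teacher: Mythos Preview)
Your overall architecture matches the paper's: both argue normality and complete positivity by writing $\Pi_t$ as the composition of the sandwich maps $\delta,\gamma_t$ with $E_t^\star$; both obtain $N_t$-bimodularity by commuting $a,b\in N_t$ through $B_T^{\pm 1/2}$ and $B_t^{\pm 1/2}$ (abelianity of $N_T\supseteq N_t$) and then invoking the bimodule property of $E_t^\star$; and both derive time-consistency by collapsing $B_t^{-1/2}B_t^{1/2}=\mathbf 1$ inside the inner slot and applying the tower property $E_s^\star\circ E_t^\star=E_s^\star$. Those parts are correct and essentially identical to the paper.

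The one genuine gap is your normalization step. You assert that ``$\Pi_t(\mathbf 1)=B_t$ is the specialization $X=\mathbf 1$'', but that specialization yields
\[
\Pi_t(\mathbf 1)=B_t^{1/2}\,E_t^\star\!\bigl(B_T^{-1}\bigr)\,B_t^{1/2},
\]
and nothing in Assumption~\ref{as:RN} forces $E_t^\star(B_T^{-1})=\mathbf 1$. The paper's proof does not establish $\Pi_t(\mathbf 1)=B_t$ either: it instead computes $\Pi_t(B_T)=B_t^{1/2}E_t^\star(\mathbf 1)B_t^{1/2}=B_t$ (since $\overline{B_T}=\mathbf 1$) and, equivalently, $\widetilde{\Pi}_t(B_T)=\mathbf 1$. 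That is the intended ``num\'eraire-normalization'': a claim paying the num\'eraire $B_T$ at maturity is worth $B_t$ today. So the displayed identity in the proposition is a slip for $\Pi_t(B_T)=B_t$, and your specialization should be taken at $X=B_T$, not $X=\mathbf 1$.
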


\begin{proof}
\emph{Complete positivity.} Since $E_t^\star$ is completely positive and the map
$Y\mapsto B_t^{1/2}YB_t^{1/2}$ is completely positive, their composition $X\mapsto \Pi_t(X)$ is completely positive.

\emph{Normality.} The map $E_t^\star$ is normal by assumption, and multiplication by the bounded operators
$B_t^{\pm 1/2}$ is normal; hence $\Pi_t$ (and $\widetilde{\Pi}_t$) is normal.

\emph{$N_t$-bimodularity.} For $a,b\in N_t$ and bounded $X\in M_T$, using $B_t^{1/2}\in N_t$ and the bimodule property of $E_t^\star$,
\[
\Pi_t(aXb)
= B_t^{1/2}E_t^\star\!\big(B_T^{-1/2}(aXb)B_T^{-1/2}\big)B_t^{1/2}
= a\,\Pi_t(X)\,b.
\]

\emph{Normalization / unitality.} Since $E_t^\star(\mathbf 1)=\mathbf 1$ and $\overline{\mathbf 1}=B_T^{-1}$,
\[
\Pi_t(B_T)=B_t^{1/2}E_t^\star(\mathbf 1)B_t^{1/2}=B_t,
\qquad\text{and in particular}\qquad
\Pi_t(\mathbf 1)=B_t^{1/2}E_t^\star(B_T^{-1})B_t^{1/2}.
\]
Moreover,
\[
\widetilde{\Pi}_t(B_T)=B_t^{-1/2}\Pi_t(B_T)B_t^{-1/2}=\mathbf 1,
\]
so $\widetilde{\Pi}_t$ is unital on the unit payoff in num\'eraire units.

\emph{Time-consistency.} By definition,
$B_t^{-1/2}\Pi_t(X)B_t^{-1/2}=\widetilde{\Pi}_t(X)=E_t^\star(\bar X)\in N_t$.
Thus, using the tower property of $(E_t^\star)_{t\le T}$,
\[
B_s^{-1/2}\Pi_s(X)B_s^{-1/2}
=E_s^\star(\bar X)
=E_s^\star\!\big(E_t^\star(\bar X)\big)
=E_s^\star\!\Big(B_t^{-1/2}\Pi_t(X)B_t^{-1/2}\Big),
\]
and conjugating by $B_s^{1/2}$ yields the stated formula.
\end{proof}

\begin{proposition}[Pricing of truncated traded claims]\label{prop:traded_trunc_pricing}
Let $i$ be traded and $n\in\mathbb N$. Define the bounded terminal claim
$X_{T,n}^{(i)}:=B_T^{1/2}f_n(\bar S_T^{(i)})B_T^{1/2}$.
Then for all $t\in[0,T]$,
\[
\Pi_t\!\big(X_{T,n}^{(i)}\big)=B_t^{1/2}f_n(\bar S_t^{(i)})B_t^{1/2}.
\]
\end{proposition}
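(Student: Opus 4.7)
The plan is to unwind the definitions and then invoke the traded-asset martingale hypothesis. First I would compute the symmetric discount $\overline{X_{T,n}^{(i)}}$ of the bounded claim. Since $B_T \in N_T$ and $N_T$ is abelian (Assumption~\ref{as:market_info}), the operator $f_n(\bar S_T^{(i)})$ lies in $N_T$ (it is a bounded Borel function of an observable affiliated with $N_T$), so it commutes with the bounded positive operators $B_T^{\pm 1/2}$. Hence
\[
\overline{X_{T,n}^{(i)}}
= B_T^{-1/2}\,B_T^{1/2} f_n(\bar S_T^{(i)}) B_T^{1/2}\, B_T^{-1/2}
= f_n(\bar S_T^{(i)}),
\]
which is the crucial simplification that kills the $T$-index num\'eraire factors.

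Next I would plug this into the definition of $\Pi_t$ (Definition~\ref{def:pricing_operator}):
\[
\Pi_t\!\big(X_{T,n}^{(i)}\big)
= B_t^{1/2}\,E_t^\star\!\big(\overline{X_{T,n}^{(i)}}\big)\,B_t^{1/2}
= B_t^{1/2}\,E_t^\star\!\big(f_n(\bar S_T^{(i)})\big)\,B_t^{1/2}.
\]
At this point the problem reduces to evaluating $E_t^\star(f_n(\bar S_T^{(i)}))$, and this is exactly what the traded-asset consistency clause of Assumption~\ref{as:RN}(ii) provides: $(\bar S_t^{(i)})_{t\in[0,T]}$ is an $(N_t,E_t^\star)$-martingale, so Definition~\ref{def:qmartingale} applied with $s=t$ and terminal time $T$ yields $E_t^\star(f_n(\bar S_T^{(i)}))=f_n(\bar S_t^{(i)})$. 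Substituting gives the claim.

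I do not anticipate a real obstacle: the argument is essentially bookkeeping once one observes that conjugation by $B_T^{\pm 1/2}$ cancels due to abelianity of $N_T$. The only point requiring minor care is justifying that $f_n(\bar S_T^{(i)}) \in N_T$, which follows from Definition~\ref{def:measurable_op} (affiliation of $\bar S_T^{(i)}$ with $N_T$, transparent in the commutative realization of Proposition~\ref{prop:abelian_sigma}) together with boundedness of $f_n$. No truncation-limit or domain issues arise, because everything is formulated at the level of the already-bounded observables $f_n(\bar S_t^{(i)})$ and because the pricing operator is only applied to the bounded claim $X_{T,n}^{(i)}$.
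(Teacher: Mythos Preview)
Your proof is correct and follows exactly the same route as the paper's: compute $\overline{X_{T,n}^{(i)}}=f_n(\bar S_T^{(i)})$, apply Definition~\ref{def:pricing_operator}, then invoke the $(N_t,E_t^\star)$-martingale property from Assumption~\ref{as:RN}(ii). One minor remark: the cancellation $B_T^{-1/2}B_T^{1/2}f_n(\bar S_T^{(i)})B_T^{1/2}B_T^{-1/2}=f_n(\bar S_T^{(i)})$ is purely associative (since $B_T^{-1/2}B_T^{1/2}=I$), so your appeal to abelianity of $N_T$ there is harmless but unnecessary.
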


\begin{proof}
By Definition~\ref{def:pricing_operator},
$\Pi_t(X_{T,n}^{(i)})=B_t^{1/2}E_t^\star(f_n(\bar S_T^{(i)}))B_t^{1/2}$.
Assumption~\ref{as:RN}(ii) gives $E_t^\star(f_n(\bar S_T^{(i)}))=f_n(\bar S_t^{(i)})$.
\end{proof}

\begin{remark}[On existence and economic content: deferred to Section~\ref{sec:duality}]\label{rem:RN_existence_deferred}
Assumption~\ref{as:RN} is \emph{not} a dynamical postulate; it only specifies the valuation objects
needed to define $\Pi_t$ in Definition~\ref{def:pricing_operator}.
In the commutative FTAP, a risk-neutral measure is derived from no-arbitrage together with
appropriate closedness hypotheses. In the present operator-algebraic setting, the analogue is a
normal \emph{pricing state} $\varphi^\star\in M_*^+$ obtained by separation under a no-free-lunch
type condition (e.g.\ NAFL$_\sigma$), and the associated $\varphi^\star$-preserving conditional
expectations $E_t^\star:M\to N_t$ whose existence/uniqueness is governed by Takesaki's modular
invariance criterion. These implications are proved in Section~5.
Accordingly, Section~3 separates \emph{definition} of the pricing functional from \emph{existence}
and \emph{characterization}.
\end{remark}

\subsection{\texorpdfstring{$L^2$}{L2}-geometry of conditioning and extension to affiliated observables}

\label{sec:information_and_C.E}

Throughout this section we work under the standing hypotheses of Section~\ref{section:MF_of_QPT}:
a terminal market algebra $M:=M_T\subseteq B(\mathcal H)$, an increasing family of abelian information
subalgebras $(N_t)_{t\in[0,T]}$, a fixed faithful normal reference state $\varphi_\rho$ on $M$, and a family
of normal $\varphi_\rho$-preserving conditional expectations $E_t:M\to N_t$ satisfying the tower property
(Assumption~\ref{as:condexp_exist}).

The purpose of this section is twofold:
(i) to record the Hilbert-space geometry induced by $\varphi_\rho$ (conditioning as an $L^2$-projection), and
(ii) to provide a canonical truncation-based extension mechanism from bounded elements of $M$ to affiliated (possibly unbounded)
observables whenever conditional expectations are applied to such objects in martingale statements.

\section[The phi-rho GNS inner product and L2(M,phi-rho)]
{The $\varphi_\rho$-GNS inner product and $L^2(M,\varphi_\rho)$}


\begin{definition}[$L^2$-inner product induced by $\varphi_\rho$]\label{def:L2_inner}
For $X,Y\in M$ set
\[
\langle X,Y\rangle_{\varphi_\rho}:=\varphi_\rho(X^\ast Y).
\]
Let $\mathcal N_{\varphi_\rho}:=\{X\in M:\ \varphi_\rho(X^\ast X)=0\}$ be the null space and define
\[
L^2(M,\varphi_\rho):=\overline{\,M/\mathcal N_{\varphi_\rho}\,}^{\ \|\cdot\|_2},
\qquad
\|[X]\|_2:=\varphi_\rho(X^\ast X)^{1/2}.
\]
\end{definition}

\begin{remark}\label{rem:faithful_L2}
Since $\varphi_\rho$ is faithful, $\mathcal N_{\varphi_\rho}=\{0\}$, so we may identify $M$ with a dense subspace of
$L^2(M,\varphi_\rho)$ via $X\mapsto [X]$. We keep the bracket notation only when helpful to avoid ambiguity.
\end{remark}

\subsection{Conditional expectations as \texorpdfstring{$L^2$}{L2}-projections}


\begin{proposition}[$E_t$ is contractive and extends to $L^2$]\label{prop:E_contract_L2}
Let $E_t:M\to N_t$ be a normal $\varphi_\rho$-preserving conditional expectation.
Then for all $X\in M$,
\[
\|E_t(X)\|_2\le \|X\|_2,
\qquad
\text{and}\qquad
\langle E_t(X),A\rangle_{\varphi_\rho}=\langle X,A\rangle_{\varphi_\rho}\ \ \forall\,A\in N_t.
\]
Consequently, $E_t$ extends uniquely to a bounded linear operator (still denoted $E_t$) on $L^2(M,\varphi_\rho)$ with
$\|E_t\|_{L^2\to L^2}\le 1$.
\end{proposition}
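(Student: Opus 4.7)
The plan is to split the statement into two independent Hilbert-space assertions — the projection-type inner-product identity and the $L^2$-contraction — and then produce the extension by the bounded linear extension theorem. The only ingredients are the defining properties of a normal $\varphi_\rho$-preserving conditional expectation ($\ast$-preservation, unitality, $N_t$-bimodularity, and $\varphi_\rho\circ E_t=\varphi_\rho$) together with the Kadison–Schwarz inequality. No analytic subtleties arise because the two estimates are proved on the $\ast$-subalgebra $M$ and only afterwards transported to the completion.

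For the inner-product identity, fix $A\in N_t$ and $X\in M$ and compute $\langle E_t(X),A\rangle_{\varphi_\rho}=\varphi_\rho(E_t(X)^\ast A)$ directly. Using $\ast$-preservation ($E_t(X)^\ast=E_t(X^\ast)$), right $N_t$-bimodularity ($E_t(X^\ast)\,A=E_t(X^\ast A)$), and state-preservation ($\varphi_\rho\circ E_t=\varphi_\rho$), the chain collapses to $\varphi_\rho(X^\ast A)=\langle X,A\rangle_{\varphi_\rho}$. Specialising to $A=E_t(X)$ gives the Pythagorean identity which, once contractivity is in hand, will identify $E_t$ with orthogonal projection onto the $\|\cdot\|_2$-closure of $N_t$ in $L^2(M,\varphi_\rho)$.

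The contraction estimate is the substantive step. Since $E_t$ is unital and, by Tomiyama's theorem, automatically completely positive — in particular $2$-positive — it satisfies the operator Kadison–Schwarz inequality $E_t(X^\ast X)\ge E_t(X)^\ast E_t(X)$ for every $X\in M$. Applying the positive functional $\varphi_\rho$ and using $\varphi_\rho\circ E_t=\varphi_\rho$ on the left-hand side yields $\|X\|_2^2\ge\|E_t(X)\|_2^2$. The extension to $L^2(M,\varphi_\rho)$ is then purely topological: by Definition~\ref{def:L2_inner} and Remark~\ref{rem:faithful_L2} the image of $M$ is $\|\cdot\|_2$-dense in $L^2(M,\varphi_\rho)$, and a uniformly continuous linear operator defined on a dense subspace of a Banach space admits a unique continuous extension with the same norm bound. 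The only conceptual hurdle worth flagging is recognising that Kadison–Schwarz — rather than some quadratic-form or polarisation manipulation — is the correct tool, and that its $2$-positivity hypothesis is automatic here; once that is acknowledged, no further analytic obstacle remains.
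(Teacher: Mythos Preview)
Your proof is correct. The inner-product identity and the density/extension argument match the paper verbatim. The genuine difference is in the contractivity step: you invoke the operator-level Kadison--Schwarz inequality $E_t(X^\ast X)\ge E_t(X)^\ast E_t(X)$ (justified via Tomiyama's theorem and $2$-positivity), then apply $\varphi_\rho$. The paper instead stays entirely at the scalar level: it specialises the inner-product identity to $A=E_t(X)$ to obtain $\|E_t(X)\|_2^2=\varphi_\rho(E_t(X)^\ast X)=\langle E_t(X),X\rangle_{\varphi_\rho}$, and then applies the ordinary Cauchy--Schwarz inequality in $L^2(M,\varphi_\rho)$ to get $\|E_t(X)\|_2^2\le\|E_t(X)\|_2\|X\|_2$. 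The paper's route is lighter---it uses nothing beyond bimodularity, state-preservation, and Hilbert-space Cauchy--Schwarz---whereas your route imports a structural operator-algebra fact (complete positivity of conditional expectations) that, while standard, is strictly stronger than what is needed here. Either argument is perfectly acceptable; the paper's has the minor advantage that it is self-contained and does not require the reader to accept Tomiyama's theorem as a black box.
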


\begin{proof}
For $A\in N_t$, $\varphi_\rho(E_t(X)^\ast A)=\varphi_\rho(E_t(X^\ast A))=\varphi_\rho(X^\ast A)$ by $\varphi_\rho$-preservation
and $N_t$-bimodularity. Taking $A=E_t(X)$ yields
\[
\|E_t(X)\|_2^2=\varphi_\rho(E_t(X)^\ast E_t(X))=\varphi_\rho(E_t(E_t(X)^\ast X))
=\varphi_\rho(E_t(X)^\ast X)\le \|E_t(X)\|_2\|X\|_2,
\]
hence $\|E_t(X)\|_2\le \|X\|_2$. The extension to $L^2$ follows by density of $M$ and continuity.
\end{proof}

\begin{theorem}[$E_t$ is the orthogonal projection onto $L^2(N_t,\varphi_\rho)$]\label{thm:E_orth_proj}
Let $L^2(N_t,\varphi_\rho)$ denote the closed subspace of $L^2(M,\varphi_\rho)$ generated by $N_t$.
Then the $L^2$-extension of $E_t$ from Proposition~\ref{prop:E_contract_L2} is the orthogonal projection
\[
E_t:L^2(M,\varphi_\rho)\to L^2(N_t,\varphi_\rho).
\]
In particular, for all $X\in L^2(M,\varphi_\rho)$ and all $A\in L^2(N_t,\varphi_\rho)$,
\[
\langle X-E_t(X),A\rangle_{\varphi_\rho}=0.
\]
\end{theorem}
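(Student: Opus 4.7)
The plan is to identify the $L^2$-extension of $E_t$ (provided by Proposition~\ref{prop:E_contract_L2}) as the orthogonal projection onto $L^2(N_t,\varphi_\rho)$ by verifying the two geometric conditions that characterize such a projection: (i) the range lies in $L^2(N_t,\varphi_\rho)$, and (ii) for every $X\in L^2(M,\varphi_\rho)$, the residual $X-E_t(X)$ is orthogonal to $L^2(N_t,\varphi_\rho)$. Together these two properties uniquely determine the orthogonal projection onto the closed subspace $L^2(N_t,\varphi_\rho)$ by the standard Hilbert-space projection theorem.

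For (i), for any $X\in M$ we have $E_t(X)\in N_t\subseteq L^2(N_t,\varphi_\rho)$. Since the $L^2$-extension is continuous (Proposition~\ref{prop:E_contract_L2}) and $L^2(N_t,\varphi_\rho)$ is closed in $L^2(M,\varphi_\rho)$, density of $M$ in $L^2(M,\varphi_\rho)$ forces the image of the extended operator to be contained in $L^2(N_t,\varphi_\rho)$.

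For (ii), I would first verify the orthogonality identity on the dense pair $(X,A)\in M\times N_t$. Using $\ast$-preservation, $N_t$-bimodularity (since $A\in N_t$), and $\varphi_\rho$-invariance of $E_t$,
\[
\varphi_\rho\bigl(E_t(X)^\ast A\bigr)=\varphi_\rho\bigl(E_t(X^\ast)\,A\bigr)=\varphi_\rho\bigl(E_t(X^\ast A)\bigr)=\varphi_\rho(X^\ast A),
\]
so $\langle X-E_t(X),A\rangle_{\varphi_\rho}=0$. The bilinear form $(X,A)\mapsto \langle X-E_t(X),A\rangle_{\varphi_\rho}$ is jointly continuous on $L^2(M,\varphi_\rho)\times L^2(N_t,\varphi_\rho)$ by Cauchy--Schwarz combined with the $L^2$-contractivity from Proposition~\ref{prop:E_contract_L2}, so the vanishing extends by density to all $X\in L^2(M,\varphi_\rho)$ and $A\in L^2(N_t,\varphi_\rho)$.

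The main (and essentially only) technical point is the algebraic chain in (ii): it requires combining the three defining properties of a normal $\varphi_\rho$-preserving conditional expectation in the correct order to collapse $\varphi_\rho(E_t(X)^\ast A)$ into $\varphi_\rho(X^\ast A)$. No substantial analytic obstacle remains beyond this, since the heavy lifting---the $L^2$-contractivity and the very existence of the extension---has already been carried out in Proposition~\ref{prop:E_contract_L2}; density and continuity close the argument immediately.
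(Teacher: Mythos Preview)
Your proof is correct and follows essentially the same route as the paper: establish the orthogonality $\langle X-E_t(X),A\rangle_{\varphi_\rho}=0$ on the dense pair $(X,A)\in M\times N_t$ via the algebraic chain (which is precisely the content of the second identity in Proposition~\ref{prop:E_contract_L2}), then extend by density and $L^2$-continuity. The only cosmetic difference is that the paper concludes by invoking idempotence of $E_t$ to identify it as the orthogonal projection onto its range, whereas you invoke the equivalent characterization via range containment plus orthogonality of the residual; both are standard and interchangeable.
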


\begin{proof}
For $X\in M$ and $A\in N_t$, Proposition~\ref{prop:E_contract_L2} gives
$\langle X-E_t(X),A\rangle_{\varphi_\rho}=0$. By density of $M$ in $L^2(M,\varphi_\rho)$ and of $N_t$ in $L^2(N_t,\varphi_\rho)$,
the orthogonality extends to all $X\in L^2(M,\varphi_\rho)$ and $A\in L^2(N_t,\varphi_\rho)$.
Idempotence of $E_t$ implies it is the orthogonal projection onto its range, which is precisely $L^2(N_t,\varphi_\rho)$.
\end{proof}

\begin{remark}[Tower property as projection consistency]\label{rem:tower_L2}
If $0\le s\le t\le T$, the tower property $E_s\circ E_t=E_s$ means that the projections satisfy
$E_s E_t = E_s$ on $L^2(M,\varphi_\rho)$, i.e.\ the family $(L^2(N_t,\varphi_\rho))_{t\in[0,T]}$ forms a nested system of
closed subspaces and $E_t$ is the orthogonal projection onto the corresponding subspace.
\end{remark}

\subsection{Affiliated observables and truncation-based extension}

\begin{definition}[$N_t$-affiliated observables]\label{def:affiliated_again}
A self-adjoint operator $X$ on $\mathcal H$ is \emph{affiliated} with $N_t$ if its spectral projections lie in $N_t$.
Equivalently (since $N_t$ is abelian), in a commutative realization it acts as multiplication by a real measurable function.
\end{definition}

\begin{definition}[Truncations and bounded functional calculus]\label{def:trunc_again}
For $n\in\mathbb N$, let $f_n:\mathbb R\to\mathbb R$ be the truncation
\[
f_n(\lambda)=\max(-n,\min(\lambda,n)).
\]
For self-adjoint $X$, define $f_n(X)$ by Borel functional calculus; then $f_n(X)\in B(\mathcal H)$ and $\|f_n(X)\|\le n$.
\end{definition}

\begin{definition}[$\varphi_\rho$-integrable affiliated observables]\label{def:L1_integrable_affiliated}
Let $X$ be self-adjoint and affiliated with $N_t$.
We say that $X$ is \emph{$\varphi_\rho$-integrable} if
\[
\sup_{n\in\mathbb N}\ \varphi_\rho\!\big(|f_n(X)|\big)<\infty.
\]
In that case we define
\[
\varphi_\rho(|X|):=\sup_{n\in\mathbb N}\ \varphi_\rho\!\big(|f_n(X)|\big)\in[0,\infty).
\]
\end{definition}

\begin{remark}\label{rem:L1_affiliated_commutative}
When $N_t\simeq L^\infty(\Sigma_t)$ and $X$ is multiplication by a real measurable function $\xi$,
Definition~\ref{def:L1_integrable_affiliated} reduces to $\xi\in L^1(\Sigma_t,\varphi_\rho)$ and
$\varphi_\rho(|X|)=\mathbb E_{\varphi_\rho}[|\xi|]$.
\end{remark}

\begin{proposition}[Canonical extension of $E_t$ to $\varphi_\rho$-integrable affiliated observables]
\label{prop:E_extension_affiliated}
Let $X$ be self-adjoint, affiliated with $N_t$, and $\varphi_\rho$-integrable in the sense of
Definition~\ref{def:L1_integrable_affiliated}. Then the sequence $(E_s(f_n(X)))_{n\in\mathbb N}$ is Cauchy in
$L^1(N_s,\varphi_\rho)$ for each $s\le t$, hence converges in $L^1(N_s,\varphi_\rho)$ to a unique limit denoted
$E_s(X)$. Moreover:
\begin{enumerate}[label=(\roman*), itemsep=2pt, topsep=4pt]
\item $E_s(X)$ is self-adjoint and affiliated with $N_s$;
\item $E_s$ is positive and $L^1$-contractive on this class:
\[
\varphi_\rho\!\big(|E_s(X)|\big)\le \varphi_\rho(|X|);
\]
\item the extension is consistent with truncations:
\[
E_s(f_n(X))=f_n(E_s(X)) \quad\text{in }L^1(N_s,\varphi_\rho)\ \text{for all }n.
\]
\end{enumerate}
\end{proposition}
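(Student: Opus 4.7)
The plan is to reduce to a classical $L^1$ statement via the commutative realization of $N_t$. First I would invoke Proposition~\ref{prop:abelian_sigma} to identify $N_t\simeq L^\infty(\Omega,\Sigma_t,\mu)$ and $N_s\simeq L^\infty(\Omega,\Sigma_s,\mu)$ for $s\le t$, so that $X$ corresponds to multiplication by a real measurable function $\xi$. Under this identification, Definition~\ref{def:L1_integrable_affiliated} together with monotone convergence gives $\xi\in L^1(\varphi_\rho)$, and the restriction $E_s|_{N_t}$ is a normal positive $\varphi_\rho$-preserving $N_s$-bimodule projection onto $N_s$, hence coincides with the classical conditional expectation $\mathbb{E}[\cdot\mid\Sigma_s]$, which extends to an $L^1$-contraction on $L^1(\Sigma_t,\varphi_\rho)$.

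For the Cauchy claim, the key tool is the pointwise Jensen-type inequality $|E_s(A)|\le E_s(|A|)$ valid for self-adjoint $A\in M$, which follows from $\pm A\le |A|$ and the positivity/linearity of $E_s$. Combined with $\varphi_\rho$-preservation this gives
\[
\varphi_\rho\!\big(|E_s(f_n(X))-E_s(f_m(X))|\big)\le \varphi_\rho\!\big(|f_n(X)-f_m(X)|\big),
\]
and the right-hand side tends to $0$ as $m,n\to\infty$ by dominated convergence applied to $|f_n(\xi)-f_m(\xi)|\le 2|\xi|\,\mathbf 1_{\{|\xi|>\min(m,n)\}}$ with dominating function $|\xi|\in L^1(\varphi_\rho)$. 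Denote the limit by $E_s(X)\in L^1(N_s,\varphi_\rho)$. For (i), self-adjointness of each approximant passes to the $L^1$ limit, and in the commutative realization the limit is a real measurable function on $\Sigma_s$, hence represents an operator affiliated with $N_s$. For (ii), $\varphi_\rho(|E_s(f_n(X))|)\le\varphi_\rho(|f_n(X)|)\le\varphi_\rho(|X|)$ follows from the same Jensen-type inequality together with $|f_n(X)|\le|X|$; passing to the limit by $L^1$-continuity of $\varphi_\rho\circ|\cdot|$ yields the stated contraction.

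The main obstacle is (iii). The literal identity $E_s(f_n(X))=f_n(E_s(X))$ is \emph{not} valid for a classical conditional expectation between nested $\sigma$-algebras: with $\xi$ taking values $0$ and $10$ each with probability $\tfrac12$ and $\Sigma_s$ trivial, one has $f_3(E_s(\xi))=f_3(5)=3$ while $E_s(f_3(\xi))=\tfrac12(0+3)=\tfrac32$. Hence (iii) cannot be proved as an equality for each fixed $n$; the natural reading consistent with the truncation-based definition is the asymptotic statement that both $E_s(f_n(X))\to E_s(X)$ and $f_n(E_s(X))\to E_s(X)$ in $L^1(N_s,\varphi_\rho)$ as $n\to\infty$. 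The first convergence is the defining property of $E_s(X)$; the second follows from dominated convergence applied to the measurable representative of $E_s(X)$, using the pointwise bound $|f_n(E_s(X))|\le|E_s(X)|\in L^1(\varphi_\rho)$ secured by (ii). I would present (iii) in this limiting form, which is precisely the compatibility required by the truncation-stable martingale framework of Definition~\ref{def:qmartingale}; a literal per-$n$ equality would require additional hypotheses such as an a.e.\ bound $|E_s(X)|\le n$.
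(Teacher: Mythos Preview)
Your argument for the Cauchy property and for (i)--(ii) is essentially the paper's own: both reduce to $L^1$-contractivity of $E_s$ (the paper states this as ``$E_s$ extends to a contraction on $L^1(M,\varphi_\rho)$''; you derive it from the pointwise bound $|E_s(A)|\le E_s(|A|)$ plus $\varphi_\rho$-preservation, which is the same thing made explicit). The treatment of the tail term $\varphi_\rho(|f_m(X)-f_n(X)|)\to 0$ is likewise the same---the paper invokes monotone convergence, you invoke dominated convergence; either works in the commutative realization.

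On (iii) you are right, and the paper is wrong. The identity $E_s(f_n(X))=f_n(E_s(X))$ for each fixed $n$ is false in general, and your counterexample with $\xi\in\{0,10\}$, $\Sigma_s$ trivial, $n=3$ is valid: $f_3(\mathbb E[\xi])=3\neq \tfrac32=\mathbb E[f_3(\xi)]$. The paper's entire proof of (iii) is the clause ``since $f_n$ is bounded and $E_s$ is $N_s$-bimodular with $N_s$ abelian, truncation consistency holds by functional calculus in the commutative realization of $N_s$,'' which establishes nothing: $f_n(X)$ is not of the form $aXb$ with $a,b\in N_s$, so bimodularity is irrelevant, and a nonlinear truncation does not commute with conditional averaging. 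Your asymptotic reformulation (both $E_s(f_n(X))$ and $f_n(E_s(X))$ converge to $E_s(X)$ in $L^1$ as $n\to\infty$) is the correct salvage and is all that the construction of $E_s(X)$ actually supports. Note also that the paper subsequently invokes the literal per-$n$ form of (iii) in the converse direction of Proposition~\ref{prop:martingale_wellposed}, so that argument inherits the same defect.
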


\begin{proof}
Since $E_s$ is positive, normal, and $\varphi_\rho$-preserving, it extends to a contraction on $L^1(M,\varphi_\rho)$.
For $m\ge n$,
\[
\|E_s(f_m(X))-E_s(f_n(X))\|_{L^1(\varphi_\rho)}
\le \|f_m(X)-f_n(X)\|_{L^1(\varphi_\rho)}
=\varphi_\rho\!\big(|f_m(X)-f_n(X)|\big).
\]
The right-hand side tends to $0$ as $m,n\to\infty$ by monotone convergence applied to truncations and
$\varphi_\rho$-integrability; hence $(E_s(f_n(X)))$ is Cauchy in $L^1(N_s,\varphi_\rho)$ and converges to some
$Y\in L^1(N_s,\varphi_\rho)$. Define $E_s(X):=Y$.

Self-adjointness follows from $f_n(X)^\ast=f_n(X)$ and $E_s$ being $\ast$-preserving on self-adjoint elements.
Positivity and $L^1$-contractivity follow from positivity and contractivity of $E_s$ on $L^1$.
Finally, since $f_n$ is bounded and $E_s$ is $N_s$-bimodular with $N_s$ abelian, truncation consistency holds by functional calculus
in the commutative realization of $N_s$.
\end{proof}

\subsection{Truncation martingales and well-posedness}

\begin{definition}[Truncation-stable $(N_t,E_t)$-martingales]\label{def:qmartingale_again}
Let $(X_t)_{t\in[0,T]}$ be self-adjoint operators with $X_t$ affiliated with $N_t$.
We say that $(X_t)$ is a truncation-stable $(N_t,E_t)$-martingale if for all $0\le s\le t\le T$ and all $n\in\mathbb N$,
\[
E_s\!\big(f_n(X_t)\big)=f_n(X_s).
\]
\end{definition}

\begin{proposition}[Well-posedness under $\varphi_\rho$-integrability]\label{prop:martingale_wellposed}
Assume each $X_t$ is $\varphi_\rho$-integrable in the sense of Definition~\ref{def:L1_integrable_affiliated}.
Then the truncation-stable martingale condition of Definition~\ref{def:qmartingale_again} is equivalent to
\[
E_s(X_t)=X_s\quad\text{in }L^1(N_s,\varphi_\rho),\qquad 0\le s\le t\le T,
\]
where $E_s(X_t)$ denotes the truncation-based extension from Proposition~\ref{prop:E_extension_affiliated}.
\end{proposition}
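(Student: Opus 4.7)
The plan is to establish the two implications separately, in each case reducing the martingale identity to the defining properties of the extension $E_s(\cdot)$ supplied by Proposition~\ref{prop:E_extension_affiliated}.

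First I would prove the forward direction. Assume $E_s(f_n(X_t))=f_n(X_s)$ for every $n\in\mathbb N$. By construction of the canonical extension, $E_s(f_n(X_t))\to E_s(X_t)$ in $L^1(N_s,\varphi_\rho)$ as $n\to\infty$. On the other hand, in the commutative realization of $N_s$, $f_n(X_s)$ corresponds to the pointwise truncation of the measurable function representing $X_s$, which converges pointwise to that function and is dominated by its absolute value; dominated convergence under the integrability hypothesis $\varphi_\rho(|X_s|)<\infty$ then gives $f_n(X_s)\to X_s$ in $L^1(N_s,\varphi_\rho)$. Uniqueness of $L^1$-limits forces $E_s(X_t)=X_s$.

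For the reverse direction, assume $E_s(X_t)=X_s$ in $L^1(N_s,\varphi_\rho)$. By truncation consistency, Proposition~\ref{prop:E_extension_affiliated}(iii), $E_s(f_n(X_t))=f_n(E_s(X_t))$ in $L^1(N_s,\varphi_\rho)$. Since $E_s(X_t)$ and $X_s$ are both self-adjoint and affiliated with the abelian algebra $N_s$, their equality in $L^1$ translates in the commutative realization to a.e.\ equality of the representing measurable functions; applying the bounded Borel map $f_n$ then yields $f_n(E_s(X_t))=f_n(X_s)$ as bounded operators in $N_s$. Chaining the identities gives $E_s(f_n(X_t))=f_n(X_s)$ in $L^1$. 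Faithfulness of $\varphi_\rho$ ensures that the inclusion $N_s\hookrightarrow L^1(N_s,\varphi_\rho)$ is injective on bounded elements, so this lifts to operator equality in $N_s$, which is Definition~\ref{def:qmartingale_again}.

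The principal obstacle is managing the two levels of convergence: the extension $E_s(X_t)$ is defined through an $L^1$-limit over truncations, while Definition~\ref{def:qmartingale_again} is formulated as an \emph{exact} operator identity for each fixed $n$. Upgrading $L^1$-equalities of bounded elements to operator-level equalities, and commuting the $L^1$-limit with the Borel functional calculus $f_n$, are both handled uniformly by abelianity of $N_s$ (so Borel functional calculus is pointwise function composition) and faithfulness of $\varphi_\rho$ (so no null-space collapse occurs on $N_s$). Both ingredients are in force under the standing hypotheses, which is ultimately what makes the truncation-based extension mechanism compatible with the pointwise martingale identity.
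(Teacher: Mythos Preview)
Your proposal is correct and follows essentially the same approach as the paper's proof: the forward direction passes to the $L^1$-limit in $n$ via Proposition~\ref{prop:E_extension_affiliated}, and the converse applies $f_n$ together with the truncation consistency (iii). Your version supplies more of the underlying justification (dominated convergence in the commutative realization, faithfulness to lift $L^1$-equality of bounded elements to operator equality) that the paper's two-line proof leaves implicit, but the structure and key ingredients are the same.
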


\begin{proof}
If $E_s(f_n(X_t))=f_n(X_s)$ for all $n$, take the $L^1$-limit as $n\to\infty$ using Proposition~\ref{prop:E_extension_affiliated}
to obtain $E_s(X_t)=X_s$ in $L^1$.
Conversely, if $E_s(X_t)=X_s$ in $L^1$, applying $f_n$ and using truncation consistency in
Proposition~\ref{prop:E_extension_affiliated}(iii) yields $E_s(f_n(X_t))=f_n(X_s)$.
\end{proof}

\subsection{Prediction operators vs.\ pricing operators}\label{subsec:prediction_vs_pricing}

\begin{definition}[Prediction operator induced by $\varphi_\rho$]\label{def:prediction_operator}
For bounded $X\in M_T$ define
\[
\Pi_t^{\rho}(X):=B_t^{1/2}\,E_t\!\big(B_T^{-1/2}XB_T^{-1/2}\big)\,B_t^{1/2}\in N_t.
\]
\end{definition}

\begin{remark}[Interpretation and separation of roles]\label{rem:prediction_not_pricing}
The map $\Pi_t^{\rho}$ is a \emph{projection-induced prediction} (best approximation) relative to the reference state
$\varphi_\rho$ and the information algebra $N_t$. It is \emph{not} a pricing operator unless the reference state
coincides with a pricing state (an exceptional case). The genuine valuation operator is $\Pi_t$ from
Definition~\ref{def:pricing_operator}, built from the pricing state $\varphi^\star$ and its valuation expectations $E_t^\star$
(Assumption~\ref{as:RN}), whose existence is proved in Section~\ref{sec:duality}.
\end{remark}

\begin{proposition}[Structural properties of $\Pi_t^\rho$]\label{prop:prediction_properties}
For each $t$, $\Pi_t^\rho:M_T\to N_t$ is normal, completely positive, unital, and $N_t$-bimodular.
Moreover, it is time-consistent:
for $0\le s\le t\le T$ and bounded $X\in M_T$,
\[
\Pi_s^{\rho}(X)=B_s^{1/2}\,E_s\!\Big(B_t^{-1/2}\Pi_t^{\rho}(X)B_t^{-1/2}\Big)\,B_s^{1/2}.
\]
\end{proposition}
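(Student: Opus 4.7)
The plan is to recycle the proof of Proposition~\ref{prop:pricing_properties} almost verbatim, since $\Pi_t^\rho$ has exactly the same algebraic shape as $\Pi_t$: only the conditional expectation appearing in the middle has changed from $E_t^\star$ (associated with a putative pricing state) to $E_t$ (associated with the reference state $\varphi_\rho$ from Assumption~\ref{as:condexp_exist}). Crucially, none of the four structural assertions — normality, complete positivity, $N_t$-bimodularity, unitality in num\'eraire coordinates, and time-consistency — depends on any traded-asset martingale property; each of them uses only that $E_t$ is a normal, completely positive, $N_t$-bimodular, state-preserving projection satisfying the tower identity.

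First I would record the factorization
\[
\Pi_t^\rho(X) = B_t^{1/2}\,E_t\!\big(B_T^{-1/2} X B_T^{-1/2}\big)\,B_t^{1/2},
\]
and read off the properties factor by factor. Normality follows because $E_t$ is normal by hypothesis and left/right multiplication by the bounded operators $B_t^{1/2}$ and $B_T^{-1/2}$ is weak-$\ast$ continuous. Complete positivity follows because $E_t$ is completely positive and each conjugation $Y\mapsto A Y A$ with self-adjoint bounded $A$ is completely positive; composition preserves complete positivity. For $N_t$-bimodularity, Assumption~\ref{as:prices_numeraire} gives $B_t\in N_t$ positive and boundedly invertible, so by abelianity of $N_t$ and Borel functional calculus $B_t^{\pm 1/2}\in N_t$; hence for $a,b\in N_t$ the outer conjugation commutes with left/right multiplication by $a,b$, and the $N_t$-bimodularity of $E_t$ pulls $a,b$ through. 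Num\'eraire-normalization (``unitality'' in the discounted variables) is the direct computation
\[
\widetilde\Pi_t^\rho(B_T) = B_t^{-1/2}\Pi_t^\rho(B_T)B_t^{-1/2} = E_t(\mathbf 1) = \mathbf 1.
\]

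Time-consistency follows the same template as the last paragraph of the proof of Proposition~\ref{prop:pricing_properties}: writing $\bar X := B_T^{-1/2} X B_T^{-1/2}$, one has $B_t^{-1/2}\Pi_t^\rho(X)B_t^{-1/2} = E_t(\bar X)\in N_t$, so applying $E_s$ and using the tower identity $E_s\circ E_t = E_s$ of Assumption~\ref{as:condexp_exist} gives
\[
B_s^{-1/2}\Pi_s^\rho(X)B_s^{-1/2} = E_s(\bar X) = E_s\!\big(E_t(\bar X)\big) = E_s\!\Big(B_t^{-1/2}\Pi_t^\rho(X)B_t^{-1/2}\Big),
\]
and conjugating by $B_s^{1/2}$ on both sides yields the stated recursion.

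I do not anticipate a genuine obstacle: the whole argument is a transcription of the proof of Proposition~\ref{prop:pricing_properties} under the single substitution $E_t^\star \mapsto E_t$. The one point requiring a word of care is the interpretation of ``unital'' — consistently with the remarks following Proposition~\ref{prop:pricing_properties}, this should be read as unitality of the discounted operator $\widetilde\Pi_t^\rho$ on the num\'eraire payoff $B_T$, rather than as the literal identity $\Pi_t^\rho(\mathbf 1)=\mathbf 1$ (the latter would require $E_t(B_T^{-1}) = B_t^{-1}$, which is an additional martingale-type condition not contained in Assumption~\ref{as:condexp_exist}).
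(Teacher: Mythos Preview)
Your proposal is correct and matches the paper's approach exactly: the paper's own proof is the single sentence ``The proof is identical to that of Proposition~\ref{prop:pricing_properties}, with $E_t^\star$ replaced by $E_t$,'' which is precisely the transcription you carry out. Your closing caveat about reading ``unital'' in the num\'eraire-discounted sense is also the correct reading, consistent with the normalization clause in Proposition~\ref{prop:pricing_properties}.
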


\begin{proof}
The proof is identical to that of Proposition~\ref{prop:pricing_properties}, with $E_t^\star$ replaced by $E_t$.
\end{proof}

\section{Duality, No-Arbitrage, and Pricing States}\label{sec:duality}

\noindent
This chapter clarifies the logical interface between a no-arbitrage postulate and the selection of a
distinguished \emph{pricing state} (risk-neutral state) $\varphi^\star\in M_*$ (equivalently, a density-operator
representative $\rho^\star$ in a concrete trace-class model).
We make explicit a separation of roles:

\begin{itemize}[leftmargin=*]
\item \emph{Existence of the pricing state.} Under a suitable no-arbitrage/closedness hypothesis
(e.g.\ NAFL$_\sigma$ in the dual pair $(M,M_*)$), one can derive the existence of a normal state
$\varphi^\star$ by a separation argument (proved in this chapter).
\item \emph{Existence of information projections.} The dynamic pricing construction requires a compatible
family of $\varphi^\star$-preserving conditional expectations $\{E_t^\star:M\to N_t\}_{t\in[0,T]}$.
Rather than postulating such maps directly, we treat their existence as an \emph{additional compatibility assumption}: once a pricing state $\varphi^\star$ has been selected (by no-arbitrage/closedness), we may further require that each $N_t$ be globally invariant under the modular automorphism group of $\varphi^\star$. This modular compatibility is \emph{not} implied by no-arbitrage; it is a structural condition that allows us to invoke Takesaki's theorem. By Takesaki's theorem,
this yields existence and uniqueness of the $\varphi^\star$-preserving conditional expectations, and also implies the
tower property for an increasing family $(N_t)$.
\end{itemize}

Once such a pair $(\varphi^\star,\{E_t^\star\})$ is fixed, the dynamic pricing operator of Chapter~\ref{sec:information_and_C.E}
applies verbatim.

\subsection{Standing assumptions: information flow and (optional) modular compatibility}\label{sec:info_modular}

\begin{axiom}[Information flow]\label{ax:info_flow}
Fix a von Neumann algebra $M$ and an increasing family of von Neumann subalgebras $(N_t)_{t\in[0,T]}$.
We assume throughout that each $N_t$ represents the information available at time $t$.
In the commutative-information regime of Chapters~\ref{section:MF_of_QPT}-\ref{sec:information_and_C.E},
we impose:
\begin{enumerate}[leftmargin=*]
\item (\textbf{Classical information}) each $N_t$ is \emph{abelian}.
\item (\textbf{Filtration}) $N_s\subseteq N_t$ for $0\le s\le t\le T$.
\end{enumerate}
\end{axiom}

\begin{axiom}[Modular compatibility (additional assumption)]\label{ax:modular_compatibility}
Let $\psi$ be a faithful normal state on $M$.
We say that the filtration $(N_t)$ is \emph{modularly compatible with $\psi$} if for every $t\in[0,T]$,
$N_t$ is globally invariant under the modular automorphism group $\{\sigma_s^{\psi}\}_{s\in\mathbb{R}}$, i.e.
\[
\sigma_s^{\psi}(N_t)=N_t,\qquad \forall s\in\mathbb{R}.
\]
In applications we will later take $\psi=\varphi^\star$, where $\varphi^\star$ is the pricing state selected from
no-arbitrage/closedness (Section~\ref{sec:NAFL}).
\end{axiom}

\begin{remark}[Faithfulness and reduction]\label{rem:faithful_reduction}
Takesaki's theorem is formulated for faithful normal states.
If a chosen state $\psi$ is not faithful on all of $M$, let $s(\psi)\in M$ be its support projection.
Then $\psi$ is faithful on the reduced algebra $M^\psi:=s(\psi)Ms(\psi)$, and all modular and
conditional-expectation constructions may be carried out on $M^\psi$ with reduced subalgebras
$N_t^\psi:=s(\psi)N_t s(\psi)$.
For notational simplicity we will often write $(M,N_t,\psi)$ with the understanding that we work on this faithful
reduction when needed.
\end{remark}

\subsection{Takesaki criterion and induced tower property}\label{sec:takesaki_tower}

\begin{theorem}[Takesaki criterion for $\psi$-preserving conditional expectations]\label{thm:takesaki}
Let $N\subseteq M$ be a von Neumann subalgebra and let $\psi$ be a faithful normal state on $M$.
There exists a unique normal $\psi$-preserving conditional expectation $E^\psi:M\to N$
if and only if $N$ is globally invariant under the modular automorphism group
$\{\sigma_s^{\psi}\}_{s\in\mathbb{R}}$, i.e.\ $\sigma_s^{\psi}(N)=N$ for all $s\in\mathbb{R}$.
\end{theorem}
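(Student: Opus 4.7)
The plan is to run the argument in Tomita-Takesaki standard form. Represent $M$ on $L^2(M,\psi) = \overline{M\Omega_\psi}$ with cyclic and separating GNS vector $\Omega_\psi$, and let $\Delta$, $J$ denote the associated modular operator and conjugation, so that $\sigma_s^\psi(x) = \Delta^{is}x\Delta^{-is}$ and $\Delta^{is}\Omega_\psi = \Omega_\psi$. The central auxiliary is the Jones projection $e_N$ from $L^2(M,\psi)$ onto $\overline{N\Omega_\psi}$; by the $L^2$-identification in Theorem~\ref{thm:E_orth_proj} (applied to the subalgebra $N$), any normal $\psi$-preserving conditional expectation $E^\psi:M\to N$ must be implemented by $e_N$ through $E^\psi(x)\Omega_\psi = e_N x\Omega_\psi$, and conversely the existence of such an $E^\psi$ is equivalent to the containment $e_N M e_N \subseteq N e_N$ (equivalently, to the invariance of $e_N$ under the modular structure).

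For sufficiency ($\Leftarrow$), assume $\sigma_s^\psi(N)=N$ for all $s\in\mathbb{R}$. Since $\Delta^{is}\Omega_\psi = \Omega_\psi$, the invariance gives $\Delta^{is}(N\Omega_\psi) = \sigma_s^\psi(N)\Omega_\psi = N\Omega_\psi$; taking closures yields $[e_N,\Delta^{is}] = 0$. A short modular-theoretic computation (using $JNJ\subseteq N'$, itself a standard consequence of modular invariance of $N$) then gives $Je_N J = e_N$, so $e_N$ commutes with the entire modular structure. From this one deduces $e_N M e_N \subseteq N e_N$ by verifying that $e_N x\Omega_\psi$, a priori only in $\overline{N\Omega_\psi}$, is in fact of the form $y\Omega_\psi$ for a unique $y\in N$; defining $E^\psi(x):=y$ yields a map that is linear, $N$-bimodular, idempotent on $N$, $\psi$-preserving, contractive, and normal (the last by weak-operator continuity of $x\mapsto e_N x\Omega_\psi$).

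For necessity ($\Rightarrow$), assume $E^\psi$ exists. The target is the intertwining $E^\psi\circ\sigma_s^\psi = \sigma_s^\psi\circ E^\psi$, from which $\sigma_s^\psi(N) = \sigma_s^\psi(E^\psi(M)) = E^\psi(\sigma_s^\psi(M)) = N$ is immediate. The intertwining itself is obtained by comparing the modular group $\sigma^\psi$ of $\psi$ on $M$ with the modular group $\sigma^{\psi|_N}$ of $\psi|_N$ on $N$ (the latter existing since $\psi|_N$ is faithful normal): pushing the KMS condition of $\psi$ on $M$ through $E^\psi$ via $\psi$-preservation and $N$-bimodularity produces a KMS-type identity on $N$ realized by the restricted flow $\sigma^\psi|_N$, and uniqueness of the modular group of $\psi|_N$ then forces $\sigma^\psi|_N = \sigma^{\psi|_N}$, which upgrades to the full commutation on $\sigma^\psi$-analytic elements and, by density, to all of $M$. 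Uniqueness of $E^\psi$ is then immediate: two normal $\psi$-preserving expectations $E_1,E_2$ satisfy $\psi(n(E_1(x)-E_2(x))) = 0$ for every $n\in N$, so $E_1=E_2$ by faithfulness of $\psi|_N$. The main obstacle is the necessity direction: any shortcut via direct manipulation of $\sigma_s^\psi(e_N)$ is circular (the invariance $\sigma_s^\psi(e_N) = e_N$ is essentially equivalent to what must be proved), so the argument must genuinely compare two modular flows via KMS uniqueness on a dense $\ast$-subalgebra of analytic elements — this is the technical core of Takesaki's original 1972 result and the step most sensitive to careful analytic-continuation bookkeeping.
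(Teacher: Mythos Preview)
The paper does not prove this theorem. It is stated without proof as a classical structural result, attributed to Takesaki's 1972 paper (reference \cite{Takesaki1972} in the bibliography), and then invoked as a black box in the subsequent Corollaries~\ref{cor:existence_Et} and~\ref{cor:tower}. So there is no paper-internal argument to compare your proposal against.

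For what it is worth, your sketch is a faithful outline of the standard proof: the GNS/standard-form setup, the Jones projection $e_N$ onto $\overline{N\Omega_\psi}$, the equivalence between modular invariance of $N$ and $[e_N,\Delta^{is}]=0$, and the KMS-uniqueness argument for necessity are exactly the ingredients in Takesaki's original proof and its textbook treatments. Your flagging of the necessity direction as the delicate step (requiring a genuine comparison of two modular flows via the KMS condition rather than a circular manipulation of $e_N$) is accurate. If you were to flesh this out, the main places needing care are (i) the passage from $e_N x\Omega_\psi\in\overline{N\Omega_\psi}$ to $e_N x\Omega_\psi = y\Omega_\psi$ with $y\in N$ bounded (this uses that $e_N$ commutes with $J$ and hence with the right $N'$-action, not just density), and (ii) the analytic-continuation step in the KMS comparison, which is where most expository accounts spend their effort.
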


\begin{corollary}[Existence/uniqueness of $\{E_t^\psi\}$ under modular compatibility]\label{cor:existence_Et}
Assume Assumption~\ref{ax:info_flow}. Let $\psi$ be a faithful normal state on $M$ such that
Assumption~\ref{ax:modular_compatibility} holds for $(N_t,\psi)$ (or work on the faithful reduction of
Remark~\ref{rem:faithful_reduction}). Then for each $t\in[0,T]$ there exists a unique normal
$\psi$-preserving conditional expectation
\[
E_t^\psi: M\to N_t.
\]
Moreover, $E_t^\psi$ is unital, completely positive, idempotent, and satisfies the $N_t$--bimodule property.
\end{corollary}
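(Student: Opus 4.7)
The plan is to invoke Takesaki's criterion (Theorem~\ref{thm:takesaki}) pointwise in $t$, and then read off the structural properties of $E_t^\psi$ from the standard characterization of normal conditional expectations. I will carry this out in three short steps.

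First, I would fix $t\in[0,T]$ and verify the hypothesis of Theorem~\ref{thm:takesaki} for the subalgebra $N_t\subseteq M$: by Assumption~\ref{ax:modular_compatibility}, $N_t$ is globally invariant under $\{\sigma_s^\psi\}_{s\in\mathbb R}$. If $\psi$ is faithful on $M$, the conclusion of Takesaki's theorem directly yields existence and uniqueness of a normal $\psi$-preserving conditional expectation $E_t^\psi\colon M\to N_t$. If $\psi$ is not faithful, I would first pass to the faithful reduction $(M^\psi, N_t^\psi)$ of Remark~\ref{rem:faithful_reduction}; this reduction is well-behaved because the support projection $s(\psi)$ is fixed by the modular flow, so $\sigma_s^\psi$ restricts to an automorphism group of $M^\psi$ and preserves $N_t^\psi$, and Takesaki's criterion then applies on the faithful reduction.

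Second, I would verify the structural properties listed in the corollary. Unitality $E_t^\psi(\mathbf 1)=\mathbf 1$ and idempotence are built into the definition of a conditional expectation onto $N_t$, since $\mathbf 1\in N_t$ and $E_t^\psi$ fixes $N_t$ pointwise. The $N_t$-bimodularity and complete positivity then follow either from the explicit construction embedded in the proof of Takesaki's theorem or, more conceptually, from Tomiyama's theorem: every norm-one projection from a $C^\ast$-algebra onto a $C^\ast$-subalgebra is automatically a completely positive bimodule map, and Takesaki's $E_t^\psi$ satisfies the norm-one projection condition.

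The hard part is entirely encapsulated in Theorem~\ref{thm:takesaki}, which is invoked as a classical external fact; once it is in hand, the remainder is unpacking definitions and citing Tomiyama. As a side remark, not strictly part of the corollary statement but natural to record here, the tower property $E_s^\psi\circ E_t^\psi=E_s^\psi$ for $0\le s\le t\le T$ comes for free by the uniqueness clause: since $N_s\subseteq N_t$, the composition $E_s^\psi\circ E_t^\psi$ is again a normal $\psi$-preserving conditional expectation from $M$ onto $N_s$, and uniqueness in Theorem~\ref{thm:takesaki} forces it to coincide with $E_s^\psi$.
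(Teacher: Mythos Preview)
Your proposal is correct and follows essentially the same route as the paper: invoke Theorem~\ref{thm:takesaki} pointwise in $t$ using the modular invariance hypothesis, handle the non-faithful case via the reduction of Remark~\ref{rem:faithful_reduction}, and then declare the structural properties standard (the paper does not even name Tomiyama explicitly). Your side remark on the tower property is exactly the content of the subsequent Corollary~\ref{cor:tower}, proved there by the same uniqueness argument you sketch.
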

\begin{proof}
Assume first that $\psi$ is faithful. By Assumption~\ref{ax:modular_compatibility},
$N_t$ is globally invariant under $\{\sigma_s^\psi\}_{s\in\mathbb{R}}$. Hence Theorem~\ref{thm:takesaki}
yields a unique normal $\psi$-preserving conditional expectation $E_t^\psi:M\to N_t$.
The stated properties are standard for normal conditional expectations.

If $\psi$ is not faithful, apply the same argument on the faithful reduction of Remark~\ref{rem:faithful_reduction},
and transport the resulting conditional expectations back to $M$.
\end{proof}

\begin{corollary}[Tower property]\label{cor:tower}
Under the assumptions of Corollary~\ref{cor:existence_Et}, the family $(E_t^\psi)_{t\in[0,T]}$ satisfies the tower (consistency) property:
for $0\le s\le t\le T$,
\[
E_s^\psi\circ E_t^\psi = E_s^\psi.
\]
\end{corollary}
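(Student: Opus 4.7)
The plan is to apply the uniqueness half of Takesaki's theorem, as delivered by Corollary~\ref{cor:existence_Et}. Set
\[
F := E_s^\psi \circ E_t^\psi : M \longrightarrow M.
\]
I will verify that $F$ is a normal, $\psi$-preserving conditional expectation from $M$ onto $N_s$; by uniqueness, this forces $F = E_s^\psi$, which is exactly the asserted tower identity.

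First I would dispatch the ``soft'' algebraic and topological properties. Normality, complete positivity, and unitality of $F$ follow from the corresponding properties of $E_s^\psi$ and $E_t^\psi$ established in Corollary~\ref{cor:existence_Et}, since each is preserved under composition. Moreover, for every $X\in M$, $E_t^\psi(X)\in N_t$ and hence $F(X)=E_s^\psi(E_t^\psi(X))\in N_s$, so $F$ takes values in $N_s$. The $\psi$-preservation is immediate:
\[
\psi\bigl(F(X)\bigr)=\psi\bigl(E_s^\psi(E_t^\psi(X))\bigr)=\psi\bigl(E_t^\psi(X)\bigr)=\psi(X).
\]

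The decisive step is to show that $F$ acts as the identity on $N_s$, so that $F$ is genuinely a conditional expectation onto $N_s$ (equivalently, a projection of norm one). Here the filtration hypothesis $N_s\subseteq N_t$ from Assumption~\ref{ax:info_flow} is essential: for $a\in N_s\subseteq N_t$, the conditional expectation $E_t^\psi$ fixes $a$ (since $a$ is already in its range), and then $E_s^\psi(a)=a$ as well. Hence $F|_{N_s}=\mathrm{id}_{N_s}$. Combined with the previous paragraph, $F$ is a normal $\psi$-preserving conditional expectation $M\to N_s$.

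Now I invoke the uniqueness clause of Theorem~\ref{thm:takesaki} / Corollary~\ref{cor:existence_Et}: under Assumption~\ref{ax:modular_compatibility}, there is exactly one such conditional expectation, namely $E_s^\psi$. Therefore $F=E_s^\psi$, i.e.\ $E_s^\psi\circ E_t^\psi=E_s^\psi$. In the non-faithful case one simply runs the same argument on the faithful reduction of Remark~\ref{rem:faithful_reduction} and transports back. The only genuine obstruction is the verification that $F$ maps into $N_s$ and fixes $N_s$ pointwise; both rely on $N_s\subseteq N_t$, so the proof would break without the filtration monotonicity, but given Assumption~\ref{ax:info_flow} this is immediate. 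The rest is bookkeeping of normality and positivity under composition.
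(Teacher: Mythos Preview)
Your proof is correct and follows essentially the same approach as the paper's: verify that $E_s^\psi\circ E_t^\psi$ is a normal $\psi$-preserving conditional expectation onto $N_s$ (using $N_s\subseteq N_t$ to get the identity on $N_s$), then invoke the uniqueness clause of Takesaki's theorem. Your write-up is simply a more detailed version of the paper's argument, including the explicit remark on the non-faithful reduction.
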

\begin{proof}
For $s\le t$, the composition $E_s^\psi\circ E_t^\psi$ is normal, unital, completely positive, and $\psi$-preserving.
Moreover its range is contained in $N_s$ and it acts as the identity on $N_s$ (because $N_s\subseteq N_t$ and both
$E_s^\psi$ and $E_t^\psi$ restrict to $\mathrm{id}$ on $N_s$). Hence $E_s^\psi\circ E_t^\psi$ is a $\psi$-preserving
conditional expectation onto $N_s$. By uniqueness in Theorem~\ref{thm:takesaki}, it must coincide with $E_s^\psi$.
\end{proof}

\begin{remark}[Specialization to the pricing state]\label{rem:specialize_pricing_state}
Once a pricing state $\varphi^\star$ has been selected by no-arbitrage/closedness (Section~\ref{sec:NAFL}),
we will take $\psi=\varphi^\star$ and write $E_t^\star:=E_t^{\varphi^\star}$ whenever the modular compatibility
Assumption~\ref{ax:modular_compatibility} is imposed for $(N_t,\varphi^\star)$.
\end{remark}

\subsection{Examples: modular-invariant information algebras}\label{sec:examples_modular_invariant}

\begin{remark}[Not used in the main no-arbitrage separation argument]
The constructions in this subsection provide intuition and concrete examples of modular invariance, but they are not used in the proofs of the no-arbitrage $\Rightarrow$ pricing-state results in Sections~\ref{sec:finite-sep}--\ref{sec:NAFL}.
\end{remark}


This section records explicit model classes in which the modular invariance condition in Assumptions~\ref{ax:info_flow} and~\ref{ax:modular_compatibility}
can be verified directly, hence the family $\{E_t^\star\}$ exists and is unique by Takesaki's theorem.

\begin{proposition}[Finite-dimensional matrix model]\label{prop:example_finite_dim_modular}
Let $M=M_n(\mathbb{C})$ and $\varphi^\star(X)=\Tr(\rho^\star X)$ with $\rho^\star>0$, $\Tr(\rho^\star)=1$.
Then the modular automorphism group is given by
\[
\sigma_s^{\varphi^\star}(X)=\rho^{\star\,is}X\rho^{\star\,-is}.
\]
Let $N_t\subset M$ be an abelian von Neumann subalgebra generated by a family of mutually orthogonal projections
$\{P_k^{(t)}\}_k$ with $\sum_k P_k^{(t)}=I$. If $[\rho^\star,P_k^{(t)}]=0$ for all $k$, then
$\sigma_s^{\varphi^\star}(N_t)=N_t$ for all $s\in\mathbb{R}$, hence there exists a unique normal $\varphi^\star$-preserving conditional expectation
$E_t^\star:M\to N_t$. Moreover, it is given explicitly by
\[
E_t^\star(X)=\sum_k \frac{\varphi^\star\!\big(P_k^{(t)}XP_k^{(t)}\big)}{\varphi^\star\!\big(P_k^{(t)}\big)}\,P_k^{(t)}.
\]
\end{proposition}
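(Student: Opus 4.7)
The plan breaks into four short steps: derive the modular automorphism formula, deduce modular invariance of $N_t$, invoke Theorem~\ref{thm:takesaki}, and identify the explicit formula via uniqueness.

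First, I would record the modular automorphism formula $\sigma_s^{\varphi^\star}(X) = \rho^{\star\,is}\, X\, \rho^{\star\,-is}$. In finite dimensions this is a standard Tomita--Takesaki computation: identifying $L^2(M,\varphi^\star)$ with $M_n(\mathbb{C})$ via $X \mapsto X(\rho^\star)^{1/2}$ equipped with the Hilbert--Schmidt inner product, one checks that the modular operator acts by $\Delta(X(\rho^\star)^{1/2}) = \rho^\star X (\rho^\star)^{-1}\cdot(\rho^\star)^{1/2}$, so $\sigma_s^{\varphi^\star} = \mathrm{Ad}(\Delta^{is})$ gives the stated formula. Alternatively, the KMS condition $\varphi^\star(XY) = \varphi^\star(Y\,\sigma_{-i}^{\varphi^\star}(X))$ pins down $\sigma_s^{\varphi^\star}$ uniquely, and the candidate $\mathrm{Ad}(\rho^{\star\,is})$ satisfies it immediately by cyclicity of the trace.

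Second, I would deduce modular invariance of $N_t$. Because $[\rho^\star, P_k^{(t)}] = 0$ for each $k$, Borel functional calculus on the abelian $*$-algebra generated by $\rho^\star$ gives $[\rho^{\star\,is}, P_k^{(t)}] = 0$, so $\sigma_s^{\varphi^\star}(P_k^{(t)}) = P_k^{(t)}$. Since the $P_k^{(t)}$ generate $N_t$, $\sigma_s^{\varphi^\star}(N_t) = N_t$ for all $s \in \mathbb{R}$, and Theorem~\ref{thm:takesaki} then furnishes a unique normal $\varphi^\star$-preserving conditional expectation $E_t^\star: M \to N_t$.

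Third, to identify $E_t^\star$ with the explicit candidate
\[
F(X) := \sum_k \frac{\varphi^\star(P_k^{(t)} X P_k^{(t)})}{\varphi^\star(P_k^{(t)})}\, P_k^{(t)},
\]
I would verify that $F$ is itself a normal $\varphi^\star$-preserving conditional expectation onto $N_t$ and then invoke uniqueness in Theorem~\ref{thm:takesaki}. Normality and the inclusion $F(M) \subseteq N_t$ are immediate. Mutual orthogonality $P_k^{(t)} P_j^{(t)} = \delta_{kj} P_k^{(t)}$ gives $P_k^{(t)} \bigl(\sum_j c_j P_j^{(t)}\bigr) P_k^{(t)} = c_k P_k^{(t)}$, so $F$ restricts to the identity on $N_t$ (hence is idempotent). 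For $\varphi^\star$-preservation---the only step that re-uses $[\rho^\star, P_k^{(t)}] = 0$---cyclicity of the trace yields
\[
\varphi^\star(F(X)) = \sum_k \varphi^\star(P_k^{(t)} X P_k^{(t)}) = \Tr\!\Big(\sum_k P_k^{(t)} \rho^\star P_k^{(t)} \cdot X\Big) = \Tr(\rho^\star X) = \varphi^\star(X),
\]
since $\sum_k P_k^{(t)} \rho^\star P_k^{(t)} = \rho^\star \sum_k P_k^{(t)} = \rho^\star$ under the commutation hypothesis. Uniqueness in Theorem~\ref{thm:takesaki} then forces $F = E_t^\star$. None of these steps presents a genuine obstacle: the proposition is a sanity check that Takesaki's abstract invariance criterion reduces to the transparent commutation condition $[\rho^\star, P_k^{(t)}] = 0$ in the finite-dimensional model; the only mildly delicate point is to avoid circularity by invoking the \emph{uniqueness} part of Theorem~\ref{thm:takesaki} rather than re-deriving conditional expectations from scratch.
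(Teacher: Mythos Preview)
Your proposal is correct and follows essentially the same route as the paper: derive modular invariance of $N_t$ from the commutation hypothesis $[\rho^\star,P_k^{(t)}]=0$, invoke Theorem~\ref{thm:takesaki} for existence and uniqueness, and then identify the explicit formula. The paper simply asserts that the displayed formula is ``the standard finite-dimensional $\varphi^\star$-preserving conditional expectation onto an abelian subalgebra'' without further verification, whereas you supply the verification via uniqueness---this is a welcome addition, not a different strategy.
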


\begin{proof}
If $[\rho^\star,P_k^{(t)}]=0$ for all $k$, then $\rho^{\star\,is}$ commutes with each $P_k^{(t)}$ and hence
$\sigma_s^{\varphi^\star}$ acts by conjugation preserving the diagonal algebra generated by the $P_k^{(t)}$.
Therefore $N_t$ is globally invariant under $\sigma_s^{\varphi^\star}$.
Existence and uniqueness of $E_t^\star$ follows from Theorem~\ref{thm:takesaki}.
The explicit formula is the standard finite-dimensional $\varphi^\star$-preserving conditional expectation onto an abelian subalgebra.
\end{proof}

\begin{proposition}[Classical--quantum product model]\label{prop:example_cq_modular}
Let $M=L^\infty(\Omega,\mathcal F)\,\bar\otimes\,B(\mathcal H)$ and define a normal state by
\[
\varphi^\star(f\otimes X):=\int_\Omega f(\omega)\,\Tr(\rho^\star X)\,d\mathbb P(\omega),
\]
where $\rho^\star>0$, $\Tr(\rho^\star)=1$ is fixed. Let $(\mathcal F_t)_{t\in[0,T]}$ be a classical filtration and set
\[
N_t := L^\infty(\Omega,\mathcal F_t)\,\bar\otimes\,\mathbf 1.
\]
Then $N_t$ is abelian and satisfies $\sigma_s^{\varphi^\star}(N_t)=N_t$ for all $s\in\mathbb{R}$.
Hence there exists a unique normal $\varphi^\star$-preserving conditional expectation $E_t^\star:M\to N_t$, given by
\[
E_t^\star(f\otimes X)=\mathbb E[f\mid\mathcal F_t]\otimes X.
\]
\end{proposition}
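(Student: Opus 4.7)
The plan is to verify the three claims (abelianness, modular invariance, existence/uniqueness and explicit form) in that order, using Theorem~\ref{thm:takesaki} as the main engine and the product structure of both the algebra and the state to reduce everything to classical conditioning on $(\Omega,\mathcal F_t)$.

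First, abelianness of $N_t = L^\infty(\Omega,\mathcal F_t)\bar\otimes \mathbf 1$ is immediate: $L^\infty(\Omega,\mathcal F_t)$ is a commutative von Neumann algebra, and its spatial tensor product with the commutative subalgebra $\mathbb C\mathbf 1\subseteq B(\mathcal H)$ is again commutative. The second step, computing the modular automorphism group, is where I would be most careful. The state $\varphi^\star$ factors as a tensor product $\varphi_{\mathrm{cl}}\otimes \varphi_\rho$ where $\varphi_{\mathrm{cl}}(f)=\int_\Omega f\,d\mathbb P$ is tracial on $L^\infty(\Omega,\mathcal F)$, and $\varphi_\rho(X)=\Tr(\rho^\star X)$ on $B(\mathcal H)$. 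For product faithful normal states on a spatial tensor product, the modular group factorises as $\sigma^{\varphi^\star}_s=\sigma^{\varphi_{\mathrm{cl}}}_s\otimes\sigma^{\varphi_\rho}_s$; since $\varphi_{\mathrm{cl}}$ is a trace, $\sigma^{\varphi_{\mathrm{cl}}}_s=\mathrm{id}$, while $\sigma^{\varphi_\rho}_s(X)=\rho^{\star\,is}X\rho^{\star\,-is}$. Hence for any elementary tensor $f\otimes \mathbf 1\in N_t$,
\[
\sigma_s^{\varphi^\star}(f\otimes \mathbf 1)=f\otimes \rho^{\star\,is}\mathbf 1\rho^{\star\,-is}=f\otimes\mathbf 1\in N_t,
\]
and by normality the same holds on all of $N_t$, so $\sigma_s^{\varphi^\star}(N_t)=N_t$.

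With modular invariance in hand, Theorem~\ref{thm:takesaki} applies (after the faithful-reduction remark~\ref{rem:faithful_reduction} if needed — here $\rho^\star>0$ makes $\varphi^\star$ faithful) and yields the unique normal $\varphi^\star$-preserving conditional expectation $E_t^\star:M\to N_t$. For the explicit formula, I would first observe that the map
\[
F_t(f\otimes X):=\Tr(\rho^\star X)\,\mathbb E[f\mid\mathcal F_t]\otimes \mathbf 1
\]
(extended by normality and linearity to all of $M$) lands in $N_t$, is normal, completely positive, unital, and acts as the identity on $N_t$, hence is a normal conditional expectation onto $N_t$. One checks $\varphi^\star$-preservation by a direct computation using Fubini together with the tower property of classical conditional expectation: $\varphi^\star(F_t(f\otimes X))=\int_\Omega \mathbb E[f\mid\mathcal F_t]\,d\mathbb P\cdot\Tr(\rho^\star X)=\int f\,d\mathbb P\cdot\Tr(\rho^\star X)=\varphi^\star(f\otimes X)$. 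By the uniqueness clause of Theorem~\ref{thm:takesaki}, $E_t^\star=F_t$.

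The main obstacle is the rigorous justification of the tensor factorisation of the modular group $\sigma^{\varphi^\star}_s=\sigma^{\varphi_{\mathrm{cl}}}_s\otimes\sigma^{\varphi_\rho}_s$ for product faithful normal states on $M_1\bar\otimes M_2$; this is a standard fact (identifiable via the Tomita cone or the Connes cocycle) but must be cited cleanly. Everything else — abelianness, trivial modular flow on the commutative tracial factor, and the identification of the resulting expectation with the proposed formula — is routine. (As a minor bookkeeping point, the displayed formula in the statement should be read so that its right-hand side lies in $N_t$, i.e.\ with the operator factor traced against $\rho^\star$ and tensored with $\mathbf 1$; this is the content of the computation above.)
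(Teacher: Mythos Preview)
Your approach coincides with the paper's: both compute the modular automorphism group via the tensor factorisation $\sigma_s^{\varphi^\star}(f\otimes X)=f\otimes\rho^{\star\,is}X\rho^{\star\,-is}$, observe that it fixes $g\otimes\mathbf 1$, and then invoke Theorem~\ref{thm:takesaki} for existence and uniqueness.

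Your ``minor bookkeeping point'' is in fact more than minor, and you are right to flag it. The formula $E_t^\star(f\otimes X)=\mathbb E[f\mid\mathcal F_t]\otimes X$ as written in the statement does \emph{not} land in $N_t=L^\infty(\Omega,\mathcal F_t)\bar\otimes\mathbf 1$; its range is $L^\infty(\Omega,\mathcal F_t)\bar\otimes B(\mathcal H)$. The paper's own proof describes this map as ``the tensor product of the classical conditional expectation with the identity map on $\mathcal B(\mathcal H)$'', which is a conditional expectation onto the larger (non-abelian) algebra $L^\infty(\Omega,\mathcal F_t)\bar\otimes B(\mathcal H)$, not onto the abelian $N_t$ defined in the statement. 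Your corrected expression $F_t(f\otimes X)=\Tr(\rho^\star X)\,\mathbb E[f\mid\mathcal F_t]\otimes\mathbf 1$ is the unique normal $\varphi^\star$-preserving conditional expectation onto $N_t$ as actually defined, and your verification of $\varphi^\star$-preservation via the tower property is correct. So your proof is not only aligned with the paper's method but in fact repairs an inconsistency between the statement's definition of $N_t$ and its displayed formula.
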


\begin{proof}
In this product setting the modular group acts trivially on the classical factor and by conjugation on $B(\mathcal H)$:
\[
\sigma_s^{\varphi^\star}(f\otimes X)=f\otimes \rho^{\star\,is}X\rho^{\star\,-is}.
\]
Therefore $\sigma_s^{\varphi^\star}(g\otimes \mathbf 1)=g\otimes \mathbf 1\in N_t$ for all $g\in L^\infty(\Omega,\mathcal F_t)$, proving invariance.
The stated formula for $E_t^\star$ is the tensor product of the classical conditional expectation with the identity map on $\mathcal B(\mathcal H)$,
hence is normal and $\varphi^\star$-preserving. Uniqueness again follows from Theorem~\ref{thm:takesaki}.
\end{proof}

\begin{remark}[Non-existence in general]\label{rem:nonexistence_warning}
The existence of a $\varphi^\star$-preserving conditional expectation onto a given subalgebra $N\subset M$ is not automatic.
If $N$ fails to be invariant under the modular group $\sigma^{\varphi^\star}$, then a $\varphi^\star$-preserving conditional expectation
$E^\star:M\to N$ may fail to exist. This is precisely why Assumptions~\ref{ax:info_flow} and~\ref{ax:modular_compatibility} imposes modular compatibility as a structural condition.
\end{remark}

\subsection{Standing notation and symmetric discounting}\label{sec:symdisc}

We retain the standing assumptions and notation of Chapter~\ref{sec:information_and_C.E}:
\begin{itemize}
\item $M$ is a von Neumann algebra acting on a Hilbert space $\mathcal H$.
\item $(N_t)_{t\in[0,T]}$ is an increasing family of \emph{abelian} information subalgebras $N_t\subseteq M$.
\item $(B_t)_{t\in[0,T]}$ is a strictly positive numeraire with $B_t\in N_t$ for each $t$.
\end{itemize}

\begin{definition}[Symmetric discounting]\label{def:symdisc10}
For $X\in M$ define its symmetrically discounted version
\[
\bar X := B_T^{-1/2}\,X\,B_T^{-1/2}.
\]
If $X=X^\ast$, then $\bar X=\bar X^\ast$.
\end{definition}

\begin{definition}[Order structure at maturity]\label{def:order10}
Let
\[
X_T:=M,\qquad X_T^{sa}:=M^{sa},\qquad X_T^+:=\{X\in M^{sa}: X\ge 0\}.
\]
We interpret $X_T^+$ as the cone of nonnegative terminal payoffs.
\end{definition}

\subsection{Trading cones and no-arbitrage}\label{sec:cone_NA}

We do not fix a concrete strategy space. Instead, we postulate the existence of a convex cone of
discounted terminal gains attainable from zero initial cost.

\begin{axiom}[Discounted attainable gains cone]\label{ax:gaincone}
There exists a convex cone $\mathcal C\subseteq X_T^{sa}$ such that:
\begin{enumerate}[label=\textup{(\roman*)}]
\item (Cone) if $G_1,G_2\in\mathcal C$ and $\lambda_1,\lambda_2\ge 0$, then $\lambda_1G_1+\lambda_2G_2\in\mathcal C$;
\item (Interpretation) each $G\in\mathcal C$ is a symmetrically discounted terminal gain of a self-financing
strategy initiated at zero cost.
\end{enumerate}
\end{axiom}

\begin{definition}[Arbitrage and no-arbitrage]\label{def:NA}
An \emph{arbitrage} is an element $G\in\mathcal C$ such that $G\ge 0$ and $G\neq 0$.
We say \emph{no-arbitrage (NA)} holds if
\[
\mathcal C\cap X_T^+=\{0\}.
\]
\end{definition}

\begin{remark}\label{rem:NA-closedness}
In infinite-dimensional models, NA alone typically does not guarantee the existence of a separating
\emph{normal} pricing functional. One needs a closedness condition (a ``no free lunch'' type hypothesis).
We introduce such a condition in \S\ref{sec:NAFL}.
\end{remark}

\subsection{Pricing functionals and pricing states}\label{sec:pricing-functionals}

\subsubsection{Normal positive functionals and states}

\begin{definition}[Normal positive linear functionals]\label{def:normalpos}
Let $M_{*}$ be the predual of $M$ (the space of normal linear functionals). Define
\[
M_{*}^{+} := \{\varphi\in M_{*}: \varphi(X)\ge 0\ \ \forall X\in X_T^+\}.
\]
Elements of $M_{*}^{+}$ are precisely the \emph{normal positive} linear functionals on $M$.
A \emph{normal state} is $\varphi\in M_{*}^{+}$ such that $\varphi(I)=1$.
\end{definition}

\begin{remark}[Density operator representation]\label{rem:density}
When $M=\mathcal B(\mathcal H)$ (or in any representation where the predual identifies with a trace-class space),
every normal state $\varphi$ has the form $\varphi(X)=\Tr(\rho X)$ for a unique density operator
$\rho\ge 0$ with $\Tr(\rho)=1$. In such settings we write $\varphi=\varphi_\rho$.
\end{remark}

\subsubsection{Time-0 discounted valuation induced by a state}

\begin{definition}[State-based discounted valuation]\label{def:pi0}
Let $\varphi\in M_{*}^{+}$ be a normal state. Define the scalar time-0 \emph{discounted} valuation
\[
\pi_0^\varphi(X):=\varphi(\bar X)
=\varphi(B_T^{-1/2}XB_T^{-1/2}),
\qquad X\in X_T^{sa}.
\]
\end{definition}

\begin{proposition}[Monotonicity and normalization]\label{prop:pi0-basic}
Let $\varphi$ be a normal state. Then:
\begin{enumerate}[label=\textup{(\roman*)}]
\item If $X\ge 0$ then $\pi_0^\varphi(X)\ge 0$.
\item $\pi_0^\varphi(B_T)=\varphi(I)=1$.
\end{enumerate}
\end{proposition}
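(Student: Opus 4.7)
The proposition asks for two elementary properties of the scalar valuation $\pi_0^\varphi(X)=\varphi(B_T^{-1/2}XB_T^{-1/2})$. Both follow directly from (a) the standing structural assumptions on the numéraire $B_T$ (strict positivity and bounded invertibility, Assumption~\ref{as:prices_numeraire}(ii), with $B_T\in N_T$ abelian), and (b) the defining positivity/normalization properties of a normal state. No limiting or density argument is needed: $X$ ranges over $X_T^{sa}=M^{sa}$ and the object $\bar X=B_T^{-1/2}XB_T^{-1/2}$ is already a bounded self-adjoint element of $M$.

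\textbf{Step 1 (preparation).} I would first fix the functional-calculus definition of $B_T^{\pm 1/2}$. Since $B_T\in N_T$ is positive with bounded inverse, the spectrum of $B_T$ is contained in a compact interval $[c,C]\subset(0,\infty)$; hence $B_T^{1/2}$ and $B_T^{-1/2}$ are well-defined bounded self-adjoint elements of $N_T$ via continuous functional calculus, they are mutually inverse, and they commute with $B_T$ (and with each other) because $N_T$ is abelian. This sidesteps any domain issues in the product $B_T^{-1/2}XB_T^{-1/2}$.

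\textbf{Step 2 (monotonicity, part (i)).} Assume $X\ge 0$. I would invoke the standard congruence principle: for any $A\in M$ and any positive $X\in M$, $A^\ast XA\ge 0$. Applying this with $A=B_T^{-1/2}=(B_T^{-1/2})^\ast$ gives $\bar X=B_T^{-1/2}XB_T^{-1/2}\ge 0$ in $M$. Since $\varphi\in M_*^+$ is, by Definition~\ref{def:normalpos}, positive on $X_T^+$, one concludes $\pi_0^\varphi(X)=\varphi(\bar X)\ge 0$.

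\textbf{Step 3 (normalization, part (ii)).} Using the commutativity secured in Step~1, I would compute directly
\[
\bar B_T = B_T^{-1/2}B_TB_T^{-1/2} = B_T^{-1/2}B_T^{1/2}B_T^{1/2}B_T^{-1/2} = I,
\]
so that $\pi_0^\varphi(B_T)=\varphi(I)=1$, where the last equality is the state normalization in Definition~\ref{def:normalpos}. Strictly speaking the only ``obstacle'' worth flagging is ensuring the symmetric product collapses cleanly to the identity; this is precisely where the abelianity of $N_T$ (and the fact that $B_T\in N_T$) is used, so that the factors $B_T^{\pm 1/2}$ commute past $B_T$ without any ordering subtlety. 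Both assertions are then established.
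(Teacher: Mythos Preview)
Your proof is correct and follows essentially the same approach as the paper's: positivity of the congruence $X\mapsto B_T^{-1/2}XB_T^{-1/2}$ for (i), and the direct computation $\overline{B_T}=I$ for (ii). One small over-emphasis: in Step~3 you attribute the collapse $B_T^{-1/2}B_TB_T^{-1/2}=I$ to abelianity of $N_T$, but this already follows from functional calculus alone ($B_T^{\pm1/2}$ are functions of the single self-adjoint element $B_T$ and hence commute with it and with each other), so abelianity plays no role here.
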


\begin{proof}
(i) If $X\ge 0$ then $\bar X\ge 0$ by positivity of $Y\mapsto B_T^{-1/2}YB_T^{-1/2}$, hence
$\pi_0^\varphi(X)=\varphi(\bar X)\ge 0$.

(ii) $\overline{B_T}=B_T^{-1/2}B_TB_T^{-1/2}=I$, hence $\pi_0^\varphi(B_T)=\varphi(I)=1$.
\end{proof}

\subsection{Separation in finite dimension: a theorem of the alternative}\label{sec:finite-sep}

The cleanest fully proved separation statement is obtained in finite dimension; it already includes
genuinely noncommutative algebras (matrix blocks).

\begin{theorem}[Finite-dimensional separation $\Rightarrow$ existence of a pricing state]\label{thm:finite-sep}
Assume $M$ is finite-dimensional (equivalently $M\simeq \bigoplus_{k=1}^m M_{n_k}(\mathbb C)$).
Let $\mathcal C\subseteq X_T^{sa}$ be a convex cone satisfying NA:
$\mathcal C\cap X_T^+=\{0\}$.
Then there exists a normal state $\varphi^\star\in M_{*}^{+}$ such that
\begin{equation}\label{eq:sep-finite}
\varphi^\star(G)\le 0\qquad\forall\,G\in\mathcal C.
\end{equation}
\end{theorem}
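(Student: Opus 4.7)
The plan is to combine a finite-dimensional Hahn-Banach strict separation with a compactness / finite-intersection-property (FIP) reduction. Since $M$ is finite-dimensional, the self-adjoint part $X_T^{sa}$ is a finite-dimensional real inner product space (equip it with the inner product induced by a fixed faithful tracial state $\tau_0$ of $M$, which exists because $M\simeq\bigoplus_k M_{n_k}(\mathbb C)$). In this regime every positive linear functional is automatically normal, and the set of normal states
\[
\mathcal S := \{\varphi \in M_{*}^{+} : \varphi(I)=1\}
\]
is a compact convex subset of the dual. The goal is to show
\[
\bigcap_{G \in \mathcal C}\,\{\varphi \in \mathcal S : \varphi(G)\le 0\} \ne \emptyset.
\]
Each slab is closed in $\mathcal S$, so by compactness it suffices to verify the FIP: for any $G_1,\dots,G_n \in \mathcal C$, produce a single $\varphi \in \mathcal S$ with $\varphi(G_i) \le 0$ for all $i$.

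For the finite subproblem, fix $G_1,\dots,G_n \in \mathcal C$ and set
\[
\mathcal C_0 := \Big\{\,\textstyle\sum_{i=1}^n \lambda_i G_i : \lambda_i \ge 0\,\Big\} \;\subseteq\; \mathcal C.
\]
By the Minkowski-Weyl theorem, a finitely generated convex cone in a finite-dimensional real vector space is closed, so $\mathcal C_0$ is closed in $X_T^{sa}$; NA inherited from $\mathcal C$ gives $\mathcal C_0 \cap X_T^+=\{0\}$. Consider the compact convex slice
\[
K := \{X \in X_T^+ : \tau_0(X)=1\},
\]
which is disjoint from $\mathcal C_0$ because $K \subseteq X_T^+ \setminus \{0\}$. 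Apply the finite-dimensional Hahn-Banach strict separation theorem (closed convex set vs.\ compact convex set) to obtain a linear functional $\varphi:X_T^{sa}\to \mathbb R$ and constants $\alpha<\beta$ with $\varphi(G)\le\alpha$ on $\mathcal C_0$ and $\varphi(X)\ge\beta$ on $K$. Because $\mathcal C_0$ is a cone containing $0$, boundedness $\varphi\le\alpha$ on $\mathcal C_0$ forces $\alpha=0$; thus $\varphi\le 0$ on $\mathcal C_0$, and the scaling $\varphi(X)\ge \beta\,\tau_0(X)$ (valid for $X\in X_T^+$ by homogeneity) shows $\varphi>0$ on $X_T^+\setminus\{0\}$ since $\tau_0$ is faithful.

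To produce a bona fide state, extend $\varphi$ to a complex-linear functional on $M$ by $\varphi(X+iY):=\varphi(X)+i\varphi(Y)$ for $X,Y\in M^{sa}$; the step above shows it is positive on $M_+$ and, in finite dimension, automatically normal. Since $I\in X_T^+\setminus\{0\}$ we have $\varphi(I)>0$, so $\varphi^\star := \varphi/\varphi(I) \in \mathcal S$ and still satisfies $\varphi^\star(G_i)\le 0$ for $i=1,\dots,n$. This establishes the FIP and hence, by compactness of $\mathcal S$, yields a common separating normal state on all of $\mathcal C$.

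The principal obstacle is really conceptual bookkeeping rather than deep analysis: the key maneuver is recognizing that although the ambient cone $\mathcal C$ need not be closed, every finitely generated subcone $\mathcal C_0$ is automatically closed (Minkowski-Weyl), which is precisely what makes strict separation against the compact slice $K$ available. Compactness of $\mathcal S$ then promotes a separator on each finite subfamily to a simultaneous separator on all of $\mathcal C$. No further analytical input is required here; the subtler no-free-lunch closedness hypotheses are needed only once one leaves finite dimensions, which is the content of the NAFL$_\sigma$ extension in the following subsection.
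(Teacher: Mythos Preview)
Your proof is correct but takes a more elaborate route than the paper's. The paper separates $\mathcal C$ from $X_T^+$ in a single step, exploiting that $X_T^+$ has nonempty interior $\{X>0\}$ in the finite-dimensional real space $X_T^{sa}$: since $\mathcal C\cap\operatorname{int}(X_T^+)=\varnothing$, the standard Hahn--Banach separation of a convex set from an open convex set yields a nonzero linear functional $\ell$ with $\ell\le 0$ on $\mathcal C$ and $\ell\ge 0$ on $X_T^+$; the interior point $I$ then gives $\ell(I)>0$, and normalization finishes. No closedness of $\mathcal C$, no Minkowski--Weyl, and no FIP/compactness reduction are needed. Your route---pass to finitely generated subcones (closed by Minkowski--Weyl), strictly separate each from the compact trace-slice $K$, then promote via compactness of the state space and the finite-intersection property---is sound, and has the incidental advantage that at each finite stage the separating functional is strictly positive on $X_T^+\setminus\{0\}$, but the extra machinery is precisely what one deploys when a direct one-shot separation is unavailable, which is not the situation here. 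One small wording issue: the cone property does not literally ``force $\alpha=0$'' but rather forces $\sup_{\mathcal C_0}\varphi\le 0$ (since any $G$ with $\varphi(G)>0$ would scale to violate the bound), which is the conclusion you actually use.
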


\begin{proof}
Since $M$ is finite-dimensional, $X_T^{sa}$ is a finite-dimensional real vector space.
The cone $X_T^+$ is closed, convex, and has nonempty interior
\[
\operatorname{int}(X_T^+)=\{X\in X_T^{sa}: X>0\},
\]
where $X>0$ means strictly positive (positive definite in each matrix block).

Consider the cones $K:=\mathcal C$ and $P:=X_T^+$. NA gives $K\cap P=\{0\}$.
Because $P$ is solid (has nonempty interior), the strong separation theorem for convex sets in finite dimension
yields a nonzero linear functional $\ell:X_T^{sa}\to\mathbb R$ such that
\[
\ell(K)\subseteq (-\infty,0],\qquad \ell(P)\subseteq [0,\infty).
\]
In particular, $\ell$ is positive on $X_T^+$ and $\ell(I)>0$ since $I\in\operatorname{int}(X_T^+)$.
Normalize:
\[
\varphi^\star(X):=\frac{\ell(X)}{\ell(I)},\qquad X\in X_T^{sa}.
\]
Then $\varphi^\star$ is a state on $M$ and satisfies \eqref{eq:sep-finite}.
In finite dimension every linear functional is normal, hence $\varphi^\star\in M_{*}^{+}$.
\end{proof}

\begin{remark}[Density matrix in finite dimension]
In the finite-dimensional case there exists a unique density matrix $\rho^\star\ge 0$ with $\Tr(\rho^\star)=1$
such that $\varphi^\star(X)=\Tr(\rho^\star X)$ for all $X\in M$.
\end{remark}

\subsection{Infinite dimension: NAFL in the predual topology}\label{sec:NAFL}

\subsubsection{Ultraweak topology and \texorpdfstring{NAFL$_\sigma$}{NAFL-sigma}}

\begin{definition}[Ultraweak topology via the predual]\label{def:uw-top}
Equip $M^{sa}$ with the locally convex topology $\sigma(M^{sa},M_{*}^{sa})$, i.e.\ the weakest topology
making all maps $X\mapsto \varphi(X)$ continuous for $\varphi\in M_{*}^{sa}$.
This is the restriction of the ultraweak topology $\sigma(M,M_{*})$ to $M^{sa}$.
\end{definition}

\begin{definition}[NAFL$_\sigma$ (no free lunch in $\sigma(M,M_{*})$)]\label{def:NAFL}
Let $\mathcal C\subseteq M^{sa}$ be the cone from Assumption~\ref{ax:gaincone} and let $M_+:=X_T^+$.
Define the convex cone
\[
\mathcal K := \overline{\mathcal C - M_+}^{\,\sigma(M^{sa},M_{*}^{sa})}\subseteq M^{sa}.
\]
We say \emph{NAFL$_\sigma$} holds if there exists $\varepsilon>0$ such that
\begin{equation}\label{eq:NAFL}
\varepsilon I\notin \mathcal K.
\end{equation}
\end{definition}

\begin{lemma}[Downward solidity of \(\mathcal K\)]\label{lem:downward}
With $\mathcal K$ as in Definition~\ref{def:NAFL}, one has
\[
\mathcal K - M_+ \subseteq \mathcal K.
\]
\end{lemma}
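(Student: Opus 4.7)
The statement is essentially a formal consequence of two ingredients: (a) the unclosed set $\mathcal C - M_+$ is already stable under subtraction of elements of $M_+$, and (b) subtraction of a fixed element is continuous in the ultraweak topology $\sigma(M^{sa},M_*^{sa})$. I would proceed in that order.

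First, I would record the algebraic step. Since $M_+ = X_T^+$ is a convex cone, it is closed under addition: $P_1, P_2 \in M_+$ implies $P_1+P_2 \in M_+$. Consequently,
\[
(\mathcal C - M_+) - M_+ \;=\; \mathcal C - (M_+ + M_+) \;\subseteq\; \mathcal C - M_+,
\]
so the downward-solidity claim already holds \emph{before} taking closures.

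Next, I would transport this to the closure using continuity. Given $Z \in \mathcal K$ and $P \in M_+$, pick a net $(Z_\alpha) \subseteq \mathcal C - M_+$ with $Z_\alpha \to Z$ in $\sigma(M^{sa},M_*^{sa})$. Because every $\varphi \in M_*^{sa}$ is linear, the translation map $Y \mapsto Y - P$ is continuous in this topology, so $Z_\alpha - P \to Z - P$. By the algebraic step, $Z_\alpha - P \in \mathcal C - M_+$ for each $\alpha$, hence $Z - P$ lies in $\overline{\mathcal C - M_+}^{\,\sigma(M^{sa},M_*^{sa})} = \mathcal K$. This proves $\mathcal K - M_+ \subseteq \mathcal K$.

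There is no real obstacle here: the only sanity check is that one stays inside the self-adjoint part (which is automatic, since $\mathcal C, M_+ \subseteq M^{sa}$ and $M^{sa}$ is $\sigma(M^{sa},M_*^{sa})$-closed in $M$), and that nets, rather than sequences, may be required because the ultraweak topology is not metrizable on unbounded sets --- but the argument above is written at the level of nets and is unaffected.
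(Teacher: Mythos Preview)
Your proof is correct and is essentially identical to the paper's: both pick a net in $\mathcal C - M_+$ converging to the given element of $\mathcal K$, subtract the fixed $P\in M_+$, and observe that each term remains in $\mathcal C - M_+$ because $M_+$ is closed under addition. The only cosmetic difference is that you isolate the ``algebraic step'' $(\mathcal C - M_+) - M_+ \subseteq \mathcal C - M_+$ explicitly before passing to the closure, whereas the paper absorbs it into a single line.
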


\begin{proof}
Let $Y\in\mathcal K$ and $P\in M_+$. Choose a net $Y_\alpha=G_\alpha-P_\alpha$ with
$G_\alpha\in\mathcal C$, $P_\alpha\in M_+$, and $Y_\alpha\to Y$ in $\sigma(M^{sa},M_{*}^{sa})$.
Then $Y_\alpha-P=G_\alpha-(P_\alpha+P)\in \mathcal C-M_+$ for all $\alpha$, hence
$Y-P\in\overline{\mathcal C-M_+}^{\,\sigma}=\mathcal K$.
\end{proof}

\begin{proposition}[Equivalent NAFL$_\sigma$ formulations]\label{prop:NAFL-equiv}
Let $\mathcal K=\overline{\mathcal C-M_+}^{\,\sigma(M^{sa},M_{*}^{sa})}$.
For $\varepsilon>0$, the following are equivalent:
\begin{enumerate}[label=\textup{(\roman*)}]
\item $\varepsilon I\notin \mathcal K$.
\item $\mathcal K\cap(\varepsilon I + M_+)=\varnothing$.
\end{enumerate}
Consequently, NAFL$_\sigma$ holds iff there exists $\varepsilon>0$ such that
\[
\mathcal K\cap(\varepsilon I + M_+)=\varnothing.
\]
\end{proposition}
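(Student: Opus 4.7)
The plan is to prove the equivalence $(\mathrm{i}) \Leftrightarrow (\mathrm{ii})$ by a short contradiction argument in each direction, leveraging the downward solidity of $\mathcal{K}$ established in Lemma~\ref{lem:downward}. No topological or fine-scale structural information about $\mathcal{K}$ is needed beyond that lemma; the rest is purely set-theoretic.

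For $(\mathrm{ii}) \Rightarrow (\mathrm{i})$, the argument is immediate: since $0 \in M_+$, one has $\varepsilon I = \varepsilon I + 0 \in \varepsilon I + M_+$; if in addition $\varepsilon I \in \mathcal{K}$, then $\varepsilon I \in \mathcal{K} \cap (\varepsilon I + M_+)$, contradicting the emptiness in $(\mathrm{ii})$. For the converse $(\mathrm{i}) \Rightarrow (\mathrm{ii})$, I would suppose for contradiction that some $Y$ lies in the intersection. Writing $Y = \varepsilon I + P$ with $P \in M_+$, Lemma~\ref{lem:downward} applied to $Y \in \mathcal{K}$ and $P \in M_+$ yields $Y - P \in \mathcal{K}$, i.e., $\varepsilon I \in \mathcal{K}$, contradicting $(\mathrm{i})$. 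The concluding assertion of the proposition is then a direct reformulation: NAFL$_\sigma$ is defined as the existence of some $\varepsilon > 0$ satisfying $(\mathrm{i})$, and the $\varepsilon$-wise equivalence just established transports this to the emptiness formulation $\mathcal{K} \cap (\varepsilon I + M_+) = \varnothing$.

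The main obstacle is essentially absent: Lemma~\ref{lem:downward} has already done the only nontrivial structural work, so the proposition functions primarily as an organizational rephrasing. Its purpose is to recast NAFL$_\sigma$ as a nonintersection of two convex sets in $M^{sa}$, which is precisely the form required to apply a Hahn--Banach separation in the $\sigma(M^{sa}, M_*^{sa})$ topology in the subsequent construction of the pricing state $\varphi^\star$.
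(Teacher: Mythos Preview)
Your proposal is correct and matches the paper's proof essentially line for line: both directions are handled by the same contradiction arguments, with Lemma~\ref{lem:downward} invoked for $(\mathrm{i})\Rightarrow(\mathrm{ii})$ and the trivial observation $\varepsilon I=\varepsilon I+0\in\varepsilon I+M_+$ for $(\mathrm{ii})\Rightarrow(\mathrm{i})$. Your closing remark about the role of this reformulation in setting up the separation argument is also accurate.
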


\begin{proof}
(i)$\Rightarrow$(ii): Suppose for contradiction that there exists $Y\in \mathcal K\cap(\varepsilon I+M_+)$.
Then $Y=\varepsilon I+P$ for some $P\in M_+$. By Lemma~\ref{lem:downward}, $Y-P\in\mathcal K$, hence
$\varepsilon I\in\mathcal K$, contradicting (i).

(ii)$\Rightarrow$(i): If $\varepsilon I\in\mathcal K$, then $\varepsilon I=\varepsilon I+0\in \varepsilon I+M_+$,
so $\mathcal K\cap(\varepsilon I+M_+)\neq\varnothing$, contradicting (ii).
\end{proof}

\subsubsection{Normal separation and existence of a pricing state}

\begin{lemma}[Separating a point from a closed convex cone]\label{lem:sep-point-cone}
Let $X$ be a real locally convex topological vector space, $K\subset X$ a nonempty closed convex cone,
and $x_0\in X\setminus K$. Then there exists a nonzero continuous linear functional $\ell\in X^\prime$
such that
\[
\ell(K)\subseteq (-\infty,0]\qquad\text{and}\qquad \ell(x_0)>0.
\]
\end{lemma}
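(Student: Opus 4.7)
The plan is to combine a strict Hahn--Banach separation argument with a standard ``cone scaling'' trick; the topology of $X$ is used only in the first step, and the cone structure does all the remaining work.

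First, I would invoke the geometric Hahn--Banach theorem to separate $x_0$ from $K$. Since $X$ is locally convex, $K$ is closed, and $x_0 \notin K$, one can pick a convex open neighborhood $U$ of $x_0$ disjoint from $K$. Applying the geometric Hahn--Banach theorem to the disjoint convex pair $(U,K)$ (one member open), and, if necessary, negating $\ell$, yields a nonzero $\ell \in X'$ and a real $\alpha$ with
\[
\ell(k) \le \alpha < \ell(x_0) \qquad \text{for every } k \in K.
\]

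Second, I would upgrade this to the stated conclusion using positive homogeneity of $K$. Because $K$ is a nonempty convex cone, $0 \in K$ (take $\lambda = 0$ times any element), so $\alpha \ge \ell(0) = 0$, and hence $\ell(x_0) > \alpha \ge 0$, which gives the desired strict positivity at $x_0$. For the upper bound on $\ell(K)$, fix $k \in K$ and note that $\lambda k \in K$ for every $\lambda > 0$, so $\lambda \ell(k) = \ell(\lambda k) \le \alpha$ for all such $\lambda$. If $\ell(k)$ were strictly positive, letting $\lambda \to +\infty$ would force $\alpha = +\infty$, a contradiction; hence $\ell(k) \le 0$ for every $k \in K$, i.e.\ $\ell(K) \subseteq (-\infty,0]$.

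There is no substantive obstacle inside the lemma itself; it is a routine consequence of Hahn--Banach in a locally convex space. The genuine subtlety lives one layer up, in its intended deployment in Section~\ref{sec:NAFL}: the lemma only produces a \emph{normal} separating functional when the continuous dual $X'$ can be identified with $M_{*}^{sa}$, which is exactly why the lemma is stated in topology-free form and will then be instantiated with $X = (M^{sa}, \sigma(M^{sa}, M_{*}^{sa}))$, $K = \mathcal{K}$, and $x_0 = \varepsilon I$. The NAFL$_\sigma$ hypothesis $\varepsilon I \notin \mathcal{K}$ plus the built-in closedness of $\mathcal{K}$ supply the geometric input, and the choice of the ultraweak topology converts the abstract $\ell$ into a normal functional on $M$, i.e.\ into the candidate pricing state.
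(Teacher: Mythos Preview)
Your proof is correct and follows essentially the same route as the paper's: invoke Hahn--Banach separation of the point $x_0$ from the closed convex set $K$ to obtain $\ell(x_0)>\alpha\ge\sup_{k\in K}\ell(k)$, then use the cone structure (scaling and $0\in K$) to force $\sup_{k\in K}\ell(k)\le 0<\ell(x_0)$. The paper's version is terser, but the underlying argument is identical; your remarks on the downstream application to NAFL$_\sigma$ are accurate as well.
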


\begin{proof}
Since $K$ is closed and convex and $x_0\notin K$, the strict separation theorem for a point and a closed convex set
yields $\ell\in X^\prime\setminus\{0\}$ and $\alpha\in\mathbb R$ such that
$\ell(x_0)>\alpha\ge \sup_{x\in K}\ell(x)$. Because $K$ is a cone containing $0$, one may scale the inequality to obtain
$\sup_{x\in K}\ell(x)\le 0 < \ell(x_0)$.
\end{proof}

\begin{theorem}[Normal pricing state from NAFL$_\sigma$]\label{thm:normal-sep-NAFL}
Assume NAFL$_\sigma$ holds. Then there exists a \emph{normal state} $\varphi^\star\in M_{*}^{+}$ such that
\begin{equation}\label{eq:normal-sep}
\varphi^\star(G)\le 0\qquad\forall\,G\in\mathcal C.
\end{equation}
\end{theorem}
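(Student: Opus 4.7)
The strategy is Hahn--Banach separation of the forbidden point $\varepsilon I$ from the closed convex cone $\mathcal K$, carried out in the locally convex space $(M^{sa},\tau)$ with $\tau:=\sigma(M^{sa},M_{*}^{sa})$. The crucial feature of this topology---and the reason NAFL$_\sigma$ is formulated with respect to it---is that the topological dual of $(M^{sa},\tau)$ is canonically identified with $M_{*}^{sa}$; consequently, any separating functional produced by Hahn--Banach will automatically be normal, which is exactly what the conclusion demands. I will apply Lemma~\ref{lem:sep-point-cone} with $X=M^{sa}$, $K=\mathcal K$, and $x_0=\varepsilon I$.

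First I verify the hypotheses of the separation lemma. The set $\mathcal K=\overline{\mathcal C-M_+}^{\,\tau}$ is $\tau$-closed by construction. Since $\mathcal C$ and $-M_+$ are convex cones, their Minkowski sum $\mathcal C-M_+$ is again a convex cone, and both properties pass to the $\tau$-closure. Nonemptiness is clear: $0\in\mathcal C$ (apply the cone axiom with $\lambda_1=\lambda_2=0$ to any element of $\mathcal C$), so $0\in\mathcal K$. The hypothesis NAFL$_\sigma$ provides $\varepsilon>0$ with $\varepsilon I\notin\mathcal K$, and Lemma~\ref{lem:sep-point-cone} then yields a nonzero $\ell\in M_{*}^{sa}$ with $\ell(\mathcal K)\subseteq(-\infty,0]$ and $\ell(\varepsilon I)>0$.

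Three routine checks then turn $\ell$ into the sought pricing state. For positivity, use $0\in\mathcal C$ to get $-M_+\subseteq\mathcal C-M_+\subseteq\mathcal K$, so $\ell(-P)\le 0$, i.e., $\ell(P)\ge 0$, for every $P\in M_+$. Strict positivity on $I$ is immediate from $\ell(\varepsilon I)>0$ and $\varepsilon>0$. Normalizing $\varphi^\star:=\ell/\ell(I)$ then gives a normal state in $M_{*}^{+}$ by Definition~\ref{def:normalpos}. The separation inequality \eqref{eq:normal-sep} follows from $\mathcal C\subseteq\mathcal C-M_+\subseteq\mathcal K$ (take $P=0$), which yields $\ell(G)\le 0$ and hence $\varphi^\star(G)\le 0$ for all $G\in\mathcal C$.

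The only nontrivial piece of the argument is the duality identification of the $\tau$-dual with $M_{*}^{sa}$---a standard fact about weak topologies induced by dual pairs, but the one silently doing all the work, since it is precisely what upgrades an abstract Hahn--Banach separating functional to a \emph{normal} one. Everything else is formal manipulation of convex cones; the argument parallels Theorem~\ref{thm:finite-sep}, with the topological closure $\mathcal K$ now playing the role that the automatically closed cone $\mathcal C$ played in finite dimension. Lemma~\ref{lem:downward} is not needed in the core separation step, although it guarantees that $\mathcal K$ is ``downward solid'' and thus conceptually confirms that $\ell$ must be positive.
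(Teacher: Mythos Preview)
Your proof is correct and follows essentially the same route as the paper: apply Lemma~\ref{lem:sep-point-cone} to separate $\varepsilon I$ from the closed convex cone $\mathcal K$ in the $\sigma(M^{sa},M_*^{sa})$ topology, deduce positivity of $\ell$ from $-M_+\subseteq\mathcal K$, and normalize. The only step you elide that the paper makes explicit is the complex-linear extension of $\ell\in M_*^{sa}$ to a functional on all of $M$ before normalizing to a state; this is routine, but strictly speaking Definition~\ref{def:normalpos} lives in $M_*$, not $M_*^{sa}$.
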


\begin{proof}
Let $\mathcal K=\overline{\mathcal C-M_+}^{\,\sigma(M^{sa},M_{*}^{sa})}$ and pick $\varepsilon>0$ with
$\varepsilon I\notin\mathcal K$. By Lemma~\ref{lem:sep-point-cone} applied in
$(M^{sa},\sigma(M^{sa},M_{*}^{sa}))$, there exists a nonzero $\sigma$-continuous real linear functional
$\ell\in (M_{*}^{sa})$ such that
\[
\ell(\mathcal K)\le 0,\qquad \ell(\varepsilon I)>0.
\]
In particular $\ell(I)>0$. Since $-M_+\subseteq \mathcal C-M_+\subseteq \mathcal K$, for every $P\in M_+$ we have
$\ell(-P)\le 0$, hence $\ell(P)\ge 0$. Thus $\ell$ is positive on $M_+$.

Define $\varphi\in M_{*}$ by complex linear extension:
\[
\varphi(X):=\ell\!\Big(\frac{X+X^\ast}{2}\Big)\;+\; i\,\ell\!\Big(\frac{X-X^\ast}{2i}\Big),\qquad X\in M.
\]
Then $\varphi$ is normal (ultraweakly continuous) and positive since for $A\in M_+$,
$\varphi(A)=\ell(A)\ge 0$. Also, for $G\in\mathcal C\subseteq\mathcal K$,
$\varphi(G)=\ell(G)\le 0$. Normalize:
\[
\varphi^\star := \frac{\varphi}{\varphi(I)}.
\]
Then $\varphi^\star\in M_{*}^{+}$ is a normal state and satisfies \eqref{eq:normal-sep}.
\end{proof}

\begin{proposition}[No-arbitrage implies existence of a pricing state]\label{prop:noarb_pricing_state}
Under the hypotheses of Theorem~\ref{thm:normal-sep-NAFL}, there exists a normal state $\varphi^\star\in M_*^+$
such that
\[
\varphi^\star(G)\le 0,\qquad \forall\,G\in\mathcal C,
\]
and $\varphi^\star(I)=1$. We call any such state a \emph{pricing state}.
\end{proposition}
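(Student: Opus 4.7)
The plan is to observe that this proposition is essentially a repackaging of Theorem~\ref{thm:normal-sep-NAFL} together with a definitional naming convention. The entire analytic content, namely the separation argument producing a nonzero $\sigma$-continuous linear functional $\ell$ with $\ell(\mathcal K)\le 0$ and $\ell(I)>0$, the passage from real to complex linear via decomposition into self-adjoint and anti-self-adjoint parts, positivity on $M_+$, and ultraweak continuity (hence normality) of the resulting functional, has already been carried out there. So I would invoke Theorem~\ref{thm:normal-sep-NAFL} verbatim to obtain a normal state $\varphi^\star\in M_*^+$ with $\varphi^\star(G)\le 0$ for every $G\in\mathcal C$.

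The only remaining point to check is $\varphi^\star(I)=1$, but this is immediate: Theorem~\ref{thm:normal-sep-NAFL} produces $\varphi^\star$ after explicit normalization $\varphi^\star:=\varphi/\varphi(I)$, and the normalization is well-defined because $\varphi(I)=\ell(I)>0$ (a fact established mid-proof in Theorem~\ref{thm:normal-sep-NAFL} from $\ell(\varepsilon I)>0$). Hence by construction $\varphi^\star$ is a normal state in the sense of Definition~\ref{def:normalpos}.

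The final step is purely terminological: the proposition introduces the name \emph{pricing state} for any normal state satisfying the separation inequality on $\mathcal C$. No further argument is needed, and I would state explicitly that this definition is consistent with Assumption~\ref{as:RN}(ii) in the sense that the separation condition $\varphi^\star(G)\le 0$ on $\mathcal C$ is the operator-algebraic analogue of the supermartingale/no-arbitrage consistency used to select a risk-neutral measure in the classical FTAP. There is no genuine obstacle here, since the heavy lifting (ultraweak closedness encoded in NAFL$_\sigma$, and the geometric separation lemma~\ref{lem:sep-point-cone}) was absorbed into Theorem~\ref{thm:normal-sep-NAFL}; the present proposition functions as a clean statement to be cited in Section~5's subsequent discussion and in the downstream construction of $E_t^\star$ via Corollary~\ref{cor:existence_Et}.
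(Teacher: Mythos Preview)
Your proposal is correct and matches the paper's own proof, which reads in its entirety: ``Immediate from Theorem~\ref{thm:normal-sep-NAFL} after normalization by $\varphi(I)>0$.'' You have identified exactly this structure: the proposition is a restatement of the conclusion of Theorem~\ref{thm:normal-sep-NAFL}, with the condition $\varphi^\star(I)=1$ already built into the normalization step there, plus the introduction of the term \emph{pricing state}. Your additional commentary on the role of the proposition and its relation to Assumption~\ref{as:RN} and Corollary~\ref{cor:existence_Et} is accurate context but not part of the proof proper.
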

\begin{proof}
Immediate from Theorem~\ref{thm:normal-sep-NAFL} after normalization by $\varphi(I)>0$.
\end{proof}

\subsection{Dynamic pricing operator under \texorpdfstring{$(\varphi^\star,\{E_t^\star\})$}{(phi-star, {E t-star})}}\label{sec:dyn_pricing_star}

\begin{definition}[Risk-neutral dynamic pricing operator]\label{def:Pi-star}
Let $\varphi^\star$ be a pricing state as in Proposition~\ref{prop:noarb_pricing_state}. Assume Assumptions~\ref{ax:info_flow} and~\ref{ax:modular_compatibility} hold for $(N_t,\varphi^\star)$, and let $(E_t^\star)_{t\in[0,T]}$ be the unique $\varphi^\star$-preserving conditional expectations
from Corollary~\ref{cor:existence_Et}. For $X\in X_T$ and $t\in[0,T]$ define
\[
\Pi_t(X)
:=B_t^{1/2}\,E_t^\star(\bar X)\,B_t^{1/2},
\qquad \bar X:=B_T^{-1/2}XB_T^{-1/2}.
\]
Then $\Pi_t(X)\in N_t$ whenever $X\in M$.
\end{definition}

\begin{proposition}[Compatibility with no-arbitrage separation]\label{prop:arbcompat}
If $\varphi^\star$ satisfies \eqref{eq:normal-sep} (in particular under Theorem~\ref{thm:normal-sep-NAFL}),
then for every attainable discounted gain $G\in\mathcal C$,
\[
\pi_0^{\varphi^\star}(G)=\varphi^\star(G)\le 0.
\]
\end{proposition}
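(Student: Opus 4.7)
The proposition splits into two assertions: the equality $\pi_0^{\varphi^\star}(G)=\varphi^\star(G)$ for $G\in\mathcal C$, and the inequality $\varphi^\star(G)\le 0$. The plan is to dispose of each in one step, using only the definitions already in place plus the separating inequality \eqref{eq:normal-sep} provided by the hypothesis (or, concretely, by Theorem~\ref{thm:normal-sep-NAFL}).

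First I would pin down the equality $\pi_0^{\varphi^\star}(G)=\varphi^\star(G)$. By Assumption~\ref{ax:gaincone}(ii) each $G\in\mathcal C$ is itself a \emph{symmetrically discounted} terminal gain: there is an underlying raw terminal payoff $X:=B_T^{1/2}\,G\,B_T^{1/2}\in X_T^{sa}$ satisfying $\bar X=B_T^{-1/2}XB_T^{-1/2}=G$ (the two pairs of $B_T^{\pm 1/2}$ factors cancel, using that $B_T\in N_t$ is boundedly invertible and positive). Substituting into Definition~\ref{def:pi0} then gives $\pi_0^{\varphi^\star}(X)=\varphi^\star(\bar X)=\varphi^\star(G)$, which is exactly the claimed identity once one reads $\pi_0^{\varphi^\star}(G)$ as the time-$0$ valuation of the claim whose discounted form is $G$; equivalently, $\pi_0^{\varphi^\star}$ acts as $\varphi^\star$ on already-discounted objects.

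Next I would apply the normal separation inequality \eqref{eq:normal-sep}, which yields $\varphi^\star(G')\le 0$ for every $G'\in\mathcal C$. Specializing to $G'=G$ gives $\varphi^\star(G)\le 0$, and combining with the first step produces the chain $\pi_0^{\varphi^\star}(G)=\varphi^\star(G)\le 0$. Positivity of $\varphi^\star$ plays no direct role in the estimate, and normality is invoked only insofar as it ensures that the pairing $\varphi^\star(G)$ is a well-defined real scalar on $M^{sa}$; both properties were already secured during the construction of $\varphi^\star$ in Theorem~\ref{thm:normal-sep-NAFL}.

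The main (and essentially only) delicate point is notational: one must interpret $\pi_0^{\varphi^\star}(G)$ as the valuation of the underlying claim $B_T^{1/2}GB_T^{1/2}$ rather than substituting $G$ literally into Definition~\ref{def:pi0}, since a literal substitution would introduce an additional pair of $B_T^{-1/2}$ factors. Once this convention is fixed, the proposition reduces to a direct combination of Assumption~\ref{ax:gaincone}, Definition~\ref{def:pi0}, and \eqref{eq:normal-sep}; no additional topological, modular, or limiting argument is required beyond those already used to produce $\varphi^\star$.
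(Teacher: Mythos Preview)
Your proof is correct and follows the same route as the paper, whose entire proof is the single sentence ``This is exactly \eqref{eq:normal-sep} applied to $G\in\mathcal C$.'' You are in fact more careful than the paper: you flag and resolve the notational tension between Definition~\ref{def:pi0} (which would literally give $\pi_0^{\varphi^\star}(G)=\varphi^\star(B_T^{-1/2}GB_T^{-1/2})$) and the intended reading of $\pi_0^{\varphi^\star}(G)$ as the time-$0$ valuation of the already-discounted object $G$, whereas the paper silently treats the equality as definitional.
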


\begin{proof}
This is exactly \eqref{eq:normal-sep} applied to $G\in\mathcal C$.
\end{proof}

\begin{proposition}[Dynamic consistency and discounted martingale valuation]\label{prop:dyncons}
Let $\varphi^\star$ be a pricing state as in Proposition~\ref{prop:noarb_pricing_state}. Assume Assumptions~\ref{ax:info_flow} and~\ref{ax:modular_compatibility} hold for $(N_t,\varphi^\star)$, and define the discounted valuation map
\[
V_t(X):=B_t^{-1/2}\Pi_t(X)B_t^{-1/2}=E_t^\star(\bar X)\in N_t.
\]
Then for all $0\le s\le t\le T$ and all $X\in X_T$,
\[
V_s(X)=E_s^\star\big(V_t(X)\big).
\]
In particular, for bounded payoffs, $\{V_t(X)\}$ is an $(N_t,E_t^\star)$-martingale.
\end{proposition}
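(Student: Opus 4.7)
The plan is to separate the result into two logically independent pieces: first, the core identity $V_s(X)=E_s^\star(V_t(X))$, which is bookkeeping around the tower property, and second, the martingale interpretation, which is a direct invocation of the well-posedness lemma for truncation-stable martingales.

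For the identity, I would begin by unfolding the definition. By Definition~\ref{def:Pi-star}, $\Pi_t(X)=B_t^{1/2}E_t^\star(\bar X)B_t^{1/2}$, and because $B_t$ is a strictly positive element of the abelian algebra $N_t$ (Assumption~\ref{as:prices_numeraire}), its functional-calculus square roots $B_t^{\pm 1/2}$ lie in $N_t$ and cancel freely. Hence
\[
V_t(X)=B_t^{-1/2}\Pi_t(X)B_t^{-1/2}=E_t^\star(\bar X)\in N_t,
\]
recovering the identification already noted in Definition~\ref{def:pricing_operator}. Now the tower property from Corollary~\ref{cor:tower}, which is available because Assumptions~\ref{ax:info_flow} and~\ref{ax:modular_compatibility} are in force for $(N_t,\varphi^\star)$ (so each $E_t^\star$ exists and is unique by Takesaki's theorem, Corollary~\ref{cor:existence_Et}), yields
\[
V_s(X)=E_s^\star(\bar X)=E_s^\star\!\big(E_t^\star(\bar X)\big)=E_s^\star\!\big(V_t(X)\big),
\]
which is the claimed dynamic consistency.

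For the martingale interpretation, I would first check integrability: for bounded $X\in X_T=M$, the discounted payoff $\bar X=B_T^{-1/2}XB_T^{-1/2}$ is again bounded by Assumption~\ref{as:prices_numeraire}(ii), and $V_t(X)=E_t^\star(\bar X)\in N_t$ is norm-contractive under the unital completely positive map $E_t^\star$, so $\|V_t(X)\|\le\|\bar X\|$. In particular each $V_t(X)$ is $\varphi^\star$-integrable in the sense of Definition~\ref{def:L1_integrable_affiliated}. Proposition~\ref{prop:martingale_wellposed} then applies, and the $L^1$-level identity established above is equivalent to the truncation-stable condition $E_s^\star(f_n(V_t(X)))=f_n(V_s(X))$ for all $n\in\mathbb N$ and $0\le s\le t\le T$, which is precisely the $(N_t,E_t^\star)$-martingale property of Definition~\ref{def:qmartingale_again}.

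The only potential obstacle is ensuring that the hypotheses of the two invoked results are genuinely in force: Corollary~\ref{cor:tower} needs modular compatibility, which is explicitly assumed in the statement; and Proposition~\ref{prop:martingale_wellposed} needs $\varphi^\star$-integrability of $V_t(X)$, which reduces to boundedness. Both checks are routine, so no genuine analytic work is required beyond assembling the structural machinery already established in Sections~\ref{sec:information_and_C.E} and~\ref{sec:takesaki_tower}.
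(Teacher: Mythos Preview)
Your proof is correct and follows essentially the same approach as the paper: unfold $V_t(X)=E_t^\star(\bar X)$ and apply the tower property from Corollary~\ref{cor:tower} to obtain $E_s^\star(V_t(X))=E_s^\star(E_t^\star(\bar X))=E_s^\star(\bar X)=V_s(X)$. The paper's own proof is exactly this one-line tower computation; you are simply more explicit about the ``in particular'' clause, supplying the boundedness/integrability check and invoking Proposition~\ref{prop:martingale_wellposed}, which the paper leaves implicit.
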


\begin{proof}
By definition $V_t(X)=E_t^\star(\bar X)$. By tower property, we have $E_s^\star\circ E_t^\star=E_s^\star$ for $s\le t$, hence
\[
E_s^\star(V_t(X))=E_s^\star(E_t^\star(\bar X))=E_s^\star(\bar X)=V_s(X).
\]
\end{proof}

\begin{remark}
Section~\ref{sec:information_and_C.E} establishes the algebraic/dynamic structure of pricing once a state and compatible conditional expectations
are fixed. Chapter~\ref{sec:duality} supplies the duality interface selecting a \emph{pricing state} $\varphi^\star$ from a
no-free-lunch type hypothesis (NAFL$_\sigma$), and provides a verifiable modular compatibility condition ensuring the existence and uniqueness
of $\varphi^\star$-preserving conditional expectations (hence the tower property) via Takesaki's theorem.
The genuinely noncommutative pricing dynamics begins after this interface is established.
\end{remark}

\section{Examples under commutative information: lattice jump models and diffusion limits}
\label{section:examples_models}

This section collects model examples in the commutative specialization $N_t\simeq L^\infty(\Sigma_t)$.
These results are \emph{not} used in the proofs of the main operator-algebraic statements in
Sections~\ref{section:MF_of_QPT}--\ref{sec:duality}. Rather, they illustrate how the pricing operator
reduces to classical risk-neutral valuation and yields familiar backward equations under specific
Markov dynamics.

\subsection{A nonlocal risk-neutral pricing equation (commutative reduction)}
\label{subsec:nonlocal_rn_equation}

In the commutative realization (classical information), the operator-valued pricing map
$\Pi_t$ reduces to the usual risk-neutral conditional expectation form:
\[
\Pi_t(X)=B_t\,\mathbb E^{\mathbb Q}\!\left[B_T^{-1}X\mid\mathcal F_t\right],
\]
for bounded terminal payoffs $X$ (cf.\ the commutative specialization stated after
Definition~\ref{def:pricing_operator}).\footnote{Equivalently, when $B_t=e^{rt}I$ is deterministic,
$\Pi_t(X)=\mathbb E^{\mathbb Q}\!\left[e^{-r(T-t)}X\mid\mathcal F_t\right]$.}
We now show that, under a translation-covariant pure-jump dynamics on a price lattice,
this classical reduction yields the nonlocal backward pricing equation (2.24) appearing
in the quantum-pricing manuscript (Theorem~1 therein).

\paragraph{Pure-jump lattice generator.}
Fix a step size $\Delta x>0$ and jump intensities $(\gamma_\alpha)_{\alpha\in\mathbb Z}$ with
\begin{equation}\label{eq:total_intensity}
\Lambda := \sum_{\alpha\in\mathbb Z}\gamma_\alpha < \infty,\qquad \gamma_\alpha\ge 0.
\end{equation}
Let $(X_t)_{t\in[0,T]}$ be a time-homogeneous pure-jump Markov process on the lattice
$x_0+\Delta x\,\mathbb Z$ whose generator $L_X$ acts on bounded functions $f$ on
$x_0+\Delta x\,\mathbb Z$ by
\begin{equation}\label{eq:LX}
(L_X f)(x):=\sum_{\alpha\in\mathbb Z}\gamma_\alpha\bigl(f(x+\alpha\Delta x)-f(x)\bigr).
\end{equation}
Define the (positive) price process $S_t:=e^{X_t}$, taking values in the multiplicative lattice
$e^{x_0}\,e^{\Delta x\mathbb Z}\subset(0,\infty)$. Its induced generator $L_S$ on bounded
functions $g$ on that price lattice is
\begin{equation}\label{eq:LS}
(L_S g)(s):=\sum_{\alpha\in\mathbb Z}\gamma_\alpha\bigl(g(se^{\alpha\Delta x})-g(s)\bigr).
\end{equation}

\begin{lemma}[Boundedness of the nonlocal generator]\label{lem:LS_bounded}
Assume \eqref{eq:total_intensity}. Then $L_S$ defines a bounded linear operator on
$\ell^\infty(e^{x_0}e^{\Delta x\mathbb Z})$ and satisfies $\|L_S g\|_\infty\le 2\Lambda\|g\|_\infty$.
Consequently, $(P_\tau)_{\tau\ge 0}$ defined by $P_\tau:=e^{\tau L_S}$ is a uniformly continuous
semigroup on $\ell^\infty$, with $\partial_\tau(P_\tau g)=L_S(P_\tau g)$ for all bounded $g$.
\end{lemma}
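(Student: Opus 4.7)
The plan is to treat the boundedness and the semigroup assertion separately, since the second is a direct consequence of the first by standard semigroup theory for bounded generators.

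For the bound, I would first observe that the price lattice $e^{x_0}e^{\Delta x\mathbb Z}$ is invariant under multiplication by $e^{\alpha\Delta x}$ for every $\alpha\in\mathbb Z$, so each evaluation $g(se^{\alpha\Delta x})$ in \eqref{eq:LS} is well-defined whenever $s$ lies in the lattice and $g\in\ell^\infty$. Then I would split the defining sum as
\[
(L_S g)(s)=\sum_{\alpha\in\mathbb Z}\gamma_\alpha\,g(se^{\alpha\Delta x})\;-\;\Bigl(\sum_{\alpha\in\mathbb Z}\gamma_\alpha\Bigr)g(s),
\]
where absolute convergence of the first series is immediate from \eqref{eq:total_intensity} together with $|g(se^{\alpha\Delta x})|\le\|g\|_\infty$. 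The triangle inequality then gives $|(L_S g)(s)|\le \Lambda\|g\|_\infty+\Lambda\|g\|_\infty=2\Lambda\|g\|_\infty$ uniformly in $s$, so $L_S$ is bounded on $\ell^\infty$ with $\|L_S\|\le 2\Lambda$. Linearity is evident from the pointwise formula.

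For the semigroup statement, since $L_S\in B(\ell^\infty)$ I would define $P_\tau:=\sum_{k=0}^{\infty}\tau^k L_S^k/k!$, which converges in operator norm because $\|\tau^k L_S^k/k!\|\le(\tau\|L_S\|)^k/k!$ and the exponential series is entire. Standard Banach-algebra arguments then yield $P_0=I$, $P_{\tau_1+\tau_2}=P_{\tau_1}P_{\tau_2}$, and $\|P_\tau-I\|\to 0$ as $\tau\downarrow 0$, i.e.\ uniform continuity. Differentiating the norm-convergent series term by term produces $\partial_\tau P_\tau=L_S P_\tau=P_\tau L_S$ in operator norm, and applying this identity to any $g\in\ell^\infty$ gives $\partial_\tau(P_\tau g)=L_S(P_\tau g)$ as required.

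There is no real obstacle here: the only point that deserves care is the absolute/unconditional convergence of the series defining $L_S g$ and of the exponential series, but both follow from \eqref{eq:total_intensity} and from $L_S$ being bounded. Once boundedness is in hand, the semigroup conclusion is a textbook application of the bounded-generator case of the Hille--Yosida framework (see, e.g., \cite{ReedSimonII}).
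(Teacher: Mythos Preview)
Your proposal is correct and follows essentially the same approach as the paper: both arguments bound $|(L_Sg)(s)|$ by $2\Lambda\|g\|_\infty$ via the triangle inequality (you split into two sums first, the paper bounds each summand $\gamma_\alpha(|g(se^{\alpha\Delta x})|+|g(s)|)$ first, which is cosmetically different but identical in content), and both then invoke the norm-convergent exponential series for a bounded generator to obtain the uniformly continuous semigroup and termwise differentiability.
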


\begin{proof}
For each $s$,
\[
|(L_S g)(s)|
\le \sum_{\alpha}\gamma_\alpha\bigl(|g(se^{\alpha\Delta x})|+|g(s)|\bigr)
\le 2\|g\|_\infty\sum_{\alpha}\gamma_\alpha = 2\Lambda\|g\|_\infty,
\]
hence $\|L_S\|\le 2\Lambda$ on $\ell^\infty$. Boundedness implies $e^{\tau L_S}$ is well-defined
by the norm-convergent exponential series, yields a uniformly continuous semigroup, and
differentiability $\partial_\tau(P_\tau g)=L_S(P_\tau g)$ follows from termwise differentiation
of the exponential series in operator norm.
\end{proof}

\paragraph{Risk-neutral dynamics.}
Assume the num\'eraire in Assumption~\ref{as:prices_numeraire} is deterministic $B_t=e^{rt}I$ with $r\in\mathbb R$.
We say that $\mathbb Q$ is \emph{risk-neutral} for $S$ if the discounted price is a $\mathbb Q$-martingale,
equivalently
\begin{equation}\label{eq:RN_constraint}
(L_S \mathrm{id})(s)=rs\quad\text{for all lattice points }s,
\qquad\text{i.e.}\qquad
\sum_{\alpha\in\mathbb Z}\gamma_\alpha\bigl(e^{\alpha\Delta x}-1\bigr)=r.
\end{equation}
Under \eqref{eq:total_intensity}, the identity function is bounded on any finite lattice truncation; for
the infinite lattice, \eqref{eq:RN_constraint} is understood as the defining constraint selecting
the drift under $\mathbb Q$ on the admissible payoff class considered below.

\begin{theorem}[Nonlocal risk-neutral backward equation]\label{thm:nonlocal_rn_backward}
Let $\Phi$ be a bounded payoff on the price lattice and define, for $(t,s)\in[0,T]\times(0,\infty)$,
\begin{equation}\label{eq:V_def}
V(t,s):=\mathbb E^{\mathbb Q}\!\left[e^{-r(T-t)}\Phi(S_T)\,\middle|\, S_t=s\right].
\end{equation}
Assume \eqref{eq:total_intensity} and that $(S_t)$ is a time-homogeneous Markov process under $\mathbb Q$
with generator $L_S$ in \eqref{eq:LS}. Then $V$ is the unique bounded classical solution (in $t$) of
the backward Cauchy problem
\begin{equation}\label{eq:nonlocal_BS}
\partial_t V(t,s)+\sum_{\alpha\in\mathbb Z}\gamma_\alpha\Bigl(V\bigl(t,se^{\alpha\Delta x}\bigr)-V(t,s)\Bigr)-rV(t,s)=0,
\qquad V(T,s)=\Phi(s),
\tag{2.24}
\end{equation}
i.e.\ the nonlocal Black--Scholes-type equation.
Moreover, in the commutative reduction of Definition~\ref{def:sym_discount},
\[
\Pi_t\bigl(\Phi(S_T)\bigr)=V(t,S_t)
\quad\text{(as a multiplication operator / classical random variable).}
\]
\end{theorem}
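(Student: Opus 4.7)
The plan is to reduce everything to the bounded-generator semigroup already provided by Lemma~\ref{lem:LS_bounded}, derive the PDE by direct differentiation, and handle the operator identification $\Pi_t(\Phi(S_T))=V(t,S_t)$ using the commutative specialization of Definition~\ref{def:pricing_operator}. The Markov property does the heavy lifting on the probabilistic side; the operator-algebraic side is essentially bookkeeping.

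First I would define the transition semigroup $(P_\tau)_{\tau\ge 0}$ on $\ell^\infty(e^{x_0}e^{\Delta x\mathbb Z})$ by $P_\tau g(s):=\mathbb E^{\mathbb Q}[g(S_\tau)\mid S_0=s]$, and identify it with $e^{\tau L_S}$: by Lemma~\ref{lem:LS_bounded}, $L_S$ is a bounded operator on $\ell^\infty$, hence generates a unique uniformly continuous semigroup which, by time-homogeneity and the standard Kolmogorov backward argument, agrees with the transition semigroup on bounded functions. With this in hand, write
\[
V(t,s)=e^{-r(T-t)}(P_{T-t}\Phi)(s).
\]
Differentiating in $t$, using $\partial_\tau P_\tau g=L_S P_\tau g$ in operator norm (Lemma~\ref{lem:LS_bounded}) and the product rule for norm-differentiable scalar/operator-valued factors, gives
\[
\partial_t V(t,s)=rV(t,s)-L_S V(t,\cdot)(s),
\]
which, unfolded via \eqref{eq:LS}, is precisely \eqref{eq:nonlocal_BS}. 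The terminal condition $V(T,s)=\Phi(s)$ is immediate from $P_0=I$.

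For uniqueness, suppose $W$ is a second bounded classical solution; setting $\Delta:=V-W$, one has $\partial_t\Delta=(rI-L_S)\Delta$ with $\Delta(T,\cdot)=0$. Because $rI-L_S$ is bounded on $\ell^\infty$, the backward Cauchy problem is a bounded-operator ODE on a Banach space with zero terminal data; its unique solution is $\Delta\equiv 0$ (explicit Gronwall, or equivalently $\Delta(t,\cdot)=e^{(T-t)(rI-L_S)}\Delta(T,\cdot)=0$).

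Finally, for the operator identification, I would invoke the commutative reduction explicitly: under $N_t\simeq L^\infty(\Sigma_t)$ and deterministic num\'eraire $B_t=e^{rt}I\in N_t$, Definition~\ref{def:pricing_operator} collapses to
\[
\Pi_t(\Phi(S_T))=e^{rt}\,\mathbb E^{\mathbb Q}\!\left[e^{-rT}\Phi(S_T)\,\middle|\,\mathcal F_t\right]
=\mathbb E^{\mathbb Q}\!\left[e^{-r(T-t)}\Phi(S_T)\,\middle|\,\mathcal F_t\right],
\]
since $B_T^{-1/2}$ commutes with everything and multiplies through symmetrically. The Markov property of $(S_t)$ under $\mathbb Q$ then identifies this conditional expectation with $V(t,S_t)$, viewed as a multiplication operator in the commutative realization of $N_t$. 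The main obstacle I anticipate is the first step: cleanly justifying that the probabilistic transition semigroup and the analytic semigroup $e^{\tau L_S}$ coincide on all bounded functions (not just on compactly supported ones, where the lattice might be countably infinite). The saving grace is that $L_S$ is norm-bounded by $2\Lambda$, so the exponential series converges uniformly and one can match the two semigroups first on indicator functions of finite lattice sets (where both are transparent) and then extend by boundedness and normal continuity of conditional expectation.
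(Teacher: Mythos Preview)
Your proposal is correct and follows essentially the same approach as the paper: both represent $V(t,s)=e^{-r(T-t)}(P_{T-t}\Phi)(s)$ via the bounded-generator semigroup of Lemma~\ref{lem:LS_bounded}, differentiate to obtain the backward equation, prove uniqueness by the semigroup formula for the zero-terminal-data problem, and then invoke the commutative reduction plus the Markov property for the $\Pi_t$ identification. The only cosmetic difference is that the paper introduces an auxiliary variable $\tau=T-t$ and derives $\partial_\tau u=(L_S-r)u$ before converting back, whereas you differentiate directly in $t$; your caveat about matching the probabilistic and analytic semigroups is a point the paper leaves implicit.
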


\begin{proof}
\emph{Step 1: Semigroup representation.}
Let $(P_\tau)_{\tau\ge 0}$ be the Markov semigroup of $S$ on bounded functions:
\[
(P_\tau \psi)(s):=\mathbb E^{\mathbb Q}\!\left[\psi(S_{t+\tau})\,\middle|\,S_t=s\right],
\]
which is time-homogeneous by assumption. For $\tau\ge 0$, define
\[
u(\tau,s):=e^{-r\tau}(P_\tau\Phi)(s).
\]
Then by construction $u(0,s)=\Phi(s)$ and, with $\tau=T-t$, we have
\[
V(t,s)=\mathbb E^{\mathbb Q}\!\left[e^{-r(T-t)}\Phi(S_T)\mid S_t=s\right]
=e^{-r\tau}(P_\tau\Phi)(s)=u(\tau,s).
\]

\emph{Step 2: Backward equation in $\tau$ via the generator.}
By Lemma~\ref{lem:LS_bounded}, $\tau\mapsto P_\tau\Phi$ is differentiable in $\ell^\infty$ and
$\partial_\tau(P_\tau\Phi)=L_S(P_\tau\Phi)$. Hence
\[
\partial_\tau u(\tau,\cdot)
= -r e^{-r\tau}(P_\tau\Phi) + e^{-r\tau}\partial_\tau(P_\tau\Phi)
= -r u(\tau,\cdot) + L_S u(\tau,\cdot).
\]
That is,
\begin{equation}\label{eq:u_equation}
\partial_\tau u(\tau,s)= (L_S u(\tau,\cdot))(s)-r u(\tau,s),\qquad u(0,s)=\Phi(s).
\end{equation}
Expanding $L_S$ by \eqref{eq:LS} yields
\[
\partial_\tau u(\tau,s)=\sum_{\alpha}\gamma_\alpha\bigl(u(\tau,se^{\alpha\Delta x})-u(\tau,s)\bigr)-r u(\tau,s).
\]

\emph{Step 3: Convert to the $t$-backward form.}
Set $\tau=T-t$ and $V(t,s)=u(T-t,s)$. Then $\partial_t V(t,s)=-\partial_\tau u(\tau,s)$, so \eqref{eq:u_equation}
becomes exactly \eqref{eq:nonlocal_BS} with terminal condition $V(T,s)=u(0,s)=\Phi(s)$.

\emph{Step 4: Uniqueness in the bounded class.}
Suppose $W$ is another bounded solution of \eqref{eq:nonlocal_BS} with $W(T,\cdot)=0$.
Define $\widetilde u(\tau,s):=W(T-\tau,s)$. Then $\widetilde u$ satisfies
$\partial_\tau \widetilde u=(L_S-r)\widetilde u$ with $\widetilde u(0,\cdot)=0$.
By the semigroup representation for the bounded generator $L_S-rI$,
$\widetilde u(\tau,\cdot)=e^{\tau(L_S-rI)}\widetilde u(0,\cdot)=0$ for all $\tau$, hence $W\equiv 0$.

\emph{Step 5: Identification with the pricing operator.}
In the commutative realization, $\Pi_t(X)=\mathbb E^{\mathbb Q}[e^{-r(T-t)}X\mid\mathcal F_t]$ for bounded $X$,
so for $X=\Phi(S_T)$ we obtain $\Pi_t(\Phi(S_T))=\mathbb E^{\mathbb Q}[e^{-r(T-t)}\Phi(S_T)\mid\mathcal F_t]$.
By the Markov property this equals $V(t,S_t)$, i.e.\ the multiplication operator by the classical price function.
\end{proof}

\subsubsection{WKB/adiabatic approximation for the term-structure discount factor}
\label{subsec:wkb_discount}

\paragraph{Deterministic term structure.}
In risk-neutral valuation, the money-market account is
\[
B_t := \exp\Big(\int_0^t r_u\,du\Big),
\qquad 
\Rightarrow\qquad
\frac{B_t}{B_T}=\exp\Big(-\int_t^T r_u\,du\Big).
\]
We consider the \emph{deterministic term structure} case, i.e.\ $r:[0,T]\to\mathbb{R}$ is a (non-random) measurable function.
If $r_t$ is stochastic, one must enlarge the state space (e.g.\ $(S_t,r_t)$ or $(S_t,B_t)$) to obtain a closed Markov generator;
we do not pursue this extension here.

\paragraph{Exact term-structure pricing.}
Let $\Phi$ be a bounded payoff on the price lattice, $\|\Phi\|_\infty<\infty$, and define
\begin{equation}
V(t,s):=\mathbb{E}^Q\!\left[\exp\Big(-\int_t^T r_u\,du\Big)\,\Phi(S_T)\,\Big|\,S_t=s\right],
\qquad (t,s)\in[0,T]\times(0,\infty).
\label{eq:term_structure_value}
\end{equation}

\paragraph{Backward equation under a jump generator.}
Assume that under $Q$ the price process $S_t$ is a time-homogeneous Markov process with pure-jump generator
\[
(L_S f)(s):=\sum_{\alpha\in\mathbb{Z}}\gamma_\alpha\big(f(se^{\alpha\Delta x})-f(s)\big),
\]
defined on a suitable core (e.g.\ bounded functions on the lattice). Under standard regularity assumptions ensuring that
$V$ lies in the domain of the backward operator, the Feynman--Kac formula implies that $V$ solves
\begin{equation}
\partial_t V(t,s) + (L_S V)(t,s) - r(t)\,V(t,s)=0,
\qquad
V(T,s)=\Phi(s).
\label{eq:backward_term_structure}
\end{equation}

\paragraph{WKB/adiabatic regime (slowly varying short rate).}
Fix $\varepsilon\in(0,1]$ and define
\[
r^\varepsilon(t):=r(\varepsilon t),\qquad t\in[0,T],
\]
where $r\in C^1([0,\varepsilon T])$ is bounded together with its derivative. Equivalently,
\[
\sup_{t\in[0,T]}|r^\varepsilon(t)|\le \|r\|_\infty,
\qquad
\sup_{t\in[0,T]}\big|(r^\varepsilon)'(t)\big|
=\varepsilon\|r'\|_\infty.
\]

\begin{lemma}[First-order WKB expansion of the discount factor]
\label{lem:wkb_discount_final}
For any $0\le t\le T$,
\begin{equation}
\int_t^T r^\varepsilon(u)\,du
=
r^\varepsilon(t)\,(T-t)+R_\varepsilon(t,T),
\qquad
|R_\varepsilon(t,T)|
\le
\frac{\varepsilon}{2}\,\|r'\|_\infty\,(T-t)^2.
\label{eq:wkb_int_final}
\end{equation}
Consequently,
\begin{equation}
\exp\Big(-\int_t^T r^\varepsilon(u)\,du\Big)
=
\exp\big(-r^\varepsilon(t)(T-t)\big)\,\big(1+\delta_\varepsilon(t,T)\big),
\label{eq:wkb_expansion_final}
\end{equation}
with the explicit bound
\begin{equation}
|\delta_\varepsilon(t,T)|
\le 
\exp\big(\|r\|_\infty (T-t)\big)\,
\frac{\varepsilon}{2}\,\|r'\|_\infty\,(T-t)^2.
\label{eq:wkb_delta_bound_final}
\end{equation}
\end{lemma}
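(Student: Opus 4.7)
The plan is to first establish the first-order Taylor expansion (4.14) of the integral, and then pass to the multiplicative discount-factor bound (4.15)--(4.16) by a short elementary computation on the exponential.

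For (4.14), I would invoke the fundamental theorem of calculus together with the chain rule to write, for $u \ge t$,
\[
r^\varepsilon(u) - r^\varepsilon(t) \;=\; \int_t^u (r^\varepsilon)'(v)\,dv \;=\; \int_t^u \varepsilon\,r'(\varepsilon v)\,dv,
\]
which is legitimate since $r \in C^1$. Integrating the identity in $u$ over $[t,T]$ immediately yields the decomposition $\int_t^T r^\varepsilon(u)\,du = r^\varepsilon(t)(T-t) + R_\varepsilon(t,T)$ with the explicit double-integral remainder $R_\varepsilon(t,T) = \int_t^T \!\!\int_t^u \varepsilon\,r'(\varepsilon v)\,dv\,du$. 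Bounding the integrand pointwise by $\varepsilon\|r'\|_\infty$ and computing $\int_t^T (u-t)\,du = \tfrac{1}{2}(T-t)^2$ gives the stated estimate $|R_\varepsilon(t,T)| \le \tfrac{\varepsilon}{2}\|r'\|_\infty(T-t)^2$.

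For (4.15), I would simply factor the exponential: $\exp\bigl(-\int_t^T r^\varepsilon(u)\,du\bigr) = \exp\bigl(-r^\varepsilon(t)(T-t)\bigr)\cdot\exp\bigl(-R_\varepsilon(t,T)\bigr)$, which forces $\delta_\varepsilon(t,T) = \exp(-R_\varepsilon(t,T)) - 1$. For (4.16), I would apply the elementary inequality $|e^{x} - 1| \le |x|\,e^{|x|}$ (valid for all real $x$, obtained by integrating the derivative of $x\mapsto e^x$ along the segment from $0$ to $x$) at $x = -R_\varepsilon(t,T)$. The polynomial factor $|R_\varepsilon|$ is then controlled by the Step~1 estimate, giving the $\tfrac{\varepsilon}{2}\|r'\|_\infty(T-t)^2$ contribution, while the exponential prefactor $e^{|R_\varepsilon|}$ is controlled by the coarse sup-norm estimate on $R_\varepsilon$ obtained from $|\int_t^T r^\varepsilon(u)\,du| \le \|r\|_\infty(T-t)$ and $|r^\varepsilon(t)(T-t)| \le \|r\|_\infty(T-t)$, yielding the claimed $e^{\|r\|_\infty(T-t)}$ prefactor (absorbing a harmless numerical constant).

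There is no deep obstacle: the argument is a textbook first-order Taylor expansion combined with one line of exponential algebra. The only technical care is the standard two-scale bookkeeping characteristic of WKB-type arguments: the refined derivative bound $\varepsilon\|r'\|_\infty$ must be used to control the \emph{size} of the remainder $R_\varepsilon$ (producing both the small-$\varepsilon$ factor and the quadratic $(T-t)^2$ scaling), whereas the coarse sup-norm bound $\|r\|_\infty$ is reserved for the exponential prefactor $e^{|R_\varepsilon|}$. Mixing the two roles would either destroy the $O(\varepsilon)$ scaling of the error or produce a nonquantitative exponential constant.
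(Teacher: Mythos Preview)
Your proof is correct and takes essentially the same approach as the paper: the paper invokes the mean value theorem pointwise rather than your integral form of the Taylor remainder for \eqref{eq:wkb_int_final}, but this is a cosmetic difference, and the exponential factorization together with $|e^{-x}-1|\le |x|e^{|x|}$ for \eqref{eq:wkb_expansion_final}--\eqref{eq:wkb_delta_bound_final} is identical. The two-scale bookkeeping you highlight (fine bound for the polynomial factor, coarse sup-norm bound for the exponential prefactor) is exactly what the paper does, and the paper's coarse estimate $|R_\varepsilon|\le\|r\|_\infty(T-t)$ carries the same harmless constant looseness you flag.
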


\begin{proof}
By the mean value theorem, for $u\in[t,T]$ there exists $\theta\in[t,u]$ such that
\[
r^\varepsilon(u)-r^\varepsilon(t)=(r^\varepsilon)'(\theta)\,(u-t),
\]
hence
\[
|r^\varepsilon(u)-r^\varepsilon(t)|
\le \sup_{\xi\in[t,T]}|(r^\varepsilon)'(\xi)|\,(u-t)
\le \varepsilon\|r'\|_\infty\,(u-t).
\]
Integrating over $u\in[t,T]$ yields \eqref{eq:wkb_int_final}. Writing
\[
\exp\Big(-\int_t^T r^\varepsilon(u)\,du\Big)
=
\exp\big(-r^\varepsilon(t)(T-t)\big)\,\exp\big(-R_\varepsilon(t,T)\big),
\]
we define $\delta_\varepsilon(t,T):=\exp(-R_\varepsilon(t,T))-1$. Using the elementary inequality
$|e^{-x}-1|\le e^{|x|}|x|$ and \eqref{eq:wkb_int_final}, together with
$|R_\varepsilon(t,T)|\le \int_t^T |r^\varepsilon(u)|du \le \|r\|_\infty(T-t)$,
gives \eqref{eq:wkb_delta_bound_final}.
\end{proof}

\begin{proposition}[WKB approximation of the value function and recovery of the constant-rate equation]
\label{prop:wkb_value_final}
Let $\Phi$ be bounded on the price lattice and define the exact term-structure price
\[
V^\varepsilon(t,s)
:=
\mathbb{E}^Q\!\left[\exp\Big(-\int_t^T r^\varepsilon(u)\,du\Big)\,\Phi(S_T)\,\Big|\,S_t=s\right].
\]
Then for all $(t,s)\in[0,T]\times(0,\infty)$,
\begin{equation}
\Big|
V^\varepsilon(t,s)
-
\mathbb{E}^Q\!\left[e^{-r^\varepsilon(t)(T-t)}\,\Phi(S_T)\,\Big|\,S_t=s\right]
\Big|
\le
\exp\big(\|r\|_\infty (T-t)\big)\,
\frac{\varepsilon}{2}\,\|r'\|_\infty\,(T-t)^2\,\|\Phi\|_\infty.
\label{eq:wkb_value_bound_final}
\end{equation}
In particular, as $\varepsilon\downarrow 0$,
\[
V^\varepsilon(t,s)
=
\mathbb{E}^Q\!\left[e^{-r^\varepsilon(t)(T-t)}\,\Phi(S_T)\,\Big|\,S_t=s\right]
+O\!\left(\varepsilon (T-t)^2\right).
\]
Moreover, under the generator $L_S$ above and standard backward-equation regularity assumptions, the leading-order approximation satisfies
\begin{equation}
\partial_t V(t,s) + (L_S V)(t,s) - r^\varepsilon(t)\,V(t,s)=0,
\qquad
V(T,s)=\Phi(s).
\label{eq:backward_frozen_rate}
\end{equation}
If $r^\varepsilon(t)\equiv r$ is constant, then \eqref{eq:backward_frozen_rate} reduces to the constant-rate backward equation in Theorem~\ref{thm:nonlocal_rn_backward}.
\end{proposition}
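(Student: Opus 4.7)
The plan is to decompose the proposition into three blocks: the sharp bound \eqref{eq:wkb_value_bound_final}, the asymptotic statement, and the backward equation together with its constant-rate reduction. All the heavy lifting is either in Lemma \ref{lem:wkb_discount_final} (discount factor expansion), Lemma \ref{lem:LS_bounded} (bounded jump semigroup), or the semigroup template from Theorem \ref{thm:nonlocal_rn_backward}.

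First I would attack \eqref{eq:wkb_value_bound_final}. Since the short rate $r^\varepsilon$ is deterministic, both $e^{-\int_t^T r^\varepsilon(u)\,du}$ and $e^{-r^\varepsilon(t)(T-t)}$ are nonrandom and pull out of the conditional expectation, so that
\[
V^\varepsilon(t,s)-\mathbb{E}^Q\!\big[e^{-r^\varepsilon(t)(T-t)}\Phi(S_T)\mid S_t=s\big]
=\Big(e^{-\int_t^T r^\varepsilon(u)\,du}-e^{-r^\varepsilon(t)(T-t)}\Big)\,\mathbb{E}^Q\!\big[\Phi(S_T)\mid S_t=s\big].
\]
With $a:=\int_t^T r^\varepsilon(u)\,du$ and $b:=r^\varepsilon(t)(T-t)$, I apply the elementary inequality $|e^{-a}-e^{-b}|\le e^{\max(|a|,|b|)}|a-b|$. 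The bounds $|a|,|b|\le \|r\|_\infty(T-t)$ supply the exponential prefactor, while Lemma \ref{lem:wkb_discount_final} (specifically \eqref{eq:wkb_int_final}) controls $|a-b|=|R_\varepsilon(t,T)|\le (\varepsilon/2)\|r'\|_\infty(T-t)^2$. Multiplying by $\|\Phi\|_\infty$ and using the trivial bound on the conditional expectation yields \eqref{eq:wkb_value_bound_final}. The $O(\varepsilon(T-t)^2)$ asymptotic is then read off directly as $\varepsilon\downarrow 0$.

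Next I would establish \eqref{eq:backward_frozen_rate} by the Feynman–Kac template of Theorem \ref{thm:nonlocal_rn_backward}, now adapted to a deterministic time-dependent discount. Let $(P_\tau)_{\tau\ge 0}=(e^{\tau L_S})_{\tau\ge 0}$ be the uniformly continuous semigroup from Lemma \ref{lem:LS_bounded}. Because $r^\varepsilon$ is deterministic and independent of $S$, conditional expectation commutes with the scalar discount factor, giving the representation
\[
V^\varepsilon(t,s)=\exp\!\Big(-\int_t^T r^\varepsilon(u)\,du\Big)\,(P_{T-t}\Phi)(s).
\]
I then differentiate in $t$: the $P_{T-t}\Phi$ factor contributes $-L_S(P_{T-t}\Phi)$ by Lemma \ref{lem:LS_bounded}, while the deterministic integrating factor contributes $r^\varepsilon(t)\,\exp(-\int_t^T r^\varepsilon\,du)\,(P_{T-t}\Phi)$ via the fundamental theorem of calculus. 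Rearranging gives $\partial_t V^\varepsilon+L_S V^\varepsilon-r^\varepsilon(t)\,V^\varepsilon=0$ with terminal datum $\Phi$. The constant-rate reduction is immediate: if $r^\varepsilon\equiv r$, the coefficient in \eqref{eq:backward_frozen_rate} collapses to the constant $r$, reproducing \eqref{eq:nonlocal_BS}, and uniqueness is inherited from Step 4 of the proof of Theorem \ref{thm:nonlocal_rn_backward}.

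The only delicate point is the differentiation in $t$ in the Feynman–Kac step. One must verify that $\tau\mapsto P_\tau\Phi$ is norm-differentiable in $\ell^\infty$ (this is exactly Lemma \ref{lem:LS_bounded}), that $\int_t^T r^\varepsilon(u)\,du$ is $C^1$ in $t$ (true since $r^\varepsilon\in C^1$), and that the chain rule applies pointwise; these pieces are routine but must be assembled under the "standard backward-equation regularity assumptions" named in the proposition (e.g., enough regularity for $V^\varepsilon$ to lie in the domain of $\partial_t+L_S$). No other obstacle arises, since the estimate \eqref{eq:wkb_value_bound_final} is a one-line consequence of Lemma \ref{lem:wkb_discount_final} combined with the determinism of $r^\varepsilon$.
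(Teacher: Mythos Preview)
Your proof is correct and follows essentially the same route as the paper: both arguments reduce the bound \eqref{eq:wkb_value_bound_final} to Lemma~\ref{lem:wkb_discount_final} (you invoke the additive remainder $R_\varepsilon$ from \eqref{eq:wkb_int_final} together with an elementary mean-value bound on $|e^{-a}-e^{-b}|$, whereas the paper uses the equivalent multiplicative form \eqref{eq:wkb_expansion_final}--\eqref{eq:wkb_delta_bound_final}), and both obtain \eqref{eq:backward_frozen_rate} from the Feynman--Kac/semigroup representation $V^\varepsilon(t,\cdot)=e^{-\int_t^T r^\varepsilon}\,P_{T-t}\Phi$, with the paper only citing this correspondence while you spell out the differentiation explicitly.
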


\begin{proof}
By Lemma~\ref{lem:wkb_discount_final},
\[
\exp\Big(-\int_t^T r^\varepsilon(u)\,du\Big)
=
e^{-r^\varepsilon(t)(T-t)}\,(1+\delta_\varepsilon(t,T)),
\qquad
|\delta_\varepsilon(t,T)|\ \text{satisfies \eqref{eq:wkb_delta_bound_final}}.
\]
Substituting into the definition of $V^\varepsilon$ and using boundedness of $\Phi$,
\begin{align*}
&\Big|
V^\varepsilon(t,s)
-
\mathbb{E}^Q\!\left[e^{-r^\varepsilon(t)(T-t)}\,\Phi(S_T)\,\Big|\,S_t=s\right]
\Big| \\
&\qquad=
\Big|
\mathbb{E}^Q\!\left[e^{-r^\varepsilon(t)(T-t)}\,\delta_\varepsilon(t,T)\,\Phi(S_T)\,\Big|\,S_t=s\right]
\Big|
\le
e^{-r^\varepsilon(t)(T-t)}\,\|\Phi\|_\infty\,|\delta_\varepsilon(t,T)|,
\end{align*}
and \eqref{eq:wkb_value_bound_final} follows from \eqref{eq:wkb_delta_bound_final}.
The backward equation statement follows from the Markov/Feynman--Kac correspondence for the jump generator with deterministic killing rate $r^\varepsilon(t)$.
\end{proof}

\medskip
This completes the commutative-information pricing illustration: once $\Pi_t$ reduces to the classical
risk-neutral conditional expectation, the backward equation is determined by the Markov semigroup generated by $L_S$.

\subsection{Diffusion limit: recovery of the classical Black--Scholes equation}
\label{subsec:diffusion_limit_BS}

The nonlocal generator \eqref{eq:LS} (hence the backward equation \eqref{eq:nonlocal_BS})
depends on the lattice spacing $\Delta x>0$ and on the jump intensities. To obtain the
classical Black--Scholes equation as a continuous limit, one must specify a scaling regime
as $\Delta x\downarrow 0$ in which the pure-jump dynamics converges to a diffusion.
We present a standard nearest-neighbour scaling leading to geometric Brownian motion,
and show that the corresponding nonlocal prices converge to the Black--Scholes price.

\paragraph{A nearest-neighbour scaling.}
Fix constants $\sigma>0$ and $\mu\in\mathbb R$ and set $\Delta:=\Delta x$.
Consider a family of compound-Poisson log-price processes $(X_t^{(\Delta)})_{t\in[0,T]}$
on $\mathbb R$ with jumps $\pm\Delta$ and intensities
\begin{equation}\label{eq:gamma_scaling}
\gamma^{(\Delta)}_{+}:=\frac{\sigma^2}{2\Delta^2}+\frac{\mu}{2\Delta},
\qquad
\gamma^{(\Delta)}_{-}:=\frac{\sigma^2}{2\Delta^2}-\frac{\mu}{2\Delta},
\qquad
\gamma^{(\Delta)}_{\alpha}=0\ \ (|\alpha|>1).
\end{equation}
For $\Delta$ sufficiently small, $\gamma^{(\Delta)}_{\pm}\ge 0$ holds automatically.
Let $S_t^{(\Delta)}:=e^{X_t^{(\Delta)}}$ and denote by $L^{(\Delta)}_X$ and $L^{(\Delta)}_S$
their generators:
\begin{align}
(L^{(\Delta)}_X f)(x)
&:=\gamma^{(\Delta)}_+\bigl(f(x+\Delta)-f(x)\bigr)
  +\gamma^{(\Delta)}_-\bigl(f(x-\Delta)-f(x)\bigr),
\label{eq:LX_Delta}\\
(L^{(\Delta)}_S g)(s)
&:=\gamma^{(\Delta)}_+\bigl(g(se^{\Delta})-g(s)\bigr)
  +\gamma^{(\Delta)}_-\bigl(g(se^{-\Delta})-g(s)\bigr).
\label{eq:LS_Delta}
\end{align}

\begin{lemma}[Generator expansion in log-price]\label{lem:generator_expansion_log}
Let $f\in C_b^3(\mathbb R)$. Then, uniformly in $x$ on compact sets,
\begin{equation}\label{eq:generator_log_limit}
(L^{(\Delta)}_X f)(x)
=\mu f'(x)+\frac12\sigma^2 f''(x)+R_\Delta(x),
\qquad
\sup_{|x|\le R}|R_\Delta(x)|\le C_R\,\Delta\,\|f^{(3)}\|_{\infty},
\end{equation}
for a constant $C_R$ independent of $\Delta$.
\end{lemma}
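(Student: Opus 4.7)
}
The plan is to Taylor-expand $f(x\pm\Delta)$ to third order around $x$ with Lagrange remainder and exploit the precise scaling of $\gamma^{(\Delta)}_\pm$ in \eqref{eq:gamma_scaling} so that the first two moments of the nearest-neighbour generator reproduce the drift and diffusion coefficients while all higher terms are $O(\Delta)$.

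First I will write, for $f\in C_b^3(\mathbb R)$ and any $x\in\mathbb R$,
\[
f(x\pm\Delta)-f(x)=\pm\Delta f'(x)+\tfrac{\Delta^2}{2}f''(x)\pm\tfrac{\Delta^3}{6}f'''(\xi_\pm),
\]
for suitable $\xi_\pm\in[x-\Delta,x+\Delta]$. Substituting into \eqref{eq:LX_Delta} and grouping terms according to the derivative order gives
\[
(L^{(\Delta)}_X f)(x)
=\bigl(\gamma^{(\Delta)}_+-\gamma^{(\Delta)}_-\bigr)\Delta\,f'(x)
+\bigl(\gamma^{(\Delta)}_++\gamma^{(\Delta)}_-\bigr)\tfrac{\Delta^2}{2}f''(x)+R_\Delta(x),
\]
where the remainder is $R_\Delta(x)=\tfrac{\Delta^3}{6}\bigl[\gamma^{(\Delta)}_+ f'''(\xi_+)-\gamma^{(\Delta)}_- f'''(\xi_-)\bigr]$. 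Next I will invoke the scaling identities
\[
\gamma^{(\Delta)}_+-\gamma^{(\Delta)}_-=\frac{\mu}{\Delta},\qquad
\gamma^{(\Delta)}_++\gamma^{(\Delta)}_-=\frac{\sigma^2}{\Delta^2},
\]
which are immediate from \eqref{eq:gamma_scaling}; these collapse the first two grouped terms to exactly $\mu f'(x)+\tfrac12\sigma^2 f''(x)$, identifying the desired diffusion generator.

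It then remains to control $R_\Delta$. By the triangle inequality and the same sum identity,
\[
|R_\Delta(x)|\le \tfrac{\Delta^3}{6}\bigl(\gamma^{(\Delta)}_++\gamma^{(\Delta)}_-\bigr)\|f'''\|_\infty
=\tfrac{\sigma^2}{6}\,\Delta\,\|f'''\|_\infty,
\]
so the bound \eqref{eq:generator_log_limit} holds with $C_R:=\sigma^2/6$, in fact uniformly in $x\in\mathbb R$ (hence in particular on any compact $\{|x|\le R\}$). Since $f\in C_b^3$, the bound is meaningful without any growth hypothesis at infinity. The main (mild) obstacle is purely bookkeeping: one must use the cubic Taylor remainder rather than a quadratic one so that $\Delta^3\gamma^{(\Delta)}_\pm=O(\Delta)$ rather than $O(1/\Delta)$; stopping at order two would leave an unbounded $\Delta^2\gamma^{(\Delta)}_\pm f''$ correction and fail to vanish as $\Delta\downarrow 0$.
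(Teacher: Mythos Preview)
Your proof is correct and follows essentially the same approach as the paper: Taylor-expand to third order with Lagrange remainder, insert into \eqref{eq:LX_Delta}, and use the scaling identities $\gamma^{(\Delta)}_+-\gamma^{(\Delta)}_-=\mu/\Delta$ and $\gamma^{(\Delta)}_++\gamma^{(\Delta)}_-=\sigma^2/\Delta^2$. Your write-up is in fact more explicit than the paper's, giving the concrete constant $C_R=\sigma^2/6$ and noting that the bound is actually uniform in $x\in\mathbb R$, not merely on compacts.
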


\begin{proof}
By Taylor's theorem with remainder, for each $x$,
\[
f(x\pm\Delta)-f(x)=\pm\Delta f'(x)+\frac{\Delta^2}{2}f''(x)\pm\frac{\Delta^3}{6}f^{(3)}(\xi_\pm),
\]
for some $\xi_\pm$ between $x$ and $x\pm\Delta$. Insert into \eqref{eq:LX_Delta} and use
\eqref{eq:gamma_scaling}. The $\Delta^{-2}$-terms cancel in the first derivative part and
add in the second derivative part; the $\Delta^{-1}$-terms produce the drift $\mu f'(x)$.
The remainder is bounded by a constant times $\Delta\|f^{(3)}\|_\infty$ on compacts.
\end{proof}

\begin{lemma}[Generator expansion in price]\label{lem:generator_expansion_price}
Let $g\in C^3((0,\infty))$ with $g,g',g'',g^{(3)}$ bounded on each compact subset of $(0,\infty)$.
Then, uniformly for $s$ in compact subsets of $(0,\infty)$,
\begin{equation}\label{eq:generator_price_limit}
(L^{(\Delta)}_S g)(s)
= r\,s\,g'(s)+\frac12\sigma^2 s^2 g''(s)+\widetilde R_\Delta(s),
\qquad \widetilde R_\Delta(s)\to 0 \ \ (\Delta\downarrow0),
\end{equation}
provided the drift parameter $\mu$ is chosen as
\begin{equation}\label{eq:mu_rn_choice}
\mu=r-\frac12\sigma^2.
\end{equation}
\end{lemma}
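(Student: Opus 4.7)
The plan is to reduce the statement in price coordinates to Lemma~\ref{lem:generator_expansion_log} in log-price coordinates by the substitution $x=\log s$, $f(x):=g(e^x)$. First I would observe that for any $s\in(0,\infty)$ one has the exact identity
\[
(L^{(\Delta)}_S g)(s)=\gamma^{(\Delta)}_+\bigl(f(x+\Delta)-f(x)\bigr)+\gamma^{(\Delta)}_-\bigl(f(x-\Delta)-f(x)\bigr)=(L^{(\Delta)}_X f)(x),
\]
since $g(s e^{\pm\Delta})=f(x\pm\Delta)$. This reduces the problem to applying the already-established log-price expansion to $f$.

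Second, I would apply Lemma~\ref{lem:generator_expansion_log} to obtain
\[
(L^{(\Delta)}_X f)(x)=\mu f'(x)+\tfrac12\sigma^2 f''(x)+R_\Delta(x),
\]
with $R_\Delta(x)\to 0$ uniformly on compact sets in $x$. A direct chain-rule computation gives
\[
f'(x)=s\,g'(s),\qquad f''(x)=s\,g'(s)+s^2\,g''(s),
\]
so that
\[
\mu f'(x)+\tfrac12\sigma^2 f''(x)=\bigl(\mu+\tfrac12\sigma^2\bigr)s\,g'(s)+\tfrac12\sigma^2 s^2\,g''(s).
\]
With $\mu=r-\tfrac12\sigma^2$ the drift coefficient collapses to $r$, producing exactly the Black--Scholes transport term $r s\,g'(s)$; this is the It\^o correction linking the log-price drift to the price drift. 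Setting $\widetilde R_\Delta(s):=R_\Delta(\log s)$ gives the claimed decomposition, and the fact that compact subsets of $(0,\infty)$ correspond under $s\mapsto \log s$ to compact subsets of $\mathbb R$ transports uniform convergence $R_\Delta\to 0$ into uniform convergence $\widetilde R_\Delta\to 0$ on compact subsets of $(0,\infty)$.

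The only mildly delicate point is that Lemma~\ref{lem:generator_expansion_log} is stated for $f\in C_b^3(\mathbb R)$, whereas $f(x)=g(e^x)$ is in general unbounded at $+\infty$ and need not have globally bounded derivatives even when $g,g',g'',g^{(3)}$ are bounded on compact subsets of $(0,\infty)$. The resolution is that the lemma's conclusion is purely local: its proof proceeds pointwise via Taylor's theorem with third-order remainder controlled by $\sup_{\xi\in[x-\Delta,x+\Delta]}|f^{(3)}(\xi)|$, and this bound only requires local $C^3$ regularity. I would therefore either (i) quote the local form of Lemma~\ref{lem:generator_expansion_log} (whose Taylor-based proof applies verbatim when $f$ is merely $C^3$ with $f^{(3)}$ bounded on a neighborhood of the relevant compact set), or (ii) multiply $f$ by a smooth compactly supported cutoff equal to $1$ on the image of the target compact $s$-set and observe that $L^{(\Delta)}_X$ agrees with $L^{(\Delta)}_X$ applied to the cutoff version for $\Delta$ small enough. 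Either route yields $\widetilde R_\Delta(s)\to 0$ uniformly on compacta, completing the proof.
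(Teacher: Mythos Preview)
Your proposal is correct and follows essentially the same route as the paper: substitute $f(x)=g(e^x)$, use the identity $(L_S^{(\Delta)}g)(s)=(L_X^{(\Delta)}f)(\log s)$, apply Lemma~\ref{lem:generator_expansion_log}, and convert the chain-rule expressions for $f',f''$ back to $g',g''$ with the choice $\mu=r-\tfrac12\sigma^2$. You are in fact more careful than the paper, which simply applies Lemma~\ref{lem:generator_expansion_log} without commenting on the $C_b^3$ hypothesis; your localization/cutoff remark is a legitimate way to close that gap.
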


\begin{proof}
Write $g(s)=f(\log s)$ so that $f(x)=g(e^x)$. Then
\[
f'(x)=s g'(s),\qquad f''(x)=s g'(s)+s^2 g''(s),\quad s=e^x.
\]
Apply Lemma~\ref{lem:generator_expansion_log} to $f$:
\[
(L_X^{(\Delta)} f)(x)=\mu s g'(s)+\frac12\sigma^2\bigl(s g'(s)+s^2 g''(s)\bigr)+o(1).
\]
Since $(L_S^{(\Delta)} g)(s)=(L_X^{(\Delta)} f)(\log s)$ by construction, choosing
$\mu=r-\frac12\sigma^2$ yields \eqref{eq:generator_price_limit}.
\end{proof}

\paragraph{Risk-neutral diffusion limit.}
With the choice \eqref{eq:mu_rn_choice}, the limiting log-price diffusion is
\begin{equation}\label{eq:limit_log_SDE}
dX_t=\Bigl(r-\frac12\sigma^2\Bigr)\,dt+\sigma\,dW_t,
\end{equation}
hence $S_t:=e^{X_t}$ satisfies the geometric Brownian motion SDE
\begin{equation}\label{eq:GBM}
dS_t=rS_t\,dt+\sigma S_t\,dW_t,
\end{equation}
i.e.\ the classical Black--Scholes risk-neutral dynamics.

\begin{theorem}[Convergence of prices to Black--Scholes]\label{thm:nonlocal_to_BS}
Assume the scaling \eqref{eq:gamma_scaling} and the risk-neutral choice \eqref{eq:mu_rn_choice}.
Let $\Phi:(0,\infty)\to\mathbb R$ be bounded and continuous.
For each $\Delta>0$, define the (nonlocal) risk-neutral price function
\begin{equation}\label{eq:V_Delta_def}
V^{(\Delta)}(t,s)
:=\mathbb E^{\mathbb Q^{(\Delta)}}\!\left[e^{-r(T-t)}\Phi\bigl(S_T^{(\Delta)}\bigr)\,\middle|\,S_t^{(\Delta)}=s\right],
\qquad (t,s)\in[0,T]\times(0,\infty),
\end{equation}
where $S^{(\Delta)}$ is the pure-jump Markov process with generator $L_S^{(\Delta)}$ in \eqref{eq:LS_Delta}.
Let $S$ be the geometric Brownian motion \eqref{eq:GBM} and set
\begin{equation}\label{eq:V_BS_def}
V(t,s):=\mathbb E^{\mathbb Q}\!\left[e^{-r(T-t)}\Phi(S_T)\,\middle|\,S_t=s\right].
\end{equation}\label{solution_for_nonlocal}
Then, as $\Delta\downarrow0$,
\[
V^{(\Delta)}(t,s)\longrightarrow V(t,s)
\quad\text{for each }(t,s)\in[0,T]\times(0,\infty),
\]
and the convergence is locally uniform on compact subsets of $[0,T]\times(0,\infty)$.

Moreover, if $\Phi\in C_b^2((0,\infty))$, then $V\in C^{1,2}([0,T)\times(0,\infty))$ and $V$ is the unique
classical solution (in the bounded class) of the Black--Scholes backward equation
\begin{equation}\label{eq:BS_PDE}
\partial_t V(t,s)+\frac12\sigma^2 s^2\,\partial_{ss}V(t,s)+r s\,\partial_s V(t,s)-rV(t,s)=0,
\qquad V(T,s)=\Phi(s).
\end{equation}
\end{theorem}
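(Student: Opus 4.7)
The plan is to split the theorem into two stages: first, derive pointwise and locally uniform convergence $V^{(\Delta)}\to V$ as a consequence of Markov semigroup/process convergence (with the generator-level input already supplied by Lemmas~\ref{lem:generator_expansion_log}--\ref{lem:generator_expansion_price}); second, identify $V$ with the unique bounded classical solution of \eqref{eq:BS_PDE} via Feynman--Kac. The natural setting for the convergence stage is the log-price coordinate $x=\log s$, where the processes $X^{(\Delta)}$ and the limit $X$ solving \eqref{eq:limit_log_SDE} all live on $\mathbb{R}$ and the limiting generator $\mathcal{L}=\mu\partial_x+\tfrac12\sigma^2\partial_{xx}$ has constant, uniformly elliptic coefficients. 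I would translate both $V^{(\Delta)}$ and $V$ into expectations on log-space, run the convergence argument there, and then transfer back via $s=e^x$.

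For the first stage I would invoke a Trotter--Kato / Kurtz type semigroup convergence theorem. Lemma~\ref{lem:generator_expansion_log} yields $L_X^{(\Delta)}f\to\mathcal{L}f$ uniformly on compact sets for every $f\in C_c^3(\mathbb{R})$, and $C_c^\infty(\mathbb{R})$ is a core for $\mathcal{L}$ (whose semigroup is the Gauss--Weierstrass semigroup with drift). Together with contractivity of the pre-limit compound-Poisson Markov semigroups, this gives strong semigroup convergence on $C_0(\mathbb{R})$ and hence weak convergence of the laws of $X^{(\Delta)}$ (started at $x$) to the law of $X$ on $D([0,T];\mathbb{R})$. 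The continuous mapping theorem applied to $x\mapsto e^x$ then gives $S^{(\Delta)}_T\Rightarrow S_T$ conditional on $S^{(\Delta)}_t=s$. Since $\Phi$ is bounded continuous and $e^{-r(T-t)}$ is a deterministic constant, taking expectations yields pointwise convergence $V^{(\Delta)}(t,s)\to V(t,s)$. Locally uniform convergence on compact subsets of $[0,T]\times(0,\infty)$ follows by equicontinuity of the family $\{V^{(\Delta)}\}$ in $(t,s)$, obtained from the smoothing effect of the jump semigroups on the bounded continuous payoff plus uniform control of the heat kernel of the limit.

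For the second stage, under $\Phi\in C_b^2((0,\infty))$ the SDE \eqref{eq:GBM} has smooth non-degenerate coefficients on $(0,\infty)$, so the classical Feynman--Kac theorem gives $V\in C^{1,2}([0,T)\times(0,\infty))$ satisfying \eqref{eq:BS_PDE} with $V(T,\cdot)=\Phi$. Uniqueness in the bounded class follows from the semigroup representation $V(t,s)=e^{-r(T-t)}(P_{T-t}\Phi)(s)$ under the Feller semigroup of $S$, or equivalently from a standard maximum-principle argument after the logarithmic change of variable $x=\log s$ reduces \eqref{eq:BS_PDE} to a constant-coefficient parabolic equation.

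The main obstacle is the clean application of Trotter--Kato across the lattice/continuum divide: the pre-limit operator $L_X^{(\Delta)}$ naturally acts on functions on $\Delta\mathbb{Z}$, while $\mathcal{L}$ acts on $C_0(\mathbb{R})$, so a comparison requires either an explicit prolongation (piecewise-constant or piecewise-linear interpolation) together with a check that the extended operators converge on the common core, or a complete bypass of this bookkeeping. The cleanest bypass is the martingale-problem route of Ethier--Kurtz: the per-unit-time jump second moment $\sum_\alpha\alpha^2\Delta^2\gamma^{(\Delta)}_\alpha=\sigma^2+O(\Delta)$ is uniform in $\Delta$, which yields tightness of $\{X^{(\Delta)}\}$ on Skorokhod space; any weak limit solves the $\mathcal{L}$-martingale problem by Lemma~\ref{lem:generator_expansion_log} against $C_c^\infty$ test functions; and uniqueness for this martingale problem (namely \eqref{eq:limit_log_SDE}) forces full weak convergence. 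This is where I expect the bulk of the technical work to lie; the PDE identification and uniqueness in stage two are comparatively standard once weak convergence is secured.
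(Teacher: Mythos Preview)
Your proposal is correct and follows essentially the same route as the paper: both work in the log-price coordinate, feed the generator convergence of Lemma~\ref{lem:generator_expansion_log} into Ethier--Kurtz/martingale-problem machinery to obtain weak convergence $X^{(\Delta)}\Rightarrow X$ on Skorokhod space, transfer to $S^{(\Delta)}\Rightarrow S$ via the continuous mapping $x\mapsto e^x$, and then read off $V^{(\Delta)}\to V$ from boundedness and continuity of $\Phi$; the PDE identification and uniqueness under $\Phi\in C_b^2$ are handled by Feynman--Kac and the maximum principle in both cases. Your discussion of the lattice/continuum bookkeeping and the tightness estimate from the uniform second-moment bound is more explicit than the paper's, which simply cites Ethier--Kurtz and Stroock--Varadhan at that juncture, but the underlying argument is the same.
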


\begin{proof}
\emph{Step 1 (Diffusion approximation at the process level).}
By Lemma~\ref{lem:generator_expansion_log}, the generators $L_X^{(\Delta)}$ converge on
$C_b^3(\mathbb R)$ (uniformly on compacts) to the second-order differential operator
\[
(A f)(x):=\Bigl(r-\frac12\sigma^2\Bigr)f'(x)+\frac12\sigma^2 f''(x).
\]
Standard generator/semigroup convergence theorems for Feller processes (e.g.\ Ethier--Kurtz,
or equivalently a martingale-problem argument) imply that
\[
X^{(\Delta)} \ \Rightarrow\ X
\quad\text{in }D([0,T];\mathbb R),
\]
where $X$ is the unique diffusion solving \eqref{eq:limit_log_SDE}.
By the continuous mapping theorem and continuity of the exponential map,
\[
S^{(\Delta)}=e^{X^{(\Delta)}}\ \Rightarrow\ S=e^{X}
\quad\text{in }D([0,T];(0,\infty)),
\]
and $S$ satisfies \eqref{eq:GBM}.
(References: Ethier--Kurtz \cite{EthierKurtz1986}, Stroock--Varadhan \cite{StroockVaradhan1979}.)

\emph{Step 2 (Semigroup convergence and price convergence).}
Denote the Markov semigroups by
\[
(P_\tau^{(\Delta)}\Phi)(s):=\mathbb E^{\mathbb Q^{(\Delta)}}\!\left[\Phi(S_{t+\tau}^{(\Delta)})\,\middle|\,S_t^{(\Delta)}=s\right],
\qquad
(P_\tau\Phi)(s):=\mathbb E^{\mathbb Q}\!\left[\Phi(S_{t+\tau})\,\middle|\,S_t=s\right].
\]
The weak convergence of the Markov processes together with boundedness and continuity of $\Phi$
yields convergence of finite-dimensional distributions and, by standard Feller-process arguments,
local uniform convergence $P_\tau^{(\Delta)}\Phi\to P_\tau\Phi$ on compact subsets for each $\tau\in[0,T]$.
Therefore,
\[
V^{(\Delta)}(t,s)=e^{-r(T-t)}(P_{T-t}^{(\Delta)}\Phi)(s)\ \longrightarrow\
e^{-r(T-t)}(P_{T-t}\Phi)(s)=V(t,s),
\]
locally uniformly on compacts.

\emph{Step 3 (Black--Scholes PDE for smooth payoffs).}
If $\Phi\in C_b^2((0,\infty))$, then the Markov semigroup of $S$ is sufficiently regular and
$V(t,s)=e^{-r(T-t)}(P_{T-t}\Phi)(s)$ is a classical solution of \eqref{eq:BS_PDE}.
Equivalently, applying It\^o's formula to $e^{-rt}V(t,S_t)$ and using \eqref{eq:GBM} shows the local
martingale condition is exactly \eqref{eq:BS_PDE}. Uniqueness in the bounded classical class follows from standard parabolic maximum principles / semigroup uniqueness.
(Standard Feynman--Kac theory, e.g.\ \cite{KaratzasShreve1991}.)
\end{proof}


\begin{proposition}[Explicit series solution (compound Poisson expansion)]
\label{prop:series_solution_nonlocal}
Fix $\Delta x>0$ and jump intensities $(\gamma_\alpha)_{\alpha\in\mathbb Z}$ with
\[
\Lambda:=\sum_{\alpha\in\mathbb Z}\gamma_\alpha<\infty,\qquad \gamma_\alpha\ge 0.
\]
Let $\mathcal S:=s_0 e^{\Delta x\mathbb Z}$ be the price lattice and let $\Phi:\mathcal S\to\mathbb R$ be bounded.
Define the nonlocal generator on $\ell^\infty(\mathcal S)$ by
\[
(L f)(s):=\sum_{\alpha\in\mathbb Z}\gamma_\alpha\bigl(f(se^{\alpha\Delta x})-f(s)\bigr),
\qquad s\in\mathcal S.
\]
For $\tau:=T-t\ge 0$, set
\begin{equation}\label{eq:series_solution}
V(t,s):=e^{-r\tau}\,e^{-\Lambda\tau}
\sum_{n=0}^{\infty}\frac{\tau^n}{n!}
\sum_{\alpha_1,\ldots,\alpha_n\in\mathbb Z}
\gamma_{\alpha_1}\cdots\gamma_{\alpha_n}\,
\Phi\!\Bigl(s\,e^{\Delta x(\alpha_1+\cdots+\alpha_n)}\Bigr),
\qquad (t,s)\in[0,T]\times\mathcal S,
\end{equation}
where the $n=0$ term is understood as $\Phi(s)$ (empty sum equals $0$).
Then the series \eqref{eq:series_solution} converges absolutely and defines a bounded function $V$.
Moreover, $V$ is the unique bounded classical solution (in $t$) of the backward equation
\[
\partial_t V(t,\cdot)+L V(t,\cdot)-rV(t,\cdot)=0,\qquad V(T,\cdot)=\Phi(\cdot),
\]
i.e.\ it coincides with the nonlocal risk-neutral price solving \ref{solution_for_nonlocal}.
\end{proposition}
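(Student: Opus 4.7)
The idea is to represent $V(t,\cdot) = e^{-r(T-t)}\,e^{(T-t)L}\Phi$ via an explicit power-series expansion of the semigroup, exploiting the fact that $L$ is bounded on $\ell^\infty(\mathcal S)$. First I would write $L = J - \Lambda I$ on $\ell^\infty(\mathcal S)$, where $J$ is the ``shift/transfer'' operator
\[
(Jf)(s) := \sum_{\alpha\in\mathbb Z}\gamma_\alpha\,f(se^{\alpha\Delta x}),
\]
and observe that $\|J\|_{\ell^\infty\to\ell^\infty}\le\Lambda$ (by the same estimate as in Lemma~\ref{lem:LS_bounded}) and that $J$ commutes with the scalar operator $\Lambda I$. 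Hence $e^{\tau L} = e^{-\Lambda\tau}\,e^{\tau J}$, and the exponential $e^{\tau J}$ is given by a norm-convergent power series on $\ell^\infty(\mathcal S)$.

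Next, a straightforward induction on $n$ (applying $J$ one factor at a time and using absolute convergence to interchange finite and infinite sums via Fubini) shows
\[
(J^n\Phi)(s) = \sum_{\alpha_1,\ldots,\alpha_n\in\mathbb Z}\gamma_{\alpha_1}\cdots\gamma_{\alpha_n}\,\Phi\!\bigl(s\,e^{\Delta x(\alpha_1+\cdots+\alpha_n)}\bigr),
\]
with $\bigl|(J^n\Phi)(s)\bigr|\le \Lambda^n\|\Phi\|_\infty$ since $\sum_{\alpha_1,\ldots,\alpha_n}\gamma_{\alpha_1}\cdots\gamma_{\alpha_n}=\Lambda^n$. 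Substituting into $e^{\tau L}\Phi = e^{-\Lambda\tau}\sum_{n\ge 0}(\tau^n/n!)\,J^n\Phi$, setting $\tau = T-t$, and multiplying by $e^{-r\tau}$, I recover precisely the formula \eqref{eq:series_solution}. The bound $|V(t,s)|\le e^{-r(T-t)}\|\Phi\|_\infty$ then follows immediately by summing the majorants, yielding absolute convergence and boundedness of $V$.

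For the PDE and uniqueness, I would differentiate $\tau\mapsto e^{\tau L}\Phi$ in operator norm (justified by Lemma~\ref{lem:LS_bounded}), so that with $\tau = T - t$,
\[
\partial_\tau V = -rV + LV,
\]
and the change of variable $\partial_t = -\partial_\tau$ produces the backward equation $\partial_tV + LV - rV = 0$ with terminal datum $V(T,\cdot)=\Phi$. Uniqueness in the bounded class is inherited from the argument in Step~4 of the proof of Theorem~\ref{thm:nonlocal_rn_backward}: any two bounded classical solutions have a difference $W$ satisfying $\partial_\tau W = (L-rI)W$ with $W(0,\cdot)=0$, and uniqueness of the uniformly continuous semigroup generated by the bounded operator $L-rI$ forces $W\equiv 0$. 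I do not expect a serious obstacle here; the only points requiring care are the combinatorial identification of $J^n\Phi$ and the justification of the Fubini interchange, both of which are handled cleanly by the absolute convergence of the double series and by scalar commutativity between $J$ and $\Lambda I$.
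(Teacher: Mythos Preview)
Your proposal is correct and follows essentially the same route as the paper: write $L=J-\Lambda I$ with $J=\sum_\alpha\gamma_\alpha T_\alpha$ bounded on $\ell^\infty$, factor $e^{\tau L}=e^{-\Lambda\tau}e^{\tau J}$, expand $J^n\Phi$ as the iterated sum over $(\alpha_1,\ldots,\alpha_n)$, and read off absolute convergence, the backward equation, and semigroup uniqueness exactly as in Lemma~\ref{lem:LS_bounded} and Step~4 of Theorem~\ref{thm:nonlocal_rn_backward}. The only cosmetic difference is that the paper phrases the computation of $J^n$ via commutativity of the shifts $T_\alpha$ rather than induction, but the content is identical.
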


\begin{proof}
\emph{Step 1 (Absolute convergence).}
Let $\|\Phi\|_\infty:=\sup_{s\in\mathcal S}|\Phi(s)|<\infty$.
For each $n\ge 0$ and each $(t,s)$,
\[
\left|
\sum_{\alpha_1,\ldots,\alpha_n}
\gamma_{\alpha_1}\cdots\gamma_{\alpha_n}\,
\Phi\!\Bigl(s\,e^{\Delta x(\alpha_1+\cdots+\alpha_n)}\Bigr)
\right|
\le \|\Phi\|_\infty\sum_{\alpha_1,\ldots,\alpha_n}\gamma_{\alpha_1}\cdots\gamma_{\alpha_n}
=\|\Phi\|_\infty\,\Lambda^n.
\]
Hence the absolute value of the $n$-th term in \eqref{eq:series_solution} is bounded by
$e^{-r\tau}e^{-\Lambda\tau}\|\Phi\|_\infty (\Lambda\tau)^n/n!$, and the series converges absolutely.

\emph{Step 2 (Identification with the semigroup).}
Introduce shift operators $(T_\alpha)_{\alpha\in\mathbb Z}$ on $\ell^\infty(\mathcal S)$ by
\[
(T_\alpha f)(s):=f(se^{\alpha\Delta x}).
\]
Then $T_\alpha T_\beta=T_{\alpha+\beta}=T_\beta T_\alpha$, and
\[
L=\sum_{\alpha\in\mathbb Z}\gamma_\alpha(T_\alpha-I)
=\Big(\sum_{\alpha\in\mathbb Z}\gamma_\alpha T_\alpha\Big)-\Lambda I.
\]
Since $I$ commutes with all $T_\alpha$ and the family $\{T_\alpha\}$ is commuting, we may write
\[
e^{\tau L}=e^{-\Lambda\tau}\exp\!\Big(\tau\sum_{\alpha}\gamma_\alpha T_\alpha\Big)
= e^{-\Lambda\tau}\sum_{n=0}^\infty \frac{\tau^n}{n!}\Big(\sum_{\alpha}\gamma_\alpha T_\alpha\Big)^n,
\]
where the series converges in operator norm on $\ell^\infty(\mathcal S)$ because
$\|\sum_\alpha\gamma_\alpha T_\alpha\|\le \sum_\alpha\gamma_\alpha=\Lambda$.
Expanding the $n$-th power yields
\[
\Big(\sum_{\alpha}\gamma_\alpha T_\alpha\Big)^n
=\sum_{\alpha_1,\ldots,\alpha_n}\gamma_{\alpha_1}\cdots\gamma_{\alpha_n}\,T_{\alpha_1+\cdots+\alpha_n}.
\]
Applying this operator identity to $\Phi$ and evaluating at $s$ gives
\[
(e^{\tau L}\Phi)(s)
= e^{-\Lambda\tau}
\sum_{n=0}^{\infty}\frac{\tau^n}{n!}
\sum_{\alpha_1,\ldots,\alpha_n}
\gamma_{\alpha_1}\cdots\gamma_{\alpha_n}\,
\Phi\!\Bigl(s\,e^{\Delta x(\alpha_1+\cdots+\alpha_n)}\Bigr).
\]
Therefore \eqref{eq:series_solution} is exactly the semigroup representation
\[
V(t,s)=e^{-r\tau}\,(e^{\tau L}\Phi)(s),\qquad \tau=T-t.
\]

\emph{Step 3 (It solves the backward equation and is unique in the bounded class).}
Since $L$ is bounded on $\ell^\infty(\mathcal S)$ (indeed $\|Lf\|_\infty\le 2\Lambda\|f\|_\infty$),
$\tau\mapsto e^{\tau L}\Phi$ is continuously differentiable in $\ell^\infty$ and satisfies
$\partial_\tau(e^{\tau L}\Phi)=L(e^{\tau L}\Phi)$.
With $V(t,\cdot)=e^{-r(T-t)}e^{(T-t)L}\Phi$, the chain rule gives
$\partial_t V + LV - rV=0$ and $V(T,\cdot)=\Phi$.
Uniqueness in the bounded class follows from semigroup uniqueness: if $W$ is bounded and
$\partial_t W + LW - rW=0$ with $W(T)=0$, then in $\tau=T-t$ variables
$\partial_\tau \widetilde W=(L-rI)\widetilde W$, $\widetilde W(0)=0$, hence
$\widetilde W(\tau)=e^{\tau(L-rI)}\widetilde W(0)=0$ and $W\equiv 0$.
\end{proof}

\begin{remark}[Consistency with the nonlocal PDE]
For each fixed $\Delta>0$, the price $V^{(\Delta)}$ solves the nonlocal backward equation
\[
\partial_t V^{(\Delta)} + L_S^{(\Delta)}V^{(\Delta)}-rV^{(\Delta)}=0,\qquad V^{(\Delta)}(T,\cdot)=\Phi.
\]
Lemma~\ref{lem:generator_expansion_price} shows that $L_S^{(\Delta)}$ converges to the Black--Scholes
local operator $r s\partial_s + \tfrac12\sigma^2 s^2\partial_{ss}$ on smooth test functions.
Thus Theorem~\ref{thm:nonlocal_to_BS} can be read as: \emph{the nonlocal risk-neutral pricing equation
reduces to the classical Black--Scholes equation in the continuous limit $\Delta\downarrow0$ under the
diffusion scaling \eqref{eq:gamma_scaling}.}
\end{remark}

\section{Operator-valued free Fisher information and amalgamated freeness}\label{app:op-valued-fisher}

\noindent
This section develops (in a self-contained manner) the operator-valued free Fisher information
and its equivalence with freeness with amalgamation over a subalgebra, following
Meng-Guo-Cao (2004)~\cite{Meng05PAMS}.  We also explain precisely how these notions can be plugged into our
quantum pricing framework $(M,(N_t),(E_t))$.

\subsection{Operator-valued noncommutative probability and Hilbert \texorpdfstring{$C^\ast$}{C*}-modules}

\subsubsection{The triple \texorpdfstring{$(\mathcal M,E_{\mathcal D},\mathcal D)$}{(M, ED, D)}}

\begin{definition}[Operator-valued probability space]
Let $\mathcal M$ be a von Neumann algebra, $\mathcal D\subset \mathcal M$ a von Neumann subalgebra,
and $E_{\mathcal D}:\mathcal M\to\mathcal D$ a \emph{faithful normal conditional expectation}.
We call $(\mathcal M,E_{\mathcal D},\mathcal D)$ an \emph{operator-valued (or $\mathcal D$-valued)
noncommutative probability space}.
\end{definition}

\begin{remark}
Faithfulness of $E_{\mathcal D}$ means: if $X\ge 0$ and $E_{\mathcal D}(X)=0$ then $X=0$.
Normality means $\sigma$-weak continuity. Throughout this appendix, $E_{\mathcal D}$ is assumed faithful.
\end{remark}

\subsubsection{\texorpdfstring{$\mathcal D$-valued inner product and $L^2_{\mathcal D}(\mathcal M)$}{D-valued inner product and L2 over D}}

\begin{definition}[\(\mathcal D\)-valued inner product]
Define a $\mathcal D$-valued sesquilinear form on $\mathcal M$ by
\[
\langle x,y\rangle_{\mathcal D}:=E_{\mathcal D}(x^\ast y),\qquad x,y\in\mathcal M.
\]
It is $\mathcal D$-linear on the right and conjugate-linear on the left.
\end{definition}

\begin{definition}[Hilbert \(C^\ast\)-module \(L^2_{\mathcal D}(\mathcal M)\)]
Define the seminorm
\[
\|x\|_{2,\mathcal D}:=\|\langle x,x\rangle_{\mathcal D}\|^{1/2}=\|E_{\mathcal D}(x^\ast x)\|^{1/2}.
\]
Let $L^2_{\mathcal D}(\mathcal M)$ denote the completion of $\mathcal M/\{x:\|x\|_{2,\mathcal D}=0\}$
under $\|\cdot\|_{2,\mathcal D}$.  This is a (right) Hilbert $C^\ast$-module over $\mathcal D$.
\end{definition}

\begin{remark}[Cauchy--Schwarz in Hilbert \(C^\ast\)-modules]
For $x,y$ in a Hilbert $C^\ast$-module, one has the Cauchy--Schwarz inequality
\[
\|\langle x,y\rangle_{\mathcal D}\|^2 \le \|\langle x,x\rangle_{\mathcal D}\|\ \|\langle y,y\rangle_{\mathcal D}\|.
\]
We will repeatedly apply it below.
\end{remark}

\subsection{Freeness with amalgamation over \texorpdfstring{$\mathcal D$}{D}}

\begin{definition}[Freeness with amalgamation]
Let $(\mathcal M,E_{\mathcal D},\mathcal D)$ be as above. Let $(\mathcal A_i)_{i\in I}$ be a family
of von Neumann subalgebras with $\mathcal D\subset \mathcal A_i\subset \mathcal M$.
We say that $(\mathcal A_i)_{i\in I}$ is \emph{free with amalgamation over $\mathcal D$} if
for every $n\ge 1$ and every choice of elements $a_j\in\mathcal A_{i_j}$ such that
\[
E_{\mathcal D}(a_j)=0,\qquad i_1\neq i_2\neq\cdots\neq i_n,
\]
we have
\[
E_{\mathcal D}(a_1 a_2\cdots a_n)=0.
\]
We also say that $x\in\mathcal M$ is \emph{free from} a subalgebra $\mathcal B\supset\mathcal D$
\emph{over $\mathcal D$} if $\mathcal D[x]$ and $\mathcal B$ are free with amalgamation over $\mathcal D$.
\end{definition}

\begin{remark}[Notation {\(\mathcal D[x]\)}]
Here $\mathcal D[x]$ denotes the von Neumann algebra generated by $\mathcal D$ and $x$.
When we need an algebraic polynomial algebra we write $\mathcal B\langle x\rangle$ (see below).
\end{remark}

\subsection{Free difference quotient, conjugate variables, and basic properties}

\subsubsection{The algebra \texorpdfstring{$\mathcal B\langle X\rangle$}{B⟨X⟩}}

Fix a von Neumann subalgebra $\mathcal B\subset\mathcal M$ with $\mathcal D\subset\mathcal B$ and
a self-adjoint element $X=X^\ast\in\mathcal M$ such that the natural map
\[
\mathcal D\langle X\rangle \ast_{\mathcal D} \mathcal B \to W^\ast(\mathcal D,X,\mathcal B)
\]
is injective at the algebraic level (algebraic freeness modulo $\mathcal D$).
In this situation we may unambiguously work with the algebraic $^\ast$-algebra
$\mathcal B\langle X\rangle$ generated by $\mathcal B$ and a symbol $X$.

\subsubsection{The free difference quotient \texorpdfstring{$\partial_{X:\mathcal B}$}{partial(X:B)}}

\begin{definition}[Free difference quotient derivation]
Define a $\mathcal D$-bimodule map
\[
\partial_{X:\mathcal B}:\mathcal B\langle X\rangle\to
\mathcal B\langle X\rangle\otimes_{\mathcal D}\mathcal B\langle X\rangle
\]
by declaring $\partial_{X:\mathcal B}(b)=0$ for $b\in\mathcal B$ and
\[
\partial_{X:\mathcal B}(b_0 X b_1 X\cdots X b_n)
:=\sum_{j=1}^n
(b_0 X\cdots X b_{j-1})\ \otimes_{\mathcal D}\ (b_j X\cdots X b_n),
\qquad b_0,\dots,b_n\in\mathcal B.
\]
Extend $\partial_{X:\mathcal B}$ to all of $\mathcal B\langle X\rangle$ by linearity and Leibniz rule.
\end{definition}

\subsubsection{Adjoint and conjugate variable}

We regard $\mathcal B\langle X\rangle$ as a subspace of $L^2_{\mathcal D}(\mathcal B\langle X\rangle)$,
and similarly for $\mathcal B\langle X\rangle\otimes_{\mathcal D}\mathcal B\langle X\rangle$.
Assume $\partial_{X:\mathcal B}$ extends (densely) to a closable operator between the corresponding
Hilbert $C^\ast$-modules. Whenever the adjoint $\partial_{X:\mathcal B}^\ast$ exists on $1\otimes 1$,
we define the conjugate variable.

\begin{definition}[Conjugate variable]
If $1\otimes 1$ belongs to the domain of $\partial_{X:\mathcal B}^\ast$, define
\[
J_{\mathcal D}(X:\mathcal B):=\partial_{X:\mathcal B}^\ast(1\otimes 1)\ \in\ L^2_{\mathcal D}(\mathcal B\langle X\rangle).
\]
\end{definition}

\begin{proposition}[Characterizing identity for \(J_{\mathcal D}(X:\mathcal B)\)]\label{prop:conj-eq}
If $J_{\mathcal D}(X:\mathcal B)$ exists, then for all $b_0,\dots,b_n\in\mathcal B$,
\begin{equation}\label{eq:conj-identity}
E_{\mathcal D}\!\big(J_{\mathcal D}(X:\mathcal B)\, b_0 X b_1\cdots X b_n\big)
=\sum_{j=1}^n
E_{\mathcal D}(b_0 X\cdots X b_{j-1})\ E_{\mathcal D}(b_j X\cdots X b_n).
\end{equation}
\end{proposition}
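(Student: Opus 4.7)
The plan is to derive the identity directly from the definition $J_{\mathcal D}(X:\mathcal B) = \partial_{X:\mathcal B}^\ast(1 \otimes 1)$ by pairing it against a generic noncommutative monomial and then unwinding the inner product on the balanced tensor module. First I would fix the convention that the $\mathcal D$-valued inner product on $L^2_{\mathcal D}(\mathcal M)$ is $\langle x,y\rangle_{\mathcal D} = E_{\mathcal D}(x^\ast y)$, and that the induced inner product on the interior tensor product $\mathcal B\langle X\rangle \otimes_{\mathcal D} \mathcal B\langle X\rangle$ is
\[
\langle u_1 \otimes u_2,\, v_1 \otimes v_2\rangle_{\mathcal D} = E_{\mathcal D}\bigl(u_2^\ast\, E_{\mathcal D}(u_1^\ast v_1)\, v_2\bigr).
\]
This is the standard Hilbert $C^\ast$-module tensor product over $\mathcal D$, and its $\mathcal D$-bimodularity will be the one nontrivial piece of structure used below.

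Next I would invoke the defining adjoint relation: for every $Y$ in the algebraic domain of $\partial_{X:\mathcal B}$,
\[
\langle J_{\mathcal D}(X:\mathcal B),\, Y\rangle_{\mathcal D} = \langle 1 \otimes 1,\, \partial_{X:\mathcal B}(Y)\rangle_{\mathcal D}.
\]
Taking $Y = b_0 X b_1 \cdots X b_n$ with $b_j \in \mathcal B$, the explicit Leibniz formula for the free difference quotient gives
\[
\partial_{X:\mathcal B}(Y) = \sum_{j=1}^n (b_0 X \cdots X b_{j-1}) \otimes_{\mathcal D} (b_j X \cdots X b_n),
\]
and pairing $1 \otimes 1$ against a pure tensor $u \otimes v$ collapses via $\mathcal D$-bimodularity of $E_{\mathcal D}$ to
\[
\langle 1 \otimes 1,\, u \otimes v\rangle_{\mathcal D} = E_{\mathcal D}\bigl(E_{\mathcal D}(u)\, v\bigr) = E_{\mathcal D}(u)\, E_{\mathcal D}(v),
\]
since $E_{\mathcal D}(u) \in \mathcal D$ pulls through $E_{\mathcal D}$. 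Summing over $j$ produces exactly the right-hand side of \eqref{eq:conj-identity}.

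Finally, to match the stated form $E_{\mathcal D}(J \cdot b_0 X \cdots X b_n)$ rather than $\langle J, Y\rangle_{\mathcal D} = E_{\mathcal D}(J^\ast Y)$, I would either apply the adjointness to $Y^\ast = b_n^\ast X \cdots X b_0^\ast$ (using $X = X^\ast$) and relabel, or equivalently observe that the identity is symmetric enough under reversing the word that both conventions yield the same closed-form formula after reindexing $j \mapsto n{-}j{+}1$.

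The main obstacle I expect is the bookkeeping around adjoint and tensor-product conventions: one must be careful which side the $\mathcal D$-module actions sit on when collapsing the iterated conditional expectation, and which adjoint of $Y$ is inserted to go from $E_{\mathcal D}(J^\ast Y)$ to $E_{\mathcal D}(J \cdot W)$. A secondary technical point is that the monomial $Y$ must lie in the domain where $\langle \partial^\ast u, v\rangle_{\mathcal D} = \langle u, \partial v\rangle_{\mathcal D}$ is valid; this is guaranteed by the standing hypothesis that $1 \otimes 1 \in \dom(\partial_{X:\mathcal B}^\ast)$ together with the fact that $\partial_{X:\mathcal B}$ is defined everywhere on the algebraic $^\ast$-algebra $\mathcal B\langle X\rangle$, so no closure or density step is required at this stage.
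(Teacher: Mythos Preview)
Your proposal is correct and matches the paper's (three-line) proof, which simply cites the adjoint relation $\langle 1\otimes 1,\partial_{X:\mathcal B}(p)\rangle_{\mathcal D}=\langle J_{\mathcal D}(X:\mathcal B),p\rangle_{\mathcal D}$ and asserts that evaluating on monomials $p=b_0Xb_1\cdots Xb_n$ yields the identity, without spelling out the tensor-inner-product collapse that you do. You are also right to flag the $J$ versus $J^\ast$ bookkeeping: the paper's own parallel restatement (Proposition~\ref{prop:conjugate-identity}) writes the left-hand side with $J^\ast$, which is what falls out directly from $\langle J,p\rangle_{\mathcal D}=E_{\mathcal D}(J^\ast p)$, so the form stated here is best read modulo that convention (your word-reversal trick actually produces $E_{\mathcal D}(Y\,J)$ rather than $E_{\mathcal D}(J\,Y)$, but the paper does not resolve this either).
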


\begin{proof}
By definition, $J_{\mathcal D}(X:\mathcal B)=\partial_{X:\mathcal B}^\ast(1\otimes 1)$ means
\[
\langle 1\otimes 1,\partial_{X:\mathcal B}(p)\rangle_{\mathcal D}
=\langle J_{\mathcal D}(X:\mathcal B),p\rangle_{\mathcal D},\qquad \forall p\in\mathcal B\langle X\rangle,
\]
where the inner products are the $\mathcal D$-valued ones induced by $E_{\mathcal D}$.
Evaluating this on monomials $p=b_0Xb_1\cdots Xb_n$ gives exactly \eqref{eq:conj-identity}.
\end{proof}

\begin{proposition}[Uniqueness and \(\mathcal D\)-compatibility]\label{prop:uniq}
If $J_{\mathcal D}(X:\mathcal B)$ exists, then:
\begin{enumerate}[label=\textup{(\roman*)}]
\item $J_{\mathcal D}(X:\mathcal B)$ is unique in $L^2_{\mathcal D}(\mathcal B\langle X\rangle)$.
\item For every $e\in L^2_{\mathcal D}(\mathcal B\langle X\rangle)$,
\[
E_{\mathcal D}\!\big(J_{\mathcal D}(X:\mathcal B)\,e\big)=E_{\mathcal D}\!\big(e\,J_{\mathcal D}(X:\mathcal B)^\ast\big).
\]
\end{enumerate}
\end{proposition}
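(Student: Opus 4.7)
The plan is to derive both statements from the characterizing identity of Proposition~\ref{prop:conj-eq}, establishing them first on the algebraic core $\mathcal B\langle X\rangle$ and then promoting to $L^2_{\mathcal D}(\mathcal B\langle X\rangle)$ by density together with the Hilbert $C^\ast$-module Cauchy--Schwarz inequality. Throughout, I will interpret both $E_{\mathcal D}(Je)$ and $E_{\mathcal D}(eJ^\ast)$ as bounded $\mathcal D$-valued sesquilinear forms in $e$, avoiding any literal pointwise multiplication on the completion.

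For uniqueness (i), the adjoint relation forces $\langle J, p\rangle_{\mathcal D}=\langle 1\otimes 1,\partial_{X:\mathcal B}p\rangle_{\mathcal D}$ for every $p\in\mathcal B\langle X\rangle$. If $J_1,J_2$ both satisfy this, the difference annihilates the dense polynomial subspace in the $\mathcal D$-valued pairing; module Cauchy--Schwarz makes $p\mapsto\langle J_1-J_2,p\rangle_{\mathcal D}$ continuous, so the annihilation extends to the full module, and evaluating at $p=J_1-J_2$ gives $\|J_1-J_2\|_{2,\mathcal D}=0$.

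For the symmetry (ii), the key step is to apply the $\ast$-involution to both sides of the identity in Proposition~\ref{prop:conj-eq} and then reindex. Using that $E_{\mathcal D}$ is $\ast$-preserving, that $(AB)^\ast=B^\ast A^\ast$, and crucially that $X^\ast=X$, the left side becomes $E_{\mathcal D}(b_n^\ast X b_{n-1}^\ast X\cdots X b_0^\ast\, J^\ast)$, while on the right each product is reversed and starred. Introducing $c_k:=b_{n-k}^\ast$ and substituting $k=n-j+1$ in the summation, the reversed/starred sum collapses back into
\[
\sum_{k=1}^n E_{\mathcal D}(c_0 X\cdots X c_{k-1})\,E_{\mathcal D}(c_k X\cdots X c_n),
\]
which, by applying Proposition~\ref{prop:conj-eq} a second time with the $c_k$'s in place of the $b_k$'s, equals $E_{\mathcal D}(J\,c_0 X c_1\cdots X c_n)$. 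This proves $E_{\mathcal D}(Je)=E_{\mathcal D}(eJ^\ast)$ for every monomial $e\in\mathcal B\langle X\rangle$; linearity extends it to polynomial $e$, and continuity of the two $\mathcal D$-valued forms $e\mapsto\langle J^\ast,e\rangle_{\mathcal D}$ and $e\mapsto\langle e^\ast,J^\ast\rangle_{\mathcal D}^{\ast}$ extends it to all of $L^2_{\mathcal D}(\mathcal B\langle X\rangle)$.

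The main technical obstacle I anticipate is ensuring that $J^\ast$ also lies in $L^2_{\mathcal D}(\mathcal B\langle X\rangle)$, so that the functional $e\mapsto E_{\mathcal D}(eJ^\ast)$ is genuinely bounded on the completion and the density argument closes. This is precisely what the self-adjointness of $X$ provides: the reversal-and-star map $m\mapsto m^\ast$ preserves the polynomial $\ast$-algebra $\mathcal B\langle X\rangle$ and is an isometry of $L^2_{\mathcal D}$ (since $\|m\|_{2,\mathcal D}^2=\|E_{\mathcal D}(m^\ast m)\|=\|E_{\mathcal D}((m^\ast)(m^\ast)^\ast)^\ast\|$ is symmetric under $m\leftrightarrow m^\ast$ after applying the same reindexing as above). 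Once $J^\ast$ is placed in the module, the remaining continuity/density step is routine, and no domain issues for the derivation $\partial_{X:\mathcal B}$ beyond those already assumed at the definition stage enter the argument.
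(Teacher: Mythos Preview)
Your argument is essentially the same as the paper's, only far more explicit: the paper's proof is two sentences, one for each item, and says ``apply \eqref{eq:conj-identity} to $p=e^\ast$ and use faithfulness and the $\ast$-structure.'' Your reindexing computation for (ii) is exactly what is hidden behind that phrase, and for (i) your density argument is what the paper means by ``by density in the Hilbert $C^\ast$-module.''

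There is one claim you should not make: that $m\mapsto m^\ast$ is an isometry of $L^2_{\mathcal D}$. Your parenthetical justification does not establish this, and in fact it is false in general. The norm satisfies $\|m\|_{2,\mathcal D}^2=\|E_{\mathcal D}(m^\ast m)\|$ while $\|m^\ast\|_{2,\mathcal D}^2=\|E_{\mathcal D}(mm^\ast)\|$, and $E_{\mathcal D}(m^\ast m)$ and $E_{\mathcal D}(mm^\ast)$ differ unless $E_{\mathcal D}$ is tracial; already for $\mathcal D=\mathbb C$ and $E_{\mathcal D}$ a non-tracial state on $M_2(\mathbb C)$ one has counterexamples. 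The reindexing you invoke is specific to the right-hand side of the conjugate identity (a sum of products of two $E_{\mathcal D}$-values), not to arbitrary $E_{\mathcal D}(m^\ast m)$. Consequently your route to placing $J^\ast$ in $L^2_{\mathcal D}$ and to making $e\mapsto E_{\mathcal D}(eJ^\ast)$ continuous on the completion is not justified as written.

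This affects only the extension from $\mathcal B\langle X\rangle$ to the full completion; your argument on the polynomial core is correct and complete. The paper's proof is no more detailed than yours on this extension point, so the honest reading is that the identity is established on the dense $\ast$-algebra and the $L^2$ statement is to be interpreted through the module inner product once $J^\ast$ is known (or assumed) to lie in $L^2_{\mathcal D}$---a hypothesis you should state rather than claim to derive from an isometry that does not hold.
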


\begin{proof}
(i) If $\xi$ and $\eta$ both satisfy \eqref{eq:conj-identity}, then $E_{\mathcal D}((\xi-\eta)p)=0$ for all
$p\in\mathcal B\langle X\rangle$. By density in the Hilbert $C^\ast$-module, $\xi=\eta$.

(ii) Apply \eqref{eq:conj-identity} to $p=e^\ast$ and use faithfulness and the $\ast$-structure.
\end{proof}

\subsection{Speicher cumulants and a cumulant characterization of amalgamated freeness}

\subsubsection{Moment maps and cumulants}

Let $(E^{(n)}_{\mathcal D})_{n\ge 1}$ be the moment maps induced by $E_{\mathcal D}$:
\[
E_{\mathcal D}^{(n)}(a_1\cdots a_n):=E_{\mathcal D}(a_1\cdots a_n),
\qquad a_1,\dots,a_n\in\mathcal M.
\]
Speicher's theory produces cumulants $(k^{(n)}_{\mathcal D})_{n\ge 1}$ determined recursively from the moments.
We will use only the following facts:
\begin{enumerate}[label=\textup{(\roman*)}]
\item The family $(k^{(n)}_{\mathcal D})_{n\ge 1}$ is uniquely determined by $(E^{(n)}_{\mathcal D})_{n\ge 1}$ and conversely.
\item Mixed cumulants vanish exactly for $\mathcal D$-freeness (Lemma~\ref{lem:cumulant-free}).
\end{enumerate}

\begin{lemma}[Cumulant characterization of \(\mathcal D\)-freeness]\label{lem:cumulant-free}
Let $\mathcal B,\mathcal C\subset\mathcal M$ be von Neumann subalgebras containing $\mathcal D$.
Then $\mathcal B$ and $\mathcal C$ are free with amalgamation over $\mathcal D$
if and only if every cumulant $k^{(n)}_{\mathcal D}(a_1,\dots,a_n)$ vanishes whenever
$a_1,\dots,a_n$ are taken from $\mathcal B\cup\mathcal C$ and not all belong to the same algebra.
\end{lemma}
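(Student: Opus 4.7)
The plan is to reduce both directions to the operator-valued moment--cumulant formula
\[
E_{\mathcal D}(a_1\cdots a_n)=\sum_{\pi\in NC(n)} k_\pi^{(n)}[a_1,\ldots,a_n],
\]
which is the defining relation for $(k_{\mathcal D}^{(n)})$ via M\"obius inversion over the non-crossing partition lattice. Throughout I will use two elementary properties of the cumulants that follow directly from this definition and $\mathcal D$-multilinearity: (i) the $k_\pi^{(n)}[\,\cdot\,]$ respect the $\mathcal D$-bimodule structure block by block; and (ii) $k_{\mathcal D}^{(n)}(a_1,\ldots,a_n)=0$ whenever $n\ge 2$ and some $a_j\in\mathcal D$.

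For the sufficiency direction (vanishing of mixed cumulants $\Rightarrow$ $\mathcal D$-freeness), fix a sequence $a_j\in\mathcal A_{i_j}$ (with $\mathcal A_1=\mathcal B$, $\mathcal A_2=\mathcal C$) satisfying the hypotheses of the freeness definition: $E_{\mathcal D}(a_j)=0$ and $i_1\neq i_2\neq\cdots\neq i_n$. Insert into the moment--cumulant formula. A partition $\pi\in NC(n)$ can contribute only if every block of $\pi$ is \emph{color-homogeneous} (all its indices share the same $i_j$), since any mixed block carries a mixed cumulant of order $\ge 2$, which vanishes by hypothesis. The combinatorial closer is the standard fact that every $\pi\in NC(n)$ contains an interval block $\{k,k+1,\ldots,k+m\}$; under color alternation such an interval block must be a singleton $\{j\}$, and the associated factor $k_{\mathcal D}^{(1)}(a_j)=E_{\mathcal D}(a_j)=0$ then kills the contribution of the entire partition. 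Summing gives $E_{\mathcal D}(a_1\cdots a_n)=0$, which is the $\mathcal D$-freeness condition.

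For the necessity direction I would argue by induction on $n\ge 2$. The base case $n=2$ is immediate from $k_{\mathcal D}^{(2)}(a,b)=E_{\mathcal D}(ab)-E_{\mathcal D}(a)E_{\mathcal D}(b)$ applied to a centered mixed pair, using the two-point $\mathcal D$-freeness identity. For the inductive step on a mixed tuple $(a_1,\ldots,a_n)$, I first reduce to centered arguments via $a_j=(a_j-E_{\mathcal D}(a_j))+E_{\mathcal D}(a_j)$, where the $E_{\mathcal D}(a_j)\in\mathcal D$ contribution is annihilated by property (ii). Next, by the inductive hypothesis every proper block produces a color-homogeneous cumulant or vanishes, so the moment--cumulant relation expresses $k_{\mathcal D}^{(n)}(a_1,\ldots,a_n)$ as the moment $E_{\mathcal D}(a_1\cdots a_n)$ minus the sum over color-homogeneous partitions $\pi\neq\hat 1_n$. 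I then compute the moment directly from $\mathcal D$-freeness by absorbing any maximal run of consecutive same-color entries into a single algebra element (using that $\mathcal A_i$ is closed under products and contains $\mathcal D$) until the reduced sequence is alternating, and apply the definitional vanishing. Matching this moment expression against the color-homogeneous cumulant sum term by term forces $k_{\mathcal D}^{(n)}(a_1,\ldots,a_n)=0$.

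I expect the main obstacle to lie in the necessity direction, specifically in the bookkeeping that matches the color-homogeneous cumulant refinements against the freeness-reduced moment. In the scalar Speicher setting this identification is a clean consequence of the nested structure of non-crossing partitions, but in the amalgamated setting one must carry the $\mathcal D$-bimodule structure through every step: the grouping of consecutive same-color entries must be verified to commute with centering, and the recursive evaluation of $k_\pi^{(n)}$ on nested color-homogeneous blocks must be checked to interleave correctly with the $\mathcal D$-valued freeness identity. The sufficiency direction is more transparent but still relies on stating the interval-block lemma precisely for alternating color sequences.
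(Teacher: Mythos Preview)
The paper does not prove this lemma at all: immediately after the statement there is only the remark ``This is the operator-valued version of the well-known fact that freeness is equivalent to vanishing of mixed free cumulants. We refer to Speicher's monograph for full details.'' So there is nothing to compare against; you are supplying what the paper outsources.

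Your approach is the standard Speicher argument, and the sufficiency direction is correct as written: the interval-block lemma plus color alternation forces any surviving block to be a singleton, and centering then kills the nested contribution via $\mathcal D$-multilinearity of $k_\pi$.

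For necessity, your outline is right but one simplification is worth noting. In the \emph{alternating} centered case the argument is actually just as clean as sufficiency, and you should separate it out: freeness gives $E_{\mathcal D}(a_1\cdots a_n)=0$; for every $\pi\neq\hat 1_n$ the blocks have size $<n$, so by the inductive hypothesis only color-homogeneous $\pi$ survive; but under alternation any such $\pi$ has a singleton interval block, which vanishes by centering. Hence $k^{(n)}_{\mathcal D}=0$ with no matching required. The genuine work is therefore confined to the \emph{non-alternating} mixed case, and here your ``absorb runs and match'' plan is correct in principle but awkward in the amalgamated setting, because absorbing a run destroys centering and forces a secondary expansion whose bookkeeping against the color-homogeneous $k_\pi$ sum is exactly the nested-$\mathcal D$ issue you flag. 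The cleaner device is the product-cumulant recursion: if $a_j,a_{j+1}$ share a color, expand $k^{(n-1)}_{\mathcal D}(\ldots,a_ja_{j+1},\ldots)$ (which is mixed of lower length, hence zero by induction) via the Speicher formula as a sum over $\pi\in NC(n)$ with $\pi\vee\{\{j,j+1\}\}=\hat 1_n$; all terms other than $\pi=\hat 1_n$ involve lower-order mixed cumulants and vanish by induction, forcing $k^{(n)}_{\mathcal D}(a_1,\ldots,a_n)=0$. This replaces the term-by-term match with a single identity and handles the $\mathcal D$-nesting automatically.
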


\begin{remark}
This is the operator-valued version of the well-known fact that freeness is equivalent to vanishing of mixed free cumulants.
We refer to Speicher's monograph for full details.
\end{remark}

\subsection{A structural theorem for conjugate variables}

\begin{theorem}[Conjugate variable is insensitive to free enlargement]\label{thm:conj-free-enlarge}
Let $\mathcal A\subset\mathcal M$ be a von Neumann algebra containing $\mathcal D$,
and let $\mathcal D\subset \mathcal C\subset\mathcal A$ be a von Neumann subalgebra.
Let $X=X^\ast\in\mathcal A$ and assume $\mathcal D[X]$ and $\mathcal C$ are free with amalgamation over $\mathcal D$.
If the conjugate variables exist, then
\[
J_{\mathcal D}(X:\mathcal D)=J_{\mathcal D}(X:\mathcal C).
\]
\end{theorem}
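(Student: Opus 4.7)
The plan is to invoke the uniqueness statement in Proposition~\ref{prop:uniq}(i), which asserts that a conjugate variable, when it exists, is uniquely determined by the characterizing identity of Proposition~\ref{prop:conj-eq}. Set $\xi:=J_{\mathcal D}(X:\mathcal D)$, regarded as an element of $L^2_{\mathcal D}(\mathcal C\langle X\rangle)$ via the natural inclusion $\mathcal D\langle X\rangle\hookrightarrow \mathcal C\langle X\rangle$. The goal is to show that $\xi$ satisfies the $\mathcal C$-coefficient version of the identity, namely
\begin{equation}\label{eq:target-conj}
E_{\mathcal D}\!\big(\xi\,b_0 X b_1\cdots X b_n\big)
=\sum_{j=1}^{n} E_{\mathcal D}(b_0 X\cdots X b_{j-1})\,E_{\mathcal D}(b_j X\cdots X b_n),
\qquad b_0,\dots,b_n\in\mathcal C.
\end{equation}
Uniqueness then forces $\xi = J_{\mathcal D}(X:\mathcal C)$, proving the theorem.

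To establish \eqref{eq:target-conj}, I would decompose each $b_j = E_{\mathcal D}(b_j) + b_j^{\circ}$ with $b_j^{\circ}\in \ker E_{\mathcal D}\cap\mathcal C$. By multilinearity in $(b_0,\dots,b_n)$, the identity splits into $2^{n+1}$ cases indexed by the subset $S\subseteq\{0,\dots,n\}$ of positions in which the centered part $b_j^{\circ}$ is used (positions outside $S$ contribute their $\mathcal D$-part $E_{\mathcal D}(b_j)$). The case $S=\varnothing$ reduces to the defining identity of $\xi=J_{\mathcal D}(X:\mathcal D)$ from Proposition~\ref{prop:conj-eq}, since all coefficients then lie in $\mathcal D$. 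The remainder of the argument is to show both sides of \eqref{eq:target-conj} vanish on every case with $S\neq\varnothing$.

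For the right-hand side vanishing, each factor $E_{\mathcal D}(b_0 X\cdots X b_{j-1})$ (or the dual factor) containing at least one centered $b_i^{\circ}\in\mathcal C$ is the $E_{\mathcal D}$-expectation of a word alternating between $\mathcal D[X]$-entries (the $X$'s, which one then centers as $X = X^{\circ}+E_{\mathcal D}(X)$ and absorbs the scalars into adjacent $\mathcal D$-valued $b_k$'s) and centered $\mathcal C$-entries. Freeness of $\mathcal D[X]$ and $\mathcal C$ with amalgamation over $\mathcal D$ then forces $E_{\mathcal D}$ of this alternating centered product to vanish. For the left-hand side vanishing, one approximates $\xi\in L^2_{\mathcal D}(\mathcal D\langle X\rangle)$ in $\|\cdot\|_{2,\mathcal D}$ by polynomials $\xi_k\in\mathcal D\langle X\rangle$, applies the same centering/alternation argument to $E_{\mathcal D}(\xi_k\,b_0 X b_1\cdots X b_n)$, and passes to the limit using Cauchy–Schwarz in the Hilbert $C^\ast$-module to control the error $E_{\mathcal D}((\xi-\xi_k)\,\cdots)$.

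The main obstacle I anticipate is the combinatorial bookkeeping in the centering step: after writing $X = X^{\circ} + E_{\mathcal D}(X)$ inside each word, consecutive factors may land in the \emph{same} subalgebra ($\mathcal D[X]$ versus $\mathcal C$), so one must merge such runs before invoking the alternation hypothesis in the definition of amalgamated freeness. A cleaner organization is to pass to Speicher cumulants and use Lemma~\ref{lem:cumulant-free}: expanding both sides of \eqref{eq:target-conj} as sums over non-crossing partitions of operator-valued cumulants, one sees that mixed cumulants spanning $\mathcal D[X]$ and $\mathcal C$ vanish, while the surviving terms recombine exactly as the $\mathcal D$-coefficient identity for $\xi$ on one side and as the product of two $E_{\mathcal D}$-moments on the other. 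This reformulation removes the explicit alternation bookkeeping at the cost of invoking the cumulant machinery stated above.
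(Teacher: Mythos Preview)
Your proposal is correct. The cumulant reformulation you arrive at in the final paragraph is exactly the paper's argument: the paper gives only a ``Proof idea'' that translates the characterizing identity \eqref{eq:conj-identity} into a cumulant statement, invokes the vanishing of mixed cumulants between $\mathcal D[X]$ and $\mathcal C$ (Lemma~\ref{lem:cumulant-free}), and then closes with uniqueness from Proposition~\ref{prop:uniq}.

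Your primary route---centering $b_j=E_{\mathcal D}(b_j)+b_j^{\circ}$, reducing the $S\neq\varnothing$ contributions to alternating centered words, and approximating $\xi$ by $\mathcal D\langle X\rangle$-polynomials for the left-hand side---is a genuinely different, more elementary alternative that avoids the moment--cumulant machinery entirely. You have correctly diagnosed its cost: after grouping adjacent $\mathcal D[X]$-factors and then centering them, the ``absorb and merge'' step is recursive (replacing a $\mathcal D[X]$-block by its $E_{\mathcal D}$-part fuses the two flanking $\mathcal C$-blocks, which remain centered since $d\cdot b_i^{\circ}$ is still in $\ker E_{\mathcal D}$ for $d\in\mathcal D$), so one must iterate until a genuinely alternating centered word is reached. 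This works but is heavier to write out than a single appeal to Lemma~\ref{lem:cumulant-free}; conversely, it has the virtue of being self-contained and not relying on the operator-valued moment--cumulant formula as a black box. One small technical point worth making explicit in the approximation step: to bound $\|E_{\mathcal D}((\xi-\xi_k)\,w)\|$ via module Cauchy--Schwarz you need control of $\|(\xi-\xi_k)^\ast\|_{2,\mathcal D}$ rather than $\|\xi-\xi_k\|_{2,\mathcal D}$; this is harmless once you observe that $J_{\mathcal D}(X:\mathcal D)$ is self-adjoint (from the symmetry of the defining relation) and hence can be approximated by self-adjoint polynomials.
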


\begin{proof}[Proof idea]
The proof uses Speicher cumulants: the defining relation \eqref{eq:conj-identity} can be translated into a cumulant
statement about mixed cumulants involving $J_{\mathcal D}(X:\cdot)$ and $\mathcal C$. Under $\mathcal D$-freeness
between $\mathcal D[X]$ and $\mathcal C$, all mixed cumulants vanish, forcing the same conjugate variable to satisfy
the defining identity relative to $\mathcal C$. Uniqueness (Proposition~\ref{prop:uniq}) then yields equality.
\end{proof}

\subsection{Standard semicircular elements and conjugate variables}

\begin{definition}[Standard \(\mathcal D\)-valued semicircular element]
A self-adjoint element $S=S^\ast\in\mathcal M$ is called \emph{standard $\mathcal D$-valued semicircular}
if its $\mathcal D$-valued cumulants satisfy:
\[
k^{(1)}_{\mathcal D}(S)=0,\qquad k^{(2)}_{\mathcal D}(S,dS)=d\ \ \forall d\in\mathcal D,
\qquad k^{(m+1)}_{\mathcal D}(S,d_1S,\dots,d_mS)=0\ \ \forall m\ge 2.
\]
\end{definition}

\begin{proposition}[Semicircular characterization by conjugate variable]\label{prop:semi}
Let $S=S^\ast$ be a $\mathcal D$-valued random variable. Then $S$ is standard semicircular
if and only if
\[
J_{\mathcal D}(S:\mathcal D)=S.
\]
\end{proposition}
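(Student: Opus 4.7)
\medskip
\noindent\textbf{Proof proposal.}
The plan is to use the characterizing identity for the conjugate variable given in Proposition~\ref{prop:conj-eq} (with $X=S$, $\mathcal B=\mathcal D$) together with the moment--cumulant correspondence in Speicher's theory. Specializing Proposition~\ref{prop:conj-eq} to $\mathcal B=\mathcal D$ and substituting the candidate $J_{\mathcal D}(S:\mathcal D)=S$ turns the defining identity into the moment recursion
\begin{equation}\label{eq:semi-recursion}
E_{\mathcal D}\bigl(S\,b_0 S b_1 \cdots S b_n\bigr)
=\sum_{j=1}^{n} E_{\mathcal D}\bigl(b_0 S\cdots S b_{j-1}\bigr)\,E_{\mathcal D}\bigl(b_j S\cdots S b_n\bigr),
\qquad b_0,\dots,b_n\in\mathcal D,
\end{equation}
together with the boundary value $E_{\mathcal D}(S)=0$ (take $n=0$ and $b_0=1$). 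The whole proof reduces to showing that \eqref{eq:semi-recursion} is equivalent to $S$ being standard $\mathcal D$-valued semicircular.

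For the direction ($\Leftarrow$), assume $J_{\mathcal D}(S:\mathcal D)=S$, so \eqref{eq:semi-recursion} holds. A straightforward induction on the number of factors of $S$ shows that \eqref{eq:semi-recursion} together with $E_{\mathcal D}(S)=0$ uniquely determines every mixed moment $E_{\mathcal D}(S b_0 S b_1\cdots S b_n)$ as a polynomial in the $b_j$'s. I would then verify directly, via the moment--cumulant formula for operator-valued free probability, that the standard $\mathcal D$-semicircular element also satisfies \eqref{eq:semi-recursion}: its moment is a sum over non-crossing pair partitions, and when the first $S$ is paired with the $j$-th $S$, the non-crossing constraint splits the remaining pairing into an independent pairing of the ``inside'' block $\{b_0,\dots,b_{j-1}\}$ and an independent pairing of the ``outside'' block $\{b_j,\dots,b_n\}$, with interior $\mathcal D$-insertions factoring through the second cumulant $k^{(2)}_{\mathcal D}(S,dS)=d$. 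Uniqueness of the solution to \eqref{eq:semi-recursion} then forces $S$ to coincide with the semicircular element at the level of all moments, and hence (by faithfulness of $E_{\mathcal D}$) distributionally.

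For the direction ($\Rightarrow$), assume $S$ is standard semicircular. Using Speicher's moment--cumulant sum over non-crossing partitions $\mathrm{NC}(n+1)$, and the vanishing of all cumulants $k^{(m+1)}_{\mathcal D}(S,d_1S,\dots,d_mS)$ for $m\neq 1$, every moment of the form $E_{\mathcal D}(S b_0 S b_1\cdots S b_n)$ reduces to a sum over non-crossing pair partitions of the $S$-positions, with each pair contributing the identity cumulant $k^{(2)}_{\mathcal D}(S,\,\cdot\,S)$ acting on the nested $\mathcal D$-insertion. Decomposing each such pair partition by the pair containing the first $S$, one recovers precisely the right-hand side of \eqref{eq:semi-recursion}. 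Thus $S$ itself satisfies the defining identity \eqref{eq:conj-identity} of the conjugate variable; by the uniqueness statement in Proposition~\ref{prop:uniq}(i), $J_{\mathcal D}(S:\mathcal D)=S$.

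The main obstacle is the combinatorial core in both directions: the bijection between the ``first-$S$ pairing'' decomposition of a non-crossing pair partition and the binary splitting on the right-hand side of \eqref{eq:semi-recursion}. In the operator-valued setting one must keep careful track of how the nested $\mathcal D$-insertions flow through each cumulant (since $k^{(2)}_{\mathcal D}$ is $\mathcal D$-bilinear, not $\mathcal D$-linear in a single slot), and ensure that the inner and outer partitions truly factor the moment through $E_{\mathcal D}$; this is where the Hilbert $C^\ast$-module Cauchy--Schwarz and the faithfulness of $E_{\mathcal D}$ are used to justify the inductive moment identification.
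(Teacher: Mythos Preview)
Your proposal is correct and follows essentially the same route as the paper's sketch: both directions are handled by reducing the conjugate-variable identity \eqref{eq:conj-identity} (with $J=S$) to the Catalan-type moment recursion for $\mathcal D$-valued semicircular elements, and then invoking the moment--cumulant correspondence over non-crossing (pair) partitions. One small comment: the appeal to the Hilbert $C^\ast$-module Cauchy--Schwarz and to faithfulness of $E_{\mathcal D}$ in your final paragraph is unnecessary for this argument, since the moment recursion and its uniqueness are purely algebraic, and ``same $\mathcal D$-valued moments'' is already the definition of equality of operator-valued distributions.
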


\begin{proof}[Sketch]
One direction: if $S$ is standard semicircular, then the defining identity \eqref{eq:conj-identity} is verified
by cumulant computations using the fact that all higher cumulants vanish.
Conversely, if $J_{\mathcal D}(S:\mathcal D)=S$, then \eqref{eq:conj-identity} forces the cumulants to match
the semicircular pattern.
\end{proof}


\subsection{Operator-valued free Fisher information}\label{subsec:op-valued-fisher}

\begin{definition}[Operator-valued free Fisher information]\label{def:fisher}
Let $(\mathcal M,E_{\mathcal D},\mathcal D)$ be a $\mathcal D$-valued noncommutative probability space, i.e.\
$\mathcal M$ is a von Neumann algebra, $\mathcal D\subset \mathcal M$ is a von Neumann subalgebra, and
$E_{\mathcal D}:\mathcal M\to\mathcal D$ is a faithful normal conditional expectation.
Let $\mathcal B\subset\mathcal M$ be a von Neumann subalgebra with $\mathcal D\subset\mathcal B$, and let
$X_1,\dots,X_n\in\mathcal M$ be self-adjoint.

For each $j\in\{1,\dots,n\}$, define the conditioning algebra
\[
\mathcal B_j:=W^\ast(\mathcal B,X_1,\dots,\widehat{X_j},\dots,X_n).
\]
Assume that the conjugate variable $J_{\mathcal D}(X_j:\mathcal B_j)$ exists in the sense of
Definition~\ref{def:conjugate} (equivalently, $1\otimes 1\in\mathrm{dom}(\partial_{X_j:\mathcal B_j}^\ast)$),
so that
\[
J_{\mathcal D}(X_j:\mathcal B_j)\in L^2_{\mathcal D}\!\big(W^\ast(\mathcal B_j,X_j)\big).
\]
We then define the \emph{$\mathcal D$-valued free Fisher information of $(X_1,\dots,X_n)$ relative to $\mathcal B$} by
\[
\Phi^{\ast\ast}_{\mathcal D}(X_1,\dots,X_n:\mathcal B)
:=\sum_{j=1}^n
E_{\mathcal D}\!\Big( J_{\mathcal D}(X_j:\mathcal B_j)\,J_{\mathcal D}(X_j:\mathcal B_j)^\ast\Big)
\ \in\ \mathcal D_+ .
\]
\end{definition}

\begin{remark}[Positivity]\label{rem:fisher-positivity}
Since $E_{\mathcal D}$ is completely positive and $YY^\ast\ge 0$ for any $Y\in\mathcal M$, each summand in
Definition~\ref{def:fisher} lies in $\mathcal D_+$, hence
$\Phi^{\ast\ast}_{\mathcal D}(X_1,\dots,X_n:\mathcal B)\in \mathcal D_+$.
\end{remark}

\begin{remark}[On well-definedness]\label{rem:fisher-well-defined}
In this paper we only invoke $\Phi^{\ast\ast}_{\mathcal D}(\cdot:\mathcal B)$ in regimes where the relevant conjugate
variables exist. Existence may be ensured, for instance, under standard regularity hypotheses from the operator-valued
free probability literature (closability of $\partial_{X:\mathcal B}$ and nonemptiness of
$\mathrm{dom}(\partial_{X:\mathcal B}^\ast)$ at $1\otimes 1$).
\end{remark}

\medskip
\noindent\textbf{Forward direction: invariance under free enlargement.}

\begin{proposition}[Fisher information under free enlargement]\label{prop:fisher-enlarge}
Let $(\mathcal M,E_{\mathcal D},\mathcal D)$ be as above and let $\mathcal C\subset\mathcal M$ be a von Neumann subalgebra
with $\mathcal D\subset \mathcal C$. Let $X=X^\ast\in\mathcal M$ and assume that
$W^\ast(\mathcal D,X)$ and $\mathcal C$ are free with amalgamation over $\mathcal D$ with respect to $E_{\mathcal D}$.

Assume that both conjugate variables $J_{\mathcal D}(X:\mathcal D)$ and $J_{\mathcal D}(X:\mathcal C)$ exist.
Then
\[
\Phi^{\ast\ast}_{\mathcal D}(X:\mathcal C)=\Phi^{\ast\ast}_{\mathcal D}(X:\mathcal D).
\]
\end{proposition}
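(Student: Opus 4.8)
The plan is to reduce the statement to the single-variable instance of Definition~\ref{def:fisher} and then invoke Theorem~\ref{thm:conj-free-enlarge}. First I would observe that, since only one self-adjoint variable $X$ is involved, the conditioning algebra $\mathcal B_j=W^\ast(\mathcal B,X_1,\dots,\widehat{X_j},\dots,X_n)$ of Definition~\ref{def:fisher} degenerates: with $n=1$ there are no omitted variables, so $\mathcal B_1=W^\ast(\mathcal B)=\mathcal B$. Hence for any von Neumann subalgebra $\mathcal B$ with $\mathcal D\subseteq\mathcal B\subseteq\mathcal M$ for which $J_{\mathcal D}(X:\mathcal B)$ exists,
\[
\Phi^{\ast\ast}_{\mathcal D}(X:\mathcal B)=E_{\mathcal D}\!\big(J_{\mathcal D}(X:\mathcal B)\,J_{\mathcal D}(X:\mathcal B)^\ast\big),
\]
and in particular this holds for $\mathcal B=\mathcal D$ and for $\mathcal B=\mathcal C$, which are exactly the two quantities to be compared.

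Next I would check that the hypotheses of Theorem~\ref{thm:conj-free-enlarge} are in force. Take $\mathcal A:=W^\ast(\mathcal D,X,\mathcal C)\subseteq\mathcal M$; then $\mathcal D\subseteq\mathcal A$, the subalgebra $\mathcal C$ satisfies $\mathcal D\subseteq\mathcal C\subseteq\mathcal A$, and $X=X^\ast\in\mathcal A$. The standing assumption of the proposition that $W^\ast(\mathcal D,X)$ and $\mathcal C$ are free with amalgamation over $\mathcal D$ (with respect to $E_{\mathcal D}$) is precisely the $\mathcal D$-freeness hypothesis of that theorem, and existence of both $J_{\mathcal D}(X:\mathcal D)$ and $J_{\mathcal D}(X:\mathcal C)$ has been assumed. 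Theorem~\ref{thm:conj-free-enlarge} therefore applies and yields the operator-valued identity $J_{\mathcal D}(X:\mathcal D)=J_{\mathcal D}(X:\mathcal C)$ in $L^2_{\mathcal D}(W^\ast(\mathcal C,X))$.

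Finally I would substitute this equality into the displayed formula above: replacing $J_{\mathcal D}(X:\mathcal C)$ by $J_{\mathcal D}(X:\mathcal D)$ in both factors and applying $E_{\mathcal D}$,
\[
\Phi^{\ast\ast}_{\mathcal D}(X:\mathcal C)=E_{\mathcal D}\!\big(J_{\mathcal D}(X:\mathcal C)\,J_{\mathcal D}(X:\mathcal C)^\ast\big)=E_{\mathcal D}\!\big(J_{\mathcal D}(X:\mathcal D)\,J_{\mathcal D}(X:\mathcal D)^\ast\big)=\Phi^{\ast\ast}_{\mathcal D}(X:\mathcal D),
\]
both sides lying in $\mathcal D_+$ by Remark~\ref{rem:fisher-positivity}. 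The only substantive input is Theorem~\ref{thm:conj-free-enlarge}, whose proof rests on the cumulant characterization of amalgamated freeness (Lemma~\ref{lem:cumulant-free}) and uniqueness of the conjugate variable (Proposition~\ref{prop:uniq}); since we are entitled to assume it, the one point deserving explicit care here is purely the Hilbert $C^\ast$-module bookkeeping — that the equality of conjugate variables, a priori an identity in $L^2_{\mathcal D}(W^\ast(\mathcal C,X))$, may legitimately be fed into the $E_{\mathcal D}$-evaluation defining $\Phi^{\ast\ast}_{\mathcal D}$, and that both Fisher informations are computed with the same conditional expectation restricted to the relevant submodules.
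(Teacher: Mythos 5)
Your proof is correct and takes the same route as the paper: both invoke Theorem~\ref{thm:conj-free-enlarge} to obtain $J_{\mathcal D}(X:\mathcal D)=J_{\mathcal D}(X:\mathcal C)$ and then read off the equality of Fisher informations directly from Definition~\ref{def:fisher}. You have merely made explicit the $n=1$ specialization of the definition and the choice of ambient algebra $\mathcal A$ that the paper leaves tacit.
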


\begin{proof}
By Theorem~\ref{thm:conj-free-enlarge} (insensitivity of conjugate variables under free enlargement) we have
\[
J_{\mathcal D}(X:\mathcal C)=J_{\mathcal D}(X:\mathcal D).
\]
The claim follows immediately from Definition~\ref{def:fisher}.
\end{proof}

\begin{corollary}[Additivity under $\mathcal D$-freeness]\label{cor:fisher-additivity}
Let $X=X^\ast$ and $Y=Y^\ast$ be elements of $(\mathcal M,E_{\mathcal D},\mathcal D)$ such that
$W^\ast(\mathcal D,X)$ and $W^\ast(\mathcal D,Y)$ are free with amalgamation over $\mathcal D$
with respect to $E_{\mathcal D}$. Assume that all conjugate variables appearing below exist.
Then
\[
\Phi^{\ast\ast}_{\mathcal D}(X,Y:\mathcal D)
=
\Phi^{\ast\ast}_{\mathcal D}(X:\mathcal D)+\Phi^{\ast\ast}_{\mathcal D}(Y:\mathcal D).
\]
\end{corollary}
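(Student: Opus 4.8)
The plan is to reduce the two-variable Fisher information to two single-variable ones by unwinding Definition~\ref{def:fisher}, and then to apply the free-enlargement invariance of conjugate variables (Theorem~\ref{thm:conj-free-enlarge}), equivalently Proposition~\ref{prop:fisher-enlarge}, to each term separately. First, I would specialize Definition~\ref{def:fisher} to $n=2$, $(X_1,X_2)=(X,Y)$, and conditioning algebra $\mathcal B=\mathcal D$. The two conditioning algebras then read $\mathcal B_1=W^\ast(\mathcal D,Y)$ and $\mathcal B_2=W^\ast(\mathcal D,X)$, so that
\begin{align*}
\Phi^{\ast\ast}_{\mathcal D}(X,Y:\mathcal D)
&= E_{\mathcal D}\!\big(J_{\mathcal D}(X:W^\ast(\mathcal D,Y))\,J_{\mathcal D}(X:W^\ast(\mathcal D,Y))^\ast\big)\\
&\quad + E_{\mathcal D}\!\big(J_{\mathcal D}(Y:W^\ast(\mathcal D,X))\,J_{\mathcal D}(Y:W^\ast(\mathcal D,X))^\ast\big).
\end{align*}
By the $n=1$ case of Definition~\ref{def:fisher}, the first summand is exactly $\Phi^{\ast\ast}_{\mathcal D}(X:W^\ast(\mathcal D,Y))$ and the second is $\Phi^{\ast\ast}_{\mathcal D}(Y:W^\ast(\mathcal D,X))$.

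Next, I would invoke the hypothesis that $W^\ast(\mathcal D,X)$ and $W^\ast(\mathcal D,Y)$ are free with amalgamation over $\mathcal D$ with respect to $E_{\mathcal D}$. This is precisely the condition under which Proposition~\ref{prop:fisher-enlarge} applies with $\mathcal C:=W^\ast(\mathcal D,Y)$ (recall $\mathcal D[X]=W^\ast(\mathcal D,X)$), and since all conjugate variables in sight are assumed to exist it yields $\Phi^{\ast\ast}_{\mathcal D}(X:W^\ast(\mathcal D,Y))=\Phi^{\ast\ast}_{\mathcal D}(X:\mathcal D)$. Because amalgamated freeness over $\mathcal D$ is a symmetric relation, interchanging the roles of $X$ and $Y$ gives likewise $\Phi^{\ast\ast}_{\mathcal D}(Y:W^\ast(\mathcal D,X))=\Phi^{\ast\ast}_{\mathcal D}(Y:\mathcal D)$. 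Substituting both identities into the displayed decomposition from the first step produces $\Phi^{\ast\ast}_{\mathcal D}(X,Y:\mathcal D)=\Phi^{\ast\ast}_{\mathcal D}(X:\mathcal D)+\Phi^{\ast\ast}_{\mathcal D}(Y:\mathcal D)$, which is the assertion.

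The argument is essentially bookkeeping, so I do not anticipate a genuine obstacle; the one point that warrants care is the interface with the existence hypotheses. Concretely, one should read ``all conjugate variables appearing below exist'' as covering the four objects $J_{\mathcal D}(X:W^\ast(\mathcal D,Y))$, $J_{\mathcal D}(Y:W^\ast(\mathcal D,X))$, $J_{\mathcal D}(X:\mathcal D)$, and $J_{\mathcal D}(Y:\mathcal D)$, so that both Definition~\ref{def:fisher} and Theorem~\ref{thm:conj-free-enlarge}/Proposition~\ref{prop:fisher-enlarge} can be invoked without extra assumptions. If one prefers to bypass Proposition~\ref{prop:fisher-enlarge} entirely, an equivalent route is to apply Theorem~\ref{thm:conj-free-enlarge} at the level of conjugate variables to obtain $J_{\mathcal D}(X:W^\ast(\mathcal D,Y))=J_{\mathcal D}(X:\mathcal D)$ and symmetrically for $Y$, and then take $E_{\mathcal D}$ of the corresponding products $J\,J^\ast$; the two approaches differ only in the order in which the conditional expectation is applied.
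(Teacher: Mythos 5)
Your argument is correct and follows the same route as the paper: specialize Definition~\ref{def:fisher} to $n=2$ with $\mathcal B=\mathcal D$ to split $\Phi^{\ast\ast}_{\mathcal D}(X,Y:\mathcal D)$ into $\Phi^{\ast\ast}_{\mathcal D}(X:W^\ast(\mathcal D,Y))+\Phi^{\ast\ast}_{\mathcal D}(Y:W^\ast(\mathcal D,X))$, then apply Proposition~\ref{prop:fisher-enlarge} to each summand. Your closing remarks on the existence hypotheses and the alternate route via Theorem~\ref{thm:conj-free-enlarge} only make explicit what the paper's proof sketch leaves implicit.
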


\begin{proof}
Apply Proposition~\ref{prop:fisher-enlarge} twice: first with $\mathcal C=W^\ast(\mathcal D,Y)$ to obtain
$\Phi^{\ast\ast}_{\mathcal D}(X:\mathcal C)=\Phi^{\ast\ast}_{\mathcal D}(X:\mathcal D)$, and then symmetrically.
Finally, use Definition~\ref{def:fisher} for the pair $(X,Y)$ relative to $\mathcal B=\mathcal D$ and the identities
of the corresponding conjugate variables under amalgamated freeness.
\end{proof}

\subsection{Reverse direction: additivity \texorpdfstring{$\Rightarrow$}{=>} amalgamated freeness}

To prove the converse, we use the operator-valued free gradient (Voiculescu/Nica--Shlyakhtenko-Speicher).

\subsubsection{A derivation on \texorpdfstring{$\mathcal A\vee\mathcal B$}{A vee B} and the free gradient}

Let $\mathcal A,\mathcal B\subset\mathcal M$ be von Neumann subalgebras containing $\mathcal D$
and algebraically free over $\mathcal D$.
Define a derivation
\[
\delta_{\mathcal A:\mathcal B}:\mathcal A\vee\mathcal B\to (\mathcal A\vee\mathcal B)\otimes_{\mathcal D}(\mathcal A\vee\mathcal B)
\]
by
\[
\delta_{\mathcal A:\mathcal B}(a)=a\otimes 1-1\otimes a\quad(a\in\mathcal A),\qquad
\delta_{\mathcal A:\mathcal B}(b)=0\quad(b\in\mathcal B),
\]
and extend by Leibniz rule.

\begin{definition}[Operator-valued free gradient]
An element $\xi\in L^2_{\mathcal D}(\mathcal A\vee\mathcal B)$ is called the \emph{free gradient} of $(\mathcal A,\mathcal B)$,
denoted $j_{\mathcal D}(\mathcal A:\mathcal B)$, if:
\begin{enumerate}[label=\textup{(\roman*)}]
\item $\xi\perp \mathcal A$ in the $\mathcal D$-valued inner product;
\item for all $m\in\mathcal A\vee\mathcal B$,
\[
E_{\mathcal D}(\xi\, m)=(E_{\mathcal D}\otimes E_{\mathcal D})(\delta_{\mathcal A:\mathcal B}(m)).
\]
\end{enumerate}
\end{definition}

\begin{lemma}[Gradient criterion for amalgamated freeness]\label{lem:gradfree}
The following are equivalent:
\begin{enumerate}[label=\textup{(\roman*)}]
\item $\mathcal A$ and $\mathcal B$ are free with amalgamation over $\mathcal D$;
\item $j_{\mathcal D}(\mathcal A:\mathcal B)=0$.
\end{enumerate}
\end{lemma}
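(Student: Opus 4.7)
The plan is to rewrite both implications as statements about moments and operator-valued Speicher cumulants, using the Leibniz expansion of $\delta_{\mathcal A:\mathcal B}$ together with Lemma~\ref{lem:cumulant-free}. A preliminary observation is that any $\xi$ satisfying property~(ii) is unique in $L^2_{\mathcal D}(\mathcal A\vee\mathcal B)$: if $\xi_1$ and $\xi_2$ both satisfy it, then $E_{\mathcal D}((\xi_1-\xi_2)m)=0$ for every $m\in\mathcal A\vee\mathcal B$, and density of this algebraic span in the Hilbert $\mathcal D$-module forces $\xi_1=\xi_2$. Hence $j_{\mathcal D}(\mathcal A:\mathcal B)=0$ is equivalent to the assertion that $(E_{\mathcal D}\otimes E_{\mathcal D})\circ\delta_{\mathcal A:\mathcal B}$ vanishes identically on $\mathcal A\vee\mathcal B$.

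For the forward implication (i)$\Rightarrow$(ii) I would assume amalgamated freeness and reduce, by linearity and centering, to checking $(E_{\mathcal D}\otimes E_{\mathcal D})(\delta_{\mathcal A:\mathcal B}(m))=0$ on centered alternating monomials $m=c_1\cdots c_n$ with $c_i\in\mathcal A\cup\mathcal B$. The Leibniz rule and $\delta_{\mathcal A:\mathcal B}|_{\mathcal B}=0$ collapse $\delta m$ to the telescoping sum
\[
\delta_{\mathcal A:\mathcal B}(m)=\sum_{j:\,c_j\in\mathcal A}\bigl((c_1\cdots c_j)\otimes(c_{j+1}\cdots c_n)-(c_1\cdots c_{j-1})\otimes(c_j\cdots c_n)\bigr).
\]
After applying $E_{\mathcal D}\otimes E_{\mathcal D}$, interior terms ($1<j<n$) factor into products of $\mathcal D$-moments of non-empty centered alternating subwords, each of which vanishes by the defining freeness condition (equivalently by Lemma~\ref{lem:cumulant-free}); the boundary contributions at $j=1$ (when $c_1\in\mathcal A$) and at $j=n$ (when $c_n\in\mathcal A$) respectively produce $-E_{\mathcal D}(m)$ and $+E_{\mathcal D}(m)$ with opposite signs that cancel without invoking freeness of $m$ itself. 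Thus $(E_{\mathcal D}\otimes E_{\mathcal D})(\delta m)=0$.

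For the converse (ii)$\Rightarrow$(i) I would argue by induction on monomial length $n$ that $E_{\mathcal D}(c_1\cdots c_n)=0$ on every centered alternating word. The base case $n=2$ is immediate: $\delta(ab)=a\otimes b-1\otimes ab$ and $\delta(ba)=ba\otimes 1-b\otimes a$ directly give $E_{\mathcal D}(ab)=E_{\mathcal D}(ba)=0$ from the vanishing of $(E_{\mathcal D}\otimes E_{\mathcal D})\circ\delta$. For the inductive step I would transcribe the derivation identity into a recursion on $\mathcal D$-valued Speicher cumulants via the moment--cumulant inversion formula, and show that $(E_{\mathcal D}\otimes E_{\mathcal D})\circ\delta_{\mathcal A:\mathcal B}\equiv 0$ recursively forces every mixed cumulant $k^{(n)}_{\mathcal D}(c_1,\ldots,c_n)$ whose arguments are not all drawn from the same subalgebra to vanish; Lemma~\ref{lem:cumulant-free} then delivers amalgamated freeness.

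The hard part will be the converse at the raw moment level, because of a parity problem: when $c_1$ and $c_n$ lie in the same subalgebra, the boundary terms in the telescope are either absent (both in $\mathcal B$) or cancel (both in $\mathcal A$), so the identity $(E_{\mathcal D}\otimes E_{\mathcal D})(\delta m)=0$ collapses to $0=0$ and pins nothing down about $E_{\mathcal D}(m)$ in isolation. A naive workaround — applying the identity to enlarged monomials $b\cdot m$ or $m\cdot b$ with $b\in\mathcal B$ centered — yields $E_{\mathcal D}(bm)=0=E_{\mathcal D}(mb)$ for every centered $b$, but reconstructing $E_{\mathcal D}(m)$ itself from this family requires delicate $\mathcal D$-bimodule bookkeeping (the relation $E_{\mathcal D}(b'm)=E_{\mathcal D}(b')E_{\mathcal D}(m)$ for all $b'\in\mathcal B$ is too weak to isolate $E_{\mathcal D}(m)$ when $b'=1$). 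Consequently I would execute the converse via the cumulant translation above, paying the price of invoking the full Speicher machinery of operator-valued moment--cumulant inversion in order to bypass the parity obstruction.
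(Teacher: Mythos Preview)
The paper does not prove this lemma at all: the remark immediately following it reads ``This is the operator-valued analogue of the classical free gradient characterization. We treat it as a known structural fact from the operator-valued free probability literature.'' So there is no argument in the paper to compare against; your attempt already goes well beyond the paper's own treatment.

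On the content of your proposal: the forward implication (i)$\Rightarrow$(ii) via the Leibniz/telescope expansion is correct in substance. One small point of phrasing: when $c_1$ and $c_n$ lie in \emph{different} subalgebras, only one of the two boundary terms appears, so you do not get a cancellation of $\pm E_{\mathcal D}(m)$; instead you must invoke freeness on the full word $m$ to kill the surviving $\pm E_{\mathcal D}(m)$. Since you are assuming amalgamated freeness in this direction, that is of course available, but the sentence ``cancel without invoking freeness of $m$ itself'' is only accurate in the symmetric-endpoint case.

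For the converse (ii)$\Rightarrow$(i), the parity obstruction you identify is real and is exactly why the raw-moment induction stalls at odd lengths and at symmetric-endpoint words. Your fallback to the operator-valued moment--cumulant translation is the right route---in the scalar case this is Voiculescu's liberation-gradient argument, and in the operator-valued case it is handled in the Nica--Shlyakhtenko--Speicher framework the paper cites---but as written it is a programme, not a proof: you have not shown how the vanishing of $(E_{\mathcal D}\otimes E_{\mathcal D})\circ\delta_{\mathcal A:\mathcal B}$ actually forces each mixed $k^{(n)}_{\mathcal D}$ to vanish. That step requires real work (in particular, controlling how $\delta_{\mathcal A:\mathcal B}$ interacts with non-crossing partitions indexed over $\mathcal D$), and the paper sidesteps it entirely by citing the literature.
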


\begin{remark}
This is the operator-valued analogue of the classical free gradient characterization.
We treat it as a known structural fact from the operator-valued free probability literature.
\end{remark}

\subsubsection{Link between \texorpdfstring{$J_{\mathcal D}(X:\mathcal B)$}{JD(X:B)} and the free gradient}

\begin{lemma}\label{lem:J-to-grad}
If $J_{\mathcal D}(X:\mathcal B)$ exists, then
\[
j_{\mathcal D}(\mathcal D[X]:\mathcal B)=[J_{\mathcal D}(X:\mathcal B),\,X].
\]
\end{lemma}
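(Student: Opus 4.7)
The plan is to verify that $\xi:=[J_{\mathcal D}(X:\mathcal B),X]$ satisfies the two defining properties of $j_{\mathcal D}(\mathcal D[X]:\mathcal B)$ — (i) orthogonality to $\mathcal A:=\mathcal D[X]$ in the $\mathcal D$-valued inner product on $L^2_{\mathcal D}(\mathcal A\vee\mathcal B)$, and (ii) the moment identity $E_{\mathcal D}(\xi\,m)=(E_{\mathcal D}\otimes E_{\mathcal D})(\delta_{\mathcal A:\mathcal B}(m))$ for every $m\in\mathcal A\vee\mathcal B$. An argument paralleling the uniqueness assertion of Proposition~\ref{prop:uniq}(i) shows that an element of $L^2_{\mathcal D}(\mathcal A\vee\mathcal B)$ satisfying (ii) is unique; so once (i) and (ii) are checked for $\xi$, the identification $\xi=j_{\mathcal D}(\mathcal D[X]:\mathcal B)$ follows.

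The central step is (ii), which by multilinearity and density reduces to alternating monomials $m=b_0 X b_1\cdots X b_n$ with $b_j\in\mathcal B$. The Leibniz rule together with $\delta_{\mathcal A:\mathcal B}|_{\mathcal B}=0$ yields
\[
\delta_{\mathcal A:\mathcal B}(m)=\sum_{k=1}^{n}\bigl[(b_0 X\cdots X b_{k-1} X)\otimes(b_k X\cdots X b_n)-(b_0 X\cdots X b_{k-1})\otimes(X b_k\cdots X b_n)\bigr],
\]
and $(E_{\mathcal D}\otimes E_{\mathcal D})$ turns each tensor into a product of $\mathcal D$-valued moments. On the other side, I expand
\[
E_{\mathcal D}([J,X]\,m)=E_{\mathcal D}(J\cdot Xm)-E_{\mathcal D}(X\cdot Jm).
\]
The first piece falls directly under \eqref{eq:conj-identity}, viewing $Xm$ as a $\mathcal B$-alternating polynomial of length $n+1$. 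For the second piece, the plan is to use self-adjointness of $J$ (valid because $X=X^*$) together with the cyclicity relation in Proposition~\ref{prop:uniq}(ii) to rewrite $E_{\mathcal D}(XJm)=E_{\mathcal D}(JmX)$, after which \eqref{eq:conj-identity} applies to $mX$. A reindexing of the $2(n+1)$ resulting moment products against the $2n$ terms on the RHS exhibits pairwise cancellation of the interior contributions and leaves the required telescoping remainder.

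Condition (i) then follows from (ii) as a corollary: for $a\in\mathcal D[X]$, an induction on the degree of $a$, using the balanced-tensor identity $d\otimes x=1\otimes dx$ for $d\in\mathcal D$, shows $\delta_{\mathcal A:\mathcal B}(a)=a\otimes 1-1\otimes a$, hence $(E_{\mathcal D}\otimes E_{\mathcal D})(\delta_{\mathcal A:\mathcal B}(a))=E_{\mathcal D}(a)-E_{\mathcal D}(a)=0$, and (ii) forces $E_{\mathcal D}(\xi\,a)=0$. Since $X$ and $J$ are self-adjoint, $\xi=[J,X]$ is skew-adjoint, whence $\langle \xi,a\rangle_{\mathcal D}=E_{\mathcal D}(\xi^* a)=-E_{\mathcal D}(\xi\,a)=0$, which is (i).

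The hard part will be the cyclic rewriting $E_{\mathcal D}(XJm)=E_{\mathcal D}(JmX)$. Proposition~\ref{prop:uniq}(ii) only permits the exchange $E_{\mathcal D}(Je)=E_{\mathcal D}(eJ^*)$ — moving $J$ from left to right — and does not by itself allow a cyclic exchange of $X$ past $J$ when $J$ sits wedged between $X$ and $m$. The cleanest remedy is to work under the standing assumption that $E_{\mathcal D}$ is tracial (so $E_{\mathcal D}(YZ)=E_{\mathcal D}(ZY)$ unconditionally), which is implicit in most of operator-valued free probability and is compatible with the rest of the section; an alternative is to prove the target identity directly at the level of adjoint derivations, showing $\delta_{\mathcal A:\mathcal B}^*(1\otimes 1)=[\partial_{X:\mathcal B}^*(1\otimes 1),X]$ by testing both sides against a dense polynomial subspace. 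The secondary question of ensuring $[J,X]\in L^2_{\mathcal D}(\mathcal A\vee\mathcal B)$ and of extending all identities from algebraic polynomials to the relevant $L^2$-closure is handled by the closability and domain hypotheses already implicit in the existence of $J_{\mathcal D}(X:\mathcal B)$.
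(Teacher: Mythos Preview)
Your approach---verify that $[J,X]$ satisfies the two defining conditions of $j_{\mathcal D}$ by expanding both sides on alternating monomials via the conjugate identity \eqref{eq:conj-identity} and the Leibniz rule for $\delta_{\mathcal A:\mathcal B}$---is exactly what the paper sketches (its proof is labeled ``Idea'' and says only: compare \eqref{eq:conj-identity} with the definition of $j_{\mathcal D}$ using derivation properties). You have supplied considerably more detail than the paper does, and your reduction of (i) to (ii) via skew-adjointness of $[J,X]$ is clean.

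One caution: your resolution of the ``hard part'' by imposing traciality of $E_{\mathcal D}$ is not warranted by the paper's hypotheses, and the claim that traciality is ``implicit in most of operator-valued free probability'' is an overstatement---the operator-valued framework is designed precisely to accommodate non-tracial conditioning. The paper's one-line sketch does not address this point either, so you have not introduced a gap that the paper closes; but if you want a self-contained argument, your second suggested route (prove $\delta_{\mathcal A:\mathcal B}^\ast(1\otimes 1)=[\partial_{X:\mathcal B}^\ast(1\otimes 1),X]$ directly at the level of the derivations and their adjoints, tested against polynomials) is the right one and does not require traciality.
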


\begin{proof}[Idea]
This is the operator-valued counterpart of Voiculescu's identity relating conjugate variables and free gradients.
It follows by comparing the defining identity \eqref{eq:conj-identity} with the definition of $j_{\mathcal D}$ and
using derivation properties.
\end{proof}

\subsubsection{Main equivalence theorem}

\begin{theorem}[Additivity/rigidity \(\Leftrightarrow\) amalgamated freeness]\label{thm:add-iff-free}
Let $X=X^\ast\in\mathcal M$ and let $\mathcal B\subset\mathcal M$ be a von Neumann subalgebra containing $\mathcal D$.
\begin{enumerate}[label=\textup{(\roman*)}]
\item If $\Phi^{\ast\ast}_{\mathcal D}(X:\mathcal B)=\Phi^{\ast\ast}_{\mathcal D}(X:\mathcal D)$ exists,
then $X$ is free from $\mathcal B$ over $\mathcal D$.
\item If $X=X^\ast$ and $Y=Y^\ast$ satisfy
\[
\Phi^{\ast\ast}_{\mathcal D}(X,Y:\mathcal D)=\Phi^{\ast\ast}_{\mathcal D}(X:\mathcal D)+\Phi^{\ast\ast}_{\mathcal D}(Y:\mathcal D),
\]
and all terms exist, then $X$ and $Y$ are free with amalgamation over $\mathcal D$.
\end{enumerate}
\end{theorem}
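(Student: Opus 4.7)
The proof strategy is to reduce both parts to a Pythagoras-type identity in the Hilbert $C^\ast$-module $L^2_{\mathcal D}$ and then translate rigidity of conjugate variables into amalgamated freeness via the structural lemmas established earlier (Proposition~\ref{prop:uniq} and Lemmas~\ref{lem:cumulant-free}, \ref{lem:gradfree}, \ref{lem:J-to-grad}). The core observation is that $J_{\mathcal D}(X:\mathcal D)$ is the $\mathcal D$-valued orthogonal projection of $J_{\mathcal D}(X:\mathcal B)$ onto the submodule $L^2_{\mathcal D}(\mathcal D\langle X\rangle)\subset L^2_{\mathcal D}(\mathcal B\langle X\rangle)$, so the two Fisher informations differ by the $\mathcal D$-norm squared of the orthogonal ``defect.'' Equality (resp.\ additivity) of Fisher informations plus faithfulness of $E_{\mathcal D}$ then forces the defect to vanish, and the resulting rigidity of the conjugate variable triggers freeness.

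For part (i), Step~1 is to restrict the defining identity~\eqref{eq:conj-identity} for $J_{\mathcal D}(X:\mathcal B)$ to test monomials $b_0 Xb_1\cdots Xb_n$ with $b_j\in\mathcal D$. Because $\partial_{X:\mathcal B}$ and $\partial_{X:\mathcal D}$ agree on $\mathcal D\langle X\rangle$ (both annihilate $\mathcal D$ and satisfy Leibniz), the restricted identity is exactly the defining identity for $J_{\mathcal D}(X:\mathcal D)$. By uniqueness (Proposition~\ref{prop:uniq}(i)), the difference $R:=J_{\mathcal D}(X:\mathcal B)-J_{\mathcal D}(X:\mathcal D)$ is $\mathcal D$-orthogonal to $L^2_{\mathcal D}(\mathcal D\langle X\rangle)$, which yields the Pythagoras identity
\[
E_{\mathcal D}\!\big(J_{\mathcal D}(X:\mathcal B)^2\big)
=E_{\mathcal D}\!\big(J_{\mathcal D}(X:\mathcal D)^2\big)+E_{\mathcal D}\!\big(R^2\big)
\]
in $\mathcal D_+$. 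The hypothesis $\Phi^{\ast\ast}_{\mathcal D}(X:\mathcal B)=\Phi^{\ast\ast}_{\mathcal D}(X:\mathcal D)$ forces $E_{\mathcal D}(R^2)=0$, and faithfulness of $E_{\mathcal D}$ gives $R=0$, i.e.\ $J_{\mathcal D}(X:\mathcal B)=J_{\mathcal D}(X:\mathcal D)\in L^2_{\mathcal D}(\mathcal D\langle X\rangle)$. Step~2 converts this rigidity into freeness: substituting the equality back into \eqref{eq:conj-identity} with arbitrary test coefficients $b_j\in\mathcal B$ yields a family of moment identities which, by the Speicher moment-cumulant inversion, is equivalent to the vanishing of every mixed $\mathcal D$-valued cumulant involving $X$ and elements of $\mathcal B$; by Lemma~\ref{lem:cumulant-free} this is precisely the $\mathcal D$-freeness of $\mathcal D[X]$ and $\mathcal B$. (Equivalently, one may feed the rigidity into the gradient identity of Lemma~\ref{lem:J-to-grad} and conclude via Lemma~\ref{lem:gradfree}.)

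For part (ii), the same Pythagoras is applied componentwise with $\mathcal B_X:=W^\ast(\mathcal D,Y)$ and $\mathcal B_Y:=W^\ast(\mathcal D,X)$. Step~1 gives two componentwise inequalities in $\mathcal D_+$:
\[
E_{\mathcal D}\!\big(J_{\mathcal D}(X:\mathcal B_X)^2\big)\ge E_{\mathcal D}\!\big(J_{\mathcal D}(X:\mathcal D)^2\big),\qquad
E_{\mathcal D}\!\big(J_{\mathcal D}(Y:\mathcal B_Y)^2\big)\ge E_{\mathcal D}\!\big(J_{\mathcal D}(Y:\mathcal D)^2\big).
\]
Summing and comparing with the hypothesized additivity identity $\Phi^{\ast\ast}_{\mathcal D}(X,Y:\mathcal D)=\Phi^{\ast\ast}_{\mathcal D}(X:\mathcal D)+\Phi^{\ast\ast}_{\mathcal D}(Y:\mathcal D)$ forces both componentwise inequalities to be equalities in $\mathcal D_+$ (since each defect is positive and their sum vanishes in $\mathcal D_+$). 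Faithfulness then yields $J_{\mathcal D}(X:\mathcal B_X)=J_{\mathcal D}(X:\mathcal D)$ and $J_{\mathcal D}(Y:\mathcal B_Y)=J_{\mathcal D}(Y:\mathcal D)$, and applying part (i) to each rigidity gives $\mathcal D$-freeness of $X$ from $W^\ast(\mathcal D,Y)$ and of $Y$ from $W^\ast(\mathcal D,X)$, i.e.\ $\mathcal D$-freeness of $X$ and $Y$.

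The main technical obstacle is making the projection/Pythagoras step rigorous in the Hilbert $C^\ast$-module setting, where orthogonal complementation is not automatic. What is needed is that the submodule $L^2_{\mathcal D}(\mathcal D\langle X\rangle)\subset L^2_{\mathcal D}(\mathcal B\langle X\rangle)$ admits a $\mathcal D$-linear orthogonal projection compatible with the $\mathcal D$-valued inner product and with positivity of $E_{\mathcal D}$. This is delivered by the existence of a faithful normal conditional expectation onto $\mathcal D\langle X\rangle$, which is implicit in the regularity hypothesis guaranteeing that all conjugate variables involved exist (Remark~\ref{rem:fisher-well-defined}). Once Step~1 is valid, the algebraic translation of conjugate-variable rigidity into amalgamated freeness is essentially bookkeeping via the cumulant or gradient lemmas established in the preceding subsections.
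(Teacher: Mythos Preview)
Your proposal is correct and aligns with the paper's own argument, which is given only as a brief sketch. The paper's sketch for (i) says merely that ``the equality of Fisher information forces (via the conjugate-variable identities and faithfulness of $E_{\mathcal D}$) the commutator expression for $j_{\mathcal D}(\mathcal D[X]:\mathcal B)$ to vanish,'' then invokes Lemma~\ref{lem:gradfree}; for (ii) it applies (i) twice with $\mathcal B=\mathcal D[Y]$ and symmetrically. Your Pythagoras argument is precisely the natural way to unpack the paper's cryptic phrase: you show that $J_{\mathcal D}(X:\mathcal D)$ is the $\mathcal D$-orthogonal projection of $J_{\mathcal D}(X:\mathcal B)$, so equality of Fisher informations plus faithfulness yields conjugate-variable rigidity, and then the gradient route (Lemmas~\ref{lem:J-to-grad} and~\ref{lem:gradfree}) closes. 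Your treatment of (ii) via componentwise positive defects summing to zero is exactly the ``apply (i) twice'' step with the bookkeeping made explicit.

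Two minor points. First, you write $E_{\mathcal D}(J^2)$ and $E_{\mathcal D}(R^2)$; the paper's Definition~\ref{def:fisher} uses $E_{\mathcal D}(JJ^\ast)$, and self-adjointness of the conjugate variable is not asserted, so you should carry the adjoints through (the Pythagoras identity still holds once you verify both cross terms vanish, using Proposition~\ref{prop:uniq}(ii)). Second, your cumulant route in Step~2 of (i) is stated a bit loosely---it is not immediate that the moment identities coming from \eqref{eq:conj-identity} with $b_j\in\mathcal B$ directly yield vanishing of \emph{all} mixed cumulants---whereas the gradient route you offer as an alternative is clean: once $J_{\mathcal D}(X:\mathcal B)\in L^2_{\mathcal D}(\mathcal D\langle X\rangle)$, the commutator $[J_{\mathcal D}(X:\mathcal B),X]$ lies in that submodule, but the free gradient is by definition orthogonal to $\mathcal D[X]$, hence zero. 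That is the route the paper intends.
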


\begin{proof}[Proof sketch]
(i) The equality of Fisher information forces (via the conjugate-variable identities and faithfulness of $E_{\mathcal D}$)
the commutator expression for $j_{\mathcal D}(\mathcal D[X]:\mathcal B)$ to vanish. By Lemma~\ref{lem:gradfree},
this is equivalent to freeness with amalgamation.

(ii) Apply (i) twice with $\mathcal B=\mathcal D[Y]$ and symmetrically, after rewriting the two-variable Fisher
information in terms of conjugate variables relative to $\mathcal D[Y]$ and $\mathcal D[X]$.
\end{proof}

\subsection{Operator-valued Cram\'er--Rao inequality}

\begin{proposition}[Cram\'er--Rao inequality]\label{prop:cr}
Let $X_1,\dots,X_n\in(\mathcal M,E_{\mathcal D},\mathcal D)$ be self-adjoint and assume the conjugate variables
defining $\Phi^{\ast\ast}_{\mathcal D}(X_1,\dots,X_n:\mathcal B)$ exist. Then
\begin{equation}\label{eq:CR2}
\Phi^{\ast\ast}_{\mathcal D}(X_1,\dots,X_n:\mathcal B)\,
\sum_{j=1}^n \big\|E_{\mathcal D}(X_j^2)\big\|
\ \ge\ n^2\,1_{\mathcal D},
\end{equation}
where $\|\cdot\|$ is the $C^\ast$-norm on $\mathcal D$ (hence the sum is a scalar).
Moreover, if each $X_j$ is standard semicircular (in the sense of Proposition~\ref{prop:semi}) and
$\{\mathcal B,X_1,\dots,X_n\}$ is free with amalgamation over $\mathcal D$, then equality holds in \eqref{eq:CR2}.
\end{proposition}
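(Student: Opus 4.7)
The plan is to convert the conjugate-variable identity into a scalar normalization, use it with an operator-valued Cauchy--Schwarz inequality to lower-bound each $E_{\mathcal D}(J_jJ_j^\ast)$, and then combine the per-coordinate bounds through an arithmetic--harmonic-mean argument. First, I specialize the characterizing identity \eqref{eq:conj-identity} of Proposition~\ref{prop:conj-eq} to the monomial $X_j$ itself: with $n=1$ and $b_0=b_1=1_{\mathcal D}$, applied to $X=X_j$ and $\mathcal B=\mathcal B_j$, the right-hand side collapses to $E_{\mathcal D}(1_{\mathcal D})E_{\mathcal D}(1_{\mathcal D})=1_{\mathcal D}$, producing the normalization
\[
E_{\mathcal D}\bigl(J_{\mathcal D}(X_j:\mathcal B_j)\,X_j\bigr)=1_{\mathcal D},\qquad j=1,\dots,n,
\]
together with its adjoint $E_{\mathcal D}(X_j J_j^\ast)=1_{\mathcal D}$. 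This is the sole (but essential) use of the conjugate-variable identity.

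Next, I invoke the operator-valued Cauchy--Schwarz inequality in the Hilbert $C^\ast$-module $L^2_{\mathcal D}$ in its Kasparov--Lance form, $\langle y,x\rangle_{\mathcal D}\langle x,y\rangle_{\mathcal D}\le \|\langle x,x\rangle_{\mathcal D}\|\,\langle y,y\rangle_{\mathcal D}$, applied with $x=X_j$ and $y=J_j^\ast$. The four resulting inner products are $\langle y,x\rangle_{\mathcal D}=E_{\mathcal D}(J_jX_j)=1_{\mathcal D}$, $\langle x,y\rangle_{\mathcal D}=E_{\mathcal D}(X_jJ_j^\ast)=1_{\mathcal D}$, $\langle x,x\rangle_{\mathcal D}=E_{\mathcal D}(X_j^2)$, and $\langle y,y\rangle_{\mathcal D}=E_{\mathcal D}(J_jJ_j^\ast)$, so the inequality reads
\[
1_{\mathcal D}\le \|E_{\mathcal D}(X_j^2)\|\,E_{\mathcal D}(J_jJ_j^\ast),\qquad j=1,\dots,n.
\]
Dividing by the positive scalar $\|E_{\mathcal D}(X_j^2)\|$ and summing over $j$ gives $\Phi^{\ast\ast}_{\mathcal D}(X_1,\dots,X_n:\mathcal B)\ge \bigl(\sum_{j=1}^n \|E_{\mathcal D}(X_j^2)\|^{-1}\bigr)\cdot 1_{\mathcal D}$. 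Multiplying both sides by the positive scalar $s:=\sum_{j=1}^n\|E_{\mathcal D}(X_j^2)\|$ and using the elementary scalar inequality $\bigl(\sum_j a_j\bigr)\bigl(\sum_j a_j^{-1}\bigr)\ge n^2$ (AM--HM) establishes \eqref{eq:CR2}.

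For the equality case I retrace the two inequalities and identify when each saturates. By Proposition~\ref{prop:semi}, standard $\mathcal D$-valued semicirculars satisfy $J_{\mathcal D}(X_j:\mathcal D)=X_j$, and the cumulant data of a standard semicircular force $E_{\mathcal D}(X_j^2)=1_{\mathcal D}$, so $\|E_{\mathcal D}(X_j^2)\|=1$. The hypothesis that $\{\mathcal B,X_1,\dots,X_n\}$ is free with amalgamation over $\mathcal D$ is precisely what allows me to apply Proposition~\ref{prop:fisher-enlarge} (insensitivity of conjugate variables under free enlargement), promoting the conditioning from $\mathcal D$ to $\mathcal B_j$ and yielding $J_j=X_j$, hence $E_{\mathcal D}(J_jJ_j^\ast)=1_{\mathcal D}$ for every $j$. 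Then $\Phi^{\ast\ast}_{\mathcal D}=n\cdot 1_{\mathcal D}$, $s=n$, and both sides of \eqref{eq:CR2} equal $n^2\cdot 1_{\mathcal D}$. The main obstacle is the Cauchy--Schwarz step: the excerpt records only the scalar norm version $\|\langle x,y\rangle\|^2\le\|\langle x,x\rangle\|\|\langle y,y\rangle\|$, whereas the argument relies on the stronger operator inequality. This is standard in Hilbert $C^\ast$-module theory and follows from positivity of the $2\times 2$ Gram matrix with entries $\langle x_i,x_j\rangle_{\mathcal D}$ for $(x_1,x_2)=(x,y)$, but it should be either cited explicitly or reproved in a brief lemma before the proof; a secondary caveat is the standing well-definedness assumption of Definition~\ref{def:fisher}, under which every conjugate variable appearing above is guaranteed to exist.
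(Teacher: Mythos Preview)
Your proof is correct, and your own caveat about needing the operator-valued (Kasparov--Lance) Cauchy--Schwarz inequality rather than the scalar norm version recorded in the paper is well placed: that upgrade is precisely what makes the operator inequality $\Phi^{\ast\ast}_{\mathcal D}\cdot s\ge n^2\,1_{\mathcal D}$ go through.

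Your route differs from the paper's. The paper works in the direct-sum Hilbert $C^\ast$-module $\bigoplus_{j=1}^n L^2_{\mathcal D}(\mathcal M)$, sets $U=(J_1,\dots,J_n)$ and $V=(X_1,\dots,X_n)$, and applies Cauchy--Schwarz \emph{once} to the pair $(U,V)$, using $\langle U,V\rangle_{\mathcal D}=n\,1_{\mathcal D}$, $\langle U,U\rangle_{\mathcal D}=\Phi^{\ast\ast}_{\mathcal D}$, and $\|\langle V,V\rangle_{\mathcal D}\|\le\sum_j\|E_{\mathcal D}(X_j^2)\|$. You instead apply Cauchy--Schwarz \emph{coordinatewise} to each $(X_j,J_j^\ast)$, obtain the per-variable operator bound $E_{\mathcal D}(J_jJ_j^\ast)\ge \|E_{\mathcal D}(X_j^2)\|^{-1}\,1_{\mathcal D}$, sum, and then pass from $\sum_j a_j^{-1}$ to $n^2/s$ via AM--HM. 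Both approaches hinge on the same normalization $E_{\mathcal D}(J_jX_j)=1_{\mathcal D}$ and on the operator-valued form of Cauchy--Schwarz; the paper's direct-sum argument is more economical (one application), whereas your coordinatewise argument yields the stronger intermediate estimate $\Phi^{\ast\ast}_{\mathcal D}\ge\bigl(\sum_j\|E_{\mathcal D}(X_j^2)\|^{-1}\bigr)1_{\mathcal D}$, which can be strictly sharper than $n^2/s$ when the $\|E_{\mathcal D}(X_j^2)\|$ are unequal. Your handling of the equality case via Proposition~\ref{prop:semi} and free-enlargement invariance of conjugate variables matches the paper's.
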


\begin{proof}[Proof sketch (module Cauchy--Schwarz)]
Let $\mathcal B_j=\mathcal B[X_1,\dots,\widehat{X_j},\dots,X_n]$. Consider the elements
$u_j:=J_{\mathcal D}(X_j:\mathcal B_j)\in L^2_{\mathcal D}(\mathcal B_j\langle X_j\rangle)$ and $v_j:=X_j$.
Using Cauchy--Schwarz in the Hilbert $C^\ast$-module and summing over $j$ yields an inequality of the form
\[
\Big\|\sum_{j=1}^n E_{\mathcal D}(u_j v_j)\Big\|^2
\le
\Big\|\sum_{j=1}^n E_{\mathcal D}(u_j u_j^\ast)\Big\|
\ \Big(\sum_{j=1}^n \|E_{\mathcal D}(v_j^\ast v_j)\|\Big).
\]
The left-hand side is controlled from below by $n^2$ using the defining relation of conjugate variables
(which forces $E_{\mathcal D}(u_j X_j)=1_{\mathcal D}$ in the normalized semicircular case, and yields
a uniform lower bound in general). This gives \eqref{eq:CR2}.
The equality statement follows by inserting the standard semicircular identity $J_{\mathcal D}(X_j:\mathcal D)=X_j$
and using freeness with amalgamation to ensure additivity and orthogonality of the relevant module components.
\end{proof}

\subsection{Embedding into the pricing filtration and consequences}\label{subsec:embed-fisher-pricing}

\subsubsection{Identification of the operator-valued probabilistic structure}

We return to the pricing filtration of the main text: a von Neumann algebra $M$ equipped with an increasing family
of information algebras $(N_t)_{t\in[0,T]}$ and faithful normal conditional expectations
$E_t:M\to N_t$ satisfying the tower property.

\begin{convention}[Operator-valued reduction at time $t$]\label{conv:op-valued-reduction}
Fix $t\in[0,T]$ and set
\[
(\mathcal M,\mathcal D,E_{\mathcal D}) := (M,\,N_t,\,E_t).
\]
For any von Neumann subalgebra $\mathcal B$ such that $\mathcal D\subset \mathcal B\subset \mathcal M$,
we regard $(\mathcal M,E_{\mathcal D},\mathcal D)$ as a $\mathcal D$-valued noncommutative probability space
and consider operator-valued free Fisher information $\Phi^{\ast\ast}_{\mathcal D}(\,\cdot\,:\mathcal B)$
whenever the corresponding conjugate variables exist.
\end{convention}

\begin{remark}[Baseline commutative information]\label{rem:comm-info}
In our baseline model $N_t$ is abelian, hence $\mathcal D$ is commutative. We nevertheless keep the operator-valued
formalism since it is the natural language for conditional expectations and remains valid verbatim in extensions
with partially noncommutative information subalgebras.
\end{remark}

\subsubsection{Increment algebras and a conditional freeness postulate}

Let $0\le s<t\le T$. We model the ``new information'' arriving in $(s,t]$ by a von Neumann subalgebra
$\mathcal I_{(s,t]}\subset M$. We shall only use $\mathcal I_{(s,t]}$ through the pair
\[
\big(N_s,\ \mathcal I_{(s,t]}\big)\subset M
\qquad\text{viewed in the $\mathcal D$-valued space } (M,E_s,N_s).
\]

\begin{assumption}[Free increments over the information algebra]\label{assump:free-increments}
For each $0\le s<t\le T$, the algebras $N_s$ and $W^\ast(\mathcal I_{(s,t]})$ are free with amalgamation over $N_s$
with respect to $E_s$.
\end{assumption}

\begin{remark}\label{rem:assump-structure}
Assumption~\ref{assump:free-increments} is an additional structural hypothesis on the market filtration and is not
implied by no-arbitrage. It can be regarded as a noncommutative analogue of conditional independence of increments
relative to the information $\sigma$-algebra.
\end{remark}

\subsubsection{Additivity and a Cram\'er--Rao bound relative to information}

The operator-valued Fisher information provides a quantitative invariant attached to self-adjoint observables
relative to a conditioning algebra. In the present setting we obtain the following two consequences.

\begin{proposition}[Additivity and rigidity]\label{prop:additivity-rigidity-pricing}
Fix $s<t\le T$ and set $(\mathcal M,\mathcal D,E_{\mathcal D})=(M,N_s,E_s)$.
Let $X=X^\ast\in W^\ast(\mathcal I_{(s,t]})$ and let $\mathcal B$ be a von Neumann algebra with $N_s\subset\mathcal B\subset M$.
Assume the relevant conjugate variables exist.
\begin{enumerate}[label=\textup{(\roman*)}]
\item Under Assumption~\ref{assump:free-increments}, one has the invariance under free enlargement
\[
\Phi^{\ast\ast}_{N_s}(X:\mathcal B)=\Phi^{\ast\ast}_{N_s}(X:N_s),
\]
whenever the right-hand side is well-defined.
\item Conversely, if $\Phi^{\ast\ast}_{N_s}(X:\mathcal B)=\Phi^{\ast\ast}_{N_s}(X:N_s)$ holds and both sides exist,
then $X$ is free from $\mathcal B$ over $N_s$.
\end{enumerate}
\end{proposition}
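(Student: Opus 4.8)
The plan is to deduce both parts from the abstract operator-valued results already established, after instantiating the $\mathcal D$-valued probability space at time $s$. First I would fix $s<t\le T$ and apply Convention~\ref{conv:op-valued-reduction}, setting $(\mathcal M,E_{\mathcal D},\mathcal D):=(M,E_s,N_s)$, so that $(M,E_s,N_s)$ is an $N_s$-valued noncommutative probability space in the sense of Definition~\ref{def:fisher}; here $X=X^\ast\in W^\ast(\mathcal I_{(s,t]})\subset M$, and all conjugate variables and Fisher informations appearing below are assumed to exist, which is exactly the standing caveat of the statement.

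For part~(i), the goal is to verify the hypotheses of Proposition~\ref{prop:fisher-enlarge} with $\mathcal D=N_s$ and $\mathcal C=\mathcal B$. The only nontrivial point is freeness with amalgamation of $W^\ast(N_s,X)$ and $\mathcal B$ over $N_s$ with respect to $E_s$. I would extract this from Assumption~\ref{assump:free-increments}: since $X\in W^\ast(\mathcal I_{(s,t]})$ one has $W^\ast(N_s,X)\subseteq W^\ast(N_s,\mathcal I_{(s,t]})$, and amalgamated freeness over $N_s$ is inherited by von Neumann subalgebras containing the amalgam $N_s$, which one checks at the level of vanishing alternating $N_s$-centered moments, equivalently via the cumulant characterization of Lemma~\ref{lem:cumulant-free}. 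Granting this, together with the assumed existence of $J_{N_s}(X:N_s)$ and $J_{N_s}(X:\mathcal B)$, Proposition~\ref{prop:fisher-enlarge} gives $\Phi^{\ast\ast}_{N_s}(X:\mathcal B)=\Phi^{\ast\ast}_{N_s}(X:N_s)$ at once.

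For part~(ii), the plan is simply to invoke Theorem~\ref{thm:add-iff-free}(i) with $\mathcal D=N_s$: the equality $\Phi^{\ast\ast}_{N_s}(X:\mathcal B)=\Phi^{\ast\ast}_{N_s}(X:N_s)$, with both sides existing, is precisely the rigidity hypothesis there, so the theorem yields that $X$ is free from $\mathcal B$ over $N_s$. Internally this runs through the free gradient: the Fisher-information equality forces $j_{N_s}(N_s[X]:\mathcal B)=[\,J_{N_s}(X:\mathcal B),X\,]$ to vanish (using faithfulness of $E_s$ and the characterizing identity~\eqref{eq:conj-identity} via Lemma~\ref{lem:J-to-grad}), and Lemma~\ref{lem:gradfree} converts $j_{N_s}(N_s[X]:\mathcal B)=0$ into amalgamated freeness; no new argument is needed at this level.

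The main obstacle is not the logical assembly—which is essentially a citation of Proposition~\ref{prop:fisher-enlarge} and Theorem~\ref{thm:add-iff-free}—but the two implicit inputs. The first is the existence of the conjugate variables defining the two Fisher informations (closability of $\partial_{X:\mathcal B}$ and $\partial_{X:N_s}$ and membership $1\otimes 1\in\dom(\partial^\ast)$); this is hypothesized in the statement but is the substantive point in concrete models, where it must be secured by operator-valued free-probability regularity. The second, more delicate for the write-up, is the precise matching of Assumption~\ref{assump:free-increments} to the filtration geometry—ensuring that passing from $\mathcal I_{(s,t]}$ to $W^\ast(N_s,X)$, and enlarging $N_s$ to an arbitrary intermediate $\mathcal B$, preserves amalgamated freeness over $N_s$—which I would handle once and for all through the cumulant formulation (Lemma~\ref{lem:cumulant-free}) rather than by direct moment manipulations.
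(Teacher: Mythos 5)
Your overall plan matches the paper's proof: both parts are obtained by specializing Proposition~\ref{prop:fisher-enlarge} and Theorem~\ref{thm:add-iff-free} at the operator-valued probability space $(M,E_s,N_s)$ via Convention~\ref{conv:op-valued-reduction}. Part~(ii) is indeed a direct citation of Theorem~\ref{thm:add-iff-free}(i) and nothing more is needed there.

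There is, however, a genuine gap in your treatment of part~(i), at exactly the point you flagged as ``delicate''. The hypothesis of Proposition~\ref{prop:fisher-enlarge} in our setting is that $W^\ast(N_s,X)$ and $\mathcal B$ are free with amalgamation over $N_s$. You propose to extract this from Assumption~\ref{assump:free-increments} via subalgebra inheritance and the cumulant criterion. But Assumption~\ref{assump:free-increments} literally asserts that \emph{$N_s$} and $W^\ast(\mathcal I_{(s,t]})$ are free with amalgamation over $N_s$ --- a statement in which the amalgam $N_s$ is also one of the two subalgebras. Since $E_s$ is the identity on $N_s$, every $N_s$-centered element of $N_s$ vanishes, so that assumption is vacuously satisfied and carries no content; in particular it says nothing whatsoever about $\mathcal B$. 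Downward inheritance only lets you shrink one of the two \emph{already-free} algebras (e.g.\ pass from $W^\ast(N_s,\mathcal I_{(s,t]})$ to $W^\ast(N_s,X)$), it cannot introduce an arbitrary intermediate algebra $N_s\subset\mathcal B\subset M$ that never appeared in the assumption; and Lemma~\ref{lem:cumulant-free} cannot help either, because the assumption furnishes no mixed cumulant information involving $\mathcal B$. The conclusion of (i) cannot hold for \emph{all} such $\mathcal B$ in any case --- taking $\mathcal B=W^\ast(N_s,X)$ already gives a counterexample unless $X\in N_s$ --- so the fix must either restrict $\mathcal B$ (to the ``past'' algebra generated below $N_s$, say) or strengthen Assumption~\ref{assump:free-increments} to an actual freeness statement between $W^\ast(N_s,\mathcal I_{(s,t]})$ and the admissible $\mathcal B$ over $N_s$. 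As written, neither your argument nor the paper's one-line proof closes this gap.
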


\begin{proof}
This is a direct application of Proposition~\ref{prop:fisher-enlarge} and Theorem~\ref{thm:add-iff-free}
under the identification $(\mathcal M,\mathcal D,E_{\mathcal D})=(M,N_s,E_s)$ from
Convention~\ref{conv:op-valued-reduction}.
\end{proof}

\begin{corollary}[Operator-valued Cram\'er--Rao bound relative to $N_t$]\label{cor:CR-pricing}
Fix $t\in[0,T]$ and set $(\mathcal M,\mathcal D,E_{\mathcal D})=(M,N_t,E_t)$.
Let $X_1,\dots,X_n\in M$ be self-adjoint and let $\mathcal B$ be a von Neumann subalgebra with $N_t\subset\mathcal B\subset M$.
Assume the conjugate variables defining $\Phi^{\ast\ast}_{N_t}(X_1,\dots,X_n:\mathcal B)$ exist. Then
\[
\Phi^{\ast\ast}_{N_t}(X_1,\dots,X_n:\mathcal B)\,
\sum_{j=1}^n \big\|E_t(X_j^2)\big\|
\ \ge\ n^2\,1_{N_t}.
\]
\end{corollary}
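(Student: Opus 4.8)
The plan is to obtain Corollary~\ref{cor:CR-pricing} as a direct specialization of the operator-valued Cram\'er--Rao inequality of Proposition~\ref{prop:cr}, via the identification $(\mathcal M,E_{\mathcal D},\mathcal D)=(M,E_t,N_t)$ supplied by Convention~\ref{conv:op-valued-reduction}. The essential analytic content — the Hilbert $C^\ast$-module Cauchy--Schwarz estimate and the lower bound on $\big\|\sum_j E_{\mathcal D}(u_j v_j)\big\|$ coming from the defining identity of the conjugate variables — is already established in Proposition~\ref{prop:cr}, so the only task is to check that the hypotheses transport to the present setting and then quote the conclusion.

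First I would verify that $(M,E_t,N_t)$ is a bona fide $N_t$-valued noncommutative probability space, i.e.\ that $E_t:M\to N_t$ is a \emph{faithful} normal conditional expectation. Normality and the conditional-expectation (bimodule, idempotent, unital) properties hold by the standing Assumption~\ref{as:condexp_exist}; faithfulness follows from faithfulness of the reference state $\varphi_\rho$, since if $X\ge 0$ and $E_t(X)=0$ then $\varphi_\rho(X)=\varphi_\rho(E_t(X))=0$, hence $X=0$. Consequently the $N_t$-valued module $L^2_{N_t}(M)$ and the module Cauchy--Schwarz inequality recorded in Section~\ref{app:op-valued-fisher} are available with $\mathcal D=N_t$. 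The remaining hypotheses of Proposition~\ref{prop:cr} are assumed in the statement: $N_t\subset\mathcal B\subset M$, and the conjugate variables $J_{N_t}(X_j:\mathcal B_j)$ with $\mathcal B_j:=W^\ast(\mathcal B,X_1,\dots,\widehat{X_j},\dots,X_n)$ exist, so $\Phi^{\ast\ast}_{N_t}(X_1,\dots,X_n:\mathcal B)$ is well-defined.

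With all hypotheses in place, Proposition~\ref{prop:cr} applies verbatim with $\mathcal D=N_t$ and yields
\[
\Phi^{\ast\ast}_{N_t}(X_1,\dots,X_n:\mathcal B)\,\sum_{j=1}^n\big\|E_t(X_j^2)\big\|\ \ge\ n^2\,1_{N_t},
\]
which is exactly the asserted bound; here $\|\cdot\|$ is the $C^\ast$-norm on $N_t\subseteq B(\mathcal H)$, so $\sum_j\|E_t(X_j^2)\|$ is a nonnegative scalar and the inequality is an operator inequality in $N_t$. Since this is a corollary rather than a fresh argument, there is no genuine obstacle: the only point demanding a line of care is the faithfulness of $E_t$ noted above, as that is precisely what licenses invoking the operator-valued free-probability framework inside the pricing filtration $(M,(N_t),(E_t))$.
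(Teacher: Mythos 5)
Your proof is correct and matches the paper's own: the corollary is obtained by specializing Proposition~\ref{prop:cr} to $(\mathcal M,\mathcal D,E_{\mathcal D})=(M,N_t,E_t)$ via Convention~\ref{conv:op-valued-reduction}. Your extra check that $E_t$ is faithful (from faithfulness of $\varphi_\rho$ and $\varphi_\rho\circ E_t=\varphi_\rho$) is sound, though the paper already folds faithfulness of $E_t$ into the standing hypotheses of the embedding subsection.
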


\begin{proof}
This is Proposition~\ref{prop:cr} specialized to $\mathcal D=N_t$ and $E_{\mathcal D}=E_t$.
\end{proof}

\begin{remark}[Commutative reduction]\label{rem:CR-comm}
When $N_t$ is abelian, $N_t\simeq L^\infty(\Omega,\Sigma_t,\mathbb P)$ and $\|E_t(X_j^2)\|$ coincides with the essential supremum
of the conditional second moment. Thus Corollary~\ref{cor:CR-pricing} links the operator-valued Fisher information
$\Phi^{\ast\ast}_{N_t}$ to classical $\Sigma_t$-measurable dispersion proxies.
\end{remark}

\section{Noncommutative Quantum Pricing Theory}\label{chap:ncqp}

In this chapter we remove the commutativity assumption on the information algebras and develop
pricing dynamics and uncertainty relations in a fully noncommutative setting.
Earlier chapters were formulated with an increasing family of \emph{abelian} von Neumann subalgebras
$(N_t)$ and normal conditional expectations $E_t:M\to N_t$.
Here we allow the global market algebra $M$ (and information algebras $D\subset M$) to be noncommutative.
Accordingly, (i) time evolution is modeled by \emph{quantum Markov semigroups} (normal, unital, completely positive semigroups),
and (ii) conditioning is realized by \emph{operator-valued} conditional expectations onto subalgebras.
We formulate an observable-based pricing framework retaining positivity, normality and dynamic consistency,
and we connect it to noncommutative uncertainty principles. In particular, we explain how
operator-valued Fisher-information bounds (Appendix~\ref{app:op-valued-fisher}) complement, but are \emph{not equivalent} to,
the commutator/H\"older-type uncertainty bounds established in appendix~\ref{chap:tools-functional-analysis}.

\subsection{Quantum Markov semigroups and pricing dynamics}\label{sec:ncqp:qms}

Throughout, $M$ denotes a von Neumann algebra.
We write $M_*$ for its predual, and all maps between von Neumann algebras are understood to be normal unless stated otherwise.

\begin{definition}[Quantum Markov semigroup]\label{def:qms}
A family $(T_t)_{t\ge 0}$ of linear maps $T_t:M\to M$ is called a \emph{quantum Markov semigroup} (QMS) if:
\begin{enumerate}[label=(\roman*)]
\item each $T_t$ is normal, unital and completely positive (UCP);
\item $T_{s+t}=T_s\circ T_t$ for all $s,t\ge 0$, and $T_0=\mathrm{id}_M$;
\item (\emph{continuity}) $t\mapsto T_t(x)$ is $\sigma$-weakly continuous for every $x\in M$
      (equivalently, $t\mapsto T_t^*(\omega)$ is norm-continuous in $M_*$ for every $\omega\in M_*$).
\end{enumerate}
\end{definition}

Condition (iii) implies that $(T_t)$ admits a (generally unbounded) generator $\mathcal L$ defined on a $\sigma$-weakly dense domain by
\[
  \mathcal L(x) := \sigma\text{-}\lim_{t\downarrow 0}\frac{T_t(x)-x}{t}.
\]
If $(T_t)$ is \emph{uniformly continuous} (i.e.\ $\lim_{t\downarrow 0}\|T_t-\mathrm{id}\|=0$), then $\mathcal L$ is bounded and $T_t=\exp(t\mathcal L)$.

\begin{definition}[Pricing state and invariance]\label{def:pricing-state}
A faithful normal state $\phi^\star$ on $M$ is called a \emph{pricing state} for a QMS $(T_t)$ if
\[
  \phi^\star\circ T_t=\phi^\star,\qquad \forall t\ge 0.
\]
In this case $(T_t)$ is said to be \emph{$\phi^\star$-invariant}.
\end{definition}

\begin{remark}[Risk-neutrality as invariance]
The condition $\phi^\star\circ T_t=\phi^\star$ is the noncommutative analogue of ``risk-neutrality'':
expectations under $\phi^\star$ are stationary along the evolution.
In the Schr\"odinger picture, the dual semigroup $(T_t^*)$ preserves the density implementing $\phi^\star$.
\end{remark}


\subsection{From structural pricing operators to a QMS}\label{subsec:qms-from-pricing}

\begin{assumption}[Time-homogeneity / stationarity of the pricing structure]\label{ass:stationarity}
There exists a $\sigma$-weakly continuous one-parameter group of $*$-automorphisms
$(\alpha_t)_{t\in\mathbb{R}}$ of $M$ such that, for all $t\in[0,T]$,
\[
N_t=\alpha_t(N_0),\qquad E_t^\star=\alpha_t\circ E_0^\star\circ \alpha_{-t},
\qquad B_t=\alpha_t(B_0),
\]
and the pricing maps satisfy the covariance
\[
\Pi_t=\alpha_t\circ \Pi_0\circ \alpha_{-t}.
\]
Moreover, the num\'eraire is deterministic and central (e.g.\ $B_t=e^{rt}\mathbf{1}$), hence
$B_t\in Z(M)$ and $B_s B_t^{-1}$ depends only on $s-t$.
\end{assumption}

\begin{theorem}[QMS induced by a time-homogeneous pricing family]\label{thm:qms-from-pricing}
Assume the standing hypotheses of Chapter~\ref{section:MF_of_QPT} for the structural pricing family
$\{\Pi_t\}_{t\in[0,T]}$, namely:

\begin{enumerate}[label=\textup{(\roman*)}]
\item each $\Pi_t:M\to N_t$ is normal, unital and completely positive, and $N_t$--bimodular;
\item the conditional expectations $\{E_t^\star\}$ are normal, $\varphi^\star$-preserving, and satisfy
the tower property $E_s^\star\circ E_t^\star=E_s^\star$ for $s\le t$;
\item the dynamic consistency relation holds: for $0\le s\le t\le T$ and $X\in M$,
\[
\Pi_s(X)=B_s^{1/2}\,E_s^\star\!\Big(B_t^{-1/2}\,\Pi_t(X)\,B_t^{-1/2}\Big)\,B_s^{1/2}.
\]
\end{enumerate}
Assume in addition Assumption~\ref{ass:stationarity}.

Define, for $\tau\in[0,T]$, a map $T_\tau:M\to M$ by
\[
T_\tau(X)\;:=\;B_{T-\tau}^{-\frac12}\,
\Pi_{T-\tau}\!\Big(B_T^{\frac12} X B_T^{\frac12}\Big)\,
B_{T-\tau}^{-\frac12},
\qquad X\in M.
\]
Then:

\begin{enumerate}[label=\textup{(\alph*)}]
\item $T_0=\mathrm{id}_M$, and each $T_\tau$ is normal, unital and completely positive;
\item $(T_\tau)_{\tau\in[0,T]}$ satisfies the semigroup property
$T_{\tau+\sigma}=T_\tau\circ T_\sigma$ whenever $\tau,\sigma\ge0$ and $\tau+\sigma\le T$;
\item if $\varphi^\star$ is a pricing state for $\{\Pi_t\}$ (equivalently, $\varphi^\star$ is preserved by
$E_t^\star$ and the underlying structure), then $\varphi^\star\circ T_\tau=\varphi^\star$ for all
$\tau\in[0,T]$ (i.e.\ $(T_\tau)$ is $\varphi^\star$-invariant).
\end{enumerate}

Consequently, $(T_\tau)$ is a (finite-horizon) quantum Markov semigroup on $M$ in the sense of
Definition~\ref{def:qms}. Moreover, in the finite-dimensional (or uniformly continuous) setting, the generator
of $(T_\tau)$ admits the standard GKSL/Lindblad form (Theorem~\ref{thm:qms-from-pricing}).
\end{theorem}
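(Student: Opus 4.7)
The plan is to verify the three structural claims (a)--(c) in turn, reducing each to known properties of $\Pi_t$, $E_t^\star$, and the num\'eraire, and then to invoke Lindblad's structure theorem in the finite-dimensional/uniformly continuous regime. The main working tools are the dynamic-consistency identity~(iii) and the stationarity postulate of Assumption~\ref{ass:stationarity}, since these are precisely what distinguishes a genuine semigroup $(T_\tau)$ from the static pricing family $(\Pi_t)$.

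For (a), normality, complete positivity, and unitality of $T_\tau$ are immediate from its structure as a composition of normal CP maps: $X\mapsto B_T^{1/2}XB_T^{1/2}$ (conjugation by a bounded positive operator, hence CP and normal), $\Pi_{T-\tau}$ (normal and UCP by hypothesis~(i)), and $Y\mapsto B_{T-\tau}^{-1/2}YB_{T-\tau}^{-1/2}$. Unitality of the outer composition reduces to $B_{T-\tau}^{-1/2}\Pi_{T-\tau}(B_T)B_{T-\tau}^{-1/2}=\mathbf{1}$, which is just the num\'eraire normalization $\Pi_t(B_T)=B_t$ from Proposition~\ref{prop:pricing_properties}, rewritten in discounted units. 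For $T_0=\mathrm{id}_M$ I would substitute $\tau=0$ and use Definition~\ref{def:pricing_operator} to obtain $\Pi_T(B_T^{1/2}XB_T^{1/2})=B_T^{1/2}E_T^\star(X)B_T^{1/2}$; the $B_T^{\pm 1/2}$-factors cancel, leaving $T_0(X)=E_T^\star(X)$, which coincides with $X$ under the terminal convention $N_T=M$ (so that $E_T^\star=\mathrm{id}_M$).

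The main obstacle is the semigroup law~(b). The natural strategy is to apply dynamic consistency of $\Pi$ at the pair of times $s=T-\tau-\sigma$, $t=T-\tau$, with argument $Y=B_T^{1/2}XB_T^{1/2}$; conjugating both sides by $B_{T-\tau-\sigma}^{-1/2}$ yields
\begin{equation*}
T_{\tau+\sigma}(X) \;=\; E_{T-\tau-\sigma}^\star\!\Big(B_{T-\tau}^{-1/2}\,\Pi_{T-\tau}(Y)\,B_{T-\tau}^{-1/2}\Big).
\end{equation*}
To identify this with $T_\tau(T_\sigma(X))$ one must expand the outer $T_\tau$ applied to $T_\sigma(X)$ and then use the covariance $\Pi_t=\alpha_t\circ\Pi_0\circ\alpha_{-t}$, $E_t^\star=\alpha_t\circ E_0^\star\circ\alpha_{-t}$, together with centrality of $B_t$ from Assumption~\ref{ass:stationarity}, to slide the inner $B_T^{\pm1/2}$-factors and the $\alpha$-conjugations past $\Pi_{T-\tau}$. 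The delicate bookkeeping lives here: $\Pi_{T-\tau}$ is only $N_{T-\tau}$-bimodular, not $M$-bimodular, so moving factors across it requires either centrality $B_t\in Z(M)$ or the $\alpha$-covariance to realign the time indices. I expect this to be where the bulk of the technical work goes; without stationarity one only obtains a tower-type collapse of the form $T_\tau\circ T_\sigma=T_{\max(\tau,\sigma)}$, which is not the desired additive semigroup.

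Claim~(c) is then almost immediate: unfolding $T_\tau$ in terms of $E_{T-\tau}^\star$ and applying $\varphi^\star\circ E_{T-\tau}^\star=\varphi^\star$, together with the trace-like behaviour of $\varphi^\star$ against the central num\'eraire factors $B_t^{\pm 1/2}$ (so that the $B$-factors can be absorbed or cancelled under $\varphi^\star$), gives $\varphi^\star(T_\tau(X))=\varphi^\star(X)$. The $\sigma$-weak continuity of $\tau\mapsto T_\tau$ follows from the $\sigma$-weak continuity of $(\alpha_t)$ in Assumption~\ref{ass:stationarity} combined with norm continuity of $t\mapsto B_t^{\pm 1/2}$. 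Finally, in the finite-dimensional (or uniformly continuous) setting, the bounded generator $\mathcal{L}=\partial_\tau T_\tau|_{\tau=0}$ of a normal UCP semigroup on $M$ admits the GKSL decomposition by Lindblad's structure theorem, which yields the stated Lindblad form and completes the proof.
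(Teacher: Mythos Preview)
Your proposal is correct and follows essentially the same three-step architecture as the paper's proof: (a) normality/CP/unitality from the composition structure and the num\'eraire normalization $\Pi_t(B_T)=B_t$, with $T_0=\mathrm{id}$ from terminal consistency; (b) the semigroup law from dynamic consistency (iii) combined with the covariance and centrality in Assumption~\ref{ass:stationarity}; (c) $\varphi^\star$-invariance from $\varphi^\star\circ E_t^\star=\varphi^\star$ and centrality of $B_t$. Your additional remarks---that $T_0=\mathrm{id}$ needs the terminal convention $N_T=M$, that without stationarity one has $T_\tau=E_{T-\tau}^\star$ and hence only the tower collapse $T_\tau\circ T_\sigma=T_{\max(\tau,\sigma)}$, and that $\sigma$-weak continuity of $\tau\mapsto T_\tau$ should be checked for the QMS definition---are all correct and in fact sharpen points the paper leaves implicit.
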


\begin{proof}
\emph{Step 1: complete positivity, normality, and unitality.}
By the Chapter~\ref{section:MF_of_QPT} hypotheses, $\Pi_{T-\tau}$ is normal and completely positive, and conjugation by the
central positive invertible element $B_{T-\tau}^{-1/2}$ preserves normal complete positivity. Hence each
$T_\tau$ is normal and completely positive. Since $\Pi_T=\mathrm{id}_M$ (terminal consistency), we obtain
\[
T_0(X)=B_T^{-1/2}\,\Pi_T(B_T^{1/2}XB_T^{1/2})\,B_T^{-1/2}=X,
\]
so $T_0=\mathrm{id}_M$. Unitality follows from $\Pi_{T-\tau}(B_T)=B_{T-\tau}$ (normalization property of
the pricing family):
\[
T_\tau(\mathbf{1})=B_{T-\tau}^{-1/2}\,\Pi_{T-\tau}(B_T)\,B_{T-\tau}^{-1/2}
=B_{T-\tau}^{-1/2}B_{T-\tau}B_{T-\tau}^{-1/2}=\mathbf{1}.
\]

\emph{Step 2: semigroup property.}
Fix $\tau,\sigma\ge0$ with $\tau+\sigma\le T$ and set $t:=T-\tau$, $s:=T-(\tau+\sigma)$, so that
$0\le s\le t\le T$.
We compute, using the definition of $T_\tau$ and then the dynamic consistency relation for the pair
$(s,t)$,
\[
\begin{aligned}
T_\tau\bigl(T_\sigma(X)\bigr)
&=B_t^{-1/2}\,\Pi_t\!\Big(B_T^{1/2}\,T_\sigma(X)\,B_T^{1/2}\Big)\,B_t^{-1/2}\\
&=B_t^{-1/2}\,\Pi_t\!\Big(
B_T^{1/2}\,B_{T-\sigma}^{-1/2}\,\Pi_{T-\sigma}(B_T^{1/2}XB_T^{1/2})\,B_{T-\sigma}^{-1/2}\,B_T^{1/2}
\Big)\,B_t^{-1/2}.
\end{aligned}
\]
By Assumption~\ref{ass:stationarity} (time-homogeneity) and centrality of the num\'eraire, the inside
expression depends only on the time-lag $\sigma$, and can be re-expressed as a stationary
``one-step'' update from $t$ down to $s$ (this is precisely the role of stationarity: to identify
conditioning and discounting between time layers by shifts). Applying the dynamic consistency relation
\[
\Pi_s(\cdot)=B_s^{1/2}E_s^\star\!\Big(B_t^{-1/2}\Pi_t(\cdot)B_t^{-1/2}\Big)B_s^{1/2}
\]
to the element $B_T^{1/2}XB_T^{1/2}$ yields
\[
B_s^{-1/2}\,\Pi_s(B_T^{1/2}XB_T^{1/2})\,B_s^{-1/2}
=
E_s^\star\!\Big(B_t^{-1/2}\Pi_t(B_T^{1/2}XB_T^{1/2})B_t^{-1/2}\Big).
\]
Using the tower property of $(E_t^\star)$ and the covariance relations from
Assumption~\ref{ass:stationarity}, one obtains exactly
\[
T_\tau\bigl(T_\sigma(X)\bigr)=
B_s^{-1/2}\,\Pi_s(B_T^{1/2}XB_T^{1/2})\,B_s^{-1/2}
=T_{\tau+\sigma}(X).
\]
Hence $T_{\tau+\sigma}=T_\tau\circ T_\sigma$ for $\tau+\sigma\le T$.

\emph{Step 3: $\varphi^\star$-invariance.}
If $\varphi^\star$ is preserved by $E_t^\star$ and compatible with the pricing structure, then by
construction and the normalization of discounting one has $\varphi^\star\circ \Pi_{T-\tau}=\varphi^\star$
on the discounted observables, and conjugation by central $B_{T-\tau}^{-1/2}$ does not alter
$\varphi^\star$. Therefore $\varphi^\star\circ T_\tau=\varphi^\star$.

This proves (a)--(c), hence $(T_\tau)$ is a (finite-horizon) QMS. In the uniformly continuous
$B(H)$ setting, Theorem~\ref{thm:qms-from-pricing} gives the GKSL/Lindblad form of the generator.
\end{proof}

\subsection[GKSL form in the uniformly continuous B(H) case]
{GKSL form in the uniformly continuous \texorpdfstring{$\mathcal B(\mathcal H)$}{B(H)} case}

We record the standard structural theorem in the case $M=\mathcal B(\mathcal H)$ and $\mathcal L$ bounded.

\begin{theorem}[GKSL--Lindblad form (bounded generator on $\mathcal B(\mathcal H)$)]\label{thm:GKSL}
Let $\mathcal H$ be a Hilbert space and let $(T_t)_{t\ge 0}$ be a uniformly continuous QMS on $B(\mathcal H)$.
Then its bounded generator $\mathcal L$ admits a representation
\[
  \mathcal L(X)= i[H,X] + \sum_{j\in J}\Bigl(V_j^\ast X V_j - \tfrac12\{V_j^\ast V_j,\,X\}\Bigr),
  \qquad X\in \mathcal B(\mathcal H),
\]
for some self-adjoint $H$ (possibly after absorbing a bounded derivation) and some family $(V_j)_{j\in J}\subset \mathcal B(\mathcal H)$
with $\sum_j V_j^\ast V_j$ convergent in the strong operator topology.
\end{theorem}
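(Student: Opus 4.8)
The plan is to follow the classical Gorini--Kossakowski--Sudarshan / Lindblad route, in the streamlined form due to Christensen--Evans: extract from $\mathcal{L}$ a completely positive ``dissipation kernel,'' dilate it by a GNS/Stinespring construction to read off the operators $V_j$, and recognise the remaining part of $\mathcal{L}$ as an inner $\ast$-derivation. As preliminary reductions, uniform continuity gives that $\mathcal{L}$ is bounded with $T_t=\exp(t\mathcal{L})$ (norm-convergent series); differentiating $T_t(\mathbf 1)=\mathbf 1$ and $T_t(X^\ast)=T_t(X)^\ast$ at $t=0$ gives $\mathcal{L}(\mathbf 1)=0$ and $\mathcal{L}(X^\ast)=\mathcal{L}(X)^\ast$.

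\textbf{The dissipation kernel is completely positive.} Define
\[
\mathcal{D}(X,Y):=\mathcal{L}(X^\ast Y)-\mathcal{L}(X^\ast)Y-X^\ast\mathcal{L}(Y),\qquad X,Y\in\mathcal B(\mathcal H).
\]
For each fixed $t$, Stinespring's theorem represents the unital CP map $T_t$ as $T_t(Z)=W_t^\ast\pi_t(Z)W_t$ with $W_t$ an isometry (by unitality), so that
\[
\Phi_t(X,Y):=T_t(X^\ast Y)-T_t(X^\ast)T_t(Y)=W_t^\ast\pi_t(X)^\ast\,(\mathbf 1-W_tW_t^\ast)\,\pi_t(Y)W_t,
\]
and since $\mathbf 1-W_tW_t^\ast$ is a positive projection the kernel $\Phi_t$ is completely positive (every matrix amplification $[\Phi_t(X_i,X_j)]_{i,j}$ is positive). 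As $T_0=\mathrm{id}$ we have $\Phi_0\equiv 0$, so the right derivative at $t=0$ exists and equals $\mathcal{D}$; being a limit of $t^{-1}\Phi_t\ge 0$, the kernel $\mathcal{D}$ is completely positive. Normality of $\mathcal{L}$ makes $\mathcal{D}$ normal in each variable.

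\textbf{Dilation, inner derivation, and reconstruction.} A normal completely positive kernel on $\mathcal B(\mathcal H)$ admits a GNS-type factorisation $\mathcal{D}(X,Y)=a(X)^\ast a(Y)$, where $a:\mathcal B(\mathcal H)\to\mathcal B(\mathcal H,\mathcal K)$ is bounded linear with $a(\mathbf 1)=0$ and, because of the Leibniz defect built into $\mathcal{D}$, a $\pi$-cocycle, $a(XY)=\pi(X)a(Y)+a(X)Y$, for a \emph{normal} $\ast$-representation $\pi$ of $\mathcal B(\mathcal H)$ on $\mathcal K$. Bounded derivations of $\mathcal B(\mathcal H)$ into a normal bimodule are inner, so $a(X)=\pi(X)\rho-\rho X$ for some $\rho\in\mathcal B(\mathcal H,\mathcal K)$; since $\mathcal B(\mathcal H)$ has a unique normal representation up to multiplicity, $\mathcal K\cong\mathcal H\otimes\ell^2(J)$ with $\pi(X)=X\otimes\mathbf 1$, and writing $\rho\xi=\sum_{j\in J}(V_j\xi)\otimes e_j$ produces $V_j\in\mathcal B(\mathcal H)$ with $\sum_j V_j^\ast V_j=\rho^\ast\rho$ convergent in the strong operator topology and $\mathcal{D}(X,Y)=\sum_j(XV_j-V_jX)^\ast(YV_j-V_jY)$. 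Finally set $\mathcal{L}_0(X):=\sum_j\bigl(V_j^\ast XV_j-\tfrac12\{V_j^\ast V_j,X\}\bigr)$; a direct algebraic check shows that the dissipation kernel of $\mathcal{L}_0$ equals $\mathcal{D}$, hence $\mathcal{L}-\mathcal{L}_0$ has vanishing dissipation kernel, i.e.\ is a bounded $\ast$-derivation of $\mathcal B(\mathcal H)$, therefore inner: $\mathcal{L}(X)-\mathcal{L}_0(X)=[K,X]$. The $\ast$-property forces $K+K^\ast\in\mathbb C\mathbf 1$, and subtracting a scalar (which does not affect the commutator) we may take $K=iH$ with $H=H^\ast$, giving the asserted GKSL form.

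\textbf{Main obstacle.} The technical heart is the third step: passing from bare complete positivity of the kernel $\mathcal{D}$ to the concrete cocycle representation while keeping $\pi$ normal (so that the multiplicity decomposition $\mathcal K\cong\mathcal H\otimes\ell^2(J)$ applies and the $V_j$ are genuine elements of $\mathcal B(\mathcal H)$), together with the strong-operator convergence of $\sum_j V_j^\ast V_j$ when $J$ is infinite. In finite dimensions this collapses to diagonalising the Choi matrix of $\mathcal{D}$, and all other steps reduce to differentiating norm-convergent series and routine algebra; in the infinite-dimensional case one instead leans on the Christensen--Evans theorem (a bounded CP-semigroup generator decomposes as $\mathcal{L}(X)=\psi(X)-K^\ast X-XK$ with $\psi$ completely positive) and applies Kraus' theorem to the normal CP map $\psi$.
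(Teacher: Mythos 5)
The paper does not prove Theorem~\ref{thm:GKSL}: it is stated as a reference result, and the remark immediately following it declares it classical, citing Gorini--Kossakowski--Sudarshan, Lindblad, and pointing to the Evans--Lewis/Christensen--Evans structure theory for the general von Neumann algebra case. Your sketch is a proof of a result the paper simply cites, and it follows precisely the Christensen--Evans route the paper's remark names.

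As a self-contained outline your argument is sound. The computation $T_t(X^\ast Y)-T_t(X^\ast)T_t(Y)=W_t^\ast\pi_t(X)^\ast(\mathbf 1-W_tW_t^\ast)\pi_t(Y)W_t$ is the standard way to exhibit the Schwarz defect of a unital CP map as a CP kernel, and differentiating at $t=0$ (justified by norm-convergence of $t^{-1}\Phi_t$) does give complete positivity of the dissipation kernel $\mathcal D$. The reconstruction $\mathcal D_{\mathcal L_0}(X,Y)=\sum_j(XV_j-V_jX)^\ast(YV_j-V_jY)$ checks out algebraically, the vanishing of $\mathcal D_{\mathcal L-\mathcal L_0}$ does characterize $\mathcal L-\mathcal L_0$ as a bounded $\ast$-derivation, and the innerness of bounded derivations of $\mathcal B(\mathcal H)$ into normal dual bimodules plus the $\ast$-property forcing $K+K^\ast\in\mathbb C\mathbf 1$ close the argument. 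Two small points: (i) $a(\mathbf 1)=0$ is most cleanly seen from $\mathcal D(\mathbf 1,\mathbf 1)=-\mathcal L(\mathbf 1)=0$; (ii) your ``main obstacle'' paragraph is slightly circular, since invoking the Christensen--Evans theorem ($\mathcal L=\psi-K^\ast\cdot-\cdot K$ with $\psi$ CP) is an alternative to the cocycle-and-dilation steps you already sketched, not a way to complete them --- if you have Christensen--Evans plus Kraus for $\psi$, the whole argument collapses to a two-line algebraic rearrangement using $\mathcal L(\mathbf 1)=0$.
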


\begin{remark}
Theorem~\ref{thm:GKSL} is classical; see \cite{GKSL1976,Lindblad1976} and standard monographs on quantum dynamical semigroups.
For general von Neumann algebras, one may use the Evans--Lewis/Christensen--Evans structure theory for bounded generators of normal CP semigroups under suitable hypotheses; we will not need the most general form here.
\end{remark}

\subsubsection{Backward pricing equation from the semigroup}

Fix a maturity $T>0$ and a payoff observable $X_T\in M$.
In the semigroup paradigm, a natural (discounted) valuation process is obtained by backward evolution.
For definiteness we take the money-market account to be deterministic, $B_t=e^{rt}\,1$ with $r\in\mathbb R$.

\begin{definition}[Semigroup valuation]\label{def:semigroup-valuation}
Let $(T_t)_{t\ge 0}$ be a QMS on $M$ and fix a deterministic money-market account
$B_t=e^{rt}\,1$ with $r\in\mathbb R$. For a payoff observable $X_T\in M$ and $0\le t\le T$ define
the time-$t$ value observable
\[
  V_t(X_T) := e^{-r(T-t)}\,T_{T-t}(X_T)\in M,
\]
and its money-market discounted version
\[
  \widetilde V_t(X_T) := B_t^{-1}V_t(X_T)=e^{-rt}V_t(X_T)
  = e^{-rT}\,T_{T-t}(X_T)\in M.
\]
\end{definition}

\begin{proposition}[Backward pricing equation from the semigroup]\label{prop:backward-eq}
Assume $(T_t)$ is a uniformly continuous QMS with bounded generator $\mathcal L$.
Then $t\mapsto V_t(X_T)$ and $t\mapsto \widetilde V_t(X_T)$ are norm differentiable on $[0,T]$, and
\begin{align*}
  \frac{d}{dt}V_t(X_T)
  &= \bigl(r\,\mathrm{id}-\mathcal L\bigr)\bigl(V_t(X_T)\bigr),
  \qquad V_T(X_T)=X_T,\\
  \frac{d}{dt}\widetilde V_t(X_T)
  &= -\,\mathcal L\bigl(\widetilde V_t(X_T)\bigr),
  \qquad \widetilde V_T(X_T)=e^{-rT}X_T.
\end{align*}
Equivalently, $V_t(X_T)$ solves the backward equation
\[
  \partial_t V_t(X_T)+\bigl(\mathcal L-r\,\mathrm{id}\bigr)\bigl(V_t(X_T)\bigr)=0,
  \qquad V_T(X_T)=X_T.
\]
\end{proposition}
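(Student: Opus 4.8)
The plan is to reduce the statement to the elementary fact that a uniformly continuous QMS is a \emph{norm-analytic} semigroup. Since, as recorded after Definition~\ref{def:qms}, uniform continuity forces $\mathcal L$ to be bounded and $T_t=\exp(t\mathcal L)$, the series $\sum_{n\ge 0}t^n\mathcal L^n/n!$ converges in the operator norm on $\mathcal B(M)$, the map $t\mapsto T_t$ is norm-differentiable on all of $\mathbb R$ with $\frac{d}{dt}T_t=\mathcal L T_t=T_t\mathcal L$ (derivative in $\mathcal B(M)$), and hence for each fixed $X_T\in M$ the $M$-valued curve $s\mapsto T_s(X_T)$ is norm-differentiable with $\frac{d}{ds}T_s(X_T)=\mathcal L\big(T_s(X_T)\big)$. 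I would state this once as the only analytic input and then invoke it.

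First I would differentiate $V_t(X_T)=e^{-r(T-t)}\,T_{T-t}(X_T)$ using the product rule for the bilinear (jointly continuous) scalar-times-vector map $\mathbb R\times M\to M$ together with the chain rule for the inner substitution $s=T-t$, $ds/dt=-1$. This gives
\[
\frac{d}{dt}V_t(X_T)=r\,e^{-r(T-t)}\,T_{T-t}(X_T)-e^{-r(T-t)}\,\mathcal L\big(T_{T-t}(X_T)\big).
\]
Because $\mathcal L$ is linear it commutes with the scalar $e^{-r(T-t)}$, so the second term is $-\mathcal L\big(V_t(X_T)\big)$ and the first is $r\,V_t(X_T)$; thus $\frac{d}{dt}V_t(X_T)=(r\,\mathrm{id}-\mathcal L)\big(V_t(X_T)\big)$. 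Evaluating at $t=T$ and using $T_0=\mathrm{id}_M$ yields $V_T(X_T)=e^{0}T_0(X_T)=X_T$. Rearranging is exactly the displayed backward equation $\partial_t V_t(X_T)+(\mathcal L-r\,\mathrm{id})\big(V_t(X_T)\big)=0$.

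Next, for $\widetilde V_t(X_T)=B_t^{-1}V_t(X_T)=e^{-rT}\,T_{T-t}(X_T)$ the prefactor $e^{-rT}$ is constant in $t$, so only the chain-rule term survives: $\frac{d}{dt}\widetilde V_t(X_T)=-e^{-rT}\,\mathcal L\big(T_{T-t}(X_T)\big)=-\mathcal L\big(\widetilde V_t(X_T)\big)$, with terminal value $\widetilde V_T(X_T)=e^{-rT}T_0(X_T)=e^{-rT}X_T$. As a consistency check one may instead differentiate $e^{-rt}V_t(X_T)$ by the product rule and substitute the equation for $V_t(X_T)$, obtaining the same result since the $r\,e^{-rt}V_t$ and $-r\,e^{-rt}V_t$ contributions cancel.

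There is no substantive obstacle: the statement is a routine corollary of norm-analyticity of uniformly continuous semigroups. The only point requiring (minor) care is the legitimacy of Banach-space–valued differentiation — the chain rule for $t\mapsto T-t\mapsto T_{T-t}(X_T)$ and the product rule for the scalar-times-vector product — which I would justify once from boundedness of $\mathcal L$ and joint continuity of scalar multiplication, and then reuse. (Relaxing the hypothesis to a merely strongly/$\sigma$-weakly continuous QMS would force one to restrict to $X_T\in\dom(\mathcal L)$ and argue with $\sigma$-weak derivatives, but that lies outside the scope of the present statement.)
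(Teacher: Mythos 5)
Your proposal is correct and follows essentially the same route as the paper's proof: both start from norm-differentiability of $t\mapsto T_t(X_T)$ for a bounded generator, substitute $u=T-t$ (or differentiate $T_{T-t}$ directly by chain rule), and combine with the scalar product rule. The only cosmetic divergence is in the $\widetilde V_t$ computation, where you differentiate the constant-prefactor form $e^{-rT}T_{T-t}(X_T)$ directly (so only the chain-rule term appears), whereas the paper differentiates $e^{-rt}V_t(X_T)$ by the product rule and lets the $\pm r\widetilde V_t$ terms cancel; you flag the paper's version as a consistency check, so the content is identical.
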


\begin{proof}
Since $T_t=\exp(t\mathcal L)$ and $\mathcal L$ is bounded, $t\mapsto T_t(X_T)$ is norm differentiable and
$\frac{d}{dt}T_t(X_T)=\mathcal L(T_t(X_T))$.

Set $u:=T-t$. Then $V_t(X_T)=e^{-ru}\,T_u(X_T)$ and
\[
  \frac{d}{dt}V_t(X_T)
  = \frac{d}{dt}\bigl(e^{-ru}T_u(X_T)\bigr)
  = (r e^{-ru})T_u(X_T)+e^{-ru}\frac{d}{dt}T_u(X_T).
\]
Moreover,
\[
  \frac{d}{dt}T_u(X_T)=\frac{d}{du}T_u(X_T)\cdot\frac{du}{dt}
  =\mathcal L(T_u(X_T))\cdot(-1),
\]
hence
\[
  \frac{d}{dt}V_t(X_T)
  = r\,e^{-ru}T_u(X_T)-e^{-ru}\mathcal L(T_u(X_T))
  = r\,V_t(X_T)-\mathcal L(V_t(X_T))
  = (r\,\mathrm{id}-\mathcal L)(V_t(X_T)).
\]
The terminal condition is immediate since $u=0$ at $t=T$.

Finally, $\widetilde V_t(X_T)=e^{-rt}V_t(X_T)$, so
\[
  \frac{d}{dt}\widetilde V_t(X_T)
  = -r e^{-rt}V_t(X_T)+e^{-rt}\frac{d}{dt}V_t(X_T)
  = -r\widetilde V_t(X_T)+e^{-rt}(r\,\mathrm{id}-\mathcal L)(V_t(X_T))
  = -\mathcal L(\widetilde V_t(X_T)),
\]
using linearity of $\mathcal L$ and $\widetilde V_t=e^{-rt}V_t$.
\end{proof}

\begin{remark}[Economic interpretation]
If $\phi^\star$ is a pricing state invariant under $(T_t)$, then
\[
  \phi^\star\bigl(\widetilde V_t(X_T)\bigr)
  = e^{-rT}\,\phi^\star(X_T)
\]
is constant in $t$, i.e.\ the money-market discounted valuation is a $\phi^\star$-martingale in expectation.
\end{remark}

\subsubsection{Information flow and dynamic valuation via operator-valued conditioning}\label{sec:ncqp:conditioning}

We now incorporate an explicit information flow.
Let $(D_t)_{0\le t\le T}$ be an increasing family of von Neumann subalgebras of $M$ (not assumed abelian),
interpreted as the information available at time $t$.
Assume there exists a family of conditional expectations $(E_t)_{0\le t\le T}$ with
\[
  E_t:M\to D_t\quad \text{normal, faithful, unital, completely positive},
\]
satisfying the tower property $E_s\circ E_t=E_s$ for $s\le t$.
We further assume $E_t$ is $\phi^\star$-preserving for the fixed pricing state $\phi^\star$:
$\phi^\star\circ E_t=\phi^\star$ for all $t$.
(Existence criteria are discussed earlier, and in particular via Takesaki-type modular invariance conditions.)

\begin{definition}[Conditioned semigroup valuation]\label{def:conditioned-valuation}
Let $(T_t)$ be a QMS on $M$, $(D_t,E_t)$ an information filtration with tower property, and $B_t=e^{rt}1$.
For $X_T\in M$ define the \emph{time-$t$ price observable} by
\[
  \Pi_t(X_T)
  := e^{-r(T-t)}\,E_t\!\bigl(T_{T-t}(X_T)\bigr)\in D_t,\qquad 0\le t\le T.
\]
\end{definition}

\begin{proposition}[Basic properties]\label{prop:pi-basic}
For each $t$, $\Pi_t:M\to D_t$ is normal and completely positive.
Moreover:
\begin{enumerate}[label=(\roman*)]
\item (Normalization) $\Pi_t(1)=e^{-r(T-t)}1$.
\item (Positivity) $X_T\ge 0\Rightarrow \Pi_t(X_T)\ge 0$.
\item (Self-adjointness) $X_T=X_T^\ast\Rightarrow \Pi_t(X_T)^\ast=\Pi_t(X_T)$.
\item (State-martingale) If $\phi^\star$ is $T$-invariant and $E_t$-preserving, then
      $\phi^\star(\Pi_t(X_T))=e^{-r(T-t)}\phi^\star(X_T)$.
\end{enumerate}
\end{proposition}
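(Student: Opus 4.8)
The plan is to observe that $\Pi_t$ factors as
\[
\Pi_t \;=\; e^{-r(T-t)}\,\bigl(E_t\circ T_{T-t}\bigr),
\]
a strictly positive scalar multiple of a composition of two maps whose relevant structure is already in hand: $T_{T-t}:M\to M$ is normal, unital and completely positive by Definition~\ref{def:qms}, and $E_t:M\to D_t$ is normal, faithful, unital and completely positive with $\ran E_t\subseteq D_t$ by the standing hypotheses on the information filtration $(D_t,E_t)$. Since normality and complete positivity are each stable under composition and under multiplication by a positive real constant, $\Pi_t$ is normal and completely positive; and $\ran\Pi_t\subseteq\ran E_t\subseteq D_t$, so $\Pi_t$ indeed maps $M$ into $D_t$.

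First I would read off (i)–(iii) directly from this factorization. For (i): unitality of $T_{T-t}$ and of $E_t$ gives $E_t\bigl(T_{T-t}(\mathbf 1)\bigr)=E_t(\mathbf 1)=\mathbf 1$, hence $\Pi_t(\mathbf 1)=e^{-r(T-t)}\mathbf 1$. For (ii): positivity (a fortiori from complete positivity) of both factors together with $e^{-r(T-t)}>0$ yields $X_T\ge 0\Rightarrow T_{T-t}(X_T)\ge 0\Rightarrow E_t\bigl(T_{T-t}(X_T)\bigr)\ge 0\Rightarrow\Pi_t(X_T)\ge 0$. For (iii): positive linear maps between $C^\ast$-algebras preserve self-adjointness (equivalently, completely positive maps satisfy $\Phi(X^\ast)=\Phi(X)^\ast$), so $T_{T-t}$ and $E_t$ each carry self-adjoint elements to self-adjoint elements, and $e^{-r(T-t)}\in\mathbb{R}$; hence $\Pi_t(X_T)^\ast=\Pi_t(X_T)$ whenever $X_T=X_T^\ast$.

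Then (iv) is a short computation chasing the two invariance hypotheses through the definition:
\[
\phi^\star\bigl(\Pi_t(X_T)\bigr)
= e^{-r(T-t)}\,\phi^\star\!\bigl(E_t(T_{T-t}(X_T))\bigr)
= e^{-r(T-t)}\,\phi^\star\!\bigl(T_{T-t}(X_T)\bigr)
= e^{-r(T-t)}\,\phi^\star(X_T),
\]
using $\phi^\star\circ E_t=\phi^\star$ ($E_t$-preservation) for the middle equality and $\phi^\star\circ T_{T-t}=\phi^\star$ ($T$-invariance of the pricing state) for the last.

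There is no genuine obstacle here: the statement is a bookkeeping consequence of hypotheses already imposed on $(T_t)$ and $(E_t)$. The only points worth a moment's care are (a) invoking correctly that normality and complete positivity pass from the two factors to $\Pi_t$ under composition and positive scaling, and (b) recording that the output lands in $D_t$ (via $\ran E_t\subseteq D_t$), not merely in $M$. It is also worth noting that this proposition uses only normality, complete positivity and unitality of $T_{T-t}$ and $E_t$ together with the two $\phi^\star$-invariances; the semigroup law $T_{s+t}=T_s\circ T_t$ plays no role here and enters only in the subsequent dynamic-consistency statements.
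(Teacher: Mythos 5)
Your proof is correct and follows essentially the same route as the paper: factor $\Pi_t=e^{-r(T-t)}\,(E_t\circ T_{T-t})$, inherit normality, complete positivity, unitality, positivity, and $\ast$-preservation from the two factors under composition and positive scaling, and verify the state-martingale identity by chasing $\phi^\star\circ E_t=\phi^\star$ and $\phi^\star\circ T_{T-t}=\phi^\star$ through the definition. Your closing observation that the semigroup law is not needed here is a correct and worthwhile remark, though not something the paper bothers to note.
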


\begin{proof}
Normality and complete positivity follow from the corresponding properties of $T_{T-t}$ and $E_t$ and closure under composition.
Items (i)–(iii) are immediate from unitality/positivity/$\ast$-preservation of CP maps.
For (iv),
\[
  \phi^\star(\Pi_t(X_T))
  =e^{-r(T-t)}\phi^\star\!\bigl(E_t(T_{T-t}(X_T))\bigr)
  =e^{-r(T-t)}\phi^\star(T_{T-t}(X_T))
  =e^{-r(T-t)}\phi^\star(X_T),
\]
using $\phi^\star\circ E_t=\phi^\star$ and $\phi^\star\circ T_{T-t}=\phi^\star$.
\end{proof}

\subsubsection{Dynamic programming and a minimal Markov compatibility}

To obtain full dynamic consistency $\Pi_s(X_T)=\Pi_s(\Pi_t(X_T))$ for $s\le t$, we need a compatibility condition
linking $(T_t)$ with the filtration $(D_t,E_t)$.
A standard (and minimal) condition is the following ``quantum Markov'' property.

\begin{definition}[Markov compatibility]\label{def:markov-compat}
We say that $(T_t)$ is \emph{compatible} with $(D_t,E_t)$ if for all $0\le s\le t\le T$,
\[
  E_s\circ T_{t-s} = E_s\circ T_{t-s}\circ E_t
  \quad \text{on } M.
\]
Equivalently, $E_s(T_{t-s}(X))=E_s(T_{t-s}(E_t(X)))$ for all $X\in M$.
\end{definition}

\begin{theorem}[Dynamic programming]\label{thm:dynamic-programming}
Assume Definition~\ref{def:markov-compat} and the tower property for $(E_t)$.
Then for all $0\le s\le t\le T$ and $X_T\in M$,
\[
  \Pi_s(X_T)= e^{-r(t-s)}\,E_s\!\bigl(T_{t-s}(\Pi_t(X_T))\bigr).
\]
In particular, if $T_{t-s}$ leaves $D_t$ globally invariant and $T_{t-s}\!\restriction_{D_t}=\mathrm{id}_{D_t}$ (no further evolution inside $D_t$),
then $\Pi_s(X_T)=\Pi_s(\Pi_t(X_T))$.
\end{theorem}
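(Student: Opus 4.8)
The plan is to reduce the right-hand side of the claimed identity directly to the definition of $\Pi_s(X_T)$, using only three ingredients: the definition of the conditioned valuation $\Pi_t$ (Definition~\ref{def:conditioned-valuation}), the Markov-compatibility identity $E_s\circ T_{t-s}=E_s\circ T_{t-s}\circ E_t$ (Definition~\ref{def:markov-compat}), and the semigroup law for $(T_\tau)$. First I would expand $\Pi_t(X_T)=e^{-r(T-t)}E_t\!\bigl(T_{T-t}(X_T)\bigr)$ inside the right-hand side; pulling the scalar through the linear maps $E_s$ and $T_{t-s}$ and combining the exponents $-r(t-s)-r(T-t)=-r(T-s)$ gives
\[
e^{-r(t-s)}E_s\!\bigl(T_{t-s}(\Pi_t(X_T))\bigr)
= e^{-r(T-s)}\,E_s\!\bigl(T_{t-s}\bigl(E_t\bigl(T_{T-t}(X_T)\bigr)\bigr)\bigr).
\]

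Next I would apply Markov compatibility to the element $X:=T_{T-t}(X_T)\in M$, in the equivalent form $E_s\bigl(T_{t-s}(E_t(X))\bigr)=E_s\bigl(T_{t-s}(X)\bigr)$, which deletes the inner conditional expectation. Then the semigroup property $T_{t-s}\circ T_{T-t}=T_{(t-s)+(T-t)}=T_{T-s}$ — legitimate since $t-s\ge 0$, $T-t\ge 0$ and $(t-s)+(T-t)=T-s\le T$, so we stay inside the finite-horizon range — collapses the composition to $E_s\bigl(T_{T-s}(X_T)\bigr)$. Hence the right-hand side equals $e^{-r(T-s)}E_s\bigl(T_{T-s}(X_T)\bigr)=\Pi_s(X_T)$, which is the asserted main identity. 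Normality and complete positivity of the maps play no role beyond ensuring everything is well defined; the computation is purely linear and takes place on bounded elements of $M$, so no closure or approximation issues arise.

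For the ``in particular'' clause I would argue as follows. By Definition~\ref{def:conditioned-valuation} we have $\Pi_t(X_T)\in D_t$; if $T_{t-s}$ leaves $D_t$ globally invariant and restricts to the identity on $D_t$, then $T_{t-s}(\Pi_t(X_T))=\Pi_t(X_T)$, and substituting this into the identity just proved yields $\Pi_s(X_T)=e^{-r(t-s)}E_s\bigl(\Pi_t(X_T)\bigr)$. This is precisely the value at time $s$ of the time-$t$ claim $\Pi_t(X_T)$, i.e.\ $\Pi_s(\Pi_t(X_T))$ read in the dynamic-programming (``re-pricing'') sense, once one fixes the convention that applying the pricing operator at $s$ to a claim revealed at $t$ means discounting and conditioning over the interval $[s,t]$; under the stated invariance hypothesis the $T$-evolution over $[s,t]$ acts trivially on $D_t$, so re-pricing reduces to the single discounted conditioning $e^{-r(t-s)}E_s(\cdot)$.

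I expect the only substantive step — the one place a hypothesis does real work — to be the invocation of Markov compatibility; everything else is the semigroup law, linearity, and bookkeeping of discount factors, and is routine. The one point requiring conceptual care is fixing the composition convention for $\Pi_s\circ\Pi_t$ so that the ``in particular'' clause is an honest operator identity rather than a notational artifact (the same symbol $\Pi_s$ is used for the fixed maturity $T$ and, implicitly, for re-pricing at the intermediate maturity $t$); once that convention is pinned down as above, the argument closes it without further input.
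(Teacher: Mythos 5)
Your argument for the main identity is correct and is essentially the computation the paper performs, just run in the opposite direction: you start from the right-hand side, expand $\Pi_t$, combine discount factors, delete the inner $E_t$ via Markov compatibility applied to $X:=T_{T-t}(X_T)$, and collapse $T_{t-s}\circ T_{T-t}=T_{T-s}$, whereas the paper starts from $\Pi_s(X_T)=e^{-r(T-s)}E_s(T_{T-s}(X_T))$, splits the semigroup and the discount, inserts $E_t$ via Markov compatibility, and then recognizes $\Pi_t$. Same three ingredients (definition of $\Pi$, semigroup law, Markov compatibility applied to exactly the same element), same bookkeeping; the reversal of direction is cosmetic.

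Your discussion of the ``in particular'' clause is the more substantive contribution, and it is warranted. The paper dispatches it as ``immediate,'' but if one reads $\Pi_s$ as the fixed map $Y\mapsto e^{-r(T-s)}E_s\!\bigl(T_{T-s}(Y)\bigr)$ (as Definition~\ref{def:conditioned-valuation} prescribes), then $\Pi_s(\Pi_t(X_T))=e^{-r(T-s)}E_s\!\bigl(T_{T-s}(\Pi_t(X_T))\bigr)$, and under the stated hypotheses $T_{t-s}\!\restriction_{D_t}=\mathrm{id}_{D_t}$ one gets $T_{T-s}(\Pi_t(X_T))=T_{T-t}(\Pi_t(X_T))$ — which does not match the $e^{-r(t-s)}E_s(\Pi_t(X_T))$ coming out of the main identity without further assumptions (e.g.\ a martingale-type property of $\Pi_t(X_T)$ under the remaining evolution $T_{T-t}$, or $T_\tau\!\restriction_{D_t}=\mathrm{id}$ for all $\tau$ and $r=0$). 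Your resolution — that $\Pi_s\circ\Pi_t$ should be read in the re-pricing sense, with the implicit maturity reset to $t$ when the argument lives in $D_t$ — is the reading that makes the clause an honest identity, and it is the interpretation the paper clearly intends but does not spell out. Flagging that convention explicitly is a genuine improvement over the paper's proof.
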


\begin{proof}
By Definition~\ref{def:conditioned-valuation} and the semigroup property,
\[
  \Pi_s(X_T)
  =e^{-r(T-s)}E_s\!\bigl(T_{T-s}(X_T)\bigr)
  =e^{-r(T-s)}E_s\!\bigl(T_{t-s}(T_{T-t}(X_T))\bigr).
\]
Write $e^{-r(T-s)}=e^{-r(t-s)}e^{-r(T-t)}$ and insert $\Pi_t(X_T)=e^{-r(T-t)}E_t(T_{T-t}(X_T))$:
\[
  \Pi_s(X_T)
  =e^{-r(t-s)}E_s\!\Bigl(T_{t-s}\bigl(e^{-r(T-t)}T_{T-t}(X_T)\bigr)\Bigr)
  =e^{-r(t-s)}E_s\!\Bigl(T_{t-s}\bigl(T_{T-t}(X_T)\bigr)\Bigr)e^{-r(T-t)}.
\]
Now apply Markov compatibility with $X:=T_{T-t}(X_T)$:
\[
  E_s\bigl(T_{t-s}(T_{T-t}(X_T))\bigr)
  =E_s\bigl(T_{t-s}(E_t(T_{T-t}(X_T)))\bigr).
\]
Multiplying by the scalar discount $e^{-r(T-t)}$ gives
\[
  \Pi_s(X_T)
  =e^{-r(t-s)}E_s\!\bigl(T_{t-s}(e^{-r(T-t)}E_t(T_{T-t}(X_T)))\bigr)
  =e^{-r(t-s)}E_s\!\bigl(T_{t-s}(\Pi_t(X_T))\bigr).
\]
The final statement is immediate under the additional invariance assumption.
\end{proof}

\begin{remark}
Theorem~\ref{thm:dynamic-programming} isolates the \emph{exact} compatibility needed for dynamic programming.
In Markovian models, $D_t$ is generated by the ``current'' observables and $(T_t)$ acts as the transition operator,
so the compatibility condition is natural.
\end{remark}

\subsection{Noncommutative uncertainty: Fisher information vs commutator bounds}\label{sec:ncqp:uncertainty}

We now formulate a rigorously defined uncertainty principle adapted to operator-valued conditioning.
The full technical development (operator-valued conjugate variables, free difference quotients, and Fisher information with amalgamation)
is given in Appendix~\ref{app:op-valued-fisher}. Here we only state the parts needed for pricing interpretation and for comparison with Section~\ref{sec:information_and_C.E}.

\subsubsection{Operator-valued Fisher information and a Cram\'er--Rao bound}

Fix a von Neumann subalgebra $D\subset M$ and a faithful normal conditional expectation $E_D:M\to D$.
Appendix~\ref{app:op-valued-fisher} constructs the Hilbert $D$-module $L^2_D(M)$ and defines the $D$-valued free Fisher information $\Phi^\ast(\,\cdot\,:\!D)$
under the existence of conjugate variables. We record a scalar special case to highlight the analytic mechanism.

\begin{proposition}[Scalar Cram\'er--Rao from Cauchy--Schwarz]\label{prop:CR-scalar}
Let $(M,\tau)$ be a tracial von Neumann algebra and let $X=X^\ast\in M$ satisfy $\tau(X)=0$.
Assume $X$ admits a (scalar) conjugate variable $J(X)\in L^2(M,\tau)$ in the sense of free Fisher information,
so that $\tau(J(X)\,P(X))=\tau(P'(X))$ for all polynomials $P$.
Then
\[
  \Phi^\ast(X)\,\tau(X^2)\ \ge\ 1,
  \qquad \text{where}\quad \Phi^\ast(X):=\tau(J(X)^2).
\]
\end{proposition}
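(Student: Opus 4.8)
The plan is to obtain the bound as a one-line consequence of the Cauchy--Schwarz inequality in the GNS space $L^2(M,\tau)$, using the conjugate-variable identity at the single test polynomial $P(t)=t$. First I would set up notation: $L^2(M,\tau)$ carries the inner product $\langle a,b\rangle_\tau:=\tau(a^\ast b)$ with norm $\|a\|_2:=\tau(a^\ast a)^{1/2}$; since $X=X^\ast\in M$ we have $X\in L^2(M,\tau)$ with $\|X\|_2^2=\tau(X^2)$, and by hypothesis $J(X)\in L^2(M,\tau)$. I would record that $J(X)$ may be taken self-adjoint, so that $\Phi^\ast(X)=\tau(J(X)^2)=\|J(X)\|_2^2$; this is the only mildly technical point (see the last paragraph). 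Note also that because $X$ is bounded, $J(X)X\in L^2(M,\tau)\subseteq L^1(M,\tau)$, so the pairing $\tau(J(X)X)$ is well defined.

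Next I would apply the defining relation $\tau(J(X)P(X))=\tau(P'(X))$ with the linear polynomial $P(t)=t$, for which $P'\equiv 1$; this yields $\tau(J(X)X)=\tau(\mathbf 1)=1$. (The centering hypothesis $\tau(X)=0$ is a standard normalization in the free-Fisher setup and is not actually needed for this particular step.) Finally I would invoke Cauchy--Schwarz in $L^2(M,\tau)$: $1=|\tau(J(X)X)|=|\langle J(X),X\rangle_\tau|\le \|J(X)\|_2\,\|X\|_2=\Phi^\ast(X)^{1/2}\,\tau(X^2)^{1/2}$, and squaring gives $\Phi^\ast(X)\,\tau(X^2)\ge 1$, which is the claim.

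The only genuine subtlety --- and it is not really an obstacle --- is justifying that $J(X)$ is self-adjoint, equivalently that $\Phi^\ast(X)=\|J(X)\|_2^2$. This follows by taking adjoints in the defining identity: for a real-coefficient polynomial $P$, both $P(X)$ and $P'(X)$ are self-adjoint, so $\tau(P'(X))$ is real, and traciality gives $\overline{\tau(J(X)P(X))}=\tau(P(X)J(X)^\ast)=\tau(J(X)^\ast P(X))$; hence $\tau\big((J(X)-J(X)^\ast)P(X)\big)=0$ for all such $P$, and density of real-coefficient polynomials in $X$ inside $L^2(W^\ast(X),\tau)$ forces $J(X)=J(X)^\ast$. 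Alternatively one sidesteps this entirely by writing $1=\operatorname{Re}\tau(J(X)X)=\operatorname{Re}\langle J(X)^\ast,X\rangle_\tau\le \|J(X)^\ast\|_2\,\|X\|_2$ and using $\|J(X)^\ast\|_2=\|J(X)\|_2$, which is again just traciality of $\tau$. Either way the estimate is immediate and the proof is complete.
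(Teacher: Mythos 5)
Your proof is correct and follows the same route as the paper: specialize the conjugate-variable identity at $P(t)=t$ to obtain $\tau(J(X)X)=1$, then apply Cauchy--Schwarz in $L^2(M,\tau)$. Your explicit verification that $J(X)$ is self-adjoint (equivalently $\Phi^\ast(X)=\|J(X)\|_2^2$), and the alternative workaround via $\|J(X)^\ast\|_2=\|J(X)\|_2$, fills a small gap that the paper's proof passes over silently.
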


\begin{proof}
Choose $P(x)=x$ in the conjugate relation to obtain $\tau(J(X)\,X)=\tau(1)=1$.
By Cauchy--Schwarz in $L^2(M,\tau)$,
\[
  1=|\tau(J(X)X)|^2 \le \tau(J(X)^2)\,\tau(X^2)=\Phi^\ast(X)\,\tau(X^2).
\]
\end{proof}

\begin{remark}[Meaning]
Proposition~\ref{prop:CR-scalar} is an \emph{estimation-type} uncertainty bound:
it lower-bounds variance by an information quantity (Fisher information).
In the operator-valued setting, Appendix~\ref{app:op-valued-fisher} proves the $D$-valued analogue
(Cram\'er--Rao inequalities in $D$) and characterizes the equality case under semicircularity.
\end{remark}

\subsubsection{Amalgamated freeness as information independence}

One of the main structural results (Appendix~\ref{app:op-valued-fisher}) states that additivity of operator-valued Fisher information
across families is equivalent to amalgamated freeness over the conditioning algebra $D$.
In the present pricing context this motivates a noncommutative analogue of ``independent innovations'':
increments of information arriving over disjoint intervals are modeled by subalgebras that are free over the past algebra.

\begin{remark}[How this enters pricing]
In the valuation maps $\Pi_t$, information enters through the conditional expectations $E_t$.
Assuming that the \emph{innovation algebra} over $(t,t+\Delta]$ is free with amalgamation over $D_t$
yields quantitative constraints on conditional variances via Fisher-information inequalities (Appendix~\ref{app:op-valued-fisher}),
which can be interpreted as intrinsic lower bounds on forecast/estimation error under the market's noncommutative information structure.
\end{remark}

\subsubsection{Relation to the Appendix~\ref{chap:tools-functional-analysis} uncertainty principle}

Appendix~\ref{chap:tools-functional-analysis} derived uncertainty relations of Robertson/H\"older type for (typically) \emph{pairs} of noncommuting observables,
schematically
\[
  \mathrm{Var}_\phi(A)\,\mathrm{Var}_\phi(B)\ \ge\ \frac14|\phi([A,B])|^2,
\]
and variants, via Cauchy--Schwarz/H\"older inequalities in $L^2(M,\phi)$.
The Fisher-information uncertainty is \emph{not} equivalent to this commutator bound:

\begin{proposition}[Non-equivalence]\label{prop:noneq}
The commutator-based uncertainty (appendix~\ref{chap:tools-functional-analysis}) and the Fisher-information Cram\'er--Rao uncertainty (Appendix~\ref{app:op-valued-fisher})
are generally not equivalent:
\begin{enumerate}[label=(\roman*)]
\item commutator bounds concern \emph{two} observables and the noncommutativity encoded by $[A,B]$;
\item Fisher-information bounds concern \emph{one} observable (or a family) and quantify \emph{estimability} via conjugate variables.
\end{enumerate}
However, both ultimately descend from Cauchy--Schwarz inequalities in suitable Hilbert(-module) geometries:
Appendix~\ref{chap:tools-functional-analysis} uses $L^2(M,\phi)$ directly, while Proposition~\ref{prop:CR-scalar} uses Cauchy--Schwarz after identifying the conjugate variable.
\end{proposition}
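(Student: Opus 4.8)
The plan is to refute non-equivalence by exhibiting two regimes: one in which the Cram\'er--Rao/Fisher bound is a genuine (indeed attained) constraint while the commutator bound is vacuous, and a second in which the roles are reversed. Since the claim is comparative, it suffices to contradict every reasonable reading of ``equivalent'' --- that one inequality is deducible from the other by elementary rearrangement, or even merely that one is informative precisely when the other is --- and the two examples below do exactly that. The final sentence of the statement (common Cauchy--Schwarz origin) is then an immediate bookkeeping remark.

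First I would take the commutator bound to be empty and the Fisher bound to have content. Let $M$ be any commutative tracial von Neumann algebra, say $L^\infty([0,1])$ with Lebesgue measure. Every pair of self-adjoint $A,B\in M$ commutes, so the Robertson/H\"older inequality of Appendix~\ref{chap:tools-functional-analysis} degenerates to $\mathrm{Var}_\phi(A)\,\mathrm{Var}_\phi(B)\ge 0$ and imposes nothing. By Remark~\ref{rem:CR-comm} the operator-valued Fisher information in the commutative case is the classical Fisher information, and Proposition~\ref{prop:CR-scalar} gives $\Phi^\ast(X)\,\tau(X^2)\ge 1$ for any centered $X$ admitting a conjugate variable; this is a nontrivial lower bound on the variance, and it is \emph{attained} by a Gaussian $X$, for which $\Phi^\ast(X)=1/\mathrm{Var}(X)$. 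Hence the commutator bound cannot imply the Cram\'er--Rao bound.

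Next I would reverse the situation. Take $M=M_2(\C)$ with normalized trace $\tau$, let $A=\sigma_x$, $B=\sigma_y$ be Pauli matrices (so $[A,B]=2i\sigma_z$), and let $\phi$ be a pure eigenstate of $\sigma_z$. Then $\mathrm{Var}_\phi(A)=\mathrm{Var}_\phi(B)=1$ and $\tfrac14\lvert\phi([A,B])\rvert^2=1$, so the commutator bound is a genuine, in fact saturated, inequality. But the $\phi$-distribution of $A$ is the purely atomic law $\tfrac12\delta_{-1}+\tfrac12\delta_{1}$, which admits no $L^2$-conjugate variable; in the terminology of Appendix~\ref{app:op-valued-fisher} this forces $\Phi^\ast(A)=+\infty$, so the Cram\'er--Rao inequality $\Phi^\ast(A)\,\tau(A^2)\ge 1$ becomes $+\infty\ge 1$ and constrains $\tau(A^2)$ not at all. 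Thus the Fisher bound cannot imply the commutator bound, and items (i)--(ii) record the structural reason for the asymmetry: two observables and the coupling $[A,B]$ versus a single observable and its conjugate variable.

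For the last assertion I would simply note the common mechanism. Both inequalities are instances of $\lvert\langle u,v\rangle\rvert^2\le\lVert u\rVert^2\,\lVert v\rVert^2$: the commutator bound is obtained in the GNS space $L^2(M,\phi)$ by taking $u,v$ to be the cyclic-vector images of $A-\phi(A)\mathbf 1$ and $B-\phi(B)\mathbf 1$ and extracting $\operatorname{Im}\langle u,v\rangle=\tfrac1{2i}\phi([A,B])$ (the derivation of Appendix~\ref{chap:tools-functional-analysis}), while the Cram\'er--Rao bound is obtained in $L^2(M,\tau)$ (or the Hilbert $\mathcal D$-module $L^2_{\mathcal D}(M)$) by taking $u=J(X)$, $v=X$ and using $\tau(J(X)X)=1$ from Proposition~\ref{prop:CR-scalar}. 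The only nontrivial input is the fact that purely atomic spectral distributions have infinite free Fisher information; the main obstacle is thus less a computation than deciding precisely which notion of ``equivalence'' to deny and citing this one structural fact cleanly.
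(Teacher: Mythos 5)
Your argument is correct and in the same spirit as the paper's, but it goes further. The paper's own ``proof'' is essentially a restatement: it observes that the commutator bound trivializes when $A$ and $B$ commute while Fisher-information bounds stay nontrivial in commutative settings, and that both ultimately come from Cauchy--Schwarz in different geometries. You instantiate both directions with concrete models: the commutative $L^\infty$ example (where the commutator bound is vacuous but the classical Cram\'er--Rao bound is sharp) matches the paper's one explicit remark, and the Pauli-matrix example (where the commutator bound is saturated but the free Fisher information of $\sigma_x$ under the trace is $+\infty$, rendering the Cram\'er--Rao inequality vacuous) is a genuine addition that shows the failure of implication in the \emph{reverse} direction too. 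That extra example is the strongest part of your proposal and makes the ``non-equivalence'' claim cleaner than the paper's version.

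Two small points of hygiene. First, your opening phrase ``refute non-equivalence'' is a slip of the pen: you plainly mean to \emph{establish} non-equivalence (equivalently, refute equivalence), and the rest of the argument is unambiguous about this, but it should be corrected. Second, there is mild bookkeeping looseness that deserves a sentence of acknowledgment: a Gaussian random variable is not an element of $L^\infty([0,1])$, so in the first example one should either work with the multiplication operator affiliated to $L^\infty(\mathbb R,\gamma)$ (truncations, as in the paper's own framework for affiliated observables), or choose a bounded nonatomic example; and in the second example you evaluate the Robertson bound in a pure eigenstate $\phi$ of $\sigma_z$ while evaluating the Cram\'er--Rao bound under the normalized trace $\tau$. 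This state-mixing is fine for the logical point being made --- the two inequalities are inequalities about \emph{different} functionals and quantities, which is exactly what (i)--(ii) record --- but an explicit sentence saying so would preempt the objection that one is comparing apples and oranges. With those two touch-ups, your argument is a tighter, more informative version of the paper's proof.
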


\begin{proof}
Items (i)–(ii) are structural: the commutator bound depends on $[A,B]$ and may become trivial when $A$ and $B$ commute,
whereas Fisher-information bounds can remain nontrivial even in commutative situations (reducing to classical Cram\'er--Rao).
The final statement follows from the proofs: both employ Cauchy--Schwarz, but on different pairings/geometries.
\end{proof}

\subsection{Concluding remarks}\label{sec:ncqp:outlook}

We have presented a fully noncommutative pricing framework built from two primitives:
(i) a $\phi^\star$-invariant quantum Markov semigroup governing observable dynamics,
and (ii) an operator-valued information flow encoded by conditional expectations onto (possibly noncommutative) subalgebras.
Under a minimal Markov compatibility, the resulting valuation maps satisfy a dynamic programming principle.
On the information-theoretic side, Appendix~\ref{app:op-valued-fisher} provides a quantitative notion of independence (amalgamated freeness)
and a Fisher-information uncertainty mechanism (Cram\'er--Rao), which complements (but does not replace)
the commutator/H\"older-type uncertainty relations of Appendix~\ref{chap:tools-functional-analysis}.


\section{Information, Innovations, and Fisher--Cramér--Rao Bounds:
A Minimal Mean-Square Error Principle for Return Prediction}\label{chap:info-fisher}

\subsection{Goal of this chapter and its role in the noncommutative pricing framework}\label{sec:info-goal}

The purpose of this chapter is \emph{not} to re-derive the backward pricing dynamics from a pricing semigroup
$T_t=e^{tL}$ (which depends only on the semigroup differential structure),
but to complete an information-theoretic layer that is central in classical finance and remains meaningful in the
noncommutative setting:

\begin{quote}
\emph{Given the market information structure at time $t$ (encoded by a conditional expectation $E_t$),
any prediction of future returns (or return increments) based solely on this information
has an unavoidable error floor; even the optimal predictor (the conditional expectation) cannot eliminate it.}
\end{quote}

In the noncommutative pricing framework, information enters pricing through conditional expectations.
If $M$ is the market algebra, $D_t\subseteq M$ is the time-$t$ information algebra, and $E_t:M\to D_t$ is a faithful normal
conditional expectation, then a typical dynamic pricing operator (e.g.\ for discounted objects) has the schematic form
\[
\Pi_t(\cdot)\;=\;\text{(discounting)}\circ E_t\circ \text{(dynamics)}.
\]
Hence, any mathematically meaningful statement about ``how much information can predict / how small the error can be''
must ultimately be expressed in terms of quantities produced by $E_t$.
This chapter formalizes this idea as a \emph{minimal mean-square error principle} and then provides a hard lower bound
via (operator-valued) Fisher information.

\subsection{\texorpdfstring{$D$}{D}-valued probability spaces and
\texorpdfstring{$L^2_D$}{L2\_D} geometry: conditional expectations as orthogonal projections}
\label{sec:Dvalued-L2}

\subsubsection{\texorpdfstring{$D$}{D}-valued probability spaces and
\texorpdfstring{$D$}{D}-valued inner products}
\label{subsec:Dprob}

\begin{definition}[$D$-valued probability space]\label{def:Dprob}
Let $M$ be a von Neumann algebra and $D\subseteq M$ a von Neumann subalgebra.
If $E_D:M\to D$ is a faithful normal conditional expectation, then the triple $(M,E_D,D)$ is called a
\emph{$D$-valued (operator-valued) noncommutative probability space}.
\end{definition}

On $(M,E_D,D)$ define the $D$-valued inner product (right $D$-linear, left conjugate-linear) by
\begin{equation}\label{eq:Dinner}
\langle x,y\rangle_D \;:=\; E_D(x^\ast y),\qquad x,y\in M.
\end{equation}
Define the seminorm
\[
\|x\|_{2,D}\;:=\;\|\langle x,x\rangle_D\|^{1/2}=\|E_D(x^\ast x)\|^{1/2},
\]
where $\|\cdot\|$ is the $C^\ast$-norm on $D$.

\begin{definition}[Hilbert $C^\ast$-module $L^2_D(M)$]\label{def:L2D}
Let
\[
\mathcal N:=\{x\in M:\|x\|_{2,D}=0\}.
\]
Take the quotient $M/\mathcal N$ and complete it in $\|\cdot\|_{2,D}$ to obtain the right Hilbert $C^\ast$-module
\[
L^2_D(M):=\overline{M/\mathcal N}^{\|\cdot\|_{2,D}}.
\]
\end{definition}

\subsubsection{Cauchy--Schwarz for Hilbert \texorpdfstring{$C^\ast$}{C*-}modules}
\label{subsec:CS}

\begin{lemma}[Cauchy--Schwarz in Hilbert $C^\ast$-modules]\label{lem:moduleCS}
Let $(\mathsf H,\langle\cdot,\cdot\rangle_D)$ be a Hilbert $C^\ast$-module.
Then for all $u,v\in\mathsf H$,
\[
\|\langle u,v\rangle_D\|^2\;\le\;\|\langle u,u\rangle_D\|\;\|\langle v,v\rangle_D\|.
\]
\end{lemma}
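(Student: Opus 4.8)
The plan is to prove the sharper \emph{operator} inequality
\[
\langle u,v\rangle_D\,\langle v,u\rangle_D \;\le\; \|\langle v,v\rangle_D\|\;\langle u,u\rangle_D
\]
in the positive cone $D_+$, and then recover the stated scalar bound by taking $C^\ast$-norms. The argument uses only the Hilbert $C^\ast$-module axioms, which hold on all of $\mathsf H$ (including the completion): right $D$-linearity $\langle x,yd\rangle_D=\langle x,y\rangle_D\,d$, conjugate symmetry $\langle x,y\rangle_D^\ast=\langle y,x\rangle_D$, and positivity $\langle x,x\rangle_D\ge 0$; it also uses that $D$, being a von Neumann algebra, is unital.

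First I would dispose of the case $v=0$, where both sides vanish, and otherwise normalize by replacing $v$ with $v/\|\langle v,v\rangle_D\|^{1/2}$, so that we may assume $\|\langle v,v\rangle_D\|\le 1$, i.e.\ $0\le\langle v,v\rangle_D\le 1_D$. For an arbitrary $d\in D$, expanding the positive element $\langle u-vd,\,u-vd\rangle_D\ge 0$ and using conjugate symmetry and right $D$-linearity gives
\[
0\;\le\;\langle u,u\rangle_D-\langle u,v\rangle_D\,d-d^\ast\langle v,u\rangle_D+d^\ast\langle v,v\rangle_D\,d .
\]
Since $0\le\langle v,v\rangle_D\le 1_D$ we have $d^\ast\langle v,v\rangle_D\,d\le d^\ast d$ in $D$, so the right-hand side is $\le \langle u,u\rangle_D-\langle u,v\rangle_D\,d-d^\ast\langle v,u\rangle_D+d^\ast d$. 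The key step is then the substitution $d:=\langle v,u\rangle_D$ (so $d^\ast=\langle u,v\rangle_D$): the three $d$-dependent terms collapse, since $\langle u,v\rangle_D\,d=d^\ast\langle v,u\rangle_D=d^\ast d=\langle u,v\rangle_D\langle v,u\rangle_D$, leaving $\langle u,v\rangle_D\langle v,u\rangle_D\le\langle u,u\rangle_D$. Undoing the normalization yields the displayed operator inequality in $D_+$; taking $C^\ast$-norms, using monotonicity of the norm on $D_+$ together with the $C^\ast$-identity $\|\langle u,v\rangle_D\langle v,u\rangle_D\|=\|\langle u,v\rangle_D\,\langle u,v\rangle_D^\ast\|=\|\langle u,v\rangle_D\|^2$, gives exactly $\|\langle u,v\rangle_D\|^2\le\|\langle u,u\rangle_D\|\,\|\langle v,v\rangle_D\|$.

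I expect the only genuinely nonroutine ingredient to be the substitution $d=\langle v,u\rangle_D$, which manufactures the completion of the square; everything else is standard $C^\ast$-algebra (operator monotonicity of $a\mapsto d^\ast a d$, the implication $a\in D_+,\ \|a\|\le 1\Rightarrow a\le 1_D$, norm monotonicity on the positive cone, and the $C^\ast$-identity). An alternative route applies ordinary Cauchy--Schwarz to the positive semidefinite scalar form $(x,y)\mapsto\omega(\langle x,y\rangle_D)$ for each state $\omega$ on $D$, but recovering $\|\langle u,v\rangle_D\|$ from $\sup_\omega|\omega(\langle u,v\rangle_D)|$ fails for non-self-adjoint elements, so I would write the operator-inequality proof above.
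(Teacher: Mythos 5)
Your proof is correct. Note that the paper does not actually prove this lemma: the remark immediately following it merely cites Lance's monograph as a standard reference. Your argument supplies the standard self-contained proof, and it is sound. The sequence---normalize so $0\le\langle v,v\rangle_D\le 1_D$, expand the positivity of $\langle u-vd,u-vd\rangle_D$, use $d^\ast\langle v,v\rangle_D d\le d^\ast d$, substitute $d=\langle v,u\rangle_D$ to complete the square, then undo the normalization and take norms using $\|a a^\ast\|=\|a\|^2$ together with monotonicity of $\|\cdot\|$ on $D_+$---is exactly the classical derivation (it goes back to Paschke and is the proof in Lance). Your closing observation is also correct and worth keeping in mind: applying scalar Cauchy--Schwarz to $\omega(\langle\cdot,\cdot\rangle_D)$ for each state $\omega$ on $D$ only bounds $\sup_\omega|\omega(\langle u,v\rangle_D)|$, which for a non-normal element can be strictly less than $\|\langle u,v\rangle_D\|$, so that route does not directly yield the stated inequality. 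The one small implicit hypothesis you use is definiteness of the inner product (so $v\neq 0$ forces $\|\langle v,v\rangle_D\|>0$ and the normalization makes sense); this is part of the standard definition of a Hilbert $C^\ast$-module, and in the paper's $L^2_D(M)$ construction it is guaranteed by the quotient-and-complete step, so there is no gap.
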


\begin{remark}
Lemma~\ref{lem:moduleCS} is standard; see, e.g.\ Lance,
\emph{Hilbert $C^\ast$-Modules} (Springer).
We will use it only once: to prove the Cramér--Rao inequality.
\end{remark}

\subsubsection{Orthogonality and the Pythagorean identity for conditional expectations}\label{subsec:projection}

\begin{proposition}[Orthogonality of conditional expectations]\label{prop:orthogonality}
In $(M,E_D,D)$, for any $x\in M$ and any $d\in D$,
\[
\langle x-E_D(x),\,d\rangle_D \;=\;0,
\qquad
\langle d,\,x-E_D(x)\rangle_D \;=\;0.
\]
\end{proposition}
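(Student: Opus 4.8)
The plan is to unfold the $D$-valued inner product \eqref{eq:Dinner} and reduce both identities to the defining algebraic properties of the conditional expectation $E_D$: $\ast$-preservation ($E_D(y^\ast)=E_D(y)^\ast$), the $D$-bimodule property ($E_D(ayb)=aE_D(y)b$ for $a,b\in D$), and the fact that $E_D$ restricts to the identity on $D$. Since the statement concerns $x\in M$ and $d\in D$ directly, no passage to the completion $L^2_D(M)$ is required, so the argument is a short computation with no genuine analytic content; the only thing to watch is which one-sided module law is invoked in each of the two identities. These properties of a (normal faithful) conditional expectation are the standing ones recorded, e.g., in Corollary~\ref{cor:existence_Et}.

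First I would treat the left-slot identity. Writing $\langle x-E_D(x),d\rangle_D = E_D\bigl((x-E_D(x))^\ast d\bigr) = E_D(x^\ast d) - E_D\bigl(E_D(x)^\ast d\bigr)$, I would use $\ast$-preservation to identify $E_D(x^\ast)=E_D(x)^\ast$, and then the right $D$-module property $E_D(ya)=E_D(y)\,a$ (applied with $y=x^\ast$, $a=d$) to get $E_D(x^\ast d)=E_D(x)^\ast d$. For the second term, $E_D(x)^\ast d\in D$ already, so $E_D$ fixes it, giving $E_D\bigl(E_D(x)^\ast d\bigr)=E_D(x)^\ast d$. Subtracting yields $0$. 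For the right-slot identity I would argue symmetrically: $\langle d, x-E_D(x)\rangle_D = E_D\bigl(d^\ast(x-E_D(x))\bigr) = E_D(d^\ast x) - E_D\bigl(d^\ast E_D(x)\bigr)$, now invoking the left $D$-module property $E_D(ay)=a\,E_D(y)$ with $a=d^\ast$ to rewrite the first term as $d^\ast E_D(x)$, and again using that $E_D$ fixes $d^\ast E_D(x)\in D$ for the second; the difference is $0$. Alternatively, I would note that the $D$-valued form is conjugate-symmetric, $\langle u,v\rangle_D^\ast=\langle v,u\rangle_D$, so the right-slot identity is simply the adjoint of the left-slot one and needs no separate computation.

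The main \textbf{obstacle} is therefore purely presentational rather than mathematical: the proof is a two-line calculation, and the only care needed is to cite the correct one-sided module identity in each case and to record $\ast$-linearity of $E_D$. In particular, no density, normality, closability, or modularity input is used at this level — those enter only later, when one passes to $L^2_D(M)$ and identifies $E_D$ as the module-orthogonal projection, or when the Pythagorean/Cram\'er--Rao consequences are drawn via Lemma~\ref{lem:moduleCS}.
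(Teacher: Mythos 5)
Your proof is correct and matches the paper's argument essentially step for step: both expand $\langle x-E_D(x),d\rangle_D=E_D(x^\ast d)-E_D(E_D(x)^\ast d)$, invoke the one-sided $D$-module property to write $E_D(x^\ast d)=E_D(x^\ast)d$, use $\ast$-preservation and idempotence to cancel, and obtain the right-slot identity by symmetry or by taking adjoints. You simply make the $\ast$-preservation step explicit where the paper leaves it implicit.
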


\begin{proof}
Using $D$-bimodularity of $E_D$ (i.e.\ $E_D(d_1 y d_2)=d_1E_D(y)d_2$ for $d_1,d_2\in D$),
for $d\in D$ we have
\[
\langle x-E_D(x),d\rangle_D
=E_D\big((x-E_D(x))^\ast d\big)
=E_D(x^\ast d)-E_D(E_D(x)^\ast d).
\]
Since $E_D(x)\in D$, $E_D(E_D(x)^\ast d)=E_D(x)^\ast d$, and also $E_D(x^\ast d)=E_D(x^\ast)d$.
Hence
\[
\langle x-E_D(x),d\rangle_D
=E_D(x^\ast)d - E_D(x)^\ast d
=0.
\]
The second identity follows similarly (or by taking adjoints).
\end{proof}

\begin{proposition}[Pythagorean identity]\label{prop:pythagoras}
Let $x\in M$ and $a\in D$. Then in the positive cone order on $D_+$,
\begin{equation}\label{eq:pythagoras}
E_D\!\big((x-a)^\ast(x-a)\big)
=
E_D\!\big((x-E_Dx)^\ast(x-E_Dx)\big)
+
(a-E_Dx)^\ast(a-E_Dx).
\end{equation}
\end{proposition}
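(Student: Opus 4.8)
The plan is to prove \eqref{eq:pythagoras} by the usual ``insert the projection and expand the square'' argument, the only inputs being $\ast$-preservation and $D$-bimodularity of $E_D$ together with Proposition~\ref{prop:orthogonality}. First I would write the orthogonal decomposition $x-a=(x-E_Dx)+d$ where $d:=E_Dx-a$, and note that $d\in D$ because $a\in D$ by hypothesis and $E_Dx\in\ran E_D\subseteq D$. Expanding,
\begin{align*}
(x-a)^\ast(x-a)
&=(x-E_Dx)^\ast(x-E_Dx)+(x-E_Dx)^\ast d\\
&\quad+d^\ast(x-E_Dx)+d^\ast d,
\end{align*}
so applying the linear map $E_D$ reduces the statement to (i) the two cross terms being killed by $E_D$ and (ii) $E_D(d^\ast d)=d^\ast d$.

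For (i), in the notation of \eqref{eq:Dinner} the first cross term is $E_D\big((x-E_Dx)^\ast d\big)=\langle x-E_Dx,d\rangle_D$ and the second is $E_D\big(d^\ast(x-E_Dx)\big)=\langle d,x-E_Dx\rangle_D$; both vanish by Proposition~\ref{prop:orthogonality} applied to the element $d\in D$. (Equivalently one re-runs the one-line computation there: right $D$-linearity gives $E_D\big((x-E_Dx)^\ast d\big)=E_D\big((x-E_Dx)^\ast\big)\,d$, and $E_D\big((x-E_Dx)^\ast\big)=E_D(x^\ast)-(E_Dx)^\ast=0$ since $E_D$ is $\ast$-preserving and restricts to the identity on $D$.) For (ii), $d^\ast d\in D$ because $D$ is a $\ast$-subalgebra, so $E_D(d^\ast d)=d^\ast d=(a-E_Dx)^\ast(a-E_Dx)$ using $E_D|_D=\mathrm{id}$. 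Combining the two gives exactly \eqref{eq:pythagoras}.

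Finally I would record the order-theoretic reading implicit in the statement: \eqref{eq:pythagoras} is an exact equality of self-adjoint elements of $D$, and each summand on the right is positive — $E_D\big((x-E_Dx)^\ast(x-E_Dx)\big)\in D_+$ by positivity of $E_D$, and $(a-E_Dx)^\ast(a-E_Dx)\in D_+$ — whence in particular $E_D\big((x-a)^\ast(x-a)\big)\ge E_D\big((x-E_Dx)^\ast(x-E_Dx)\big)$ in $D_+$, with equality (by faithfulness of $E_D$) iff $a=E_Dx$. There is no serious obstacle here; the argument is routine once Proposition~\ref{prop:orthogonality} is in hand. The two points needing a moment of care are (a) verifying $d=E_Dx-a\in D$, so that the module/bimodule identities legitimately apply to the cross terms, and (b) not downgrading the conclusion to a mere inequality — the content is the genuine Pythagorean \emph{equality}, and it is the positivity of the ``approximation-error'' term $(a-E_Dx)^\ast(a-E_Dx)$ that encodes optimality of $E_Dx$ as the best $D$-valued approximant in the $L^2_D$ geometry of Definition~\ref{def:L2D}.
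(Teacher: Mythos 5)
Your proof is correct and takes essentially the same route as the paper: same decomposition $x-a=(x-E_Dx)+(E_Dx-a)$, same use of $D$-bimodularity and $\ast$-preservation to kill the cross terms, same observation that $E_D$ fixes the $D$-element $(E_Dx-a)^\ast(E_Dx-a)$. One minor slip in your closing remark: the equality case $a=E_Dx$ follows from $(a-E_Dx)^\ast(a-E_Dx)=0\Leftrightarrow a-E_Dx=0$ in the $C^\ast$-algebra $D$, not from faithfulness of $E_D$ (that term is not under $E_D$).
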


\begin{proof}
Let $m:=E_Dx\in D$. Then $x-a=(x-m)+(m-a)$, hence
\[
(x-a)^\ast(x-a)=(x-m)^\ast(x-m)+(m-a)^\ast(m-a)
+(x-m)^\ast(m-a)+(m-a)^\ast(x-m).
\]
Applying $E_D$ and using $m-a\in D$ plus bimodularity,
\[
E_D\big((x-m)^\ast(m-a)\big)=E_D((x-m)^\ast)\,(m-a)=\big(E_D(x)-m\big)^\ast(m-a)=0,
\]
\[
E_D\big((m-a)^\ast(x-m)\big)=(m-a)^\ast E_D(x-m)=(m-a)^\ast(E_Dx-m)=0.
\]
Thus the cross terms vanish and \eqref{eq:pythagoras} follows.
\end{proof}

\subsection{Best prediction equals conditional expectation: minimal MSE among
\texorpdfstring{$D_t$}{D\_t}-measurable predictors}\label{sec:best-prediction}

\subsubsection{A rigorous formulation of the prediction problem}\label{subsec:prediction-setup}

Fix a time $t$ in the pricing filtration. Let $D_t\subseteq M$ be the information algebra and
$E_t:M\to D_t$ a faithful normal conditional expectation. Consider a \emph{return (or return increment) observable}
$R\in M^{\mathrm{sa}}$.

\begin{definition}[Admissible predictors and mean-square error]\label{def:predictor}
Any $A\in D_t^{\mathrm{sa}}$ is called a (self-adjoint) predictor based on information $D_t$.
Define its (conditional) mean-square error by
\[
\mathrm{MSE}_t(A)
:=\big\|\,E_t\big((R-A)^2\big)\,\big\|.
\]
Define the \emph{innovation} by
\[
X:=R-E_t(R)\in M^{\mathrm{sa}}.
\]
\end{definition}

\subsubsection{Optimality theorem (no commutativity required)}\label{subsec:best-predictor}

\begin{theorem}[Best predictor and minimal error]\label{thm:best-predictor}
In the above setting, $A^\ast:=E_t(R)\in D_t^{\mathrm{sa}}$ is the unique best predictor. For every
$A\in D_t^{\mathrm{sa}}$,
\begin{equation}\label{eq:MSE-lower-by-innovation}
\mathrm{MSE}_t(A)\;\ge\;\mathrm{MSE}_t(A^\ast)
=\big\|\,E_t(X^2)\,\big\|.
\end{equation}
More precisely, the following Pythagorean decomposition holds in the positive cone order on $D_t$:
\begin{equation}\label{eq:MSE-pythagoras}
E_t\big((R-A)^2\big)
=
E_t(X^2)+(A-E_tR)^2.
\end{equation}
\end{theorem}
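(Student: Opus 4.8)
The plan is to derive Theorem~\ref{thm:best-predictor} directly from the Pythagorean identity of Proposition~\ref{prop:pythagoras}, with essentially no new analytic input, since $R\in M^{\mathrm{sa}}$ is bounded and so no domain subtleties arise. First I would observe that, as $R$ and $A$ are both self-adjoint, the difference $R-A$ is self-adjoint, so $(R-A)^\ast(R-A)=(R-A)^2$, and likewise the innovation $X=R-E_t(R)$ is self-adjoint with $X^\ast X=X^2$. Applying Proposition~\ref{prop:pythagoras} with the substitution $x:=R$, $a:=A$ (for the conditional expectation $E_t:M\to D_t$) then yields exactly the decomposition \eqref{eq:MSE-pythagoras},
\[
E_t\big((R-A)^2\big)=E_t(X^2)+(A-E_tR)^2,
\]
as an identity of self-adjoint elements of $D_t$; the cross terms vanish by the orthogonality relation of Proposition~\ref{prop:orthogonality} (equivalently, by $D_t$-bimodularity of $E_t$ together with $E_t\!\restriction_{D_t}=\mathrm{id}$), since $A-E_tR\in D_t$.

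Next I would extract the two displayed assertions from this identity. For the lower bound, the term $(A-E_tR)^2$ is positive in $D_t$, so \eqref{eq:MSE-pythagoras} gives the operator inequality $E_t(X^2)\le E_t\big((R-A)^2\big)$ in the positive cone of $D_t$. Since the $C^\ast$-norm on $D_t$ is monotone on positive elements (i.e.\ $0\le S\le W$ implies $\|S\|\le\|W\|$), taking norms yields $\|E_t(X^2)\|\le\|E_t((R-A)^2)\|=\mathrm{MSE}_t(A)$, which is \eqref{eq:MSE-lower-by-innovation}; and for the choice $A=A^\ast=E_t(R)$ one has $R-A^\ast=X$ and $(A^\ast-E_tR)^2=0$, so $\mathrm{MSE}_t(A^\ast)=\|E_t(X^2)\|$, giving equality.

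For uniqueness I would argue at the level of the operator order on $D_t$, which is the natural and stronger statement: if $A\in D_t^{\mathrm{sa}}$ attains the minimum in the sense that $E_t\big((R-A)^2\big)=E_t(X^2)$ in $D_t$, then \eqref{eq:MSE-pythagoras} forces $(A-E_tR)^2=0$; since $A-E_tR$ is self-adjoint, $\|A-E_tR\|^2=\|(A-E_tR)^2\|=0$, whence $A=E_t(R)=A^\ast$ (faithfulness of $E_t$ is not even needed here). The only point I would flag as a genuine subtlety — and the place where a careless argument would fail — is that ``uniqueness of the best predictor'' is clean at the level of the positive-cone order but \emph{not} at the level of the scalar functional $\mathrm{MSE}_t=\|E_t((\,\cdot\,)^2)\|$ alone: from $\|P+Q\|=\|P\|$ with $P,Q\ge 0$ one cannot in general conclude $Q=0$, so a norm-dominated perturbation of $A^\ast$ can leave $\mathrm{MSE}_t$ unchanged. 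Accordingly I would phrase (and prove) the uniqueness clause as uniqueness of the $D_t$-order minimizer, which is exactly what the Pythagorean decomposition delivers; otherwise there is no real obstacle, the theorem being a direct corollary of the $L^2_D$-projection geometry of Section~\ref{sec:Dvalued-L2}.
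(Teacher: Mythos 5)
Your proof takes essentially the same route as the paper: apply Proposition~\ref{prop:pythagoras} with $x=R$, $a=A$ to get the decomposition \eqref{eq:MSE-pythagoras}, extract the positive-cone inequality, and pass to norms via order-monotonicity of the $C^\ast$-norm. The one genuine point of divergence is your flag on the uniqueness clause, and it is well taken: the paper's proof closes with ``Equality holds iff $A=E_tR$, hence uniqueness,'' which, read at the scalar level, would require the (false) implication $\|P+Q\|=\|P\|$ with $P,Q\ge 0\Rightarrow Q=0$ in $D_t$. Your reformulation of uniqueness as uniqueness of the minimizer in the positive-cone order on $D_t$ — where $(A-E_tR)^2=0$ genuinely forces $A=E_tR$ via the $C^\ast$-identity $\|(A-E_tR)^2\|=\|A-E_tR\|^2$ — is exactly what the Pythagorean identity delivers, and is a more careful rendering of the claim than the paper's own proof provides.
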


\begin{proof}
Apply Proposition~\ref{prop:pythagoras} to $(M,E_D,D)=(M,E_t,D_t)$ with $x=R$ and $a=A$:
\[
E_t\big((R-A)^2\big)=E_t\big((R-E_tR)^2\big)+(A-E_tR)^2
=E_t(X^2)+(A-E_tR)^2.
\]
Since $(A-E_tR)^2\in (D_t)_+$, we have $E_t\big((R-A)^2\big)\ge E_t(X^2)$ in $D_t$.
Taking norms yields \eqref{eq:MSE-lower-by-innovation}. Equality holds iff $A=E_tR$, hence uniqueness.
\end{proof}

\begin{remark}
Theorem~\ref{thm:best-predictor} shows that, for a fixed information structure $E_t$,
the minimal achievable prediction error is completely determined by the innovation $X$ through $E_t(X^2)$.
Thus the existence of an unavoidable error floor is equivalent to whether $\|E_t(X^2)\|$ can be bounded from below
by structural information-theoretic quantities. This is precisely what Fisher--Cramér--Rao tools provide.
\end{remark}

\subsection{Operator-valued Fisher information and a Cramér--Rao inequality}\label{sec:fisher-cramer-rao}

\subsubsection{A noncommutative derivative: the free difference quotient}\label{subsec:free-dq}

Fix $(M,E_D,D)$ and let $B\subseteq M$ be a von Neumann subalgebra containing $D$.
For a self-adjoint element $X=X^\ast\in M$, let $B\langle X\rangle$ denote the unital $^\ast$-algebra generated by $B$ and $X$.

\begin{definition}[Free difference quotient derivation]\label{def:free-dq}
Define the $D$-bimodular derivation
\[
\partial_{X:B}:B\langle X\rangle\longrightarrow B\langle X\rangle\otimes_D B\langle X\rangle
\]
by
\[
\partial_{X:B}(b)=0,\quad b\in B,
\]
and for any monomial $b_0Xb_1X\cdots Xb_n$ ($b_0,\dots,b_n\in B$),
\begin{equation}\label{eq:free-dq-monomial}
\partial_{X:B}(b_0Xb_1\cdots Xb_n)
:=\sum_{j=1}^n
(b_0X\cdots Xb_{j-1})\;\otimes\;(b_jX\cdots Xb_n),
\end{equation}
extended linearly and by the Leibniz rule to all of $B\langle X\rangle$.
\end{definition}

On $B\langle X\rangle$ use the $D$-valued inner product $\langle p,q\rangle_D:=E_D(p^\ast q)$.
On the algebraic tensor product $B\langle X\rangle\otimes_D B\langle X\rangle$ define the $D$-valued inner product
on simple tensors by
\begin{equation}\label{eq:tensor-inner}
\langle p_1\otimes q_1,\;p_2\otimes q_2\rangle_D
:=E_D\!\Big(q_1^\ast\,E_D(p_1^\ast p_2)\,q_2\Big)\in D,
\end{equation}
and extend by linearity.

\subsubsection{Conjugate variables and Fisher information}\label{subsec:conjugate-fisher}

\begin{definition}[Conjugate variable]\label{def:conjugate}
If $1\otimes 1$ belongs to the domain of the adjoint $\partial_{X:B}^\ast$ (with respect to
\eqref{eq:Dinner} and \eqref{eq:tensor-inner}), define the \emph{$D$-valued conjugate variable} by
\[
J_D(X:B):=\partial_{X:B}^\ast(1\otimes 1)\in L^2_D(B\langle X\rangle).
\]
\end{definition}

\begin{proposition}[Characterizing identity]\label{prop:conjugate-identity}
If $J_D(X:B)$ exists, then for any $b_0,\dots,b_n\in B$,
\begin{equation}\label{eq:conjugate-identity}
E_D\!\big(J_D(X:B)^\ast\,b_0Xb_1\cdots Xb_n\big)
=
\sum_{j=1}^n
E_D(b_0X\cdots Xb_{j-1})\;E_D(b_jX\cdots Xb_n).
\end{equation}
In particular, taking $n=1$ and $b_0=b_1=1$ gives
\begin{equation}\label{eq:JDX=1}
E_D\!\big(J_D(X:B)^\ast\,X\big)=1_D.
\end{equation}
\end{proposition}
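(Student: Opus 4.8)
The plan is to read off \eqref{eq:conjugate-identity} directly from the defining adjoint relation for $J_D(X:B)$, evaluated on monomials. By Definition~\ref{def:conjugate}, the hypothesis that $J_D(X:B)$ exists means precisely that $1\otimes 1\in\dom(\partial_{X:B}^\ast)$, and hence the adjoint identity
\[
\langle J_D(X:B),\,p\rangle_D=\langle 1\otimes 1,\,\partial_{X:B}(p)\rangle_D
\]
holds for every $p\in B\langle X\rangle$, where the left inner product is the $D$-valued form \eqref{eq:Dinner} on $L^2_D(B\langle X\rangle)$ and the right one is the $D$-valued form \eqref{eq:tensor-inner} on $B\langle X\rangle\otimes_D B\langle X\rangle$. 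Since \eqref{eq:Dinner} reads $\langle u,v\rangle_D=E_D(u^\ast v)$, the left side is $E_D\big(J_D(X:B)^\ast p\big)$; taking $p=b_0Xb_1\cdots Xb_n$, this is exactly the left side of \eqref{eq:conjugate-identity}.

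It then remains to evaluate $\langle 1\otimes 1,\partial_{X:B}(p)\rangle_D$ for the monomial $p=b_0Xb_1\cdots Xb_n$. First apply the monomial formula \eqref{eq:free-dq-monomial}, writing $\partial_{X:B}(p)$ as the sum over $j=1,\dots,n$ of the simple tensors $(b_0X\cdots Xb_{j-1})\otimes(b_jX\cdots Xb_n)$. Next pair each such tensor with $1\otimes 1$ via \eqref{eq:tensor-inner}: with $p_1=q_1=1$ the formula collapses to $E_D\big(E_D(b_0X\cdots Xb_{j-1})\,(b_jX\cdots Xb_n)\big)$, and since $E_D(b_0X\cdots Xb_{j-1})\in D$, the $D$-bimodule property of $E_D$ pulls this factor outside, yielding $E_D(b_0X\cdots Xb_{j-1})\,E_D(b_jX\cdots Xb_n)\in D$. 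Summing over $j$ and combining with the first paragraph gives \eqref{eq:conjugate-identity}. For the displayed special case, take $n=1$ and $b_0=b_1=1$: then $p=X$, the sum reduces to the single term $E_D(1)\,E_D(1)=1_D$ by unitality of $E_D$, and hence $E_D\big(J_D(X:B)^\ast X\big)=1_D$, which is \eqref{eq:JDX=1}.

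I do not expect a genuine obstacle here; the argument is essentially bookkeeping once the adjoint relation is in hand. The two points that need care are: (i) the placement of the adjoint — because the $D$-valued inner product is conjugate-linear in its first slot, the element $\partial_{X:B}^\ast(1\otimes 1)$ appears as $J_D(X:B)^\ast$ inside $E_D$, which is precisely how \eqref{eq:conjugate-identity} is written; and (ii) the correct unwinding of the nested conditional expectation in \eqref{eq:tensor-inner}, where one must first evaluate the inner $E_D$ to land in $D$ and only then use bimodularity to factor it out of the outer $E_D$. One should also note that the hypothesis $1\otimes 1\in\dom(\partial_{X:B}^\ast)$ supplies the adjoint identity for all $p\in B\langle X\rangle$ simultaneously, so no separate density or closability argument is needed at this stage.
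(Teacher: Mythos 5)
Your proof is correct and follows essentially the same route as the paper's: read off the adjoint relation defining $J_D(X:B)=\partial_{X:B}^\ast(1\otimes 1)$, expand $\partial_{X:B}$ on the monomial via \eqref{eq:free-dq-monomial}, evaluate the tensor inner product \eqref{eq:tensor-inner} against $1\otimes 1$, and pull the inner $E_D$ out using $D$-bimodularity. Your two cautionary remarks (adjoint placement via conjugate-linearity in the first slot, and the order in which the nested $E_D$'s are evaluated) are exactly the bookkeeping the paper's proof also relies on, so there is nothing to add.
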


\begin{proof}
By definition of $J_D(X:B)=\partial_{X:B}^\ast(1\otimes 1)$, for any $p\in B\langle X\rangle$,
\[
\langle 1\otimes 1,\;\partial_{X:B}(p)\rangle_D=\langle J_D(X:B),\;p\rangle_D=E_D\big(J_D(X:B)^\ast\,p\big).
\]
For $p=b_0Xb_1\cdots Xb_n$, using \eqref{eq:free-dq-monomial} and \eqref{eq:tensor-inner},
\begin{align*}
\langle 1\otimes 1,\;\partial_{X:B}(p)\rangle_D
&=
\sum_{j=1}^n
\langle 1\otimes 1,\;(b_0X\cdots Xb_{j-1})\otimes(b_jX\cdots Xb_n)\rangle_D\\
&=
\sum_{j=1}^n
E_D\!\Big(1^\ast\,E_D(1^\ast(b_0X\cdots Xb_{j-1}))\,(b_jX\cdots Xb_n)\Big)\\
&=
\sum_{j=1}^n
E_D(b_0X\cdots Xb_{j-1})\,E_D(b_jX\cdots Xb_n),
\end{align*}
where the last step uses $D$-bimodularity of $E_D$.
Comparing with $E_D(J_D(X:B)^\ast p)$ yields \eqref{eq:conjugate-identity}, and \eqref{eq:JDX=1} follows by
choosing $n=1$ and $b_0=b_1=1$.
\end{proof}

\begin{definition}[Multivariate $D$-valued free Fisher information]\label{def:fisher_information}
Let $X_1,\dots,X_n\in M$ be self-adjoint. For each $j$, set
\[
B_j:=B[X_1,\dots,\widehat{X_j},\dots,X_n],
\]
the von Neumann algebra generated by $B$ and all variables except $X_j$.
If each conjugate variable $J_D(X_j:B_j)$ exists, define
\[
\Phi_D^{\ast\ast}(X_1,\dots,X_n:B)
:=\sum_{j=1}^n E_D\!\big(J_D(X_j:B_j)^\ast\,J_D(X_j:B_j)\big)\in D_+.
\]
\end{definition}

\subsubsection{Cramér--Rao: an unavoidable lower bound for second moments}\label{subsec:cramer-rao}


\paragraph{Operator-valued Cramér--Rao inequality.}
In the operator-valued free-probability framework, let $(M,\varphi)$ be a von Neumann algebra
equipped with a faithful normal state $\varphi$. Let $D\subset M$ be a von Neumann subalgebra and
let $E_D:M\to D$ be a faithful normal conditional expectation preserving $\varphi$.
Let $X=X^\ast\in M$ and assume that the $D$-valued conjugate variable $J_D(X:B)$ exists, i.e.
\[
J_D(X:B)=\partial_{X:B}^\ast(1\otimes 1),
\]
where $\partial_{X:B}$ denotes the free difference quotient relative to a unital von Neumann subalgebra
$B\subset M$ (with $D\subset B$) satisfying $\partial_{X:B}(b)=0$ for $b\in B$ and
$\partial_{X:B}(X)=1\otimes 1$.
Define the $D$-valued Fisher information by
\[
I_D(X:B):=E_D\!\big(J_D(X:B)^\ast J_D(X:B)\big)\in D_+.
\]
We view a self-adjoint $D$-measurable element $T\in D$ as an estimator for $X$ under the information
constraint $D$. If $T$ satisfies the unbiasedness constraint $E_D(T)=E_D(X)$, then the (conditional)
mean-square error is the positive element $E_D((T-X)^2)\in D_+$.
We now record the following operator-valued Cramér--Rao bound.

\begin{theorem}[Operator-valued Cramér--Rao inequality]\label{thm:OVCRI}
Assume the above notation. Suppose that the conjugate variable $J_D(X:B)$ exists and that
$I_D(X:B)$ is invertible in $D$. Then for any self-adjoint $D$-measurable unbiased estimator $T\in D$
(i.e.\ $E_D(T)=E_D(X)$), the error operator satisfies
\[
E_D\bigl((T-X)^2\bigr)\;\succeq\; I_D(X:B)^{-1},
\]
where ``$\succeq$'' denotes the positive cone order in $D$ (equivalently, the left-hand side minus the
right-hand side is positive in $D$). In particular, for the best predictor $T=E_D(X)$ one has
\[
E_D\bigl((X-E_D(X))^2\bigr)\;\succeq\;I_D(X:B)^{-1}.
\]
\end{theorem}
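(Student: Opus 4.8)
The plan is to reduce the cone inequality to an operator-valued (Schur-complement) Cauchy--Schwarz estimate in the Hilbert $C^\ast$-module $L^2_D(M)$, using the conjugate variable $\xi:=J_D(X:B)$ as a ``score'' and the innovation $X_0:=X-E_D(X)$ as the quantity being estimated. First I would extract two exact consequences of the characterizing identity \eqref{eq:conjugate-identity}: taking the monomial $p=X$ (i.e.\ $n=1$, $b_0=b_1=1$) gives the normalization $E_D(\xi^\ast X)=1_D$, while taking $p=b$ with $b\in B$ (i.e.\ $n=0$) gives $E_D(\xi^\ast b)=0$ for every $b\in B$, in particular for every $b\in D$. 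Since $X_0=X-E_D(X)\in B\langle X\rangle$ is self-adjoint with $E_D(X_0)=0$ and $E_D(X)\in D\subset B$, these yield
\[
\langle \xi,X_0\rangle_D = E_D(\xi^\ast X)-E_D\!\big(\xi^\ast E_D(X)\big)=1_D-0=1_D,
\qquad
\langle X_0,\xi\rangle_D=\langle\xi,X_0\rangle_D^\ast=1_D,
\]
together with $\langle\xi,\xi\rangle_D=I_D(X:B)$ (invertible by hypothesis) and $\langle X_0,X_0\rangle_D=E_D(X_0^2)$. No density or closability argument is needed here because $X_0$ lies in the algebraic algebra $B\langle X\rangle$, so \eqref{eq:conjugate-identity} applies verbatim.

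Next I would prove the cone-ordered refinement of the module Cauchy--Schwarz inequality (Lemma~\ref{lem:moduleCS}): for $u,v\in L^2_D(M)$ with $\langle v,v\rangle_D$ invertible in $D$,
\[
\langle u,u\rangle_D \;\succeq\; \langle u,v\rangle_D\,\langle v,v\rangle_D^{-1}\,\langle v,u\rangle_D .
\]
The proof is to set $c:=\langle v,v\rangle_D^{-1}\langle v,u\rangle_D\in D$ and $w:=u-v\,c\in L^2_D(M)$, and expand $0\le\langle w,w\rangle_D$ using right $D$-linearity, left conjugate-linearity, and self-adjointness of $\langle v,v\rangle_D^{-1}$; a direct computation collapses the three $\langle v,v\rangle_D$-weighted terms and leaves exactly $\langle w,w\rangle_D=\langle u,u\rangle_D-\langle u,v\rangle_D\langle v,v\rangle_D^{-1}\langle v,u\rangle_D\ge 0$. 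Applying this with $u=X_0$ and $v=\xi$ gives
\[
E_D(X_0^2)\;\succeq\; 1_D\cdot I_D(X:B)^{-1}\cdot 1_D \;=\; I_D(X:B)^{-1}.
\]

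Finally, for a self-adjoint $D$-measurable estimator $T$ I would invoke the Pythagorean identity (Proposition~\ref{prop:pythagoras}) with $x=X$, $a=T$, obtaining $E_D\big((X-T)^2\big)=E_D(X_0^2)+(T-E_D(X))^2\succeq E_D(X_0^2)$, and combining with the previous display yields $E_D\big((X-T)^2\big)\succeq I_D(X:B)^{-1}$ for every $T\in D^{\mathrm{sa}}$; the unbiasedness constraint $E_D(T)=E_D(X)$ forces $T=E_D(X)$ (as $E_D$ is the identity on $D$), which makes the second term vanish and gives the sharp statement $E_D\big((X-E_D(X))^2\big)\succeq I_D(X:B)^{-1}$. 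The main obstacle is the operator-valued Cauchy--Schwarz step: one must ensure $\langle v,v\rangle_D^{-1}$ genuinely lives in $D$ (guaranteed by invertibility of $I_D(X:B)$) and that $w$ is a bona fide element of the completed module so that positivity $\langle w,w\rangle_D\ge 0$ is available; everything else is exact algebra driven by the conjugate-variable identity.
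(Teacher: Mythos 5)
Your proof is correct, and it takes a genuinely different and in fact more robust route than the paper's. The paper deduces $E_D\bigl(J_D(X:B)^\ast (X-E_D(X))\bigr)=1_D$ by informally ``applying equation~\eqref{eq:JDX=1} to the centered variable,'' which silently requires the (true but unstated) identity $J_D(X-E_D(X):B)=J_D(X:B)$; you instead derive it cleanly from the characterizing identity \eqref{eq:conjugate-identity} itself, by combining the $n=1$ case ($E_D(\xi^\ast X)=1_D$) with the $n=0$ case ($E_D(\xi^\ast b)=0$ for $b\in B$, so in particular for $E_D(X)\in D\subset B$). More significantly, the paper's Cauchy--Schwarz step is a scalar-style order inequality $1_D\preceq E_D(X_0^2)\,I_D(X:B)$ followed by multiplication by $I_D(X:B)^{-1}$, which is only licensed because the paper explicitly assumes $D$ commutative (both for the form of the inequality and for the rearrangement when multiplying). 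Your Schur-complement Cauchy--Schwarz $\langle u,u\rangle_D\succeq\langle u,v\rangle_D\langle v,v\rangle_D^{-1}\langle v,u\rangle_D$, proved by expanding $\langle u-vc,u-vc\rangle_D\ge 0$ with $c=\langle v,v\rangle_D^{-1}\langle v,u\rangle_D$, is the correct operator-valued statement and does not require $D$ to be commutative; it delivers the conclusion in the positive cone order directly, with no rearrangement needed. The Pythagorean step for general $T\in D^{\mathrm{sa}}$ is a pleasant extra that subsumes the unbiasedness reduction. In short: the paper's argument buys brevity at the cost of hidden hypotheses (commutativity, the conjugate-variable-of-the-centering identity), whereas yours is fully algebraic, works for noncommutative $D$, and is the natural module-theoretic proof.
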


\begin{proof}
Since $T\in D$, we have $E_D(T)=T$. Hence the unbiasedness constraint $E_D(T)=E_D(X)$ forces
\[
T=E_D(X).
\]
Thus it suffices to prove the claimed lower bound for $T=E_D(X)$, i.e.\ for the innovation
$X-E_D(X)$.

By Proposition~\ref{prop:conjugate-identity} (in particular equation~\ref{eq:JDX=1}), applied to the centered variable $X-E_D(X)$), we have
\[
E_D\bigl(J_D(X:B)^\ast\,(X-E_D(X))\bigr)=1_D.
\]
Applying the (order) Cauchy--Schwarz inequality for conditional expectations onto a commutative
information algebra (in our setting $D$ is commutative), we obtain
\[
1_D\,1_D
=\Bigl|E_D\bigl(J_D(X:B)^\ast\,(X-E_D(X))\bigr)\Bigr|^2
\preceq
E_D\bigl((X-E_D(X))^2\bigr)\;E_D\bigl(J_D(X:B)^\ast J_D(X:B)\bigr).
\]
By the definition of $I_D(X:B)$, this reads
\[
1_D\preceq E_D\bigl((X-E_D(X))^2\bigr)\;I_D(X:B).
\]
Since $I_D(X:B)$ is invertible in $D$ and elements of $D$ commute, multiplying by $I_D(X:B)^{-1}$
yields
\[
E_D\bigl((X-E_D(X))^2\bigr)\succeq I_D(X:B)^{-1},
\]
which is exactly the desired bound (and hence also the stated bound for any unbiased $T\in D$).
\end{proof}

\begin{proposition}[Cramér--Rao inequality ($D$-valued version)]\label{prop:CR}
Let $X_1,\dots,X_n\in M$ be self-adjoint and assume the conjugate variables needed to define
$\Phi_D^{\ast\ast}(X_1,\dots,X_n:B)$ exist. Then
\begin{equation}\label{eq:CR}
\Phi_D^{\ast\ast}(X_1,\dots,X_n:B)\;
\sum_{j=1}^n \big\|E_D(X_j^2)\big\|
\;\ge\; n^2\,1_D,
\end{equation}
where $\|\cdot\|$ is the $C^\ast$-norm on $D$ (hence $\sum_j\|E_D(X_j^2)\|$ is a scalar).
\end{proposition}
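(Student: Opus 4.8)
The plan is to turn the asserted \emph{operator} inequality \eqref{eq:CR} in $D$ into a family of scalar inequalities, one for each state $\psi$ on $D$, each coming from a single-variable Cauchy--Schwarz estimate, and then to recombine them with the elementary $\mathrm{AM}$--$\mathrm{HM}$ inequality. The only input about the conjugate variables I need is the normalization identity \eqref{eq:JDX=1}.

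First I would fix, for each $j=1,\dots,n$, the conjugate variable $u_j:=J_D(X_j:B_j)\in L^2_D(B_j\langle X_j\rangle)$ (it exists by hypothesis) and record the two identities that drive everything. Applying Proposition~\ref{prop:conjugate-identity} with the subalgebra $B_j$ playing the role of $B$ and the self-adjoint element $X_j$ playing the role of $X$, equation \eqref{eq:JDX=1} reads $\langle u_j,X_j\rangle_D=E_D(u_j^\ast X_j)=1_D$; and by Definition~\ref{def:fisher_information}, setting $a_j:=\langle u_j,u_j\rangle_D=E_D(u_j^\ast u_j)\in D_+$ gives $\Phi_D^{\ast\ast}(X_1,\dots,X_n:B)=\sum_{j=1}^n a_j$. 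Write $b_j:=\|E_D(X_j^2)\|$; note each $b_j>0$, since $b_j=0$ would force $X_j^2=0$ by faithfulness of $E_D$, hence $X_j=0$, contradicting $E_D(u_j^\ast X_j)=1_D$.

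Next, for an arbitrary state $\psi$ on $D$, the composite $\psi\circ E_D$ is a state on $M$, so $u\mapsto\psi(\langle u,u\rangle_D)$ is a positive semidefinite form on the module $L^2_D$ and the scalar Cauchy--Schwarz inequality applies to $u_j$ and $X_j$:
\[
1=|\psi(1_D)|^2=\big|\psi(\langle u_j,X_j\rangle_D)\big|^2\ \le\ \psi(a_j)\,\psi\big(E_D(X_j^2)\big)\ \le\ \psi(a_j)\,b_j ,
\]
the last step using that $\psi$ is a state. Hence $\psi(a_j)\ge b_j^{-1}$ for every $j$, so
\[
\psi\big(\Phi_D^{\ast\ast}(X_1,\dots,X_n:B)\big)=\sum_{j=1}^n\psi(a_j)\ \ge\ \sum_{j=1}^n\frac{1}{b_j}\ \ge\ \frac{n^2}{\sum_{j=1}^n b_j},
\]
where the last inequality is the Cauchy--Schwarz ($\mathrm{AM}$--$\mathrm{HM}$) bound $\big(\sum_j b_j^{-1}\big)\big(\sum_j b_j\big)\ge n^2$. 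Since $\psi$ was an arbitrary state on $D$, and a self-adjoint element of a von Neumann algebra dominates a scalar multiple of the unit iff every state does, this gives $\Phi_D^{\ast\ast}(X_1,\dots,X_n:B)\ge\big(\sum_j b_j\big)^{-1}n^2\,1_D$, which is \eqref{eq:CR}.

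The genuine subtlety — the step I expect to need the most care in writing — is why one cannot simply invoke module Cauchy--Schwarz (Lemma~\ref{lem:moduleCS}) once on the direct-sum vectors $(u_1,\dots,u_n)$ and $(X_1,\dots,X_n)$: that route only yields the \emph{norm} inequality $\|\Phi_D^{\ast\ast}\|\sum_j b_j\ge n^2$, which is strictly weaker than the operator inequality in \eqref{eq:CR}. Recovering the operator statement forces the statewise argument above, in which the replacement $\psi(E_D(X_j^2))\le b_j$ is made \emph{coordinate by coordinate} before summing, rather than globally. A minor bookkeeping point to handle along the way is that $u_j$ lies a priori only in $L^2_D$ and not in $M$; this is harmless, since $\langle\cdot,\cdot\rangle_D$ (and hence $\psi(\langle\cdot,\cdot\rangle_D)$) extends continuously from $M$ to $L^2_D$, and $\langle u_j,X_j\rangle_D$ retains the value $E_D(u_j^\ast X_j)=1_D$.
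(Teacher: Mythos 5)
Your argument is correct, and it is genuinely different from --- and in fact tighter than --- the paper's own proof. The paper's proof applies the module Cauchy--Schwarz inequality (Lemma~\ref{lem:moduleCS}) once to the direct sums $U=(J_1,\dots,J_n)$, $V=(X_1,\dots,X_n)$ and arrives at the scalar bound $n^2\le\|\Phi_D^{\ast\ast}\|\sum_j\|E_D(X_j^2)\|$, and then asserts that positivity of $\Phi_D^{\ast\ast}$ lets one upgrade to the operator-order inequality~\eqref{eq:CR}. You correctly flag that this upgrade is not automatic: $\|a\|\ge c$ for $a\in D_+$ does not entail $a\ge c\,1_D$ (think of a nontrivial projection in $D$). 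Your statewise route closes exactly that gap. By composing any state $\psi$ on $D$ with $E_D$ and using \emph{scalar} Cauchy--Schwarz per coordinate, you obtain $\psi(a_j)\ge b_j^{-1}$ for each $j$, and the AM--HM inequality then yields $\psi(\Phi_D^{\ast\ast})\ge n^2/\sum_j b_j$ for \emph{every} state $\psi$ on $D$, which is equivalent to the order inequality $\Phi_D^{\ast\ast}\ge (n^2/\sum_j b_j)1_D$. The crucial structural difference is that the per-coordinate replacement $\psi(E_D(X_j^2))\le b_j$ is made before summing, so you never lose information to the single global $C^\ast$-norm the paper takes at the end. Your auxiliary observations are also sound: $b_j>0$ by faithfulness of $E_D$ and the normalization $E_D(u_j^\ast X_j)=1_D$; and the density/continuity remark about $u_j\in L^2_D$ versus $M$ is the right thing to say. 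In short, where the paper's written proof leaves a genuine gap between a norm bound and an order bound, your proof delivers the stated operator inequality.
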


\begin{proof}
For each $j$, write $J_j:=J_D(X_j:B_j)$.
Consider the direct sum Hilbert $C^\ast$-module
\[
\mathsf H:=\bigoplus_{j=1}^n L^2_D(M),
\]
and elements
\[
U:=(J_1,\dots,J_n)\in\mathsf H,\qquad V:=(X_1,\dots,X_n)\in\mathsf H.
\]
Then
\[
\langle U,V\rangle_D
=\sum_{j=1}^n E_D(J_j^\ast X_j).
\]
By \eqref{eq:JDX=1} applied to each $(X_j,B_j)$, we have $E_D(J_j^\ast X_j)=1_D$, hence
\begin{equation}\label{eq:sumJDx}
\langle U,V\rangle_D = n\,1_D.
\end{equation}
Applying Lemma~\ref{lem:moduleCS} in $\mathsf H$ gives
\[
\|\langle U,V\rangle_D\|^2\le \|\langle U,U\rangle_D\|\;\|\langle V,V\rangle_D\|.
\]
By \eqref{eq:sumJDx}, the left side equals $\|n1_D\|^2=n^2$. Moreover,
\[
\langle U,U\rangle_D=\sum_{j=1}^n E_D(J_j^\ast J_j)=\Phi_D^{\ast\ast}(X_1,\dots,X_n:B),
\]
so $\|\langle U,U\rangle_D\|=\|\Phi_D^{\ast\ast}(X_1,\dots,X_n:B)\|$.
Also,
\[
\langle V,V\rangle_D=\sum_{j=1}^n E_D(X_j^\ast X_j)=\sum_{j=1}^n E_D(X_j^2)\in D_+,
\]
hence
\[
\|\langle V,V\rangle_D\|
=\Big\|\sum_{j=1}^n E_D(X_j^2)\Big\|
\le \sum_{j=1}^n \|E_D(X_j^2)\|.
\]
Combining yields
\[
n^2 \le \big\|\Phi_D^{\ast\ast}(X_1,\dots,X_n:B)\big\|\;
\sum_{j=1}^n \|E_D(X_j^2)\|.
\]
Since $\Phi_D^{\ast\ast}(X_1,\dots,X_n:B)\in D_+$, this implies the order form \eqref{eq:CR}.
\end{proof}

\begin{corollary}[Single-variable bound]\label{cor:CR-one}
For $n=1$, if $X=X^\ast$ and $J_D(X:B)$ exists, then
\begin{equation}\label{eq:CR-one}
\Phi_D^{\ast\ast}(X:B)\;\|E_D(X^2)\|\;\ge\;1_D.
\end{equation}
In particular, if there exists $K<\infty$ such that
\begin{equation}\label{eq:FI-bounded}
\Phi_D^{\ast\ast}(X:B)\;\le\;K\,1_D,
\end{equation}
then
\begin{equation}\label{eq:second-moment-lb}
\|E_D(X^2)\|\;\ge\;\frac{1}{K}.
\end{equation}
\end{corollary}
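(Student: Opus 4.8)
The plan is to read off \eqref{eq:CR-one} from the general $D$-valued Cram\'er--Rao inequality and then to deduce \eqref{eq:second-moment-lb} by an elementary order manipulation in $D_+$.

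First I would specialize Proposition~\ref{prop:CR} to $n=1$: for a single self-adjoint $X\in M$ with $J_D(X:B)$ existing, the conditioning algebra $B_1$ is just $B$, the sum $\sum_{j=1}^{1}\|E_D(X_j^2)\|$ collapses to $\|E_D(X^2)\|$, and $n^2=1$, so the conclusion \eqref{eq:CR} of Proposition~\ref{prop:CR} becomes precisely \eqref{eq:CR-one}. For completeness one can also give the direct one-line argument: put $J:=J_D(X:B)\in L^2_D(M)$; by Proposition~\ref{prop:conjugate-identity} (equation~\eqref{eq:JDX=1}) and self-adjointness of $X$ one has $\langle J,X\rangle_D=E_D(J^\ast X)=1_D$ and $\langle X,J\rangle_D=\langle J,X\rangle_D^{\ast}=1_D$, and the Cauchy--Schwarz inequality for Hilbert $C^\ast$-modules (the order form underlying Lemma~\ref{lem:moduleCS}), $\langle J,X\rangle_D\langle X,J\rangle_D\le\|\langle X,X\rangle_D\|\,\langle J,J\rangle_D$, gives
\[
1_D\;\le\;\|E_D(X^2)\|\;E_D(J^\ast J)\;=\;\|E_D(X^2)\|\,\Phi_D^{\ast\ast}(X:B),
\]
which is \eqref{eq:CR-one}.

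For the ``in particular'' statement, set $s:=\|E_D(X^2)\|$. Since $X\in M$ is bounded we have $E_D(X^2)\in D_+$ with $s<\infty$; moreover the normalization $E_D(J^\ast X)=1_D$ forces $X\neq0$, hence $E_D(X^2)\neq0$ by faithfulness of $E_D$, so $s>0$. Now assume \eqref{eq:FI-bounded}, i.e.\ $\Phi_D^{\ast\ast}(X:B)\le K\,1_D$. Multiplying by the positive scalar $s$ and combining with \eqref{eq:CR-one} yields, in the order of $D$,
\[
1_D\;\le\;s\,\Phi_D^{\ast\ast}(X:B)\;\le\;sK\,1_D,
\]
whence $sK\ge1$ (in a unital $C^\ast$-algebra, $1_D\le\lambda\,1_D$ for a scalar $\lambda$ forces $\lambda\ge1$), i.e.\ $\|E_D(X^2)\|=s\ge1/K$, which is \eqref{eq:second-moment-lb}.

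I do not expect a genuine obstacle here: the statement is a formal corollary of Proposition~\ref{prop:CR}. The only two points that deserve a line of care are (i) invoking Cauchy--Schwarz in its \emph{order} form on the module, so that \eqref{eq:CR-one} is an inequality in $D_+$ and not merely between $C^\ast$-norms, and (ii) the strict positivity $s>0$ needed to scale and conclude in the last step, which is exactly where faithfulness of $E_D$ together with the conjugate-variable normalization $E_D(J_D(X:B)^\ast X)=1_D$ enters.
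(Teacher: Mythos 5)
Your proof is correct, and your primary route (specialize Proposition~\ref{prop:CR} to $n=1$, then multiply by the positive scalar $\|E_D(X^2)\|$ and compare scalars in the order of $D$) is the same as the paper's. The ``in particular'' step matches the paper's two-line computation exactly; your observation that $s:=\|E_D(X^2)\|>0$ is automatic (indeed $sK\ge 1$ already forces $s>0$) is fine, if slightly more cautious than necessary.

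Your ``for completeness'' direct argument is, however, more than a courtesy: it is actually the more solid route, and worth noting as a genuine improvement. The paper's Lemma~\ref{lem:moduleCS} is stated only as a \emph{scalar} norm inequality $\|\langle u,v\rangle_D\|^2\le\|\langle u,u\rangle_D\|\,\|\langle v,v\rangle_D\|$, and the proof of Proposition~\ref{prop:CR} applies it to obtain the scalar bound $n^2\le\|\Phi_D^{\ast\ast}\|\cdot\sum_j\|E_D(X_j^2)\|$, then asserts without further argument that ``this implies the order form'' $\Phi_D^{\ast\ast}\cdot\sum_j\|E_D(X_j^2)\|\ge n^2\,1_D$. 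That passage is not automatic, since for a positive $\Phi\in D_+$ a lower bound on $\|\Phi\|$ does not give a lower bound on $\Phi$ in the positive cone. Your one-line derivation using the \emph{order} form of the Hilbert $C^\ast$-module Cauchy--Schwarz inequality, $\langle J,X\rangle_D\langle X,J\rangle_D\le\|\langle X,X\rangle_D\|\,\langle J,J\rangle_D$, combined with the normalization $\langle J,X\rangle_D=E_D(J^\ast X)=1_D$ from \eqref{eq:JDX=1}, delivers the operator inequality $1_D\le\|E_D(X^2)\|\,\Phi_D^{\ast\ast}(X:B)$ directly in $D_+$, exactly what \eqref{eq:CR-one} asserts. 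This sidesteps the scalar-to-order gap entirely and is the argument one should actually rely on.
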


\begin{proof}
\eqref{eq:CR-one} follows from Proposition~\ref{prop:CR}. If \eqref{eq:FI-bounded} holds, then in $D_+$,
\[
K\,1_D\cdot \|E_D(X^2)\|\;\ge\;\Phi_D^{\ast\ast}(X:B)\,\|E_D(X^2)\|\;\ge\;1_D,
\]
hence $\|E_D(X^2)\|\ge 1/K$.
\end{proof}

\subsection{Application: predicting returns from information is inherently imprecise}\label{sec:application-return}

Return to the pricing information structure at time $t$, with
\[
D:=D_t,\qquad E_D:=E_t,
\]
and let $R\in M^{\mathrm{sa}}$ be a return (or return increment) observable.

\begin{theorem}[Unavoidable MSE lower bound for return prediction]\label{thm:return-lower-bound}
Let $B\subseteq M$ be a von Neumann subalgebra containing $D_t$ (the canonical choice is $B=D_t$).
Let
\[
X:=R-E_t(R)\in M^{\mathrm{sa}}
\]
be the innovation relative to $D_t$.
Assume the conjugate variable $J_{D_t}(X:B)$ exists and there is $K<\infty$ such that
\[
\Phi_{D_t}^{\ast\ast}(X:B)\;\le\;K\,1_{D_t}.
\]
Then for every predictor $A\in D_t^{\mathrm{sa}}$,
\begin{equation}\label{eq:unavoidable-MSE}
\mathrm{MSE}_t(A)=\big\|E_t((R-A)^2)\big\|\;\ge\;\frac{1}{K}.
\end{equation}
In particular, even for the best predictor $A^\ast=E_t(R)$,
\[
\inf_{A\in D_t^{\mathrm{sa}}}\mathrm{MSE}_t(A)
=
\big\|E_t(X^2)\big\|
\ge\frac1K.
\]
\end{theorem}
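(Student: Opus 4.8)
The plan is to obtain the bound by chaining two results already in hand: the Pythagorean optimality of the conditional expectation as a predictor (Theorem~\ref{thm:best-predictor}) and the single-variable operator-valued Cram\'er--Rao inequality (Corollary~\ref{cor:CR-one}). The first collapses the error of an arbitrary $D_t$-measurable predictor down to the error of the innovation $X$; the second turns the Fisher-information hypothesis $\Phi_{D_t}^{\ast\ast}(X:B)\le K\,1_{D_t}$ into a hard floor for that error.

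First I would fix $A\in D_t^{\mathrm{sa}}$ and apply Theorem~\ref{thm:best-predictor} with $(M,E_D,D)=(M,E_t,D_t)$, $x=R$, $a=A$. This yields, in the positive cone of $D_t$, the decomposition $E_t((R-A)^2)=E_t(X^2)+(A-E_tR)^2$ with $X=R-E_t(R)$; since $(A-E_tR)^2\in (D_t)_+$ and the $C^\ast$-norm is monotone on the positive cone, one gets $\mathrm{MSE}_t(A)=\|E_t((R-A)^2)\|\ge\|E_t(X^2)\|$, with equality precisely at $A^\ast=E_t(R)$.

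Next I would check that the innovation $X$ meets the hypotheses of Corollary~\ref{cor:CR-one}: $X$ is self-adjoint (a difference of self-adjoints, $E_t$ being $\ast$-preserving), and by assumption $J_{D_t}(X:B)$ exists with $\Phi_{D_t}^{\ast\ast}(X:B)\le K\,1_{D_t}$. The implication \eqref{eq:FI-bounded} $\Rightarrow$ \eqref{eq:second-moment-lb} in Corollary~\ref{cor:CR-one} then gives $\|E_t(X^2)\|\ge 1/K$. Combining with the previous step delivers $\mathrm{MSE}_t(A)\ge\|E_t(X^2)\|\ge 1/K$ for all $A\in D_t^{\mathrm{sa}}$, and since the lower value $\|E_t(X^2)\|$ is attained at $A^\ast=E_t(R)$, we also obtain $\inf_{A}\mathrm{MSE}_t(A)=\|E_t(X^2)\|\ge 1/K$.

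The proof is essentially a composition of two black boxes, so I do not anticipate a genuine analytic obstacle; the delicate points are bookkeeping. One must confirm that the $\Phi_{D_t}^{\ast\ast}(X:B)$ appearing in the hypothesis is exactly the $n=1$ instance of Definition~\ref{def:fisher_information} feeding Corollary~\ref{cor:CR-one}, and that the centering $E_t(X)=0$ is what makes the conjugate-variable identity \eqref{eq:JDX=1} apply to $X$ (rather than to $R$ itself), so that the Cram\'er--Rao step genuinely controls the innovation's second moment. I would also note that the Pythagorean reduction requires no commutativity of $D_t$, whereas abelianness of $D_t$ is used implicitly inside Corollary~\ref{cor:CR-one} through the order Cauchy--Schwarz in $L^2_{D_t}(M)$.
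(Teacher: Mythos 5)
Your proof is correct and is essentially identical to the paper's: both chain Theorem~\ref{thm:best-predictor} (Pythagorean optimality, yielding $\mathrm{MSE}_t(A)\ge\mathrm{MSE}_t(A^\ast)=\|E_t(X^2)\|$) with Corollary~\ref{cor:CR-one} (single-variable Cram\'er--Rao, yielding $\|E_t(X^2)\|\ge 1/K$) and combine. One tangential remark in your closing paragraph is slightly off: Corollary~\ref{cor:CR-one} rests on Proposition~\ref{prop:CR}, which invokes the general Hilbert $C^\ast$-module Cauchy--Schwarz of Lemma~\ref{lem:moduleCS} and does \emph{not} require $D_t$ to be abelian; commutativity of the information algebra is used in the alternative formulation Theorem~\ref{thm:OVCRI}, which is not the route taken here.
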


\begin{proof}
By Theorem~\ref{thm:best-predictor},
\[
\mathrm{MSE}_t(A)\ge \mathrm{MSE}_t(A^\ast)=\|E_t(X^2)\|.
\]
By Corollary~\ref{cor:CR-one} (with $D=D_t$, $E_D=E_t$, and variable $X$),
\[
\|E_t(X^2)\|\ge \frac{1}{K}.
\]
Combining yields \eqref{eq:unavoidable-MSE}.
\end{proof}

\begin{remark}[This is an information-theoretic uncertainty principle]
Theorem~\ref{thm:return-lower-bound} has the structure
\[
\text{information }(D_t,E_t)\;\Rightarrow\;\text{best predictor }E_tR
\;\Rightarrow\;\text{minimal error }\|E_t(X^2)\|
\;\stackrel{\text{CR}}{\gtrsim}\;(\text{Fisher information})^{-1}.
\]
This differs from commutator-type uncertainty principles.
Here ``uncertainty'' is estimation-/information-theoretic: if Fisher information is bounded, mean-square error cannot vanish.
\end{remark}

\subsubsection*{A concrete example within this chapter (sharpness of the bound)}

\begin{example}[Semicircular innovations: the bound is attained]\label{ex:semicircular-innovation}
Take $B=D_t$ and assume there exists a self-adjoint element $S\in M^{\mathrm{sa}}$ such that:
\begin{enumerate}[label=(\roman*)]
\item $E_t(S)=0$ and $E_t(S^2)=1_{D_t}$;
\item $S$ is a (standard) $D_t$-valued semicircular innovation relative to $D_t$, so that the relevant conjugate variable exists
and the single-variable Cramér--Rao inequality is attained (equality case).
\end{enumerate}
Let $\sigma>0$ and define
\[
R:=\mu+\sigma S,\qquad \mu\in D_t^{\mathrm{sa}}.
\]
Then $A^\ast=E_t(R)=\mu$, the innovation is $X=R-E_t(R)=\sigma S$, and
\[
\|E_t(X^2)\|=\|E_t(\sigma^2 S^2)\|=\sigma^2.
\]
Moreover, by the equality case in the single-variable Cramér--Rao inequality,
\[
\Phi_{D_t}^{\ast\ast}(X:D_t)\;\|E_t(X^2)\|=1_{D_t},
\qquad\text{so}\qquad
\Phi_{D_t}^{\ast\ast}(X:D_t)=\frac{1}{\sigma^2}\,1_{D_t}.
\]
Thus one may take $K=1/\sigma^2$, and the lower bound in Theorem~\ref{thm:return-lower-bound} becomes
\[
\inf_{A\in D_t^{\mathrm{sa}}}\mathrm{MSE}_t(A)\ge \frac1K=\sigma^2,
\]
which is attained: the minimal error equals $\sigma^2$.
\end{example}

\begin{remark}[Compatibility with the Local Information Efficiency Principle (LIEP)]
If the (discounted) price process satisfies a (truncated) martingale condition under $E_t$ (LIEP),
then certain return increments may have controlled conditional means (e.g.\ vanishing drift at small scales or after centering).
The present chapter shows that even if conditional means can be forced to zero,
as long as Fisher information (information complexity) is bounded, there remains an unavoidable \emph{second-moment}
error floor. Quantitatively: zero drift does not imply zero predictability; the information structure itself limits forecasting.
\end{remark}


\subsection{Pure-jump (compound Poisson) innovations: an explicit error floor}\label{sec:poisson-innovation}

The Fisher--Cramér--Rao bound in Section~\ref{sec:fisher-cramer-rao} requires existence of conjugate variables.
For \emph{pure-jump} models (in particular, compound Poisson jump models), it is often more natural to compute
the minimal prediction error \emph{exactly} from the jump intensities.  This section gives a fully explicit
error floor for predicting log-returns from the market information $D_t$.

\subsubsection{Translation-invariant jump generator on a price grid}\label{subsec:poisson-generator}

Fix a grid size $\Delta x>0$ and consider the state space $\mathsf G:=\Delta x\,\mathbb Z$.
Let $\ell^\infty(\mathsf G)$ denote bounded functions $f:\mathsf G\to\mathbb C$.
For each $\alpha\in\mathbb Z$, define the shift operator
\[
(T_\alpha f)(x):=f(x+\alpha\Delta x),\qquad x\in\mathsf G.
\]
Let $(\gamma_\alpha)_{\alpha\in\mathbb Z}$ be nonnegative jump intensities satisfying
\begin{equation}\label{eq:poisson-intensity-assumptions}
\Lambda:=\sum_{\alpha\in\mathbb Z}\gamma_\alpha \;<\;\infty,
\qquad
\sum_{\alpha\in\mathbb Z}\alpha^2\gamma_\alpha \;<\;\infty.
\end{equation}
Define the (bounded) generator
\begin{equation}\label{eq:LXI}
(L_X f)(x)\;:=\;\sum_{\alpha\in\mathbb Z}\gamma_\alpha\bigl(f(x+\alpha\Delta x)-f(x)\bigr)
\;=\;\sum_{\alpha\in\mathbb Z}\gamma_\alpha\,(T_\alpha-I)f(x).
\end{equation}
Then $L_X$ generates a conservative Markov semigroup $(P_h)_{h\ge0}$ on $\ell^\infty(\mathsf G)$ via
$P_h=e^{hL_X}$.

\begin{assumption}[Embedding into the pricing information structure]\label{ass:poisson-embed}
There exists a commutative von Neumann subalgebra $\mathcal A\subseteq M$ and a self-adjoint process
$(X_t)_{t\ge0}$ affiliated with $\mathcal A$ such that:
\begin{enumerate}[label=(\roman*)]
\item under the pricing state (or physical state) on $\mathcal A$, the transition semigroup of $(X_t)$ is $(P_h)$
generated by \eqref{eq:LXI};
\item the market information algebra $D_t$ contains the past observation algebra of $X$ up to time $t$
(e.g.\ $\mathrm{vN}(X_s:\,0\le s\le t)\subseteq D_t$), and $E_t$ restricts to the usual conditional expectation on this
commutative sector.
\end{enumerate}
\end{assumption}

\subsubsection{Exponential eigenfunctions and the increment cumulant}\label{subsec:poisson-cumulant}

For $u\in\mathbb R$, define $f_u(x):=e^{ux}$ on $\mathsf G$.
Assume additionally that
\begin{equation}\label{eq:poisson-exp-moment-local}
\sum_{\alpha\in\mathbb Z}\gamma_\alpha\,e^{u\alpha\Delta x}\;<\;\infty
\quad\text{for $u$ in a neighborhood of $0$}.
\end{equation}
Then $f_u$ is an eigenfunction of $L_X$:

\begin{lemma}[Cumulant exponent]\label{lem:psi}
For $u$ satisfying \eqref{eq:poisson-exp-moment-local}, define
\begin{equation}\label{eq:psi}
\psi(u)\;:=\;\sum_{\alpha\in\mathbb Z}\gamma_\alpha\bigl(e^{u\alpha\Delta x}-1\bigr).
\end{equation}
Then
\[
L_X f_u \;=\; \psi(u)\,f_u,
\qquad\text{and hence}\qquad
P_h f_u \;=\; e^{h\psi(u)}\,f_u\quad(h\ge0).
\]
\end{lemma}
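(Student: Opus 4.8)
The plan is to prove the two assertions in sequence: first the eigenvalue relation $L_X f_u = \psi(u) f_u$ by a direct computation with the shift operators, and then the semigroup identity $P_h f_u = e^{h\psi(u)} f_u$ by exponentiating $L_X$ on a function space large enough to contain the (unbounded) exponential $f_u$. The observation that drives everything is that $f_u$ is a joint eigenvector of all the shifts: since $(T_\alpha f_u)(x) = f_u(x+\alpha\Delta x) = e^{u\alpha\Delta x} f_u(x)$, we have $T_\alpha f_u = e^{u\alpha\Delta x} f_u$ for every $\alpha\in\mathbb Z$.

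For the eigenvalue relation, I would first note that $L_X f_u$ is to be read as the pointwise series $\sum_\alpha \gamma_\alpha (T_\alpha - I) f_u$, which a priori need not converge since $f_u\notin\ell^\infty(\mathsf G)$. Absolute convergence is exactly what \eqref{eq:poisson-exp-moment-local} provides: using $|e^{u\alpha\Delta x} - 1|\le e^{u\alpha\Delta x}+1$ one gets $\sum_\alpha \gamma_\alpha |e^{u\alpha\Delta x}-1| \le \sum_\alpha \gamma_\alpha e^{u\alpha\Delta x} + \Lambda < \infty$, so the series defining $\psi(u)$ and the series for $(L_X f_u)(x)$ both converge absolutely, and termwise summation yields $(L_X f_u)(x) = \big(\sum_\alpha \gamma_\alpha(e^{u\alpha\Delta x}-1)\big) f_u(x) = \psi(u)\,f_u(x)$.

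For the semigroup identity, the cleanest route is to pass from $\ell^\infty(\mathsf G)$ to a weighted space. Pick $v\ge |u|$ lying in the exponential-moment neighborhood of \eqref{eq:poisson-exp-moment-local}, and set $\mathcal B := \{\, f : \sup_{x\in\mathsf G} |f(x)|\,e^{-v|x|} < \infty \,\}$ with the corresponding weighted sup-norm. Then each $T_\alpha$ is bounded on $\mathcal B$ with $\|T_\alpha\| \le e^{v|\alpha|\Delta x}$, so $L_X = \sum_\alpha \gamma_\alpha(T_\alpha - I)$ converges in operator norm on $\mathcal B$ (because $\sum_\alpha \gamma_\alpha(e^{v|\alpha|\Delta x}+1) < \infty$) and is therefore a bounded operator on $\mathcal B$; consequently $e^{hL_X} = \sum_{k\ge 0} h^k L_X^k/k!$ converges in operator norm on $\mathcal B$, restricts to $P_h$ on $\ell^\infty(\mathsf G)\cap\mathcal B$, and is the natural extension of $P_h$ to $\mathcal B$. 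Since $f_u\in\mathcal B$ and, by the eigenvalue relation together with an immediate induction, $L_X^k f_u = \psi(u)^k f_u$, we obtain $P_h f_u = \sum_{k\ge 0} \frac{h^k}{k!}\psi(u)^k f_u = e^{h\psi(u)} f_u$. Equivalently, one can invoke the probabilistic representation $(P_h f_u)(x) = \mathbb E_x[e^{uX_h}]$ for the underlying compound-Poisson log-price process and read off $e^{h\psi(u)} f_u(x)$ from the standard L\'evy--Khintchine/compound-Poisson moment generating function, the expectation being finite precisely by \eqref{eq:poisson-exp-moment-local}.

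The only genuine obstacle is a bookkeeping one rather than a mathematical one: $f_u$ is not a bounded function, so neither $L_X$ nor $P_h$ as defined on $\ell^\infty(\mathsf G)$ applies to it verbatim, and the argument must first fix a function space (or a probabilistic representation) on which both objects extend. It is the local exponential-moment hypothesis \eqref{eq:poisson-exp-moment-local} that makes any such extension possible and simultaneously guarantees that $\psi(u)$ is finite; once that extension is in place, both claims reduce to one-line computations.
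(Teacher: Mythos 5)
Your proof is correct and follows the same basic line as the paper's: establish $T_\alpha f_u = e^{u\alpha\Delta x} f_u$, substitute into $L_X$, and exponentiate. The difference is in rigor, and it actually favors you. The paper's own proof simply writes ``Since $P_h = e^{hL_X}$ and $f_u$ is an eigenvector of $L_X$, we obtain $P_h f_u = e^{h\psi(u)} f_u$'' — but $P_h = e^{hL_X}$ is only defined in the paper as a bounded operator on $\ell^\infty(\mathsf G)$ (cf.\ Lemma~\ref{lem:LS_bounded}), and $f_u(x) = e^{ux}\notin\ell^\infty(\mathsf G)$ for $u\neq 0$, so applying $P_h$ to $f_u$ requires an extension which the paper leaves implicit. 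You correctly flag this as the one nontrivial point, supply the absolute-convergence estimate from \eqref{eq:poisson-exp-moment-local} needed for the pointwise series defining $(L_X f_u)(x)$, and give a clean fix via a weighted Banach space $\mathcal B$ on which the $T_\alpha$ are bounded with norm $\le e^{v|\alpha|\Delta x}$ and $L_X$ remains a bounded generator (using that the exponential-moment hypothesis holds in a two-sided neighborhood of $0$, so $\sum_\alpha\gamma_\alpha e^{v|\alpha|\Delta x}<\infty$). You also note the equivalent probabilistic route through the compound-Poisson moment generating function. Either patch is legitimate, and either makes the exponentiation step rigorous where the paper's version is merely formal. The only thing to be careful about in your weighted-space argument is that $v$ and $-v$ must both lie in the moment neighborhood (to control $e^{v|\alpha|\Delta x}$ from both tails), which holds because the neighborhood is two-sided; you should say this explicitly.
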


\begin{proof}
For each $\alpha$,
\[
(T_\alpha f_u)(x)=f_u(x+\alpha\Delta x)=e^{u(x+\alpha\Delta x)}=e^{u\alpha\Delta x}f_u(x).
\]
Substituting into \eqref{eq:LXI} yields
\[
(L_X f_u)(x)=\sum_{\alpha}\gamma_\alpha\bigl(e^{u\alpha\Delta x}-1\bigr)f_u(x)=\psi(u)f_u(x),
\]
which proves the eigenfunction identity. Since $P_h=e^{hL_X}$ and $f_u$ is an eigenvector of $L_X$,
we obtain $P_h f_u=e^{h\psi(u)}f_u$.
\end{proof}

\begin{corollary}[Conditional mgf of the increment]\label{cor:mgf-increment}
Under Assumption~\ref{ass:poisson-embed}, for $u$ as above and any $t\ge0,h\ge0$,
\[
E_t\bigl(e^{u(X_{t+h}-X_t)}\bigr)\;=\;e^{h\psi(u)}\,1_{D_t}.
\]
In particular, the conditional distribution of the increment $X_{t+h}-X_t$ given $D_t$ does not depend on $t$.
\end{corollary}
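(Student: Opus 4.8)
The plan is to combine the Markov property of $(X_t)$ built into Assumption~\ref{ass:poisson-embed} with the eigenfunction identity of Lemma~\ref{lem:psi}, and then to strip off the $D_t$-measurable factor $e^{-uX_t}$ using bimodularity of $E_t$.

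First I would record that, by Assumption~\ref{ass:poisson-embed}(i) together with (ii), the conditional expectation $E_t$ restricted to the commutative sector $\mathrm{vN}(X_s:0\le s\le t+h)$ implements the Markov transition: for every bounded Borel $g$ on $\mathsf G$,
\[
E_t\bigl(g(X_{t+h})\bigr)=\bigl(P_h g\bigr)(X_t),
\]
understood as an identity between elements of $D_t$ via functional calculus for the self-adjoint operator $X_t$ affiliated with $\mathcal A\subseteq D_t$. I would apply this with $g=f_u^{(n)}:=f_u\cdot\mathbf 1_{\{|x|\le n\}}$, a bounded truncation of $f_u(x)=e^{ux}$, and then let $n\to\infty$. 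The exponential-moment hypothesis \eqref{eq:poisson-exp-moment-local} guarantees (dominated convergence for the explicit compound-Poisson kernel of $P_h$) that $P_h f_u^{(n)}\to P_h f_u$ pointwise, while Lemma~\ref{lem:psi} identifies $P_h f_u=e^{h\psi(u)}f_u$. Hence
\[
E_t\bigl(e^{uX_{t+h}}\bigr)=e^{h\psi(u)}\,e^{uX_t}
\]
in the sense of affiliated operators, the left side being well defined by \eqref{eq:poisson-exp-moment-local} applied to the law of the increment.

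Next, since $X_t$ is affiliated with $\mathcal A\subseteq D_t$, the operator $e^{uX_t}$ is affiliated with $D_t$, and on the commutative algebra $\mathcal A$ one has $e^{uX_{t+h}}=e^{uX_t}\,e^{u(X_{t+h}-X_t)}$. Using the $D_t$-bimodule property of $E_t$ (in the truncation-stable form of Proposition~\ref{prop:E_extension_affiliated} to accommodate the unbounded affiliated factor), I would multiply the previous identity on the left by $e^{-uX_t}\in D_t$ to obtain
\[
E_t\bigl(e^{u(X_{t+h}-X_t)}\bigr)
= e^{-uX_t}\,E_t\bigl(e^{uX_{t+h}}\bigr)
= e^{-uX_t}\,e^{h\psi(u)}\,e^{uX_t}
= e^{h\psi(u)}\,1_{D_t},
\]
which is the stated identity. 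The final assertion — $t$-independence of the conditional law of $X_{t+h}-X_t$ — then follows because $u\mapsto e^{h\psi(u)}$ does not involve $t$ and, being finite in a neighborhood of $0$, determines the (compound-Poisson) conditional distribution.

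The hard part will be the unboundedness of $f_u$ and of $X_t$: the Markov identity and bimodularity were stated in the excerpt only for bounded $g$ and bounded multipliers, so the genuine content is the truncation/limiting argument legitimizing $E_t\bigl(g(X_{t+h})\bigr)=(P_hg)(X_t)$ at $g=f_u$ and the factorization $e^{uX_{t+h}}=e^{uX_t}e^{u(X_{t+h}-X_t)}$ on $\mathcal A$. This is purely a dominated-convergence matter powered by \eqref{eq:poisson-exp-moment-local} and the commutativity of $\mathcal A$; no new structural input is required.
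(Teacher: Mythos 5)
Your proof is correct and follows essentially the same route as the paper: apply the Markov property to write $E_t(e^{uX_{t+h}})=(P_h f_u)(X_t)$, invoke Lemma~\ref{lem:psi} to identify $P_h f_u = e^{h\psi(u)}f_u$, and then strip off the $D_t$-affiliated factor $e^{uX_t}$ (your bimodularity step is the operator-algebraic form of the paper's commutative division by $f_u(X_t)$). The only difference is that you make explicit the truncation/dominated-convergence step needed to pass from bounded $g$ to the unbounded $f_u$, which the paper leaves implicit.
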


\begin{proof}
On the commutative sector, Markov property gives
\[
E\bigl(e^{uX_{t+h}}\mid X_t\bigr)=(P_h f_u)(X_t).
\]
Therefore,
\[
E\bigl(e^{u(X_{t+h}-X_t)}\mid X_t\bigr)
=\frac{(P_h f_u)(X_t)}{f_u(X_t)}=e^{h\psi(u)}.
\]
Since $\mathrm{vN}(X_s:\,s\le t)\subseteq D_t$ and $E_t$ restricts to the commutative conditional expectation,
the same identity holds with conditioning on $D_t$, and the right-hand side is scalar in $D_t$.
\end{proof}

\subsubsection{Explicit minimal prediction error for log-returns}\label{subsec:poisson-mse}

Fix $t\ge0$ and horizon $h>0$. Define the log-return observable
\[
R_{t,h}\;:=\;X_{t+h}-X_t \in M^{\mathrm{sa}}.
\]
By Theorem~\ref{thm:best-predictor}, the minimal mean-square prediction error from $D_t$ equals
$\|E_t((R_{t,h}-E_tR_{t,h})^2)\|$. In the present jump setting, this quantity is explicit.

\begin{theorem}[Exact error floor for compound Poisson jumps]\label{thm:poisson-error-floor}
Assume \eqref{eq:poisson-intensity-assumptions} and Assumption~\ref{ass:poisson-embed}.
Then for every $A\in D_t^{\mathrm{sa}}$,
\[
\bigl\|E_t\bigl((R_{t,h}-A)^2\bigr)\bigr\|
\;\ge\;
\inf_{A\in D_t^{\mathrm{sa}}}\bigl\|E_t\bigl((R_{t,h}-A)^2\bigr)\bigr\|
\;=\;
h(\Delta x)^2\sum_{\alpha\in\mathbb Z}\alpha^2\gamma_\alpha.
\]
Equivalently,
\[
\inf_{A\in D_t^{\mathrm{sa}}}\bigl\|E_t\bigl((R_{t,h}-A)^2\bigr)\bigr\|
\;=\;\mathrm{Var}(R_{t,h}).
\]
\end{theorem}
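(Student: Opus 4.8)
The plan is to collapse the variational statement to a single norm evaluation via the best-predictor theorem, and then to compute that norm by exploiting the translation invariance of the jump generator.

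First I would apply Theorem~\ref{thm:best-predictor} to the $D_t$-valued probability space $(M,E_t,D_t)$ with $R=R_{t,h}$. Writing $X:=R_{t,h}-E_t(R_{t,h})$ for the innovation, that theorem gives
\[
\inf_{A\in D_t^{\mathrm{sa}}}\bigl\|E_t\bigl((R_{t,h}-A)^2\bigr)\bigr\|
=\bigl\|E_t(X^2)\bigr\|,
\]
with the infimum attained at $A^\star=E_t(R_{t,h})$. Thus the theorem reduces to the identity $E_t(X^2)=h(\Delta x)^2\bigl(\sum_{\alpha}\alpha^2\gamma_\alpha\bigr)\,1_{D_t}$, together with the identification of the scalar on the right with $\mathrm{Var}(R_{t,h})$.

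Next I would compute the first two conditional moments of the increment. Set $c_1:=(\Delta x)\sum_\alpha\alpha\gamma_\alpha$ and $c_2:=(\Delta x)^2\sum_\alpha\alpha^2\gamma_\alpha$, both finite by \eqref{eq:poisson-intensity-assumptions}. On the commutative sector $\mathcal A$ of Assumption~\ref{ass:poisson-embed}, apply Dynkin's formula to the monomials $m_1(x)=x$ and $m_2(x)=x^2$: since $(L_Xm_1)(x)=\sum_\alpha\gamma_\alpha(\alpha\Delta x)=c_1$ is constant and $(L_Xm_2)(x)=\sum_\alpha\gamma_\alpha\bigl(2x\,\alpha\Delta x+(\alpha\Delta x)^2\bigr)=2c_1x+c_2$, one obtains, conditionally on $X_t$ and hence on $D_t$,
\[
E_t(R_{t,h})=c_1h\,1_{D_t},
\qquad
E_t\bigl(R_{t,h}^2\bigr)=\bigl(c_1^2h^2+c_2h\bigr)\,1_{D_t}.
\]
(Equivalently: by Assumption~\ref{ass:poisson-embed} the conditional law of $R_{t,h}$ given $D_t$ is the compound Poisson law of L\'evy measure $\sum_\alpha\gamma_\alpha\delta_{\alpha\Delta x}$ over time $h$, whose mean is $c_1h$ and variance is $c_2h$.) Since $E_t(R_{t,h})\in D_t$ and $E_t$ is $D_t$-bimodular, expanding $(R_{t,h}-E_tR_{t,h})^2$ and applying $E_t$ kills the cross terms, so $E_t(X^2)=E_t(R_{t,h}^2)-(E_tR_{t,h})^2=c_2h\,1_{D_t}$. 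Taking norms and using $\|1_{D_t}\|=1$,
\[
\bigl\|E_t(X^2)\bigr\|=c_2h=h(\Delta x)^2\sum_{\alpha\in\mathbb Z}\alpha^2\gamma_\alpha=\mathrm{Var}(R_{t,h}),
\]
and combining with the first step yields the claim.

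The hard part will be the rigorous justification of the moment computation: $L_X$ is a priori only defined on $\ell^\infty(\mathsf G)$, whereas $m_1,m_2$ are unbounded, so Dynkin's formula cannot be quoted verbatim. I would handle this with the truncation device used throughout the paper — replace $X_t$ by $f_n(X_t)$, run the (now bounded) computation, and pass to the limit $n\to\infty$, where the summability bounds $\Lambda<\infty$ and $\sum_\alpha\alpha^2\gamma_\alpha<\infty$ supply the dominating control for monotone/dominated convergence of the relevant conditional expectations — or simply invoke the classical L\'evy--Khintchine fact that a compound Poisson law with L\'evy measure $\nu$ has finite variance $h\int x^2\,\nu(dx)$ exactly when $\int x^2\,\nu(dx)<\infty$. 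Everything else is elementary bookkeeping.
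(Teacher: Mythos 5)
Your Step~1 is identical to the paper's: both reduce the variational problem to $\|E_t(X^2)\|$ via Theorem~\ref{thm:best-predictor}. Where you genuinely diverge is in the moment computation. The paper evaluates the conditional second moment by invoking Corollary~\ref{cor:mgf-increment}, writing $\log E_t(e^{uR_{t,h}})=h\psi(u)\,1_{D_t}$ and differentiating twice at $u=0$; you instead apply the jump generator $L_X$ directly to the monomials $m_1(x)=x$, $m_2(x)=x^2$ and feed the resulting closed system $(L_Xm_1,L_Xm_2)=(c_1,\,2c_1x+c_2)$ into Dynkin's formula. Both produce the same conditional mean $c_1h$ and variance $c_2h$, and your algebraic simplification $E_t(X^2)=E_t(R_{t,h}^2)-(E_tR_{t,h})^2$ via bimodularity is correct. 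The trade-offs are worth noting: your polynomial route only needs the second-moment summability $\sum_\alpha\alpha^2\gamma_\alpha<\infty$ already in~\eqref{eq:poisson-intensity-assumptions}, whereas the paper's route also invokes the local exponential-moment hypothesis~\eqref{eq:poisson-exp-moment-local} (stated in the ambient subsection but not in the theorem's own hypothesis list) in order to justify differentiating the mgf. On the other hand, the paper's route reuses machinery it already proved (Lemma~\ref{lem:psi}, Corollary~\ref{cor:mgf-increment}), whereas your Dynkin step would need the extra justification you honestly flag --- $L_X$ is set up on $\ell^\infty(\mathsf G)$ and $m_1,m_2$ are unbounded, so one must either truncate and pass to the limit, work on the space of polynomially-growing test functions with the moment bounds as domination, or, as you note, fall back on the elementary compound-Poisson representation $R_{t,h}=\sum_{k\le N_h}J_k$ with iid jumps and compute $\mathrm{Var}(R_{t,h})=h\int x^2\,\nu(dx)$ directly. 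Any one of these closes the gap; as written your argument is a correct alternative modulo that standard patch.
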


\begin{proof}
Step 1 (best predictor). By Theorem~\ref{thm:best-predictor}, the unique minimizer is $A^\ast=E_t(R_{t,h})$ and
\[
\inf_{A\in D_t^{\mathrm{sa}}}\bigl\|E_t\bigl((R_{t,h}-A)^2\bigr)\bigr\|
=\bigl\|E_t\bigl((R_{t,h}-E_tR_{t,h})^2\bigr)\bigr\|.
\]

Step 2 (compute conditional moments). By Corollary~\ref{cor:mgf-increment},
\[
E_t\bigl(e^{uR_{t,h}}\bigr)=e^{h\psi(u)}1_{D_t}.
\]
Hence $\log E_t(e^{uR_{t,h}})=h\psi(u)\,1_{D_t}$. Differentiating at $u=0$ (justified by
\eqref{eq:poisson-intensity-assumptions} and \eqref{eq:poisson-exp-moment-local} near $0$) gives
\[
E_t(R_{t,h})=h\psi'(0)\,1_{D_t}
=h\Big(\sum_{\alpha}\gamma_\alpha\,\alpha\Delta x\Big)\,1_{D_t}.
\]
Moreover, the conditional variance is the second derivative of the log-mgf at $0$:
\[
E_t\bigl((R_{t,h}-E_tR_{t,h})^2\bigr)
=\frac{d^2}{du^2}\Big|_{u=0}\log E_t(e^{uR_{t,h}})
=\frac{d^2}{du^2}\Big|_{u=0}\bigl(h\psi(u)\bigr)\,1_{D_t}
=h\psi''(0)\,1_{D_t}.
\]
Since $\psi''(0)=\sum_{\alpha}\gamma_\alpha(\alpha\Delta x)^2=(\Delta x)^2\sum_{\alpha}\alpha^2\gamma_\alpha$,
we obtain
\[
E_t\bigl((R_{t,h}-E_tR_{t,h})^2\bigr)
=h(\Delta x)^2\Big(\sum_{\alpha}\alpha^2\gamma_\alpha\Big)\,1_{D_t}.
\]
Taking $C^\ast$-norm yields the stated explicit value.
\end{proof}

\begin{remark}[Relation to Fisher--Cramér--Rao]
Theorem~\ref{thm:poisson-error-floor} gives an \emph{exact} (and typically sharp) error floor derived directly from
jump intensities. In pure-jump settings, conjugate variables in the sense of Section~\ref{sec:fisher-cramer-rao} may fail
to exist (equivalently, $\Phi^{\ast\ast}$ may be infinite), so the Fisher route may not be the appropriate tool; the
explicit computation above is the correct substitute.
\end{remark}

\subsubsection{Risk-neutral constraint and a fully explicit two-sided jump example}\label{subsec:poisson-riskneutral}

Assume the traded asset price is $S_t:=e^{X_t}$ and the money market account is $B_t:=e^{rt}$.
A risk-neutral (no-arbitrage) condition is that the discounted price is a martingale:
\[
E_t\Big(\frac{S_{t+h}}{B_{t+h}}\Big)=\frac{S_t}{B_t}
\quad\Longleftrightarrow\quad
E_t\bigl(e^{R_{t,h}}\bigr)=e^{rh}\,1_{D_t}.
\]
By Corollary~\ref{cor:mgf-increment} with $u=1$, this holds provided $u=1$ satisfies
\eqref{eq:poisson-exp-moment-local} and
\begin{equation}\label{eq:RN-constraint-jumps}
\psi(1)=\sum_{\alpha\in\mathbb Z}\gamma_\alpha\bigl(e^{\alpha\Delta x}-1\bigr)=r.
\end{equation}

\begin{example}[Symmetric $\pm1$ jumps]\label{ex:pm1-jumps}
Assume only $\pm1$ jumps occur:
\[
\gamma_{1}=\gamma_{-1}=\frac{\lambda}{2},\qquad \gamma_\alpha=0\ \ (\alpha\neq\pm1),
\quad\lambda>0.
\]
Then \eqref{eq:RN-constraint-jumps} becomes
\[
\frac{\lambda}{2}(e^{\Delta x}-1)+\frac{\lambda}{2}(e^{-\Delta x}-1)=r
\quad\Longleftrightarrow\quad
\lambda\bigl(\cosh(\Delta x)-1\bigr)=r,
\]
hence
\[
\lambda=\frac{r}{\cosh(\Delta x)-1}.
\]
By Theorem~\ref{thm:poisson-error-floor}, the minimal $D_t$-based prediction MSE for the $h$-horizon log-return is
\[
\inf_{A\in D_t^{\mathrm{sa}}}\bigl\|E_t\bigl((R_{t,h}-A)^2\bigr)\bigr\|
=h(\Delta x)^2(\gamma_{1}+\gamma_{-1})
=h(\Delta x)^2\lambda
=h(\Delta x)^2\,\frac{r}{\cosh(\Delta x)-1}.
\]
\end{example}

\appendix
\section{Quantum-Mechanical Background (Not Used in Main Proofs)}\label{app:qm}
This appendix collects standard operator-theoretic and quantum-mechanical background
(Born rule, projective measurement maps, and unitary dynamics).
\emph{None of the results in this appendix is used in the proofs of the main theorems} in Sections~\ref{sec:MFQM}; they are included only for contextual completeness and for possible non-commutative extensions.

Throughout, $\Hcal$ is a complex separable Hilbert space, $\BH=\mathcal B(\Hcal)$, and
$\Tone=\mathcal T_1(\Hcal)$ with trace $\Tr$.

\subsection{Spectral measures and the Born rule}\label{app:qm:born}

\begin{definition}[Projection-valued measure]\label{def:pvm-app}
A \emph{projection-valued measure} (PVM) on $\R$ is a map $E:\Bcal(\R)\to\BH$ such that:
\begin{enumerate}[label=(\roman*), itemsep=2pt, topsep=4pt]
\item $E(\emptyset)=0$ and $E(\R)=I$;
\item $E(F)$ is an orthogonal projection for every $F\in\Bcal(\R)$;
\item for pairwise disjoint $\{F_i\}_{i\in\mathbb N}\subset\Bcal(\R)$,
\[
E\Big(\bigsqcup_{i=1}^\infty F_i\Big)=\sum_{i=1}^\infty E(F_i),
\]
where the series converges in the strong operator topology.
\end{enumerate}
\end{definition}

\begin{theorem}[Spectral theorem for self-adjoint operators]\label{thm:spectral-app}
Let $A$ be a self-adjoint operator on $\Hcal$. There exists a unique PVM
$E_A:\Bcal(\R)\to\BH$ such that, in the sense of spectral calculus,
\[
A=\int_{\R}\lambda\,E_A(d\lambda).
\]
Moreover, for every bounded Borel function $f:\R\to\C$,
\[
f(A)=\int_{\R} f(\lambda)\,E_A(d\lambda)\in \BH.
\]
\end{theorem}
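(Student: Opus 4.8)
The plan is to prove the theorem in the standard two stages — first for bounded self-adjoint $A$, then for the general (possibly unbounded) case via the Cayley transform — and to treat existence and uniqueness separately; see \cite{ReedSimonI,KadisonRingroseI} for the classical treatment.

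For bounded $A=A^\ast$ I would first build the Borel functional calculus. Polynomials in $A$ obey the $C^\ast$-identity $\|p(A)\|_{\BH}=\sup_{\lambda\in\sigma(A)}|p(\lambda)|$ (from $\|T\|^2=\|T^\ast T\|=r(T^\ast T)$ for normal $T$ together with polynomial spectral mapping), so $p\mapsto p(A)$ extends by Stone--Weierstrass to an isometric $\ast$-homomorphism $C(\sigma(A))\to\BH$. For each $\psi\in\Hcal$ the functional $f\mapsto\langle\psi,f(A)\psi\rangle$ is positive on $C(\sigma(A))$, so Riesz--Markov yields a finite positive Borel measure $\mu_\psi$ with $\langle\psi,f(A)\psi\rangle=\int f\,d\mu_\psi$ and $\mu_\psi(\sigma(A))=\|\psi\|^2$; polarization gives complex Borel measures $\mu_{\phi,\psi}$ of total variation at most $\|\phi\|\,\|\psi\|$. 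For bounded Borel $f$ I then define $f(A)$ by $\langle\phi,f(A)\psi\rangle:=\int f\,d\mu_{\phi,\psi}$: sesquilinearity is automatic and $\|f(A)\|\le\|f\|_\infty$ from the mass bound, so $f(A)\in\BH$. Setting $E_A(F):=\chi_F(A)$ gives the candidate PVM — $E_A(F)^\ast=E_A(F)$ and $E_A(F)^2=E_A(F)$ since $\chi_F$ is real and idempotent, and strong countable additivity follows from dominated convergence on each $\mu_\psi$ — and $\int_\R\lambda\,E_A(d\lambda)=A$ because the identity holds after pairing with every $\psi$ by construction of $\mu_\psi$.

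For general self-adjoint $A$ I would pass to the Cayley transform $U:=(A-iI)(A+iI)^{-1}$. Self-adjointness makes $A\pm iI$ bijections of $\dom(A)$ onto $\Hcal$ with bounded inverse, so $U$ is unitary, $1$ is not an eigenvalue of $U$, $I-U$ has dense range $\ran(I-U)=\dom(A)$, and $A=i(I+U)(I-U)^{-1}$. Applying the bounded case to the normal operator $U$ (equivalently to the commuting self-adjoint pair $\operatorname{Re}U,\operatorname{Im}U$) produces a PVM $E_U$ supported on the unit circle $\mathbb T=\{z\in\C:|z|=1\}$ with $E_U(\{1\})=0$. The inverse Cayley map $c(\zeta):=i(1+\zeta)(1-\zeta)^{-1}$ is a Borel isomorphism of $\mathbb T\setminus\{1\}$ onto $\R$, so $E_A(F):=E_U(c^{-1}(F))$ defines the required PVM on $\Bcal(\R)$, and $A=\int_\R\lambda\,E_A(d\lambda)$ on its natural domain follows by transporting the identity $A=c(U)$ through the change of variables. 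For uniqueness, any competing $E_A'$ reproduces the same functional calculus on polynomials (bounded case) or on resolvents $(A-z)^{-1}$, $z\in\C\setminus\R$ (general case), so the associated measures $\mu_\psi'$ have the same moments — hence equal $\mu_\psi$ by Riesz--Markov uniqueness plus Stone--Weierstrass — or the same Cauchy transform, which determines a finite measure uniquely by Stieltjes inversion; equality for all $\psi$ forces $E_A=E_A'$ by polarization.

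The step I expect to be the main obstacle — and which I would spell out most carefully — is verifying that the Borel calculus is genuinely \emph{multiplicative}, i.e.\ that $(fg)(A)=f(A)g(A)$ for bounded Borel $f,g$, so that $E_A$ is a bona fide projection-valued measure rather than merely an operator-valued set function; I would obtain this by a bounded-convergence / monotone-class argument bootstrapping from the known multiplicativity on $C(\sigma(A))$. The secondary technical nuisance is the domain bookkeeping around the Cayley transform (surjectivity of $A\pm iI$, absence of the eigenvalue $1$ for $U$, and the check that pulling back $E_U$ recovers $A$ exactly on $\dom(A)$ rather than a proper extension).
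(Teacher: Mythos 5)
The paper does not prove this theorem: it appears twice (Section~2 and Appendix~A) purely as background, with the proof explicitly deferred to \cite{ReedSimonI,ReedSimonII,KadisonRingroseI} (see the remark following Theorem~\ref{thm:spectral} and the references paragraph closing Section~\ref{sec:MFQM}). There is therefore no in-paper proof to compare your attempt against; what you have written is a proof of the cited classical result. On its own merits your outline is correct and follows the standard textbook route: the $C^\ast$-identity and Stone--Weierstrass give the continuous functional calculus on $\sigma(A)$, Riesz--Markov produces the scalar spectral measures $\mu_\psi$ with $\mu_\psi(\sigma(A))=\|\psi\|^2$, polarization plus the total-variation bound $|\mu_{\phi,\psi}|\le\|\phi\|\,\|\psi\|$ yields a bounded Borel calculus, $E_A(F):=\chi_F(A)$ is the PVM, and the Cayley transform lifts this to unbounded self-adjoint $A$. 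You correctly flag the two genuine pressure points (multiplicativity of the Borel calculus, so that $E_A(F\cap G)=E_A(F)E_A(G)$ rather than only $E_A$ being a positive operator-valued measure; and the Cayley-transform domain bookkeeping, in particular that $1$ is not an eigenvalue of $U$ forces $E_U(\{1\})=0$ and $\dom(A)=\ran(I-U)$). One refinement worth spelling out: $U$ is unitary, not self-adjoint, so the ``bounded case'' you invoke is really the spectral theorem for normal operators (equivalently, the joint spectral theorem for the commuting self-adjoint pair $\operatorname{Re}U,\operatorname{Im}U$), which needs the Stone--Weierstrass step run in the $C^\ast$-algebra generated by $U$ and $U^\ast$ rather than in $C(\sigma(A))$ alone, but is otherwise the same construction.
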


\begin{axiom}[Born rule]\label{ax:born-app}
Let $A$ be an observable with spectral measure $E_A$ as in
Theorem~\ref{thm:spectral-app}. For each state $\rho\in\Tone$ with $\rho\ge0$ and $\Tr(\rho)=1$,
and each $F\in\Bcal(\R)$, the probability of obtaining an outcome in $F$ upon measuring $A$ is
\[
\mathbb P_\rho(A\in F)=\Tr\!\big(E_A(F)\rho\big).
\]
\end{axiom}

\begin{proposition}[Born rule defines a probability measure]\label{prop:born-measure-app}
Fix a state $\rho$ and an observable $A$ with PVM $E_A$. The map
\[
\mu_\rho^A:\Bcal(\R)\to[0,1],\qquad \mu_\rho^A(F):=\Tr\!\big(E_A(F)\rho\big),
\]
is a probability measure on $(\R,\Bcal(\R))$.
\end{proposition}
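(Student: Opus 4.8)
The plan is to verify the three defining properties of a probability measure—normalization, nonnegativity with values in $[0,1]$, and countable additivity—directly from the PVM axioms in Definition~\ref{def:pvm-app} together with the ideal and cyclicity properties of the trace (Lemma~\ref{lem:trace-ideal}).

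First, well-definedness and the easy properties. For each $F\in\Bcal(\R)$ we have $E_A(F)\in\BH$ and $\rho\in\Tone$, so $E_A(F)\rho\in\Tone$ by Lemma~\ref{lem:trace-ideal}, hence $\mu_\rho^A(F)=\Tr(E_A(F)\rho)$ is well-defined. Since $E_A(F)$ is an orthogonal projection, $E_A(F)=E_A(F)^{1/2}E_A(F)^{1/2}$, and cyclicity (Lemma~\ref{lem:trace-ideal}) gives $\Tr(E_A(F)\rho)=\Tr\!\big(\rho^{1/2}E_A(F)\rho^{1/2}\big)\ge 0$, because $\rho^{1/2}E_A(F)\rho^{1/2}\ge 0$. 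Also $E_A(\emptyset)=0$ gives $\mu_\rho^A(\emptyset)=0$, and $E_A(\R)=I$ gives $\mu_\rho^A(\R)=\Tr(\rho)=1$; once countable additivity is established, monotonicity forces $\mu_\rho^A(F)\le\mu_\rho^A(\R)=1$ for all $F$.

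The substantive step is countable additivity, which I would reduce to the scalar case using the spectral decomposition of the density operator. Write $\rho=\sum_{k}\lambda_k\,\ketbra{e_k}{e_k}$ with $\lambda_k\ge 0$, $\sum_k\lambda_k=1$, and $\{e_k\}$ an orthonormal system; then $\Tr(E_A(F)\rho)=\sum_k\lambda_k\langle e_k,E_A(F)e_k\rangle$. For each fixed $k$, the strong $\sigma$-additivity of $E_A$ (Definition~\ref{def:pvm-app}(iii)) together with continuity of the inner product shows that $\mu_k(F):=\langle e_k,E_A(F)e_k\rangle$ is a finite nonnegative Borel measure with $\mu_k(\R)=1$; these are the scalar spectral measures. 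Hence $\mu_\rho^A=\sum_k\lambda_k\mu_k$, and for pairwise disjoint $\{F_i\}_{i\in\mathbb N}$,
\[
\mu_\rho^A\Big(\bigsqcup_i F_i\Big)=\sum_k\lambda_k\sum_i\mu_k(F_i)=\sum_i\sum_k\lambda_k\mu_k(F_i)=\sum_i\mu_\rho^A(F_i),
\]
the interchange of the two summations being justified by Tonelli's theorem since every term $\lambda_k\mu_k(F_i)$ is nonnegative. This yields countable additivity and completes the argument.

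The main obstacle I anticipate is precisely this interchange of the \emph{strong} $\sigma$-additivity of the projection-valued measure with the trace: one must pass the SOT-convergent series $\sum_i E_A(F_i)$ through $\Tr(\,\cdot\,\rho)$. Passing first to the scalar spectral measures $\mu_k$ and then invoking Tonelli on the resulting nonnegative double series is the cleanest route. An alternative is a monotone-convergence argument for the trace applied to the increasing projections $P_n=\sum_{i\le n}E_A(F_i)\uparrow E_A(\bigsqcup_i F_i)$, using $0\le P_n\le I$ and $\rho\in\Tone$, $\rho\ge 0$; but the spectral-decomposition route avoids re-proving such a trace-continuity lemma and is the one I would present.
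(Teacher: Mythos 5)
Your proof is correct, and it takes a genuinely different route from the one in the paper. The paper's proof is shorter but defers the analytic work: it sets $S_n:=\sum_{i\le n}E_A(F_i)$, notes $0\le S_n\uparrow E_A(F)$ strongly, and invokes normality (monotone continuity) of the trace pairing $X\mapsto\Tr(\rho X)$ on $\Tone^+$ to pass the limit through the trace. Your proof instead diagonalizes $\rho=\sum_k\lambda_k\,\ketbra{e_k}{e_k}$, reduces $\mu_\rho^A$ to a convex combination $\sum_k\lambda_k\mu_k$ of scalar spectral measures $\mu_k(F)=\langle e_k,E_A(F)e_k\rangle$, verifies that each $\mu_k$ is countably additive from the strong $\sigma$-additivity of the PVM and continuity of the inner product, and then interchanges the two sums by Tonelli. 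What the paper's approach buys is brevity, at the cost of relying on a trace-normality lemma (which the paper does prove, but later, in the appendix on spectral measures). What your approach buys is self-containment: you are effectively inlining the proof of exactly that normality lemma — the paper's own proof of trace normality (the appendix lemma on increasing positive sequences) is precisely the diagonalize-and-interchange argument you give — so nothing is used that is not established on the spot. Both are sound; your explicit handling of the nonnegativity claim via cyclicity and $\rho^{1/2}E_A(F)\rho^{1/2}\ge 0$ is also a welcome detail the paper omits.
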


\begin{proof}
Normalization follows from $E_A(\R)=I$ and $E_A(\emptyset)=0$:
$\mu_\rho^A(\R)=\Tr(\rho)=1$ and $\mu_\rho^A(\emptyset)=0$.
Let $\{F_i\}_{i\in\mathbb N}$ be pairwise disjoint and set $F=\bigsqcup_{i=1}^\infty F_i$.
Define $S_n:=\sum_{i=1}^n E_A(F_i)$. Then $S_n\to E_A(F)$ strongly by
Definition~\ref{def:pvm-app}. Since $\rho\in\Tone$ is positive, monotone continuity (normality)
of the trace on $\Tone^+$ yields
\[
\Tr\!\big(E_A(F)\rho\big)=\lim_{n\to\infty}\Tr\!\big(S_n\rho\big)
=\lim_{n\to\infty}\sum_{i=1}^n \Tr\!\big(E_A(F_i)\rho\big)
=\sum_{i=1}^\infty \Tr\!\big(E_A(F_i)\rho\big),
\]
i.e.\ countable additivity.
\end{proof}

\subsection{Projective measurements: selective and non-selective updates}\label{app:qm:meas}

\begin{axiom}[Selective (L\"uders) update]\label{ax:luders-app}
Let $A$ be an observable with PVM $E_A$. For $F\in\Bcal(\R)$ with $\Tr(E_A(F)\rho)>0$,
the post-measurement (selective) state conditioned on the event $\{A\in F\}$ is
\[
\rho_F:=\frac{E_A(F)\,\rho\,E_A(F)}{\Tr(E_A(F)\rho)}.
\]
\end{axiom}

\begin{proposition}[Well-posedness of the selective update]\label{prop:luders-wellposed-app}
Under Assumption~\ref{ax:luders-app}, $\rho_F$ is a density operator: $\rho_F\ge0$, $\rho_F\in\Tone$,
and $\Tr(\rho_F)=1$.
\end{proposition}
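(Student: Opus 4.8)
The plan is to verify the three defining properties of a density operator for
\[
\rho_F:=\frac{E_A(F)\,\rho\,E_A(F)}{\Tr(E_A(F)\rho)},
\]
exactly as in the proof of Proposition~\ref{prop:luders-wellposed}, since the scalar normalization $\Tr(E_A(F)\rho)$ is strictly positive by the standing hypothesis in Assumption~\ref{ax:luders-app}. First I would record that $E_A(F)$ is an orthogonal projection (by the PVM axioms in Definition~\ref{def:pvm-app}), so $E_A(F)\in\BH$ and $E_A(F)^2=E_A(F)=E_A(F)^\ast$. Positivity of the numerator then follows directly: for every $\psi\in\Hcal$,
\[
\langle \psi, E_A(F)\,\rho\,E_A(F)\,\psi\rangle=\langle E_A(F)\psi,\ \rho\,E_A(F)\psi\rangle\ge 0
\]
because $\rho\ge 0$; dividing by the positive scalar $\Tr(E_A(F)\rho)$ preserves positivity, giving $\rho_F\ge 0$.

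Next I would invoke the trace-class ideal property (Lemma~\ref{lem:trace-ideal}): since $E_A(F)\in\BH$ and $\rho\in\Tone$, both $E_A(F)\rho$ and $E_A(F)\rho E_A(F)$ lie in $\Tone$, with trace-norm control $\|E_A(F)\rho E_A(F)\|_1\le \|E_A(F)\|^2\|\rho\|_1$. Hence $\rho_F\in\Tone$. Finally, for the normalization I would compute, using cyclicity of the trace (also Lemma~\ref{lem:trace-ideal}) and idempotence $E_A(F)^2=E_A(F)$,
\[
\Tr\bigl(E_A(F)\,\rho\,E_A(F)\bigr)=\Tr\bigl(E_A(F)^2\,\rho\bigr)=\Tr\bigl(E_A(F)\,\rho\bigr),
\]
so that $\Tr(\rho_F)=\Tr(E_A(F)\rho)/\Tr(E_A(F)\rho)=1$.

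There is no genuine obstacle here: the statement is a routine consequence of the projection property of $E_A(F)$ together with the ideal and cyclicity facts from Lemma~\ref{lem:trace-ideal}, and the only point requiring care is that the normalizing denominator is nonzero — which is exactly the selection hypothesis $\Tr(E_A(F)\rho)>0$ built into Assumption~\ref{ax:luders-app}. Indeed the argument is verbatim the one already given for Proposition~\ref{prop:luders-wellposed}, now read with the PVM $E_A$ of Theorem~\ref{thm:spectral-app} in place of the spectral measure of Theorem~\ref{thm:spectral}.
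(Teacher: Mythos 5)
Your proof is correct and follows essentially the same route as the paper's: projection property of $E_A(F)$ for positivity, the trace-class ideal property for $\rho_F\in\Tone$, and trace cyclicity with idempotence for the normalization. The only difference is that you spell out the positivity computation via the inner product, where the paper dispatches it in one sentence; otherwise the two arguments coincide.
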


\begin{proof}
Positivity is immediate since $E_A(F)$ is a projection. Since $E_A(F)\in\BH$ and $\rho\in\Tone$,
we have $E_A(F)\rho E_A(F)\in\Tone$ and
\[
\Tr(\rho_F)=\frac{\Tr(E_A(F)\rho E_A(F))}{\Tr(E_A(F)\rho)}
=\frac{\Tr(E_A(F)\rho)}{\Tr(E_A(F)\rho)}=1,
\]
using cyclicity of the trace for bounded--trace-class products.
\end{proof}

\begin{definition}[Non-selective projective measurement map]\label{def:nonselective-app}
Assume $A$ has pure point spectrum with spectral projections $\{P_\lambda\}_{\lambda\in\sigma_p(A)}$.
The associated non-selective (unconditioned) post-measurement state is defined by
\[
\Phi_A(\rho):=\sum_{\lambda\in\sigma_p(A)} P_\lambda\,\rho\,P_\lambda,
\]
where the series converges in trace norm.
\end{definition}

\begin{lemma}[Trace-norm convergence of the non-selective update]\label{lem:nonselective-conv-app}
Let $A=A^*$ have pure point spectrum with spectral projections $\{P_k\}_{k\in\mathbb N}$
satisfying $\sum_{k\ge1}P_k=I$ strongly. For any $\rho\in\Tone$ with $\rho\ge0$, the series
\[
\Phi_A(\rho):=\sum_{k\ge1} P_k\,\rho\,P_k
\]
converges in trace norm to a positive trace-class operator, and $\Tr(\Phi_A(\rho))=\Tr(\rho)$.
\end{lemma}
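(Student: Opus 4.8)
The plan is to exploit positivity of the summands together with normality (countable additivity) of the trace, so that trace-norm convergence reduces to summability of a single scalar series.

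First I would introduce the positive square root $\rho^{1/2}\in\BH$. Since $\rho=\rho^{1/2}\rho^{1/2}\in\Tone$, the operator $\rho^{1/2}$ is Hilbert--Schmidt, with $\Tr\big((\rho^{1/2})^{*}\rho^{1/2}\big)=\Tr(\rho)<\infty$. For each $k$ write $P_k\rho P_k=(\rho^{1/2}P_k)^{*}(\rho^{1/2}P_k)\ge 0$; by Lemma~\ref{lem:trace-ideal} each $P_k\rho P_k\in\Tone$, and by cyclicity of the trace together with $P_k^{2}=P_k$,
\[
\Tr(P_k\rho P_k)=\Tr(P_k\rho)=\Tr\big((\rho^{1/2}P_k)^{*}(\rho^{1/2}P_k)\big)=\|\rho^{1/2}P_k\|_{2}^{2}\ge 0 .
\]
Next I would sum over $k$. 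The partial sums $B_n:=\sum_{k\le n}P_k$ increase to $I$ strongly, and since they are uniformly bounded by $I$, the operators $\rho^{1/2}B_n\rho^{1/2}$ increase to $\rho$ in the strong operator topology. Normality of the trace on the positive cone of $\Tone$ (the monotone-continuity property already invoked in Proposition~\ref{prop:born-measure-app}) then gives
\[
\sum_{k\ge1}\Tr(P_k\rho P_k)=\lim_{n\to\infty}\Tr\big(\rho^{1/2}B_n\rho^{1/2}\big)=\Tr(\rho)<\infty .
\]

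With this scalar series in hand, the convergence claim is routine. Set $S_n:=\sum_{k=1}^{n}P_k\rho P_k\ge 0$; for $m>n$ the operator $S_m-S_n=\sum_{k=n+1}^{m}P_k\rho P_k$ is positive, hence $\|S_m-S_n\|_1=\Tr(S_m-S_n)=\sum_{k=n+1}^{m}\Tr(P_k\rho P_k)\to 0$ as $m,n\to\infty$. Thus $(S_n)$ is Cauchy in $(\Tone,\|\cdot\|_1)$, and by completeness it converges in trace norm to some $\Phi_A(\rho)\in\Tone$. Since trace-norm convergence implies operator-norm convergence and the positive trace-class operators are norm-closed, $\Phi_A(\rho)\ge 0$; and since $\Tr$ is $\|\cdot\|_1$-continuous, $\Tr(\Phi_A(\rho))=\lim_n\Tr(S_n)=\Tr(\rho)$.

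The only delicate point is the interchange of sum and trace in the identity $\sum_k\Tr(P_k\rho P_k)=\Tr(\rho)$, which rests precisely on the normality of the trace on positive trace-class operators; I would state this property explicitly, citing the references for Section~\ref{sec:MFQM}, after which the argument closes. Everything else (positivity of $P_k\rho P_k$, cyclicity, completeness of $\Tone$, continuity of $\Tr$) is standard and needs no further comment.
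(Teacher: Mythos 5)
Your argument is correct and follows essentially the same route as the paper's proof: establish $\sum_k\Tr(P_k\rho P_k)=\Tr(\rho)<\infty$ via normality of the trace on $\Tone^+$, then observe that positivity of the increments makes the partial sums Cauchy in trace norm, and finish by completeness of $\Tone$ and continuity of $\Tr$. The only difference is cosmetic: your detour through $\rho^{1/2}$ and Hilbert--Schmidt norms is not needed, since the paper simply applies normality directly to $\Tr(B_n\rho)\uparrow\Tr(\rho)$ with $B_n=\sum_{k\le n}P_k\uparrow I$, which your computation $\Tr(\rho^{1/2}B_n\rho^{1/2})=\Tr(B_n\rho)$ reduces to anyway.
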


\begin{proof}
Set $S_n:=\sum_{k=1}^n P_k\rho P_k\ge0$. For $m>n$,
$S_m-S_n=\sum_{k=n+1}^m P_k\rho P_k\ge0$, hence
\[
\|S_m-S_n\|_1=\Tr(S_m-S_n)=\sum_{k=n+1}^m \Tr(P_k\rho).
\]
Since $\sum_{k\ge1}P_k=I$ strongly and $\Tr$ is normal on $\Tone^+$,
$\sum_{k\ge1}\Tr(P_k\rho)=\Tr(\rho)$, so the tails
$\sum_{k=n+1}^m \Tr(P_k\rho)\to0$ as $m,n\to\infty$.
Thus $(S_n)$ is Cauchy in $\|\cdot\|_1$ and converges in $\Tone$ to $\Phi_A(\rho)\ge0$, and
$\Tr(\Phi_A(\rho))=\lim_{n\to\infty}\Tr(S_n)=\Tr(\rho)$.
\end{proof}

\subsection{Unitary dynamics and Stone's theorem}\label{app:qm:dynamics}

\begin{definition}[Strongly continuous unitary propagator]\label{def:propagator-app}
A family $\{U(t,s)\}_{t,s\in\R}\subset \BH$ is a \emph{strongly continuous unitary propagator} if:
\begin{enumerate}[label=(\roman*), itemsep=2pt, topsep=4pt]
\item $U(t,s)$ is unitary for all $t,s\in\R$;
\item $U(t,t)=I$ for all $t\in\R$ and $U(t,s)U(s,r)=U(t,r)$ for all $r,s,t\in\R$;
\item for each fixed $s\in\R$, the map $t\mapsto U(t,s)\psi$ is continuous in $\Hcal$ for every
$\psi\in\Hcal$, and similarly in $s$ for fixed $t$.
\end{enumerate}
\end{definition}

\begin{axiom}[Unitary dynamics]\label{ax:dynamics-app}
There exists a strongly continuous unitary propagator $\{U(t,s)\}_{t,s\in\R}$ such that the state
evolves according to
\[
\rho(t)=U(t,s)\,\rho(s)\,U(t,s)^\dagger,\qquad \forall\, t,s\in\R.
\]
\end{axiom}

\begin{theorem}[Stone]\label{thm:stone-app}
Let $(U(t))_{t\in\R}$ be a strongly continuous one-parameter unitary group on $\Hcal$.
Then there exists a unique self-adjoint operator $H$ on $\Hcal$ such that
\[
U(t)=e^{-\frac{i}{\hbar}tH}\qquad (t\in\R),
\]
where the exponential is defined by the functional calculus.
\end{theorem}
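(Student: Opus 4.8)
The plan is to realise $H$ as $i\hbar$ times the infinitesimal generator of $(U(t))_{t\in\R}$, to prove that this generator is self-adjoint, and then to identify $U(t)$ with $e^{-itH/\hbar}$ by an energy estimate; uniqueness will then follow immediately from the fact that the generator is recovered from the group by differentiation.

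First I would introduce the domain $\dom(A):=\{\psi\in\Hcal:\ \lim_{t\to0}t^{-1}(U(t)\psi-\psi)\ \text{exists in}\ \Hcal\}$ and set $A\psi$ equal to that limit, so that, with the convention in the statement, $H:=i\hbar A$. Density of $\dom(A)$ would be obtained by the standard smearing construction: for $f\in C_c^\infty(\R)$, strong continuity makes the Bochner integral $\psi_f:=\int_\R f(s)\,U(s)\psi\,ds$ well defined, and the group law gives $U(t)\psi_f-\psi_f=\int_\R\bigl(f(s-t)-f(s)\bigr)U(s)\psi\,ds$, whence $\psi_f\in\dom(A)$ with $A\psi_f=-\psi_{f'}$; taking $f$ an approximate identity yields $\psi_f\to\psi$. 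The same manipulations show that $U(t)$ maps $\dom(A)$ into itself and commutes with $A$ there, and that for $\psi\in\dom(A)$ the map $t\mapsto U(t)\psi$ is $\Hcal$-differentiable with $\tfrac{d}{dt}U(t)\psi=AU(t)\psi=U(t)A\psi$; in particular $A$ is closed.

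Next I would show that $A$ is skew-adjoint, equivalently that $H$ is self-adjoint. Skew-symmetry, $\langle A\phi,\psi\rangle=-\langle\phi,A\psi\rangle$ for $\phi,\psi\in\dom(A)$, follows by differentiating $\langle U(t)\phi,U(t)\psi\rangle=\langle\phi,\psi\rangle$ at $t=0$. The substantive point, and the step I expect to be the main obstacle, is the passage from symmetry to self-adjointness; this is precisely where the hypothesis that $(U(t))$ is a group, and not merely a semigroup, is indispensable. I would argue that the deficiency subspaces $\ker(A^{*}\mp I)$ are trivial: if $A^{*}\phi=\phi$, then, using $U(t)\dom(A)\subseteq\dom(A)$, the scalar function $g(t):=\langle\phi,U(t)\psi\rangle$ satisfies $g'=g$ for every $\psi\in\dom(A)$, so $g(t)=e^{t}g(0)$; boundedness of $g$ (from unitarity) forces $g(0)=\langle\phi,\psi\rangle=0$, and density of $\dom(A)$ gives $\phi=0$; the case $A^{*}\phi=-\phi$ is handled symmetrically via $g(t)=e^{-t}g(0)$. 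Hence $iA$, and therefore $H=\hbar\,(iA)$, is self-adjoint.

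Finally, with $H$ self-adjoint I would invoke the spectral theorem (Theorem~\ref{thm:spectral-app}) to define the strongly continuous unitary group $V(t):=e^{-itH/\hbar}=\int_\R e^{-it\lambda/\hbar}\,E_H(d\lambda)$, which leaves $\dom(H)=\dom(A)$ invariant and satisfies $\tfrac{d}{dt}V(t)\psi=-\tfrac{i}{\hbar}HV(t)\psi$ for $\psi\in\dom(A)$. Fixing $\psi\in\dom(A)$ and putting $v(t):=U(t)\psi-V(t)\psi$, both summands are differentiable and $v'(t)=-\tfrac{i}{\hbar}Hv(t)$, so $\tfrac{d}{dt}\|v(t)\|^{2}=2\,\mathrm{Re}\langle v'(t),v(t)\rangle=0$ because $\langle Hv(t),v(t)\rangle\in\R$ by symmetry of $H$; since $v(0)=0$ we conclude $v\equiv0$, and density of $\dom(A)$ upgrades this to $U(t)=e^{-itH/\hbar}$ on all of $\Hcal$. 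Uniqueness is then immediate: any self-adjoint $H'$ with $U(t)=e^{-itH'/\hbar}$ satisfies $-\tfrac{i}{\hbar}H'\psi=\tfrac{d}{dt}\big|_{t=0}U(t)\psi=A\psi$ exactly on $\dom(A)$, so $H\subseteq H'$, whence $H=H'$ since self-adjoint operators are maximal symmetric.
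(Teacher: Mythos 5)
The paper does not give a proof of this statement: Stone's theorem is recorded in Appendix~\ref{app:qm} explicitly as background ``not used in main proofs,'' and the closing references for the appendix point the reader to standard texts (Reed--Simon, Hall, Kadison--Ringrose) for the argument. Your proof is a complete and correct instance of the classical approach: realize $A$ as the infinitesimal generator, smear with $C_c^\infty$ functions to obtain a dense, $U(t)$-invariant core and the strong differentiability $\tfrac{d}{dt}U(t)\psi = AU(t)\psi = U(t)A\psi$, differentiate unitarity to get skew-symmetry, kill the deficiency spaces $\ker(A^*\mp I)$ by the exponential-growth-versus-boundedness argument (correctly stressing that the \emph{group} structure, not just semigroup, is what gives boundedness on all of $\R$), reconstruct $U$ from $H$ via the spectral theorem and the norm-constancy of $v(t)=U(t)\psi-V(t)\psi$, and conclude uniqueness by maximality of self-adjoint operators. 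This matches the textbook proofs the paper cites.

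One small slip in the uniqueness paragraph: from $U(t)=e^{-itH'/\hbar}$ you can readily establish $\dom(H')\subseteq\dom(A)$ and $-\tfrac{i}{\hbar}H'\psi = A\psi$ there, giving the inclusion $H'\subseteq H$ of operators, not $H\subseteq H'$ ``on $\dom(A)$'' as written (the converse inclusion $\dom(A)\subseteq\dom(H')$ is not immediate without a spectral/Fatou argument you did not supply). The conclusion is unaffected, since an inclusion in either direction between two self-adjoint operators forces equality; it is worth rewording so the direction of the inclusion matches what has actually been established.
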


\begin{corollary}[Time-homogeneous dynamics]\label{cor:time-hom-app}
If in addition $U(t,s)=U(t-s,0)$ for all $t,s\in\R$, then $(U(t,0))_{t\in\R}$ is a strongly
continuous unitary group. Hence there exists a self-adjoint Hamiltonian $H$ with
\[
U(t,0)=e^{-\frac{i}{\hbar}tH}\qquad (t\in\R).
\]
\end{corollary}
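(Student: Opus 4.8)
The plan is to reduce the statement entirely to Stone's theorem (Theorem~\ref{thm:stone-app}) by showing that the one-parameter family $V(t):=U(t,0)$, $t\in\R$, is a strongly continuous unitary group on $\Hcal$. The hypothesis in force is the time-homogeneity relation $U(t,s)=U(t-s,0)$ for all $t,s\in\R$, together with the propagator axioms of Definition~\ref{def:propagator-app}; no further input (in particular, not Axiom~\ref{ax:dynamics-app}) is needed.

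First I would record the easy structural facts. Each $V(t)=U(t,0)$ is unitary by Definition~\ref{def:propagator-app}(i), and $V(0)=U(0,0)=I$ by Definition~\ref{def:propagator-app}(ii). The crucial point is the group law $V(t+s)=V(t)V(s)$. To get it, apply the cocycle identity $U(t,s)U(s,r)=U(t,r)$ of Definition~\ref{def:propagator-app}(ii) with the index choice $(t,s,r)\mapsto(t+s,\,s,\,0)$, which gives $U(t+s,0)=U(t+s,s)\,U(s,0)$. Now the time-homogeneity hypothesis applied to the first factor yields $U(t+s,s)=U((t+s)-s,0)=U(t,0)=V(t)$, hence $V(t+s)=V(t)V(s)$ for all $s,t\in\R$. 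Finally, strong continuity of $t\mapsto V(t)\psi=U(t,0)\psi$ for every $\psi\in\Hcal$ is exactly Definition~\ref{def:propagator-app}(iii) specialized to the fixed base point $s=0$. Thus $(V(t))_{t\in\R}$ is a strongly continuous one-parameter unitary group.

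With this in hand, Stone's theorem (Theorem~\ref{thm:stone-app}) applies verbatim to $(V(t))_{t\in\R}$ and produces a unique self-adjoint operator $H$ on $\Hcal$ with $V(t)=e^{-\frac{i}{\hbar}tH}$ for all $t\in\R$; rewriting $V(t)=U(t,0)$ gives the claimed formula, and the uniqueness of $H$ is inherited from the uniqueness clause of Stone's theorem.

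The only point requiring any care is bookkeeping: one must pick the substitution in the cocycle relation so that the time-homogeneity hypothesis can be used to collapse $U(t+s,s)$ back to $V(t)$; once that is set up correctly, the proof is a direct citation of Definition~\ref{def:propagator-app} and Theorem~\ref{thm:stone-app}, with no analytic obstacle.
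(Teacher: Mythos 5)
Your proof is correct, and since the paper states the corollary without proof, your argument supplies exactly the standard reduction the paper leaves implicit: use the cocycle identity with indices $(t+s,s,0)$ together with time-homogeneity to obtain the group law, cite Definition~\ref{def:propagator-app}(iii) at the base point $s=0$ for strong continuity, and then invoke Theorem~\ref{thm:stone-app}. No gap.
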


\subsection*{References for Appendix~\ref{app:qm}}
For standard proofs and background used implicitly above (spectral theorem, trace-class ideals,
normality of the trace, Stone's theorem), see, e.g.,
\cite{ReedSimonI,ReedSimonII,Hall2013,KadisonRingroseI}.


\section{Functional-Analytic Tools: Banach and Separable Hilbert Spaces}
\label{chap:tools-functional-analysis}

\subsection{Normed and Banach spaces}

Throughout this section, all vector spaces are over $\mathbb C$.

\begin{definition}[Normed space; Banach space]
A \emph{normed space} is a pair $(V,\|\cdot\|_V)$ where $V$ is a complex vector space and
$\|\cdot\|_V:V\to[0,\infty)$ satisfies for all $x,y\in V$ and $z\in\mathbb C$:
(i) $\|x\|_V=0 \iff x=0$;
(ii) $\|zx\|_V=|z|\|x\|_V$;
(iii) $\|x+y\|_V\le \|x\|_V+\|y\|_V$.
A sequence $(x_n)$ is \emph{Cauchy} if $\forall\varepsilon>0\,\exists N\,\forall n,m\ge N:\|x_n-x_m\|_V<\varepsilon$.
The space $(V,\|\cdot\|_V)$ is \emph{complete} if every Cauchy sequence converges in $V$.
A \emph{Banach space} is a complete normed space.
\end{definition}

\subsection{Bounded linear operators and the operator norm}

\begin{definition}[Bounded linear operator]
Let $(V,\|\cdot\|_V)$ be a normed space and $(W,\|\cdot\|_W)$ a normed space.
A linear map $A:V\to W$ is called \emph{bounded} if
\[
\sup_{f\in V\setminus\{0\}}\frac{\|Af\|_W}{\|f\|_V}<\infty.
\]
\end{definition}

\begin{proposition}[Equivalent characterizations of boundedness]
\label{prop:bounded-iff-continuous}
Let $A:V\to W$ be linear. The following are equivalent:
\begin{enumerate}[label=(\roman*)]
\item $\displaystyle \sup_{\|f\|_V=1}\|Af\|_W<\infty$;
\item $\exists k>0$ such that $\|f\|_V\le 1 \Rightarrow \|Af\|_W\le k$;
\item $\exists k>0$ such that $\forall f\in V:\ \|Af\|_W\le k\|f\|_V$;
\item $A:V\to W$ is continuous (norm topologies);
\item $A$ is continuous at $0\in V$.
\end{enumerate}
\end{proposition}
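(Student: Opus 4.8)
The plan is to prove the five statements equivalent by a closed cycle of implications, using only that $A$ is linear and that $\|\cdot\|_V$ is absolutely homogeneous, so that any control of $A$ on a small ball around the origin automatically propagates to a global bound.

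First I would dispose of the two bookkeeping equivalences (i)$\Leftrightarrow$(ii) and (ii)$\Leftrightarrow$(iii). All of these amount to rescaling: for $f\neq 0$ one writes $f=\|f\|_V\,(f/\|f\|_V)$ and uses $\|Af\|_W=\|f\|_V\,\|A(f/\|f\|_V)\|_W$, with the degenerate case $f=0$ handled separately via $A0=0$; restricting (iii) to the unit ball recovers (ii), and restricting (ii) to the unit sphere recovers (i). In each case the constant $k$ is transported unchanged, so no new information is created.

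Next I would close the loop among the analytic statements: (iii)$\Rightarrow$(iv) follows because $\|Af_1-Af_2\|_W=\|A(f_1-f_2)\|_W\le k\|f_1-f_2\|_V$, so $A$ is Lipschitz, hence norm-to-norm continuous; (iv)$\Rightarrow$(v) is immediate. The only step with genuine content is (v)$\Rightarrow$(iii): continuity at $0$, together with $A0=0$, applied with $\varepsilon=1$ yields $\delta>0$ such that $\|g\|_V\le\delta$ forces $\|Ag\|_W\le 1$; for arbitrary $f\neq 0$ one takes $g=\delta f/\|f\|_V$ (so $\|g\|_V=\delta$), obtains $\|Ag\|_W\le 1$, and rescales to get $\|Af\|_W\le\delta^{-1}\|f\|_V$, i.e.\ (iii) with $k=\delta^{-1}$. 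This is exactly the point where local information is upgraded to a global linear bound, and it is the only place where a little care—in the choice of $g$ and in treating $f=0$—is needed; everything else in the argument is purely formal.
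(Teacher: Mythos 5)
Your argument is correct and follows essentially the same route as the paper: the bookkeeping equivalences among (i)–(iii) by rescaling, (iii)$\Rightarrow$(iv) via the Lipschitz estimate, (iv)$\Rightarrow$(v) trivially, and (v)$\Rightarrow$(iii) by choosing a rescaled test vector on the $\delta$-ball. The only cosmetic difference is that the paper places its test vector strictly inside the ball (radius $\delta/2$) to respect the strict inequality from continuity, whereas you place it on the boundary and would need to shrink $\delta$ by a hair — a nonissue.
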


\begin{proof}
\emph{(i)$\Rightarrow$(iii)} Let $M:=\sup_{\|f\|_V=1}\|Af\|_W<\infty$. For $f\neq 0$ write $f=\|f\|_V\,\tilde f$ with
$\|\tilde f\|_V=1$. Then $\|Af\|_W=\|f\|_V\|A\tilde f\|_W\le M\|f\|_V$.

\emph{(iii)$\Rightarrow$(ii)} is immediate.

\emph{(ii)$\Rightarrow$(i)} If $\|f\|_V=1$ then $\|Af\|_W\le k$, so the supremum is $\le k$.

\emph{(iii)$\Rightarrow$(iv)} If $f_n\to f$ in $V$, then
\[
\|Af_n-Af\|_W=\|A(f_n-f)\|_W\le k\|f_n-f\|_V\to 0.
\]

\emph{(iv)$\Rightarrow$(v)} is trivial.

\emph{(v)$\Rightarrow$(iii)} By continuity at $0$, there exists $\delta>0$ such that
$\|f\|_V<\delta \Rightarrow \|Af\|_W<1$. For arbitrary $f\neq 0$, set $g:=\frac{\delta}{2\|f\|_V}f$.
Then $\|g\|_V=\delta/2<\delta$, hence $\|Ag\|_W<1$. By linearity,
\[
\|Af\|_W=\frac{2\|f\|_V}{\delta}\|Ag\|_W \le \frac{2}{\delta}\|f\|_V.
\]
Thus (iii) holds with $k=2/\delta$.
\end{proof}

\begin{definition}[Operator norm]
If $A:V\to W$ is bounded, define its \emph{operator norm} by
\[
\|A\|:=\sup_{\|f\|_V=1}\|Af\|_W=\sup_{f\in V\setminus\{0\}}\frac{\|Af\|_W}{\|f\|_V}.
\]
\end{definition}

\begin{lemma}[Basic inequality]
\label{lem:basic-ineq}
If $A:V\to W$ is bounded, then $\|Af\|_W\le \|A\|\,\|f\|_V$ for all $f\in V$.
\end{lemma}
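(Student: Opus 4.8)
The plan is to dispose of the trivial case first and then reduce the general case to the defining supremum by a normalization argument. If $f=0$, then by linearity $Af=0$, so $\|Af\|_W=0=\|A\|\,\|f\|_V$ and the inequality holds (indeed with equality). Hence it suffices to treat $f\in V\setminus\{0\}$.

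For $f\neq 0$, first I would normalize: set $\tilde f:=f/\|f\|_V$, which is a unit vector, $\|\tilde f\|_V=1$. By the definition of the operator norm as $\|A\|=\sup_{\|g\|_V=1}\|Ag\|_W$, we get $\|A\tilde f\|_W\le\|A\|$. Then I would use linearity of $A$ together with absolute homogeneity of $\|\cdot\|_W$ to write
\[
\|Af\|_W=\Bigl\|A\bigl(\|f\|_V\,\tilde f\bigr)\Bigr\|_W=\bigl\|\,\|f\|_V\,A\tilde f\,\bigr\|_W=\|f\|_V\,\|A\tilde f\|_W\le\|f\|_V\,\|A\|,
\]
which is the claim. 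Alternatively, one may simply invoke the second expression for the operator norm, $\|A\|=\sup_{g\neq 0}\|Ag\|_W/\|g\|_V$, from the definition immediately preceding the lemma: this supremum dominates the particular quotient $\|Af\|_W/\|f\|_V$ for the given $f\neq 0$, and multiplying through by $\|f\|_V>0$ yields the inequality.

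There is no genuine obstacle here; the only point requiring (minimal) care is the case distinction $f=0$ versus $f\neq 0$, since the normalization $f/\|f\|_V$ is undefined when $f=0$, and the supremum defining $\|A\|$ ranges over nonzero (or unit) vectors. Once that is handled, the argument is a one-line consequence of linearity and homogeneity of the norms.
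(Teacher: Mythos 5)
Your proof is correct and follows essentially the same route as the paper: handle $f=0$ separately, then normalize $\tilde f=f/\|f\|_V$ and apply the definition of $\|A\|$ as a supremum over unit vectors, using linearity and homogeneity. The alternative reading off the quotient form $\sup_{g\neq 0}\|Ag\|_W/\|g\|_V$ is also fine and equivalent.
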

\begin{proof}
If $f=0$ it is trivial. If $f\neq 0$, write $f=\|f\|_V \tilde f$ with $\|\tilde f\|_V=1$ and use the definition of $\|A\|$.
\end{proof}

\subsection{The Banach space of bounded operators; duals and weak convergence}

Let $\mathcal L(V,W)$ denote the vector space of bounded linear maps $V\to W$.

\begin{theorem}[$\mathcal L(V,W)$ is Banach when $W$ is Banach]
\label{thm:LVW-banach}
Let $(V,\|\cdot\|_V)$ be normed and $(W,\|\cdot\|_W)$ Banach.
Then $\big(\mathcal L(V,W),\|\cdot\|\big)$ is a Banach space.
\end{theorem}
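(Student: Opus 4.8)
The plan is to first confirm that $\big(\mathcal L(V,W),\|\cdot\|\big)$ is a normed space and then prove completeness by the standard pointwise-limit construction, with the only delicate point being the passage from pointwise convergence to operator-norm convergence at the end.

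For the normed-space part: $\mathcal L(V,W)$ is a linear subspace of the space of all linear maps $V\to W$, since for $A,B\in\mathcal L(V,W)$ and $z\in\mathbb C$ the triangle inequality and homogeneity of $\|\cdot\|_W$ give $\|(A+zB)f\|_W\le\|Af\|_W+|z|\,\|Bf\|_W\le(\|A\|+|z|\,\|B\|)\|f\|_V$, so $A+zB$ is bounded with $\|A+zB\|\le\|A\|+|z|\,\|B\|$; homogeneity $\|zA\|=|z|\,\|A\|$ is immediate from the definition of the operator norm, and $\|A\|=0$ forces $\|Af\|_W=0$ hence $Af=0$ for every $f$ (Lemma~\ref{lem:basic-ineq}), i.e.\ $A=0$. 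Thus $\|\cdot\|$ is a genuine norm.

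For completeness, let $(A_n)$ be Cauchy in $\mathcal L(V,W)$. First I would fix $f\in V$ and use Lemma~\ref{lem:basic-ineq} in the form $\|A_nf-A_mf\|_W\le\|A_n-A_m\|\,\|f\|_V$ to see that $(A_nf)_n$ is Cauchy in $W$; since $W$ is Banach it converges, and I define $Af:=\lim_n A_nf$. Next, $A$ is linear because $A(f+zg)=\lim_n A_n(f+zg)=\lim_n\big(A_nf+zA_ng\big)=Af+zAg$, using continuity of the vector operations in $W$. Then $A$ is bounded: a Cauchy sequence is bounded, so $C:=\sup_n\|A_n\|<\infty$, whence $\|A_nf\|_W\le C\|f\|_V$ for all $n$, and letting $n\to\infty$ (continuity of $\|\cdot\|_W$) gives $\|Af\|_W\le C\|f\|_V$; hence $A\in\mathcal L(V,W)$.

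The final step is to upgrade $A_nf\to Af$ (pointwise) to $\|A_n-A\|\to 0$. Given $\varepsilon>0$, pick $N$ with $\|A_n-A_m\|<\varepsilon$ for all $n,m\ge N$. Fix $n\ge N$ and any $f$ with $\|f\|_V=1$; then $\|A_nf-A_mf\|_W<\varepsilon$ for all $m\ge N$, and letting $m\to\infty$ yields $\|A_nf-Af\|_W\le\varepsilon$. Taking the supremum over $\|f\|_V=1$ gives $\|A_n-A\|\le\varepsilon$ for all $n\ge N$, so $A_n\to A$ in $\mathcal L(V,W)$. I expect the main (indeed only) obstacle to be this order-of-limits subtlety: one must fix $n$ and choose $\varepsilon$ independently of $f$ \emph{before} letting $m\to\infty$, so that the supremum over the unit sphere can legitimately be taken afterward; everything else is routine bookkeeping.
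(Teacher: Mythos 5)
Your proof is correct and follows essentially the same route as the paper's: define $A$ as the pointwise limit of $A_n f$ using completeness of $W$, verify linearity and boundedness (via uniform boundedness of the Cauchy sequence), and then upgrade pointwise convergence to operator-norm convergence by fixing $n$, letting $m\to\infty$ inside the Cauchy estimate, and taking the supremum over the unit sphere afterward. The only cosmetic difference is that you phrase the last step with explicit $\varepsilon$--$N$ bookkeeping while the paper uses a $\limsup$, and you also include the (routine) check that $\mathcal L(V,W)$ is a normed space, which the paper takes for granted.
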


\begin{proof}
Let $(A_n)_{n\in\mathbb N}$ be Cauchy in operator norm. Fix $f\in V$.
Then for $n,m$,
\[
\|A_nf-A_mf\|_W \le \|A_n-A_m\|\,\|f\|_V,
\]
so $(A_nf)_n$ is Cauchy in $W$ and hence converges (since $W$ is Banach). Define $Af:=\lim_{n\to\infty}A_nf$.

Linearity of $A$ follows from pointwise limits of linear maps. Next, $(A_n)$ is bounded in operator norm:
choose $N$ with $\|A_n-A_N\|<1$ for $n\ge N$, then $\|A_n\|\le \|A_N\|+1$ for $n\ge N$, hence
$M:=\sup_n\|A_n\|<\infty$. Therefore for all $f$,
\[
\|Af\|_W=\lim_{n\to\infty}\|A_nf\|_W\le \limsup_{n\to\infty}\|A_n\|\,\|f\|_V\le M\|f\|_V,
\]
so $A\in\mathcal L(V,W)$.

Finally, we show $\|A_n-A\|\to 0$. For any $f$ with $\|f\|_V=1$,
\[
\|(A_n-A)f\|_W = \lim_{m\to\infty}\|(A_n-A_m)f\|_W \le \limsup_{m\to\infty}\|A_n-A_m\|.
\]
Taking the supremum over $\|f\|_V=1$ yields
\[
\|A_n-A\| \le \limsup_{m\to\infty}\|A_n-A_m\|.
\]
Since $(A_n)$ is Cauchy, the right-hand side tends to $0$ as $n\to\infty$. Hence $A_n\to A$ in operator norm,
and $\mathcal L(V,W)$ is complete.
\end{proof}

\begin{definition}[Dual space]
The \emph{dual} of a normed space $V$ is
\[
V^*:=\mathcal L(V,\mathbb C),
\]
whose elements are bounded linear functionals on $V$.
\end{definition}

\begin{definition}[Weak convergence]
A sequence $(f_n)\subset V$ converges \emph{weakly} to $f\in V$, written $f_n\rightharpoonup f$, if
\[
\forall \varphi\in V^*:\quad \lim_{n\to\infty}\varphi(f_n)=\varphi(f).
\]
\end{definition}

\begin{proposition}[Strong convergence implies weak convergence]
If $f_n\to f$ in norm in $V$, then $f_n\rightharpoonup f$ in $V$.
\end{proposition}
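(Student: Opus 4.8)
The plan is to unwind the definition of weak convergence and reduce it, via linearity, to a single scalar estimate controlled by the operator-norm inequality already recorded in Lemma~\ref{lem:basic-ineq}. Concretely, I would fix an arbitrary bounded linear functional $\varphi\in V^*$ and aim to show $\varphi(f_n)\to\varphi(f)$ in $\mathbb C$; since $\varphi$ is arbitrary, this is precisely the assertion $f_n\rightharpoonup f$.

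First I would write, using linearity of $\varphi$,
\[
\bigl|\varphi(f_n)-\varphi(f)\bigr| = \bigl|\varphi(f_n-f)\bigr|.
\]
Next I would apply the basic inequality of Lemma~\ref{lem:basic-ineq} (with $A=\varphi$, $W=\mathbb C$) to obtain
\[
\bigl|\varphi(f_n-f)\bigr|\le \|\varphi\|\,\|f_n-f\|_V .
\]
Since $\|\varphi\|<\infty$ is a fixed constant and $\|f_n-f\|_V\to 0$ by hypothesis (norm convergence), the right-hand side tends to $0$, hence $\varphi(f_n)\to\varphi(f)$. As $\varphi\in V^*$ was arbitrary, the definition of weak convergence is satisfied and $f_n\rightharpoonup f$.

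There is essentially no obstacle here: the statement is immediate once one observes that each $\varphi\in V^*$ is Lipschitz with constant $\|\varphi\|$, which is exactly the content of Lemma~\ref{lem:basic-ineq}. The only point worth a word of care is that the constant $\|\varphi\|$ may depend on $\varphi$, but since $\varphi$ is held fixed throughout the limit in $n$, this causes no difficulty; no uniformity over $\varphi$ is needed for weak convergence.
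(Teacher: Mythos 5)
Your proof is correct and follows essentially the same route as the paper's: fix $\varphi\in V^*$, use linearity to reduce to $|\varphi(f_n-f)|$, and bound this by $\|\varphi\|\,\|f_n-f\|_V$, which tends to zero. The only cosmetic difference is that the paper spells out an explicit $\varepsilon$--$N$ argument (and separately notes the trivial case $\varphi=0$), whereas you argue directly that the dominating quantity vanishes, which handles $\varphi=0$ automatically.
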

\begin{proof}
Let $\varphi\in V^*$ and $\varepsilon>0$. Since $\varphi$ is bounded, $|\varphi(g)|\le \|\varphi\|\,\|g\|_V$.
Choose $N$ such that $\|f_n-f\|_V<\varepsilon/\|\varphi\|$ for $n\ge N$ (if $\varphi=0$, the statement is trivial).
Then for $n\ge N$,
\[
|\varphi(f_n)-\varphi(f)|=|\varphi(f_n-f)|\le \|\varphi\|\,\|f_n-f\|_V<\varepsilon.
\]
\end{proof}

\subsection{BLT extension theorem (densely-defined bounded operators)}

\begin{theorem}[BLT extension theorem]
\label{thm:BLT}
Let $(V,\|\cdot\|_V)$ be a normed space, let $W$ be Banach, and let $D\subset V$ be a dense linear subspace.
If $A:D\to W$ is bounded (with respect to $\|\cdot\|_V$ restricted to $D$), then there exists a unique bounded linear
operator $\widetilde A:V\to W$ such that $\widetilde A|_D=A$ and $\|\widetilde A\|=\|A\|$.
\end{theorem}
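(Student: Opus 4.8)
The plan is to construct $\widetilde A$ by continuity, using density of $D$ and completeness of $W$, and then verify the four requirements (extension, linearity, norm equality, uniqueness) by passing limits through the relevant estimates.

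First I would fix $x\in V$ and, invoking density of $D$, choose a sequence $(d_n)\subset D$ with $d_n\to x$ in $\|\cdot\|_V$. Such a sequence is Cauchy, so by boundedness of $A$ on $D$ (Lemma~\ref{lem:basic-ineq} restricted to $D$), $\|Ad_n-Ad_m\|_W\le\|A\|\,\|d_n-d_m\|_V\to0$, i.e. $(Ad_n)$ is Cauchy in $W$. Since $W$ is Banach, $(Ad_n)$ converges, and I would set $\widetilde Ax:=\lim_{n\to\infty}Ad_n$. The one genuinely careful step — and the place I expect a referee to look hardest — is well-definedness: if $(d_n')\subset D$ is another sequence with $d_n'\to x$, then the interleaved sequence $d_1,d_1',d_2,d_2',\dots$ still converges to $x$, hence its image under $A$ is Cauchy and convergent, forcing $\lim Ad_n=\lim Ad_n'$. (Equivalently, $\|Ad_n-Ad_n'\|_W\le\|A\|\,\|d_n-d_n'\|_V\to0$.) So $\widetilde Ax$ depends only on $x$.

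Next I would record that $\widetilde A$ extends $A$: for $x\in D$ take the constant sequence $d_n=x$, giving $\widetilde Ax=Ax$. Linearity follows by choosing approximating sequences $d_n\to x$, $d_n'\to y$, so that $d_n+\lambda d_n'\to x+\lambda y$ and, by continuity of the vector-space operations in $W$ together with linearity of $A$ on $D$, $\widetilde A(x+\lambda y)=\lim A(d_n+\lambda d_n')=\lim(Ad_n+\lambda Ad_n')=\widetilde Ax+\lambda\widetilde Ay$. For the norm: with $d_n\to x$ one has $\|\widetilde Ax\|_W=\lim_n\|Ad_n\|_W\le\|A\|\,\lim_n\|d_n\|_V=\|A\|\,\|x\|_V$, so $\widetilde A$ is bounded with $\|\widetilde A\|\le\|A\|$; conversely $\|\widetilde A\|\ge\sup_{d\in D\setminus\{0\}}\|\widetilde Ad\|_W/\|d\|_V=\sup_{d\in D\setminus\{0\}}\|Ad\|_W/\|d\|_V=\|A\|$, whence $\|\widetilde A\|=\|A\|$.

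Finally, uniqueness: if $B:V\to W$ is any bounded linear map with $B|_D=A$, then for $x\in V$ pick $d_n\to x$ in $D$; by continuity of $B$, $Bx=\lim Bd_n=\lim Ad_n=\widetilde Ax$, so $B=\widetilde A$. The argument is entirely a density-plus-completeness routine; there is no substantive obstacle beyond keeping the sequence-independence check clean, and all the inequalities used are exactly those already established in Lemma~\ref{lem:basic-ineq} and the definition of the operator norm.
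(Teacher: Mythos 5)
Your proof is correct and follows essentially the same route as the paper's: define $\widetilde A$ by approximating through $D$, use completeness of $W$ for existence of the limit, check sequence-independence via the boundedness estimate, and derive linearity, norm equality, and uniqueness by passing to limits. The only cosmetic difference is that you offer a spare ``interleaving'' argument for well-definedness in addition to the direct estimate $\|Ad_n-Ad_n'\|_W\le\|A\|\,\|d_n-d_n'\|_V$, which is exactly the route the paper takes.
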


\begin{proof}
For $v\in V$ choose a sequence $(v_n)\subset D$ with $v_n\to v$ in $V$ (density).
Then
\[
\|A v_n-A v_m\|_W \le \|A\|\,\|v_n-v_m\|_V,
\]
so $(A v_n)$ is Cauchy in $W$ and hence converges. Define
\[
\widetilde A v := \lim_{n\to\infty}A v_n\in W.
\]
\emph{Well-definedness:} if $(v_n')\subset D$ also satisfies $v_n'\to v$, then
\[
\|A v_n-A v_n'\|_W \le \|A\|\,\|v_n-v_n'\|_V \to 0,
\]
so both limits coincide. Linearity follows by passing to limits along approximating sequences.

\emph{Boundedness and norm:} for $v\in V$ and approximating $v_n\to v$,
\[
\|\widetilde A v\|_W=\lim_{n\to\infty}\|A v_n\|_W \le \limsup_{n\to\infty}\|A\|\,\|v_n\|_V=\|A\|\,\|v\|_V,
\]
hence $\|\widetilde A\|\le \|A\|$. Conversely, since $\widetilde A|_D=A$, we have $\|A\|\le \|\widetilde A\|$,
thus $\|\widetilde A\|=\|A\|$.

\emph{Uniqueness:} if $B:V\to W$ is bounded and $B|_D=A$, then for any $v\in V$ and $v_n\in D$ with $v_n\to v$,
\[
Bv=\lim_{n\to\infty}Bv_n=\lim_{n\to\infty}Av_n=\widetilde Av,
\]
so $B=\widetilde A$.
\end{proof}

\subsection{Hilbert spaces, orthogonality, and projections}

\begin{definition}[Hilbert space]
A \emph{Hilbert space} is a complex vector space $\mathcal H$ equipped with an inner product
$\langle\cdot|\cdot\rangle:\mathcal H\times\mathcal H\to\mathbb C$ which is
conjugate-linear in the first argument, linear in the second, Hermitian
$\langle x|y\rangle=\overline{\langle y|x\rangle}$, and positive definite
$\langle x|x\rangle>0$ for $x\neq 0$, and such that the induced norm
$\|x\|:=\sqrt{\langle x|x\rangle}$ is complete.
\end{definition}

\begin{lemma}[Cauchy--Schwarz]
For all $x,y\in\mathcal H$, $|\langle x|y\rangle|\le \|x\|\,\|y\|$.
\end{lemma}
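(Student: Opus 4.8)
The plan is to extract the inequality from positivity of the norm evaluated at a one‑parameter family of vectors and then optimize over the parameter. First I would dispose of the degenerate case: if $y=0$, then $\langle x|y\rangle=0$ by linearity in the second slot and $\|y\|=0$, so both sides vanish and the inequality holds (with equality). Hence assume $y\neq 0$; by positive definiteness $\|y\|^{2}=\langle y|y\rangle>0$.

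For the main case, fix $\lambda\in\mathbb C$ and expand the nonnegative real number $\|x-\lambda y\|^{2}=\langle x-\lambda y\,|\,x-\lambda y\rangle$ using conjugate‑linearity in the first argument, linearity in the second, and the Hermitian symmetry $\langle y|x\rangle=\overline{\langle x|y\rangle}$. This yields
\[
0\le \|x-\lambda y\|^{2}=\|x\|^{2}-2\,\operatorname{Re}\!\big(\lambda\,\langle x|y\rangle\big)+|\lambda|^{2}\,\|y\|^{2}.
\]
The key step is the choice $\lambda:=\overline{\langle x|y\rangle}/\|y\|^{2}$, for which $\lambda\,\langle x|y\rangle=|\langle x|y\rangle|^{2}/\|y\|^{2}$ is real and $|\lambda|^{2}\|y\|^{2}=|\langle x|y\rangle|^{2}/\|y\|^{2}$. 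Substituting, the right‑hand side collapses to $\|x\|^{2}-|\langle x|y\rangle|^{2}/\|y\|^{2}$, so $|\langle x|y\rangle|^{2}\le \|x\|^{2}\|y\|^{2}$; taking nonnegative square roots gives the stated bound. (As a by‑product one also reads off the equality case, namely $x-\lambda y=0$, i.e.\ $x,y$ proportional — though this is not needed for the statement.)

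The computation is entirely routine; the only point requiring care is bookkeeping with the paper's convention that $\langle\cdot|\cdot\rangle$ is conjugate‑linear in the \emph{first} slot, which fixes the cross terms as $-\lambda\langle x|y\rangle-\overline{\lambda}\langle y|x\rangle$, and checking that the chosen $\lambda$ makes the middle term real so that $\operatorname{Re}$ may be dropped. I do not anticipate any genuine obstacle: neither completeness nor separability of $\mathcal H$ is invoked, so the argument is valid verbatim in any complex inner‑product space.
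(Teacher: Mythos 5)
Your proof is correct and takes essentially the same approach as the paper's: both reduce to positivity of $\|u-\text{scalar}\cdot v\|^2$ and optimize the scalar. The paper expands $\|y-tx\|^2$ with $t=\langle x|y\rangle/\|x\|^2$ (disposing of $x=0$ first), whereas you expand $\|x-\lambda y\|^2$ with $\lambda=\overline{\langle x|y\rangle}/\|y\|^2$ (disposing of $y=0$ first); this is merely a symmetric relabeling and the computations match.
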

\begin{proof}
If $x=0$ it is trivial. For $t\in\mathbb C$,
\[
0\le \|y-tx\|^2=\langle y-tx|y-tx\rangle=\|y\|^2-\overline t\,\langle x|y\rangle-t\,\langle y|x\rangle+|t|^2\|x\|^2.
\]
Choose $t=\langle x|y\rangle/\|x\|^2$ to obtain $0\le \|y\|^2-|\langle x|y\rangle|^2/\|x\|^2$.
\end{proof}

\begin{lemma}[Parallelogram identity]
For all $x,y\in\mathcal H$, $\|x+y\|^2+\|x-y\|^2=2\|x\|^2+2\|y\|^2$.
\end{lemma}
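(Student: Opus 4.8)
The plan is to prove the identity by brute-force expansion of both squared norms via the inner product, exploiting the sesquilinearity recorded in the definition of a Hilbert space (conjugate-linear in the first slot, linear in the second, Hermitian symmetry). Since $\|z\|^2=\langle z\mid z\rangle$ for every $z\in\mathcal H$, the whole statement reduces to an algebraic identity among four inner products.

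First I would expand $\|x+y\|^2=\langle x+y\mid x+y\rangle$. Distributing over each argument gives
\[
\|x+y\|^2=\langle x\mid x\rangle+\langle x\mid y\rangle+\langle y\mid x\rangle+\langle y\mid y\rangle=\|x\|^2+\|y\|^2+\langle x\mid y\rangle+\langle y\mid x\rangle.
\]
Next I would do the same for $\|x-y\|^2=\langle x-y\mid x-y\rangle$; here the cross terms pick up a sign from the conjugate-linearity (first slot) and linearity (second slot) applied to $-y$, yielding
\[
\|x-y\|^2=\|x\|^2+\|y\|^2-\langle x\mid y\rangle-\langle y\mid x\rangle.
\]
Adding these two displays, the mixed terms $\langle x\mid y\rangle$ and $\langle y\mid x\rangle$ cancel in pairs, leaving exactly $2\|x\|^2+2\|y\|^2$, which is the claim. (Hermitian symmetry $\langle y\mid x\rangle=\overline{\langle x\mid y\rangle}$ is not even needed, though it makes the cancellation transparent: the offending terms are $2\operatorname{Re}\langle x\mid y\rangle$ with opposite signs.)

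The "hard part" is essentially nonexistent: the only point requiring minimal care is keeping track of which argument is conjugate-linear when expanding $\langle x-y\mid x-y\rangle$, so that the signs on the cross terms come out correctly; everything else is a one-line symbolic computation. No completeness, no Cauchy--Schwarz, and no topology enters — the identity is purely algebraic in the inner product.
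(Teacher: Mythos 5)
Your proof is correct and is precisely the expansion the paper intends when it writes "Expand using bilinearity and Hermitian symmetry"; you have simply spelled out the cancellation explicitly, with the sign-tracking on the cross terms handled correctly under the conjugate-linear-first-slot convention. Nothing is missing.
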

\begin{proof}
Expand using bilinearity and Hermitian symmetry.
\end{proof}

\begin{definition}[Orthogonal complement]
For a subset $M\subset\mathcal H$, define
\[
M^\perp:=\{\,x\in\mathcal H:\ \langle x|m\rangle=0\ \forall m\in M\,\}.
\]
If $M$ is a linear subspace then $M^\perp$ is a closed linear subspace.
\end{definition}

\begin{theorem}[Orthogonal projection theorem]
\label{thm:orth-proj}
Let $M\subset\mathcal H$ be a closed linear subspace. Then for every $x\in\mathcal H$ there exist unique
$m\in M$ and $n\in M^\perp$ such that $x=m+n$. The map $P_M:\mathcal H\to\mathcal H$, $P_M x:=m$,
is a bounded linear operator satisfying
\[
P_M^2=P_M,\qquad P_M^*=P_M,\qquad \|P_M\|=1\ \text{ unless }M=\{0\}.
\]
Moreover, $\mathrm{Ran}(P_M)=M$ and $\ker(P_M)=M^\perp$.
\end{theorem}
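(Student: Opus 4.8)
The plan is to derive the decomposition from a nearest-point (best-approximation) construction in $M$, after which every remaining assertion follows mechanically from uniqueness of the orthogonal splitting.

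First I would establish existence of the closest point. Fix $x\in\mathcal H$ and set $d:=\inf_{m\in M}\|x-m\|$. Choose a minimizing sequence $(m_k)\subset M$ with $\|x-m_k\|\to d$. Applying the parallelogram identity to $x-m_j$ and $x-m_k$ gives
\[
\|m_j-m_k\|^2 = 2\|x-m_j\|^2+2\|x-m_k\|^2-4\big\|x-\tfrac{m_j+m_k}{2}\big\|^2 .
\]
Since $M$ is a linear subspace, $\tfrac{m_j+m_k}{2}\in M$, so the last term is $\ge 4d^2$, and the right-hand side tends to $0$; thus $(m_k)$ is Cauchy. Completeness of $\mathcal H$ gives a limit, and closedness of $M$ puts it in $M$; call it $m$, so $\|x-m\|=d$. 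I expect this to be the \emph{only} genuinely analytic step: it is where completeness and closedness are used, and the main (minor) obstacle is simply recognizing that the midpoint trick forces the Cauchy estimate.

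Next I would show $n:=x-m\in M^\perp$. For $y\in M$ and $\lambda\in\mathbb C$ we have $m+\lambda y\in M$, hence $\|n-\lambda y\|^2\ge d^2=\|n\|^2$; expanding and taking $\lambda=\langle y\mid n\rangle/\|y\|^2$ for $y\neq 0$ forces $|\langle y\mid n\rangle|^2\le 0$, so $\langle y\mid n\rangle=0$. This produces $x=m+n$ with $m\in M$, $n\in M^\perp$. Uniqueness follows from $M\cap M^\perp=\{0\}$ (if $z$ lies in both, $\langle z\mid z\rangle=0$, so $z=0$): two decompositions $m+n=m'+n'$ give $m-m'=n'-n\in M\cap M^\perp=\{0\}$.

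Finally I would read off the operator properties of $P_M x:=m$ purely from uniqueness. Linearity: $aP_Mx+bP_My\in M$ and $a(x-P_Mx)+b(y-P_My)\in M^\perp$ exhibit the (unique) decomposition of $ax+by$, so $P_M(ax+by)=aP_Mx+bP_My$. Since every $m\in M$ has decomposition $m=m+0$, we get $P_M m=m$, hence $P_M^2=P_M$ and $\mathrm{Ran}(P_M)=M$. The Pythagorean identity $\|x\|^2=\|m\|^2+\|n\|^2$ yields $\|P_Mx\|\le\|x\|$, so $\|P_M\|\le 1$, with equality whenever some nonzero $m\in M$ exists (then $P_Mm=m$), i.e.\ unless $M=\{0\}$. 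Self-adjointness: writing $x=m_x+n_x$, $y=m_y+n_y$, orthogonality collapses both $\langle P_Mx\mid y\rangle$ and $\langle x\mid P_My\rangle$ to $\langle m_x\mid m_y\rangle$, so $P_M^\ast=P_M$. And $\ker(P_M)=\{x:m_x=0\}=M^\perp$. No further difficulty arises beyond the minimizer step.
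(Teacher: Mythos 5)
Your proof is correct and follows essentially the same route as the paper: nearest-point existence via the parallelogram identity, orthogonality of the residual by a perturbation argument, uniqueness from $M\cap M^\perp=\{0\}$, and then all operator properties read off the unique splitting. The only cosmetic difference is in the orthogonality step, where you optimize over a single complex $\lambda$ rather than first varying a real parameter $t$ and then replacing $u$ by $iu$; both are standard and equivalent.
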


\begin{proof}
Fix $x\in\mathcal H$ and set $d:=\inf_{m\in M}\|x-m\|$. Choose a minimizing sequence $(m_k)\subset M$
with $\|x-m_k\|\to d$.
By the parallelogram identity,
\[
\left\|x-\frac{m_k+m_\ell}{2}\right\|^2+\left\|\frac{m_k-m_\ell}{2}\right\|^2
=\frac12\|x-m_k\|^2+\frac12\|x-m_\ell\|^2.
\]
Since $(m_k+m_\ell)/2\in M$, the first term is $\ge d^2$, hence
\[
\left\|\frac{m_k-m_\ell}{2}\right\|^2
\le \frac12\|x-m_k\|^2+\frac12\|x-m_\ell\|^2-d^2 \xrightarrow[k,\ell\to\infty]{} 0.
\]
Thus $(m_k)$ is Cauchy in $M$, and since $M$ is closed it converges to some $m\in M$.
Then $\|x-m\|=d$ by continuity of the norm.

Set $n:=x-m$. We claim $n\in M^\perp$. For any $u\in M$ and $t\in\mathbb R$,
the point $m+tu\in M$ and minimality gives $\|x-(m+tu)\|^2\ge \|x-m\|^2$, i.e.
\[
\|n-tu\|^2\ge \|n\|^2.
\]
Expanding $\|n-tu\|^2=\|n\|^2-2t\,\mathrm{Re}\langle n|u\rangle+t^2\|u\|^2$ yields
$\mathrm{Re}\langle n|u\rangle=0$ for all $u\in M$. Applying the same argument to $iu$ gives
$\mathrm{Im}\langle n|u\rangle=0$, hence $\langle n|u\rangle=0$ and $n\in M^\perp$.

Uniqueness: if $x=m_1+n_1=m_2+n_2$ with $m_i\in M$, $n_i\in M^\perp$, then $m_1-m_2=n_2-n_1\in M\cap M^\perp=\{0\}$.

Linearity of $P_M$ follows from uniqueness of decomposition. Idempotence and self-adjointness follow from
standard identities: $P_M x\in M$ and $x-P_M x\in M^\perp$. The bound $\|P_M\|\le 1$ follows from
Pythagoras $\|x\|^2=\|P_M x\|^2+\|x-P_M x\|^2$, and $\|P_M\|=1$ if $M\neq\{0\}$ by evaluating on $x\in M$.
\end{proof}

\begin{theorem}[Orthogonal projectors]
\label{thm:projectors}
Let $P\in\mathcal L(\mathcal H)$ satisfy $P^2=P$ and $P^*=P$. Then $\mathrm{ran}(P)$ is closed and
$P=P_{\mathrm{ran}(P)}$ is the orthogonal projection onto $\mathrm{ran}(P)$.
\end{theorem}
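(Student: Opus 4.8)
The plan is to reduce the statement to the orthogonal projection theorem (Theorem~\ref{thm:orth-proj}), which already provides existence, uniqueness, and all structural properties of $P_M$ for a closed subspace $M$. The whole argument rests on the two defining identities $P^2=P$ and $P^*=P$.

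First I would show that $\mathrm{ran}(P)$ is closed by reformulating it as a fixed-point set: I claim $y\in\mathrm{ran}(P)$ if and only if $Py=y$. Indeed, if $y=Px$ then $Py=P^2x=Px=y$, and the converse is immediate. Hence $\mathrm{ran}(P)=\ker(I-P)$, which is closed because $I-P\in\mathcal L(\mathcal H)$ is continuous. Set $M:=\mathrm{ran}(P)$; being a closed linear subspace, $M$ falls under the hypotheses of Theorem~\ref{thm:orth-proj}, so the orthogonal projection $P_M$ is defined and characterized by the unique decomposition $x=m+n$ with $m\in M$, $n\in M^\perp$.

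Next I would exhibit precisely this decomposition produced by $P$ itself. For $x\in\mathcal H$ write $x=Px+(x-Px)$; clearly $Px\in M$. For the complementary term, I would compute, for arbitrary $y\in\mathcal H$,
\[
\langle x-Px\,|\,Py\rangle=\langle x|Py\rangle-\langle Px|Py\rangle=\langle x|Py\rangle-\langle x|P^\ast Py\rangle=\langle x|Py\rangle-\langle x|P^2y\rangle=0,
\]
using $P^*=P$ and then $P^2=P$. Since every element of $M$ has the form $Py$, this shows $x-Px\in M^\perp$. By the uniqueness in Theorem~\ref{thm:orth-proj}, $(Px,\,x-Px)$ is the orthogonal decomposition of $x$, whence $P_Mx=Px$ for all $x\in\mathcal H$; that is, $P=P_M=P_{\mathrm{ran}(P)}$, and $P$ automatically inherits the properties listed in Theorem~\ref{thm:orth-proj} (idempotence, self-adjointness, $\|P\|=1$ unless $\mathrm{ran}(P)=\{0\}$), consistent with the hypotheses.

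The only step that genuinely uses idempotence in an essential way — and hence the only mild obstacle — is the closedness of $\mathrm{ran}(P)$: for a general bounded operator the range need not be closed, and it is the identity $\mathrm{ran}(P)=\ker(I-P)$ that repairs this. Everything else is a two-line verification from $P^2=P$ and $P^*=P$ together with the already-proved projection theorem.
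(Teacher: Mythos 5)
Your proof is correct, and the core of it — exhibit the decomposition $x = Px + (x-Px)$, verify $Px\in\ran(P)$ and $x-Px\in\ran(P)^\perp$ via $P^*=P$ and $P^2=P$, then invoke uniqueness from the orthogonal projection theorem — is exactly the paper's route. Where you genuinely diverge is the closedness of $\ran(P)$: you prove it directly by the clean fixed-point identification $\ran(P)=\{y:Py=y\}=\ker(I-P)$, which is closed as the kernel of a bounded operator and uses only $P^2=P$. The paper instead establishes $\ker(P)=\ran(P)^\perp$ (using both identities), combines this with the algebraic direct sum $\mathcal H=\ran(P)\oplus\ker(P)$, and deduces closedness indirectly. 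Your version is shorter, needs only idempotence at that step, and is the argument most textbooks use; the paper's version has the minor virtue of delivering $\ker(P)=\ran(P)^\perp$ as an explicit by-product, which is sometimes wanted in its own right. One small quibble with your concluding remark: idempotence is not used \emph{only} in the closedness step — your orthogonality computation also invokes $P^2=P$ in passing from $\langle x\,|\,P^*Py\rangle$ to $\langle x\,|\,Py\rangle$ — but this does not affect the correctness of the proof.
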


\begin{proof}
For any $x$, $Px\in\mathrm{ran}(P)$ and $x-Px\in\ker(P)$ since $P(x-Px)=Px-P^2x=0$.
Moreover, if $y\in\mathrm{ran}(P)$ then $y=Pz$ for some $z$, hence
$\langle x-Px|y\rangle=\langle x-Px|Pz\rangle=\langle P(x-Px)|z\rangle=0$,
so $\ker(P)\subset \mathrm{ran}(P)^\perp$. Conversely, if $x\in\mathrm{ran}(P)^\perp$, then
$\langle Px|Px\rangle=\langle x|P^*Px\rangle=\langle x|P^2x\rangle=\langle x|Px\rangle=0$, hence $Px=0$ and
$x\in\ker(P)$. Thus $\ker(P)=\mathrm{Ran}(P)^\perp$, and $\mathcal H=\mathrm{ran}(P)\oplus \ker(P)$
implies $\mathrm{ran}(P)$ is closed. By Theorem~\ref{thm:orth-proj} the orthogonal projection onto $\mathrm{ran}(P)$
is unique, hence equals $P$.
\end{proof}

\subsection{Riesz representation and the bra--ket identification}

\begin{theorem}[Riesz representation theorem]
\label{thm:riesz}
Let $\mathcal H$ be a Hilbert space and let $\varphi\in \mathcal H^*=\mathcal L(\mathcal H,\mathbb C)$.
Then there exists a unique $y\in\mathcal H$ such that
\[
\forall x\in\mathcal H:\quad \varphi(x)=\langle y|x\rangle.
\]
Moreover, $\|\varphi\|=\|y\|$.
\end{theorem}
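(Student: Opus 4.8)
The plan is to treat the trivial case $\varphi=0$ separately (take $y=0$), and otherwise to extract the representing vector from the orthogonal complement of $\ker\varphi$. First I would observe that, since $\varphi$ is bounded (hence continuous), $\ker\varphi$ is a closed linear subspace of $\mathcal H$, and it is proper because $\varphi\neq 0$. Applying the orthogonal projection theorem (Theorem~\ref{thm:orth-proj}) to the closed subspace $M:=\ker\varphi$, the decomposition $\mathcal H=M\oplus M^\perp$ forces $M^\perp\neq\{0\}$ (otherwise $M=\mathcal H$, contradicting $\varphi\neq 0$). So I may fix a unit vector $z\in M^\perp$.

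Next I would run the standard algebraic identity: for every $x\in\mathcal H$ the vector $\varphi(x)\,z-\varphi(z)\,x$ lies in $\ker\varphi=M$, since $\varphi\bigl(\varphi(x)z-\varphi(z)x\bigr)=\varphi(x)\varphi(z)-\varphi(z)\varphi(x)=0$. Because $z\perp M$, pairing against $z$ gives
\[
0=\bigl\langle z\,\big|\,\varphi(x)z-\varphi(z)x\bigr\rangle=\varphi(x)\langle z|z\rangle-\varphi(z)\langle z|x\rangle=\varphi(x)-\varphi(z)\langle z|x\rangle,
\]
using $\|z\|=1$ and linearity of $\langle\cdot|\cdot\rangle$ in the second slot. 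Hence $\varphi(x)=\varphi(z)\langle z|x\rangle=\langle\,\overline{\varphi(z)}\,z\,|\,x\rangle$ for all $x$, so $y:=\overline{\varphi(z)}\,z$ is a representing vector.

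For uniqueness, if $\langle y_1|x\rangle=\langle y_2|x\rangle$ for all $x$, then $\langle y_1-y_2|x\rangle=0$ for all $x$; choosing $x=y_1-y_2$ yields $\|y_1-y_2\|^2=0$, so $y_1=y_2$. For the norm identity, Cauchy--Schwarz gives $|\varphi(x)|=|\langle y|x\rangle|\le\|y\|\,\|x\|$, hence $\|\varphi\|\le\|y\|$; if $y\neq 0$, evaluating at $x=y/\|y\|$ gives $\varphi(y/\|y\|)=\|y\|$, so $\|\varphi\|\ge\|y\|$, and the case $y=0$ is trivial. The only genuine obstacle is the nonemptiness of $M^\perp$ — everything else is routine algebra and Cauchy--Schwarz — and this is exactly where completeness of $\mathcal H$ enters, through the orthogonal projection theorem; without it one cannot guarantee a representing vector exists (bounded functionals on incomplete inner-product spaces may fail to be represented).
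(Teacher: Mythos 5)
Your proof is correct and follows essentially the same route as the paper's: both reduce to the nontrivial case, form $M=\ker\varphi$, invoke the orthogonal projection theorem to obtain a nonzero $M^\perp$, and extract the representing vector from a unit vector there, with the norm identity from Cauchy--Schwarz. Your only stylistic variation is that instead of decomposing $x=m+\alpha u$ and solving for $\alpha$ as the paper does here, you use the identity $\varphi(x)z-\varphi(z)x\in\ker\varphi$ and pair with $z$ --- which is in fact precisely the computation the paper itself employs in its other statement of Riesz (Theorem~\ref{thm:rieszunique} in a later appendix), and you handle the conjugation $y=\overline{\varphi(z)}z$ correctly.
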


\begin{proof}
If $\varphi=0$, take $y=0$. Otherwise, let $M:=\ker(\varphi)$, a closed subspace (continuity of $\varphi$).
By Theorem~\ref{thm:orth-proj}, $\mathcal H=M\oplus M^\perp$. Since $\varphi\neq 0$, $M\neq\mathcal H$ hence $M^\perp\neq\{0\}$.
Pick $u\in M^\perp$, $u\neq 0$. For any $x\in\mathcal H$, write $x=m+\alpha u$ with $m\in M$, $\alpha\in\mathbb C$.
Then $\varphi(x)=\varphi(m)+\alpha\varphi(u)=\alpha\varphi(u)$.

We determine $\alpha$ from inner products: because $u\perp M$, we have $\langle u|x\rangle=\langle u|m\rangle+\alpha\langle u|u\rangle
=\alpha\|u\|^2$, so $\alpha=\langle u|x\rangle/\|u\|^2$. Hence
\[
\varphi(x)=\frac{\varphi(u)}{\|u\|^2}\,\langle u|x\rangle = \left\langle \overline{\frac{\varphi(u)}{\|u\|^2}}\,u \,\middle|\, x\right\rangle.
\]
Therefore $y:=\overline{\frac{\varphi(u)}{\|u\|^2}}\,u$ works. Uniqueness follows from non-degeneracy of the inner product:
if $\langle y_1-y_2|x\rangle=0$ for all $x$, then $y_1=y_2$.

Finally,
\[
\|\varphi\|=\sup_{\|x\|=1}|\langle y|x\rangle|\le \|y\|
\]
by Cauchy--Schwarz, while taking $x=y/\|y\|$ gives equality. Thus $\|\varphi\|=\|y\|$.
\end{proof}

\begin{remark}[Bra--ket convention and anti-linearity]\label{rem:A1_bra_ket_antilinear}
With the convention that $\langle\cdot|\cdot\rangle$ is conjugate-linear in the first argument and linear in the second,
define
\[
J:\mathcal H\to \mathcal H^*,\qquad (J(y))(x):=\langle y|x\rangle,\qquad x\in\mathcal H.
\]
Then $J$ is \emph{conjugate-linear} (anti-linear), i.e.
\[
J(\alpha y_1+\beta y_2)=\overline{\alpha}\,J(y_1)+\overline{\beta}\,J(y_2)
\qquad(\alpha,\beta\in\mathbb C;\ y_1,y_2\in\mathcal H),
\]
and $J$ is an isometry:
\[
\|J(y)\|=\sup_{\|x\|=1}|\langle y|x\rangle|
\le \|y\|,\qquad
\text{while equality holds for }x=y/\|y\|\ \text{if }y\neq 0,
\]
hence $\|J(y)\|=\|y\|$ (the case $y=0$ is trivial).
We write $|y\rangle:=y\in\mathcal H$ and $\langle y|:=J(y)\in\mathcal H^*$.
\end{remark}

\begin{lemma}[Rank-one operators]
For $y,z\in\mathcal H$, define $|y\rangle\langle z|:\mathcal H\to\mathcal H$ by
\[
(|y\rangle\langle z|)\,x := y\,\langle z|x\rangle.
\]
Then $|y\rangle\langle z|\in \mathcal L(\mathcal H)$ and $\||y\rangle\langle z|\|=\|y\|\,\|z\|$.
\end{lemma}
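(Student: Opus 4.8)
The plan is to establish the three assertions in order: linearity of $T:=|y\rangle\langle z|$, the operator-norm upper bound $\|T\|\le\|y\|\,\|z\|$, and the matching lower bound $\|T\|\ge\|y\|\,\|z\|$. The first two follow at once from the definition together with the Cauchy--Schwarz inequality, and the third is obtained by testing $T$ on a single, optimally chosen unit vector.

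First I would record linearity: since $\langle\cdot|\cdot\rangle$ is linear in its second argument (the bra--ket convention of Remark~\ref{rem:A1_bra_ket_antilinear}), the map $x\mapsto y\,\langle z|x\rangle$ respects addition and scalar multiplication in $x$, so $T$ is a linear map $\mathcal H\to\mathcal H$.

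Next, for boundedness I would invoke Cauchy--Schwarz: for every $x\in\mathcal H$,
\[
\|Tx\|=\bigl\|\,y\,\langle z|x\rangle\,\bigr\|=|\langle z|x\rangle|\,\|y\|\le \|z\|\,\|y\|\,\|x\|,
\]
which shows $T\in\mathcal L(\mathcal H)$ with $\|T\|\le\|y\|\,\|z\|$. For the reverse inequality I would first dispose of the degenerate cases $y=0$ or $z=0$, in which $T=0$ and equality is trivial; otherwise take the unit vector $x_0:=z/\|z\|$, for which $\langle z|x_0\rangle=\langle z|z\rangle/\|z\|=\|z\|$ and hence $Tx_0=\|z\|\,y$, giving $\|Tx_0\|=\|y\|\,\|z\|$ with $\|x_0\|=1$, so $\|T\|\ge\|y\|\,\|z\|$. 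Combining the two inequalities yields $\|T\|=\|y\|\,\|z\|$.

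The main obstacle: essentially none — this is a textbook consequence of Cauchy--Schwarz and the definition of the operator norm. The only points warranting a line of care are handling the degenerate case $y=0$ or $z=0$ and keeping the conjugate-linear-in-the-first-slot convention straight, so that $x\mapsto\langle z|x\rangle$ is honestly linear and $\langle z|z\rangle=\|z\|^2$ is used correctly when selecting $x_0$.
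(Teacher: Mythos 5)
Your proof is correct and follows essentially the same route as the paper's: Cauchy--Schwarz gives the upper bound $\|T\|\le\|y\|\,\|z\|$, and testing on the unit vector $z/\|z\|$ (handling $z=0$ separately) gives the matching lower bound. You additionally spell out the linearity check, which the paper leaves implicit.
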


\begin{proof}
For $\|x\|=1$, Cauchy--Schwarz yields $\|y\langle z|x\rangle\|\le \|y\|\,|\langle z|x\rangle|\le \|y\|\,\|z\|$,
so $\||y\rangle\langle z|\|\le \|y\|\,\|z\|$. Equality holds by taking $x=z/\|z\|$ (if $z\neq 0$), giving
$\||y\rangle\langle z|\,(z/\|z\|)\|=\|y\|\,\|z\|$. If $z=0$ the operator is zero and the identity holds.
\end{proof}

\subsection{Separable Hilbert spaces and the \texorpdfstring{$\ell^2$}{l\string^2} model}

\begin{definition}[Separable]
A metric space (hence a normed space) is \emph{separable} if it has a countable dense subset.
\end{definition}

\begin{definition}[Orthonormal basis]
A subset $\{e_n\}_{n\in I}\subset\mathcal H$ is \emph{orthonormal} if $\langle e_i|e_j\rangle=\delta_{ij}$.
An orthonormal set $\{e_n\}_{n\in I}$ is an \emph{orthonormal basis} if its linear span is dense in $\mathcal H$.
\end{definition}

\begin{proposition}[Separable $\Longleftrightarrow$ countable ONB]
\label{prop:separable-onb}
Let $\mathcal H$ be an infinite-dimensional Hilbert space. Then $\mathcal H$ is separable
if and only if it admits a countably infinite orthonormal basis.
\end{proposition}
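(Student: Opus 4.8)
The plan is to prove the two implications separately: the reverse implication by a rational-span density argument, and the forward implication by Gram--Schmidt applied to a countable dense set.

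For the reverse direction, suppose $\{e_n\}_{n\in\mathbb N}$ is a countably infinite orthonormal basis, so by definition $\operatorname{span}\{e_n:n\in\mathbb N\}$ is dense in $\mathcal H$. Let $D$ be the set of all finite linear combinations $\sum_{k=1}^{N} q_k e_k$ with $N\in\mathbb N$ and $q_k\in\mathbb Q+i\mathbb Q$; this is a countable union of countable sets, hence countable. To see $D$ is dense, fix $x\in\mathcal H$ and $\varepsilon>0$: by density of the span pick $\sum_{k=1}^{N} c_k e_k$ with $\|x-\sum_{k=1}^N c_k e_k\|<\varepsilon/2$, then choose $q_k\in\mathbb Q+i\mathbb Q$ with $\sum_{k=1}^N|c_k-q_k|^2<\varepsilon^2/4$; by orthonormality $\|\sum_{k=1}^N(c_k-q_k)e_k\|^2=\sum_{k=1}^N|c_k-q_k|^2$, so $\sum_{k=1}^N q_ke_k\in D$ lies within $\varepsilon$ of $x$. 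Thus $\mathcal H$ is separable.

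For the forward direction, suppose $\mathcal H$ is separable with countable dense subset $\{x_n:n\in\mathbb N\}$; then $\overline{\operatorname{span}}\{x_n:n\in\mathbb N\}=\mathcal H$. Apply the Gram--Schmidt process to the sequence $(x_n)$: inductively, if $x_n$ lies in the span of the orthonormal vectors already selected, discard it; otherwise subtract its orthogonal projection onto that span (using Theorem~\ref{thm:orth-proj}) and normalize to obtain the next vector $e_m$. An easy induction shows that at every finite stage $\operatorname{span}\{e_1,\dots,e_m\}=\operatorname{span}\{x_1,\dots,x_n\}$ for the corresponding initial segments, since each $e_m$ is a linear combination of $x_1,\dots,x_n$ and conversely. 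Hence the resulting orthonormal set $\{e_m\}_{m\in I}$ satisfies $\operatorname{span}\{e_m:m\in I\}=\operatorname{span}\{x_n:n\in\mathbb N\}$, which is dense, so $\{e_m\}_{m\in I}$ is an orthonormal basis. Finally $I$ is infinite: a finite orthonormal set would span a finite-dimensional (hence closed) proper subspace of the infinite-dimensional $\mathcal H$, contradicting density. Therefore $I$ is countably infinite.

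The only point needing genuine care is the bookkeeping in the forward direction: one must confirm that discarding redundant vectors during Gram--Schmidt preserves the span exactly at each finite stage (the induction sketched above), and then invoke infinite-dimensionality to exclude the possibility that the procedure terminates after finitely many vectors. The reverse direction and the approximation estimates are entirely routine.
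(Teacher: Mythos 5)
Your proof is correct and follows essentially the same route as the paper: Gram--Schmidt (with a skip rule for redundant vectors) on a countable dense set for the forward direction, and the countable density of rational-coefficient finite combinations of an orthonormal basis for the reverse. You are somewhat more explicit than the paper in verifying that the resulting orthonormal set is genuinely infinite (via the closedness of finite-dimensional subspaces), which is a worthwhile detail, but the underlying argument is identical.
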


\begin{proof}
($\Rightarrow$) Assume $\mathcal H$ separable. Let $D=\{x_1,x_2,\dots\}$ be a countable dense subset.
Construct inductively an orthonormal sequence $(e_n)$ as follows:
set $y_1:=x_1$. If $y_1\neq 0$, let $e_1:=y_1/\|y_1\|$.
Given $e_1,\dots,e_{n-1}$, choose the smallest index $k$ such that
\[
y_n:=x_k-\sum_{j=1}^{n-1}\langle e_j|x_k\rangle e_j \neq 0,
\]
and set $e_n:=y_n/\|y_n\|$. This Gram--Schmidt procedure produces a countable orthonormal set.
Let $M:=\overline{\mathrm{span}}\{e_n:n\in\mathbb N\}$.
By construction, for each $x_k\in D$, either $x_k\in M$ already or it generates a new $e_n$,
hence $D\subset M$. Since $D$ is dense, $M=\mathcal H$, so $(e_n)$ is an orthonormal basis.

($\Leftarrow$) If $\{e_n\}_{n\in\mathbb N}$ is an orthonormal basis, then the set of all finite linear combinations
with rational (in $\mathbb Q+i\mathbb Q$) coefficients is countable and dense in $\mathcal H$.
\end{proof}

\begin{theorem}[Canonical $\ell^2$ representation]
\label{thm:H-is-l2}
Let $\mathcal H$ be an infinite-dimensional separable Hilbert space and let $\{e_n\}_{n\in\mathbb N}$ be an orthonormal basis.
Define
\[
U:\mathcal H\to \ell^2(\mathbb N),\qquad Ux := \big(\langle e_1|x\rangle,\langle e_2|x\rangle,\dots\big).
\]
Then $U$ is unitary (an isometric surjection). In particular, $\mathcal H\simeq \ell^2(\mathbb N)$ as Hilbert spaces.
\end{theorem}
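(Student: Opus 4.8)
The plan is to verify the three defining properties of a unitary in turn — that $U$ maps into $\ell^2(\mathbb{N})$, that $U$ is isometric, and that $U$ is surjective — and then conclude unitarity by polarization, with the Hilbert-space isomorphism being immediate.

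\textbf{Step 1: $U$ lands in $\ell^2$ (Bessel).} First I would fix $x\in\mathcal H$ and, for each $N$, consider the finite-dimensional (hence closed) subspace $M_N:=\mathrm{span}\{e_1,\dots,e_N\}$ with orthogonal projection $P_N:=P_{M_N}$ from Theorem~\ref{thm:orth-proj}. Since $\{e_1,\dots,e_N\}$ is orthonormal, one checks $\langle e_k\,|\,x-\sum_{n=1}^N\langle e_n|x\rangle e_n\rangle=0$ for $k\le N$, so by uniqueness of the orthogonal decomposition $P_N x=\sum_{n=1}^N\langle e_n|x\rangle e_n$. Expanding the norm using orthonormality gives $\|P_N x\|^2=\sum_{n=1}^N|\langle e_n|x\rangle|^2$, and Pythagoras $\|x\|^2=\|P_N x\|^2+\|x-P_N x\|^2$ yields $\sum_{n=1}^N|\langle e_n|x\rangle|^2\le\|x\|^2$. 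Letting $N\to\infty$ shows $Ux\in\ell^2(\mathbb{N})$ with $\|Ux\|_{\ell^2}\le\|x\|$; linearity of $U$ is immediate from linearity of each $\langle e_n|\cdot\rangle$.

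\textbf{Step 2: $U$ is isometric (Parseval).} This is the step where the orthonormal-\emph{basis} hypothesis (density of the span), as opposed to mere orthonormality, is essential, and I expect it to be the only genuinely non-routine point. I would show $P_N x\to x$ in norm: given $\varepsilon>0$, density of $\mathrm{span}\{e_n:n\in\mathbb{N}\}$ furnishes a finite linear combination $y\in M_{N_0}$ with $\|x-y\|<\varepsilon$; for every $N\ge N_0$ we have $y\in M_N$, so the best-approximation property of $P_N$ (Theorem~\ref{thm:orth-proj}) gives $\|x-P_N x\|\le\|x-y\|<\varepsilon$. Hence $\|P_N x\|\to\|x\|$, so $\sum_{n}|\langle e_n|x\rangle|^2=\|x\|^2$, i.e.\ $\|Ux\|_{\ell^2}=\|x\|$. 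In particular $U$ is injective.

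\textbf{Step 3: $U$ is surjective, and conclusion.} Given $(a_n)\in\ell^2(\mathbb{N})$, the partial sums $s_N:=\sum_{n=1}^N a_n e_n$ satisfy $\|s_N-s_M\|^2=\sum_{n=M+1}^N|a_n|^2\to 0$ as $M,N\to\infty$, so $(s_N)$ is Cauchy and, by completeness of $\mathcal H$, converges to some $x\in\mathcal H$. Continuity of $\langle e_k|\cdot\rangle$ gives $\langle e_k|x\rangle=\lim_N\langle e_k|s_N\rangle=a_k$, so $Ux=(a_n)$ and $U$ is onto. Thus $U$ is a linear isometric bijection; by the polarization identity an isometry preserves inner products, so $U$ is unitary, and therefore $\mathcal H\simeq\ell^2(\mathbb{N})$ as Hilbert spaces. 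The hard part is Step 2 (the density argument forcing $P_N x\to x$); Steps 1 and 3 are routine once the orthogonal projection theorem and completeness are in hand.
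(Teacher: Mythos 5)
Your proof is correct and follows essentially the same route as the paper's: Bessel's inequality to show $U$ maps into $\ell^2$, Parseval's identity for the isometry, and completeness of $\mathcal H$ for surjectivity via convergence of the Cauchy partial sums. The only difference is that you derive Bessel and Parseval from the orthogonal projection theorem and a density argument, whereas the paper cites them as known facts; your added polarization remark for unitarity is a harmless completion that the paper leaves implicit.
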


\begin{proof}
For any $x\in\mathcal H$, Bessel's inequality gives $\sum_{n=1}^\infty|\langle e_n|x\rangle|^2\le \|x\|^2$,
so $Ux\in \ell^2$. If $\{e_n\}$ is an orthonormal basis, Parseval's identity holds:
\[
\|x\|^2=\sum_{n=1}^\infty|\langle e_n|x\rangle|^2=\|Ux\|_{\ell^2}^2,
\]
hence $U$ is an isometry.

To show surjectivity, take $a=(a_n)\in\ell^2(\mathbb N)$ and consider partial sums $s_N:=\sum_{n=1}^N a_n e_n$.
Then for $M>N$,
\[
\|s_M-s_N\|^2=\left\|\sum_{n=N+1}^M a_n e_n\right\|^2=\sum_{n=N+1}^M |a_n|^2\to 0,
\]
so $(s_N)$ is Cauchy and converges to some $x\in\mathcal H$. Continuity of inner products yields
$\langle e_k|x\rangle=a_k$, hence $Ux=a$. Therefore $U$ is surjective and unitary.
\end{proof}

\subsection{Operator spaces \texorpdfstring{$B(\mathcal H)$, trace class $T_1(\mathcal H)$ and Hilbert--Schmidt $T_2(\mathcal H)$}{B(H), trace class T\string_1(H), Hilbert--Schmidt T\string_2(H)}}

\begin{definition}[Bounded operators]
Let $\mathcal H$ be a Hilbert space. Denote by $B(\mathcal H):=\mathcal L(\mathcal H,\mathcal H)$ the Banach space of
bounded operators on $\mathcal H$, equipped with the operator norm.
\end{definition}

\begin{definition}[Hilbert--Schmidt operators]
An operator $A\in B(\mathcal H)$ is \emph{Hilbert--Schmidt} if for some (hence any) orthonormal basis $(e_n)$,
\[
\|A\|_2^2:=\sum_{n=1}^\infty \|Ae_n\|^2 <\infty.
\]
The set $T_2(\mathcal H)$ of Hilbert--Schmidt operators becomes a Hilbert space with inner product
$\langle A,B\rangle_{HS}:=\mathrm{Tr}(A^*B)$ (well-defined on $T_2$).
\end{definition}

\begin{definition}[Trace-class operators]
An operator $A\in B(\mathcal H)$ is \emph{trace class} if $|A|:=(A^*A)^{1/2}$ satisfies
\[
\|A\|_1:=\mathrm{Tr}(|A|)<\infty.
\]
The set $T_1(\mathcal H)$ of trace-class operators is a Banach space under $\|\cdot\|_1$.
\end{definition}

\begin{remark}
The basic ideal properties (e.g. $B(\mathcal H)\,T_1(\mathcal H)\subset T_1(\mathcal H)$ and
$B(\mathcal H)\,T_2(\mathcal H)\subset T_2(\mathcal H)$) and completeness of $T_1,T_2$
are standard; if you want, we can expand this subsection into fully self-contained proofs
(typically by singular value decomposition / polar decomposition).
\end{remark}


\subsection{Trace-class and Hilbert--Schmidt operators: cyclicity, Hölder, ideal properties}

\newcommand{\Ttwo}{\mathcal T_2(\Hcal)} 

\begin{definition}[Rank-one operators]
For $u,v\in \Hcal$, define the rank-one operator $|u\rangle\langle v|\in B(\Hcal)$ by
\[
(|u\rangle\langle v|)x := u\,\langle v|x\rangle,\qquad x\in\Hcal.
\]
\end{definition}

\begin{lemma}[Operator norm of rank-one maps]
\label{lem:rank-one-norm}
For all $u,v\in\Hcal$, $\ \||u\rangle\langle v|\| = \|u\|\,\|v\|$.
\end{lemma}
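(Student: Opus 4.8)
The plan is to prove the two inequalities $\||u\rangle\langle v|\|\le\|u\|\,\|v\|$ and $\||u\rangle\langle v|\|\ge\|u\|\,\|v\|$ separately: the first via Cauchy--Schwarz, the second by evaluating the operator on a single explicitly chosen unit vector.

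First I would dispose of the degenerate case: if $v=0$ (and symmetrically if $u=0$) then $|u\rangle\langle v|=0$ as an operator and $\|u\|\,\|v\|=0$, so the identity holds trivially. Assume henceforth $v\neq 0$. For the upper bound, fix $x\in\Hcal$ with $\|x\|=1$; by the defining formula $(|u\rangle\langle v|)x=u\,\langle v|x\rangle$, so
\[
\big\|(|u\rangle\langle v|)x\big\|=|\langle v|x\rangle|\,\|u\|\le \|v\|\,\|x\|\,\|u\|=\|u\|\,\|v\|,
\]
using the Cauchy--Schwarz inequality. Taking the supremum over unit vectors $x$ gives $|u\rangle\langle v|\in B(\Hcal)$ with $\||u\rangle\langle v|\|\le\|u\|\,\|v\|$.

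For the reverse inequality I would test against the unit vector $x_0:=v/\|v\|$: then $(|u\rangle\langle v|)x_0=u\,\langle v|v\rangle/\|v\|=\|v\|\,u$, hence $\|(|u\rangle\langle v|)x_0\|=\|u\|\,\|v\|$, which forces $\||u\rangle\langle v|\|\ge\|u\|\,\|v\|$. Combining the two bounds yields equality. There is no substantive obstacle here; the only points meriting a line of care are the degenerate case $v=0$ and the observation that the supremum defining the operator norm is actually attained at $x_0$, so that one obtains the exact value $\|u\|\,\|v\|$ rather than merely an upper bound.
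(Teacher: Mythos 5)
Your proof is correct and follows essentially the same route as the paper: Cauchy--Schwarz for the upper bound and evaluation at the unit vector $v/\|v\|$ for the reverse inequality, with the degenerate case $v=0$ handled separately. The only cosmetic difference is that you dispose of the degenerate case up front rather than at the end.
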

\begin{proof}
For $\|x\|=1$, $\|u\langle v|x\rangle\|\le \|u\|\,|\langle v|x\rangle|\le \|u\|\,\|v\|$, so $\||u\rangle\langle v|\|\le \|u\|\,\|v\|$.
If $v\neq 0$, taking $x=v/\|v\|$ yields equality; if $v=0$ the operator is $0$.
\end{proof}

\begin{definition}[Trace-class operators (nuclear definition)]
\label{def:trace-class-nuclear}
An operator $S\in B(\Hcal)$ is \emph{trace-class} if it admits a representation
\[
S=\sum_{k=1}^\infty |u_k\rangle\langle v_k|
\quad \text{with}\quad
\sum_{k=1}^\infty \|u_k\|\,\|v_k\|<\infty,
\]
where the series converges in operator norm (hence strongly).
Define the \emph{trace norm} by
\[
\|S\|_1 := \inf\left\{\sum_{k=1}^\infty \|u_k\|\,\|v_k\|:\ S=\sum_{k\ge 1}|u_k\rangle\langle v_k|\right\}.
\]
Denote the space by $\Tone$.
\end{definition}

\begin{lemma}[Absolute convergence and boundedness]
\label{lem:T1-bounded}
If $S\in \Tone$, then $\|S\|\le \|S\|_1$.
\end{lemma}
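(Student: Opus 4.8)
The plan is to reduce everything to the triangle inequality for the operator norm together with the rank-one norm formula already recorded in Lemma~\ref{lem:rank-one-norm}. Fix $S\in\Tone$ and let $S=\sum_{k\ge1}|u_k\rangle\langle v_k|$ be an arbitrary admissible representation, i.e.\ one with $\sum_{k\ge1}\|u_k\|\,\|v_k\|<\infty$ and operator-norm convergence of the series. First I would pass to partial sums $S_N:=\sum_{k=1}^N|u_k\rangle\langle v_k|$ and apply subadditivity of $\|\cdot\|$ on $B(\Hcal)$ together with Lemma~\ref{lem:rank-one-norm} to get
\[
\|S_N\|\ \le\ \sum_{k=1}^N \||u_k\rangle\langle v_k|\|\ =\ \sum_{k=1}^N \|u_k\|\,\|v_k\|\ \le\ \sum_{k=1}^\infty \|u_k\|\,\|v_k\|.
\]

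Next I would use that $S_N\to S$ in operator norm (by the defining convergence of the representation) and continuity of the norm, so $\|S\|=\lim_{N\to\infty}\|S_N\|\le \sum_{k\ge1}\|u_k\|\,\|v_k\|$. Since this bound holds for \emph{every} admissible representation of $S$, taking the infimum over all such representations and invoking Definition~\ref{def:trace-class-nuclear} yields $\|S\|\le\|S\|_1$.

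There is essentially no serious obstacle here; the only point requiring a word of care is the interchange of limit and norm, which is immediate from continuity of $\|\cdot\|$ once one knows the series defining $S$ converges in operator norm (guaranteed by the definition of $\Tone$). One could alternatively phrase the estimate directly as $\|S\|=\big\|\sum_{k\ge1}|u_k\rangle\langle v_k|\big\|\le\sum_{k\ge1}\||u_k\rangle\langle v_k|\|$ using that the operator norm is lower semicontinuous (indeed continuous) with respect to its own topology, but the partial-sum argument is the cleanest fully elementary route and avoids any appeal beyond the triangle inequality and Lemma~\ref{lem:rank-one-norm}.
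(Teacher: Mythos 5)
Your argument is correct and is essentially identical to the paper's: both bound partial sums by the triangle inequality together with Lemma~\ref{lem:rank-one-norm}, pass to the limit using operator-norm convergence of the series, and take the infimum over admissible representations to obtain $\|S\|\le\|S\|_1$.
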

\begin{proof}
Take any decomposition $S=\sum_k |u_k\rangle\langle v_k|$. Then by Lemma~\ref{lem:rank-one-norm},
\[
\left\|\sum_{k=1}^n |u_k\rangle\langle v_k|\right\|
\le \sum_{k=1}^n \||u_k\rangle\langle v_k|\|
= \sum_{k=1}^n \|u_k\|\,\|v_k\|.
\]
Letting $n\to\infty$ gives $\|S\|\le \sum_k \|u_k\|\,\|v_k\|$. Taking infimum over all decompositions yields $\|S\|\le \|S\|_1$.
\end{proof}

\begin{definition}[Trace on $\Tone$]
\label{def:trace-on-T1}
For a rank-one operator $|u\rangle\langle v|$, define
\[
\Tr(|u\rangle\langle v|):=\langle v|u\rangle.
\]
For a general $S\in\Tone$ with a nuclear decomposition $S=\sum_{k\ge 1}|u_k\rangle\langle v_k|$,
define
\[
\Tr(S):=\sum_{k=1}^\infty \langle v_k|u_k\rangle.
\]
\end{definition}

\begin{lemma}[Well-definedness and basic bound]
\label{lem:trace-well-defined}
The definition of $\Tr(S)$ in Definition~\ref{def:trace-on-T1} is independent of the chosen nuclear decomposition.
Moreover,
\[
|\Tr(S)|\le \|S\|_1\qquad \forall S\in\Tone.
\]
\end{lemma}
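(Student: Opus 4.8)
The plan is to reduce both assertions to a single basis formula. First I would fix, once and for all, a countably infinite orthonormal basis $(e_n)_{n\ge1}$ of $\Hcal$ (which exists by separability, Proposition~\ref{prop:separable-onb}) and prove that for \emph{every} nuclear decomposition $S=\sum_{k\ge1}|u_k\rangle\langle v_k|$ as in Definition~\ref{def:trace-class-nuclear} one has
\[
\sum_{k\ge1}\langle v_k|u_k\rangle \;=\; \sum_{n\ge1}\langle e_n\mid S e_n\rangle .
\]
Since the right-hand side does not involve the chosen decomposition at all, this yields well-definedness of $\Tr(S)$ in one stroke.

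To establish the identity I would proceed in three steps. Step 1: because the defining series converges in operator norm, hence strongly, $Se_n=\sum_{k}u_k\langle v_k|e_n\rangle$ in $\Hcal$ for each $n$, and by continuity of the inner product in its second argument, $\langle e_n\mid Se_n\rangle=\sum_k\langle e_n|u_k\rangle\langle v_k|e_n\rangle$. Step 2 (the technical heart): the double series is absolutely summable, since
\[
\sum_{n\ge1}\sum_{k\ge1}\bigl|\langle e_n|u_k\rangle\bigr|\,\bigl|\langle v_k|e_n\rangle\bigr|
\;\le\;\sum_{k\ge1}\Bigl(\sum_{n}|\langle e_n|u_k\rangle|^2\Bigr)^{1/2}\Bigl(\sum_{n}|\langle v_k|e_n\rangle|^2\Bigr)^{1/2}
\;=\;\sum_{k\ge1}\|u_k\|\,\|v_k\|<\infty,
\]
where the inequality is Cauchy--Schwarz applied to the inner $n$-sum and the final equality is Parseval's identity. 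Step 3: this absolute summability licenses Fubini for double series, so the $n$- and $k$-summations may be interchanged; summing over $n$ first and using completeness ($\sum_n\langle v_k|e_n\rangle\langle e_n|u_k\rangle=\langle v_k|u_k\rangle$) gives the claimed identity, and in particular shows $\sum_n\langle e_n\mid Se_n\rangle$ converges absolutely.

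For the norm bound I would then argue directly: for any admissible decomposition $S=\sum_k|u_k\rangle\langle v_k|$, Cauchy--Schwarz on each rank-one term gives $|\langle v_k|u_k\rangle|\le\|u_k\|\,\|v_k\|$, whence
\[
|\Tr(S)|=\Bigl|\sum_{k\ge1}\langle v_k|u_k\rangle\Bigr|\le\sum_{k\ge1}\|u_k\|\,\|v_k\| .
\]
Taking the infimum over all nuclear decompositions of $S$ and invoking the definition of $\|S\|_1$ yields $|\Tr(S)|\le\|S\|_1$. I expect the only genuine obstacle to be the careful justification of the interchange of the two infinite summations in Step 2--3; once the absolute double-sum estimate is in place it is routine, but it is the step that must be written with care (including the preliminary observation that operator-norm convergence of the defining series is exactly what permits the term-by-term evaluation of $Se_n$ and of $\langle e_n\mid Se_n\rangle$).
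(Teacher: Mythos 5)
Your proof is correct and follows essentially the same route as the paper's: both establish the basis-sum identity $\sum_n\langle e_n\mid Se_n\rangle=\sum_k\langle v_k|u_k\rangle$ for an arbitrary nuclear decomposition via the Cauchy--Schwarz/Parseval absolute-summability estimate that licenses Fubini, and both derive the norm bound by the same term-by-term Cauchy--Schwarz followed by an infimum over decompositions. The only cosmetic difference is that the paper phrases decomposition-independence as a reduction to the case $S=0$, whereas you prove the common value directly; the underlying computation is identical.
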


\begin{proof}
Let $S=\sum_k |u_k\rangle\langle v_k|$ with $\sum_k\|u_k\|\,\|v_k\|<\infty$.
Then by Cauchy--Schwarz, $|\langle v_k|u_k\rangle|\le \|v_k\|\,\|u_k\|$, so
$\sum_k |\langle v_k|u_k\rangle|<\infty$ and the trace series converges absolutely, yielding
$|\Tr(S)|\le \sum_k\|u_k\|\,\|v_k\|$. Taking the infimum over decompositions gives $|\Tr(S)|\le \|S\|_1$.

To show independence, it suffices to prove: if $S=0$ then any such series satisfies $\sum_k\langle v_k|u_k\rangle=0$.
Fix an orthonormal basis $(e_n)$ of $\Hcal$. For rank-one operators,
\[
\sum_{n=1}^\infty \langle e_n|\,|u\rangle\langle v|\,e_n\rangle
=\sum_{n=1}^\infty \langle e_n|u\rangle\langle v|e_n\rangle
=\left\langle v\ \middle|\ \sum_{n=1}^\infty \langle e_n|u\rangle e_n\right\rangle
=\langle v|u\rangle,
\]
where we used Parseval/orthonormal expansion of $u$.
By absolute convergence (Tonelli/Fubini is justified),
\[
\sum_{n=1}^\infty \langle e_n|S e_n\rangle
=\sum_{n=1}^\infty \sum_{k=1}^\infty \langle e_n|u_k\rangle\langle v_k|e_n\rangle
=\sum_{k=1}^\infty \langle v_k|u_k\rangle.
\]
If $S=0$, then $\langle e_n|S e_n\rangle=0$ for all $n$, hence the left-hand side is $0$ and so is $\sum_k\langle v_k|u_k\rangle$.
Thus the trace is independent of representation.
\end{proof}

\begin{theorem}[Ideal property for $\Tone$ and Hölder inequality]
\label{thm:T1-ideal-holder}
Let $A,B\in B(\Hcal)$ and $S\in\Tone$. Then $ASB\in\Tone$ and
\[
\|ASB\|_1\le \|A\|\,\|S\|_1\,\|B\|.
\]
Moreover,
\[
|\Tr(AS)|\le \|A\|\,\|S\|_1
\qquad\text{and}\qquad
|\Tr(SA)|\le \|A\|\,\|S\|_1.
\]
\end{theorem}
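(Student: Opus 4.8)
The plan is to work entirely from the nuclear definition of $\Tone$ (Definition~\ref{def:trace-class-nuclear}) and to deduce the two trace estimates as immediate corollaries of the ideal property together with the bound $|\Tr(\cdot)|\le\|\cdot\|_1$ already furnished by Lemma~\ref{lem:trace-well-defined}. Thus the only substantive point is the inequality $\|ASB\|_1\le\|A\|\,\|S\|_1\,\|B\|$; the trace inequalities then follow by specializing one of the two multipliers to the identity, since $\Tr$ is defined on all of $\Tone$.

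For the ideal property, I would first record the elementary rank-one identities: for $u,v\in\Hcal$ and $A,B\in\BH$ one has $A\,|u\rangle\langle v|\,B=|Au\rangle\langle B^\ast v|$, which is immediate from $\langle v|(Bx)=\langle B^\ast v|x\rangle$ and linearity of $A$. Now fix $S\in\Tone$ and let $S=\sum_{k\ge1}|u_k\rangle\langle v_k|$ be an arbitrary nuclear decomposition with $\sum_k\|u_k\|\,\|v_k\|<\infty$. Since $A,B$ are bounded, the map $T\mapsto ATB$ is operator-norm continuous on $\BH$, so the operator-norm-convergent series defining $S$ may be multiplied termwise: $ASB=\sum_{k\ge1}|Au_k\rangle\langle B^\ast v_k|$, and this series is again norm-convergent because $\sum_k\|Au_k\|\,\|B^\ast v_k\|\le\|A\|\,\|B\|\sum_k\|u_k\|\,\|v_k\|<\infty$ (using $\|B^\ast\|=\|B\|$). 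Hence $ASB\in\Tone$, and reading off the displayed decomposition gives $\|ASB\|_1\le\|A\|\,\|B\|\sum_k\|u_k\|\,\|v_k\|$. Taking the infimum over all nuclear decompositions of $S$ yields $\|ASB\|_1\le\|A\|\,\|S\|_1\,\|B\|$.

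For the trace bounds, applying the ideal property with $B$ the identity gives $AS\in\Tone$ and $\|AS\|_1\le\|A\|\,\|S\|_1$, whence $|\Tr(AS)|\le\|AS\|_1\le\|A\|\,\|S\|_1$ by Lemma~\ref{lem:trace-well-defined}; taking instead the left multiplier to be the identity and the right multiplier to be $A$ gives $SA\in\Tone$ with $\|SA\|_1\le\|A\|\,\|S\|_1$, hence $|\Tr(SA)|\le\|A\|\,\|S\|_1$ by the same lemma. The only step requiring any care is the termwise multiplication of the series, which is legitimate precisely because convergence is in operator norm and $A,B$ are bounded; no reordering or Fubini-type argument beyond what Lemma~\ref{lem:trace-well-defined} already establishes is needed. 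Accordingly I do not anticipate a genuine obstacle: the statement is essentially a bookkeeping consequence of the nuclear characterization of $\Tone$ and the norm bound $\||u\rangle\langle v|\|=\|u\|\,\|v\|$ (Lemma~\ref{lem:rank-one-norm}).
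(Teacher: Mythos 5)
Your proof is correct and follows essentially the same route as the paper's: the nuclear decomposition, the rank-one identity $A\,|u\rangle\langle v|\,B=|Au\rangle\langle B^\ast v|$, the termwise bound, and the infimum, with the trace estimates deduced from the ideal property plus $|\Tr(\cdot)|\le\|\cdot\|_1$. The only difference is that you spell out the operator-norm continuity argument justifying termwise multiplication, which the paper leaves implicit but is the right thing to say.
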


\begin{proof}
Take a nuclear decomposition $S=\sum_k |u_k\rangle\langle v_k|$ with $\sum_k\|u_k\|\,\|v_k\|<\infty$.
Then
\[
ASB=\sum_{k=1}^\infty A|u_k\rangle\langle v_k|B
=\sum_{k=1}^\infty |Au_k\rangle\langle B^\ast v_k|.
\]
Hence $ASB\in\Tone$ and
\[
\|ASB\|_1 \le \sum_k \|Au_k\|\,\|B^\ast v_k\|
\le \|A\|\,\|B\| \sum_k \|u_k\|\,\|v_k\|.
\]
Taking infimum over all decompositions yields $\|ASB\|_1\le \|A\|\,\|S\|_1\,\|B\|$.

For the trace bound,
\[
|\Tr(AS)|=|\Tr((A)S)|\le \|AS\|_1 \le \|A\|\,\|S\|_1,
\]
and similarly for $|\Tr(SA)|$.
\end{proof}

\begin{theorem}[Trace cyclicity for bounded--trace-class products]
\label{thm:trace-cyclicity}
Let $A\in B(\Hcal)$ and $S\in\Tone$. Then
\[
\Tr(AS)=\Tr(SA).
\]
More generally, for $A,B\in B(\Hcal)$ and $S\in\Tone$,
\[
\Tr(ASB)=\Tr(BAS).
\]
\end{theorem}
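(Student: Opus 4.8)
The plan is to reduce everything to a termwise computation on a nuclear decomposition of $S$. First I would fix a representation $S=\sum_{k\ge 1}\ketbra{u_k}{v_k}$ with $\sum_k\|u_k\|\,\|v_k\|<\infty$ as in Definition~\ref{def:trace-class-nuclear}. The elementary identities $A\ketbra{u}{v}=\ketbra{Au}{v}$ and $\ketbra{u}{v}A=\ketbra{u}{A^\ast v}$ (read off from how each side acts on $x\in\Hcal$, using $\langle v\mid Ax\rangle=\langle A^\ast v\mid x\rangle$) then give nuclear decompositions $AS=\sum_k\ketbra{Au_k}{v_k}$ and $SA=\sum_k\ketbra{u_k}{A^\ast v_k}$; absolute summability of the coefficients is inherited since $\sum_k\|Au_k\|\,\|v_k\|\le\|A\|\sum_k\|u_k\|\,\|v_k\|<\infty$ and similarly for $SA$. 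So $AS,SA\in\Tone$ and, by Definition~\ref{def:trace-on-T1} together with the representation-independence from Lemma~\ref{lem:trace-well-defined},
\[
\Tr(AS)=\sum_{k\ge1}\langle v_k\mid Au_k\rangle,\qquad
\Tr(SA)=\sum_{k\ge1}\langle A^\ast v_k\mid u_k\rangle .
\]

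\textbf{Equality of the two series.} The key step is then just the adjoint relation: $\langle A^\ast v_k\mid u_k\rangle=\overline{\langle u_k\mid A^\ast v_k\rangle}=\overline{\langle Au_k\mid v_k\rangle}=\langle v_k\mid Au_k\rangle$. Summing over $k$ (both series converge absolutely, so no rearrangement issue) yields $\Tr(AS)=\Tr(SA)$. I would emphasize that the only thing making this legitimate is (i) the ideal property $B(\Hcal)\cdot\Tone\subseteq\Tone$, which I may either re-derive in two lines from the nuclear decomposition or simply cite from Theorem~\ref{thm:T1-ideal-holder}, and (ii) Lemma~\ref{lem:trace-well-defined}, which lets me evaluate $\Tr$ on the convenient decompositions $\sum_k\ketbra{Au_k}{v_k}$ and $\sum_k\ketbra{u_k}{A^\ast v_k}$ rather than on some canonical one.

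\textbf{The three-factor identity.} For the general statement I would bootstrap from the two-factor case. Given $A,B\in B(\Hcal)$ and $S\in\Tone$, first note $AS\in\Tone$ (ideal property), hence $\Tr\big((AS)B\big)=\Tr\big(B(AS)\big)$ by the already-proved cyclicity applied to the bounded operator $B$ and the trace-class operator $AS$. Since $ASB=(AS)B$ and $BAS=B(AS)$, this is exactly $\Tr(ASB)=\Tr(BAS)$, and $ASB\in\Tone$ by Theorem~\ref{thm:T1-ideal-holder}.

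\textbf{Main obstacle.} There is no deep difficulty here; the only point requiring care is bookkeeping around convergence — making sure that distributing $A$ (resp.\ $B$) through the infinite sum is valid in operator norm, that the resulting coefficient series $\sum_k\|Au_k\|\,\|v_k\|$ is still summable, and that one may compute $\Tr$ term by term on these decompositions. All of this is underwritten by the absolute summability built into Definition~\ref{def:trace-class-nuclear} and by Lemma~\ref{lem:trace-well-defined}, so the proof is essentially a bounded bookkeeping argument plus the one-line adjoint identity.
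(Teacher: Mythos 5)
Your proof is correct and takes essentially the same route as the paper: fix a nuclear decomposition $S=\sum_k\ketbra{u_k}{v_k}$, push $A$ through the rank-one terms using $A\ketbra{u}{v}=\ketbra{Au}{v}$ and $\ketbra{u}{v}A=\ketbra{u}{A^\ast v}$, invoke absolute summability and the representation-independence of the trace (Lemma~\ref{lem:trace-well-defined}) to compute termwise, and finish with the adjoint identity $\langle A^\ast v_k\mid u_k\rangle=\langle v_k\mid Au_k\rangle$. One small remark: your bootstrap for the three-factor identity (apply two-factor cyclicity to the bounded $B$ and the trace-class $AS$, giving $\Tr((AS)B)=\Tr(B(AS))$ in one stroke) is slightly tighter than the paper's phrasing ``applying it to $(BA)$ and $S$'', which as written yields only $\Tr(BAS)=\Tr(SBA)$ and would need a second application of cyclicity to reach $\Tr(ASB)=\Tr(BAS)$.
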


\begin{proof}
It suffices to prove $\Tr(AS)=\Tr(SA)$, since the general case follows by applying it to $(BA)$ and $S$.
Take a nuclear decomposition $S=\sum_k |u_k\rangle\langle v_k|$ with absolute convergence.
Then by linearity and absolute convergence,
\[
\Tr(AS)=\sum_k \Tr\big(A|u_k\rangle\langle v_k|\big)
=\sum_k \langle v_k|Au_k\rangle.
\]
On the other hand,
\[
\Tr(SA)=\sum_k \Tr\big(|u_k\rangle\langle v_k|A\big)
=\sum_k \Tr\big(|u_k\rangle\langle A^\ast v_k|\big)
=\sum_k \langle A^\ast v_k|u_k\rangle
=\sum_k \langle v_k|Au_k\rangle,
\]
so $\Tr(AS)=\Tr(SA)$.
\end{proof}

\begin{definition}[Hilbert--Schmidt operators]
\label{def:HS}
Fix an orthonormal basis $(e_n)$ of $\Hcal$. An operator $T\in B(\Hcal)$ is \emph{Hilbert--Schmidt} if
\[
\|T\|_2^2 := \sum_{n=1}^\infty \|Te_n\|^2 <\infty.
\]
The set of all such operators is denoted $\Ttwo$.
\end{definition}

\begin{lemma}[Basis independence and adjoint invariance]
\label{lem:HS-basis-indep}
If $\|T\|_2<\infty$ for one orthonormal basis, then the same value is obtained for any orthonormal basis.
Moreover $T\in\Ttwo$ iff $T^\ast\in\Ttwo$ and $\|T^\ast\|_2=\|T\|_2$.
\end{lemma}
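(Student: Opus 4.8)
The plan is to reduce everything to a single symmetric identity: for any two orthonormal bases $(e_n)_n$ and $(f_m)_m$ of $\Hcal$ and any $T\in B(\Hcal)$,
\[
\sum_n\|Te_n\|^2 \;=\; \sum_m\|T^\ast f_m\|^2 \qquad\text{in }[0,\infty].
\]
First I would establish this identity by expanding each vector $Te_n$ in the basis $(f_m)$ via Parseval, writing $\|Te_n\|^2=\sum_m|\langle f_m\mid Te_n\rangle|^2$, then interchanging the order of the double summation, and finally using the defining relation of the adjoint, $\langle f_m\mid Te_n\rangle=\langle T^\ast f_m\mid e_n\rangle$ (valid under the paper's convention that $\langle\cdot\mid\cdot\rangle$ is linear in the second slot), so that $\sum_n|\langle T^\ast f_m\mid e_n\rangle|^2=\|T^\ast f_m\|^2$ by Parseval again.

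Next I would deduce basis independence. Fix an auxiliary orthonormal basis $(g_k)_k$ and apply the identity twice: once to the pair $\big((e_n),(g_k)\big)$ and once to $\big((e'_n),(g_k)\big)$, where $(e_n)$ and $(e'_n)$ are two arbitrary orthonormal bases. This gives $\sum_n\|Te_n\|^2=\sum_k\|T^\ast g_k\|^2=\sum_n\|Te'_n\|^2$, so the value $\|T\|_2^2$ in Definition~\ref{def:HS} is independent of the chosen orthonormal basis; in particular it is finite for one basis if and only if it is finite for all. Then adjoint invariance is immediate by taking $(f_m)=(e_n)$ in the identity, which yields $\sum_n\|Te_n\|^2=\sum_n\|T^\ast e_n\|^2$, i.e.\ $\|T\|_2=\|T^\ast\|_2$ (now a basis-free statement), whence $T\in\Ttwo\iff T^\ast\in\Ttwo$.

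I do not expect a genuine obstacle here. The only step requiring a word of care is the interchange of the double sum, but this is harmless because all summands are nonnegative: Tonelli's theorem for the counting measure applies and every manipulation is valid in $[0,\infty]$, regardless of whether $T$ is actually Hilbert--Schmidt. The remaining items — Parseval's identity for $Te_n$ and $T^\ast f_m$, and the adjoint relation in the paper's inner-product convention — are standard facts recorded earlier in this appendix, so the argument is essentially bookkeeping.
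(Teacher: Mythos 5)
Your proof is correct and follows essentially the same route as the paper: the central step in both is the double-Parseval identity $\sum_n\|Te_n\|^2=\sum_m\|T^\ast f_m\|^2$, justified by Tonelli for nonnegative terms and the adjoint relation $\langle f_m\mid Te_n\rangle=\langle T^\ast f_m\mid e_n\rangle$. The only cosmetic difference is that you make the basis-independence deduction explicit via an auxiliary third basis $(g_k)$, whereas the paper states it directly from the symmetric identity; both are sound.
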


\begin{proof}
Let $(e_n)$ and $(f_m)$ be orthonormal bases. Using Parseval in the $f_m$-basis,
\[
\sum_{n}\|Te_n\|^2=\sum_n\sum_m |\langle f_m|Te_n\rangle|^2
=\sum_m\sum_n |\langle T^\ast f_m|e_n\rangle|^2
=\sum_m \|T^\ast f_m\|^2.
\]
Thus finiteness of $\sum_n\|Te_n\|^2$ implies finiteness of $\sum_m\|T^\ast f_m\|^2$ and the displayed quantity depends only on $T$,
hence is basis independent. Replacing $T$ by $T^\ast$ gives $\|T\|_2=\|T^\ast\|_2$.
\end{proof}

\begin{definition}[Hilbert--Schmidt inner product]
\label{def:HS-inner}
For $S,T\in\Ttwo$, define
\[
\langle S,T\rangle_{HS}:=\sum_{n=1}^\infty \langle Se_n|Te_n\rangle.
\]
\end{definition}

\begin{lemma}[Well-definedness and Cauchy--Schwarz]
\label{lem:HS-inner-well}
The value of $\langle S,T\rangle_{HS}$ is independent of the orthonormal basis. Moreover,
\[
|\langle S,T\rangle_{HS}|\le \|S\|_2\,\|T\|_2.
\]
\end{lemma}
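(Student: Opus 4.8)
The plan is to prove the Cauchy--Schwarz bound directly (which simultaneously settles absolute convergence of the defining series) and then deduce basis independence by polarization, leveraging the already-established basis independence of $\|\cdot\|_2$ in Lemma~\ref{lem:HS-basis-indep}.

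First, fixing an orthonormal basis $(e_n)$, I would bound each term of the series by Cauchy--Schwarz in $\Hcal$, $|\langle Se_n|Te_n\rangle|\le\|Se_n\|\,\|Te_n\|$, and then sum, using the Cauchy--Schwarz inequality for the $\ell^2$-sequences $(\|Se_n\|)_n$ and $(\|Te_n\|)_n$ (or merely $\|Se_n\|\,\|Te_n\|\le\tfrac12(\|Se_n\|^2+\|Te_n\|^2)$ for summability), to obtain $\sum_n|\langle Se_n|Te_n\rangle|\le\|S\|_2\,\|T\|_2<\infty$. This yields absolute convergence of $\langle S,T\rangle_{HS}$ and, at the same time, the inequality $|\langle S,T\rangle_{HS}|\le\|S\|_2\,\|T\|_2$, i.e.\ the second assertion.

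For basis independence I would first note that $\Ttwo$ is a complex vector space (from $\|(S+T)e_n\|^2\le 2\|Se_n\|^2+2\|Te_n\|^2$ and $\|\lambda S\|_2=|\lambda|\,\|S\|_2$), so that $(S,T)\mapsto\langle S,T\rangle_{HS}$ is a well-defined sesquilinear form on $\Ttwo$ (conjugate-linear in $S$, linear in $T$, by term-wise sesquilinearity of $\langle\cdot|\cdot\rangle$ together with the absolute convergence just obtained), whose diagonal is $\langle U,U\rangle_{HS}=\sum_n\|Ue_n\|^2=\|U\|_2^2$. With the convention that $\langle\cdot|\cdot\rangle$ is conjugate-linear in its first slot, polarization gives
\[
\langle S,T\rangle_{HS}=\frac14\sum_{k=0}^{3} i^{-k}\,\big\|S+i^{k}T\big\|_2^2 ,
\]
and since each $S+i^{k}T\in\Ttwo$ and each $\big\|S+i^{k}T\big\|_2^2$ is independent of the orthonormal basis by Lemma~\ref{lem:HS-basis-indep}, so is $\langle S,T\rangle_{HS}$.

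The main obstacle here is bookkeeping rather than conceptual: one must pin down the precise form of the polarization identity under the paper's conjugate-linear-in-the-first-argument convention, confirm that all polarization summands $S+i^{k}T$ genuinely lie in $\Ttwo$, and justify the term-by-term manipulations of the series by the absolute-convergence estimate of the first step. If one prefers to bypass polarization, an equally short route is a direct Fubini computation: for a second orthonormal basis $(f_m)$, expanding $Se_n$ and $Te_n$ in $(f_m)$ and interchanging the (absolutely convergent) double sum gives $\sum_n\langle Se_n|Te_n\rangle=\sum_m\langle T^{\ast}f_m|S^{\ast}f_m\rangle$, whose right-hand side does not involve $(e_n)$ at all; the only delicate point is the legitimacy of the interchange, which again follows from the $\ell^2$-Cauchy--Schwarz bound of the first step.
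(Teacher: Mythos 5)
Your proposal is correct, and your primary route — deduce the inequality by a two-step Cauchy--Schwarz (first in $\Hcal$, then in $\ell^2$), and deduce basis independence by \emph{polarization} from the already-proved basis independence of $\|\cdot\|_2$ (Lemma~\ref{lem:HS-basis-indep}) — is genuinely different from the paper's. The paper instead rewrites $\langle S,T\rangle_{HS}$ as a double sum over matrix coefficients $\langle f_m|Se_n\rangle$ with respect to a second basis $(f_m)$, observes that this exhibits the quantity as an $\ell^2(\mathbb N\times\mathbb N)$ inner product of coefficient matrices, and reads off both basis independence (via a Fubini swap and re-expansion, so that one of the two bases disappears from the expression) and the Cauchy--Schwarz bound from that double-sum representation. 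Your polarization route is a bit cleaner conceptually: it avoids any double-sum/Fubini manipulation by reducing the off-diagonal form to the diagonal $\|\cdot\|_2^2$, at the cost of first noting that $\mathcal T_2$ is a vector space so that $S+i^kT\in\mathcal T_2$, and of pinning down the correct polarization coefficients under the conjugate-linear-in-first-argument convention (which you state correctly, $\langle S,T\rangle_{HS}=\tfrac14\sum_{k=0}^3 i^{-k}\|S+i^kT\|_2^2$). The Fubini alternative you sketch at the end is essentially the paper's own argument, and, as you note, the interchange is justified by the absolute-convergence estimate you obtain from Cauchy--Schwarz. One small bookkeeping point: in your Fubini sketch the identity $\sum_n\langle Se_n|Te_n\rangle=\sum_m\langle T^\ast f_m|S^\ast f_m\rangle$ is correct with the paper's convention, but it is worth checking the conjugation carefully since the paper's own displayed double sum has the overline on the $T$-side factor rather than the $S$-side, which is inconsistent with conjugate-linearity in the first slot; your version has it right.
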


\begin{proof}
Using the matrix-coefficient identity (as in Lemma~\ref{lem:HS-basis-indep}),
\[
\langle S,T\rangle_{HS}=\sum_n \langle Se_n|Te_n\rangle=\sum_{m,n}\langle f_m|Se_n\rangle\,\overline{\langle f_m|Te_n\rangle},
\]
which is the $\ell^2$ inner product of the coefficient matrices, hence basis independent.
Cauchy--Schwarz in $\ell^2$ yields the inequality.
\end{proof}

\begin{theorem}[$\Ttwo$ is a Hilbert space; $B(\Hcal)$-ideal property]
\label{thm:HS-Hilbert-ideal}
$\Ttwo$ is complete under $\|\cdot\|_2$ and $(\Ttwo,\langle\cdot,\cdot\rangle_{HS})$ is a Hilbert space.
Moreover, for $A,B\in B(\Hcal)$ and $T\in\Ttwo$,
\[
ATB\in\Ttwo,\qquad \|ATB\|_2\le \|A\|\,\|T\|_2\,\|B\|.
\]
\end{theorem}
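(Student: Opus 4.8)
The plan is to dispatch the three assertions in sequence: first that $(\Ttwo,\langle\cdot,\cdot\rangle_{HS})$ is a genuine inner product space, then completeness, then the two-sided ideal bound. For the inner product structure, Lemma~\ref{lem:HS-inner-well} already supplies well-definedness (basis independence) and Cauchy--Schwarz; sesquilinearity is immediate from Definition~\ref{def:HS-inner}, and positive-definiteness follows because $\langle T,T\rangle_{HS}=\sum_n\|Te_n\|^2=\|T\|_2^2$, which vanishes iff $Te_n=0$ for every basis vector, i.e.\ iff $T=0$. One also records that $\Ttwo$ is a linear subspace of $\BH$: closure under scalars is clear, and closure under addition is Minkowski's inequality in $\ell^2$ applied to the sequences $(\|Se_n\|)_n$ and $(\|Te_n\|)_n$. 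Hence $\|\cdot\|_2$ is a norm on $\Ttwo$ induced by the inner product.

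For completeness I would first establish the comparison $\|T\|\le\|T\|_2$: for $x=\sum_n x_ne_n\in\Hcal$ one has $Tx=\sum_n x_nTe_n$, so by Cauchy--Schwarz $\|Tx\|\le\sum_n|x_n|\,\|Te_n\|\le\|x\|\,\|T\|_2$. Thus any $\|\cdot\|_2$-Cauchy sequence $(T_k)$ is Cauchy in operator norm and, by Theorem~\ref{thm:LVW-banach}, converges in $\BH$ to some $T$. To see $T\in\Ttwo$ and $T_k\to T$ in $\|\cdot\|_2$, fix $\varepsilon>0$ and $K$ with $\|T_j-T_k\|_2<\varepsilon$ for $j,k\ge K$; then for any finite $N$ and $k\ge K$,
\[
\sum_{n=1}^N\|(T-T_k)e_n\|^2=\lim_{j\to\infty}\sum_{n=1}^N\|(T_j-T_k)e_n\|^2\le\varepsilon^2,
\]
where the limit is legitimate because operator-norm convergence $T_j\to T$ gives $(T_j-T_k)e_n\to(T-T_k)e_n$ in $\Hcal$ and a finite sum of norms is continuous, and each truncated sum is dominated by $\|T_j-T_k\|_2^2$. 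Letting $N\to\infty$ gives $\|T-T_k\|_2\le\varepsilon$ for $k\ge K$; in particular $T-T_K\in\Ttwo$, hence $T=(T-T_K)+T_K\in\Ttwo$, and $T_k\to T$ in $\|\cdot\|_2$. So $(\Ttwo,\langle\cdot,\cdot\rangle_{HS})$ is a Hilbert space.

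For the ideal property, left multiplication is a direct computation: $\|AT\|_2^2=\sum_n\|ATe_n\|^2\le\|A\|^2\sum_n\|Te_n\|^2=\|A\|^2\|T\|_2^2$. Right multiplication is reduced to this via the adjoint invariance $\|S^\ast\|_2=\|S\|_2$ from Lemma~\ref{lem:HS-basis-indep}: $\|TB\|_2=\|(TB)^\ast\|_2=\|B^\ast T^\ast\|_2\le\|B^\ast\|\,\|T^\ast\|_2=\|B\|\,\|T\|_2$, which in particular shows $TB\in\Ttwo$. Combining, $ATB\in\Ttwo$ with $\|ATB\|_2\le\|A\|\,\|TB\|_2\le\|A\|\,\|T\|_2\,\|B\|$.

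The step I expect to need the most care is the completeness argument — specifically the finite-truncation (Fatou-type) passage that simultaneously upgrades operator-norm convergence of $(T_k)$ to $\|\cdot\|_2$-convergence and certifies that the limit is Hilbert--Schmidt; once the comparison $\|T\|\le\|T\|_2$ and adjoint invariance are available, the inner-product axioms and the ideal bound are short verifications.
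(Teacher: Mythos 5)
Your proof is correct, and on the ideal property it takes a genuinely different (and in fact cleaner) route from the paper. The paper proves $\|ATB\|_2\le\|A\|\,\|T\|_2\,\|B\|$ by pulling out $\|A\|$ directly and then trying to bound $\sum_n\|T(Be_n)\|^2$ by writing $Be_n=\|B\|\xi_n$ with $\|\xi_n\|\le 1$ and appealing to ``Bessel/Parseval.'' As stated, that step is not self-evident: $\|\xi_n\|\le 1$ alone does not give $\sum_n\|T\xi_n\|^2\le\sum_n\|Te_n\|^2$ (one really needs the specific structure $\xi_n=Be_n/\|B\|$ and either a trace computation or precisely the adjoint trick you use). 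Your route, reducing $\|TB\|_2$ to $\|B^*T^*\|_2$ via the adjoint invariance from Lemma~\ref{lem:HS-basis-indep} and then applying the one-line left-multiplication estimate, is the standard argument and closes that gap unambiguously. For completeness, the paper's proof is terse (``one checks $\|T_k-T\|_2\to 0$ by dominated convergence on coefficients''), while you supply the full finite-truncation/Fatou-type passage, including the preliminary comparison $\|T\|\le\|T\|_2$ to produce the candidate limit via Theorem~\ref{thm:LVW-banach}; this is a fleshed-out version of what the paper leaves to the reader, and it is correct. You also spell out the inner-product axioms (sesquilinearity, positive definiteness, and that $\Ttwo$ is a subspace via Minkowski in $\ell^2$), which the paper treats as implicit. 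Net effect: your proof is more explicit where the paper hand-waves, and replaces the paper's questionable estimate for right multiplication with the standard adjoint argument.
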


\begin{proof}
For the ideal estimate, fix an ONB $(e_n)$ and note
\[
\|ATB\|_2^2=\sum_n \|ATBe_n\|^2 \le \|A\|^2 \sum_n \|T(Be_n)\|^2.
\]
Since $\|Be_n\|\le \|B\|$, we can write $Be_n=\|B\|\,\xi_n$ with $\|\xi_n\|\le 1$. Then
\[
\sum_n \|T(Be_n)\|^2 = \|B\|^2\sum_n \|T\xi_n\|^2 \le \|B\|^2 \sum_n \|Te_n\|^2=\|B\|^2\|T\|_2^2,
\]
where the inequality follows by expanding $\xi_n$ in the ONB and applying Bessel/Parseval (standard estimate).
Hence $\|ATB\|_2\le \|A\|\,\|T\|_2\,\|B\|$.

Completeness: let $(T_k)$ be Cauchy in $\|\cdot\|_2$. Then for each $x\in\Hcal$,
$\|T_kx-T_\ell x\|\le \|T_k-T_\ell\|_2\|x\|$ (by expanding $x$ in an ONB and Cauchy--Schwarz),
so $(T_kx)$ is Cauchy in $\Hcal$ and defines a bounded operator $T$ by strong limit.
One checks $\|T_k-T\|_2\to 0$ by dominated convergence on coefficients.
Thus $\Ttwo$ is complete and the inner product makes it Hilbert.
\end{proof}

\begin{theorem}[Hölder for $\Ttwo\cdot\Ttwo\subset \Tone$ and trace bound]
\label{thm:HS-times-HS}
If $S,T\in\Ttwo$, then $ST\in\Tone$ and
\[
\|ST\|_1\le \|S\|_2\,\|T\|_2.
\]
Consequently, $\Tr(ST)$ is well-defined and satisfies
\[
|\Tr(ST)|\le \|S\|_2\,\|T\|_2.
\]
\end{theorem}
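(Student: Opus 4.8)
The plan is to exhibit an explicit nuclear decomposition of $ST$ whose defining series is controlled by $\|S\|_2\|T\|_2$, and then read off both assertions from the nuclear characterization of $\Tone$ (Definition~\ref{def:trace-class-nuclear}) together with the bound $|\Tr(\cdot)|\le\|\cdot\|_1$ from Lemma~\ref{lem:trace-well-defined}. First I would fix an orthonormal basis $(e_k)$ of $\Hcal$ and insert the strongly convergent resolution of the identity between $S$ and $T$: writing $P_N:=\sum_{k=1}^N|e_k\rangle\langle e_k|$ for the finite-rank orthogonal projections, one has the algebraic identity $SP_NT=\sum_{k=1}^N|Se_k\rangle\langle T^\ast e_k|$, using $\langle e_k|T=\langle T^\ast e_k|$. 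The key estimate is $\sum_{k\ge1}\|Se_k\|\,\|T^\ast e_k\|<\infty$: by Cauchy--Schwarz in $\ell^2$ this sum is at most $\big(\sum_k\|Se_k\|^2\big)^{1/2}\big(\sum_k\|T^\ast e_k\|^2\big)^{1/2}=\|S\|_2\,\|T^\ast\|_2$, and $\|T^\ast\|_2=\|T\|_2$ by the adjoint-invariance in Lemma~\ref{lem:HS-basis-indep}, so the sum is $\le\|S\|_2\|T\|_2$.

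Next I would show the series $\sum_k|Se_k\rangle\langle T^\ast e_k|$ converges in operator norm with limit $ST$. Operator-norm Cauchyness of the partial sums $SP_NT$ follows from $\big\|\sum_{k=N+1}^M|Se_k\rangle\langle T^\ast e_k|\big\|\le\sum_{k=N+1}^M\|Se_k\|\,\|T^\ast e_k\|\to0$ (triangle inequality and Lemma~\ref{lem:rank-one-norm}); on the other hand $P_N\to I$ strongly with $\|P_N\|\le1$, so $SP_NT\to ST$ strongly, and the norm limit must coincide with the strong limit. Hence $ST=\sum_k|Se_k\rangle\langle T^\ast e_k|$ is a nuclear decomposition, so $ST\in\Tone$, and taking the infimum over decompositions in Definition~\ref{def:trace-class-nuclear} gives $\|ST\|_1\le\sum_k\|Se_k\|\,\|T^\ast e_k\|\le\|S\|_2\|T\|_2$. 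The trace bound is then immediate: $\Tr(ST)$ is well-defined on $\Tone$ and $|\Tr(ST)|\le\|ST\|_1\le\|S\|_2\|T\|_2$ by Lemma~\ref{lem:trace-well-defined}.

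The only genuinely delicate point is the decision to split the identity symmetrically between the two factors rather than collapsing $ST=\sum_n|STe_n\rangle\langle e_n|$: the latter is a valid strong expansion, but its natural coefficient series $\sum_n\|STe_n\|$ need not converge, since Hilbert--Schmidt membership only controls the $\ell^2$-norm of $(\|Te_n\|)_n$ and $\|STe_n\|\le\|S\|\|Te_n\|$ gives no $\ell^1$ bound. Placing one basis expansion on each side lets both factors contribute a square-summable sequence, after which the estimate is exactly Cauchy--Schwarz; identifying the operator-norm limit of $SP_NT$ with $ST$ via the coincidence of norm and strong limits is the other small technical care required.
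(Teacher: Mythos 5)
Your proof is correct and takes essentially the same route as the paper: both fix an ONB, use the nuclear decomposition $ST=\sum_k |Se_k\rangle\langle T^\ast e_k|$, bound $\sum_k\|Se_k\|\,\|T^\ast e_k\|$ by $\|S\|_2\|T\|_2$ via Cauchy--Schwarz and adjoint-invariance of $\|\cdot\|_2$, and deduce the trace bound from $|\Tr(\cdot)|\le\|\cdot\|_1$. The one place you are marginally more careful than the paper is in spelling out why the operator-norm limit of $SP_NT$ coincides with $ST$ (the paper relegates this to a parenthetical); this is a genuine requirement of Definition~\ref{def:trace-class-nuclear}, so the extra care is warranted.
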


\begin{proof}
Fix an ONB $(e_n)$. Define $u_n:=Se_n$ and $v_n:=T^\ast e_n$. Then for any $x\in\Hcal$,
\[
\left(\sum_{n=1}^N |u_n\rangle\langle v_n|\right)x
=\sum_{n=1}^N u_n\,\langle v_n|x\rangle
=\sum_{n=1}^N Se_n\,\langle T^\ast e_n|x\rangle
=S\left(\sum_{n=1}^N e_n\,\langle e_n|Tx\rangle\right)
\to S(Tx)=STx,
\]
since $\sum_{n=1}^N e_n\langle e_n|Tx\rangle$ is the ONB partial sum converging to $Tx$.
Thus $ST=\sum_{n\ge 1}|u_n\rangle\langle v_n|$ strongly (in fact in operator norm if desired by a standard estimate).

Moreover,
\[
\sum_{n=1}^\infty \|u_n\|\,\|v_n\|
\le \left(\sum_n\|u_n\|^2\right)^{1/2}\left(\sum_n\|v_n\|^2\right)^{1/2}
=\|S\|_2\,\|T^\ast\|_2=\|S\|_2\,\|T\|_2,
\]
so by Definition~\ref{def:trace-class-nuclear}, $ST\in\Tone$ and $\|ST\|_1\le \|S\|_2\|T\|_2$.
The trace bound follows from Lemma~\ref{lem:trace-well-defined}:
$|\Tr(ST)|\le \|ST\|_1\le \|S\|_2\|T\|_2$.
\end{proof}

\begin{lemma}[Square root of a density is Hilbert--Schmidt]
Let $\rho\in \mathcal T_1(\Hcal)$ be positive. Then $\rho^{1/2}\in \mathcal T_2(\Hcal)$ and
$\|\rho^{1/2}\|_2^2=\Tr(\rho)$.
In particular, if $\Tr(\rho)=1$ then $\|\rho^{1/2}\|_2=1$.
\end{lemma}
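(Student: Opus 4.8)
The statement to prove is that if $\rho \in \mathcal{T}_1(\mathcal{H})$ is positive, then $\rho^{1/2} \in \mathcal{T}_2(\mathcal{H})$ with $\|\rho^{1/2}\|_2^2 = \Tr(\rho)$. The natural approach is to diagonalize: since $\rho$ is positive and trace-class, it is compact, so by the spectral theorem for positive compact operators there is an orthonormal system $(e_n)$ of eigenvectors with eigenvalues $\lambda_n \ge 0$ such that $\rho = \sum_n \lambda_n \ketbra{e_n}{e_n}$, and $\Tr(\rho) = \sum_n \lambda_n < \infty$. One can extend $(e_n)$ to an orthonormal basis of $\mathcal{H}$ by adjoining a basis of $\ker\rho$, on which $\rho$ (hence $\rho^{1/2}$) acts as zero. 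The functional calculus then gives $\rho^{1/2} e_n = \lambda_n^{1/2} e_n$.

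First I would compute $\|\rho^{1/2}\|_2^2$ using Definition~\ref{def:HS} with this eigenbasis: $\|\rho^{1/2}\|_2^2 = \sum_n \|\rho^{1/2} e_n\|^2 = \sum_n \lambda_n = \Tr(\rho) < \infty$, where the contributions from $\ker\rho$ vanish. This simultaneously shows finiteness (so $\rho^{1/2}\in\mathcal T_2(\Hcal)$ by definition) and the identity. By Lemma~\ref{lem:HS-basis-indep}, the value $\|\rho^{1/2}\|_2$ is basis-independent, so it does not matter that we worked in the privileged eigenbasis. The final assertion is then immediate: if $\Tr(\rho) = 1$ then $\|\rho^{1/2}\|_2^2 = 1$, hence $\|\rho^{1/2}\|_2 = 1$.

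One small point deserving care is the definition of $\rho^{1/2}$ itself: since $\rho \ge 0$, the operator $|\rho| = (\rho^*\rho)^{1/2}$ appearing in the trace-class definition coincides with $\rho$, and $\rho^{1/2}$ is the unique positive square root given by Borel functional calculus (Theorem~\ref{thm:spectral}), which on the eigenbasis is $\sum_n \lambda_n^{1/2}\ketbra{e_n}{e_n}$ — bounded since the $\lambda_n$ are bounded (indeed $\lambda_n \to 0$). Alternatively, and perhaps more cleanly, one can avoid invoking compactness explicitly: $\rho^{1/2}$ is self-adjoint and positive, and for \emph{any} orthonormal basis $(f_m)$, $\sum_m \|\rho^{1/2} f_m\|^2 = \sum_m \langle f_m \mid \rho^{1/2*}\rho^{1/2} f_m\rangle = \sum_m \langle f_m \mid \rho f_m\rangle = \Tr(\rho)$, where the middle step uses $\rho^{1/2*}\rho^{1/2} = \rho$ and the last is the definition of the trace of the positive trace-class operator $\rho$. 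This computation is basis-independent from the outset and needs no diagonalization at all.

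The main obstacle — though it is mild — is ensuring that the interchange $\sum_m \langle f_m \mid \rho^{1/2*}\rho^{1/2} f_m\rangle = \sum_m \langle f_m \mid \rho f_m\rangle$ is legitimate and that the right-hand side genuinely equals $\Tr(\rho)$ as defined in Definition~\ref{def:trace-on-T1}; this requires reconciling the nuclear-decomposition definition of the trace with the basis-sum formula $\Tr(\rho) = \sum_m \langle f_m \mid \rho f_m\rangle$, which for positive trace-class operators follows from the computation in the proof of Lemma~\ref{lem:trace-well-defined} (the displayed identity $\sum_n \langle e_n \mid S e_n\rangle = \sum_k \langle v_k \mid u_k\rangle$ there). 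Since every term $\|\rho^{1/2} f_m\|^2 \ge 0$, there are no convergence subtleties — the sum is a sum of nonnegative reals and equals $\Tr(\rho)$ or $+\infty$, and trace-class-ness of $\rho$ forces the finite value. So the proof is essentially a two-line computation once the definitional bookkeeping is in place.
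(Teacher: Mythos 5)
Your primary argument — diagonalize $\rho$ via the spectral theorem for positive compact operators, note $\rho^{1/2}e_n=\lambda_n^{1/2}e_n$, and sum $\|\rho^{1/2}e_n\|^2=\lambda_n$ to get $\Tr(\rho)$ — is exactly the paper's proof. The basis-free alternative you sketch, namely $\sum_m\|\rho^{1/2}f_m\|^2=\sum_m\langle f_m\mid\rho f_m\rangle=\Tr(\rho)$ for an arbitrary orthonormal basis $(f_m)$, is also correct and slightly cleaner in that it sidesteps diagonalization entirely; its one extra cost is exactly the point you flag — one must know that the nuclear-decomposition definition of $\Tr(\rho)$ in Definition~\ref{def:trace-on-T1} coincides with the basis-sum $\sum_m\langle f_m\mid\rho f_m\rangle$, which the paper does establish inside the proof of Lemma~\ref{lem:trace-well-defined}. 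Either route is sound.
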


\begin{proof}
By the spectral theorem for positive compact operators, there exists an orthonormal basis $(e_n)$
and eigenvalues $(\lambda_n)_{n\ge 1}\subset[0,\infty)$ such that
$\rho e_n=\lambda_n e_n$ and $\sum_n \lambda_n=\Tr(\rho)<\infty$.
Then $\rho^{1/2}e_n=\lambda_n^{1/2}e_n$, hence
\[
\|\rho^{1/2}\|_2^2=\sum_{n\ge 1}\|\rho^{1/2}e_n\|^2=\sum_{n\ge 1}\lambda_n=\Tr(\rho),
\]
so $\rho^{1/2}\in \mathcal T_2(\Hcal)$.
\end{proof}

\begin{proposition}[Uncertainty inequality from Hölder/Cauchy--Schwarz on Schatten ideals]\label{prop:uncertainty}
Fix a separable Hilbert space $\Hcal$ and a state $\rho\in\mathcal T_1(\Hcal)$ with $\rho\ge 0$
and $\Tr(\rho)=1$. Let $X,Y\in\mathcal B(\Hcal)$ be bounded self-adjoint operators.
Define the $\rho$-expectation and centered observables
\[
\langle X\rangle_\rho:=\Tr(\rho X),\qquad \widetilde X:=X-\langle X\rangle_\rho I,
\]
and similarly for $Y$. Define the $\rho$-variance
\[
\Var_\rho(X):=\Tr(\rho\,\widetilde X^2),\qquad \Delta_\rho(X):=\sqrt{\Var_\rho(X)}.
\]
Then the Robertson--Schr\"odinger bound holds:
\begin{equation}\label{eq:RS_pure}
\Var_\rho(X)\,\Var_\rho(Y)\ \ge\
\frac14\Big(\Tr(\rho\{\widetilde X,\widetilde Y\})\Big)^2
+\frac14\Big|\Tr(\rho[X,Y])\Big|^2.
\end{equation}
In particular,
\begin{equation}\label{eq:Robertson_pure}
\Delta_\rho(X)\,\Delta_\rho(Y)\ \ge\ \frac12\Big|\Tr\!\Big(\rho\,\frac1{2i}[X,Y]\Big)\Big|.
\end{equation}
\end{proposition}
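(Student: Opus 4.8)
The plan is to realise the bound as a single Cauchy--Schwarz inequality for the positive semidefinite sesquilinear form $\langle A,B\rangle_\rho:=\Tr(\rho A^\ast B)$ on $\mathcal B(\Hcal)$, followed by an elementary decomposition of the resulting scalar into its real and imaginary parts. First I would check that this form is well defined and positive semidefinite: for $A,B\in\mathcal B(\Hcal)$ the product $A^\ast B$ is bounded, so $\rho A^\ast B\in\Tone$ by the ideal property (Theorem~\ref{thm:T1-ideal-holder}), and writing $\rho=\rho^{1/2}\rho^{1/2}$ (with $\rho^{1/2}$ Hilbert--Schmidt, cf.\ Theorem~\ref{thm:HS-Hilbert-ideal}) together with trace cyclicity (Theorem~\ref{thm:trace-cyclicity}) gives $\langle A,A\rangle_\rho=\Tr(\rho^{1/2}A^\ast A\rho^{1/2})=\|A\rho^{1/2}\|_2^2\ge 0$, as well as Hermitian symmetry $\langle B,A\rangle_\rho=\overline{\langle A,B\rangle_\rho}$. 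No faithfulness or definiteness of $\rho$ is needed, since Cauchy--Schwarz holds for any positive semidefinite sesquilinear form. In particular, with $A=\widetilde X$, $\langle\widetilde X,\widetilde X\rangle_\rho=\Var_\rho(X)$, and similarly for $Y$.

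Applying Cauchy--Schwarz to the centred observables $\widetilde X,\widetilde Y$ (bounded and self-adjoint, hence legitimate inputs) yields $|\langle\widetilde X,\widetilde Y\rangle_\rho|^2\le\langle\widetilde X,\widetilde X\rangle_\rho\,\langle\widetilde Y,\widetilde Y\rangle_\rho=\Var_\rho(X)\,\Var_\rho(Y)$, using $\widetilde X^\ast=\widetilde X$. Next I would expand the left-hand inner product: since $\widetilde X\widetilde Y=\tfrac12\{\widetilde X,\widetilde Y\}+\tfrac12[\widetilde X,\widetilde Y]$ and the scalar shifts cancel in the commutator so that $[\widetilde X,\widetilde Y]=[X,Y]$, one gets $\langle\widetilde X,\widetilde Y\rangle_\rho=\tfrac12\Tr(\rho\{\widetilde X,\widetilde Y\})+\tfrac12\Tr(\rho[X,Y])$. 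The key structural observation is that these two summands are respectively purely real and purely imaginary: $\{\widetilde X,\widetilde Y\}$ is self-adjoint, and for any bounded self-adjoint $S$ one has $\overline{\Tr(\rho S)}=\Tr(S^\ast\rho^\ast)=\Tr(\rho S)$ by cyclicity, so $\Tr(\rho\{\widetilde X,\widetilde Y\})\in\R$; while $[X,Y]^\ast=-[X,Y]$, so $\tfrac1{i}[X,Y]$ is self-adjoint and $\Tr(\rho[X,Y])\in i\R$. Hence $|\langle\widetilde X,\widetilde Y\rangle_\rho|^2=\tfrac14\big(\Tr(\rho\{\widetilde X,\widetilde Y\})\big)^2+\tfrac14\big|\Tr(\rho[X,Y])\big|^2$, and combining with the Cauchy--Schwarz estimate gives \eqref{eq:RS_pure}.

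For \eqref{eq:Robertson_pure} I would discard the nonnegative anticommutator term in \eqref{eq:RS_pure}, take square roots to obtain $\Delta_\rho(X)\Delta_\rho(Y)\ge\tfrac12|\Tr(\rho[X,Y])|$, and rewrite $\tfrac12|\Tr(\rho[X,Y])|=|\Tr(\rho\,\tfrac1{2i}[X,Y])|\ge\tfrac12|\Tr(\rho\,\tfrac1{2i}[X,Y])|$, which is the stated inequality (in fact a sharper form holds). There is no deep obstacle here; the only points requiring care are (i) the finiteness and positivity of $\langle\cdot,\cdot\rangle_\rho$, handled by the Schatten-ideal and trace-cyclicity lemmas already established, and (ii) the real/imaginary splitting, which is the true content of why the two distinct terms in \eqref{eq:RS_pure} add in quadrature rather than interfering. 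If one additionally wanted the equality characterisation, it would reduce to tracking the equality case of Cauchy--Schwarz (proportionality of $\widetilde X\rho^{1/2}$ and $\widetilde Y\rho^{1/2}$ in $\Ttwo$), but that is not needed for the statement as given.
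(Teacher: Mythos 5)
Your proof is correct and follows essentially the same route as the paper's: both reduce to Cauchy--Schwarz for the Hilbert--Schmidt pairing $(A,B)\mapsto\Tr(\rho^{1/2}A^\ast B\rho^{1/2})$ (you phrase it as a positive semidefinite sesquilinear form $\langle A,B\rangle_\rho=\Tr(\rho A^\ast B)$, but your positivity check routes through the same $\|\widetilde X\rho^{1/2}\|_2^2$ identity used in the paper), followed by the identical decomposition $\widetilde X\widetilde Y=\tfrac12\{\widetilde X,\widetilde Y\}+\tfrac12[X,Y]$ and the real/imaginary orthogonality observation. You also correctly flag that the stated \eqref{eq:Robertson_pure} carries a redundant factor of $\tfrac12$ relative to the bound actually obtained from \eqref{eq:RS_pure}; this is a genuine (if harmless) weakening in the paper's statement, not a defect in your argument.
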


\begin{proof}
First, $\Tr(\rho X)$ and $\Tr(\rho Y)$ are well-defined by the trace-class H\"older estimate
(Theorem~\ref{thm:T1-ideal-holder}) since $\rho\in\mathcal T_1(\Hcal)$ and $X,Y\in\mathcal B(\Hcal)$.

By Lemma above, $\rho^{1/2}\in\mathcal T_2(\Hcal)$. Set
\[
A:=\widetilde X\,\rho^{1/2}\in\mathcal T_2(\Hcal),\qquad
B:=\widetilde Y\,\rho^{1/2}\in\mathcal T_2(\Hcal),
\]
using the $\mathcal B(\Hcal)$-ideal property of $\mathcal T_2(\Hcal)$.
By Theorem~\ref{thm:HS-times-HS}, $A^*B\in\mathcal T_1(\Hcal)$ and
\begin{equation}\label{eq:HSCS_pure}
|\Tr(A^*B)|\ \le\ \|A\|_2\,\|B\|_2.
\end{equation}
Now $A^*B=(\rho^{1/2}\widetilde X)(\widetilde Y\rho^{1/2})
=\rho^{1/2}\widetilde X\widetilde Y\rho^{1/2}$, hence by trace cyclicity
(Theorem~\ref{thm:trace-cyclicity}) we obtain
\[
\Tr(A^*B)=\Tr(\rho^{1/2}\widetilde X\widetilde Y\rho^{1/2})
=\Tr(\rho\,\widetilde X\widetilde Y).
\]
Moreover,
\[
\|A\|_2^2=\Tr(A^*A)=\Tr(\rho^{1/2}\widetilde X^2\rho^{1/2})
=\Tr(\rho\,\widetilde X^2)=\Var_\rho(X),
\]
and similarly $\|B\|_2^2=\Var_\rho(Y)$.
Therefore \eqref{eq:HSCS_pure} becomes
\[
|\Tr(\rho\,\widetilde X\widetilde Y)|^2\ \le\ \Var_\rho(X)\,\Var_\rho(Y).
\]
Finally decompose
\[
\widetilde X\widetilde Y=\frac12\{\widetilde X,\widetilde Y\}+\frac12[\widetilde X,\widetilde Y],
\qquad [\widetilde X,\widetilde Y]=[X,Y],
\]
where $\{\widetilde X,\widetilde Y\}$ is self-adjoint and $[\widetilde X,\widetilde Y]$ is skew-adjoint.
Thus $\Tr(\rho\{\widetilde X,\widetilde Y\})\in\mathbb R$ and $\Tr(\rho[X,Y])\in i\mathbb R$, so
\[
|\Tr(\rho\,\widetilde X\widetilde Y)|^2=
\frac14\Big(\Tr(\rho\{\widetilde X,\widetilde Y\})\Big)^2
+\frac14\Big|\Tr(\rho[X,Y])\Big|^2.
\]
Combining yields \eqref{eq:RS_pure}, and \eqref{eq:Robertson_pure} follows by dropping the first
nonnegative term.
\end{proof}
\begin{remark}[A concrete classical financial model inside the operator-algebraic framework]
We record a standard discrete-time risk-neutral pricing model as a commutative special case of the
operator-algebraic setup.

\smallskip
\noindent\textbf{Finite-state filtered model:}
Let $\Omega=\{1,\dots,N\}$, fix a filtration $(\mathcal F_t)_{t=0}^T$ on $\Omega$ (equivalently, a
nested sequence of partitions of $\Omega$), and let $\mathbb Q$ be a risk-neutral probability measure
with weights $q_i:=\mathbb Q(\{i\})>0$ and $\sum_{i=1}^N q_i=1$.
Set $\Hcal:=\mathbb C^N$ and identify
\[
N:=L^\infty(\Omega,\mathcal F_T,\mathbb Q)\ \cong\ \{\mathrm{diag}(x_1,\dots,x_N): x_i\in\mathbb C\}
\ \subset\ \mathcal B(\Hcal),
\]
acting by multiplication on $\Hcal$. For each $t$, let $N_t\subset N$ be the subalgebra of diagonal
operators corresponding to $\mathcal F_t$ (i.e.\ sequences constant on the atoms of $\mathcal F_t$).
Then each $N_t$ is an abelian von Neumann algebra.

Define the density operator $\rho^\star:=\mathrm{diag}(q_1,\dots,q_N)\in\mathcal T_1(\Hcal)$.
The induced normal state $\varphi_{\rho^\star}(Z):=\Tr(\rho^\star Z)$ satisfies, for every
$X=\mathrm{diag}(x_1,\dots,x_N)\in N$,
\[
\varphi_{\rho^\star}(X)=\Tr(\rho^\star X)=\sum_{i=1}^N q_i x_i=\mathbb E_{\mathbb Q}[x].
\]
Thus, in this commutative model, the trace pairing $\Tr(\rho^\star X)$ coincides with the classical
expectation under $\mathbb Q$.

\smallskip
\noindent\textbf{Conditional expectation and operator-valued pricing:}
Let $E_t^\star:N\to N_t$ be the $\mathbb Q$-conditional expectation (equivalently, the unique
$\rho^\star$-preserving normal conditional expectation). Concretely, if $A$ is an atom of
$\mathcal F_t$ and $i\in A$, then
\[
\big(E_t^\star(X)\big)_i
=\frac{\sum_{j\in A} q_j x_j}{\sum_{j\in A} q_j}
=\mathbb E_{\mathbb Q}[x\,|\,\mathcal F_t](i),
\qquad X=\mathrm{diag}(x_1,\dots,x_N)\in N.
\]
Let $(B_t)_{t=0}^T$ be a strictly positive discount factor with $B_t\in N_t$.
For any bounded terminal payoff $X\in N$, the operator-valued pricing map of Chapter~\ref{section:MF_of_QPT},
\[
\Pi_t(X):=B_t^{1/2}\,E_t^\star(\bar X)\,B_t^{1/2},
\qquad \bar X:=B_T^{-1/2} X B_T^{-1/2},
\]
belongs to $N_t$ and is exactly the multiplication operator by the classical (risk-neutral) price
process $B_t\,\mathbb E_{\mathbb Q}[B_T^{-1}x\,|\,\mathcal F_t]$.

In the commutative subalgebra $N_t$ one always has $[X,Y]=0$ for $X,Y\in N_t$, hence the lower bound
in Proposition~\ref{prop:uncertainty} degenerates. A nontrivial lower bound arises only when one
considers $X\in \mathcal B(\Hcal)^{sa}$ and $Y\in \mathcal B(\Hcal)^{sa}$ that do not belong to a
common abelian von Neumann subalgebra (equivalently, such that
$\varphi_{\rho^\star}(\frac{1}{2i}[X,Y])\neq 0$ for the given state).

In particular, under the above commutative identification $N\simeq L^\infty(\Omega,\mathbb Q)$,
the trace pairing $\Tr(\rho^\star X)=\mathbb E_{\mathbb Q}[x]$ realizes the classical expected value
as a Schatten--H\"older duality pairing between $\rho^\star\in \mathcal T_1(\Hcal)$ and
$X\in \mathcal B(\Hcal)$; consequently, Theorem \ref{thm:T1-ideal-holder} specializes to the classical bound
\[
|\mathbb E_{\mathbb Q}[x]|=|\Tr(\rho^\star X)|\le \|\rho^\star\|_1\,\|X\|=\|x\|_\infty,
\qquad \big(\Tr(\rho^\star)=\|\rho^\star\|_1=1\big).
\]

\end{remark}

\begin{example}[Embedding classical ``expected price'' into trace/H\"older (commutative atomic case)]
Let $\Hcal=\ell^2(\mathbb N)$ and let $N\subset \mathcal B(\Hcal)$ be the diagonal von Neumann algebra:
$N=\{ \mathrm{diag}(x_1,x_2,\dots): (x_n)\in \ell^\infty\}$.
Fix a probability vector $p=(p_n)_{n\ge 1}$ with $p_n\ge 0$ and $\sum_n p_n=1$, and set
$\rho:=\mathrm{diag}(p_1,p_2,\dots)\in \mathcal T_1(\Hcal)$.

For any bounded real sequence $s=(s_n)\in \ell^\infty$, let
$S:=\mathrm{diag}(s_1,s_2,\dots)\in N$ (a commutative ``price observable'').
Then
\[
\langle S\rangle_\rho=\Tr(\rho S)=\sum_{n\ge 1} p_n s_n,
\]
which is exactly the classical expectation $\mathbb E_p[s]$ on the atomic probability space
$(\mathbb N,2^{\mathbb N},p)$.
Moreover, Theorem~3.7 yields the classical $\ell^1$--$\ell^\infty$ H\"older bound
\[
|\mathbb E_p[s]|=|\Tr(\rho S)|\le \|\rho\|_1\,\|S\|=\sum_n p_n\cdot \|s\|_\infty=\|s\|_\infty.
\]
If $S,T\in N$ are two such diagonal observables, then $[S,T]=0$ and
Proposition~\ref{prop:uncertainty} reduces to the classical Cauchy--Schwarz inequality
$\Var_p(s)\Var_p(t)\ge \mathrm{Cov}_p(s,t)^2$.
\end{example}

Note that if $X$ and $Y$ belong to a common abelian von Neumann subalgebra (in particular, if
$X,Y\in N_t$ when $N_t$ is assumed abelian), then $[X,Y]=0$ and the Robertson lower bound is $0$
(whereas it is strictly positive precisely when
$\varphi_{\rho_t}(\frac{1}{2i}[X,Y])\neq 0$).

\section{Projectors, bras and kets}\label{ch:projectors_bra_ket}

\subsection{Projectors}\label{sec:projectors}

Projectors are the basic ``filters'' in Hilbert space: they split a vector into a
``kept part'' and a ``discarded part''. This is the linear-algebraic core behind
projective measurements and, later, projection-valued measures.

\begin{definition}[Projection onto a unit vector]\label{def:proj_unit_vector}
Let $e\in \Hcal$ be a unit vector, $\|e\|=1$. For $\psi\in \Hcal$ define
\[
\psi_{\parallel e} \;:=\; \langle e\mid \psi\rangle\, e,
\qquad
\psi_{\perp e} \;:=\; \psi-\psi_{\parallel e}.
\]
\end{definition}

\begin{lemma}\label{lem:orth_component_unit_vector}
For $\psi\in\Hcal$ and unit $e\in\Hcal$, one has $\psi_{\perp e}\perp e$, i.e.
$\langle e\mid \psi_{\perp e}\rangle=0$.
\end{lemma}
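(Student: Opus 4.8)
The plan is to unwind the definition of $\psi_{\perp e}$ and expand the inner product by linearity, using only that $\langle\cdot\mid\cdot\rangle$ is linear in its second argument and that $\|e\|=1$. There is no real obstacle here: the statement is an immediate consequence of the defining formula in Definition~\ref{def:proj_unit_vector}.

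Concretely, I would first write $\psi_{\perp e}=\psi-\psi_{\parallel e}=\psi-\langle e\mid\psi\rangle\,e$. Applying $\langle e\mid\cdot\rangle$ and using linearity in the second slot gives
\[
\langle e\mid\psi_{\perp e}\rangle=\langle e\mid\psi\rangle-\langle e\mid\psi\rangle\,\langle e\mid e\rangle.
\]
Then I would invoke $\langle e\mid e\rangle=\|e\|^2=1$ to conclude that the right-hand side equals $\langle e\mid\psi\rangle-\langle e\mid\psi\rangle=0$, which is exactly the claimed orthogonality $\psi_{\perp e}\perp e$.

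If desired I could add a one-sentence remark noting that the scalar $\langle e\mid\psi\rangle$ factors out cleanly precisely because the inner product is linear (not conjugate-linear) in the argument being expanded, so no complex conjugation appears; but this is not needed for the argument. The ``hard part'' is nonexistent — the lemma is purely a bookkeeping check that the orthogonal-decomposition notation of Definition~\ref{def:proj_unit_vector} does what its name suggests, and it will be used later only as the scalar prototype for projection-valued measures.

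\begin{proof}
By Definition~\ref{def:proj_unit_vector}, $\psi_{\perp e}=\psi-\langle e\mid\psi\rangle\,e$. Since the inner product is linear in its second argument,
\[
\langle e\mid\psi_{\perp e}\rangle
=\langle e\mid\psi\rangle-\langle e\mid\psi\rangle\,\langle e\mid e\rangle.
\]
As $\|e\|=1$ we have $\langle e\mid e\rangle=1$, hence $\langle e\mid\psi_{\perp e}\rangle=\langle e\mid\psi\rangle-\langle e\mid\psi\rangle=0$, i.e.\ $\psi_{\perp e}\perp e$.
\end{proof}
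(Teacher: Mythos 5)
Your proof is correct and takes exactly the same route as the paper's: unwind the definition, expand by linearity in the second slot, and use $\langle e\mid e\rangle=1$. Nothing to add.
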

\begin{proof}
By Definition~\ref{def:proj_unit_vector},
\[
\langle e\mid \psi_{\perp e}\rangle
=\langle e\mid \psi\rangle-\langle e\mid \langle e\mid\psi\rangle e\rangle
=\langle e\mid \psi\rangle-\langle e\mid\psi\rangle\,\langle e\mid e\rangle
=\langle e\mid \psi\rangle-\langle e\mid\psi\rangle\cdot 1=0.
\]
\end{proof}

\begin{remark}[Rank-one projector]\label{rem:rank_one_projector}
The map $P_e:\Hcal\to\Hcal$ given by $P_e\psi:=\langle e\mid\psi\rangle e$ is linear,
bounded, and satisfies $P_e^2=P_e$ and $P_e^*=P_e$. It is the orthogonal projection
onto $\mathrm{span}\{e\}$.
\end{remark}

\subsection{Closed linear subspaces}\label{sec:closed_subspaces}

\begin{definition}[Orthogonal complement]\label{def:orth_complement}
For a subset $M\subset \Hcal$ define
\[
M^\perp \;:=\;\{x\in\Hcal:\ \langle x\mid m\rangle=0\ \text{for all }m\in M\}.
\]
\end{definition}

\begin{proposition}\label{prop:Mperp_closed_subspace}
If $M$ is a linear subspace of $\Hcal$, then $M^\perp$ is a closed linear subspace of $\Hcal$.
\end{proposition}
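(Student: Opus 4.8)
The plan is to verify the two defining properties of $M^\perp$ separately: first that it is a linear subspace, then that it is closed, the latter being the only place where any analysis (as opposed to pure algebra) enters.

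For the linear-subspace part, I would take $x,y\in M^\perp$ and scalars $\alpha,\beta\in\mathbb C$, and for an arbitrary $m\in M$ expand $\langle \alpha x+\beta y\mid m\rangle$ using sesquilinearity of the inner product (conjugate-linear in the first slot, per the paper's convention that $\langle\cdot\mid\cdot\rangle$ is linear in the second argument). This gives $\bar\alpha\langle x\mid m\rangle+\bar\beta\langle y\mid m\rangle$, both terms zero by hypothesis, so the sum is zero; since $m\in M$ was arbitrary, $\alpha x+\beta y\in M^\perp$. Observing $0\in M^\perp$ trivially closes this step.

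For closedness, the key observation is that for each fixed $m\in M$ the functional $z\mapsto\langle z\mid m\rangle$ is continuous on $\Hcal$ — this is exactly the content of the Cauchy--Schwarz inequality $|\langle z\mid m\rangle|\le\|z\|\,\|m\|$ recorded earlier. Hence $\ker(\langle\cdot\mid m\rangle)$ is closed, being the preimage of $\{0\}$ under a continuous map, and $M^\perp=\bigcap_{m\in M}\ker(\langle\cdot\mid m\rangle)$ is an intersection of closed sets, therefore closed. Equivalently, one may argue sequentially: given $(x_n)\subset M^\perp$ with $x_n\to x$ in $\Hcal$, pass to the limit in $\langle x_n\mid m\rangle=0$ using continuity of the inner product to conclude $\langle x\mid m\rangle=0$ for every $m\in M$, i.e.\ $x\in M^\perp$.

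I expect no substantive obstacle: the statement follows directly from sesquilinearity and continuity of the inner product, both already available in the excerpt. The only point requiring a moment's care is the bookkeeping of the complex conjugates in the scalar-multiplication check, which depends on the chosen convention; this does not affect the conclusion, since $0$ is fixed by multiplication by any scalar regardless of where the conjugation sits.
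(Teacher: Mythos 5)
Your proof is correct and takes essentially the same route as the paper: linearity from sesquilinearity, and closedness by writing $M^\perp$ as the intersection over $m\in M$ of the kernels of the continuous functionals $z\mapsto\langle z\mid m\rangle$. The extra details you supply (explicit conjugate bookkeeping, the equivalent sequential argument) are harmless elaborations of the same idea.
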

\begin{proof}
Linearity is immediate from sesquilinearity of $\langle\cdot\mid\cdot\rangle$.
For closedness, fix $m\in M$ and consider the continuous linear functional
$\ell_m:\Hcal\to\mathbb C$ defined by $\ell_m(x):=\langle x\mid m\rangle$.
Then
\[
M^\perp=\bigcap_{m\in M}\ker(\ell_m),
\]
an intersection of closed sets (kernels of continuous maps), hence closed.
\end{proof}

\begin{proposition}\label{prop:double_perp_equals_closure}
For any subset $M\subset \Hcal$, one has
\[
\overline{\mathrm{span}}(M)\;=\; (M^\perp)^\perp .
\]
In particular, if $M$ is a closed linear subspace then $M=(M^\perp)^\perp$.
\end{proposition}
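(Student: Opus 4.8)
The plan is to reduce the whole statement to the orthogonal projection theorem (Theorem~\ref{thm:orth-proj}) applied to the single closed subspace $N:=\overline{\mathrm{span}}(M)$. First I would note that passing from $M$ to its closed linear span leaves the orthogonal complement unchanged, i.e.\ $M^\perp=N^\perp$. The inclusion $N^\perp\subseteq M^\perp$ is immediate from $M\subseteq N$. For the converse, if $x\in M^\perp$ then $\langle x\mid m\rangle=0$ for all $m\in M$, hence $\langle x\mid y\rangle=0$ for every finite linear combination $y\in\mathrm{span}(M)$ by sesquilinearity, and finally for every $y\in N$ since $z\mapsto\langle x\mid z\rangle$ is continuous (Cauchy--Schwarz) and $\mathrm{span}(M)$ is dense in $N$. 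Consequently $(M^\perp)^\perp=(N^\perp)^\perp$, and it suffices to prove $N=(N^\perp)^\perp$ for the closed subspace $N$.

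Next I would establish the two inclusions. One direction is automatic from the definition of the orthogonal complement: every $x\in N$ satisfies $\langle x\mid z\rangle=0$ for all $z\in N^\perp$, so $x\in(N^\perp)^\perp$; thus $N\subseteq(N^\perp)^\perp$. For the reverse inclusion I would take $x\in(N^\perp)^\perp$ and invoke Theorem~\ref{thm:orth-proj}: since $N$ is a closed linear subspace, there are unique $n\in N$ and $n'\in N^\perp$ with $x=n+n'$. Then $n'=x-n$, where $x\in(N^\perp)^\perp$ by assumption and $n\in N\subseteq(N^\perp)^\perp$ by the inclusion just proved, so $n'\in(N^\perp)^\perp$ as well. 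Since also $n'\in N^\perp$, we get $\langle n'\mid n'\rangle=0$, hence $n'=0$ by positive definiteness of the inner product, and therefore $x=n\in N$. This gives $(N^\perp)^\perp\subseteq N$, completing the equality $\overline{\mathrm{span}}(M)=(M^\perp)^\perp$.

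Finally, the special case follows by specialization: if $M$ is itself a closed linear subspace then $\mathrm{span}(M)=M$ and $\overline{\mathrm{span}}(M)=\overline{M}=M$, so the general identity reads $M=(M^\perp)^\perp$. There is no genuine obstacle in this argument; the only point demanding a little care is the first step (the claim $M^\perp=N^\perp$), which is a routine density/continuity argument, and the use of the decomposition $\mathcal H=N\oplus N^\perp$, which is exactly the content of Theorem~\ref{thm:orth-proj}.
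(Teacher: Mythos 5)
Your proposal is correct and follows essentially the same route as the paper: pass to $N:=\overline{\mathrm{span}}(M)$ with $M^\perp=N^\perp$, use the orthogonal projection theorem to decompose $x=n+n^\perp$, and conclude $n^\perp=0$ by positive-definiteness. The only cosmetic difference is that you argue $n'\in(N^\perp)^\perp\cap N^\perp$ abstractly, while the paper reaches the same conclusion by computing $0=\langle x\mid n^\perp\rangle=\langle n^\perp\mid n^\perp\rangle$ directly.
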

\begin{proof}
First, $\mathrm{span}(M)\subset (M^\perp)^\perp$ follows from Definition~\ref{def:orth_complement}.
Since $(M^\perp)^\perp$ is closed by Proposition~\ref{prop:Mperp_closed_subspace},
we obtain $\overline{\mathrm{span}}(M)\subset (M^\perp)^\perp$.

Conversely, let $x\in (M^\perp)^\perp$. Set $N:=\overline{\mathrm{span}}(M)$, a closed subspace,
so $N^\perp=M^\perp$ (by Definition~\ref{def:orth_complement} and density).
Hence $x\in (N^\perp)^\perp$.
By the orthogonal projection theorem (proved in the next section), $\Hcal=N\oplus N^\perp$,
so write $x=n+n^\perp$ with $n\in N$ and $n^\perp\in N^\perp$ uniquely.
But $x\perp N^\perp$ (since $x\in (N^\perp)^\perp$), therefore
\[
0=\langle x\mid n^\perp\rangle=\langle n+n^\perp\mid n^\perp\rangle=\langle n^\perp\mid n^\perp\rangle
\]
because $n\perp n^\perp$. Positive-definiteness gives $n^\perp=0$, hence $x=n\in N$.
Thus $(M^\perp)^\perp\subset \overline{\mathrm{span}}(M)$.
\end{proof}

\subsection{Orthogonal projections}\label{sec:orthogonal_projections}

\begin{definition}[Projector / orthogonal projector]\label{def:projector}
A bounded linear operator $P\in \mathcal B(\Hcal)$ is a \emph{projector} if $P^2=P$.
It is an \emph{orthogonal projector} if, in addition, $P^*=P$, equivalently
\[
\langle P\psi\mid \varphi\rangle=\langle \psi\mid P\varphi\rangle
\quad \text{for all }\psi,\varphi\in\Hcal.
\]
\end{definition}

\begin{theorem}[Orthogonal projection theorem]\label{thm:orthogonal_projection}
Let $M\subset\Hcal$ be a closed linear subspace. Then for every $x\in\Hcal$ there exist unique
$m\in M$ and $n\in M^\perp$ such that $x=m+n$.
Define $P_M:\Hcal\to\Hcal$ by $P_M x:=m$. Then:
\begin{enumerate}[label=\emph{(\roman*)}]
\item $P_M$ is linear and bounded, with $\|P_M\|\le 1$ (and $\|P_M\|=1$ if $M\neq\{0\}$);
\item $P_M^2=P_M$ and $P_M^*=P_M$;
\item $\mathrm{Ran}(P_M)=M$ and $\ker(P_M)=M^\perp$.
\end{enumerate}
\end{theorem}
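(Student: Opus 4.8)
The plan is to obtain the decomposition $x=m+n$ from a nearest-point argument and then read off every assertion in (i)--(iii) from uniqueness together with the Pythagorean identity; this is the classical Hilbert-space projection theorem, and in fact the argument is the same one already carried out for Theorem~\ref{thm:orth-proj}. First I would fix $x\in\Hcal$, set $d:=\inf_{m\in M}\|x-m\|$, and pick a minimizing sequence $(m_k)\subset M$ with $\|x-m_k\|\to d$. Applying the parallelogram identity to $x-m_k$ and $x-m_\ell$ and using $(m_k+m_\ell)/2\in M$ (so that $\|x-(m_k+m_\ell)/2\|\ge d$) gives $\|m_k-m_\ell\|^2\le 2\|x-m_k\|^2+2\|x-m_\ell\|^2-4d^2\to 0$, so $(m_k)$ is Cauchy; completeness of $\Hcal$ and closedness of $M$ then yield a limit $m\in M$ with $\|x-m\|=d$ by continuity of the norm. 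Setting $n:=x-m$, for any $u\in M$ and $t\in\R$ the point $m+tu$ lies in $M$, whence $\|n-tu\|^2\ge\|n\|^2$; expanding gives $-2t\,\mathrm{Re}\langle n\mid u\rangle+t^2\|u\|^2\ge 0$ for all real $t$, forcing $\mathrm{Re}\langle n\mid u\rangle=0$, and replacing $u$ by $iu$ kills the imaginary part, so $n\in M^\perp$.

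Next I would dispatch uniqueness and linearity: if $x=m_1+n_1=m_2+n_2$ with $m_i\in M$, $n_i\in M^\perp$, then $m_1-m_2=n_2-n_1\in M\cap M^\perp$, and a vector orthogonal to itself is $0$, so $m_1=m_2$, $n_1=n_2$. This makes $P_M$ well-defined, and applying the decomposition to $\alpha x+\beta y$ versus $x,y$ separately and invoking uniqueness gives linearity. For the remaining properties: since $x=P_Mx+(x-P_Mx)$ with $P_Mx\perp(x-P_Mx)$, Pythagoras gives $\|x\|^2=\|P_Mx\|^2+\|x-P_Mx\|^2\ge\|P_Mx\|^2$, so $\|P_M\|\le 1$; if $M\ne\{0\}$ then any unit $m\in M$ has $P_Mm=m$, so $\|P_M\|=1$. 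The identity $P_Mm=m$ for all $m\in M$ simultaneously gives $\mathrm{Ran}(P_M)=M$ (the reverse inclusion being immediate) and $P_M^2=P_M$. For self-adjointness, writing $y=P_My+(y-P_My)$ and using $P_Mx\in M$, $y-P_My\in M^\perp$ yields $\langle P_Mx\mid y\rangle=\langle P_Mx\mid P_My\rangle$, and the same trick with $x-P_Mx\in M^\perp$ rewrites this as $\langle x\mid P_My\rangle$, so $P_M^*=P_M$. Finally $P_Mx=0$ iff $x=x-P_Mx\in M^\perp$, i.e.\ $\ker(P_M)=M^\perp$.

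The only genuinely substantive step is the existence of the nearest point $m$, which is exactly where both hypotheses are used in an essential way — completeness of $\Hcal$ to extract the limit of the Cauchy minimizing sequence, and closedness of $M$ to keep that limit inside $M$ — with the parallelogram law supplying the Cauchy estimate; everything afterward is routine bookkeeping. One point of care: Proposition~\ref{prop:double_perp_equals_closure} forward-references this very theorem, so the proof must be kept independent of it, which it is, since the argument above relies only on the parallelogram identity, completeness, closedness, and the definition of $M^\perp$.
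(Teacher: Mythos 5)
Your proof is correct and follows essentially the same route as the paper's: the nearest-point/minimizing-sequence argument via the parallelogram identity, the real and imaginary perturbations to establish orthogonality of the residual, uniqueness from $M\cap M^\perp=\{0\}$, and the same bookkeeping for linearity, idempotence, self-adjointness, the norm bound, and the range/kernel identifications. Your closing remark about keeping the argument independent of Proposition~\ref{prop:double_perp_equals_closure} (which forward-references this theorem) is a valid and worthwhile observation, and your proof indeed avoids that circularity.
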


\begin{proof}
Fix $x\in\Hcal$ and set $d:=\inf_{m\in M}\|x-m\|$. Choose a sequence $(m_k)\subset M$
with $\|x-m_k\|\to d$.

\emph{Step 1: $(m_k)$ is Cauchy in $M$.}
Using the parallelogram identity,
\[
\Big\|x-\frac{m_k+m_\ell}{2}\Big\|^2+\Big\|\frac{m_k-m_\ell}{2}\Big\|^2
=\frac12\|x-m_k\|^2+\frac12\|x-m_\ell\|^2.
\]
Since $(m_k+m_\ell)/2\in M$, the first term is $\ge d^2$, hence
\[
\Big\|\frac{m_k-m_\ell}{2}\Big\|^2
\le \frac12\|x-m_k\|^2+\frac12\|x-m_\ell\|^2-d^2 \xrightarrow[k,\ell\to\infty]{}0,
\]
so $(m_k)$ is Cauchy. As $M$ is closed in the complete space $\Hcal$, $M$ is complete,
so $m_k\to m\in M$.

\emph{Step 2: the minimiser is orthogonal.}
Let $n:=x-m$. For any $u\in M$ and $t\in\mathbb R$, one has $m+tu\in M$, hence by minimality
\[
\|x-(m+tu)\|^2=\|n-tu\|^2\ge \|n\|^2.
\]
Expanding $\|n-tu\|^2=\|n\|^2-2t\,\Re\langle n\mid u\rangle+t^2\|u\|^2$ and varying $t\in\mathbb R$
yields $\Re\langle n\mid u\rangle=0$ for all $u\in M$.
Replacing $u$ by $iu$ gives $\Im\langle n\mid u\rangle=0$, hence $\langle n\mid u\rangle=0$.
Thus $n\in M^\perp$ and $x=m+n$.

\emph{Step 3: uniqueness.}
If $x=m_1+n_1=m_2+n_2$ with $m_i\in M$, $n_i\in M^\perp$, then
$m_1-m_2=n_2-n_1\in M\cap M^\perp=\{0\}$, so $m_1=m_2$ and $n_1=n_2$.

\emph{Step 4: linearity, idempotence, self-adjointness, norm bound.}
Linearity of $P_M$ follows from uniqueness of the decomposition.
For idempotence, $P_Mx\in M$ implies $P_M(P_Mx)=P_Mx$.
Also $x-P_Mx\in M^\perp$ implies $P_M(x-P_Mx)=0$.

Self-adjointness: write $x=P_Mx+(x-P_Mx)$ with $(x-P_Mx)\in M^\perp$ and similarly for $y$.
Then
\[
\langle P_Mx\mid y\rangle
=\langle P_Mx\mid P_My\rangle+\langle P_Mx\mid (y-P_My)\rangle
=\langle P_Mx\mid P_My\rangle,
\]
since $P_Mx\in M$ is orthogonal to $y-P_My\in M^\perp$.
Similarly,
\[
\langle x\mid P_My\rangle
=\langle P_Mx\mid P_My\rangle+\langle x-P_Mx\mid P_My\rangle
=\langle P_Mx\mid P_My\rangle,
\]
hence $\langle P_Mx\mid y\rangle=\langle x\mid P_My\rangle$ for all $x,y$, i.e. $P_M^*=P_M$.

Finally, by Pythagoras,
\[
\|x\|^2=\|P_Mx\|^2+\|x-P_Mx\|^2 \;\Rightarrow\; \|P_Mx\|\le \|x\|,
\]
so $\|P_M\|\le 1$. If $M\neq\{0\}$, take $x\in M\setminus\{0\}$ so $P_Mx=x$, giving $\|P_M\|=1$.

The range/kernel statements are immediate from the definition and the decomposition.
\end{proof}

\begin{proposition}\label{prop:orthproj_properties}
Let $M\subset\Hcal$ be a closed subspace and $P_M$ be the orthogonal projector from
Theorem~\ref{thm:orthogonal_projection}. Then $P_{M^\perp}=I-P_M$, and
\[
\Hcal = M\oplus M^\perp,\qquad x=P_Mx+(I-P_M)x
\]
is the orthogonal decomposition.
\end{proposition}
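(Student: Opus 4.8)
The plan is to reduce everything to Theorem~\ref{thm:orthogonal_projection} applied to the subspace $M^\perp$ in place of $M$, combined with the identity $(M^\perp)^\perp=M$ for closed $M$. First I would observe that $M^\perp$ is itself a closed linear subspace of $\Hcal$ by Proposition~\ref{prop:Mperp_closed_subspace}, so Theorem~\ref{thm:orthogonal_projection} applies to it and yields an orthogonal projector $P_{M^\perp}$ with $\ran(P_{M^\perp})=M^\perp$ and $\ker(P_{M^\perp})=(M^\perp)^\perp$. Since $M$ is closed, Proposition~\ref{prop:double_perp_equals_closure} gives $(M^\perp)^\perp=M$, hence $\ker(P_{M^\perp})=M$.

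Next I would identify $P_{M^\perp}$ with $I-P_M$ via uniqueness of the orthogonal decomposition. Fix $x\in\Hcal$ and write, using Theorem~\ref{thm:orthogonal_projection} for $M$, $x=P_Mx+(x-P_Mx)$ with $P_Mx\in M$ and $x-P_Mx\in M^\perp$. Reading the same equality as a decomposition relative to the pair $\bigl(M^\perp,(M^\perp)^\perp\bigr)=(M^\perp,M)$ — that is, $x=(x-P_Mx)+P_Mx$ with the first summand in $M^\perp$ and the second in $(M^\perp)^\perp$ — the uniqueness clause of Theorem~\ref{thm:orthogonal_projection} (now applied to $M^\perp$) forces $P_{M^\perp}x=x-P_Mx=(I-P_M)x$. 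As $x$ was arbitrary, $P_{M^\perp}=I-P_M$.

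Finally, the orthogonal-decomposition assertion is essentially a restatement of Theorem~\ref{thm:orthogonal_projection}: every $x\in\Hcal$ is uniquely $x=m+n$ with $m\in M$ and $n\in M^\perp$, and $m\perp n$ by the very definition of $M^\perp$. Setting $m=P_Mx$ and $n=P_{M^\perp}x=(I-P_M)x$ then gives $\Hcal=M\oplus M^\perp$ together with the orthogonal splitting $x=P_Mx+(I-P_M)x$. I do not expect any genuine obstacle; the single point that needs care is the legitimate use of $(M^\perp)^\perp=M$, which is precisely where closedness of $M$ enters — without it one would only have $(M^\perp)^\perp=\overline{M}$ and the kernel identification (hence $P_{M^\perp}=I-P_M$) would no longer be valid.
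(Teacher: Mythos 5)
Your proof is correct and follows essentially the same route as the paper's: both hinge on constructing the decomposition $x=P_Mx+(x-P_Mx)$ with $P_Mx\in M$ and $x-P_Mx\in M^\perp$, and then invoking uniqueness of orthogonal decomposition (relative to $M^\perp$) to identify $I-P_M$ with $P_{M^\perp}$. Your write-up is more explicit than the paper's two-line argument — in particular, you correctly isolate and cite the ingredients the paper leaves implicit: that $M^\perp$ is itself closed so Theorem~\ref{thm:orthogonal_projection} applies to it, and that $(M^\perp)^\perp=M$ (Proposition~\ref{prop:double_perp_equals_closure}, requiring closedness of $M$) is needed so that $P_Mx$ really lies in $\ker(P_{M^\perp})=(M^\perp)^\perp$.
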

\begin{proof}
By Theorem~\ref{thm:orthogonal_projection}, $(I-P_M)x=x-P_Mx\in M^\perp$ and $P_Mx\in M$.
Uniqueness of orthogonal decomposition implies $(I-P_M)$ coincides with $P_{M^\perp}$.
\end{proof}

\begin{theorem}[Every orthogonal projector comes from a closed subspace]\label{thm:proj_converse}
Let $P\in \mathcal B(\Hcal)$ satisfy $P^2=P$ and $P^*=P$.
Then $\mathrm{ran}(P)$ is closed and $P$ is the orthogonal projector onto $\mathrm{ran}(P)$.
Equivalently, $P=P_{\mathrm{ran}(P)}$.
\end{theorem}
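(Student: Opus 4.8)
The plan is to run the standard ``idempotent $+$ self-adjoint $\Rightarrow$ orthogonal projection'' argument in three movements: (i) identify $\ker P$ and $\mathrm{ran}(P)$ as mutual orthogonal complements; (ii) deduce that $\mathrm{ran}(P)$ is closed; (iii) invoke the uniqueness clause of the orthogonal projection theorem to conclude $P=P_{\mathrm{ran}(P)}$. This is of course the same content as Theorem~\ref{thm:projectors}, so one could simply cite it; I spell out the self-contained argument here.

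First I would record the key consequence of idempotence: $P$ restricts to the identity on its range. Indeed, if $y\in\mathrm{ran}(P)$, say $y=Px$, then $Py=P^{2}x=Px=y$; conversely $Py=y$ forces $y\in\mathrm{ran}(P)$. Hence $\mathrm{ran}(P)=\ker(I-P)$, and since $I-P\in\mathcal B(\Hcal)$ is bounded, its kernel is closed (it is the preimage of $\{0\}$ under a continuous map). This settles step (ii) at once, and is precisely where $P^{2}=P$ is essential.

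Next I would compute $\mathrm{ran}(P)^{\perp}$ using $P^{*}=P$. For $z\in\ker P$ and $y=Px\in\mathrm{ran}(P)$ we get $\langle z\mid y\rangle=\langle z\mid Px\rangle=\langle Pz\mid x\rangle=0$, so $\ker P\subseteq\mathrm{ran}(P)^{\perp}$. Conversely, if $w\perp\mathrm{ran}(P)$ then $\|Pw\|^{2}=\langle Pw\mid Pw\rangle=\langle w\mid P^{*}Pw\rangle=\langle w\mid P^{2}w\rangle=\langle w\mid Pw\rangle=0$, using $Pw\in\mathrm{ran}(P)$, hence $Pw=0$ and $w\in\ker P$. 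Therefore $\ker P=\mathrm{ran}(P)^{\perp}$.

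Finally, for arbitrary $x\in\Hcal$ write $x=Px+(I-P)x$. Then $Px\in\mathrm{ran}(P)$, while $P(I-P)x=Px-P^{2}x=0$, so $(I-P)x\in\ker P=\mathrm{ran}(P)^{\perp}$. Since $\mathrm{ran}(P)$ is a closed subspace, Theorem~\ref{thm:orthogonal_projection} guarantees this is the \emph{unique} decomposition of $x$ into a component in $\mathrm{ran}(P)$ and a component in $\mathrm{ran}(P)^{\perp}$, and its first component is by definition $P_{\mathrm{ran}(P)}x$; comparing yields $P_{\mathrm{ran}(P)}x=Px$ for every $x$, i.e.\ $P=P_{\mathrm{ran}(P)}$. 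The only genuine obstacle is the closedness of the range, and the identification $\mathrm{ran}(P)=\ker(I-P)$ dispatches it; the remaining steps are routine bookkeeping with the two defining identities $P^{2}=P$ and $P^{*}=P$.
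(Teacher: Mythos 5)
Your proof is correct. It follows the same overall strategy as the paper's (establish $\ker P=\mathrm{ran}(P)^{\perp}$, exhibit the decomposition $x=Px+(I-P)x$, and appeal to uniqueness in Theorem~\ref{thm:orthogonal_projection}), but your route to the closedness of $\mathrm{ran}(P)$ is genuinely different and noticeably cleaner. The paper first shows $\ker(P)$ is closed, identifies $\ker(P)=\mathrm{ran}(P)^{\perp}$, verifies the algebraic direct sum $\Hcal=\mathrm{ran}(P)\oplus\ker(P)$, and only then deduces that $\mathrm{ran}(P)$ is closed because it is the algebraic complement of a closed subspace --- a step that, to be airtight, really uses $\mathrm{ran}(P)=(\mathrm{ran}(P)^{\perp})^{\perp}$. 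You instead observe directly that $\mathrm{ran}(P)=\ker(I-P)$, which is closed because $I-P$ is a bounded (hence continuous) operator; this settles closedness in one line using only $P^{2}=P$, before any self-adjointness enters, and lets you invoke the existence part of Theorem~\ref{thm:orthogonal_projection} from the start. What the paper's route buys is an explicit verification of the full orthogonal decomposition of $\Hcal$ along the way, but for the purposes of this theorem your shortcut is the more economical argument. (Your remark that this reproduces Theorem~\ref{thm:projectors} and could simply be cited is also accurate --- that appendix proof also takes the paper's direct-sum route to closedness, so your alternative applies there too.)
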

\begin{proof}
Let $M:=\mathrm{ran}(P)$. First, note that $\ker(P)=\mathrm{Ran}(I-P)$, hence $\ker(P)$ is closed
(since it is the preimage of $\{0\}$ under the continuous map $P$).

\emph{Claim 1: $\ker(P)=M^\perp$.}
If $x\in \ker(P)$ and $y\in M$, write $y=Pz$. Then
\[
\langle x\mid y\rangle=\langle x\mid Pz\rangle=\langle P^*x\mid z\rangle=\langle Px\mid z\rangle=0,
\]
so $x\in M^\perp$, hence $\ker(P)\subset M^\perp$.
Conversely, if $x\in M^\perp$, then for any $z$ we have
$\langle x\mid Pz\rangle=0$, i.e. $\langle x\mid Pz\rangle=\langle P^*x\mid z\rangle=\langle Px\mid z\rangle=0$
for all $z$, so $Px=0$ and $x\in\ker(P)$. Thus $\ker(P)=M^\perp$.

\emph{Claim 2: $\Hcal=M\oplus M^\perp$.}
For any $x\in\Hcal$, write $x=Px+(I-P)x$ with $Px\in M$ and $(I-P)x\in\ker(P)=M^\perp$.
If $x=m+n$ with $m\in M$ and $n\in M^\perp$, then $Pm=m$ and $Pn=0$, hence
$m=Px$ and $n=(I-P)x$, so the sum is direct and the decomposition is unique.

Since $M^\perp$ is closed, the direct sum decomposition implies $M$ is closed as well.
Finally, by uniqueness of orthogonal projections onto a closed subspace (Theorem~\ref{thm:orthogonal_projection}),
$P$ coincides with $P_M$.
\end{proof}

\begin{remark}[Interpretation cue (kept minimal)]\label{rem:filter_interpretation}
In the quantum-pricing chapters, projectors are used as algebraic models for ``event filters'':
for instance, the event ``barrier hit'' versus ``not hit'' can be idealised as a two-outcome
projection, and public information release can be modelled as a projective update
(L\"uders-type state update). The present chapter only provides the operator-theoretic tools.
\end{remark}

\subsection{Riesz representation theorem, bras and kets}\label{sec:riesz_bra_ket}

Let $\Hcal^*:=\mathcal L(\Hcal,\mathbb C)$ denote the (continuous) dual space.

\begin{definition}\label{def:f_phi}
For $\phi\in\Hcal$ define $f_\phi\in \Hcal^*$ by
\[
f_\phi(\psi)\;:=\;\langle \phi\mid \psi\rangle,\qquad \psi\in\Hcal.
\]
\end{definition}

\begin{lemma}\label{lem:fphi_bounded}
For every $\phi\in\Hcal$, the functional $f_\phi$ is continuous and
$\|f_\phi\|=\|\phi\|$.
\end{lemma}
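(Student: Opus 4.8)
The plan is to verify the two assertions — boundedness and the norm identity — in the obvious order, using only the Cauchy--Schwarz inequality and an extremizing choice of argument. First I would note that $f_\phi$ is a genuine linear functional: for $\psi_1,\psi_2\in\Hcal$ and $\lambda\in\mathbb C$, linearity of $\langle\phi\mid\cdot\rangle$ in the second slot gives $f_\phi(\lambda\psi_1+\psi_2)=\langle\phi\mid\lambda\psi_1+\psi_2\rangle=\lambda f_\phi(\psi_1)+f_\phi(\psi_2)$, so $f_\phi$ is a well-defined element of the algebraic dual.

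Next I would establish the upper bound $\|f_\phi\|\le\|\phi\|$. By the Cauchy--Schwarz inequality (Lemma on Cauchy--Schwarz recorded earlier), for every $\psi\in\Hcal$ we have $|f_\phi(\psi)|=|\langle\phi\mid\psi\rangle|\le\|\phi\|\,\|\psi\|$. Taking the supremum over $\|\psi\|=1$ shows $f_\phi$ is bounded and $\|f_\phi\|=\sup_{\|\psi\|=1}|f_\phi(\psi)|\le\|\phi\|$; in particular $f_\phi\in\Hcal^*$, so it is continuous by the equivalence of boundedness and continuity for linear maps.

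Finally I would prove the reverse inequality $\|f_\phi\|\ge\|\phi\|$ by exhibiting a unit vector on which $|f_\phi|$ attains the value $\|\phi\|$. If $\phi=0$ there is nothing to prove, since then $f_\phi\equiv0$ and $\|\phi\|=0$. If $\phi\neq0$, set $\psi_0:=\phi/\|\phi\|$, which satisfies $\|\psi_0\|=1$; then $f_\phi(\psi_0)=\langle\phi\mid\phi\rangle/\|\phi\|=\|\phi\|^2/\|\phi\|=\|\phi\|$, so $\|f_\phi\|\ge|f_\phi(\psi_0)|=\|\phi\|$. Combining the two inequalities gives $\|f_\phi\|=\|\phi\|$. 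There is no real obstacle here — the only point requiring any care is the degenerate case $\phi=0$, which must be excluded before normalizing, and keeping the convention on which argument of the inner product is linear consistent throughout.
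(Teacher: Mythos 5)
Your proof is correct and uses exactly the same strategy as the paper's: Cauchy--Schwarz for the upper bound, and evaluation at the normalized vector $\phi/\|\phi\|$ for the reverse inequality. The only additions you make — the explicit linearity check and the separate treatment of the degenerate case $\phi=0$ — are harmless elaborations that the paper leaves implicit.
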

\begin{proof}
By Cauchy--Schwarz, $|f_\phi(\psi)|=|\langle\phi\mid\psi\rangle|\le \|\phi\|\,\|\psi\|$,
so $\|f_\phi\|\le \|\phi\|$.
For the reverse inequality, if $\phi\neq 0$ take $\psi=\phi/\|\phi\|$ to obtain
$|f_\phi(\psi)|=|\langle\phi\mid \phi/\|\phi\|\rangle|=\|\phi\|$,
hence $\|f_\phi\|\ge \|\phi\|$.
\end{proof}

\begin{theorem}[Riesz representation]\label{thm:rieszunique}
Every $f\in\Hcal^*$ is of the form $f=f_\phi$ for a unique $\phi\in\Hcal$.
\end{theorem}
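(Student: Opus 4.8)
The plan is to prove surjectivity and injectivity of the assignment $\phi\mapsto f_\phi$ separately, with the existence half resting on the orthogonal decomposition $\Hcal=M\oplus M^\perp$ supplied by Theorem~\ref{thm:orthogonal_projection}. First I would dispose of \emph{uniqueness}: if $f_{\phi_1}=f_{\phi_2}$ then $\langle \phi_1-\phi_2\mid\psi\rangle=0$ for every $\psi\in\Hcal$; choosing $\psi=\phi_1-\phi_2$ and invoking positive-definiteness of the inner product forces $\phi_1=\phi_2$.

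For \emph{existence}, fix $f\in\Hcal^*$. If $f=0$ take $\phi=0$. Otherwise set $M:=\ker f$. Continuity of $f$ makes $M$ a closed linear subspace, and $f\neq 0$ makes it proper, so Theorem~\ref{thm:orthogonal_projection} applied to $M$ yields $\Hcal=M\oplus M^\perp$ with $M^\perp\neq\{0\}$; pick a unit vector $e\in M^\perp$. For an arbitrary $\psi\in\Hcal$ I would form the auxiliary vector $u:=f(\psi)\,e-f(e)\,\psi$, observe $f(u)=f(\psi)f(e)-f(e)f(\psi)=0$ so that $u\in M$, and then use $\langle e\mid u\rangle=0$ together with $\langle e\mid e\rangle=1$ to get $f(\psi)=f(e)\langle e\mid\psi\rangle$. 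Since $\langle\cdot\mid\cdot\rangle$ is conjugate-linear in its first argument, $f(e)\langle e\mid\psi\rangle=\langle\overline{f(e)}\,e\mid\psi\rangle$, so with $\phi:=\overline{f(e)}\,e$ we conclude $f(\psi)=\langle\phi\mid\psi\rangle=f_\phi(\psi)$ for all $\psi$, i.e.\ $f=f_\phi$.

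The hard part is not really an obstacle so much as the one step with genuine content: the nonvanishing of $M^\perp$ for a proper closed hyperplane, which is exactly what the orthogonal projection theorem delivers (and without which the vector $e$ could not be produced). Everything else is bookkeeping already recorded in Definition~\ref{def:f_phi} and Lemma~\ref{lem:fphi_bounded}, plus the single conjugation manoeuvre needed to slot the scalar $\overline{f(e)}$ into the conjugate-linear first argument of the inner product. As a shortcut one could instead cite the earlier Theorem~\ref{thm:riesz}, but I would prefer to run the kernel--complement argument directly so that the present chapter remains self-contained, and then simply note that $\|f\|=\|\phi\|$ follows from Lemma~\ref{lem:fphi_bounded}.
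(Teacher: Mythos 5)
Your proof follows essentially the same route as the paper's own argument for this theorem: reduce to the nonzero case, observe that $\ker f$ is a proper closed subspace so Theorem~\ref{thm:orthogonal_projection} yields a nonzero $(\ker f)^\perp$, pick a unit vector $e$ there, form an auxiliary vector in $\ker f$, and expand the inner product with $e$ to pin down the scalar. Uniqueness via nondegeneracy of the inner product is also identical.

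One detail in your favor: you handle the conjugate-linearity of the first slot correctly by taking $\phi:=\overline{f(e)}\,e$, so that $\langle\phi\mid\psi\rangle=\overline{\overline{f(e)}}\,\langle e\mid\psi\rangle=f(e)\langle e\mid\psi\rangle=f(\psi)$. The paper's own proof of this theorem writes $\phi:=f(\xi)\,\xi$ and then asserts $f(\xi)\langle\xi\mid\psi\rangle=\langle f(\xi)\xi\mid\psi\rangle$, which is false under the paper's conjugate-linear-in-first-slot convention unless $f(\xi)$ is real; the correct representing vector requires the conjugate, exactly as you wrote (and as the paper's other Riesz statement, Theorem~\ref{thm:riesz}, does get right). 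So your proposal is not only correct but slightly more careful than the paper's own write-up on this point.
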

\begin{proof}
If $f=0$, take $\phi=0$.
Assume $f\neq 0$ and set $M:=\ker(f)$, a closed subspace of $\Hcal$ (continuity of $f$).
Then by Theorem~\ref{thm:orthogonal_projection}, $\Hcal=M\oplus M^\perp$.
Since $f\neq 0$, we have $M\neq \Hcal$, hence $M^\perp\neq\{0\}$. Pick $\xi\in M^\perp$ with $\|\xi\|=1$.
Define $\phi:= f(\xi)\,\xi\in M^\perp$.

For any $\psi\in\Hcal$, consider
\[
v:= f(\xi)\psi - f(\psi)\xi.
\]
Then $f(v)=f(\xi)f(\psi)-f(\psi)f(\xi)=0$, so $v\in \ker(f)=M$.
Since $\xi\in M^\perp$, we get $\langle \xi\mid v\rangle=0$, i.e.
\[
0=\langle \xi\mid f(\xi)\psi - f(\psi)\xi\rangle
=f(\xi)\langle \xi\mid \psi\rangle - f(\psi)\langle \xi\mid\xi\rangle
=f(\xi)\langle \xi\mid \psi\rangle - f(\psi).
\]
Thus $f(\psi)=f(\xi)\langle \xi\mid \psi\rangle=\langle f(\xi)\xi\mid \psi\rangle=\langle \phi\mid \psi\rangle$,
i.e. $f=f_\phi$.

Uniqueness: if $\langle \phi_1\mid\psi\rangle=\langle \phi_2\mid\psi\rangle$ for all $\psi$,
then $\langle \phi_1-\phi_2\mid\psi\rangle=0$ for all $\psi$, hence $\phi_1=\phi_2$ by
non-degeneracy of the inner product.
\end{proof}

\begin{remark}[Riesz map and conjugate-linearity]\label{rem:riesz_map}
Define the Riesz map $R:\Hcal\to\Hcal^*$ by $R(\phi):=f_\phi=\langle \phi\mid\cdot\rangle$.
With our convention (conjugate-linear in the first slot), $R$ is \emph{conjugate-linear}:
$R(a\phi_1+\phi_2)=\overline a\,R(\phi_1)+R(\phi_2)$.
It is an isometric bijection by Lemma~\ref{lem:fphi_bounded} and Theorem~\ref{thm:rieszunique}.
\end{remark}

\begin{definition}[Bra--ket notation]\label{def:bra_ket}
For $\psi\in\Hcal$ write the vector as a \emph{ket} $|\psi\rangle:=\psi$.
For $\phi\in\Hcal$ write the functional $f_\phi\in\Hcal^*$ as a \emph{bra}
$\langle \phi|:=f_\phi$.
Then $\langle \phi|\psi\rangle$ denotes the scalar $f_\phi(\psi)=\langle \phi\mid \psi\rangle$.
\end{definition}

\begin{definition}[Rank-one / outer-product operator]\label{def:outer_product}
For $\psi,\phi\in\Hcal$ define the rank-one operator $|\psi\rangle\langle \phi|\in \mathcal B(\Hcal)$ by
\[
(|\psi\rangle\langle \phi|)(x)\;:=\;\langle \phi\mid x\rangle\,\psi,\qquad x\in\Hcal.
\]
\end{definition}

\begin{proposition}\label{prop:outer_product_properties}
For $\psi,\phi\in\Hcal$, the operator $|\psi\rangle\langle \phi|$ is bounded and satisfies
\[
\big\||\psi\rangle\langle \phi|\big\| \;=\; \|\psi\|\,\|\phi\|,
\qquad
(|\psi\rangle\langle \phi|)^* \;=\; |\phi\rangle\langle \psi|.
\]
Moreover, for all $\psi,\phi,u,v\in\Hcal$,
\[
(|\psi\rangle\langle \phi|)\,(|u\rangle\langle v|)
\;=\;
\langle \phi\mid u\rangle\;|\psi\rangle\langle v|.
\]
\end{proposition}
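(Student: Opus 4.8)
The plan is to verify all three assertions by direct computation on an arbitrary vector $x\in\Hcal$, using only the definition $(|\psi\rangle\langle\phi|)x=\langle\phi\mid x\rangle\,\psi$ from Definition~\ref{def:outer_product}, the Cauchy--Schwarz inequality (Lemma in Section~\ref{sec:riesz_bra_ket}), and bookkeeping of the inner-product convention (linear in the second slot, conjugate-linear in the first). No deep input is needed; this is the rank-one analogue of Lemma~\ref{lem:rank-one-norm}, and indeed the norm statement can be quoted verbatim from it.

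\textbf{Step 1 (boundedness and norm).} First I would estimate, for $x\in\Hcal$,
\[
\big\|(|\psi\rangle\langle\phi|)x\big\|=\big\|\langle\phi\mid x\rangle\,\psi\big\|=|\langle\phi\mid x\rangle|\,\|\psi\|\le\|\phi\|\,\|\psi\|\,\|x\|,
\]
by Cauchy--Schwarz, giving $\||\psi\rangle\langle\phi|\|\le\|\psi\|\,\|\phi\|$. For the reverse inequality, if $\phi\neq0$ take $x=\phi/\|\phi\|$, so that $(|\psi\rangle\langle\phi|)x=\|\phi\|\,\psi$ and hence the operator norm is $\ge\|\psi\|\,\|\phi\|$; if $\phi=0$ the operator is the zero map and the identity is trivial.

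\textbf{Step 2 (adjoint).} Next I would compute, for $x,y\in\Hcal$, using conjugate-linearity in the first argument,
\[
\big\langle(|\psi\rangle\langle\phi|)x\ \big|\ y\big\rangle=\big\langle\langle\phi\mid x\rangle\,\psi\ \big|\ y\big\rangle=\overline{\langle\phi\mid x\rangle}\,\langle\psi\mid y\rangle=\langle x\mid\phi\rangle\,\langle\psi\mid y\rangle,
\]
and independently
\[
\big\langle x\ \big|\ (|\phi\rangle\langle\psi|)y\big\rangle=\big\langle x\ \big|\ \langle\psi\mid y\rangle\,\phi\big\rangle=\langle\psi\mid y\rangle\,\langle x\mid\phi\rangle.
\]
Since the two expressions agree for all $x,y$, non-degeneracy of the inner product gives $(|\psi\rangle\langle\phi|)^*=|\phi\rangle\langle\psi|$.

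\textbf{Step 3 (composition).} Finally, for $x\in\Hcal$,
\[
(|\psi\rangle\langle\phi|)(|u\rangle\langle v|)x=(|\psi\rangle\langle\phi|)\big(\langle v\mid x\rangle\,u\big)=\langle v\mid x\rangle\,\langle\phi\mid u\rangle\,\psi=\langle\phi\mid u\rangle\big(|\psi\rangle\langle v|\big)x,
\]
which is the claimed identity. \textbf{Main obstacle.} There is no real obstacle; the only point requiring care is consistent tracking of which slot of $\langle\cdot\mid\cdot\rangle$ is conjugate-linear, since a sign/conjugation slip there is exactly what would break the adjoint computation in Step~2.
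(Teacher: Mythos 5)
Your proof is correct and follows essentially the same route as the paper's: Cauchy--Schwarz for the upper bound, the test vector $\phi/\|\phi\|$ for equality, the bilinear bookkeeping for the adjoint, and a direct evaluation for the composition. No discrepancy to report.
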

\begin{proof}
For $x\in\Hcal$, Cauchy--Schwarz gives
\[
\|(|\psi\rangle\langle \phi|)x\|
=|\langle \phi\mid x\rangle|\,\|\psi\|
\le \|\phi\|\,\|x\|\,\|\psi\|,
\]
hence $\||\psi\rangle\langle \phi|\|\le \|\psi\|\,\|\phi\|$.
Equality holds by testing on $x=\phi/\|\phi\|$ (if $\phi\neq 0$; otherwise both sides are $0$).

For the adjoint, for $x,y\in\Hcal$,
\[
\langle (|\psi\rangle\langle \phi|)x\mid y\rangle
=\langle \langle \phi\mid x\rangle\psi\mid y\rangle
=\overline{\langle \phi\mid x\rangle}\,\langle \psi\mid y\rangle
=\langle x\mid \langle \psi\mid y\rangle\,\phi\rangle
=\langle x\mid (|\phi\rangle\langle \psi|)y\rangle,
\]
so $(|\psi\rangle\langle \phi|)^*=|\phi\rangle\langle \psi|$.
Finally,
\[
(|\psi\rangle\langle \phi|)(|u\rangle\langle v|)x
=(|\psi\rangle\langle \phi|)\big(\langle v\mid x\rangle u\big)
=\langle v\mid x\rangle\,\langle \phi\mid u\rangle\,\psi
=\big(\langle \phi\mid u\rangle\,|\psi\rangle\langle v|\big)x.
\]
\end{proof}

\begin{corollary}\label{cor:rank_one_orth_proj}
If $e\in\Hcal$ is a unit vector, then $|e\rangle\langle e|$ is the orthogonal projector onto
$\mathrm{span}\{e\}$.
\end{corollary}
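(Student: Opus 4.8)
The plan is to show that $P := |e\rangle\langle e|$ is a bounded, idempotent, self-adjoint operator and then invoke the structure theorem (Theorem~\ref{thm:proj_converse}) identifying every such operator with the orthogonal projection onto its range, after checking that this range is exactly $\mathrm{span}\{e\}$.

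First I would read off the algebraic properties of $P$ directly from Proposition~\ref{prop:outer_product_properties}. Specializing the composition identity $(|\psi\rangle\langle\phi|)(|u\rangle\langle v|)=\langle\phi\mid u\rangle\,|\psi\rangle\langle v|$ to $\psi=\phi=u=v=e$ gives $P^2=\langle e\mid e\rangle\,|e\rangle\langle e|=\|e\|^{2}P=P$, where the normalization $\|e\|=1$ is precisely what is used. Self-adjointness $P^{*}=P$ is the case $\psi=\phi=e$ of the adjoint formula $(|\psi\rangle\langle\phi|)^{*}=|\phi\rangle\langle\psi|$, and boundedness (indeed $\|P\|=\|e\|^{2}=1$) is the norm statement of the same proposition. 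Thus $P\in\mathcal B(\Hcal)$ satisfies $P^{2}=P$ and $P^{*}=P$.

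Next I would pin down the range. By Definition~\ref{def:outer_product}, $Px=\langle e\mid x\rangle\,e\in\mathrm{span}\{e\}$ for every $x\in\Hcal$, so $\mathrm{ran}(P)\subseteq\mathrm{span}\{e\}$; conversely $Pe=\langle e\mid e\rangle\,e=e$, so $e\in\mathrm{ran}(P)$ and hence $\mathrm{span}\{e\}\subseteq\mathrm{ran}(P)$. Therefore $\mathrm{ran}(P)=\mathrm{span}\{e\}$, which is one-dimensional and in particular a closed linear subspace. Applying Theorem~\ref{thm:proj_converse} to $P$ then yields that $P$ coincides with the orthogonal projector $P_{\mathrm{ran}(P)}=P_{\mathrm{span}\{e\}}$, which is exactly the claim.

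I do not expect any genuine obstacle: the proof is a sequence of substitutions into results already established. The only points deserving a word of care are (i) that the hypothesis $\|e\|=1$ is essential for idempotence — for a general nonzero $\phi$ the operator $|\phi\rangle\langle\phi|$ is merely $\|\phi\|^{2}$ times a rank-one orthogonal projector — and (ii) that the closedness assumption required by Theorem~\ref{thm:proj_converse} is automatic here because $\mathrm{span}\{e\}$ is finite-dimensional, so no density or completeness argument is needed.
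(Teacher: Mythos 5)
Your proof is correct and follows essentially the same route as the paper: both verify $P^{2}=P$ and $P^{*}=P$ via Proposition~\ref{prop:outer_product_properties} (using $\|e\|=1$), identify $\mathrm{ran}(P)=\mathrm{span}\{e\}$, and conclude via Theorem~\ref{thm:proj_converse}. You spell out the range computation and the appeal to the converse theorem a bit more explicitly than the paper does, but there is no substantive difference.
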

\begin{proof}
By Proposition~\ref{prop:outer_product_properties}, $(|e\rangle\langle e|)^*=|e\rangle\langle e|$ and
\[
(|e\rangle\langle e|)^2
=|e\rangle\langle e|e\rangle\langle e|
=\langle e\mid e\rangle\,|e\rangle\langle e|
=1\cdot |e\rangle\langle e|.
\]
Its range is $\mathrm{span}\{e\}$ by definition.
\end{proof}


\section{Projections as information-events: probability, conditioning, and update}
\label{sec:proj_as_events}

Throughout, let $\Hcal$ be a complex Hilbert space and let $\mathcal M\subset \mathcal B(\Hcal)$
be a von Neumann algebra. A (normal) state is represented by a density operator
$\rho\in \mathcal T_1(\Hcal)$ with $\rho\ge 0$ and $\Tr(\rho)=1$, and we write
\[
\rho(X)\;:=\;\Tr(\rho X),\qquad X\in \mathcal M.
\]
Let $(\mathcal N_t)_{t\ge 0}$ be a family of von Neumann subalgebras of $\mathcal M$ modeling
``available information'' at time $t$. In particular, when $\mathcal N_t$ is commutative,
its projections coincide with indicator functions of classical events in the commutative limit.

\subsection{Events as projections and Born-type probabilities}

\begin{definition}[Projections as events]\label{def:events_as_projections}
For a von Neumann algebra $\mathcal N\subset \mathcal M$, define its projection lattice
\[
\mathcal P(\mathcal N)\;:=\;\{P\in\mathcal N:\ P^2=P,\ P^*=P\}.
\]
Elements $P\in \mathcal P(\mathcal N)$ will be interpreted as (yes/no) information-events
measurable with respect to $\mathcal N$.
\end{definition}

\begin{proposition}[Born-type probability on events]\label{prop:born_probability_projection}
Let $\rho$ be a normal state on $\mathcal M$. Then for every $P\in \mathcal P(\mathcal M)$
\[
\mathbb P_\rho(P)\;:=\;\rho(P)\;=\;\Tr(\rho P)
\]
defines a probability assignment on projections:
\begin{enumerate}[label=\emph{(\roman*)}]
\item $0\le \mathbb P_\rho(P)\le 1$ for all $P\in \mathcal P(\mathcal M)$;
\item $\mathbb P_\rho(I)=1$;
\item (finite additivity on orthogonal families) if $P,Q\in \mathcal P(\mathcal M)$ and $PQ=0$, then
$\mathbb P_\rho(P+Q)=\mathbb P_\rho(P)+\mathbb P_\rho(Q)$.
\end{enumerate}
Moreover, if $\rho$ is normal and $(P_n)_{n\ge 1}\subset\mathcal P(\mathcal M)$ are pairwise orthogonal
and $\sum_{k=1}^n P_k \uparrow P$ in the strong operator topology (equivalently, $P$ is the strong
limit of partial sums), then
\[
\mathbb P_\rho(P)\;=\;\sum_{k=1}^\infty \mathbb P_\rho(P_k).
\]
\end{proposition}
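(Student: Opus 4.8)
The plan is to handle the bookkeeping items (i)--(iii) by elementary positivity and linearity of the trace, and then reduce the countable-additivity clause to the normality (monotone/dominated continuity) of the trace on $\mathcal T_1(\Hcal)^+$, in the same way as Proposition~\ref{prop:born-measure-app}. Since $\rho$ is a positive trace-class operator it is compact, so the spectral theorem for positive compact operators (used already in the square-root-of-a-density lemma above) furnishes an orthonormal basis $(e_j)$ with $\rho e_j=p_j e_j$, $p_j\ge0$, $\sum_j p_j=\Tr(\rho)=1$. This diagonalization is the one device I would use throughout.

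First I would prove (i)--(iii). For $P\in\mathcal P(\mathcal M)$, well-definedness of $\Tr(\rho P)$ is Lemma~\ref{lem:trace-ideal}, and in the chosen basis
\[
\Tr(\rho P)=\sum_j p_j\langle e_j\mid P e_j\rangle=\sum_j p_j\|P e_j\|^2,
\]
where $0\le\|Pe_j\|^2\le\|e_j\|^2=1$; hence $0\le\Tr(\rho P)\le\sum_j p_j=1$, giving (i). Taking $P=I$ gives $\Tr(\rho I)=\sum_j p_j=1$, which is (ii). For (iii), $P,Q\in\mathcal P(\mathcal M)$ with $PQ=0$ also satisfy $QP=(PQ)^\ast=0$, so $P+Q$ is again a projection, and $\mathbb P_\rho(P+Q)=\Tr(\rho(P+Q))=\Tr(\rho P)+\Tr(\rho Q)$ by linearity of the trace.

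For the final (countable-additivity) clause I would argue as follows. Set $S_n:=\sum_{k=1}^n P_k$; pairwise orthogonality makes each $S_n$ a projection with $S_n\le S_{n+1}$, and $S_n\to P$ in the strong operator topology. Using $S_nS_m=S_{\min(n,m)}$ and passing to the strong limit one gets $S_nP=PS_n=S_n$, so $R_n:=P-S_n$ is a projection, decreasing in $n$, with $R_n\to0$ strongly. Iterating (iii) gives $\Tr(\rho S_n)=\sum_{k=1}^n\mathbb P_\rho(P_k)$, so it remains to show $\Tr(\rho R_n)\to0$. But $\Tr(\rho R_n)=\sum_j p_j\|R_n e_j\|^2$, each summand tends to $0$ (strong convergence $R_n\to0$) and is dominated by $p_j$ with $\sum_j p_j=1<\infty$; dominated convergence over the index set $j$ then yields $\Tr(\rho R_n)\to0$, hence $\mathbb P_\rho(P)=\lim_n\Tr(\rho S_n)=\sum_{k\ge1}\mathbb P_\rho(P_k)$.

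The only step with genuine content is this last interchange of the limit in $n$ with the infinite diagonal sum defining the trace; this is exactly where the \emph{normality} of the state (its representability by a trace-class density) enters, and a non-normal state on $\BH$ would generally violate countable additivity. I do not anticipate any other difficulty: items (i)--(iii) are immediate from the diagonalization and linearity, and the additivity argument is a verbatim analogue of the computation already carried out for projection-valued measures in Proposition~\ref{prop:born-measure-app}.
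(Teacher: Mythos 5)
Your proof is correct and follows essentially the same route as the paper: items (i)--(iii) are immediate from positivity, normalization, and linearity, and the countable-additivity clause reduces to a monotone/dominated-continuity property of the trace pairing against a positive trace-class operator. The only stylistic difference is that the paper invokes ``normality of $\rho$'' as a black box (it proves the underlying lemma, $0\le A_n\uparrow A \Rightarrow \Tr(\rho A_n)\uparrow\Tr(\rho A)$, separately in a later appendix), whereas you inline that argument via the diagonalization $\rho=\sum_j p_j\,|e_j\rangle\langle e_j|$ and dominated convergence over the eigenindex $j$; you also phrase it with the decreasing remainders $R_n=P-S_n\downarrow 0$ rather than the increasing partial sums $S_n\uparrow P$, which is equivalent. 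Both work, and your version is somewhat more self-contained. One small bookkeeping point worth flagging: your verification that $R_n$ is a projection quietly uses $S_nP=PS_n=S_n$, which you derive by taking a strong limit in $S_nS_m=S_{\min(n,m)}$; that step is fine because multiplication on either side by a \emph{fixed} bounded operator is strongly continuous, but it deserves the one-line justification you gave.
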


\begin{proof}
\emph{(i)--(ii)} Since $0\le P\le I$ and $\rho\ge 0$ with $\Tr(\rho)=1$, we have
$0\le \Tr(\rho P)\le \Tr(\rho I)=1$, and $\Tr(\rho I)=1$.

\emph{(iii)} If $PQ=0$, then $P+Q$ is again a projection and by linearity of the trace,
$\Tr(\rho(P+Q))=\Tr(\rho P)+\Tr(\rho Q)$.

For the last claim, note that for $Q_n:=\sum_{k=1}^n P_k$ we have $0\le Q_n\uparrow P$
and, by normality of $\rho$, $\rho(Q_n)\uparrow \rho(P)$.
But $\rho(Q_n)=\sum_{k=1}^n \rho(P_k)$ by (iii), hence letting $n\to\infty$ gives the result.
\end{proof}

\subsection{Projective update and Bayes rule on the information algebra}

\begin{definition}[L\"uders update (conditioning on an event)]\label{def:luders_update}
Let $P\in \mathcal P(\mathcal M)$ satisfy $\rho(P)>0$. Define the \emph{post-event state}
$\rho^P$ by
\[
\rho^P(X)\;:=\;\frac{\Tr(P\rho P\,X)}{\Tr(\rho P)},\qquad X\in\mathcal M.
\]
Equivalently, the corresponding density operator is
\[
\rho^{\,P}\;:=\;\frac{P\rho P}{\Tr(\rho P)}\ \in\ \mathcal T_1(\Hcal).
\]
\end{definition}

\begin{lemma}\label{lem:luders_is_state}
If $P\in\mathcal P(\mathcal M)$ and $\rho(P)>0$, then $\rho^{\,P}\ge 0$ and $\Tr(\rho^{\,P})=1$.
In particular, $\rho^P$ is a well-defined normal state on $\mathcal M$.
\end{lemma}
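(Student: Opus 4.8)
The plan is to verify the two defining properties of a density operator for $\rho^{\,P}=P\rho P/\Tr(\rho P)$ and then observe that these two facts together with $P\in\mathcal M$ give a well-defined normal state on $\mathcal M$. Since $\rho(P)=\Tr(\rho P)>0$ by hypothesis, the scalar prefactor is a strictly positive real number, so it suffices to analyze the operator $P\rho P$.

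First I would establish positivity. Write $\rho=\rho^{1/2}\rho^{1/2}$ using the positive square root (which exists by the spectral theorem for positive operators, Theorem~\ref{thm:spectral}, and is itself positive and bounded). Then for any $\xi\in\Hcal$,
\[
\langle \xi\mid P\rho P\,\xi\rangle=\langle P\xi\mid \rho\,P\xi\rangle=\langle \rho^{1/2}P\xi\mid \rho^{1/2}P\xi\rangle=\|\rho^{1/2}P\xi\|^2\ge 0,
\]
using $P^*=P$ and self-adjointness of $\rho^{1/2}$. Hence $P\rho P\ge 0$, and dividing by the positive scalar $\Tr(\rho P)$ preserves positivity, so $\rho^{\,P}\ge 0$.

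Next I would check the trace-class property and normalization. Since $P\in\mathcal B(\Hcal)$ and $\rho\in\mathcal T_1(\Hcal)$, the ideal property of the trace class (Lemma~\ref{lem:trace-ideal}, equivalently Theorem~\ref{thm:T1-ideal-holder}) gives $P\rho P\in\mathcal T_1(\Hcal)$, with $\|P\rho P\|_1\le\|P\|^2\|\rho\|_1<\infty$. For the trace, cyclicity for bounded--trace-class products (the $\Tr(TS)=\Tr(ST)$ clause of Lemma~\ref{lem:trace-ideal}, or Theorem~\ref{thm:trace-cyclicity}) together with $P^2=P$ yields
\[
\Tr(P\rho P)=\Tr(P^2\rho)=\Tr(P\rho)=\Tr(\rho P),
\]
so $\Tr(\rho^{\,P})=\Tr(P\rho P)/\Tr(\rho P)=1$. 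Finally, $\rho^{\,P}\ge 0$, $\rho^{\,P}\in\mathcal T_1(\Hcal)$, and $\Tr(\rho^{\,P})=1$ are exactly the conditions in Assumption~\ref{ax:states} defining a density operator, and since $\rho^{\,P}$ is trace class the functional $X\mapsto\Tr(\rho^{\,P}X)$ is normal on $\mathcal B(\Hcal)$, hence its restriction to $\mathcal M$ is a normal state; this is precisely $\rho^P$ as given in Definition~\ref{def:luders_update}. There is no genuine obstacle here — the only point requiring minor care is invoking the correct form of cyclicity (one needs the bounded--trace-class version, not a general unbounded statement), which is already available from Lemma~\ref{lem:trace-ideal}.
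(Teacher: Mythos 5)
Your proof is correct and follows essentially the same route as the paper's: positivity of $P\rho P$, cyclicity of the trace together with $P^2=P$ for the normalization, and normality from trace-class duality. The only difference is that you unpack the ``immediate'' positivity step via the square-root factorization $\langle\xi\mid P\rho P\,\xi\rangle=\|\rho^{1/2}P\xi\|^2$ and you explicitly record the trace-class ideal estimate; both are just small elaborations of what the paper leaves implicit.
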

\begin{proof}
Positivity is immediate from $P\rho P\ge 0$.
Moreover,
\[
\Tr(\rho^{\,P})
=\frac{\Tr(P\rho P)}{\Tr(\rho P)}
=\frac{\Tr(\rho P^2)}{\Tr(\rho P)}
=\frac{\Tr(\rho P)}{\Tr(\rho P)}=1.
\]
Normality follows because $X\mapsto \Tr(\rho^{\,P}X)$ is normal for any trace-class density.
\end{proof}

\begin{proposition}[Bayes rule on a commutative information algebra]\label{prop:bayes_on_N}
Let $\mathcal N\subset\mathcal M$ be a \emph{commutative} von Neumann subalgebra, and let
$P\in \mathcal P(\mathcal N)$ satisfy $\rho(P)>0$. Then for every $Y\in\mathcal N$,
\[
\rho^P(Y)\;=\;\frac{\rho(PY)}{\rho(P)}.
\]
\end{proposition}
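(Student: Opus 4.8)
The plan is to reduce the claimed identity to a pure trace manipulation and then exploit commutativity of $\mathcal N$. By Definition~\ref{def:luders_update} the post-event state is $\rho^P(Y)=\Tr(P\rho P\,Y)/\Tr(\rho P)$, while by definition $\rho(PY)=\Tr(\rho PY)$ and $\rho(P)=\Tr(\rho P)$, the latter being strictly positive by hypothesis. Hence it suffices to prove the operator-trace identity $\Tr(P\rho P\,Y)=\Tr(\rho PY)$ for $P\in\mathcal P(\mathcal N)$ and $Y\in\mathcal N$, after which dividing by $\Tr(\rho P)>0$ gives the result.

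First I would record two elementary facts: $P^2=P$ since $P$ is a projection, and $PY=YP$ since $P$ and $Y$ both lie in the abelian algebra $\mathcal N$. Next I would invoke trace cyclicity for products of a trace-class operator with bounded operators (Theorem~\ref{thm:trace-cyclicity}): with $\rho\in\mathcal T_1(\Hcal)$ and $P,Y\in\mathcal B(\Hcal)$ one has $\Tr\bigl(P\cdot(\rho P Y)\bigr)=\Tr\bigl((\rho P Y)\cdot P\bigr)=\Tr(\rho\,PYP)$. Then rewrite $PYP=P(YP)=P(PY)=P^2Y=PY$ using the commutation and idempotence recorded above, so that $\Tr(\rho\,PYP)=\Tr(\rho PY)$. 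Combining the two displays yields $\Tr(P\rho P\,Y)=\Tr(\rho PY)$, hence $\rho^P(Y)=\rho(PY)/\rho(P)$.

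The computation is almost entirely bookkeeping; the one point that needs care — and thus the only genuine obstacle — is that $P$ and $Y$ need not commute with $\rho$, so the two outer factors $P$ in $P\rho P$ cannot simply be collapsed against $\rho$. The cancellation must instead be routed through cyclicity of the trace together with the commutation $PY=YP$ that is available precisely because $\mathcal N$ is abelian. This also makes transparent why abelianity is essential: if $\mathcal N$ (hence the information algebra) were noncommutative, $PYP$ would not simplify to $PY$ in general, and the classical Bayes formula would fail on $\mathcal N$.
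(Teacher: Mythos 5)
Your proof is correct and takes essentially the same route as the paper: both apply trace cyclicity to turn $\Tr(P\rho P\,Y)$ into $\Tr(\rho\,PYP)$ and then collapse $PYP=PY$ using $PY=YP$ (abelianity of $\mathcal N$) together with $P^2=P$. Your closing remark on why abelianity is exactly what allows $PYP$ to simplify is a nice addition but not a divergence in method.
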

\begin{proof}
Since $\mathcal N$ is commutative and $P,Y\in \mathcal N$, we have $PY=YP$ and hence $PYP=PY$.
Therefore,
\[
\rho^P(Y)
=\frac{\Tr(P\rho P\,Y)}{\Tr(\rho P)}
=\frac{\Tr(\rho\,PYP)}{\Tr(\rho P)}
=\frac{\Tr(\rho\,PY)}{\Tr(\rho P)}
=\frac{\rho(PY)}{\rho(P)}.
\]
\end{proof}

\begin{remark}[Classical limit]\label{rem:classical_limit_proj}
If $\mathcal N\simeq L^\infty(\Omega,\mathcal F,\mathbb Q)$ is realized as a commutative von Neumann algebra,
then projections $P\in\mathcal P(\mathcal N)$ correspond to indicator functions $\mathbf 1_A$
of events $A\in\mathcal F$. In this representation, the identity
$\rho^P(Y)=\rho(PY)/\rho(P)$ becomes the classical Bayes/conditioning formula
$\mathbb E[Y\mid A]=\mathbb E[\mathbf 1_A Y]/\mathbb Q(A)$ (with respect to the measure induced by $\rho$).
\end{remark}

\subsection{Spectral events and induced distributions}

\begin{theorem}[Spectral events are projections]\label{thm:spectral_events}
Let $X=X^*\in\mathcal M$ be a bounded self-adjoint observable, and let $P_X(\cdot)$ denote its
spectral measure. Then for each Borel set $\Delta\subset\mathbb R$, the operator $P_X(\Delta)$
is a projection in $\mathcal M$, and the map
\[
\mu_X^\rho(\Delta)\;:=\;\rho\big(P_X(\Delta)\big)\;=\;\Tr\big(\rho\,P_X(\Delta)\big)
\]
defines a (countably additive) probability measure on $(\mathbb R,\mathcal B(\mathbb R))$.
\end{theorem}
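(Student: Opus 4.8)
The plan is to separate the statement into three essentially independent claims: (a) for every Borel $\Delta$, $P_X(\Delta)$ is an orthogonal projection on $\Hcal$; (b) $P_X(\Delta)\in\mathcal M$; and (c) $\mu_X^\rho$ is a countably additive probability measure. Claim (a) is immediate from the spectral theorem (Theorem~\ref{thm:spectral-app}, equivalently Theorem~\ref{thm:spectral}): writing $X=\int_{\R}\lambda\,E_X(d\lambda)$ for the unique projection-valued measure attached to the bounded self-adjoint $X$, we have $P_X(\Delta)=E_X(\Delta)$, which by Definition~\ref{def:pvm-app}(ii) is an orthogonal projection; equivalently $P_X(\Delta)=\mathbf 1_\Delta(X)$ via bounded Borel functional calculus, and $\mathbf 1_\Delta$ is a real $\{0,1\}$-valued idempotent function, so $P_X(\Delta)^2=P_X(\Delta)=P_X(\Delta)^*$.

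For (b) I would argue that spectral projections of a self-adjoint element of a von Neumann algebra stay inside the algebra. Concretely, for any unitary $U$ in the commutant $\mathcal M'$ one has $UXU^*=X$, hence $U\,f(X)\,U^*=f(X)$ for every bounded Borel $f$ (uniqueness of the spectral measure transported by the conjugation $U$), in particular $U\,E_X(\Delta)\,U^*=E_X(\Delta)$; since $\mathcal M'$ is generated as a von Neumann algebra by its unitaries and $\mathcal M=\mathcal M''$ by the bicommutant theorem, this forces $P_X(\Delta)=E_X(\Delta)\in\mathcal M$. Together with (a) this places $P_X(\Delta)\in\mathcal P(\mathcal M)$ in the sense of Definition~\ref{def:events_as_projections} (and, if one prefers, Theorem~\ref{thm:proj_converse} identifies it with the orthogonal projection onto its range inside $\mathcal M$).

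For (c) I would reproduce, or simply cite, the argument of Proposition~\ref{prop:born-measure-app} (equivalently Proposition~\ref{prop:born_probability_projection}) applied to the PVM $E_X$. Since $\rho\ge0$, $\Tr(\rho)=1$, and $0\le E_X(\Delta)\le I$, Lemma~\ref{lem:trace-ideal} makes $\mu_X^\rho(\Delta):=\Tr(\rho\,E_X(\Delta))$ well defined with $0\le\mu_X^\rho(\Delta)\le1$, while $\mu_X^\rho(\R)=\Tr(\rho\,E_X(\R))=\Tr(\rho)=1$ and $\mu_X^\rho(\emptyset)=0$. Countable additivity is the only point needing care: for pairwise disjoint Borel sets $(\Delta_i)_{i\ge1}$ with union $\Delta$, Definition~\ref{def:pvm-app}(iii) gives $S_n:=\sum_{i=1}^n E_X(\Delta_i)\to E_X(\Delta)$ in the strong operator topology, and then normality of the trace on positive trace-class operators (equivalently, normality of the state $\rho$) yields $\Tr(\rho\,E_X(\Delta))=\lim_n\Tr(\rho\,S_n)=\sum_{i=1}^\infty\Tr(\rho\,E_X(\Delta_i))$.

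The main obstacle is really the only non-formal step, namely the membership $P_X(\Delta)\in\mathcal M$ in part (b): the spectral theorem as stated in the excerpt only produces $E_X(\Delta)\in\mathcal B(\Hcal)$, so the proof must supply the short commutant/bicommutant argument above rather than leave stability of $\mathcal M$ under Borel functional calculus implicit. Everything else is bookkeeping that has already been carried out (for general self-adjoint operators) in Appendix~\ref{app:qm}, so the write-up can largely defer to Proposition~\ref{prop:born-measure-app} for (c) and to Definition~\ref{def:pvm-app} for (a).
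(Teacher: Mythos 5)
Your proof is correct, and for the projection property and the probability-measure claim it follows the same route as the paper: the projection property is read off the spectral theorem, and countable additivity is obtained from normality of the state along the same lines as Proposition~\ref{prop:born_probability_projection} (equivalently the Appendix version). What you add, however, is a genuine contribution: the paper's own proof merely writes ``by the spectral theorem, $P_X(\Delta)$ is a projection,'' which only places $P_X(\Delta)$ in $\mathcal B(\Hcal)$ and is silent on the membership $P_X(\Delta)\in\mathcal M$ that the statement actually asserts. Your commutant/bicommutant argument closes that gap correctly: any unitary $U\in\mathcal M'$ fixes $X$ under conjugation, hence, since $\Delta\mapsto U E_X(\Delta)U^*$ is again the spectral measure of $UXU^*=X$, uniqueness forces $U E_X(\Delta)U^*=E_X(\Delta)$; as $\mathcal M'$ is spanned by its unitaries, $E_X(\Delta)\in(\mathcal M')'=\mathcal M''=\mathcal M$ by the bicommutant theorem. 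You have correctly identified, and repaired, the one non-formal step that the paper's terse proof leaves implicit.
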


\begin{proof}
By the spectral theorem, $P_X(\Delta)$ is a projection and $\Delta\mapsto P_X(\Delta)$ is
countably additive in the strong operator topology.
Since $\rho$ is normal, $\Delta\mapsto \rho(P_X(\Delta))$ is countably additive, and by
Proposition~\ref{prop:born_probability_projection} it takes values in $[0,1]$ with
$\mu_X^\rho(\mathbb R)=\rho(I)=1$.
\end{proof}

\subsection{Pricing interpretation (kept minimal)}

\begin{remark}[Conditional pricing via event projections]\label{rem:conditional_pricing}
In the quantum-pricing chapters, (discounted) payoffs are modeled as self-adjoint observables
$X=X^*\in\mathcal M$. An information-event available at time $t$ is represented by a projection
$P\in\mathcal P(\mathcal N_t)$. The post-event (conditional) price functional is obtained by
the state update in Definition~\ref{def:luders_update}:
\[
\pi_t(X\,|\,P)\;:=\;\rho_t^P(X)\;=\;\frac{\Tr(P\rho_t P\,X)}{\Tr(\rho_t P)},
\]
which reduces on $\mathcal N_t$ to the classical Bayes formula in
Proposition~\ref{prop:bayes_on_N}.
\end{remark}

\section{Measure Theory and Integration}

\subsection{Measurable spaces, Borel sets, and measures}

\begin{definition}[$\sigma$-algebra; measurable space]
Let $M$ be a set. A collection $\Sigma\subseteq \mathcal P(M)$ is a \emph{$\sigma$-algebra} if
\begin{enumerate}[label=(\roman*)]
\item $M\in\Sigma$;
\item $A\in\Sigma \Rightarrow M\setminus A\in\Sigma$;
\item $(A_n)_{n\ge1}\subseteq\Sigma \Rightarrow \bigcup_{n=1}^\infty A_n\in\Sigma$.
\end{enumerate}
The pair $(M,\Sigma)$ is called a \emph{measurable space}. Sets in $\Sigma$ are called \emph{measurable}.
\end{definition}

\begin{definition}[Generated $\sigma$-algebra]
Let $E\subseteq \mathcal P(M)$. The \emph{$\sigma$-algebra generated by $E$}, denoted $\sigma(E)$, is the smallest
$\sigma$-algebra on $M$ containing $E$.
\end{definition}

\begin{definition}[Borel $\sigma$-algebra]
Let $M$ be a topological space with topology $\mathcal O$. The \emph{Borel $\sigma$-algebra} on $M$ is
\[
\Bcal(M):=\sigma(\mathcal O).
\]
In particular, $\Bcal(\R)$ denotes the Borel $\sigma$-algebra on $\R$ with its standard topology.
\end{definition}

\begin{lemma}[Borel generators on $\R$]
One has
\[
\Bcal(\R)=\sigma\bigl(\{(-\infty,a):a\in\R\}\bigr)=\sigma\bigl(\{(a,\infty):a\in\R\}\bigr).
\]
\end{lemma}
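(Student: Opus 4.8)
The plan is to establish the two claimed generator identities by a short chain of set-theoretic inclusions, using only the definition $\Bcal(\R)=\sigma(\mathcal O)$ (with $\mathcal O$ the standard topology on $\R$) together with the fact that $\R$ is second countable. Write $\mathcal G_-:=\{(-\infty,a):a\in\R\}$ and $\mathcal G_+:=\{(a,\infty):a\in\R\}$. First I would record the easy inclusions: each half-line $(-\infty,a)$ and each $(a,\infty)$ is open, so $\mathcal G_-\subseteq\mathcal O$ and $\mathcal G_+\subseteq\mathcal O$, whence $\sigma(\mathcal G_-)\subseteq\Bcal(\R)$ and $\sigma(\mathcal G_+)\subseteq\Bcal(\R)$.

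For the reverse inclusion $\Bcal(\R)\subseteq\sigma(\mathcal G_-)$, the key intermediate step is to show that every open subset of $\R$ already lies in $\sigma(\mathcal G_-)$, since then $\Bcal(\R)=\sigma(\mathcal O)\subseteq\sigma(\mathcal G_-)$. I would proceed in two moves. First, every bounded open interval $(a,b)$ with $a<b$ belongs to $\sigma(\mathcal G_-)$: indeed $(-\infty,a]=\bigcap_{n\ge1}(-\infty,a+1/n)\in\sigma(\mathcal G_-)$, hence its complement $(a,\infty)\in\sigma(\mathcal G_-)$, and therefore $(a,b)=(-\infty,b)\cap(a,\infty)\in\sigma(\mathcal G_-)$. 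Second, since $\R$ is second countable the family of open intervals with rational endpoints is a countable base, so any open $U\subseteq\R$ is a countable union of such intervals (for $x\in U$ choose rationals $p<x<q$ with $(p,q)\subseteq U$), hence $U\in\sigma(\mathcal G_-)$. This yields $\mathcal O\subseteq\sigma(\mathcal G_-)$ and thus $\Bcal(\R)=\sigma(\mathcal G_-)$.

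It then remains to identify $\sigma(\mathcal G_+)$ with $\sigma(\mathcal G_-)$. From the previous step we already know $(a,\infty)\in\sigma(\mathcal G_-)$ for every $a$, so $\mathcal G_+\subseteq\sigma(\mathcal G_-)$ and hence $\sigma(\mathcal G_+)\subseteq\sigma(\mathcal G_-)$. For the opposite direction, note $(-\infty,a)=\bigcup_{n\ge1}\bigl((a-1/n,\infty)\bigr)^{c}\in\sigma(\mathcal G_+)$, so $\mathcal G_-\subseteq\sigma(\mathcal G_+)$ and $\sigma(\mathcal G_-)\subseteq\sigma(\mathcal G_+)$. Combining all inclusions gives $\sigma(\mathcal G_-)=\sigma(\mathcal G_+)=\Bcal(\R)$, which is the assertion.

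The only non-formal ingredient is the second-countability step, namely that every open subset of $\R$ is a countable union of rational-endpoint open intervals; everything else is routine manipulation of countable unions, intersections and complements inside a $\sigma$-algebra. Accordingly, the one place I would take care to phrase cleanly is the rational-base argument, since a careless statement there is the only way the proof could leave a gap.
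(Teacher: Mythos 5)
Your proof is correct and follows essentially the same route as the paper's: show half-lines generate all bounded open intervals via countable unions/intersections and complements, then use the rational-endpoint base to cover all open sets, and finally pass between $\mathcal G_-$ and $\mathcal G_+$ by complements. The only differences are cosmetic (you introduce $(-\infty,a]$ as an intermediate step where the paper writes $(u,\infty)=\bigcup_n(-\infty,u+1/n)^c$ directly, and you spell out the last step that the paper dispatches with ``follows by complements'').
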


\begin{proof}
Let $\mathcal G:=\{(-\infty,a):a\in\R\}$. Since each $(-\infty,a)$ is open, $\sigma(\mathcal G)\subseteq \Bcal(\R)$.
Conversely, every open interval $(u,v)$ can be written as $(u,\infty)\cap(-\infty,v)$, and
$(u,\infty)=\bigcup_{n=1}^\infty(-\infty,u+1/n)^c$ belongs to $\sigma(\mathcal G)$.
Hence every open interval belongs to $\sigma(\mathcal G)$, and therefore every open set (as a countable union of
open intervals with rational endpoints) belongs to $\sigma(\mathcal G)$. Thus $\Bcal(\R)\subseteq\sigma(\mathcal G)$.
The equality with $\sigma(\{(a,\infty)\})$ follows by complements.
\end{proof}

\begin{definition}[Measure; probability measure]
Let $(M,\Sigma)$ be a measurable space. A map $\mu:\Sigma\to[0,\infty]$ is a \emph{measure} if
\begin{enumerate}[label=(\roman*)]
\item $\mu(\varnothing)=0$;
\item for pairwise disjoint $(A_n)_{n\ge1}\subseteq\Sigma$,
\[
\mu\Big(\bigcup_{n=1}^\infty A_n\Big)=\sum_{n=1}^\infty \mu(A_n).
\]
\end{enumerate}
The triple $(M,\Sigma,\mu)$ is a \emph{measure space}. If $\mu(M)=1$, then $\mu$ is a \emph{probability measure}.
\end{definition}

\begin{proposition}[Continuity from below and above]
Let $\mu$ be a measure on $(M,\Sigma)$.
\begin{enumerate}[label=(\roman*)]
\item If $A_n\uparrow A$ (i.e.\ $A_n\subseteq A_{n+1}$ and $A=\bigcup_n A_n$), then
\[
\mu(A_n)\uparrow \mu(A).
\]
\item If $A_n\downarrow A$ (i.e.\ $A_{n+1}\subseteq A_n$ and $A=\bigcap_n A_n$) and $\mu(A_1)<\infty$, then
\[
\mu(A_n)\downarrow \mu(A).
\]
\end{enumerate}
\end{proposition}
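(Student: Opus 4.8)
The plan is to reduce part (i) to countable additivity via a disjointification, and then reduce part (ii) to part (i) by passing to complements inside $A_1$.

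For part (i), I would first record the monotonicity property: if $B\subseteq C$ are measurable then $\mu(B)\le\mu(C)$, which follows by writing $C=B\cup(C\setminus B)$ as a disjoint union and applying finite additivity together with nonnegativity of $\mu$. This already guarantees that $(\mu(A_n))_n$ is nondecreasing, so the symbol ``$\uparrow$'' on the left-hand side is meaningful. Next I would set $B_1:=A_1$ and $B_n:=A_n\setminus A_{n-1}$ for $n\ge 2$. Using $A_{n-1}\subseteq A_n$ one checks that the $B_n$ are pairwise disjoint, that $\bigcup_{k=1}^n B_k=A_n$, and that $\bigcup_{k=1}^\infty B_k=\bigcup_{n=1}^\infty A_n=A$. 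Countable additivity then yields $\mu(A)=\sum_{k=1}^\infty\mu(B_k)=\lim_{n\to\infty}\sum_{k=1}^n\mu(B_k)=\lim_{n\to\infty}\mu(A_n)$, which is precisely $\mu(A_n)\uparrow\mu(A)$.

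For part (ii), I would apply part (i) to the increasing sets $C_n:=A_1\setminus A_n$. Since $A_{n+1}\subseteq A_n\subseteq A_1$, we have $C_n\subseteq C_{n+1}$ and $\bigcup_n C_n=A_1\setminus\bigcap_n A_n=A_1\setminus A$, so part (i) gives $\mu(C_n)\uparrow\mu(A_1\setminus A)$. The hypothesis $\mu(A_1)<\infty$ is what lets me use the finite-difference identities $\mu(A_1)=\mu(A_n)+\mu(C_n)$ and $\mu(A_1)=\mu(A)+\mu(A_1\setminus A)$ (valid by finite additivity applied to the disjoint decompositions of $A_1$, with all terms finite by monotonicity), so that no $\infty-\infty$ arises. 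Substituting, $\mu(A_1)-\mu(A_n)\uparrow\mu(A_1)-\mu(A)$, and subtracting these quantities from the finite constant $\mu(A_1)$ gives $\mu(A_n)\downarrow\mu(A)$.

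The only genuine obstacle is the bookkeeping around finiteness in part (ii): the subtraction $\mu(A_1)-\mu(A_n)$ and the identity $\mu(A_1)=\mu(A_n)+\mu(C_n)$ both require $\mu(A_n)<\infty$, which is exactly where $\mu(A_1)<\infty$ and monotonicity enter. I would also note explicitly that this hypothesis cannot be dropped, since for Lebesgue measure on $\R$ the sets $A_n=[n,\infty)$ satisfy $A_n\downarrow\varnothing$ yet $\mu(A_n)=\infty$ for all $n$. Everything else is routine manipulation of disjoint unions and the two defining axioms of a measure.
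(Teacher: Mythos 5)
Your proof is correct and takes essentially the same approach as the paper: part (i) via disjointification of $A_n$ into annuli $B_n=A_n\setminus A_{n-1}$ and countable additivity, and part (ii) by reduction to part (i) through the increasing sets $A_1\setminus A_n$, using $\mu(A_1)<\infty$ to justify the subtractions. The extra care you take with the finiteness bookkeeping and the counterexample $A_n=[n,\infty)$ are worthwhile additions but do not change the argument.
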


\begin{proof}
(i) Set $B_1:=A_1$ and $B_n:=A_n\setminus A_{n-1}$ for $n\ge2$. Then $B_n$ are disjoint and
$\bigcup_{k=1}^n B_k=A_n$, while $\bigcup_{k=1}^\infty B_k=A$. Hence
\[
\mu(A_n)=\sum_{k=1}^n \mu(B_k)\uparrow \sum_{k=1}^\infty \mu(B_k)=\mu(A).
\]
(ii) Apply (i) to complements: $A_n^c\uparrow A^c$, and note $\mu(A_n)=\mu(A_1)-\mu(A_1\setminus A_n)$ with
$\mu(A_1)<\infty$.
\end{proof}

\subsection{Measurable maps and push-forward}

\begin{definition}[Measurable map]
Let $(M,\Sigma_M)$ and $(N,\Sigma_N)$ be measurable spaces. A map $f:M\to N$ is \emph{measurable} if
$f^{-1}(B)\in\Sigma_M$ for every $B\in\Sigma_N$.
\end{definition}

\begin{lemma}[Generator criterion]
Let $E\subseteq\mathcal P(N)$ and assume $\Sigma_N=\sigma(E)$. Then $f:M\to N$ is measurable iff
$f^{-1}(B)\in\Sigma_M$ for every $B\in E$.
\end{lemma}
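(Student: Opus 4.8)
The plan is to reduce the criterion to the defining minimality property of $\sigma(E)$ via a ``good sets'' argument. The forward implication is immediate: if $f$ is measurable, then by definition $f^{-1}(B)\in\Sigma_M$ for every $B\in\Sigma_N$, and in particular for every $B\in E\subseteq\Sigma_N$. So the content is entirely in the converse.

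For the converse, first I would introduce the collection
\[
\mathcal A:=\{\,B\subseteq N:\ f^{-1}(B)\in\Sigma_M\,\}.
\]
The key step is to verify that $\mathcal A$ is a $\sigma$-algebra on $N$. This rests on the elementary set-theoretic identities for preimages: $f^{-1}(N)=M\in\Sigma_M$, so $N\in\mathcal A$; $f^{-1}(N\setminus B)=M\setminus f^{-1}(B)$, so $\mathcal A$ is closed under complementation since $\Sigma_M$ is; and $f^{-1}\bigl(\bigcup_{n\ge1}B_n\bigr)=\bigcup_{n\ge1}f^{-1}(B_n)$, so $\mathcal A$ is closed under countable unions because $\Sigma_M$ is. Hence $\mathcal A$ is a $\sigma$-algebra.

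By hypothesis $E\subseteq\mathcal A$. Since $\sigma(E)$ is by definition the smallest $\sigma$-algebra on $N$ containing $E$, and $\mathcal A$ is a $\sigma$-algebra containing $E$, we conclude $\Sigma_N=\sigma(E)\subseteq\mathcal A$. That is, $f^{-1}(B)\in\Sigma_M$ for every $B\in\Sigma_N$, which is precisely measurability of $f$.

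There is no real obstacle here; the proof is short and structural. The only point requiring care — and the one I would write out explicitly — is the verification that $\mathcal A$ is closed under the three $\sigma$-algebra operations, which is where the preimage identities do all the work; everything else is an appeal to the minimality of $\sigma(E)$.
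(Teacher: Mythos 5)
Your proof is correct and follows exactly the same route as the paper: define the collection of "good sets" $\{B\subseteq N: f^{-1}(B)\in\Sigma_M\}$, check it is a $\sigma$-algebra via the preimage identities, and invoke minimality of $\sigma(E)$. You simply spell out the $\sigma$-algebra verification that the paper leaves as "one checks."
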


\begin{proof}
Define $\Lambda:=\{B\subseteq N: f^{-1}(B)\in\Sigma_M\}$. One checks $\Lambda$ is a $\sigma$-algebra on $N$.
If $f^{-1}(B)\in\Sigma_M$ for all $B\in E$, then $E\subseteq\Lambda$ hence $\sigma(E)\subseteq \Lambda$, i.e.\ $\Sigma_N\subseteq\Lambda$,
which is exactly measurability.
\end{proof}

\begin{definition}[Push-forward (distribution)]
Let $(M,\Sigma_M,\mu)$ be a measure space and $(N,\Sigma_N)$ a measurable space. If $f:M\to N$ is measurable, define
the \emph{push-forward} measure $f_\#\mu$ on $(N,\Sigma_N)$ by
\[
(f_\#\mu)(B):=\mu(f^{-1}(B)),\qquad B\in\Sigma_N.
\]
\end{definition}

\begin{proposition}[Push-forward is a measure]
If $\mu$ is a measure on $(M,\Sigma_M)$ and $f$ is measurable, then $f_\#\mu$ is a measure on $(N,\Sigma_N)$.
Moreover, if $\mu$ is a probability measure then $f_\#\mu$ is a probability measure.
\end{proposition}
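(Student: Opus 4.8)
The plan is to verify directly the two defining axioms of a measure for $f_\#\mu$, using only the elementary commutation of preimages with set operations together with the measure axioms for $\mu$. First I would record the set-theoretic facts that the proof rests on: for any map $f:M\to N$ and any sequence $(B_n)_{n\ge 1}\subseteq\mathcal P(N)$ one has $f^{-1}(\varnothing)=\varnothing$, $f^{-1}\bigl(\bigcup_{n}B_n\bigr)=\bigcup_{n}f^{-1}(B_n)$, and $f^{-1}(B_n\cap B_m)=f^{-1}(B_n)\cap f^{-1}(B_m)$; these are immediate from the definition $f^{-1}(B)=\{x\in M:f(x)\in B\}$. In particular, if the $B_n$ are pairwise disjoint, then so are the preimages $f^{-1}(B_n)$.

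Next I would check that $f_\#\mu$ is well-defined as a map $\Sigma_N\to[0,\infty]$: for $B\in\Sigma_N$, measurability of $f$ gives $f^{-1}(B)\in\Sigma_M$, so $\mu(f^{-1}(B))$ is defined, and it lies in $[0,\infty]$ since $\mu$ does. Then the first measure axiom is
\[
(f_\#\mu)(\varnothing)=\mu(f^{-1}(\varnothing))=\mu(\varnothing)=0.
\]
For countable additivity, let $(B_n)_{n\ge 1}\subseteq\Sigma_N$ be pairwise disjoint and put $B:=\bigcup_{n\ge1}B_n\in\Sigma_N$. Using the preimage identities above and then countable additivity of $\mu$ applied to the pairwise disjoint family $\bigl(f^{-1}(B_n)\bigr)_{n\ge1}\subseteq\Sigma_M$, I obtain
\[
(f_\#\mu)(B)=\mu\Bigl(f^{-1}\Bigl(\bigcup_{n\ge1}B_n\Bigr)\Bigr)
=\mu\Bigl(\bigcup_{n\ge1}f^{-1}(B_n)\Bigr)
=\sum_{n\ge1}\mu\bigl(f^{-1}(B_n)\bigr)
=\sum_{n\ge1}(f_\#\mu)(B_n).
\]
This establishes that $f_\#\mu$ is a measure on $(N,\Sigma_N)$.

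Finally, for the probability statement: if $\mu(M)=1$, then since $f^{-1}(N)=M$ (every point of $M$ maps into $N$), we get $(f_\#\mu)(N)=\mu(f^{-1}(N))=\mu(M)=1$, so $f_\#\mu$ is a probability measure. There is no genuine obstacle here; the only point requiring a moment's care is the verification that preimages preserve disjointness and countable unions, which is exactly what licenses transporting the countable-additivity axiom of $\mu$ over to $f_\#\mu$.
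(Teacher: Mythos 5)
Your proof is correct and follows essentially the same route as the paper's: verify $(f_\#\mu)(\varnothing)=0$, use that preimages commute with countable unions and preserve pairwise disjointness to transport countable additivity of $\mu$, and note $f^{-1}(N)=M$ for the probability statement. You spell out the set-theoretic identities a bit more explicitly than the paper does, but the argument is identical in substance.
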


\begin{proof}
Clearly $(f_\#\mu)(\varnothing)=\mu(\varnothing)=0$. For disjoint $(B_n)\subseteq\Sigma_N$,
the sets $f^{-1}(B_n)$ are disjoint in $\Sigma_M$, hence
\[
(f_\#\mu)\Big(\bigcup_{n=1}^\infty B_n\Big)
=\mu\Big(f^{-1}\Big(\bigcup_{n=1}^\infty B_n\Big)\Big)
=\mu\Big(\bigcup_{n=1}^\infty f^{-1}(B_n)\Big)
=\sum_{n=1}^\infty \mu(f^{-1}(B_n))
=\sum_{n=1}^\infty (f_\#\mu)(B_n).
\]
If $\mu(M)=1$ then $(f_\#\mu)(N)=\mu(f^{-1}(N))=\mu(M)=1$.
\end{proof}

\subsection{Lebesgue integral}

\begin{definition}[Simple functions]
Let $(M,\Sigma)$ be a measurable space. A function $s:M\to\R$ is \emph{simple} if it takes finitely many values.
Equivalently,
\[
s=\sum_{k=1}^N a_k\,\mathbf 1_{A_k}
\]
for some $a_k\in\R$ and measurable sets $A_k\in\Sigma$ (which may be taken pairwise disjoint).
If $a_k\ge0$, we call $s$ \emph{nonnegative}.
\end{definition}

\begin{definition}[Integral of a nonnegative simple function]
Let $(M,\Sigma,\mu)$ be a measure space. If $s=\sum_{k=1}^N a_k\mathbf 1_{A_k}$ is nonnegative simple with disjoint $A_k$,
define
\[
\int_M s\,d\mu := \sum_{k=1}^N a_k\,\mu(A_k).
\]
\end{definition}

\begin{definition}[Integral of a nonnegative measurable function]
Let $f:M\to[0,\infty]$ be measurable. Define
\[
\int_M f\,d\mu := \sup\Big\{\int_M s\,d\mu:\ s \text{ nonnegative simple, } 0\le s\le f\Big\}.
\]
\end{definition}

\begin{lemma}[Monotonicity]
If $0\le f\le g$ are measurable, then $\int f\,d\mu\le \int g\,d\mu$.
\end{lemma}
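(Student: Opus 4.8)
The argument is immediate from the definition of the Lebesgue integral of a nonnegative measurable function as a supremum over subordinate nonnegative simple functions, so the plan is essentially a one-line monotonicity-of-suprema observation. First I would fix the measure space $(M,\Sigma,\mu)$ and recall that, by definition,
\[
\int_M f\,d\mu=\sup\Big\{\int_M s\,d\mu:\ s\text{ nonnegative simple},\ 0\le s\le f\Big\},
\]
and similarly for $g$. The key step is then to note that the two supremum sets are nested: if $s$ is any nonnegative simple function with $0\le s\le f$ pointwise on $M$, then since $f\le g$ pointwise we also have $0\le s\le g$ pointwise, so $s$ is admissible in the defining supremum for $\int_M g\,d\mu$ as well.

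Concretely, writing
\[
\mathcal S_f:=\{s\text{ nonneg.\ simple}:0\le s\le f\},\qquad
\mathcal S_g:=\{s\text{ nonneg.\ simple}:0\le s\le g\},
\]
the previous observation gives $\mathcal S_f\subseteq \mathcal S_g$. Taking the supremum of the functional $s\mapsto\int_M s\,d\mu$ over the smaller set $\mathcal S_f$ can only produce a value no larger than the supremum over $\mathcal S_g$, hence $\int_M f\,d\mu\le\int_M g\,d\mu$, which is the claim. (One should only be mildly careful that $\mathcal S_f$ is nonempty — the zero simple function always belongs to it — so the supremum is taken over a nonempty set in $[0,\infty]$ and the comparison is legitimate.)

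There is no substantive obstacle here: the statement is a formal consequence of the definition, and the only thing to check is that the pointwise inequalities $0\le f\le g$ are genuinely used to transfer admissibility of subordinate simple functions from $f$ to $g$. I would therefore keep the write-up to a few lines and avoid introducing any machinery (no need for the Monotone Convergence Theorem or for simple-function approximation), since those would be circular or unnecessarily heavy for this elementary comparison.
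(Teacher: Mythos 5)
Your proof is correct and follows exactly the same route as the paper's: observe that every nonnegative simple function subordinate to $f$ is also subordinate to $g$, so the defining supremum for $\int f\,d\mu$ is taken over a subset of the collection defining $\int g\,d\mu$, and monotonicity of suprema gives the inequality. The extra remark that $\mathcal S_f$ is nonempty (it contains the zero function) is a reasonable bit of care, though the paper's one-line proof omits it as obvious.
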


\begin{proof}
Every nonnegative simple $s\le f$ also satisfies $s\le g$. Taking suprema in the defining formula yields the claim.
\end{proof}

\subsection{Simple approximation of measurable functions (self-contained)}

\begin{lemma}[Nonnegative measurable functions admit increasing simple approximations]\label{lem:simple_monotone_approx}
Let $(M,\Sigma)$ be a measurable space and let $f:M\to[0,\infty]$ be $\Sigma$-measurable.
Then there exists a sequence of nonnegative simple $\Sigma$-measurable functions $(s_n)_{n\ge1}$ such that
\begin{enumerate}[label=(\roman*)]
\item $0\le s_n \le s_{n+1}\le f$ pointwise on $M$;
\item $s_n(x)\uparrow f(x)$ for every $x\in M$.
\end{enumerate}
Moreover, one may choose each $s_n$ to take values in the finite dyadic grid
$\{k2^{-n}: k=0,1,\dots,n2^n\}$ and to satisfy $0\le s_n\le n$.
\end{lemma}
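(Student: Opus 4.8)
The plan is to exhibit the approximating sequence explicitly by dyadic truncation of $f$ and then verify the required properties in turn. For each $n\ge 1$ I would define $s_n:M\to[0,\infty)$ by
\[
s_n(x):=\begin{cases} k\,2^{-n}, & \text{if } k\,2^{-n}\le f(x)<(k+1)2^{-n}\ \text{for some }k\in\{0,1,\dots,n2^n-1\},\\[2pt] n, & \text{if } f(x)\ge n,\end{cases}
\]
equivalently $s_n(x)=\min\bigl(n,\ 2^{-n}\lfloor 2^n f(x)\rfloor\bigr)$, with the convention that the right-hand side equals $n$ when $f(x)=+\infty$. This is the standard "staircase" construction; the task is then purely bookkeeping.

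First I would record simplicity and measurability: $s_n$ is a finite linear combination of indicators of the sets $\{k2^{-n}\le f<(k+1)2^{-n}\}$ ($k=0,\dots,n2^n-1$) and $\{f\ge n\}$, each of which lies in $\Sigma$ since $f$ is $\Sigma$-measurable and these are $f$-preimages of Borel subsets of $[0,\infty]$. By construction $s_n$ takes values in $\{k2^{-n}:k=0,\dots,n2^n\}$ and satisfies $0\le s_n\le n$. The inequality $s_n\le f$ is immediate from the two defining cases: in the first case $s_n(x)=k2^{-n}\le f(x)$, and in the second $s_n(x)=n\le f(x)$.

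The substantive step — and the one I expect to be the main obstacle, though only because it is easy to fumble — is monotonicity $s_n\le s_{n+1}$. I would argue pointwise, fixing $x$ and distinguishing cases by the size of $f(x)$. If $f(x)\ge n+1$ then $s_n(x)=n\le n+1=s_{n+1}(x)$. If $n\le f(x)<n+1$ then $s_n(x)=n$ while $s_{n+1}(x)=2^{-(n+1)}\lfloor 2^{n+1}f(x)\rfloor\ge 2^{-(n+1)}\cdot n2^{n+1}=n$. If $f(x)<n$, both values are given by the floor formula, so it suffices to note the elementary dyadic refinement inequality $2^{-n}\lfloor 2^n t\rfloor\le 2^{-(n+1)}\lfloor 2^{n+1}t\rfloor$ for all $t\ge 0$: writing $m:=\lfloor 2^n t\rfloor$, one has $\lfloor 2^{n+1}t\rfloor\in\{2m,2m+1\}$, hence $\ge 2m$, which gives the claim. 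This yields $0\le s_n\le s_{n+1}\le f$ for all $n$, proving (i) and the stated refinements on the range and the bound.

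It remains to check the pointwise convergence (ii). If $f(x)=+\infty$ then $s_n(x)=n\uparrow+\infty=f(x)$. If $f(x)<\infty$, then for every integer $n>f(x)$ we are in the floor regime and $0\le f(x)-s_n(x)=f(x)-2^{-n}\lfloor 2^n f(x)\rfloor<2^{-n}\to 0$, so $s_n(x)\uparrow f(x)$. Combined with the monotonicity established above, this completes the proof. Apart from the refinement inequality in the monotonicity step, every step is an immediate consequence of the definition.
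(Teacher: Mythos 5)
Your proof is correct and uses the same dyadic staircase construction as the paper; the only cosmetic difference is in the monotonicity step, where the paper characterizes $s_n(x)$ as the maximum of a nested family of finite sets $E_n(x)$, whereas you argue directly by case analysis on the size of $f(x)$ together with the floor refinement inequality $2^{-n}\lfloor 2^n t\rfloor \le 2^{-(n+1)}\lfloor 2^{n+1}t\rfloor$. Both verifications are valid and of comparable length, so the proofs are essentially identical.
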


\begin{proof}
Fix $n\in\mathbb N$. For $k=0,1,\dots,n2^n-1$, define the measurable sets
\[
A_{k,n}:=\Big\{x\in M:\ \frac{k}{2^n}\le f(x) < \frac{k+1}{2^n}\Big\}\in\Sigma,
\]
and define also
\[
A_{n2^n,n}:=\{x\in M:\ f(x)\ge n\}\in\Sigma.
\]
(These are measurable because $f$ is measurable and the intervals are Borel.)

Define
\[
s_n(x):=\sum_{k=0}^{n2^n-1}\frac{k}{2^n}\,\mathbf 1_{A_{k,n}}(x)\;+\; n\,\mathbf 1_{A_{n2^n,n}}(x).
\]
Then $s_n$ is a nonnegative simple measurable function, takes values in
$\{k2^{-n}:k=0,1,\dots,n2^n\}$, and satisfies $0\le s_n\le n$.

\emph{Step 1: $s_n\le f$ pointwise.}
If $x\in A_{k,n}$ for some $k<n2^n$, then $f(x)\ge k2^{-n}=s_n(x)$.
If $x\in A_{n2^n,n}$, then $f(x)\ge n=s_n(x)$.
Thus $s_n(x)\le f(x)$ for all $x$.

\emph{Step 2: Monotonicity $s_n\le s_{n+1}$.}
Fix $x\in M$. Consider the finite set
\[
E_n(x):=\Big\{\frac{k}{2^n}: k=0,1,\dots,n2^n,\ \frac{k}{2^n}\le f(x)\Big\}.
\]
By construction, $s_n(x)=\max E_n(x)$ (equivalently, the supremum, since the set is finite).
Now note that for every element $\frac{k}{2^n}\in E_n(x)$, the number
\[
\frac{2k}{2^{n+1}}=\frac{k}{2^n}
\]
also belongs to the dyadic grid for level $n+1$; moreover $2k\le 2n2^n = n2^{n+1}\le (n+1)2^{n+1}$,
so this same value is admissible at level $n+1$ with the larger truncation.
Since $\frac{k}{2^n}\le f(x)$, we also have $\frac{2k}{2^{n+1}}\le f(x)$, hence
$\frac{k}{2^n}\in E_{n+1}(x)$. Therefore $E_n(x)\subseteq E_{n+1}(x)$ and thus
\[
s_n(x)=\max E_n(x)\le \max E_{n+1}(x)=s_{n+1}(x).
\]

\emph{Step 3: Pointwise convergence $s_n(x)\uparrow f(x)$.}
Fix $x\in M$.

If $f(x)=\infty$, then for every $n$ we have $x\in A_{n2^n,n}$ and thus $s_n(x)=n\uparrow\infty=f(x)$.

If $f(x)<\infty$, choose $N$ such that $N>f(x)$. Then for all $n\ge N$ we have $f(x)<n$, hence $x\notin A_{n2^n,n}$,
so $x\in A_{k,n}$ for a unique $k\in\{0,1,\dots,n2^n-1\}$ and
\[
\frac{k}{2^n}\le f(x) < \frac{k+1}{2^n}.
\]
Therefore
\[
0\le f(x)-s_n(x) < 2^{-n},
\]
which implies $s_n(x)\to f(x)$ as $n\to\infty$. Together with Step 2, this gives $s_n(x)\uparrow f(x)$.
\end{proof}

\begin{corollary}[Pointwise simple approximation: real/complex]\label{cor:simple_pointwise_general_fixed}
Let $(M,\Sigma)$ be a measurable space.
\begin{enumerate}[label=(\roman*)]
\item If $f:M\to\R$ is measurable, then there exists a sequence of real-valued simple measurable functions
$\phi_n$ such that $\phi_n(x)\to f(x)$ for all $x$ and $|\phi_n|\le |f|$ pointwise.
\item If $f:M\to\C$ is measurable, then there exists a sequence of complex-valued simple measurable functions
$\phi_n$ such that $\phi_n(x)\to f(x)$ for all $x$ and
\[
|\phi_n|\le \sqrt2\,|f|\quad\text{pointwise.}
\]
\end{enumerate}
\end{corollary}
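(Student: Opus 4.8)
The plan is to reduce both statements to Lemma~\ref{lem:simple_monotone_approx} via the standard positive/negative and real/imaginary decompositions, carrying the pointwise size control through each step. First I would treat the real case (i), then obtain the complex case (ii) as an immediate consequence.

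For part (i), I would decompose $f=f^+-f^-$ with $f^+:=\max(f,0)$ and $f^-:=\max(-f,0)$. Both are $\Sigma$-measurable and $[0,\infty)$-valued (finite, since $f$ is real-valued), and they satisfy $f^++f^-=|f|$ pointwise. Applying Lemma~\ref{lem:simple_monotone_approx} to each produces increasing sequences of nonnegative simple measurable functions $s_n^+\uparrow f^+$ and $s_n^-\uparrow f^-$ with $0\le s_n^\pm\le f^\pm$ everywhere. Set $\phi_n:=s_n^+-s_n^-$; this is simple (it takes finitely many values) and measurable, and $\phi_n(x)\to f^+(x)-f^-(x)=f(x)$ for every $x$. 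For the size bound, since $a,b\ge 0$ implies $|a-b|\le a+b$, I get
\[
|\phi_n|=|s_n^+-s_n^-|\le s_n^++s_n^-\le f^++f^-=|f|
\]
pointwise, which is exactly the claim in (i).

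For part (ii), I would write $f=u+iv$ with $u:=\operatorname{Re}f$ and $v:=\operatorname{Im}f$, both real-valued and measurable, and apply part (i) to each to obtain real simple measurable functions $\psi_n^{(1)}\to u$ and $\psi_n^{(2)}\to v$ with $|\psi_n^{(1)}|\le|u|$ and $|\psi_n^{(2)}|\le|v|$ pointwise. Then $\phi_n:=\psi_n^{(1)}+i\psi_n^{(2)}$ is complex-valued, simple and measurable, $\phi_n\to u+iv=f$ pointwise, and
\[
|\phi_n|^2=|\psi_n^{(1)}|^2+|\psi_n^{(2)}|^2\le|u|^2+|v|^2=|f|^2,
\]
so $|\phi_n|\le|f|\le\sqrt2\,|f|$ pointwise, which implies (in fact strengthens) the stated bound. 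The argument is essentially routine with no serious obstacle; the only point requiring a moment's attention is keeping the modulus bound intact after the subtraction in (i), which is handled by the elementary inequality $|a-b|\le a+b$ for nonnegative $a,b$ together with the identity $f^++f^-=|f|$, after which (ii) follows at once from the Pythagorean identity $|f|^2=(\operatorname{Re}f)^2+(\operatorname{Im}f)^2$.
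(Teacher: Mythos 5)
Your proof is correct and follows the same reduction the paper uses: decompose $f=f^+-f^-$ and invoke Lemma~\ref{lem:simple_monotone_approx} for part (i), then split into real and imaginary parts for part (ii). In part (ii), however, you improve on the paper: the paper estimates $|\phi_n|\le|u_n|+|v_n|\le|u|+|v|\le\sqrt{2}\,|f|$, introducing the factor $\sqrt{2}$ via the $\ell^1$--$\ell^2$ comparison, whereas your Pythagorean computation $|\phi_n|^2=(\psi_n^{(1)})^2+(\psi_n^{(2)})^2\le u^2+v^2=|f|^2$ yields the sharper bound $|\phi_n|\le|f|$ directly. This is strictly stronger than what the corollary asserts and shows the stated $\sqrt{2}$ is an artifact of the paper's triangle-inequality route; since the sharper bound costs nothing, it is the preferable argument.
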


\begin{proof}
(i) Write $f=f^+-f^-$ with $f^\pm:=\max\{\pm f,0\}$. By Lemma~\ref{lem:simple_monotone_approx},
choose simple $s_n^\pm\uparrow f^\pm$ with $0\le s_n^\pm\le f^\pm$.
Set $\phi_n:=s_n^+-s_n^-$. Then $\phi_n$ is simple, $\phi_n\to f$ pointwise, and
$|\phi_n|\le s_n^++s_n^-\le f^++f^-=|f|$.

(ii) Write $f=u+iv$ with $u,v:M\to\R$ measurable. Apply (i) to get simple $u_n\to u$ and $v_n\to v$
with $|u_n|\le |u|$ and $|v_n|\le |v|$. Define $\phi_n:=u_n+i v_n$.
Then $\phi_n\to f$ pointwise and
\[
|\phi_n|\le |u_n|+|v_n|\le |u|+|v|\le \sqrt2\,\sqrt{u^2+v^2}=\sqrt2\,|f|.
\]
\end{proof}

\begin{proof}
(i) Write $f=f^+-f^-$ with $f^\pm:=\max\{\pm f,0\}$, so $f^\pm:M\to[0,\infty]$ are measurable.
Apply Lemma~\ref{lem:simple_monotone_approx} to obtain increasing simple $(s_n^+)$ with $s_n^+\uparrow f^+$
and increasing simple $(s_n^-)$ with $s_n^-\uparrow f^-$. Define $\phi_n:=s_n^+-s_n^-$.
Then $\phi_n$ is simple and measurable, and $\phi_n(x)\to f^+(x)-f^-(x)=f(x)$ pointwise.
Moreover, since $0\le s_n^\pm\le f^\pm$, we have $|\phi_n|\le s_n^+ + s_n^-\le f^+ + f^- = |f|$.

(ii) Write $f=u+iv$ with $u,v:M\to\R$ measurable. Apply (i) to $u$ and $v$ to get simple $u_n\to u$ and $v_n\to v$
with $|u_n|\le |u|$, $|v_n|\le |v|$. Set $\phi_n:=u_n+i v_n$. Then $\phi_n\to f$ pointwise and
$|\phi_n|\le |u_n|+|v_n|\le |u|+|v|\le \sqrt{2}\,|f|$. If one insists on $|\phi_n|\le |f|$ exactly, apply (i) to
$|f|$ and normalize appropriately; for our later use, the dominated bound by a fixed integrable function is what matters.
\end{proof}

\begin{proposition}[$L^1$-approximation by simple functions]\label{prop:L1_simple_approx}
Let $(M,\Sigma,\mu)$ be a measure space and let $f\in L^1(\mu)$ be real- or complex-valued measurable.
Then there exists a sequence of simple measurable functions $\phi_n$ such that
\[
\int_M |f-\phi_n|\,d\mu \longrightarrow 0.
\]
\end{proposition}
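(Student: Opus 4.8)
The plan is to reduce everything to the case of a nonnegative real integrand and then upgrade the pointwise monotone approximation already furnished by Lemma~\ref{lem:simple_monotone_approx} to an $L^1$-convergence statement. First I would note that it suffices to treat $f\ge 0$ with $f\in L^1(\mu)$: for general real $f$ write $f=f^+-f^-$, and for complex $f$ write $f=u+iv$ with $u=u^+-u^-$, $v=v^+-v^-$; each of the (at most four) nonnegative pieces lies in $L^1(\mu)$ because $f^\pm\le|f|$ and $|u|,|v|\le|f|$. If $\phi_n^{(j)}$ are simple functions approximating the $j$-th nonnegative piece in $L^1$, then
\[
\phi_n:=\phi_n^{(1)}-\phi_n^{(2)}+i\bigl(\phi_n^{(3)}-\phi_n^{(4)}\bigr)
\]
is simple and, by the triangle inequality under the integral, $\int_M|f-\phi_n|\,d\mu\le\sum_{j}\int_M|(\text{$j$-th piece})-\phi_n^{(j)}|\,d\mu\to 0$.

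For nonnegative $f\in L^1(\mu)$ I would invoke Lemma~\ref{lem:simple_monotone_approx} to obtain simple measurable functions with $0\le s_n\le s_{n+1}\le f$ and $s_n\uparrow f$ pointwise. Next I would check that each $s_n$ is integrable: $s_n$ takes finitely many values, and for any positive value $c$ of $s_n$ the set $\{s_n=c\}$ is contained in $\{f\ge c\}$; testing the simple function $c\,\mathbf 1_{\{f\ge c\}}\le f$ against the definition of $\int_M f\,d\mu$ gives at once the Chebyshev bound $\mu(\{f\ge c\})\le c^{-1}\int_M f\,d\mu<\infty$, so $s_n$ is a finite linear combination of indicators of finite-measure sets and $\int_M s_n\,d\mu\le\int_M f\,d\mu<\infty$ by monotonicity of the integral. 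Then the functions $g_n:=f-s_n$ satisfy $0\le g_{n+1}\le g_n\le f$, $g_n\to 0$ pointwise, and $g_1\le f\in L^1(\mu)$, so a convergence theorem (dominated convergence, or equivalently monotone convergence applied to the decreasing, $L^1$-dominated sequence $g_n\downarrow 0$) yields $\int_M|f-s_n|\,d\mu=\int_M g_n\,d\mu\to 0$. Taking $\phi_n:=s_n$ settles the nonnegative case, and combining with the reduction above completes the argument.

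The main obstacle is bookkeeping rather than conceptual: the passage from $g_n\to 0$ pointwise to $\int_M g_n\,d\mu\to 0$ genuinely requires a limit theorem, and the excerpt so far has only recorded monotonicity of the integral and monotone continuity of measures, not the monotone/dominated convergence theorem. A fully self-contained proof would therefore first establish that theorem (or at least its decreasing, $L^1$-dominated form); once that is available, every remaining step — measurability of the sets involved, integrability of the $s_n$, and the four-way real/complex splitting — is routine and is essentially delivered by Lemma~\ref{lem:simple_monotone_approx} and Corollary~\ref{cor:simple_pointwise_general_fixed}.
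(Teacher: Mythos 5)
Your proof is correct and follows essentially the same route as the paper's: both rest on the pointwise simple approximation furnished by Lemma~\ref{lem:simple_monotone_approx} (the paper wraps the real/complex reduction into Corollary~\ref{cor:simple_pointwise_general_fixed}, whereas you split into four nonnegative pieces by hand) and both close with dominated convergence. Your concern about ordering is well-founded --- the dominated/monotone convergence theorem and the definition of $L^1(\mu)$ only appear after this proposition in the paper's source, and the paper's own proof invokes DCT regardless --- but this is a presentational defect in the paper, not a gap in your argument.
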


\begin{proof}
By Corollary~\ref{cor:simple_pointwise_general_fixed} we can choose simple $\phi_n$ with $\phi_n\to f$ pointwise and
$|\phi_n|\le C|f|$ pointwise for some universal constant $C$ (one may take $C=2$ for the complex case).
Then
\[
|f-\phi_n|\le |f|+|\phi_n|\le (1+C)|f|.
\]
Since $f\in L^1(\mu)$, the dominating function $(1+C)|f|$ is integrable.
As $|f-\phi_n|\to 0$ pointwise, the Dominated Convergence Theorem implies
$\int |f-\phi_n|\,d\mu\to 0$.
\end{proof}

\begin{remark}[For Lemma~\ref{lem:simple_monotone_approx} is structural]
Lemma~\ref{lem:simple_monotone_approx} is the key self-contained ingredient behind:
(i) the definition of $\int f\,d\mu$ for $f\ge0$ as $\sup\{\int s\,d\mu:\ 0\le s\le f,\ s \text{ simple}\}$,
(ii) monotone convergence (MCT), and
(iii) truncation-based arguments used later for unbounded observables/payoffs.
\end{remark}

\begin{theorem}[Monotone convergence theorem (MCT)]
Let $f_n:M\to[0,\infty]$ be measurable with $f_n\uparrow f$ pointwise. Then
\[
\int_M f_n\,d\mu \uparrow \int_M f\,d\mu.
\]
\end{theorem}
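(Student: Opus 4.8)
The plan is to prove the two inequalities $\lim_n \int_M f_n\,d\mu \le \int_M f\,d\mu$ and $\lim_n \int_M f_n\,d\mu \ge \int_M f\,d\mu$ separately. Since $f_n \le f_{n+1}$ pointwise, the monotonicity of the integral (Lemma immediately preceding this statement) shows that $\bigl(\int_M f_n\,d\mu\bigr)_n$ is a nondecreasing sequence in $[0,\infty]$, so its limit exists in $[0,\infty]$; and from $f_n \le f$ together with monotonicity we get $\int_M f_n\,d\mu \le \int_M f\,d\mu$ for every $n$, hence $\lim_n \int_M f_n\,d\mu \le \int_M f\,d\mu$. The substance of the proof is the reverse inequality.

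For the reverse inequality I would fix an arbitrary nonnegative simple measurable function $s=\sum_{k=1}^N a_k\,\mathbf 1_{A_k}$ with pairwise disjoint $A_k\in\Sigma$ and $a_k\ge 0$, satisfying $0\le s\le f$, and fix a constant $c\in(0,1)$. Set
\[
E_n:=\{x\in M:\ f_n(x)\ge c\,s(x)\}.
\]
Each $E_n$ is measurable because $f_n$ and $s$ are measurable; the sequence $(E_n)$ is increasing because $(f_n)$ is; and $\bigcup_n E_n=M$, which is seen as follows: if $s(x)=0$ then $f_n(x)\ge 0=c\,s(x)$ already for $n=1$, while if $s(x)>0$ then $c\,s(x)<s(x)\le f(x)=\lim_n f_n(x)$, so $f_n(x)\ge c\,s(x)$ for all sufficiently large $n$. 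From $f_n\ge f_n\,\mathbf 1_{E_n}\ge c\,s\,\mathbf 1_{E_n}$ and monotonicity of the integral,
\[
\int_M f_n\,d\mu\ \ge\ c\int_M s\,\mathbf 1_{E_n}\,d\mu\ =\ c\sum_{k=1}^N a_k\,\mu(A_k\cap E_n),
\]
the last equality being just the definition of the integral of the nonnegative simple function $s\,\mathbf 1_{E_n}$.

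Next I would invoke continuity from below of $\mu$ (the Proposition on continuity of measures): for each $k$ we have $A_k\cap E_n\uparrow A_k$, hence $\mu(A_k\cap E_n)\uparrow\mu(A_k)$, so letting $n\to\infty$ in the displayed inequality gives $\lim_n\int_M f_n\,d\mu\ge c\sum_{k=1}^N a_k\mu(A_k)=c\int_M s\,d\mu$. Letting $c\uparrow 1$ yields $\lim_n\int_M f_n\,d\mu\ge\int_M s\,d\mu$, and taking the supremum over all nonnegative simple $s$ with $0\le s\le f$ gives, by the definition of $\int_M f\,d\mu$, the inequality $\lim_n\int_M f_n\,d\mu\ge\int_M f\,d\mu$. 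Combining with the first inequality and recalling that the sequence is monotone gives $\int_M f_n\,d\mu\uparrow\int_M f\,d\mu$. The only genuinely delicate point is the identity $\bigcup_n E_n=M$: the factor $c<1$ is exactly what is needed to absorb the pointwise convergence $f_n(x)\to f(x)$ even at points where $f_n(x)$ never attains $f(x)$ and where $f(x)$ may equal $+\infty$; everything else is bookkeeping built on monotonicity of the integral and countable additivity of $\mu$ in its continuity-from-below form.
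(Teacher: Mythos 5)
Your proof is correct; it is the standard Rudin/Folland proof of the monotone convergence theorem. The overall skeleton is the same as the paper's: the easy inequality $\lim_n\int f_n\le\int f$ follows from monotonicity, and for the hard inequality one fixes a nonnegative simple $s\le f$, truncates by an increasing sequence of measurable sets, and passes to the limit via continuity of $\mu$ from below. The difference is that you insert a factor $c\in(0,1)$ and work with $E_n:=\{f_n\ge c\,s\}$, whereas the paper takes $c=1$ and works with $A_{k,n}:=A_k\cap\{f_n\ge a_k\}$. This factor is not mere bookkeeping: the paper's assertion that $A_{k,n}\uparrow A_k$ is false in general. If $f(x)=a_k>0$ exactly for some $x\in A_k$ while $f_n(x)<a_k$ for every $n$ --- which is fully compatible with monotone pointwise convergence $f_n(x)\uparrow f(x)$ --- then $x\notin A_{k,n}$ for every $n$, so $\bigcup_n A_{k,n}\subsetneq A_k$ and one cannot conclude $\mu(A_{k,n})\uparrow\mu(A_k)$. (A one-point counterexample: $M=\{\mathrm{pt}\}$ with $\mu=\delta_{\mathrm{pt}}$, $f_n\equiv 1-1/n$, $f\equiv 1$, $s=f$; then $A_{1,n}=\varnothing$ for all $n$ but $A_1=M$.) Your strict inequality $c\,s(x)<s(x)\le f(x)$ at every point where $s(x)>0$ is precisely what forces each such $x$ to enter $E_n$ eventually, regardless of how $f_n(x)$ approaches $f(x)$, and letting $c\uparrow 1$ at the end recovers the full bound at no cost. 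So the strategy is the same, but your version closes a small gap in the paper's argument.
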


\begin{proof}
Set $I_n:=\int f_n\,d\mu$ and $I:=\int f\,d\mu$. By monotonicity, $I_n\uparrow \le I$.

For the reverse inequality, let $s$ be any nonnegative simple function with $s\le f$.
Write $s=\sum_{k=1}^N a_k\mathbf 1_{A_k}$ with $a_k\ge0$.
Fix $k$ and define $A_{k,n}:=A_k\cap\{f_n\ge a_k\}$. Since $f_n\uparrow f\ge a_k$ on $A_k$, we have $A_{k,n}\uparrow A_k$.
By continuity from below, $\mu(A_{k,n})\uparrow\mu(A_k)$.
Let $s_n:=\sum_{k=1}^N a_k\mathbf 1_{A_{k,n}}$; then $s_n$ is simple, $0\le s_n\le f_n$, and $\int s_n\,d\mu=\sum_k a_k\mu(A_{k,n})\uparrow\sum_k a_k\mu(A_k)=\int s\,d\mu$.
Hence for every $n$,
\[
\int s_n\,d\mu \le \int f_n\,d\mu = I_n,
\]
so taking limits gives $\int s\,d\mu\le \lim_{n\to\infty} I_n$.
Finally take the supremum over all such $s\le f$ to get $I\le \lim_n I_n$.
\end{proof}

\begin{theorem}[Fatou's lemma]
Let $f_n:M\to[0,\infty]$ be measurable. Then
\[
\int_M \liminf_{n\to\infty} f_n\,d\mu \le \liminf_{n\to\infty}\int_M f_n\,d\mu.
\]
\end{theorem}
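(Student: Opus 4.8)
The plan is to reduce Fatou's lemma to the already-established Monotone Convergence Theorem by replacing $(f_n)$ with the increasing sequence of its tail infima. For each $n\ge 1$ define
\[
g_n:=\inf_{k\ge n} f_k:M\to[0,\infty].
\]
First I would verify that each $g_n$ is $\Sigma$-measurable: for every $a\in\R$ one has $\{x\in M:g_n(x)<a\}=\bigcup_{k\ge n}\{x\in M:f_k(x)<a\}$, a countable union of measurable sets, so measurability follows from the generator criterion (applied to the generators $\{[0,a):a>0\}$ of the Borel $\sigma$-algebra on $[0,\infty]$). Next I would record the two structural facts that make MCT applicable: (i) monotonicity $g_n\le g_{n+1}$, since the infimum defining $g_{n+1}$ is taken over the smaller index set $\{k\ge n+1\}$; and (ii) the pointwise limit $g_n(x)\uparrow\sup_n g_n(x)=\sup_n\inf_{k\ge n}f_k(x)=\liminf_{n\to\infty}f_n(x)$, which is exactly the definition of $\liminf$ together with monotonicity of $(g_n)$. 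In particular $\liminf_n f_n$, being the increasing pointwise limit of the measurable $g_n$, is itself measurable, so the left-hand integral is well defined.

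With $0\le g_n\uparrow\liminf_n f_n$ pointwise, the Monotone Convergence Theorem gives
\[
\int_M g_n\,d\mu\ \uparrow\ \int_M\liminf_{n\to\infty}f_n\,d\mu .
\]
On the other hand, for each fixed $n$ and every $k\ge n$ we have $g_n\le f_k$ pointwise, so the Monotonicity lemma for the integral yields $\int_M g_n\,d\mu\le\int_M f_k\,d\mu$; taking the infimum over $k\ge n$ gives $\int_M g_n\,d\mu\le\inf_{k\ge n}\int_M f_k\,d\mu$. Letting $n\to\infty$ and combining with the MCT limit,
\[
\int_M\liminf_{n\to\infty}f_n\,d\mu
=\lim_{n\to\infty}\int_M g_n\,d\mu
\le\lim_{n\to\infty}\Big(\inf_{k\ge n}\int_M f_k\,d\mu\Big)
=\liminf_{n\to\infty}\int_M f_n\,d\mu,
\]
which is the asserted inequality.

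I do not expect any genuine obstacle here: the only points requiring a word of care are the measurability of the auxiliary functions $g_n$ (infima of countably many $[0,\infty]$-valued measurable functions) and the fact that $(g_n)$ is genuinely increasing rather than merely bounded — both are immediate from the definitions. Everything else is a direct invocation of MCT and monotonicity of the integral; since all functions are nonnegative and the value $+\infty$ is permitted throughout, there are no convergence, finiteness, or domination issues to address.
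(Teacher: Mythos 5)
Your proof is correct and follows exactly the same route as the paper: define the tail infima $g_n=\inf_{k\ge n}f_k$, note $g_n\uparrow\liminf f_n$, apply MCT, and combine with $\int g_n\le\inf_{k\ge n}\int f_k$. You add a welcome sentence verifying measurability of the $g_n$, which the paper leaves implicit, but the argument is otherwise identical.
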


\begin{proof}
Let $g_k:=\inf_{n\ge k} f_n$. Then $g_k\uparrow \liminf_n f_n$ and $g_k\le f_n$ for all $n\ge k$.
Thus $\int g_k\,d\mu \le \inf_{n\ge k}\int f_n\,d\mu$ by monotonicity. Apply MCT to $(g_k)$ and take $\sup_k$.
\end{proof}

\begin{definition}[$L^1$ and integrability]
A measurable function $f:M\to\R$ is \emph{integrable} if $\int_M |f|\,d\mu<\infty$.
Writing $f=f^+-f^-$ with $f^\pm:=\max\{\pm f,0\}$, define
\[
\int_M f\,d\mu := \int_M f^+\,d\mu-\int_M f^-\,d\mu
\]
whenever at least one of $\int f^\pm$ is finite (in particular if $f\in L^1(\mu)$).
\end{definition}

\begin{theorem}[Dominated convergence theorem (DCT)]
Let $f_n:M\to\R$ be measurable with $f_n\to f$ pointwise.
Assume there exists $g\in L^1(\mu)$ such that $|f_n|\le g$ for all $n$.
Then $f\in L^1(\mu)$ and
\[
\int_M f_n\,d\mu \longrightarrow \int_M f\,d\mu.
\]
\end{theorem}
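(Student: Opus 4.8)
The plan is to deduce DCT from Fatou's lemma (applied to the nonnegative functions $g\pm f_n$), exactly as in the classical argument. First I would record the pointwise bound on the limit: since $|f_n|\le g$ for all $n$ and $f_n\to f$ pointwise, passing to the limit gives $|f|\le g$; as $g\in L^1(\mu)$, monotonicity of the integral yields $\int_M|f|\,d\mu\le\int_M g\,d\mu<\infty$, so $f\in L^1(\mu)$. In particular all the integrals appearing below are finite real numbers, which legitimizes the additions and subtractions carried out in the main step.

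Next I would apply Fatou's lemma twice. Consider $g+f_n\ge 0$ (nonnegative because $f_n\ge -g$) and note $g+f_n\to g+f$ pointwise. Fatou gives
\[
\int_M (g+f)\,d\mu \;\le\; \liminf_{n\to\infty}\int_M (g+f_n)\,d\mu \;=\; \int_M g\,d\mu + \liminf_{n\to\infty}\int_M f_n\,d\mu,
\]
and subtracting the finite quantity $\int_M g\,d\mu$ yields $\int_M f\,d\mu\le\liminf_n\int_M f_n\,d\mu$. Symmetrically, $g-f_n\ge 0$ and $g-f_n\to g-f$ pointwise, so Fatou gives
\[
\int_M (g-f)\,d\mu \;\le\; \liminf_{n\to\infty}\int_M (g-f_n)\,d\mu \;=\; \int_M g\,d\mu - \limsup_{n\to\infty}\int_M f_n\,d\mu,
\]
using $\liminf(-a_n)=-\limsup a_n$; after cancelling $\int_M g\,d\mu$ this reads $\limsup_n\int_M f_n\,d\mu\le\int_M f\,d\mu$.

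Finally I would combine the two inequalities:
\[
\limsup_{n\to\infty}\int_M f_n\,d\mu \;\le\; \int_M f\,d\mu \;\le\; \liminf_{n\to\infty}\int_M f_n\,d\mu,
\]
which forces $\lim_{n\to\infty}\int_M f_n\,d\mu=\int_M f\,d\mu$. I do not expect a genuine obstacle here; the only points requiring a word of care are (i) that $g+f_n$ and $g-f_n$ are genuinely nonnegative and measurable, so Fatou's lemma is applicable, and (ii) that $\int_M g\,d\mu<\infty$ so that the cancellations are valid (with a real-valued, not merely extended-real-valued, $f$). If one wanted the stronger conclusion $\int_M|f_n-f|\,d\mu\to 0$, the same scheme applies to the nonnegative functions $2g-|f_n-f|$, but as stated only convergence of the integrals is asserted, so the two Fatou applications above suffice.
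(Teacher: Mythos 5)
Your proof is correct, and it takes a genuinely different (though closely related) route from the paper's. You apply Fatou's lemma twice, to the nonnegative sequences $g+f_n$ and $g-f_n$, obtaining $\int f\,d\mu\le\liminf_n\int f_n\,d\mu$ and $\limsup_n\int f_n\,d\mu\le\int f\,d\mu$ respectively, and then squeeze. The paper instead applies Fatou once, to the single nonnegative sequence $h_n:=2g-|f_n-f|$, concluding that $\limsup_n\int|f_n-f|\,d\mu\le 0$; this yields the stronger $L^1$-convergence $\int|f_n-f|\,d\mu\to 0$, from which convergence of the integrals follows immediately via $\bigl|\int f_n\,d\mu-\int f\,d\mu\bigr|\le\int|f_n-f|\,d\mu$. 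You explicitly note this stronger variant at the end, so you already see the trade-off: your version is slightly more economical for the statement as written (no need to pass through the triangle inequality at the end, and no need to justify integrability of $|f_n-f|$ separately), while the paper's version establishes the $L^1$-convergence that is actually used elsewhere in the manuscript (e.g.\ in the truncation arguments for affiliated observables). One small point you handle more carefully than the paper does: you explicitly derive $f\in L^1(\mu)$ from $|f|\le g$ before manipulating finite integrals, whereas the paper leaves that observation implicit.
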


\begin{proof}
Consider $|f_n-f|$. Since $|f_n-f|\le 2g\in L^1(\mu)$, it suffices to show $\int |f_n-f|\,d\mu\to 0$.
Define $h_n:=2g-|f_n-f|\ge0$; then $h_n\to 2g$ pointwise. By Fatou applied to $(h_n)$,
\[
\int 2g\,d\mu \le \liminf_{n\to\infty}\int (2g-|f_n-f|)\,d\mu
= \int 2g\,d\mu - \limsup_{n\to\infty}\int |f_n-f|\,d\mu,
\]
hence $\limsup_n \int |f_n-f|\,d\mu\le 0$. Therefore $\int |f_n-f|\,d\mu\to 0$, and consequently
$\int f_n\,d\mu\to \int f\,d\mu$.
\end{proof}

\begin{lemma}[Change of variables via push-forward]
Let $(M,\Sigma_M,\mu)$ be a measure space, $(N,\Sigma_N)$ a measurable space, $f:M\to N$ measurable,
and $h:N\to[0,\infty]$ measurable. Then
\[
\int_M h\circ f\,d\mu = \int_N h\,d(f_\#\mu).
\]
\end{lemma}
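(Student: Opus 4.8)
The plan is to run the standard ``measure-theoretic induction'' (the standard machine): establish the identity first for indicators, then for nonnegative simple functions by linearity, and finally for arbitrary nonnegative measurable $h$ by monotone approximation. The key structural facts to invoke are: composition of measurable maps is measurable (so $h\circ f$ is $\Sigma_M$-measurable, since $(h\circ f)^{-1}(B)=f^{-1}(h^{-1}(B))\in\Sigma_M$ for $B\in\Bcal([0,\infty])$); the defining property of the push-forward $(f_\#\mu)(B)=\mu(f^{-1}(B))$; and Lemma~\ref{lem:simple_monotone_approx} together with the Monotone Convergence Theorem.

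\textbf{Step 1 (indicators).} For $B\in\Sigma_N$ take $h=\mathbf 1_B$. Then $h\circ f=\mathbf 1_{f^{-1}(B)}$, and both sides reduce to the definition of $f_\#\mu$:
\[
\int_M \mathbf 1_{f^{-1}(B)}\,d\mu=\mu(f^{-1}(B))=(f_\#\mu)(B)=\int_N \mathbf 1_B\,d(f_\#\mu).
\]

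\textbf{Step 2 (simple functions).} Let $s=\sum_{k=1}^N a_k\mathbf 1_{A_k}$ be nonnegative simple on $N$, with $a_k\ge 0$ and $A_k\in\Sigma_N$. Then $s\circ f=\sum_{k=1}^N a_k\mathbf 1_{f^{-1}(A_k)}$ is nonnegative simple on $M$, and by linearity of the integral for simple functions together with Step 1,
\[
\int_M s\circ f\,d\mu=\sum_{k=1}^N a_k\,\mu(f^{-1}(A_k))=\sum_{k=1}^N a_k\,(f_\#\mu)(A_k)=\int_N s\,d(f_\#\mu).
\]

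\textbf{Step 3 (general nonnegative $h$).} By Lemma~\ref{lem:simple_monotone_approx} choose nonnegative simple $\Sigma_N$-measurable functions $s_n$ with $s_n\uparrow h$ pointwise on $N$. Then $s_n\circ f$ is nonnegative simple on $M$ and $s_n\circ f\uparrow h\circ f$ pointwise on $M$. Applying the Monotone Convergence Theorem on $(M,\Sigma_M,\mu)$ and on $(N,\Sigma_N,f_\#\mu)$, and Step 2 at each finite stage,
\[
\int_M h\circ f\,d\mu=\lim_{n\to\infty}\int_M s_n\circ f\,d\mu=\lim_{n\to\infty}\int_N s_n\,d(f_\#\mu)=\int_N h\,d(f_\#\mu),
\]
which is the claim. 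There is no serious obstacle here; the only points requiring a moment's care are verifying that $h\circ f$ is $\Sigma_M$-measurable and that pulling back a monotone simple approximation of $h$ yields a monotone simple approximation of $h\circ f$ — both are immediate from set-theoretic identities for preimages. (If one later wants the signed/complex version, it follows by decomposing $h=h^+-h^-$ and, for complex $h$, into real and imaginary parts, provided $\int_N|h|\,d(f_\#\mu)<\infty$; this is not needed for the stated nonnegative form.)
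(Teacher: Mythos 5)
Your proof is correct and follows exactly the route the paper takes: indicators via the definition of $f_\#\mu$, then linearity for nonnegative simple functions, then monotone approximation and MCT. The only difference is that you spell out the details (measurability of $h\circ f$, the pullback of a monotone simple approximation) that the paper leaves implicit, and you explicitly cite Lemma~\ref{lem:simple_monotone_approx} for the approximating sequence — both reasonable elaborations.
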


\begin{proof}
First check for indicators $h=\mathbf 1_B$:
$\int \mathbf 1_{f^{-1}(B)}\,d\mu = \mu(f^{-1}(B)) = (f_\#\mu)(B) = \int \mathbf 1_B\,d(f_\#\mu)$.
Extend to nonnegative simple functions by linearity, then to general nonnegative measurable functions by MCT.
\end{proof}

\begin{remark}
In later chapters, when we speak of ``the distribution of an observable'', we will use the same
push-forward philosophy, but the underlying ``measure'' will be induced by a quantum state and a spectral measure.
\end{remark}

\section{Spectral Measures, Spectral Integration, and State-Induced Distributions}

\subsection{Projection-valued measures and induced scalar measures}

\begin{definition}[Projection-valued measure (PVM)]
A map $P:\Bcal(\R)\to \BH$ is a \emph{projection-valued measure} if:
\begin{enumerate}[label=(\roman*)]
\item $P(\Delta)=P(\Delta)^*=P(\Delta)^2$ for all $\Delta\in\Bcal(\R)$ (orthogonal projections);
\item $P(\varnothing)=0$ and $P(\R)=I$;
\item for pairwise disjoint $(\Delta_n)\subseteq\Bcal(\R)$,
\[
P\Big(\bigcup_{n=1}^\infty \Delta_n\Big)
=\text{\rm s-}\!\lim_{N\to\infty}\sum_{n=1}^N P(\Delta_n),
\]
where the limit is taken in the strong operator topology.
\end{enumerate}
\end{definition}

\begin{definition}[Complex measures induced by a PVM]
Let $P$ be a PVM on $\Hcal$. For $\psi,\varphi\in \Hcal$ define
\[
\mu_{\psi,\varphi}^P(\Delta):=\langle \psi,\,P(\Delta)\varphi\rangle,\qquad \Delta\in\Bcal(\R).
\]
\end{definition}

\begin{lemma}[$\mu_{\psi,\varphi}^P$ is a complex measure]
For each $\psi,\varphi\in\Hcal$, the set function $\mu_{\psi,\varphi}^P$ is countably additive (hence a complex measure).
Moreover, $\mu_{\psi}^P:=\mu_{\psi,\psi}^P$ is a finite positive measure with $\mu_\psi^P(\R)=\|\psi\|^2$.
\end{lemma}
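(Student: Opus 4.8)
The plan is to reduce the statement entirely to the three defining axioms of a PVM together with the norm-continuity (hence weak continuity) of the inner product, with no genuinely hard step. First I would dispose of the trivial normalizations: since $P(\varnothing)=0$ we get $\mu_{\psi,\varphi}^P(\varnothing)=\langle\psi,0\rangle=0$, and since $P(\R)=I$ we get $\mu_\psi^P(\R)=\langle\psi,P(\R)\psi\rangle=\langle\psi,\psi\rangle=\|\psi\|^2$, which already exhibits $\mu_\psi^P$ as finite.

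Next, for countable additivity, fix pairwise disjoint $(\Delta_n)_{n\ge1}\subseteq\Bcal(\R)$ and set $\Delta:=\bigcup_n\Delta_n$. By PVM axiom (iii), $\sum_{n=1}^N P(\Delta_n)\to P(\Delta)$ in the strong operator topology, so $\sum_{n=1}^N P(\Delta_n)\varphi\to P(\Delta)\varphi$ in $\Hcal$; applying the bounded functional $\langle\psi,\,\cdot\,\rangle$ and using linearity of the inner product in its second slot yields
\[
\mu_{\psi,\varphi}^P(\Delta)=\langle\psi,P(\Delta)\varphi\rangle=\lim_{N\to\infty}\Big\langle\psi,\sum_{n=1}^N P(\Delta_n)\varphi\Big\rangle=\lim_{N\to\infty}\sum_{n=1}^N\mu_{\psi,\varphi}^P(\Delta_n)=\sum_{n=1}^\infty\mu_{\psi,\varphi}^P(\Delta_n).
\]
This is the only place the PVM convergence axiom is used, and it is precisely the (minor) point requiring care: the interchange of the inner product with the infinite sum is licensed exactly by strong convergence of the partial sums.

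Finally, for the positive case I would use self-adjointness and idempotence of $P(\Delta)$: for every $\Delta$,
\[
\mu_\psi^P(\Delta)=\langle\psi,P(\Delta)\psi\rangle=\langle\psi,P(\Delta)^\ast P(\Delta)\psi\rangle=\|P(\Delta)\psi\|^2\ge 0,
\]
so $\mu_\psi^P$ is a nonnegative, countably additive set function with $\mu_\psi^P(\R)=\|\psi\|^2<\infty$, i.e.\ a finite positive measure. To certify that $\mu_{\psi,\varphi}^P$ is a bona fide complex measure (finite total variation), I would invoke the polarization identity $\mu_{\psi,\varphi}^P=\tfrac14\sum_{k=0}^{3}i^{-k}\,\mu_{\psi+i^k\varphi}^P$, writing it as a finite $\C$-linear combination of the positive finite measures just constructed; its total variation is then bounded by $\tfrac14\sum_{k=0}^{3}\|\psi+i^k\varphi\|^2<\infty$. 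No substantive obstacle arises; the argument is entirely mechanical once the strong-to-weak passage in the displayed limit is noted.
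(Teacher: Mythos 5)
Your argument is correct and follows essentially the same route as the paper's proof: both deduce countable additivity from the strong convergence of $\sum_{n=1}^N P(\Delta_n)\varphi$ to $P(\Delta)\varphi$ and the continuity of $\langle\psi,\cdot\rangle$, and both read off positivity and finiteness from $P(\Delta)\ge 0$ and $P(\R)=I$. The one addition you make beyond the paper is the explicit polarization estimate certifying finite total variation of $\mu_{\psi,\varphi}^P$; this is a nice touch, though not strictly needed, since a countably additive $\C$-valued set function on a $\sigma$-algebra is automatically of bounded variation.
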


\begin{proof}
Let $(\Delta_n)$ be pairwise disjoint and set $\Delta:=\bigcup_n \Delta_n$.
By strong additivity,
\[
P(\Delta)\varphi = \sum_{n=1}^\infty P(\Delta_n)\varphi
\quad\text{in }\Hcal,
\]
hence taking the inner product with $\psi$ yields
\[
\mu_{\psi,\varphi}^P(\Delta)
=\langle \psi, P(\Delta)\varphi\rangle
=\sum_{n=1}^\infty \langle \psi, P(\Delta_n)\varphi\rangle
=\sum_{n=1}^\infty \mu_{\psi,\varphi}^P(\Delta_n).
\]
Positivity for $\mu_\psi^P$ follows from $P(\Delta)\ge0$. Also $\mu_\psi^P(\R)=\langle\psi,I\psi\rangle=\|\psi\|^2$.
\end{proof}

\subsection{Spectral integration (bounded functional calculus)}

\begin{definition}[Spectral integral for simple functions]
Let $P$ be a PVM. If $s:\R\to\C$ is a bounded Borel simple function
$s=\sum_{k=1}^m c_k\,\mathbf 1_{\Delta_k}$ with disjoint $\Delta_k\in\Bcal(\R)$, define
\[
\int_{\R} s(\lambda)\,P(d\lambda) \;:=\; \sum_{k=1}^m c_k\,P(\Delta_k)\ \in \BH.
\]
\end{definition}

\begin{lemma}[Norm bound for simple integrals]
For bounded simple $s$,
\[
\left\|\int s\,dP\right\|\le \|s\|_\infty.
\]
\end{lemma}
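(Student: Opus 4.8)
The plan is to exploit the one structural feature that separates this from an arbitrary linear combination of projections: the sets $\Delta_1,\dots,\Delta_m$ are pairwise disjoint, so the spectral projections $P(\Delta_1),\dots,P(\Delta_m)$ have mutually orthogonal ranges. Once that is established, $\int s\,dP=\sum_k c_k P(\Delta_k)$ acts on each $\psi\in\Hcal$ by splitting $\psi$ into orthogonal pieces $P(\Delta_k)\psi$, rescaling the $k$-th piece by $c_k$, and reassembling; its operator norm is then governed by the largest $|c_k|$ via the Pythagorean theorem, and $\|s\|_\infty$ is precisely that largest modulus. As a harmless preliminary I would discard any index with $\Delta_k=\varnothing$ (such a term contributes $c_k P(\varnothing)=0$), so that we may assume every $\Delta_k$ is nonempty and hence $\max_{1\le k\le m}|c_k|\le\|s\|_\infty$ (indeed equality holds).

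First I would prove the orthogonality relation $P(\Delta_j)P(\Delta_k)=0$ for $j\neq k$, which is the crux. Since $\Delta_j\cap\Delta_k=\varnothing$, finite additivity gives $P(\Delta_j\cup\Delta_k)=P(\Delta_j)+P(\Delta_k)$, and axiom (i) says the left-hand side is again an orthogonal projection. Expanding $(P(\Delta_j)+P(\Delta_k))^2=P(\Delta_j)+P(\Delta_k)$ and using $P(\Delta_j)^2=P(\Delta_j)$, $P(\Delta_k)^2=P(\Delta_k)$ forces the anticommutator $P(\Delta_j)P(\Delta_k)+P(\Delta_k)P(\Delta_j)$ to vanish. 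Left-multiplying and right-multiplying this identity by $P(\Delta_j)$ and comparing the two results shows that $P(\Delta_j)$ and $P(\Delta_k)$ commute, whence the vanishing anticommutator yields $2P(\Delta_j)P(\Delta_k)=0$. Consequently $\ran P(\Delta_k)\perp\ran P(\Delta_\ell)$ for $k\neq\ell$, and $\sum_{k=1}^m P(\Delta_k)=P\bigl(\bigcup_k\Delta_k\bigr)$ is an orthogonal projection.

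Next comes the estimate itself. Fix $\psi\in\Hcal$. Because $\bigl(\int s\,dP\bigr)\psi=\sum_{k=1}^m c_k\,P(\Delta_k)\psi$ is a sum of vectors lying in mutually orthogonal subspaces, Pythagoras gives $\bigl\|\bigl(\int s\,dP\bigr)\psi\bigr\|^2=\sum_k|c_k|^2\|P(\Delta_k)\psi\|^2\le\|s\|_\infty^2\sum_k\|P(\Delta_k)\psi\|^2$. Applying orthogonality and finite additivity once more, $\sum_k\|P(\Delta_k)\psi\|^2=\bigl\|\sum_k P(\Delta_k)\psi\bigr\|^2=\bigl\|P\bigl(\bigcup_k\Delta_k\bigr)\psi\bigr\|^2\le\|\psi\|^2$, the last inequality because $P\bigl(\bigcup_k\Delta_k\bigr)$ is an orthogonal projection and hence a contraction. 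Combining the two bounds yields $\bigl\|\bigl(\int s\,dP\bigr)\psi\bigr\|\le\|s\|_\infty\|\psi\|$, and taking the supremum over $\|\psi\|\le1$ gives $\bigl\|\int s\,dP\bigr\|\le\|s\|_\infty$.

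The only step I expect to require care is the orthogonality lemma of the second paragraph: the definition of a projection-valued measure stated above lists idempotency, self-adjointness, the normalization $P(\varnothing)=0$ and $P(\R)=I$, and $\sigma$-additivity in the strong operator topology, but it does \emph{not} explicitly include the multiplicativity relation $P(A\cap B)=P(A)P(B)$. Hence that relation (at least in the disjoint case, which is all we need) must be \emph{derived} from the stated axioms as above, not quoted. Everything else — the Pythagorean identity, contractivity of a projection, and the reduction that removes empty $\Delta_k$ — is entirely routine.
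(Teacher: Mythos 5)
Your proof is correct and follows essentially the same Pythagorean route as the paper, which simply asserts pairwise orthogonality of the $P(\Delta_k)$ and bounds $\sum_k P(\Delta_k)\le I$. Your one genuine addition — deriving $P(\Delta_j)P(\Delta_k)=0$ for disjoint cells from the stated PVM axioms via the anticommutator argument, and discarding empty cells so that $\max_k|c_k|\le\|s\|_\infty$ — is a careful gap-filling the paper skips, but it does not change the approach.
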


\begin{proof}
It suffices to treat the case where the $\Delta_k$ are disjoint. Then the projections $P(\Delta_k)$ are pairwise orthogonal,
and for any $\psi\in\Hcal$,
\[
\left\|\sum_{k=1}^m c_k P(\Delta_k)\psi\right\|^2
=\sum_{k=1}^m |c_k|^2\,\|P(\Delta_k)\psi\|^2
\le \|s\|_\infty^2 \sum_{k=1}^m \|P(\Delta_k)\psi\|^2
\le \|s\|_\infty^2 \|\psi\|^2,
\]
since $\sum_k P(\Delta_k)\le I$ for disjoint sets. Taking the supremum over $\|\psi\|=1$ yields the claim.
\end{proof}

\subsection{A uniform simple-approximation lemma (self-contained)}

\begin{lemma}[Uniform approximation by simple functions]\label{lem:uniform_simple_approx}
Let $(M,\Sigma)$ be a measurable space and let $f:M\to\R$ be bounded and $\Sigma$-measurable.
Write $L:=\inf_{x\in M} f(x)$ and $U:=\sup_{x\in M} f(x)$, so $-\infty<L\le U<\infty$.
Then for each $n\in\mathbb N$ there exists a simple (finite-valued) $\Sigma$-measurable function $s_n:M\to\R$ such that
\begin{enumerate}[label=(\roman*)]
\item $L\le s_n(x)\le U$ for all $x\in M$;
\item $\|f-s_n\|_\infty \le 2^{-n}$.
\end{enumerate}
In particular, $\|f-s_n\|_\infty\to 0$ as $n\to\infty$.
\end{lemma}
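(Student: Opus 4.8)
The plan is to discretize the range $[L,U]$ into finitely many subintervals of length at most $2^{-n}$ and to let $s_n$ report, at each point $x$, the left endpoint of the subinterval containing $f(x)$. Fix $n\in\mathbb{N}$ and set $\delta:=2^{-n}$. If $L=U$ then $f$ is constant and one takes $s_n:=L$, so assume $L<U$. I would set $N:=\lceil (U-L)/\delta\rceil\ge 1$ and define the grid points $t_k:=L+k\delta$ for $k=0,1,\dots,N$. By the choice of $N$ one has $t_{N-1}=L+(N-1)\delta<U\le L+N\delta=t_N$, so the left endpoints that will appear as values of $s_n$ all lie in $[L,U)$.

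Next I would introduce the measurable partition of $M$ given by
\[
A_k := f^{-1}\big([t_{k-1},t_k)\big)\quad(k=1,\dots,N-1),\qquad A_N := f^{-1}\big([t_{N-1},\infty)\big).
\]
Since $f$ is $\Sigma$-measurable and the half-lines/intervals above are Borel, each $A_k\in\Sigma$ (by the generator criterion, it suffices to know preimages of such generating sets are measurable). The sets $A_k$ are pairwise disjoint, and because $L\le f(x)$ for every $x\in M$ (so $f(x)$ lies in exactly one of $[L,t_1),\dots,[t_{N-2},t_{N-1}),[t_{N-1},\infty)$), they cover $M$. I then define
\[
s_n:=\sum_{k=1}^N t_{k-1}\,\mathbf 1_{A_k},
\]
which is a finite-valued $\Sigma$-measurable simple function.

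The two claims are then verified directly. For (i): $s_n$ takes only the values $t_0,\dots,t_{N-1}$, all of which satisfy $L=t_0\le t_{k-1}=L+(k-1)\delta\le t_{N-1}<U$, so $L\le s_n\le U$ on $M$. For (ii): if $x\in A_k$ with $k\le N-1$ then $t_{k-1}\le f(x)<t_k$, hence $0\le f(x)-s_n(x)=f(x)-t_{k-1}<\delta$; if $x\in A_N$ then $t_{N-1}\le f(x)\le U\le t_N=t_{N-1}+\delta$, hence $0\le f(x)-s_n(x)\le\delta$. In all cases $|f(x)-s_n(x)|\le\delta$, so $\|f-s_n\|_\infty\le 2^{-n}$, and letting $n\to\infty$ gives $\|f-s_n\|_\infty\to 0$.

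\textbf{Main obstacle.} There is no deep difficulty here; the only point requiring care is the endpoint bookkeeping — choosing $N=\lceil (U-L)/\delta\rceil$ so that $t_{N-1}<U$ (ensuring the reported values never exceed $U$), and making the top cell $A_N$ a preimage of a half-line (closed on the left) so that the $A_k$ genuinely partition $M$ and capture any $x$ with $f(x)$ close to, or equal to, the supremum. Once that is set up, measurability of the $A_k$ follows from measurability of $f$, and the uniform estimate follows from the single inequality that each cell has $f$-oscillation at most $\delta$.
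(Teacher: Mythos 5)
Your proof is correct and follows essentially the same approach as the paper's: discretize the range $[L,U]$ into cells of length at most $2^{-n}$, take preimages under $f$ to obtain a measurable partition, and let $s_n$ report the left endpoint of the relevant cell. Your endpoint bookkeeping (half-line $[t_{N-1},\infty)$ for the top cell, and the separate $L=U$ case) is marginally cleaner than the paper's, which defines a final interval $[a_N,U]$ with $a_N\ge U$ that may be empty or a singleton, but the underlying idea and estimates are identical.
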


\begin{proof}
Fix $n\in\mathbb N$ and set $\delta:=2^{-n}>0$. Consider the partition of the bounded interval $[L,U]$ into finitely many
half-open subintervals of length at most $\delta$:
choose an integer $N:=\left\lceil \frac{U-L}{\delta}\right\rceil$ and define
\[
a_k:=L+k\delta\quad (k=0,1,\dots,N),\qquad a_{N+1}:=U.
\]
Define Borel sets in $\R$
\[
I_k := [a_k,a_{k+1})\quad (k=0,1,\dots,N-1),\qquad I_N:=[a_N,a_{N+1}] .
\]
Then $\{I_k\}_{k=0}^N$ is a finite Borel partition of $[L,U]$, and each $I_k$ has length $\le \delta$.
Since $f$ is measurable and each $I_k\in\Bcal(\R)$, the sets $A_k:=f^{-1}(I_k)\in\Sigma$.

Define the simple function
\[
s_n(x):=\sum_{k=0}^N a_k\,\mathbf 1_{A_k}(x).
\]
By construction, $s_n$ is $\Sigma$-measurable and takes only finitely many values $\{a_0,\dots,a_N\}$.
Moreover, if $x\in A_k$, then $f(x)\in I_k\subseteq[a_k,a_k+\delta]$, hence
\[
0\le f(x)-a_k \le \delta,
\qquad\text{so}\qquad |f(x)-s_n(x)|\le \delta.
\]
Therefore $\|f-s_n\|_\infty\le \delta=2^{-n}$.
Finally, since $a_k\in[L,U]$, we have $L\le s_n(x)\le U$ for all $x$.
\end{proof}

\begin{corollary}[Complex-valued case]\label{cor:uniform_simple_approx_complex}
Let $(M,\Sigma)$ be a measurable space and let $f:M\to\mathbb C$ be bounded and $\Sigma$-measurable.
Then there exists a sequence of simple $\Sigma$-measurable functions $s_n:M\to\mathbb C$ such that
$\|f-s_n\|_\infty\to 0$.
\end{corollary}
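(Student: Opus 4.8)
The plan is to reduce the complex-valued statement to the real-valued Lemma~\ref{lem:uniform_simple_approx} by separating real and imaginary parts. First I would write $f=u+iv$ with $u:=\operatorname{Re}f$ and $v:=\operatorname{Im}f$. Since $f$ is bounded and $\Sigma$-measurable, both $u$ and $v$ are real-valued, bounded, and $\Sigma$-measurable (measurability of $\operatorname{Re}f,\operatorname{Im}f$ follows from the fact that $z\mapsto\operatorname{Re}z$ and $z\mapsto\operatorname{Im}z$ are continuous, hence Borel, so their compositions with $f$ are $\Sigma$-measurable; boundedness of $u,v$ is immediate from $|u|,|v|\le|f|$).

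Next I would apply Lemma~\ref{lem:uniform_simple_approx} separately to $u$ and to $v$: for each $n\in\mathbb N$ this yields simple $\Sigma$-measurable functions $u_n,v_n:M\to\R$ with $\|u-u_n\|_\infty\le 2^{-n}$ and $\|v-v_n\|_\infty\le 2^{-n}$. Then I would set $s_n:=u_n+iv_n$. This is again a simple $\Sigma$-measurable function, because a finite $\C$-linear combination of simple functions is simple (one may take a common refinement of the finitely many level sets of $u_n$ and $v_n$), and it is $\Sigma$-measurable as a sum of $\Sigma$-measurable functions.

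Finally I would estimate, for every $x\in M$,
\[
|f(x)-s_n(x)|\le |u(x)-u_n(x)|+|v(x)-v_n(x)|\le 2^{-n}+2^{-n}=2^{1-n},
\]
hence $\|f-s_n\|_\infty\le 2^{1-n}\to 0$ as $n\to\infty$, which is the claim. There is no real obstacle here; the only point requiring a word of justification is the elementary closure of the class of simple measurable functions under $\C$-linear combinations, which I would dispatch by the common-refinement remark above.
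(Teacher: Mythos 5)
Your proof is correct and follows essentially the same route as the paper: decompose $f$ into real and imaginary parts, apply Lemma~\ref{lem:uniform_simple_approx} to each, set $s_n:=u_n+iv_n$, and use the triangle inequality to get $\|f-s_n\|_\infty\le 2^{1-n}$. The only (welcome) addition is your explicit remark that simple measurable functions are closed under $\C$-linear combinations via common refinement, which the paper leaves implicit.
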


\begin{proof}
Write $f=u+iv$ with $u,v:M\to\R$ bounded measurable. Apply Lemma~\ref{lem:uniform_simple_approx} to obtain real-valued
simple functions $p_n,q_n$ with $\|u-p_n\|_\infty\le 2^{-n}$ and $\|v-q_n\|_\infty\le 2^{-n}$.
Set $s_n:=p_n+i q_n$. Then $s_n$ is complex-valued simple and measurable, and for all $x$,
\[
|f(x)-s_n(x)| \le |u(x)-p_n(x)| + |v(x)-q_n(x)|
\le 2\cdot 2^{-n},
\]
so $\|f-s_n\|_\infty\le 2^{1-n}\to 0$.
\end{proof}

\begin{remark}[How this is used in spectral integration]
Lemma~\ref{lem:uniform_simple_approx} (and Corollary~\ref{cor:uniform_simple_approx_complex}) justifies the standard
construction of $\int f\,dP$ for bounded Borel $f$ by uniform approximation with simple functions,
ensuring existence and uniqueness of the resulting operator via the $\|\cdot\|_\infty$-to-operator-norm bound.
\end{remark}

\begin{definition}[Spectral integral for bounded Borel functions]
Let $P$ be a PVM and let $f:\R\to\C$ be bounded and Borel measurable. Choose bounded simple functions $s_n$ such that
$\|s_n-f\|_\infty\to 0$. Define
\[
\int_{\R} f(\lambda)\,P(d\lambda)
:= \lim_{n\to\infty}\int_{\R} s_n(\lambda)\,P(d\lambda),
\]
where the limit is taken in operator norm.
\end{definition}

\begin{lemma}[Well-definedness]
The operator $\int f\,dP$ is independent of the chosen approximating sequence $(s_n)$.
\end{lemma}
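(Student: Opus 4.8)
The plan is to first record the structural facts about the spectral integral of simple functions, then to establish that the defining limit exists, and finally to show sequence-independence by an interlacing argument. First I would observe that $s\mapsto\int s\,P(d\lambda)$ is a well-defined \emph{linear} map on the space of bounded Borel simple functions: given two representations of the same simple function, refining to a common Borel partition and invoking finite additivity of $P$ shows the two candidate values coincide, and linearity then follows by passing once more to a common partition. Combined with the estimate $\|\int s\,P(d\lambda)\|\le\|s\|_\infty$ established above (stated there for disjoint representations, so one first rewrites an arbitrary simple $s$ in canonical disjoint form before applying it), this exhibits $s\mapsto\int s\,P(d\lambda)$ as a linear contraction from the bounded simple functions with $\|\cdot\|_\infty$ into $(\BH,\|\cdot\|)$.

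Next I would prove existence of the limit. If $(s_n)$ satisfies $\|s_n-f\|_\infty\to0$, then $\|s_n\|_\infty\le\|f\|_\infty+\|s_n-f\|_\infty$ is bounded, and by linearity and the contraction estimate,
\[
\Bigl\|\int s_n\,P(d\lambda)-\int s_m\,P(d\lambda)\Bigr\|=\Bigl\|\int(s_n-s_m)\,P(d\lambda)\Bigr\|\le\|s_n-s_m\|_\infty\le\|s_n-f\|_\infty+\|f-s_m\|_\infty\xrightarrow[n,m\to\infty]{}0,
\]
so $\bigl(\int s_n\,P(d\lambda)\bigr)$ is Cauchy in $\BH$; since $\BH$ is complete (Theorem~\ref{thm:LVW-banach} with $V=W=\Hcal$), the operator-norm limit exists.

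For independence of the approximating sequence, let $(s_n)$ and $(t_n)$ both converge uniformly to $f$. I would form the interlaced sequence $u_1:=s_1$, $u_2:=t_1$, $u_3:=s_2$, $u_4:=t_2,\dots$, which again satisfies $\|u_k-f\|_\infty\to0$; by the previous paragraph $\bigl(\int u_k\,P(d\lambda)\bigr)$ converges in operator norm, hence its two subsequences $\bigl(\int s_n\,P(d\lambda)\bigr)$ and $\bigl(\int t_n\,P(d\lambda)\bigr)$ converge to the \emph{same} limit, which is exactly the claim (equivalently, $\|\int s_n\,P(d\lambda)-\int t_n\,P(d\lambda)\|\le\|s_n-t_n\|_\infty\le\|s_n-f\|_\infty+\|f-t_n\|_\infty\to0$). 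I do not expect a genuine obstacle; the only step needing a little care is the first one — ensuring the norm bound is applied only after putting an arbitrary simple function into disjoint-support form, and checking representation-independence so that linearity, hence the estimate $\|\int(s_n-s_m)\,P(d\lambda)\|\le\|s_n-s_m\|_\infty$, is legitimate.
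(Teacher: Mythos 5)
Your argument is correct and, at its core, is the same estimate the paper's proof uses, namely $\bigl\|\int(s_n-t_n)\,dP\bigr\|\le\|s_n-t_n\|_\infty\to 0$, which you also record as the ``equivalently'' remark at the end. The extra scaffolding you supply — checking that the simple-function integral is representation-independent and linear before applying the norm bound to $s_n-t_n$ — is a gap the paper's proof silently glosses over, and your interlacing framing is a stylistic variant of the same two-sequence comparison.
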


\begin{proof}
If $s_n\to f$ and $t_n\to f$ uniformly, then $\|s_n-t_n\|_\infty\to 0$ and by the norm bound,
\[
\left\|\int s_n\,dP-\int t_n\,dP\right\|
=\left\|\int (s_n-t_n)\,dP\right\|
\le \|s_n-t_n\|_\infty \to 0.
\]
\end{proof}

\begin{definition}[Bounded functional calculus]
If $X=X^*$ is self-adjoint with spectral measure $P_X$, then for bounded Borel $f$ we define
\[
f(X):=\int_{\R} f(\lambda)\,P_X(d\lambda)\in\BH.
\]
\end{definition}

\begin{theorem}[Algebraic properties of bounded functional calculus]
Let $X=X^*$ and let $f,g$ be bounded Borel functions.
\begin{enumerate}[label=(\roman*)]
\item $(\alpha f+\beta g)(X)=\alpha f(X)+\beta g(X)$ for $\alpha,\beta\in\C$.
\item $(fg)(X)=f(X)\,g(X)$.
\item $f(X)^*=\overline{f}(X)$.
\item If $f\ge 0$ pointwise, then $f(X)\ge 0$.
\end{enumerate}
\end{theorem}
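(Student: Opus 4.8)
The plan is to verify (i)--(iv) first for bounded Borel \emph{simple} functions, where every identity collapses to a finite sum of projections, and then to propagate each identity to general bounded Borel $f$ by invoking the operator-norm estimate $\bigl\|\int f\,P_X(d\lambda)\bigr\|\le\|f\|_\infty$ together with uniform simple approximation (Corollary on uniform simple approximation). The one structural input needed beyond the definition is the \emph{multiplicativity of the PVM}, $P(\Delta)P(\Delta')=P(\Delta\cap\Delta')$. I would prove this as a short preliminary lemma: for disjoint $\Delta_1,\Delta_2$ the operator $P(\Delta_1\sqcup\Delta_2)=P(\Delta_1)+P(\Delta_2)$ is a projection, and a sum of two orthogonal projections is a projection only when the two are mutually annihilating, so $P(\Delta_1)P(\Delta_2)=0$; for general $\Delta,\Delta'$ one splits $\Delta=(\Delta\setminus\Delta')\sqcup(\Delta\cap\Delta')$ and $\Delta'=(\Delta'\setminus\Delta)\sqcup(\Delta\cap\Delta')$, expands $P(\Delta)P(\Delta')$ into four terms, kills the three cross terms by disjointness, and uses $P(\Delta\cap\Delta')^2=P(\Delta\cap\Delta')$. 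We also record $P(\Delta)^*=P(\Delta)\ge0$, which is part of the PVM axioms.

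\textbf{Step 1 (simple functions).} Write $s=\sum_{j}c_j\mathbf 1_{\Delta_j}$ and $t=\sum_{k}d_k\mathbf 1_{\Delta'_k}$ with each family $\{\Delta_j\}$, $\{\Delta'_k\}$ pairwise disjoint. Passing to a common refinement gives (i) for simple functions. For (ii), $st=\sum_{j,k}c_jd_k\mathbf 1_{\Delta_j\cap\Delta'_k}$, while
\[
\Bigl(\int s\,dP\Bigr)\Bigl(\int t\,dP\Bigr)=\sum_{j,k}c_jd_k\,P(\Delta_j)P(\Delta'_k)=\sum_{j,k}c_jd_k\,P(\Delta_j\cap\Delta'_k)=\int st\,dP,
\]
using the multiplicativity lemma. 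For (iii), taking adjoints termwise and using $P(\Delta_j)^*=P(\Delta_j)$ yields $(\int s\,dP)^*=\sum_j\overline{c_j}P(\Delta_j)=\int\overline s\,dP$. For (iv), if $c_j\ge0$ and the $\Delta_j$ are disjoint then $\int s\,dP=\sum_j c_jP(\Delta_j)$ is a nonnegative combination of positive operators, hence $\ge0$.

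\textbf{Step 2 (passage to the limit).} Choose simple functions $s_n\to f$ and $t_n\to g$ uniformly. Since $\bigl\|\int h\,dP\bigr\|\le\|h\|_\infty$, the maps $h\mapsto\int h\,dP$ are norm-continuous, so (i) and (iii) pass to the limit (the adjoint is an isometry of $\BH$), using that the spectral integral is the well-defined operator-norm limit of the simple-function integrals. For (ii), note $s_n,t_n$ are eventually uniformly bounded, so $\|s_nt_n-fg\|_\infty\le\|s_n\|_\infty\|t_n-g\|_\infty+\|g\|_\infty\|s_n-f\|_\infty\to0$, while the operators $\int s_n\,dP$, $\int t_n\,dP$ are uniformly bounded in operator norm; since multiplication is jointly norm-continuous on norm-bounded sets, $\int s_nt_n\,dP=(\int s_n\,dP)(\int t_n\,dP)\to f(X)g(X)$, and $\int s_nt_n\,dP\to(fg)(X)$ by definition, giving (ii). For (iv), the uniform approximation lemma produces $s_n$ taking values in $[\inf f,\sup f]\subseteq[0,\infty)$, hence $\int s_n\,dP\ge0$, and $f(X)$ is their norm limit, so $f(X)\ge0$ because the positive cone is norm-closed; alternatively, set $g:=\sqrt f$ (bounded, real, Borel) and conclude $f(X)=g(X)^*g(X)\ge0$ directly from (ii) and (iii).

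\textbf{Main obstacle.} The only point that is not pure bookkeeping is (ii): one must align two distinct finite Borel partitions for the product at the simple level and then control the product of two approximating sequences simultaneously at the operator level. The PVM multiplicativity lemma handles the first issue, and joint norm-continuity of operator multiplication on bounded sets (legitimized by the $\|\cdot\|_\infty$-to-operator-norm estimate) handles the second; everything else follows from linearity and norm-closedness of the positive cone.
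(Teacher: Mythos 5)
Your argument is correct and follows essentially the same route as the paper's proof sketch: verify (i)--(iv) on simple functions using the PVM algebra, then propagate to general bounded Borel functions via uniform simple approximation and the estimate $\|\int h\,dP_X\|\le\|h\|_\infty$. You usefully fill in two details the paper leaves implicit---a self-contained proof of the multiplicativity $P(\Delta)P(\Delta')=P(\Delta\cap\Delta')$ from countable additivity and idempotence, and the alternative positivity argument $f(X)=g(X)^*g(X)$ with $g=\sqrt{f}$---but the overall scaffold is the same.
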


\begin{proof}
Prove first for indicators $f=\mathbf 1_\Delta$ using $P_X(\Delta)P_X(\Gamma)=P_X(\Delta\cap\Gamma)$ and
$P_X(\Delta)^*=P_X(\Delta)$; then extend to simple functions by linearity, and finally to bounded Borel functions by uniform approximation,
using the norm continuity ensured by the bound $\|\int h\,dP_X\|\le \|h\|_\infty$.
\end{proof}

\subsection{Density operators, Born-type measures, and trace--integral identities}

\begin{definition}[Density operator and the associated normal state]
A \emph{density operator} on $\Hcal$ is an operator $\rho\in\Tone$ such that $\rho\ge0$ and $\Tr(\rho)=1$.
It defines a linear functional $\varphi_\rho:\BH\to\C$ by
\[
\varphi_\rho(A):=\Tr(\rho A),\qquad A\in\BH.
\]
\end{definition}

\begin{lemma}[Boundedness of $\varphi_\rho$]
For $\rho\in\Tone$ and $A\in\BH$,
\[
|\Tr(\rho A)|\le \|\rho\|_1\,\|A\|.
\]
In particular, if $\rho$ is a density operator then $|\varphi_\rho(A)|\le \|A\|$.
\end{lemma}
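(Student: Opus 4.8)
The plan is to prove the bound $|\Tr(\rho A)| \le \|\rho\|_1 \|A\|$ for $\rho \in \Tone$ and $A \in \BH$ by invoking the Hölder-type estimate already established for the trace-class ideal, then to obtain the normalized consequence $|\varphi_\rho(A)| \le \|A\|$ by specializing to a density operator.

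First I would recall that Theorem~\ref{thm:T1-ideal-holder} states precisely that for $S \in \Tone$ and $B \in \BH$ one has $BS \in \Tone$ (in particular $AS$ makes sense) together with the trace bounds $|\Tr(BS)| \le \|B\|\,\|S\|_1$ and $|\Tr(SB)| \le \|B\|\,\|S\|_1$. Applying this with $S = \rho$ and $B = A$ yields $|\Tr(\rho A)| \le \|A\|\,\|\rho\|_1$ directly. (One should note $\Tr(\rho A)$ is well-defined in the first place because $\rho A \in \Tone$ by the ideal property recorded in Lemma~\ref{lem:trace-ideal} or Theorem~\ref{thm:T1-ideal-holder}.) This is essentially a one-line deduction; no genuine calculation is needed.

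For the second assertion, suppose $\rho$ is a density operator, so $\rho \ge 0$ and $\Tr(\rho) = 1$. Since $\rho \ge 0$ we have $|\rho| = (\rho^*\rho)^{1/2} = \rho$, hence $\|\rho\|_1 = \Tr(|\rho|) = \Tr(\rho) = 1$. Substituting $\|\rho\|_1 = 1$ into the first bound gives $|\varphi_\rho(A)| = |\Tr(\rho A)| \le \|A\|$, as claimed.

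There is no real obstacle here: the content is entirely packaged in the trace-class Hölder inequality proved earlier (Theorem~\ref{thm:T1-ideal-holder}), and the only additional observation is the elementary fact that a positive trace-class operator with unit trace has trace norm one. If one wanted a fully self-contained argument avoiding the cited theorem, the mild ``difficulty'' would be reproving the bound $|\Tr(\rho A)| \le \|\rho\|_1\|A\|$ from the nuclear (rank-one expansion) definition of the trace norm: writing $\rho = \sum_k |u_k\rangle\langle v_k|$ with $\sum_k \|u_k\|\|v_k\|$ close to $\|\rho\|_1$, one has $\Tr(\rho A) = \sum_k \langle v_k, A u_k\rangle$, and Cauchy--Schwarz gives $|\langle v_k, A u_k\rangle| \le \|A\|\,\|u_k\|\,\|v_k\|$, so summing and taking the infimum over decompositions yields the estimate. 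But since Theorem~\ref{thm:T1-ideal-holder} is already available, the direct citation is the cleanest route.
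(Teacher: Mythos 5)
Your proof is correct, and it reaches the bound cleanly. The route does differ somewhat from the paper's: the paper's proof is a one-liner pointing to the standard trace duality $\BH^*\cong\Tone$ and sketching an argument via polar decomposition, and the fully worked version (Proposition~\ref{prop:trace_duality_cyclicity} in the same appendix section) goes through $\rho = V|\rho|$, the factorization $T:=|\rho|^{1/2}\in\mathcal T_2(\Hcal)$, and Cauchy--Schwarz for Hilbert--Schmidt pairings. You instead cite Theorem~\ref{thm:T1-ideal-holder}, which is proved from the \emph{nuclear} (rank-one expansion) definition of the trace norm; your fallback sketch at the end is in fact exactly that theorem's proof. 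Both approaches are legitimate and both appear in the paper's appendices, so your citation is well-founded, but it is worth noting that the two appendices use different starting definitions of the trace class (nuclear series vs.\ $\Tr(|S|)<\infty$), and a careful reader would want to check that the definition in force where this lemma lives matches the one underlying Theorem~\ref{thm:T1-ideal-holder}. Modulo that bookkeeping point, your deduction $\|\rho\|_1 = \Tr(\rho) = 1$ for density operators is exactly right, and the overall argument is sound.
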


\begin{proof}
This is the standard trace duality estimate $\BH^*\cong \Tone$, i.e.\ $\|\varphi_\rho\|=\|\rho\|_1$.
One may prove it directly by writing $\rho=U|\rho|$ (polar decomposition), using cyclicity of trace and the definition of trace norm.
\end{proof}

\begin{lemma}[Normality on increasing positive sequences]
Let $\rho\in\Tone$ be positive and let $0\le A_n\uparrow A$ in $\BH$ (monotone in the operator order).
Then
\[
\Tr(\rho A_n)\uparrow \Tr(\rho A).
\]
\end{lemma}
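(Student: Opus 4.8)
The plan is to diagonalize $\rho$ and reduce the operator assertion to the scalar monotone convergence theorem applied to a series (i.e.\ $\mathbb N$ with counting measure). Concretely, since $\rho\in\Tone$ is positive it is compact and self-adjoint, so by the spectral theorem for positive compact operators there are an orthonormal system $(e_k)_{k\ge1}$ and scalars $\lambda_k>0$ with
\[
\rho=\sum_{k\ge1}\lambda_k\,\ketbra{e_k}{e_k}
\]
(convergence in trace norm, hence strongly), and $\sum_k\lambda_k=\Tr(\rho)=\|\rho\|_1<\infty$; append an orthonormal basis of $\ker\rho$ to obtain an ONB of $\Hcal$, the adjoined vectors carrying eigenvalue $0$. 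For any $B\in\BH$ we have $\rho B\in\Tone$ by the ideal property (Lemma~\ref{lem:trace-ideal}), and evaluating the trace in this basis, using $\rho e_k=\lambda_ke_k$ and the fact that kernel vectors contribute nothing, gives the key identity
\[
\Tr(\rho B)=\sum_{k\ge1}\langle e_k,\rho B e_k\rangle=\sum_{k\ge1}\lambda_k\,\langle e_k,Be_k\rangle .
\]

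\textbf{Steps.} First, apply this identity with $B=A_n$ and with $B=A$. Second, monotonicity of $n\mapsto\Tr(\rho A_n)$ is immediate: since $A_{n+1}-A_n\ge0$ one has $\lambda_k\langle e_k,A_ne_k\rangle\le\lambda_k\langle e_k,A_{n+1}e_k\rangle$ for every $k$, and summing gives $\Tr(\rho A_n)\le\Tr(\rho A_{n+1})$; moreover each summand is $\ge0$ because $A_n\ge0$. Third, interpret the hypothesis $0\le A_n\uparrow A$: it means $A$ is the supremum of $(A_n)$ in the order of self-adjoint operators, equivalently (Vigier's monotone convergence theorem for operators) $A_n\to A$ in the strong operator topology; in either reading one obtains, for every vector $x\in\Hcal$, that $\langle x,A_nx\rangle\uparrow\langle x,Ax\rangle$. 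In particular, for each fixed $k$, $\lambda_k\langle e_k,A_ne_k\rangle\uparrow\lambda_k\langle e_k,Ae_k\rangle$ as $n\to\infty$. Fourth, regard $k\mapsto\lambda_k\langle e_k,A_ne_k\rangle$ as a nonnegative function on $\mathbb N$ (with counting measure) increasing pointwise in $n$ to $k\mapsto\lambda_k\langle e_k,Ae_k\rangle$; the monotone convergence theorem permits interchanging the limit and the sum, yielding
\[
\lim_{n\to\infty}\Tr(\rho A_n)=\lim_{n\to\infty}\sum_{k\ge1}\lambda_k\langle e_k,A_ne_k\rangle=\sum_{k\ge1}\lambda_k\langle e_k,Ae_k\rangle=\Tr(\rho A).
\]
Combining with Step two gives exactly $\Tr(\rho A_n)\uparrow\Tr(\rho A)$.

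\textbf{Main obstacle.} The only point requiring genuine care — as opposed to bookkeeping — is the input in the third step: deducing $\langle x,A_nx\rangle\uparrow\langle x,Ax\rangle$ from "$A_n\uparrow A$''. This is handled either by taking the supremum characterization of $A$ as the definition of the arrow (in which case the quadratic-form statement is essentially tautological, since $\langle x,Ax\rangle=\sup_n\langle x,A_nx\rangle$), or by invoking Vigier's theorem to identify the order-supremum with the strong limit. Everything else is the diagonalization identity for the trace, positivity of the summands, and the monotone convergence theorem on $\mathbb N$.
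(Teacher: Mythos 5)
Your proof is correct and takes essentially the same route as the paper: diagonalize the positive trace-class operator $\rho$ in an orthonormal eigenbasis $(e_k)$ with eigenvalues $\lambda_k\ge0$, rewrite $\Tr(\rho B)=\sum_k\lambda_k\langle e_k,Be_k\rangle$, note that $0\le A_n\uparrow A$ gives $\langle e_k,A_ne_k\rangle\uparrow\langle e_k,Ae_k\rangle$ for each fixed $k$, and apply the scalar monotone convergence theorem to the series of nonnegative terms. The only difference is that you make explicit the justification (via the quadratic-form meaning of the order supremum, or Vigier's theorem) that the paper's proof takes for granted, which is a small but welcome refinement.
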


\begin{proof}
Choose an orthonormal basis $(e_k)$ and a sequence $(r_k)_{k\ge1}$ with $r_k\ge0$, $\sum_k r_k=\Tr(\rho)$, such that
$\rho e_k=r_k e_k$ (diagonalization of the positive trace-class operator $\rho$).
Then
\[
\Tr(\rho A_n)=\sum_{k=1}^\infty r_k\,\langle e_k, A_n e_k\rangle,
\qquad
\Tr(\rho A)=\sum_{k=1}^\infty r_k\,\langle e_k, A e_k\rangle.
\]
Since $A_n\uparrow A$, we have $\langle e_k,A_n e_k\rangle\uparrow \langle e_k,A e_k\rangle$ for each $k$, and all terms are nonnegative.
Apply monotone convergence for series of nonnegative terms to conclude.
\end{proof}

\begin{definition}[State-induced distribution of an observable]
Let $X=X^*$ with spectral measure $P_X$, and let $\rho$ be a density operator.
Define $\mu_X^\rho:\Bcal(\R)\to[0,1]$ by
\[
\mu_X^\rho(\Delta):=\Tr\bigl(\rho\,P_X(\Delta)\bigr),\qquad \Delta\in\Bcal(\R).
\]
\end{definition}

\begin{proposition}[$\mu_X^\rho$ is a probability measure]
The map $\mu_X^\rho$ is a countably additive probability measure on $(\R,\Bcal(\R))$.
\end{proposition}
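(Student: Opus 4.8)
The plan is to verify the three defining properties of a probability measure — normalization, $[0,1]$-valuedness, and countable additivity — directly from the PVM axioms for $P_X$ together with the lemma on normality of $\varphi_\rho=\Tr(\rho\,\cdot)$ on increasing positive sequences established just above.

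First I would record normalization and positivity. Since $P_X(\R)=I$ and $\Tr(\rho)=1$, one has $\mu_X^\rho(\R)=\Tr(\rho I)=1$, while $\mu_X^\rho(\varnothing)=\Tr(\rho\,P_X(\varnothing))=\Tr(0)=0$. For arbitrary $\Delta\in\Bcal(\R)$ the projection $P_X(\Delta)$ satisfies $0\le P_X(\Delta)\le I$, so positivity of $\rho$ (equivalently, monotonicity of $A\mapsto\Tr(\rho A)$ on self-adjoint operators, seen by diagonalizing $\rho$ as in the normality lemma) yields $0\le \Tr(\rho\,P_X(\Delta))\le \Tr(\rho)=1$. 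Hence $\mu_X^\rho$ is $[0,1]$-valued.

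The substantive step is countable additivity. Let $(\Delta_n)_{n\ge 1}\subset\Bcal(\R)$ be pairwise disjoint with $\Delta:=\bigcup_n\Delta_n$. Since $P_X(\Delta_i)P_X(\Delta_j)=P_X(\Delta_i\cap\Delta_j)=0$ for $i\ne j$, the partial sums $Q_N:=\sum_{n=1}^N P_X(\Delta_n)$ are themselves orthogonal projections, and $Q_{N+1}-Q_N=P_X(\Delta_{N+1})\ge 0$, so $(Q_N)$ is monotone increasing in the operator order; moreover $P_X(\Delta)-Q_N=P_X\big(\Delta\setminus\bigcup_{n\le N}\Delta_n\big)\ge 0$, so $Q_N\le P_X(\Delta)$ for all $N$. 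A bounded monotone increasing sequence of self-adjoint operators converges strongly to its least upper bound; since the PVM axiom also gives $Q_N\to P_X(\Delta)$ strongly, uniqueness of strong limits yields $Q_N\uparrow P_X(\Delta)$ in the operator order. Now the normality lemma for increasing positive sequences applies, giving $\Tr(\rho Q_N)\uparrow \Tr(\rho\,P_X(\Delta))$. By linearity of the trace, $\Tr(\rho Q_N)=\sum_{n=1}^N\Tr(\rho\,P_X(\Delta_n))=\sum_{n=1}^N\mu_X^\rho(\Delta_n)$, so letting $N\to\infty$ gives $\sum_{n=1}^\infty\mu_X^\rho(\Delta_n)=\mu_X^\rho(\Delta)$.

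Combining these, $\mu_X^\rho$ is a $[0,1]$-valued, countably additive set function with $\mu_X^\rho(\R)=1$, i.e.\ a probability measure on $(\R,\Bcal(\R))$. I do not expect any genuine obstacle; the only point requiring care is reconciling the strong convergence supplied by the PVM axiom with the order-monotone convergence needed to invoke the normality lemma, which is exactly what the uniqueness-of-strong-limits observation above resolves.
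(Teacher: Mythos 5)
Your proof is correct and follows essentially the same route as the paper's: normalization from $P_X(\R)=I$, $[0,1]$-valuedness from positivity, and countable additivity via the partial sums $Q_N=\sum_{n\le N}P_X(\Delta_n)$ and the normality lemma for the trace pairing. The one difference is cosmetic: you spell out why the strong PVM convergence $Q_N\to P_X(\Delta)$ can be upgraded to monotone operator-order convergence (via $Q_N\le P_X(\Delta)$ and uniqueness of strong limits), a point the paper asserts in a parenthetical.
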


\begin{proof}
Nonnegativity follows from $\rho\ge0$ and $P_X(\Delta)\ge0$.
Normalization: $\mu_X^\rho(\R)=\Tr(\rho P_X(\R))=\Tr(\rho I)=\Tr(\rho)=1$.

For countable additivity, let $(\Delta_n)$ be pairwise disjoint and $\Delta:=\bigcup_n \Delta_n$.
Set $S_N:=\sum_{n=1}^N P_X(\Delta_n)$. Then $0\le S_N\uparrow P_X(\Delta)$ in the strong operator sense,
and in fact in the operator order (since the projections are orthogonal). By the normality lemma,
\[
\Tr(\rho P_X(\Delta))=\lim_{N\to\infty}\Tr(\rho S_N)
=\lim_{N\to\infty}\sum_{n=1}^N \Tr(\rho P_X(\Delta_n))
=\sum_{n=1}^\infty \mu_X^\rho(\Delta_n).
\]
\end{proof}

\begin{theorem}[Trace--integral identity for bounded Borel functions]
Let $X=X^*$ and $\rho$ be a density operator. For every bounded Borel $f:\R\to\C$,
\[
\Tr\bigl(\rho\, f(X)\bigr) = \int_{\R} f(\lambda)\,\mu_X^\rho(d\lambda).
\]
\end{theorem}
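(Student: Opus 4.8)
The plan is to prove the identity by the standard three-stage measure-theoretic bootstrap: verify it on indicators, extend by linearity to simple functions, and pass to a limit for bounded Borel functions via uniform approximation. First I would treat $f=\mathbf 1_\Delta$ with $\Delta\in\Bcal(\R)$. By the definition of the bounded functional calculus, $f(X)=\int_{\R}\mathbf 1_\Delta(\lambda)\,P_X(d\lambda)=P_X(\Delta)$, so
\[
\Tr\bigl(\rho\,f(X)\bigr)=\Tr\bigl(\rho\,P_X(\Delta)\bigr)=\mu_X^\rho(\Delta)=\int_{\R}\mathbf 1_\Delta\,d\mu_X^\rho,
\]
directly from the definition of the state-induced distribution $\mu_X^\rho$.

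Next, for a bounded Borel simple function $s=\sum_{k=1}^m c_k\mathbf 1_{\Delta_k}$ with pairwise disjoint $\Delta_k$, linearity of the spectral integral gives $s(X)=\sum_{k=1}^m c_k\,P_X(\Delta_k)$, and linearity of the functional $A\mapsto\Tr(\rho A)$ together with the indicator case yields
\[
\Tr\bigl(\rho\,s(X)\bigr)=\sum_{k=1}^m c_k\,\Tr\bigl(\rho\,P_X(\Delta_k)\bigr)=\sum_{k=1}^m c_k\,\mu_X^\rho(\Delta_k)=\int_{\R}s\,d\mu_X^\rho.
\]
This is the identity at the level of simple functions.

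Finally, let $f:\R\to\C$ be bounded Borel. By Corollary~\ref{cor:uniform_simple_approx_complex} there is a sequence of simple Borel functions $s_n$ with $\|s_n-f\|_\infty\to 0$. On the operator side, the norm estimate $\bigl\|\int h\,dP_X\bigr\|\le\|h\|_\infty$ (which underlies well-definedness of $f(X)$) gives $\|s_n(X)-f(X)\|\le\|s_n-f\|_\infty\to 0$, and then the trace-duality bound $|\Tr(\rho A)|\le\|\rho\|_1\,\|A\|=\|A\|$ forces $\Tr(\rho\,s_n(X))\to\Tr(\rho\,f(X))$. On the measure side, since $\mu_X^\rho$ is a probability measure,
\[
\Bigl|\int_{\R}s_n\,d\mu_X^\rho-\int_{\R}f\,d\mu_X^\rho\Bigr|\le\|s_n-f\|_\infty\,\mu_X^\rho(\R)=\|s_n-f\|_\infty\to 0.
\]
Passing to the limit in the simple-function identity $\Tr(\rho\,s_n(X))=\int_{\R}s_n\,d\mu_X^\rho$ yields $\Tr(\rho\,f(X))=\int_{\R}f\,d\mu_X^\rho$.

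The argument is almost entirely routine; the only points requiring care are (i) invoking the correct continuity properties---norm continuity of the bounded functional calculus in $\|\cdot\|_\infty$ and norm continuity of $A\mapsto\Tr(\rho A)$---so that both sides of the approximating equalities converge, and (ii) the finiteness of $\mu_X^\rho$ (indeed $\mu_X^\rho(\R)=1$), which is precisely what makes the measure-side passage to the limit legitimate under mere uniform convergence. Boundedness of $f$ is essential here: for unbounded $f$ one would instead work with spectral truncations $f_n:=f\cdot\mathbf 1_{\{|f|\le n\}}$ and monotone/dominated convergence, but that lies outside the scope of the present statement. I do not anticipate any genuine obstacle.
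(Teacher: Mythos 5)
Your proof is correct and follows the same three-stage bootstrap (indicator $\to$ simple $\to$ bounded Borel via uniform approximation, combined with the $\|\cdot\|_\infty$-to-operator-norm bound for the spectral integral and trace duality) that the paper uses. The only micro-difference is in the measure-side limit: the paper invokes dominated convergence, whereas you use the more elementary estimate $\bigl|\int s_n\,d\mu_X^\rho-\int f\,d\mu_X^\rho\bigr|\le\|s_n-f\|_\infty\,\mu_X^\rho(\R)$, a slight simplification available because $\mu_X^\rho$ is finite, but not a structurally different route.
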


\begin{proof}
First assume $f=\mathbf 1_\Delta$. Then $f(X)=P_X(\Delta)$ and the identity is exactly the definition of $\mu_X^\rho$.

Next assume $f$ is simple: $f=\sum_{k=1}^m c_k\mathbf 1_{\Delta_k}$. Then by linearity of trace and the definition of $f(X)$,
\[
\Tr(\rho f(X))=\sum_{k=1}^m c_k \Tr(\rho P_X(\Delta_k))
=\sum_{k=1}^m c_k\,\mu_X^\rho(\Delta_k)
=\int f\,d\mu_X^\rho.
\]

Finally let $f$ be bounded Borel. Choose simple $s_n$ with $\|s_n-f\|_\infty\to 0$.
Then $\|s_n(X)-f(X)\|\le \|s_n-f\|_\infty\to0$, so by trace duality
\[
\Tr(\rho s_n(X))\to \Tr(\rho f(X)).
\]
On the other hand, $|s_n|\le \|f\|_\infty$ and $s_n\to f$ pointwise, hence by dominated convergence for the probability measure $\mu_X^\rho$,
\[
\int s_n\,d\mu_X^\rho \to \int f\,d\mu_X^\rho.
\]
Combine with the simple-function case to conclude.
\end{proof}

\begin{remark}[Truncations are structurally natural in quantum pricing]
When $f$ is unbounded (e.g.\ $f(\lambda)=\lambda$ or a call payoff $(\lambda-K)^+$), the operator $f(X)$ is generally unbounded
and $\Tr(\rho f(X))$ may fail to be well-defined without integrability assumptions.
A robust strategy used throughout this paper is to work first with bounded truncations
$f_n(\lambda):=\max\{-n,\min\{\lambda,n\}\}$ (or $f_n:=(f\wedge n)\vee(-n)$), so that $f_n(X)\in\BH$ and
\[
\Tr(\rho f_n(X))=\int f_n(\lambda)\,\mu_X^\rho(d\lambda)
\]
always holds. One then passes to limits via MCT/DCT once integrability is verified.
\end{remark}

\begin{example}[State-predicted price as an integral]
Let $S_T=S_T^*$ be a terminal price observable and let $g:\R\to\R$ be bounded Borel (a bounded payoff).
Then the ``price'' predicted by the state $\rho$ is
\[
\Pi(g(S_T)) := \Tr(\rho\,g(S_T))=\int_\R g(\lambda)\,\mu_{S_T}^\rho(d\lambda).
\]
\end{example}

\subsection{Two technical supplements (normality and trace duality)}

\begin{proposition}[Born-type measure from a PVM and a density operator]\label{prop:born_measure_pvm}
Let $P:\Bcal(\R)\to\BH$ be a projection-valued measure (PVM) on $\Hcal$, and let $\rho\in\Tone$ satisfy
$\rho\ge 0$ and $\Tr(\rho)=1$. Define
\[
\mu_{P}^{\rho}(\Delta):=\Tr\bigl(\rho\,P(\Delta)\bigr),\qquad \Delta\in\Bcal(\R).
\]
Then $\mu_{P}^{\rho}$ is a probability measure on $(\R,\Bcal(\R))$.
\end{proposition}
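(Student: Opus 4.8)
The plan is to check the three defining properties of a probability measure in turn; the argument is essentially a verbatim repeat of the earlier proof that $\mu_X^\rho$ is a probability measure, since a PVM satisfies exactly the properties of a spectral measure that were used there.

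First I would record nonnegativity and normalization. For each Borel set $\Delta$ the operator $P(\Delta)$ is an orthogonal projection, hence $0\le P(\Delta)\le I$ in $\BH$, and since $\rho\ge 0$ is trace class one has $\Tr(\rho\,P(\Delta))\ge 0$: writing $\rho=\rho^{1/2}\rho^{1/2}$ with $\rho^{1/2}\in\mathcal T_2(\Hcal)$ (the ``square root of a density is Hilbert--Schmidt'' lemma), using cyclicity of the trace and $P(\Delta)^2=P(\Delta)$, the trace rewrites as $\|P(\Delta)\rho^{1/2}\|_2^2\ge 0$. The upper bound $\mu_P^\rho(\Delta)\le 1$ is the trace--duality estimate $|\Tr(\rho A)|\le\|\rho\|_1\|A\|$ applied with $A=P(\Delta)$. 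Then $\mu_P^\rho(\varnothing)=\Tr(\rho\cdot 0)=0$, and the PVM axiom $P(\R)=I$ gives $\mu_P^\rho(\R)=\Tr(\rho\,P(\R))=\Tr(\rho I)=\Tr(\rho)=1$.

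Second, for countable additivity let $(\Delta_n)_{n\ge 1}\subseteq\Bcal(\R)$ be pairwise disjoint with union $\Delta$, and set $S_N:=\sum_{n=1}^N P(\Delta_n)$. Disjointness makes the projections $P(\Delta_n)$ pairwise orthogonal, so each $S_N$ is itself a projection and $0\le S_1\le S_2\le\cdots$; the strong-additivity axiom of the PVM gives $S_N\to P(\Delta)$ strongly, and orthogonality upgrades this to $S_N\uparrow P(\Delta)$ in the operator order. I would then invoke the already-established normality lemma (normality of the trace on increasing positive sequences): since $\rho$ is positive trace class, $\Tr(\rho\,S_N)\uparrow\Tr(\rho\,P(\Delta))$. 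Combined with linearity of the trace, $\Tr(\rho\,S_N)=\sum_{n=1}^N\Tr(\rho\,P(\Delta_n))$, this yields $\sum_{n=1}^\infty\mu_P^\rho(\Delta_n)=\mu_P^\rho(\Delta)$, which is the desired countable additivity.

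The only genuinely non-formal point---and hence the ``main obstacle'', although it is already handled in the excerpt---is the passage from strong operator convergence $S_N\to P(\Delta)$ to numerical convergence $\Tr(\rho\,S_N)\to\Tr(\rho\,P(\Delta))$. This is precisely the content of the normality lemma (diagonalize $\rho$ and apply monotone convergence for series of nonnegative terms); everything else reduces immediately to the PVM axioms and the positivity and linearity of the trace on $\Tone$.
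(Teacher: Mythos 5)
Your proposal is correct and takes essentially the same route as the paper: nonnegativity and normalization from the PVM axioms, then countable additivity by forming the partial sums $S_N$, noting $S_N\uparrow P(\Delta)$ in the operator order via orthogonality, and invoking the trace-normality lemma. The only cosmetic differences are that you spell out nonnegativity through the Hilbert--Schmidt factorization $\Tr(\rho P(\Delta))=\|P(\Delta)\rho^{1/2}\|_2^2$ and explicitly record the upper bound $\le 1$, both of which the paper leaves implicit.
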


\begin{proof}
Nonnegativity is immediate: $P(\Delta)\ge 0$ and $\rho\ge 0$ imply $\Tr(\rho P(\Delta))\ge 0$.
Normalization holds since $P(\R)=I$:
\[
\mu_P^\rho(\R)=\Tr(\rho I)=\Tr(\rho)=1.
\]

It remains to prove countable additivity. Let $(\Delta_n)_{n\ge1}$ be pairwise disjoint and set
$\Delta:=\bigcup_{n=1}^\infty \Delta_n$. Define partial sums
\[
S_N:=\sum_{n=1}^N P(\Delta_n)\in\BH.
\]
Because the projections $P(\Delta_n)$ are pairwise orthogonal, $S_N$ is an orthogonal projection onto
$\bigoplus_{n=1}^N \ran P(\Delta_n)$ and hence $0\le S_N\le I$. Moreover, by the PVM axiom,
\[
P(\Delta)=\text{\rm s-}\!\lim_{N\to\infty} S_N,
\]
and in fact $S_N\uparrow P(\Delta)$ in the operator order (equivalently: $S_N\le S_{N+1}$ and
$S_N\psi\to P(\Delta)\psi$ for every $\psi$).

Thus it suffices to know that the trace pairing $A\mapsto \Tr(\rho A)$ is \emph{normal} on increasing positive sequences:
\[
0\le A_N\uparrow A \quad\Longrightarrow\quad \Tr(\rho A_N)\uparrow \Tr(\rho A).
\]
This follows from Lemma~\ref{lem:trace_normality_posseq} proved below (applied to $A_N=S_N$ and $A=P(\Delta)$). Therefore
\[
\mu_P^\rho(\Delta)=\Tr(\rho P(\Delta))
=\lim_{N\to\infty}\Tr(\rho S_N)
=\lim_{N\to\infty}\sum_{n=1}^N \Tr(\rho P(\Delta_n))
=\sum_{n=1}^\infty \mu_P^\rho(\Delta_n),
\]
which is the desired countable additivity.
\end{proof}

\begin{lemma}[Normality of the trace pairing on increasing positive sequences]\label{lem:trace_normality_posseq}
Let $\rho\in\Tone$ be positive. Let $(A_n)_{n\ge1}\subseteq\BH$ satisfy $0\le A_n\uparrow A$ (monotone increasing in the
operator order), where $A\in\BH$. Then
\[
\Tr(\rho A_n)\uparrow \Tr(\rho A).
\]
\end{lemma}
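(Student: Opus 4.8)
The plan is to diagonalize $\rho$ and reduce the assertion to the monotone convergence theorem for a series of nonnegative terms (i.e.\ MCT for counting measure on $\mathbb N$). First I would record the monotonicity of $n\mapsto\Tr(\rho A_n)$ together with the finiteness of each term. Since $A_{n+1}-A_n\ge 0$ and $\rho\ge 0$, the operator $\rho^{1/2}(A_{n+1}-A_n)\rho^{1/2}$ is positive and trace-class by the ideal property (Lemma~\ref{lem:trace-ideal}), and by cyclicity of the trace (Theorem~\ref{thm:trace-cyclicity}),
\[
\Tr\bigl(\rho(A_{n+1}-A_n)\bigr)=\Tr\bigl(\rho^{1/2}(A_{n+1}-A_n)\rho^{1/2}\bigr)\ge 0 .
\]
Likewise $\rho A_n\in\Tone$ and $\Tr(\rho A_n)\le\Tr(\rho A)$, applying the same argument to $A-A_n\ge 0$. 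Thus $(\Tr(\rho A_n))_n$ is a nondecreasing sequence bounded above by $\Tr(\rho A)$; it remains to identify its limit.

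Next, since $\rho$ is positive and trace-class it is compact, so the spectral theorem provides an orthonormal basis $(e_k)_{k\ge 1}$ of $\Hcal$ and eigenvalues $r_k\ge 0$ with $\rho e_k=r_k e_k$ and $\sum_{k}r_k=\Tr(\rho)<\infty$. Computing the trace in this basis and using self-adjointness of $\rho$,
\[
\Tr(\rho A_n)=\sum_{k=1}^{\infty}\langle e_k,\rho A_n e_k\rangle=\sum_{k=1}^{\infty}r_k\,\langle e_k,A_n e_k\rangle ,
\qquad
\Tr(\rho A)=\sum_{k=1}^{\infty}r_k\,\langle e_k,A e_k\rangle .
\]
For each fixed $k$, the scalar sequence $\langle e_k,A_n e_k\rangle$ is nondecreasing (because $A_{n+1}-A_n\ge 0$) and converges to $\langle e_k,A e_k\rangle$, since $A_n\uparrow A$ in the operator order implies $A_n\to A$ strongly, in particular weakly. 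All summands $r_k\langle e_k,A_n e_k\rangle$ are nonnegative and increase in $n$, so the monotone convergence theorem lets me interchange $\lim_{n}$ with $\sum_k$:
\[
\lim_{n\to\infty}\Tr(\rho A_n)=\sum_{k=1}^{\infty}r_k\lim_{n\to\infty}\langle e_k,A_n e_k\rangle=\sum_{k=1}^{\infty}r_k\,\langle e_k,A e_k\rangle=\Tr(\rho A),
\]
with the convergence monotone by the first step.

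The only point that needs a word of care is the implication $A_n\uparrow A\ \Rightarrow\ \langle e_k,A_n e_k\rangle\to\langle e_k,Ae_k\rangle$. I would handle it by the standard fact that a bounded increasing sequence of self-adjoint operators converges strongly to its least upper bound: the weak limits of the diagonal forms define, via polarization, a bounded self-adjoint operator $A_\infty$ which is both an upper bound for $(A_n)$ and dominated by $A$, hence $A_\infty=A$ by minimality of $A$ as least upper bound. If instead one takes the convention (implicit in Proposition~\ref{prop:born_measure_pvm}, where $S_N\uparrow P(\Delta)$ is meant in the strong sense) that ``$A_n\uparrow A$'' already includes $A_n\to A$ strongly, this step is immediate and nothing further is needed. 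This is the only mild subtlety; the rest is exactly the ideal property, cyclicity of the trace, and MCT for nonnegative series.
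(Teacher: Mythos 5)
Your proof follows essentially the same route as the paper's: diagonalize $\rho$ in an orthonormal eigenbasis with eigenvalues $r_k\ge 0$, rewrite $\Tr(\rho A_n)=\sum_k r_k\langle e_k,A_n e_k\rangle$, observe that each diagonal matrix element is nondecreasing and nonnegative, and invoke monotone convergence for nonnegative series. Your extra care about the meaning of ``$A_n\uparrow A$'' (least upper bound vs.\ strong limit, reconciled via Vigier's theorem that a norm-bounded increasing net of self-adjoint operators converges strongly to its supremum) is a worthwhile clarification that the paper leaves implicit but uses correctly in its application to partial sums of orthogonal projections.
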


\begin{proof}
Since $\rho\ge 0$ is trace-class, there exists an orthonormal basis $(e_k)_{k\ge1}$ of $\Hcal$ and numbers $r_k\ge 0$
with $\sum_{k=1}^\infty r_k=\Tr(\rho)$ such that $\rho e_k=r_k e_k$ (spectral decomposition of a positive compact operator,
and trace-class means $\sum_k r_k<\infty$). For each $n$,
\[
\Tr(\rho A_n)=\sum_{k=1}^\infty \langle e_k,\rho A_n e_k\rangle
=\sum_{k=1}^\infty r_k\,\langle e_k, A_n e_k\rangle,
\qquad
\Tr(\rho A)=\sum_{k=1}^\infty r_k\,\langle e_k, A e_k\rangle.
\]
Because $0\le A_n\uparrow A$, we have $\langle e_k,A_n e_k\rangle\uparrow \langle e_k, A e_k\rangle$ for every fixed $k$,
and all terms are nonnegative. Hence, by monotone convergence for series of nonnegative terms,
\[
\sum_{k=1}^\infty r_k\,\langle e_k, A_n e_k\rangle \uparrow \sum_{k=1}^\infty r_k\,\langle e_k, A e_k\rangle,
\]
which is exactly $\Tr(\rho A_n)\uparrow\Tr(\rho A)$.
\end{proof}

\begin{definition}[Hilbert--Schmidt operators]
An operator $T\in\BH$ is \emph{Hilbert--Schmidt} if for some (equivalently, for every) orthonormal basis $(e_k)$ of $\Hcal$,
\[
\|T\|_2^2:=\sum_{k=1}^\infty \|Te_k\|^2<\infty.
\]
We denote the Hilbert--Schmidt class by $\mathcal T_2(\Hcal)$.
\end{definition}

\begin{lemma}[Cauchy--Schwarz in the Hilbert--Schmidt class]\label{lem:HS_CS}
Let $B,C\in \mathcal T_2(\Hcal)$. Then $BC$ is trace-class and
\[
|\Tr(BC)| \le \|B\|_2\,\|C\|_2.
\]
Moreover, $\Tr(BC)=\Tr(CB)$.
\end{lemma}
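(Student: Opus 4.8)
The plan is to reduce the first two assertions directly to the Hölder-type inclusion $\mathcal T_2(\Hcal)\cdot\mathcal T_2(\Hcal)\subseteq\mathcal T_1(\Hcal)$ already established in Theorem~\ref{thm:HS-times-HS}, and then to obtain the cyclicity identity by a matrix-coefficient computation relative to a fixed orthonormal basis, using absolute convergence of the resulting doubly indexed series to justify interchanging the order of summation.

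First I would invoke Theorem~\ref{thm:HS-times-HS} with $(S,T)=(B,C)$: this gives $BC\in\mathcal T_1(\Hcal)$ together with $\|BC\|_1\le\|B\|_2\,\|C\|_2$, and, combined with the basic bound $|\Tr(S)|\le\|S\|_1$ for $S\in\mathcal T_1(\Hcal)$ from Lemma~\ref{lem:trace-well-defined}, the estimate $|\Tr(BC)|\le\|B\|_2\,\|C\|_2$. Applying the same theorem with the roles of $B$ and $C$ exchanged shows $CB\in\mathcal T_1(\Hcal)$ as well, so both $\Tr(BC)$ and $\Tr(CB)$ are well-defined scalars in the sense of Definition~\ref{def:trace-on-T1}.

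For the cyclicity identity, I would fix an orthonormal basis $(e_k)_{k\ge1}$ of $\Hcal$ and set $B_{kj}:=\langle e_k, B e_j\rangle$ and $C_{jk}:=\langle e_j, C e_k\rangle$. By Lemma~\ref{lem:trace-well-defined} the trace of a trace-class operator is computed as $\Tr(S)=\sum_k\langle e_k, S e_k\rangle$ in any orthonormal basis, so inserting a resolution of the identity gives $\Tr(BC)=\sum_k\sum_j B_{kj}C_{jk}$ and $\Tr(CB)=\sum_j\sum_k C_{jk}B_{kj}$, which is the same doubly indexed sum written in the two orders of summation. To legitimize the interchange I would verify absolute summability: by Cauchy--Schwarz on $\ell^2(\mathbb N\times\mathbb N)$,
\[
\sum_{j,k}|B_{kj}|\,|C_{jk}|\;\le\;\Big(\sum_{j,k}|B_{kj}|^2\Big)^{1/2}\Big(\sum_{j,k}|C_{jk}|^2\Big)^{1/2},
\]
and Parseval in each variable identifies $\sum_{j,k}|B_{kj}|^2=\sum_j\|Be_j\|^2=\|B\|_2^2$ and $\sum_{j,k}|C_{jk}|^2=\sum_j\|C^\ast e_j\|^2=\|C^\ast\|_2^2=\|C\|_2^2$, the last equality by the adjoint-invariance of the Hilbert--Schmidt norm (Lemma~\ref{lem:HS-basis-indep}). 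Hence the double series converges absolutely, Fubini for series applies, and $\Tr(BC)=\Tr(CB)$.

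The main obstacle is essentially bookkeeping rather than any genuine difficulty: one must be careful that the naive basis sum $\sum_k\langle e_k, BCe_k\rangle$ really equals the intrinsic trace of Definition~\ref{def:trace-on-T1} (handled by Lemma~\ref{lem:trace-well-defined}) and that the double sum is summable so that the order of summation may be reversed (handled by the Cauchy--Schwarz estimate above). An alternative to the matrix-coefficient route would be to approximate $B$ and $C$ in $\|\cdot\|_2$ by finite-rank operators, for which cyclicity is elementary, and pass to the limit using $|\Tr(XY)|\le\|X\|_2\|Y\|_2$; I prefer the direct computation, since it produces the trace bound and the cyclicity identity by the same argument.
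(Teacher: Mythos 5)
Your proof is correct and takes a genuinely different route from the paper's. For trace-class membership of $BC$ and the bound $|\Tr(BC)|\le\|B\|_2\,\|C\|_2$ you delegate to Theorem~\ref{thm:HS-times-HS} together with $|\Tr(S)|\le\|S\|_1$ from Lemma~\ref{lem:trace-well-defined}, whereas the paper re-derives the estimate from scratch by writing $\Tr(BC)=\sum_k\langle B^\ast e_k,\,Ce_k\rangle$ and applying scalar Cauchy--Schwarz; your route is shorter and avoids duplication, the paper's keeps the subsection locally self-contained. The substantive divergence is the cyclicity $\Tr(BC)=\Tr(CB)$. You insert a resolution of the identity, obtain the doubly indexed sum $\sum_{j,k}B_{kj}C_{jk}$ in both orders, verify absolute summability via $\ell^2$-Cauchy--Schwarz and Parseval (using $\|C^\ast\|_2=\|C\|_2$ from Lemma~\ref{lem:HS-basis-indep}), and then interchange the order of summation by Fubini for series --- the standard, airtight argument. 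The paper instead asserts $\Tr(CB)=\sum_k\langle C^\ast e_k,Be_k\rangle=\overline{\sum_k\langle B^\ast e_k,Ce_k\rangle}=\Tr(BC)$, i.e.\ that the two single-indexed diagonal sums are complex conjugates of one another; but $\langle C^\ast e_k,Be_k\rangle=\langle e_k,CBe_k\rangle$ while $\overline{\langle B^\ast e_k,Ce_k\rangle}=\langle e_k,B^\ast C^\ast e_k\rangle$, so the paper's middle equality amounts to claiming $\Tr(CB)=\Tr(B^\ast C^\ast)$, which is false in general (already for $2\times2$ matrices with $\Tr(BC)\notin\mathbb R$). Your double-sum derivation therefore not only differs from the paper's but actually closes a gap in it; you should keep your argument as written.
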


\begin{proof}
Fix an orthonormal basis $(e_k)$. Since $B,C$ are Hilbert--Schmidt,
the sequences $(Be_k)_k$ and $(C^* e_k)_k$ are square-summable in $\Hcal$. Observe
\[
\Tr(BC)=\sum_{k=1}^\infty \langle e_k,BC e_k\rangle
=\sum_{k=1}^\infty \langle B^* e_k, C e_k\rangle.
\]
By Cauchy--Schwarz for series,
\[
|\Tr(BC)|
\le \left(\sum_{k=1}^\infty \|B^* e_k\|^2\right)^{1/2}
     \left(\sum_{k=1}^\infty \|C e_k\|^2\right)^{1/2}
= \|B^*\|_2\,\|C\|_2
= \|B\|_2\,\|C\|_2.
\]
This estimate also implies absolute convergence of the defining series, hence $BC$ is trace-class.
Finally, applying the same computation with $BC$ replaced by $CB$ yields
\[
\Tr(CB)=\sum_{k=1}^\infty \langle C^* e_k, B e_k\rangle
=\overline{\sum_{k=1}^\infty \langle B^* e_k, C e_k\rangle}
=\Tr(BC),
\]
because the scalar series is absolutely convergent and the two expressions are complex conjugates of each other.
\end{proof}

\begin{lemma}[Square root of a positive trace-class operator is Hilbert--Schmidt]\label{lem:traceclass_sqrt_HS}
Let $T\in\Tone$ satisfy $T\ge 0$. Then $T^{1/2}\in \mathcal T_2(\Hcal)$ and
\[
\|T^{1/2}\|_2^2=\Tr(T).
\]
\end{lemma}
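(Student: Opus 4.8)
The plan is to diagonalize the positive trace-class operator $T$ and compute both sides of the asserted identity in the eigenbasis. First I would observe that since $T\in\mathcal T_1(\Hcal)$ is positive it is in particular compact, so the spectral theorem for positive compact self-adjoint operators furnishes an orthonormal basis $(e_k)_{k\ge1}$ of $\Hcal$ consisting of eigenvectors, $Te_k=\lambda_k e_k$ with $\lambda_k\ge 0$, and by the very definition of the trace on $\mathcal T_1$ (equivalently, the nuclear/spectral definition $\Tr(T)=\sum_k\langle e_k,Te_k\rangle$) one has $\sum_{k\ge1}\lambda_k=\Tr(T)<\infty$.

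Next I would invoke the (continuous, or Borel) functional calculus for the positive operator $T$ to define its unique positive square root $T^{1/2}$, which on the eigenbasis acts by $T^{1/2}e_k=\lambda_k^{1/2}e_k$. Hence $\|T^{1/2}e_k\|^2=\lambda_k$, and summing over $k$,
\[
\|T^{1/2}\|_2^2=\sum_{k\ge1}\|T^{1/2}e_k\|^2=\sum_{k\ge1}\lambda_k=\Tr(T)<\infty,
\]
where the first equality is the definition of the Hilbert--Schmidt norm evaluated on the \emph{particular} orthonormal basis $(e_k)$. Finiteness of this quantity gives $T^{1/2}\in\mathcal T_2(\Hcal)$, using that the Hilbert--Schmidt condition need only be checked on one orthonormal basis (recorded in Definition of $\mathcal T_2$ and in Lemma~\ref{lem:HS-basis-indep}), and the displayed chain of equalities is exactly the claimed identity.

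There is essentially no obstacle here; the only point needing a word of justification is that computing $\|T^{1/2}\|_2$ in the eigenbasis, rather than an arbitrary one, is legitimate, which is precisely the basis-independence established in Lemma~\ref{lem:HS-basis-indep}. As an alternative route avoiding even that appeal, one may use $\|S\|_2^2=\Tr(S^*S)$ applied to $S:=T^{1/2}$: then $S^*S=T$, and the well-definedness/normality of the trace on positive operators (Lemma~\ref{lem:trace_normality_posseq}) together with the spectral definition of $\Tr$ yields $\|T^{1/2}\|_2^2=\Tr(T)$ directly.
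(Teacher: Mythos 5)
Your proof is correct and follows essentially the same route as the paper: diagonalize the positive compact trace-class operator $T$, observe $T^{1/2}e_k=\lambda_k^{1/2}e_k$, and sum $\|T^{1/2}e_k\|^2=\lambda_k$ to recover $\Tr(T)$. The added remarks on basis-independence of the Hilbert--Schmidt norm and the alternative route via $\|S\|_2^2=\Tr(S^*S)$ are sound but not substantively different from the paper's argument.
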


\begin{proof}
Diagonalize $T$: there exists an orthonormal basis $(e_k)$ and eigenvalues $\lambda_k\ge 0$ such that
$Te_k=\lambda_k e_k$ and $\sum_k \lambda_k=\Tr(T)<\infty$.
Then $T^{1/2}e_k=\sqrt{\lambda_k}\,e_k$, and hence
\[
\|T^{1/2}\|_2^2=\sum_{k=1}^\infty \|T^{1/2}e_k\|^2=\sum_{k=1}^\infty \lambda_k=\Tr(T).
\]
\end{proof}

\begin{lemma}[Bounded operators preserve the Hilbert--Schmidt class]\label{lem:bounded_preserves_HS}
Let $B\in \mathcal T_2(\Hcal)$ and $A\in\BH$. Then $BA,AB\in \mathcal T_2(\Hcal)$ and
\[
\|BA\|_2\le \|A\|\,\|B\|_2,\qquad \|AB\|_2\le \|A\|\,\|B\|_2.
\]
\end{lemma}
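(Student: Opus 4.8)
The plan is to dispatch right multiplication by a one-line Bessel-type estimate and then reduce left multiplication to it by passing to adjoints. First I would fix an orthonormal basis $(e_k)_{k\ge1}$ of $\Hcal$ and use the pointwise bound $\|ABe_k\|\le\|A\|\,\|Be_k\|$ (immediate from the definition of the operator norm) to obtain
\[
\|AB\|_2^2=\sum_{k=1}^\infty\|ABe_k\|^2\le\|A\|^2\sum_{k=1}^\infty\|Be_k\|^2=\|A\|^2\,\|B\|_2^2,
\]
the last series being finite because $B\in\mathcal T_2(\Hcal)$; hence $AB\in\mathcal T_2(\Hcal)$ with $\|AB\|_2\le\|A\|\,\|B\|_2$.

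Next I would record adjoint-invariance of the Hilbert--Schmidt norm (and, incidentally, its basis-independence), which the left-multiplication case needs. For $T\in\BH$ and orthonormal bases $(e_k)$, $(f_j)$ of $\Hcal$, Parseval's identity gives $\sum_k\|Te_k\|^2=\sum_{k,j}|\langle f_j,Te_k\rangle|^2=\sum_{j,k}|\langle T^\ast f_j,e_k\rangle|^2=\sum_j\|T^\ast f_j\|^2$, the interchange of the (nonnegative) double sum being legitimate; taking $(f_j)=(e_k)$ yields $\|T\|_2=\|T^\ast\|_2$ and in particular $T\in\mathcal T_2(\Hcal)\iff T^\ast\in\mathcal T_2(\Hcal)$. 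Then I would write $BA=(A^\ast B^\ast)^\ast$, apply the first estimate to the product $A^\ast B^\ast$ (noting $B^\ast\in\mathcal T_2(\Hcal)$ by the previous sentence and $\|A^\ast\|=\|A\|$) to get $A^\ast B^\ast\in\mathcal T_2(\Hcal)$ with $\|A^\ast B^\ast\|_2\le\|A\|\,\|B\|_2$, and finally invoke adjoint-invariance once more to conclude $BA\in\mathcal T_2(\Hcal)$ and $\|BA\|_2=\|A^\ast B^\ast\|_2\le\|A\|\,\|B\|_2$.

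There is no genuine obstacle: the computation is routine. The one place to be careful is that one should \emph{not} try to bound $\|BA\|_2^2=\sum_k\|B(Ae_k)\|^2$ directly, since $(Ae_k)_k$ need not be orthonormal and there is no free termwise comparison with $\|B\|_2$; the adjoint detour in the second paragraph is precisely what circumvents this. (If one preferred to avoid adjoints altogether, the same bound could instead be extracted from the Hilbert--Schmidt ideal/Cauchy--Schwarz estimates already used in Lemma~\ref{lem:HS_CS}, but the adjoint argument is the shortest route given what has been established.)
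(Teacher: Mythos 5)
Your proof is correct, and it actually takes a safer route than the paper's own proof for the left-multiplication case. Both of you handle $\|AB\|_2\le\|A\|\,\|B\|_2$ identically, via the termwise bound $\|ABe_k\|\le\|A\|\,\|Be_k\|$ summed over an orthonormal basis. The divergence is in $\|BA\|_2$. The paper's proof writes
\[
\|BA\|_2^2=\sum_k \|BA e_k\|^2 \le \sum_k \|B\|^2\,\|A e_k\|^2
\le \|A\|^2\sum_k \|B e_k\|^2 ,
\]
but the second inequality is unjustified and in fact false: the middle quantity equals $\|B\|^2\sum_k\|Ae_k\|^2$, which is $\|B\|^2\,\|A\|_2^2$ and involves the Hilbert--Schmidt norm of $A$ rather than its operator norm, while $A$ is only assumed bounded -- already for $A=I$ one has $\sum_k\|Ae_k\|^2=\infty$ whereas the claimed right-hand side $\|A\|^2\|B\|_2^2$ is finite. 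This is precisely the trap you flag: $(Ae_k)_k$ need not be orthonormal, so there is no free termwise comparison against $\sum_k\|Be_k\|^2$. Your adjoint detour -- establishing $\|T\|_2=\|T^\ast\|_2$ via Parseval and Tonelli, then writing $BA=(A^\ast B^\ast)^\ast$ to reduce left multiplication to the already-proved right multiplication -- is the clean and correct way to close the gap, and is the argument the paper should have used. (The adjoint invariance you re-derive is also available verbatim as Lemma~\ref{lem:HS-basis-indep}.) In short, your version repairs a genuine error in the paper's written proof of this lemma.
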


\begin{proof}
Let $(e_k)$ be an orthonormal basis. Then
\[
\|BA\|_2^2=\sum_k \|BA e_k\|^2 \le \sum_k \|B\|^2\,\|A e_k\|^2
\le \|A\|^2\sum_k \|B e_k\|^2 = \|A\|^2\|B\|_2^2.
\]
The estimate for $\|AB\|_2$ follows similarly from $\|AB e_k\|\le \|A\|\,\|B e_k\|$.
\end{proof}

\begin{proposition}[Trace duality estimate and cyclicity]\label{prop:trace_duality_cyclicity}
Let $\rho\in\Tone$ and $A\in\BH$. Then $\rho A$ and $A\rho$ are trace-class and
\[
|\Tr(\rho A)|\le \|\rho\|_1\,\|A\|.
\]
Moreover, the trace is cyclic in this setting:
\[
\Tr(\rho A)=\Tr(A\rho).
\]
\end{proposition}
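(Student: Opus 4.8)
The plan is to read off both assertions from the ideal and cyclicity properties of the trace already established for trace-class operators, so that no new analysis is required. First I would invoke Theorem~\ref{thm:T1-ideal-holder} (ideal property and Hölder estimate for $\Tone$): applying it with $S=\rho$ and the two bounded factors chosen as $(I,A)$ and $(A,I)$ respectively gives $\rho A\in\Tone$ and $A\rho\in\Tone$ with $\|\rho A\|_1\le\|\rho\|_1\|A\|$ and $\|A\rho\|_1\le\|\rho\|_1\|A\|$; the same theorem's trace bound $|\Tr(SA)|\le\|A\|\,\|S\|_1$ (again with $S=\rho$) yields $|\Tr(\rho A)|\le\|\rho\|_1\|A\|$. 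For the cyclicity identity I would apply Theorem~\ref{thm:trace-cyclicity} verbatim with $A\in\BH$ and $S:=\rho\in\Tone$, obtaining $\Tr(\rho A)=\Tr(A\rho)$. (Equivalently, the whole statement is the special case of Lemma~\ref{lem:trace-ideal} in which the von Neumann algebra is all of $\BH$.)

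If one prefers a self-contained argument not relying on those theorems, I would instead use the polar decomposition $\rho=U|\rho|$, with $|\rho|=(\rho^*\rho)^{1/2}\ge0$ trace-class and $U$ a partial isometry ($\|U\|\le1$), together with the Hilbert--Schmidt factorization machinery. By Lemma~\ref{lem:traceclass_sqrt_HS}, $|\rho|^{1/2}\in\mathcal T_2(\Hcal)$ with $\||\rho|^{1/2}\|_2^2=\Tr(|\rho|)=\|\rho\|_1$; setting $B:=U|\rho|^{1/2}$ and $C:=|\rho|^{1/2}$ one has $B,C\in\mathcal T_2(\Hcal)$ (Lemma~\ref{lem:bounded_preserves_HS}), $\|B\|_2\le\||\rho|^{1/2}\|_2$, and $\rho=BC$. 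Then $\rho A=B(CA)$ and $A\rho=(AB)C$ are products of Hilbert--Schmidt operators (Lemma~\ref{lem:bounded_preserves_HS} once more), hence trace-class, and Lemma~\ref{lem:HS_CS} gives $|\Tr(\rho A)|=|\Tr(B(CA))|\le\|B\|_2\,\|CA\|_2\le\||\rho|^{1/2}\|_2\cdot\|A\|\cdot\||\rho|^{1/2}\|_2=\|\rho\|_1\|A\|$. For cyclicity, Lemma~\ref{lem:HS_CS} applied twice yields $\Tr(\rho A)=\Tr\big(B(CA)\big)=\Tr\big((CA)B\big)=\Tr(CAB)=\Tr\big(C(AB)\big)=\Tr\big((AB)C\big)=\Tr(A\rho)$.

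There is essentially no obstacle here: the statement is a mild repackaging of Lemma~\ref{lem:trace-ideal}, Theorem~\ref{thm:T1-ideal-holder}, and Theorem~\ref{thm:trace-cyclicity}. The only point requiring a moment's care is that $\rho$ is not assumed positive, so in the self-contained route one must pass through $|\rho|$ via the polar decomposition rather than diagonalising $\rho$ directly; all the norm bounds then survive because the implementing partial isometry is a contraction.
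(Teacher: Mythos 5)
Your second, self-contained route is essentially the paper's own proof: the paper also uses the polar decomposition $\rho=V\lvert\rho\rvert$, puts $T:=\lvert\rho\rvert^{1/2}\in\mathcal T_2(\Hcal)$ via Lemma~\ref{lem:traceclass_sqrt_HS}, factors $\rho A=(VT)(TA)$ into a product of Hilbert--Schmidt operators, and then applies Lemma~\ref{lem:HS_CS} for both the norm estimate and the cyclicity (the paper's choice $B:=TV^*$, $C:=TA$ is a cosmetic variant of your $B:=U\lvert\rho\rvert^{1/2}$, $C:=\lvert\rho\rvert^{1/2}$, and both handle the non-positive $\rho$ correctly by working through $\lvert\rho\rvert$). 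Your first route --- reading the whole statement off Theorem~\ref{thm:T1-ideal-holder} and Theorem~\ref{thm:trace-cyclicity}, or indeed off Lemma~\ref{lem:trace-ideal} together with $\lvert\Tr(S)\rvert\le\|S\|_1$ --- is also valid given that those results precede this proposition in the text; the paper simply chose to re-derive the bound from its local Hilbert--Schmidt lemmas rather than cite across sections, so the two are logically equivalent and you lose nothing either way.
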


\begin{proof}
By polar decomposition, $\rho=V|\rho|$ for a partial isometry $V\in\BH$ and $|\rho|:=(\rho^*\rho)^{1/2}\ge 0$.
Since $\rho\in\Tone$, we have $|\rho|\in\Tone$ and $\|\rho\|_1=\Tr(|\rho|)$.

Set $T:=|\rho|^{1/2}$. By Lemma~\ref{lem:traceclass_sqrt_HS}, $T\in \mathcal T_2(\Hcal)$ and $\|T\|_2^2=\Tr(|\rho|)=\|\rho\|_1$.
Define
\[
B:=T V^*,\qquad C:=T A.
\]
By Lemma~\ref{lem:bounded_preserves_HS}, both $B$ and $C$ are Hilbert--Schmidt, and
\[
\|B\|_2=\|T V^*\|_2\le \|V^*\|\,\|T\|_2\le \|T\|_2,
\qquad
\|C\|_2=\|T A\|_2\le \|A\|\,\|T\|_2.
\]

Now note that
\[
\rho A = V|\rho|A = (V T)(T A).
\]
The product of two Hilbert--Schmidt operators is trace-class, so $\rho A$ is trace-class, and we may compute its trace.
Using Lemma~\ref{lem:HS_CS} (and its cyclicity statement),
\[
\Tr(\rho A)=\Tr\big((V T)(T A)\big)=\Tr\big((T A)(V T)\big).
\]
But $(T A)(V T)= C (V T)$, and $(V T) = (T V^*)^* = B^*$. Hence
\[
\Tr(\rho A)=\Tr(C B^*)=\Tr(B^* C)=\overline{\Tr(C^* B)}.
\]
In particular, Lemma~\ref{lem:HS_CS} gives the estimate
\[
|\Tr(\rho A)|=|\Tr(B^* C)|\le \|B\|_2\,\|C\|_2
\le \|T\|_2\,(\|A\|\|T\|_2)
= \|A\|\,\|T\|_2^2
= \|A\|\,\Tr(|\rho|)
= \|A\|\,\|\rho\|_1.
\]
This proves the trace duality bound.

For cyclicity, observe similarly that $A\rho = A V|\rho| = (A V T)T$, again a product of Hilbert--Schmidt operators,
hence trace-class. Moreover, by Lemma~\ref{lem:HS_CS},
\[
\Tr(\rho A)=\Tr\big((V T)(T A)\big)=\Tr\big((T A)(V T)\big),
\qquad
\Tr(A\rho)=\Tr\big((A V T)T\big)=\Tr\big(T(A V T)\big).
\]
Since both are traces of products of two Hilbert--Schmidt operators, the cyclicity part of Lemma~\ref{lem:HS_CS}
implies $\Tr((V T)(T A))=\Tr((T A)(V T))$ and likewise $\Tr((A V T)T)=\Tr(T(A V T))$, yielding $\Tr(\rho A)=\Tr(A\rho)$.
(Equivalently: whenever $X,Y$ are such that $XY$ and $YX$ are trace-class, one has $\Tr(XY)=\Tr(YX)$; here we are in that regime.)
\end{proof}

\section{Self-adjoint and essentially self-adjoint operators}

\subsection{Densely defined operators and adjoints}

Throughout, $\Hcal$ is a complex Hilbert space with inner product
$\langle\cdot|\cdot\rangle$ conjugate-linear in the first argument and linear
in the second. An (unbounded) operator means a linear map
\[
A: D(A)\subset \Hcal \to \Hcal,
\]
where $D(A)$ is a linear subspace (the domain).

\begin{definition}[Densely defined]
An operator $A:D(A)\to\Hcal$ is \emph{densely defined} if $D(A)$ is dense in $\Hcal$.
\end{definition}

\begin{definition}[Adjoint]
Let $A:D(A)\to\Hcal$ be densely defined. The \emph{adjoint} $A^*$ of $A$ is defined as follows.
Its domain is
\[
D(A^*) := \Big\{\psi\in\Hcal \;\Big|\; \exists\,\eta\in\Hcal\ \text{s.t.}\ 
\langle \psi|A\alpha\rangle = \langle \eta|\alpha\rangle \ \forall \alpha\in D(A)\Big\},
\]
and for $\psi\in D(A^*)$ we define $A^*\psi := \eta$ (the $\eta$ from the above identity).
\end{definition}

\begin{proposition}[Well-definedness of $A^*$]
The vector $\eta$ in the definition of $A^*$ is uniquely determined by $\psi$, hence $A^*$ is well-defined.
\end{proposition}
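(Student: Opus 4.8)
The plan is to deduce uniqueness of $\eta$ directly from density of $D(A)$ together with continuity and non-degeneracy of the inner product. Concretely, suppose $\psi\in\Hcal$ and that two vectors $\eta_1,\eta_2\in\Hcal$ both witness the defining property of $A^*\psi$, i.e.
\[
\langle \psi\mid A\alpha\rangle = \langle \eta_1\mid \alpha\rangle = \langle \eta_2\mid \alpha\rangle
\qquad\text{for all }\alpha\in D(A).
\]
First I would subtract: by (conjugate-)linearity of $\langle\cdot\mid\cdot\rangle$ in its first slot this gives $\langle \eta_1-\eta_2\mid \alpha\rangle = 0$ for every $\alpha\in D(A)$.

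Next I would upgrade this from $D(A)$ to all of $\Hcal$. The functional $\alpha\mapsto\langle \eta_1-\eta_2\mid \alpha\rangle$ is continuous on $\Hcal$ by the Cauchy--Schwarz inequality, and it vanishes on the dense subspace $D(A)$; hence it vanishes identically on $\Hcal$. (If one prefers, pick a sequence $\alpha_n\in D(A)$ with $\alpha_n\to\beta$ for arbitrary $\beta\in\Hcal$ and pass to the limit in $\langle \eta_1-\eta_2\mid\alpha_n\rangle=0$.) Finally, choosing $\beta:=\eta_1-\eta_2$ yields $\|\eta_1-\eta_2\|^2=\langle \eta_1-\eta_2\mid \eta_1-\eta_2\rangle=0$, so $\eta_1=\eta_2$ by positive-definiteness of the inner product. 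Thus the assignment $\psi\mapsto\eta=:A^*\psi$ is unambiguous, and $A^*:D(A^*)\to\Hcal$ is a well-defined map.

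There is no genuine obstacle here; the only ingredient is that $D(A)$ is dense, which is exactly why the adjoint is defined only for densely defined operators. For completeness one may also remark that $D(A^*)$ is a linear subspace and $A^*$ is linear: if $\psi_1,\psi_2\in D(A^*)$ with witnesses $\eta_1,\eta_2$ and $c_1,c_2\in\C$, then for all $\alpha\in D(A)$,
\[
\langle c_1\psi_1+c_2\psi_2\mid A\alpha\rangle
=\overline{c_1}\langle\psi_1\mid A\alpha\rangle+\overline{c_2}\langle\psi_2\mid A\alpha\rangle
=\langle c_1\eta_1+c_2\eta_2\mid\alpha\rangle,
\]
so $c_1\psi_1+c_2\psi_2\in D(A^*)$ with $A^*(c_1\psi_1+c_2\psi_2)=c_1\eta_1+c_2\eta_2=c_1A^*\psi_1+c_2A^*\psi_2$, the uniqueness just proved guaranteeing that this identification of the witness is forced. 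This last point, while not strictly part of the stated proposition, is the natural companion fact and follows from the same density argument.
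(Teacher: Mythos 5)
Your argument is correct and coincides with the paper's own proof: both subtract the two witnesses to obtain $\langle\eta_1-\eta_2\mid\alpha\rangle=0$ on $D(A)$, then use density of $D(A)$ together with continuity of the inner product to conclude $\eta_1=\eta_2$ by non-degeneracy. The supplementary remark on linearity of $A^*$ is a correct and natural companion fact, though not part of what the paper proves here.
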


\begin{proof}
Fix $\psi\in\Hcal$ and suppose $\eta_1,\eta_2\in\Hcal$ satisfy
$\langle\psi|A\alpha\rangle=\langle\eta_1|\alpha\rangle=\langle\eta_2|\alpha\rangle$
for all $\alpha\in D(A)$. Then $\langle\eta_1-\eta_2|\alpha\rangle=0$ for all $\alpha\in D(A)$.
By density of $D(A)$ and continuity of $\alpha\mapsto\langle\eta_1-\eta_2|\alpha\rangle$,
we get $\langle\eta_1-\eta_2|\beta\rangle=0$ for all $\beta\in\Hcal$, hence $\eta_1=\eta_2$.
\end{proof}

\begin{definition}[Kernel, range, orthogonal complement]
For $A:D(A)\to\Hcal$ define
\[
\ker(A):=\{\alpha\in D(A):A\alpha=0\},\qquad
\ran(A):=\{A\alpha:\alpha\in D(A)\}.
\]
For any subset $M\subset\Hcal$, define $M^\perp:=\{x\in\Hcal:\langle x|m\rangle=0\ \forall m\in M\}$.
\end{definition}

\begin{proposition}[Range--kernel orthogonality]
Let $A$ be densely defined. Then
\[
\ker(A^*) = \ran(A)^\perp .
\]
\end{proposition}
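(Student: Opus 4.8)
The plan is to establish the asserted equality by proving the two inclusions separately, each of which follows directly from unwinding the definition of the adjoint; essentially no analytic input beyond the uniqueness statement already proved above is needed.

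First I would show $\ker(A^*) \subseteq \ran(A)^\perp$. Given $\psi \in \ker(A^*)$, by definition $\psi \in D(A^*)$ and $A^*\psi = 0$; plugging $\eta = A^*\psi = 0$ into the defining identity of the adjoint yields $\langle \psi \mid A\alpha\rangle = \langle 0 \mid \alpha\rangle = 0$ for every $\alpha \in D(A)$. Since a generic element of $\ran(A)$ has the form $A\alpha$, this says precisely that $\psi \perp \ran(A)$, i.e.\ $\psi \in \ran(A)^\perp$.

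Conversely, to get $\ran(A)^\perp \subseteq \ker(A^*)$, I would take $\psi \in \ran(A)^\perp$, so that $\langle \psi \mid A\alpha\rangle = 0$ for all $\alpha \in D(A)$. Rewriting the right-hand side as $\langle 0 \mid \alpha\rangle$ exhibits the vector $\eta := 0$ as a witness for the membership condition defining $D(A^*)$; hence $\psi \in D(A^*)$, and by the uniqueness of the adjoint vector (the proposition on well-definedness of $A^*$) we conclude $A^*\psi = 0$, i.e.\ $\psi \in \ker(A^*)$.

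There is no genuine obstacle here. The only subtlety worth flagging is that the inclusion $\ran(A)^\perp \subseteq \ker(A^*)$ implicitly contains the assertion $\psi \in D(A^*)$, so one must explicitly produce the witness $\eta = 0$ rather than treat ``$A^*\psi = 0$'' as automatic; note also that density of $D(A)$ plays no role in this particular argument (it was needed earlier only to make $A^*$ well-defined).
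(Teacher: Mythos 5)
Your proof is correct and follows essentially the same route as the paper: both unwind the definition of the adjoint to convert the condition $A^*\psi=0$ into $\langle\psi\mid A\alpha\rangle=0$ for all $\alpha\in D(A)$, which is the membership condition for $\ran(A)^\perp$. The paper phrases this as a single chain of equivalences, while you present the two inclusions separately and explicitly flag that the reverse inclusion requires producing the witness $\eta=0$ to certify $\psi\in D(A^*)$ — a worthwhile clarification, but the underlying argument is the same.
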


\begin{proof}
For $\psi\in\Hcal$,
\[
\psi\in\ker(A^*)
\iff \psi\in D(A^*)\ \text{and}\ A^*\psi=0
\iff \langle\psi|A\alpha\rangle=0\ \forall \alpha\in D(A)
\iff \psi\in \ran(A)^\perp .
\]
\end{proof}

\begin{definition}[Operator extension / inclusion]
Given operators $A:D(A)\to\Hcal$ and $B:D(B)\to\Hcal$, we say $B$ is an \emph{extension} of $A$,
and write $A\subset B$, if $D(A)\subset D(B)$ and $A\alpha=B\alpha$ for all $\alpha\in D(A)$.
\end{definition}

\begin{proposition}[Inclusion reverses under adjoint]
Let $A,B$ be densely defined and $A\subset B$. Then $B^*\subset A^*$.
\end{proposition}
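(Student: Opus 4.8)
The statement is a direct unwinding of the definition of the adjoint together with the definition of operator inclusion. The plan is to show two things: first, the domain inclusion $D(B^*)\subseteq D(A^*)$; second, that the two adjoints agree on $D(B^*)$. Both follow by restricting the defining identity for $B^*$ from the larger domain $D(B)$ to the smaller domain $D(A)$.

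\textbf{Key steps.} First I would fix an arbitrary $\psi\in D(B^*)$ and set $\eta:=B^*\psi$. By the definition of $B^*$, this means
\[
\langle\psi|B\beta\rangle=\langle\eta|\beta\rangle\qquad\text{for all }\beta\in D(B).
\]
Next I would use the hypothesis $A\subset B$, i.e.\ $D(A)\subseteq D(B)$ and $A\alpha=B\alpha$ for all $\alpha\in D(A)$. Restricting the displayed identity to vectors $\beta=\alpha\in D(A)$ and substituting $B\alpha=A\alpha$ gives
\[
\langle\psi|A\alpha\rangle=\langle\eta|\alpha\rangle\qquad\text{for all }\alpha\in D(A).
\]
This is precisely the condition defining membership $\psi\in D(A^*)$, with witness vector $\eta$; hence $\psi\in D(A^*)$. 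Finally, by the uniqueness of the witness vector (the well-definedness proposition for adjoints, which uses density of $D(A)$), the vector $\eta$ is exactly $A^*\psi$, so $A^*\psi=\eta=B^*\psi$. Since $\psi\in D(B^*)$ was arbitrary, we conclude $D(B^*)\subseteq D(A^*)$ and $A^*|_{D(B^*)}=B^*$, i.e.\ $B^*\subset A^*$.

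\textbf{Main obstacle.} There is essentially no obstacle here; the only point requiring a little care is invoking density of $D(A)$ to guarantee that the witness vector is unique, so that identifying it with $A^*\psi$ is legitimate. This is already available from the earlier well-definedness proposition for adjoints, so the argument is complete once that is cited.
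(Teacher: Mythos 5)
Your proof is correct and takes essentially the same approach as the paper: restrict the defining identity for $B^*$ from $D(B)$ to $D(A)$, substitute $A\alpha=B\alpha$, and read off membership in $D(A^*)$ together with $A^*\psi=B^*\psi$. Your explicit mention of the uniqueness of the witness vector via density of $D(A)$ is a small but welcome clarification that the paper leaves implicit.
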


\begin{proof}
Let $\psi\in D(B^*)$, so there exists $\eta\in\Hcal$ such that
$\langle\psi|B\beta\rangle=\langle\eta|\beta\rangle$ for all $\beta\in D(B)$.
In particular, for all $\alpha\in D(A)\subset D(B)$ we have
$\langle\psi|A\alpha\rangle=\langle\psi|B\alpha\rangle=\langle\eta|\alpha\rangle$,
so $\psi\in D(A^*)$ and $A^*\psi=\eta=B^*\psi$. Hence $B^*\subset A^*$.
\end{proof}

\begin{proposition}[$A^*$ is closed]
If $A$ is densely defined, then $A^*$ is a closed operator (its graph is closed in $\Hcal\oplus\Hcal$).
\end{proposition}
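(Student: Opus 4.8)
The plan is to verify directly that the graph $\Gamma(A^*):=\{(\psi,A^*\psi):\psi\in D(A^*)\}\subset\Hcal\oplus\Hcal$ is closed, by showing it contains the limit of every convergent sequence of its points. First I would fix a sequence $(\psi_n)_{n\ge1}$ in $D(A^*)$ with $\psi_n\to\psi$ and $A^*\psi_n\to\eta$ in $\Hcal$, and I would aim to prove $\psi\in D(A^*)$ and $A^*\psi=\eta$, which is exactly the assertion that $(\psi,\eta)\in\Gamma(A^*)$.

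The key step is to pass to the limit in the defining identity of the adjoint. For each $n$ and each $\alpha\in D(A)$ one has $\langle\psi_n\,|\,A\alpha\rangle=\langle A^*\psi_n\,|\,\alpha\rangle$; letting $n\to\infty$ and using sequential continuity of the inner product in each argument yields $\langle\psi\,|\,A\alpha\rangle=\langle\eta\,|\,\alpha\rangle$ for all $\alpha\in D(A)$. By the definition of $D(A^*)$, together with the uniqueness of the representing vector (the well-definedness of $A^*$ established just above), this says precisely that $\psi\in D(A^*)$ and $A^*\psi=\eta$. Hence $\Gamma(A^*)$ is closed.

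I would also record the structural variant, which makes the role of the standing hypotheses transparent: with the flip unitary $V:\Hcal\oplus\Hcal\to\Hcal\oplus\Hcal$ defined by $V(\xi,\zeta):=(-\zeta,\xi)$, a short computation straight from the definition of the adjoint gives $\Gamma(A^*)=(V\Gamma(A))^{\perp}$, and since orthogonal complements of arbitrary subsets are always closed, closedness of $A^*$ follows at once. I do not expect a genuine obstacle here: the only points requiring care are applying continuity of $\langle\cdot\,|\,\cdot\rangle$ correctly (the conjugate-linearity in the first slot is harmless for convergence of sequences) and noting that density of $D(A)$ is exactly the hypothesis already used to make $A^*$ a single-valued operator, so no additional assumption is needed.
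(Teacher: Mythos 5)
Your primary argument is exactly the paper's: take a convergent sequence $(\psi_n,A^*\psi_n)$ in the graph, pass to the limit in the defining identity $\langle\psi_n\mid A\alpha\rangle=\langle A^*\psi_n\mid\alpha\rangle$ using continuity of the inner product, and conclude $\psi\in D(A^*)$ with $A^*\psi=\eta$. The appended flip-unitary identity $\Gamma(A^*)=(V\Gamma(A))^\perp$ is a correct and standard alternative (not in the paper) which makes closedness automatic since orthogonal complements are always closed; it is a nice structural remark but your first argument is already complete and matches the paper's route.
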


\begin{proof}
Assume $\psi_n\in D(A^*)$, $\psi_n\to\psi$ and $A^*\psi_n\to\eta$ in $\Hcal$.
For any $\alpha\in D(A)$ we have $\langle\psi_n|A\alpha\rangle=\langle A^*\psi_n|\alpha\rangle$.
Passing to the limit and using continuity of the inner product yields
$\langle\psi|A\alpha\rangle=\langle\eta|\alpha\rangle$ for all $\alpha\in D(A)$.
Thus $\psi\in D(A^*)$ and $A^*\psi=\eta$, i.e. the graph of $A^*$ is closed.
\end{proof}

\subsection{Symmetric and self-adjoint operators}

\begin{definition}[Symmetric]
A densely defined operator $A:D(A)\to\Hcal$ is \emph{symmetric} if
\[
\langle \alpha|A\beta\rangle=\langle A\alpha|\beta\rangle,\qquad \forall \alpha,\beta\in D(A).
\]
\end{definition}

\begin{proposition}[Symmetric $\iff$ $A\subset A^*$]
A densely defined operator $A$ is symmetric if and only if $A\subset A^*$.
\end{proposition}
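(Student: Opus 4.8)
The plan is to prove both implications by directly unwinding the definition of the adjoint given above; no analytic input beyond the density of $D(A)$ (already used to make $A^*$ well-defined) is needed.

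First I would establish symmetric $\Rightarrow$ $A\subset A^*$. Fix an arbitrary $\alpha\in D(A)$. Symmetry says $\langle\alpha\mid A\beta\rangle=\langle A\alpha\mid\beta\rangle$ for every $\beta\in D(A)$. Reading this as exactly the defining condition for membership in $D(A^*)$, with the witness $\eta:=A\alpha$, I conclude $\alpha\in D(A^*)$ and $A^*\alpha=\eta=A\alpha$. Since $\alpha$ was arbitrary, this simultaneously gives the domain inclusion $D(A)\subseteq D(A^*)$ and the agreement $A^*|_{D(A)}=A$, i.e.\ $A\subset A^*$ in the sense of the extension/inclusion relation defined earlier.

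Conversely, assume $A\subset A^*$. Then for any $\alpha,\beta\in D(A)$ we have in particular $\alpha\in D(A^*)$ with $A^*\alpha=A\alpha$, so the defining property of the adjoint applied to $\psi=\alpha$ yields $\langle\alpha\mid A\beta\rangle=\langle A^*\alpha\mid\beta\rangle=\langle A\alpha\mid\beta\rangle$. As $\alpha,\beta\in D(A)$ were arbitrary, $A$ is symmetric.

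I do not expect a genuine obstacle here: the only point requiring care is that the adjoint is a priori defined on a possibly strictly larger domain than $D(A)$, so one must verify the domain inclusion $D(A)\subseteq D(A^*)$ rather than merely the algebraic identity on $D(A)$; in the forward direction this is handled at no extra cost by exhibiting the witness $\eta=A\alpha$. The well-definedness of that witness (hence of $A^*$ itself) relies on density of $D(A)$, which is part of the standing hypothesis that $A$ is densely defined, so nothing further is needed.
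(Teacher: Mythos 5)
Your proof is correct and follows essentially the same route as the paper: in the forward direction both proofs exhibit $\eta = A\alpha$ as the witness for $\alpha\in D(A^*)$; in the converse direction you apply the adjoint's defining identity with $\psi=\alpha$ directly, whereas the paper first swaps $A\beta$ for $A^*\beta$ and then invokes the adjoint identity on the $\beta$ side, but these are just two re-orderings of the same two facts. Your remark about needing the domain inclusion (not merely the algebraic identity) is exactly the right point of care, and your argument handles it correctly.
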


\begin{proof}
($\Rightarrow$) Fix $\psi\in D(A)$ and set $\eta:=A\psi$. For any $\alpha\in D(A)$,
symmetry gives $\langle\psi|A\alpha\rangle=\langle A\psi|\alpha\rangle=\langle\eta|\alpha\rangle$,
so $\psi\in D(A^*)$ and $A^*\psi=\eta=A\psi$. Hence $A\subset A^*$.

($\Leftarrow$) If $A\subset A^*$ then for all $\alpha,\beta\in D(A)$,
\[
\langle \alpha|A\beta\rangle=\langle \alpha|A^*\beta\rangle=\langle A\alpha|\beta\rangle,
\]
which is symmetry.
\end{proof}

\begin{definition}[Self-adjoint]
A densely defined operator $A$ is \emph{self-adjoint} if $A=A^*$, i.e.
$D(A)=D(A^*)$ and $A\alpha=A^*\alpha$ for all $\alpha\in D(A)$.
\end{definition}

\begin{proposition}[Maximality]
If $A$ is self-adjoint and $A\subset B$ with $B$ self-adjoint, then $A=B$.
In particular, a self-adjoint operator is maximal among symmetric extensions.
\end{proposition}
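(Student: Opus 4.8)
The plan is to derive the conclusion directly from the adjoint-reversal proposition proved just above (``Inclusion reverses under adjoint''), together with the definition of self-adjointness. Both statements are essentially one-line consequences, so the proof will be short; the only care needed is to distinguish between the two clauses, since the ``in particular'' clause only assumes the larger operator to be symmetric, not self-adjoint.

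First I would prove the main claim. Since $A\subset B$ and both are densely defined, the adjoint-reversal proposition gives $B^*\subset A^*$. Now invoke the hypotheses $A=A^*$ and $B=B^*$: the inclusion $B^*\subset A^*$ becomes $B\subset A$. Combining with the standing hypothesis $A\subset B$ yields $D(A)=D(B)$ and $A=B$ on this common domain, i.e.\ $A=B$.

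For the ``in particular'' statement, let $B$ be any symmetric extension of the self-adjoint operator $A$, so $A\subset B$ and $B$ is symmetric, hence (by the characterization ``symmetric $\iff B\subset B^*$'') $B\subset B^*$. Applying adjoint-reversal to $A\subset B$ gives $B^*\subset A^*=A$. Chaining these inclusions, $A\subset B\subset B^*\subset A$, so all inclusions are equalities and $A=B$. Thus a self-adjoint operator admits no proper symmetric extension, which is exactly the asserted maximality.

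The main obstacle here is essentially nonexistent: the entire content is packaged in the previously established adjoint-reversal proposition and the definitions, so the only thing to watch is not to over-assume in the second clause (using $B\subset B^*$ rather than $B=B^*$) and to state clearly that two-sided inclusion of operators means equality of domains together with agreement on that domain.
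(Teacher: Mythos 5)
Your proof of the main claim is correct and follows exactly the paper's route: apply inclusion-reversal to get $B^*\subset A^*$, then use $A=A^*$ and $B=B^*$ to conclude $B\subset A$, hence $A=B$. You also give an explicit argument for the ``in particular'' clause (chaining $A\subset B\subset B^*\subset A$ using only symmetry of $B$), which the paper states but does not separately prove; your observation that the weaker hypothesis $B\subset B^*$ suffices there is correct and worth recording.
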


\begin{proof}
If $A\subset B$, then by the inclusion-reversal property we have $B^*\subset A^*$.
Self-adjointness gives $B^*=B$ and $A^*=A$, hence $B\subset A$. Together with $A\subset B$
we obtain $A=B$.
\end{proof}

\subsection{Closability, closure, and closedness}

\begin{definition}[Closable / closure / closed]
A densely defined operator $A$ is \emph{closable} if $A^*$ is densely defined.
If $A$ is closable, its \emph{closure} is defined by
\[
\overline A := A^{**}.
\]
An operator is \emph{closed} if $A=\overline A$.
\end{definition}

\begin{proposition}[Symmetric operators are closable]
If $A$ is symmetric, then $A$ is closable.
\end{proposition}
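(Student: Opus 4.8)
The plan is to unwind the definition of closability and then invoke the characterization of symmetry already proved in the excerpt. Recall that, by definition, a densely defined operator $A$ is closable precisely when its adjoint $A^*$ is densely defined; so the entire task reduces to showing $\overline{D(A^*)} = \Hcal$.

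First I would use the Proposition characterizing symmetric operators, which states that $A$ is symmetric if and only if $A \subset A^*$. In particular, this operator inclusion entails the domain inclusion $D(A) \subseteq D(A^*)$. Since $A$ is densely defined by standing hypothesis, $D(A)$ is dense in $\Hcal$; hence the larger subspace $D(A^*)$ is dense in $\Hcal$ as well. This is exactly the statement that $A^*$ is densely defined, i.e.\ that $A$ is closable. As a consequence, the closure $\overline{A} := A^{**}$ is well-defined, and (being an adjoint) it is automatically closed by the Proposition asserting that $A^*$ is closed for any densely defined $A$.

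There is no genuine obstacle: all the substance has been absorbed into the earlier Proposition identifying symmetry with the inclusion $A \subset A^*$, so the argument is a two-line deduction. The only point worth flagging is logical sequencing — one must not attempt to write down $\overline{A} = A^{**}$ before establishing that $A^*$ is densely defined, since closability is precisely the hypothesis that renders the double adjoint meaningful. Thus the chain (symmetry $\Rightarrow$ $D(A^*) \supseteq D(A)$ dense $\Rightarrow$ $A^*$ densely defined $\Rightarrow$ $A$ closable) should be carried out in that order.
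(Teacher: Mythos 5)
Your proof is correct and follows essentially the same route as the paper's: invoke the equivalence of symmetry with $A \subset A^*$, deduce $D(A)\subseteq D(A^*)$, and conclude density of $D(A^*)$ from density of $D(A)$, which is exactly the definition of closability used here. The extra remark about logical sequencing (not invoking $A^{**}$ before closability is established) is sound but not needed for the statement itself.
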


\begin{proof}
If $A$ is symmetric then $A\subset A^*$, hence $D(A)\subset D(A^*)$.
Since $D(A)$ is dense, $D(A^*)$ is dense as well. Thus $A$ is closable by definition.
\end{proof}

\begin{proposition}[Double adjoint contains the operator]
If $A$ is closable, then $A\subset A^{**}=\overline A$.
\end{proposition}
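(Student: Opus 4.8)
The statement is almost immediate from the definition of the adjoint, once one tracks the conjugate-linearity convention carefully; I expect no serious obstacle beyond that bookkeeping. First I would note that since $A$ is closable, $A^*$ is by definition densely defined, so $A^{**}:=(A^*)^*$ is a well-defined operator, and $\overline A=A^{**}$ by definition. Hence it suffices to show $D(A)\subset D(A^{**})$ and that $A^{**}$ agrees with $A$ on $D(A)$.

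Fix $\alpha\in D(A)$ and set $\eta:=A\alpha$. The plan is to verify directly the defining condition for membership in $D(A^{**})$: namely, that there exists $\eta'\in\Hcal$ with $\langle\alpha\mid A^*\psi\rangle=\langle\eta'\mid\psi\rangle$ for all $\psi\in D(A^*)$. To produce $\eta'$, I would start from the defining identity of $A^*$: for every $\psi\in D(A^*)$ one has $\langle\psi\mid A\alpha\rangle=\langle A^*\psi\mid\alpha\rangle$ for all $\alpha\in D(A)$. Taking the complex conjugate of both sides and using Hermitian symmetry of the inner product gives $\langle A\alpha\mid\psi\rangle=\langle\alpha\mid A^*\psi\rangle$, i.e.
\[
\langle\alpha\mid A^*\psi\rangle=\langle A\alpha\mid\psi\rangle=\langle\eta\mid\psi\rangle\qquad\text{for all }\psi\in D(A^*).
\]
Thus $\eta'=\eta=A\alpha$ works, so $\alpha\in D(A^{**})$ and $A^{**}\alpha=\eta=A\alpha$.

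Since $\alpha\in D(A)$ was arbitrary, this shows $D(A)\subset D(A^{**})$ and $A^{**}|_{D(A)}=A$, which is precisely the inclusion $A\subset A^{**}=\overline A$. The only point requiring care is the direction of conjugate-linearity when passing between the defining relations for $A^*$ and for $A^{**}=(A^*)^*$, which is handled by the single conjugation step above; no density argument beyond the standing hypothesis (that $A$, hence $A^*$, is densely defined) is needed here.
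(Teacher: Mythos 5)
Your proof is correct and follows essentially the same route as the paper's: you take the defining identity for $A^*$, conjugate it using Hermitian symmetry of the inner product, and recognize the resulting equality as exactly the membership condition for $D(A^{**})$ together with the value $A^{**}\alpha=A\alpha$. The only difference is that you swap the roles of the letters $\alpha$ and $\psi$ relative to the paper, which is immaterial.
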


\begin{proof}
Let $\psi\in D(A)$. We show $\psi\in D(A^{**})$ and $A^{**}\psi=A\psi$.
Fix $\alpha\in D(A^*)$. By definition of $A^*$, for all $\varphi\in D(A)$,
$\langle\alpha|A\varphi\rangle=\langle A^*\alpha|\varphi\rangle$.
Rename dummy variables: for all $\varphi\in D(A)$ and all $\alpha\in D(A^*)$,
$\langle\alpha|A\varphi\rangle=\langle A^*\alpha|\varphi\rangle$.
Take complex conjugates and use conjugate-linearity in the first slot:
\[
\langle \varphi|A^*\alpha\rangle=\langle A\varphi|\alpha\rangle,\qquad
\forall \varphi\in D(A),\ \forall \alpha\in D(A^*).
\]
Now fix $\varphi=\psi\in D(A)$ and set $\eta:=A\psi$. Then
$\langle\psi|A^*\alpha\rangle=\langle\eta|\alpha\rangle$ for all $\alpha\in D(A^*)$,
which precisely says $\psi\in D(A^{**})$ and $A^{**}\psi=\eta=A\psi$.
\end{proof}

\begin{corollary}[For symmetric $A$: $A\subset \overline A \subset A^*$]
If $A$ is symmetric, then
\[
A \subset \overline A = A^{**} \subset A^* .
\]
\end{corollary}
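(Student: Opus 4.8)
The plan is to assemble the three chained relations directly from results already established in this section, so the argument is a short bookkeeping exercise with no genuine analytic content. First I would invoke the proposition that symmetric operators are closable: since $A$ is symmetric we have $A\subset A^*$, hence $D(A^*)\supseteq D(A)$ is dense, so by definition $A$ is closable and $\overline A:=A^{**}$ is a well-defined operator. This legitimizes writing $\overline A$ in the statement.

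Next I would obtain the first inclusion $A\subset \overline A$ by citing the proposition ``double adjoint contains the operator'': for any closable $A$ one has $A\subset A^{**}=\overline A$. Since we just verified closability, this applies verbatim, giving $A\subset\overline A=A^{**}$.

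For the final inclusion $A^{**}\subset A^*$ I would apply the inclusion-reversal property of the adjoint (``$A\subset B \Rightarrow B^*\subset A^*$'') to the relation $A\subset A^*$, which holds because $A$ is symmetric. Taking $B:=A^*$ yields $(A^*)^*\subset A^*$, i.e.\ $A^{**}\subset A^*$. Concatenating the two inclusions with the definitional identity $\overline A=A^{**}$ gives the claimed chain $A\subset\overline A=A^{**}\subset A^*$. (As a sanity check, this is consistent with $A^*$ being closed: $\overline A$, being the smallest closed extension of $A$, must sit inside any closed extension, and $A^*$ is one such by symmetry.)

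I do not anticipate a real obstacle here: every step is a direct citation of a previously proved proposition, and the only thing to be careful about is the order — one must establish closability \emph{before} writing $\overline A=A^{**}$, and one must feed the correct operator ($A^*$, not $A$) into the inclusion-reversal lemma to land at $A^{**}\subset A^*$ rather than the trivial $A\subset A^{**}$ again.
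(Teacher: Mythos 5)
Your argument is correct and follows exactly the paper's proof: first use symmetry $\Rightarrow$ closable $\Rightarrow$ $A\subset A^{**}=\overline A$ by the double-adjoint proposition, then apply inclusion-reversal to $A\subset A^*$ to get $A^{**}\subset A^*$. The ordering and choice of lemmas are identical, so there is nothing to add.
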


\begin{proof}
Symmetry implies closable, hence $A\subset A^{**}=\overline A$ by the previous proposition.
Also $A\subset A^*$ implies, by inclusion-reversal, $(A^*)^*\subset A^*$, i.e. $A^{**}\subset A^*$.
\end{proof}

\subsection{Essential self-adjointness and defect indices}

\begin{definition}[Essentially self-adjoint]
A symmetric operator $A$ is \emph{essentially self-adjoint} if its closure $\overline A$ is self-adjoint.
Equivalently, $A$ has a unique self-adjoint extension, namely $\overline A$.
\end{definition}

\begin{definition}[Defect indices]
Let $A$ be symmetric. Its \emph{defect indices} are
\[
d_+ := \dim\ker(A^*- i),\qquad d_- := \dim\ker(A^*+ i).
\]
\end{definition}

\begin{theorem}[von Neumann extension criterion]
A symmetric operator $A$ admits a self-adjoint extension if and only if $d_+=d_-$.
If $d_+\neq d_-$, then $A$ admits no self-adjoint extension.
\end{theorem}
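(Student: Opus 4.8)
The plan is to route the entire argument through the \emph{Cayley transform}, which converts symmetric operators into isometries and self-adjoint operators into unitaries. First I would record the two preliminaries I need. Since $A$ is symmetric, a direct computation shows the cross terms cancel and $\|(A\pm i)\eta\|^2=\|A\eta\|^2+\|\eta\|^2$ for $\eta\in D(A)$, so $A\pm i$ are injective and bounded below; hence the Cayley transform
\[
V\colon \ran(A+i)\to\ran(A-i),\qquad V(A+i)\eta:=(A-i)\eta,
\]
is a well-defined surjective isometry, extending by continuity to an isometry $\bar V\colon\overline{\ran(A+i)}\to\overline{\ran(A-i)}$. Moreover $(I-V)(A+i)\eta=2i\eta$, so $\ran(I-V)=D(A)$ is dense. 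Second, from the identity $(A\pm i)^*=A^*\mp i$ (a one-line consequence of the definition of the adjoint) together with the range--kernel orthogonality $\ker(B^*)=\ran(B)^\perp$ proved earlier, I get $\ker(A^*-i)=\ran(A+i)^\perp$ and $\ker(A^*+i)=\ran(A-i)^\perp$; call these $\mathcal N_+$ and $\mathcal N_-$, so $d_\pm=\dim\mathcal N_\pm$, and $\Hcal=\overline{\ran(A+i)}\oplus\mathcal N_+=\overline{\ran(A-i)}\oplus\mathcal N_-$.

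For the implication $d_+=d_-\Rightarrow$ existence of a self-adjoint extension: since the deficiency subspaces have equal dimension, choose a unitary $U\colon\mathcal N_+\to\mathcal N_-$ and set $\widetilde V:=\bar V\oplus U$ on $\Hcal=\overline{\ran(A+i)}\oplus\mathcal N_+$, a unitary on $\Hcal$ extending $V$. I would then check $\ker(I-\widetilde V)=\{0\}$: if $\widetilde Vx=x$ then, using that $\widetilde V$ is an isometry, $\langle x,(I-\widetilde V)v\rangle=\langle \widetilde Vx-x,\widetilde Vv\rangle=0$ for all $v\in\Hcal$, so $x\perp\ran(I-\widetilde V)\supseteq\ran(I-V)=D(A)$, which is dense, forcing $x=0$. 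Consequently the inverse Cayley transform $B:=i(I+\widetilde V)(I-\widetilde V)^{-1}$, with $D(B):=\ran(I-\widetilde V)$, is a well-defined densely defined operator, and the standard verification (symmetry is direct from unitarity of $\widetilde V$, and $B\pm i=2i\,\widetilde V^{\,(\cdot)}(I-\widetilde V)^{-1}$ are surjective onto $\Hcal$ because $\widetilde V$ is bijective, whence $\ker(B^*\mp i)=\ran(B\pm i)^\perp=\{0\}$) shows $B=B^*$. Finally $B\supseteq A$ because $\widetilde V$ extends $V$, the Cayley transform of $A$.

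For the converse, suppose $B=B^*$ with $B\supseteq A$. As $B$ is self-adjoint, $\|(B\pm i)\psi\|^2=\|B\psi\|^2+\|\psi\|^2$, so $B\pm i$ are bounded below with closed range and $\ran(B\pm i)^\perp=\ker(B^*\mp i)=\ker(B\mp i)=\{0\}$; hence $B\pm i$ are bijective and the Cayley transform $V_B=(B-i)(B+i)^{-1}$ is a \emph{unitary} on $\Hcal$. Evaluating on $(A+i)\eta$ for $\eta\in D(A)\subseteq D(B)$ shows $V_B$ extends $V$. Since $V_B$ is unitary and both $V_B$ and $V_B^{*}$ extend the isometry $V$ of $\ran(A+i)$ onto $\ran(A-i)$ and its inverse, one gets $V_B\big(\overline{\ran(A+i)}\big)=\overline{\ran(A-i)}$; a unitary preserves orthogonal complements, so $V_B(\mathcal N_+)=\mathcal N_-$, i.e.\ $V_B|_{\mathcal N_+}$ is a unitary isomorphism $\mathcal N_+\to\mathcal N_-$. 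Therefore $d_+=\dim\mathcal N_+=\dim\mathcal N_-=d_-$. The contrapositive is exactly the last assertion: if $d_+\neq d_-$, no self-adjoint extension $B$ can exist.

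The technical heart — and the step I expect to require the most care — is the verification that the inverse Cayley transform $B$ is genuinely self-adjoint (not merely symmetric) and densely defined: one must confirm injectivity and dense range of $I-\widetilde V$, symmetry of $B$, and surjectivity of $B\pm i$, all squeezed out of the unitarity of $\widetilde V$ (equivalently, that the Cayley correspondence is a bijection between unitaries-without-eigenvalue-$1$ and self-adjoint operators). Everything else — the norm identities, the identification of the deficiency subspaces, and the bookkeeping with the two orthogonal decompositions of $\Hcal$ — is routine.
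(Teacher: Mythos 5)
The paper states this theorem without proof (it sits as a cited reference result in the appendix on self-adjoint operators, between the definition of defect indices and the subsection on practical range/kernel criteria), so there is no in-paper argument to compare against. Your proof via the Cayley transform is the standard one and is correct: the norm identity $\|(A\pm i)\eta\|^2=\|A\eta\|^2+\|\eta\|^2$ gives a surjective isometry $V\colon\ran(A+i)\to\ran(A-i)$; the identification $\mathcal N_\pm=\ker(A^*\mp i)=\ran(A\pm i)^\perp$ and the density of $\ran(I-V)=D(A)$ are exactly what is needed; extending $V$ by a unitary $U\colon\mathcal N_+\to\mathcal N_-$ when $d_+=d_-$ yields a unitary $\widetilde V$ with $1\notin\sigma_p(\widetilde V)$, and the inverse Cayley transform gives a symmetric extension $B\supseteq A$ with $\ran(B\pm i)=\Hcal$, whence $B=B^*$ by the paper's own ``sufficient criterion for self-adjointness''; conversely a self-adjoint $B\supseteq A$ produces a unitary $V_B$ that extends $V$ and hence restricts to a unitary $\mathcal N_+\to\mathcal N_-$, forcing $d_+=d_-$. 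One small presentational point: in the forward direction you should state explicitly that $\bar V$ maps $\overline{\ran(A+i)}$ \emph{onto} $\overline{\ran(A-i)}$ (a limiting argument from surjectivity of $V$), since this is what makes $\widetilde V=\bar V\oplus U$ surjective on all of $\Hcal$; you use this tacitly when asserting $\widetilde V$ is unitary. Likewise in the converse you correctly flag that one must use both $V_B$ and $V_B^*$ extending $V$ and $V^{-1}$ to get surjectivity of $V_B\restriction_{\overline{\ran(A+i)}}$; this is the right care to take and the step others most often skip.
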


\subsection{Practical criteria (avoiding explicit adjoints)}

The following two criteria are particularly useful in applications: they reduce
(self-/essential) self-adjointness to range conditions for $A+z$.

\begin{theorem}[Sufficient criterion for self-adjointness]
Let $A$ be symmetric. If there exists $z\in\C$ such that
\[
\ran(A+z)=\Hcal=\ran(A+\overline z),
\]
then $A$ is self-adjoint.
\end{theorem}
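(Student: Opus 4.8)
The plan is to prove the domain inclusion $D(A^*)\subseteq D(A)$; since $A$ is symmetric we already know $A\subset A^*$ (Proposition ``symmetric $\iff A\subset A^*$''), so the reverse inclusion of domains will force $D(A)=D(A^*)$ and hence $A=A^*$. To that end I would fix an arbitrary $\psi\in D(A^*)$ and consider the vector $(A^*+z)\psi\in\Hcal$. By the first surjectivity hypothesis $\ran(A+z)=\Hcal$, there exists $\varphi\in D(A)$ with $(A+z)\varphi=(A^*+z)\psi$. Because $\varphi\in D(A)\subseteq D(A^*)$ and $A\subset A^*$, we may rewrite $A\varphi=A^*\varphi$, which gives $(A^*+z)(\psi-\varphi)=0$; note $\psi-\varphi\in D(A^*)$, so $\psi-\varphi\in\ker(A^*+z)$.

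Next I would identify this kernel as an orthogonal complement. Applying the range--kernel orthogonality $\ker(B^*)=\ran(B)^\perp$ to the densely defined operator $B:=A+\overline z$, and using that adding the bounded scalar operator $\overline z$ produces $B^*=A^*+\overline{\overline z}=A^*+z$, we obtain $\ker(A^*+z)=\ran(A+\overline z)^\perp$. The second surjectivity hypothesis $\ran(A+\overline z)=\Hcal$ then says this complement is $\{0\}$, so $\psi=\varphi\in D(A)$. Since $\psi\in D(A^*)$ was arbitrary, $D(A^*)\subseteq D(A)$, and combined with $A\subset A^*$ this yields $D(A)=D(A^*)$ with $A\alpha=A^*\alpha$ for all $\alpha\in D(A)$, i.e.\ $A$ is self-adjoint.

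Once the two structural facts are in hand the argument is purely formal, so the only place demanding care — the ``hard part'' — is the bookkeeping of complex conjugates: that the adjoint of $A+\overline z$ is $A^*+z$ (one conjugates twice), and correspondingly that $\ker(A^*+z)$ equals the orthogonal complement of $\ran(A+\overline z)$ rather than of $\ran(A+z)$. This is precisely why the hypothesis is stated symmetrically in $z$ and $\overline z$: the condition on $\ran(A+z)$ supplies the preimage $\varphi$, while the condition on $\ran(A+\overline z)$ annihilates the putative defect vector $\psi-\varphi$. (If one only wanted essential self-adjointness of a symmetric $A$ under the weaker hypothesis that the two ranges are merely dense, the same scheme would apply to $\overline A=A^{**}$ after first checking that $\ran(\overline A\pm z)$ is then all of $\Hcal$; but for the present statement, with the ranges equal to $\Hcal$, no passage to the closure is needed.)
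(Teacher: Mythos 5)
Your proof is correct and, at the level of strategy, matches what the paper is attempting: fix $\psi\in D(A^*)$, use surjectivity of $A+z$ to produce $\varphi\in D(A)$ with $(A+z)\varphi=(A^*+z)\psi$, observe the difference $\psi-\varphi$ is killed by $A^*+z$, and then kill that defect vector using the second range condition. The difference is that you factor the last step through the range--kernel lemma $\ker(B^*)=\ran(B)^\perp$ applied to $B=A+\overline z$, obtaining $\ker(A^*+z)=\ran(A+\overline z)^\perp=\{0\}$ cleanly, whereas the paper tries to reproduce the same conclusion by a direct inner-product computation against vectors of the form $(A+z)\beta$. That direct route is exactly where the conjugate bookkeeping bites: because $A+z$ is not symmetric for non-real $z$, the paper's identity $\langle\alpha\mid(A+z)\beta\rangle=\langle(A+z)\alpha\mid\beta\rangle$ is false (the correct statement is $\langle\alpha\mid(A+z)\beta\rangle=\langle(A+\overline z)\alpha\mid\beta\rangle$), and the paper's version never actually invokes the hypothesis $\ran(A+\overline z)=\Hcal$ — a hypothesis which is genuinely needed, since a symmetric operator with defect indices $(1,0)$ has $\ran(A-i)=\Hcal$ but is not self-adjoint. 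Your formulation (testing against $\ran(A+\overline z)$, equivalently invoking $\ker(A^*+z)=\ran(A+\overline z)^\perp$) uses both hypotheses in the right places and is the argument one actually wants; your remark at the end explicitly flagging which hypothesis supplies the preimage and which annihilates the defect vector is precisely the point the paper's write-up loses track of.
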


\begin{proof}
Since $A$ is symmetric we already have $A\subset A^*$. It remains to show $A^*\subset A$.
Fix $\psi\in D(A^*)$. Then $A^*\psi+z\psi\in\Hcal$.
By $\ran(A+z)=\Hcal$, there exists $\alpha\in D(A)$ such that
\[
A^*\psi+z\psi=(A+z)\alpha.
\]
Now for any $\beta\in D(A)$ we compute
\[
\langle\psi|(A+z)\beta\rangle
=\langle (A+z)^*\psi|\beta\rangle
=\langle A^*\psi+\overline z\,\psi|\beta\rangle,
\]
while also
\[
\langle\alpha|(A+z)\beta\rangle
=\langle (A+z)\alpha|\beta\rangle
=\langle A^*\psi+z\psi|\beta\rangle.
\]
Using $\ran(A+z)=\Hcal$, the identity
$\langle\psi|\varphi\rangle=\langle\alpha|\varphi\rangle$ holds for all $\varphi\in\Hcal$,
hence $\psi=\alpha$. Therefore $\psi\in D(A)$ and $A^*\psi=A\psi$, i.e. $A^*\subset A$.
\end{proof}

\begin{theorem}[Criterion for essential self-adjointness]
Let $A$ be symmetric. Then $A$ is essentially self-adjoint if and only if there exists
$z\in\C\setminus\R$ such that
\[
\ran(A+z)=\Hcal=\ran(A+\overline z).
\]
\end{theorem}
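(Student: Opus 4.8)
The plan is to reduce the statement to the self-adjointness criterion already established for a \emph{closed} symmetric operator, applied to the closure $\overline A=A^{**}$, exploiting that $A$ is essentially self-adjoint precisely when $\overline A$ is self-adjoint (this is the definition), that $\overline A$ is closed and, by the corollary to the closure discussion, symmetric, and that $A+z$ and $\overline A+z$ have the same range up to closure. For a closed symmetric operator the relevant ranges will turn out to be closed, so ``$\ran(\overline A+z)=\Hcal$'' coincides with density of $\ran(A+z)$; the displayed equalities in the statement are read in this sense.

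First I would record the coercivity identity: for symmetric $A$ and $z=a+ib$ with $a,b\in\R$,
\[
\|(A+z)\psi\|^2=\|(A+a)\psi\|^2+b^2\|\psi\|^2,\qquad \psi\in D(A),
\]
which follows by expanding the square and noting that $\langle(A+a)\psi,\psi\rangle\in\R$ by symmetry, so the cross term is purely imaginary. When $b=\operatorname{Im}z\neq0$ this gives $\|(A+z)\psi\|\ge|b|\,\|\psi\|$, and the same inequality passes to $\overline A+z=\overline{A+z}$ (a bounded perturbation of $\overline A$) by density; hence $\overline A+z$ is injective with bounded inverse on its range, and likewise for $\overline z$.

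Next I would show, for $z\notin\R$, that $\ran(\overline A+z)$ is closed and equals $\overline{\ran(A+z)}$. Closedness follows from the coercivity bound together with closedness of $\overline A$: if $(\overline A+z)\psi_n\to\eta$, then $(\psi_n)$ is Cauchy, so $\psi_n\to\psi$ and $\overline A\psi_n\to\eta-z\psi$, whence $\psi\in D(\overline A)$ and $(\overline A+z)\psi=\eta$. For the equality, every $(\overline A+z)\psi$ is a limit of $(A+z)\psi_n$ along a graph-approximating sequence $\psi_n\to\psi$, $A\psi_n\to\overline A\psi$ (such sequences exist since $\Gamma(\overline A)=\overline{\Gamma(A)}$ for closable $A$), so $\ran(\overline A+z)\subseteq\overline{\ran(A+z)}$; the reverse inclusion is immediate from $A\subseteq\overline A$ and closedness of $\ran(\overline A+z)$. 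The same statements hold with $z$ replaced by $\overline z$.

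Finally I assemble the equivalences. By the previously established ``sufficient criterion'', for the closed symmetric operator $B:=\overline A$ the condition $\ran(B+z)=\Hcal=\ran(B+\overline z)$ implies $B$ is self-adjoint. Conversely, if $B$ is self-adjoint then $\ran(B+z)^\perp=\ker((B+z)^*)=\ker(B^*+\overline z)=\ker(B+\overline z)=\{0\}$, using $B^*=B$ (range--kernel orthogonality) and the injectivity of $B+\overline z$ from the coercivity step, so $\ran(B+z)$ is dense; being closed it equals $\Hcal$, and likewise for $\overline z$. Hence $\overline A$ is self-adjoint $\iff$ $\ran(\overline A+z)=\Hcal=\ran(\overline A+\overline z)$ for some (equivalently every) $z\in\C\setminus\R$, which by the previous step is equivalent to $\overline{\ran(A+z)}=\Hcal=\overline{\ran(A+\overline z)}$, i.e.\ to density of $\ran(A+z)$ and $\ran(A+\overline z)$. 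Combined with ``$A$ essentially self-adjoint $\iff$ $\overline A$ self-adjoint'', this is the asserted criterion. I expect the main obstacle to be the middle step — the identification $\overline{\ran(A+z)}=\ran(\overline A+z)$ and closedness of the latter — since this is exactly where the passage from a possibly non-closed $A$ to $\overline A$ must be handled correctly; a secondary point requiring care is the conjugation bookkeeping in $\ker((B+z)^*)=\ker(B^*+\overline z)$, kept consistent with the range--kernel orthogonality proposition.
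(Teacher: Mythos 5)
The paper states this theorem without a proof, so there is no argument in the source to compare against. Your approach is correct and is the standard one: reduce to the already-proved sufficient criterion for self-adjointness applied to the closed symmetric operator $\overline A$, after using the coercivity identity $\|(A+z)\psi\|^2=\|(A+\operatorname{Re}z)\psi\|^2+(\operatorname{Im}z)^2\|\psi\|^2$ to show that $\ran(\overline A+z)$ is closed and equals $\overline{\ran(A+z)}$ for $z\notin\R$. The auxiliary facts you invoke (that $\overline{A+z}=\overline A+z$ for scalar $z$, that graph-approximating sequences exist because $\Gamma(\overline A)=\overline{\Gamma(A)}$, and that $\overline A$ is symmetric) are all routine and deducible from the paper's inclusion-reversal and double-adjoint propositions; for the last, note $\overline A\subset A^*$ gives $A^{**}\subset(\overline A)^*$ by inclusion reversal, i.e.\ $\overline A\subset(\overline A)^*$. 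Alternatively, the identification $\overline{\ran(A+z)}=\ran(\overline A+z)$ can be obtained without graph approximation: both sets have orthogonal complement $\ker(A^*+\overline z)$ (using $(\overline A)^*=A^*$), and the right side is closed.

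You are right to flag the reading of the displayed equalities; the observation deserves to be made explicit. Taken literally, $\ran(A+z)=\Hcal=\ran(A+\overline z)$ for symmetric $A$ and $z\notin\R$ forces $A$ self-adjoint (hence closed) by the immediately preceding theorem, so the ``only if'' direction would fail whenever $A$ is essentially self-adjoint but not closed. The intended statement replaces $\ran(\cdot)$ by $\overline{\ran(\cdot)}$; this is also the reading the paper tacitly adopts in its proof of the ``equivalent kernel criterion,'' which converts $\ker(A^*+\overline z)=\{0\}$ into $\overline{\ran(A+z)}=\Hcal$ and then compares ``with the previous theorem.'' Your argument establishes precisely this corrected version.
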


\begin{theorem}[Equivalent kernel criterion]
Let $A$ be symmetric. Then $A$ is essentially self-adjoint if and only if there exists
$z\in\C\setminus\R$ such that
\[
\ker(A^*+z)=\{0\}=\ker(A^*+\overline z).
\]
\end{theorem}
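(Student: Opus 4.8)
The statement is a translation, via the range--kernel orthogonality $\ker(B^*)=\ran(B)^\perp$ already established for densely defined $B$, of a range criterion into a kernel criterion; the only delicate point is the passage between ``$\ran$ is dense'' and ``$\ran=\Hcal$''. First I would record the elementary identity $(A+z)^*=A^*+\overline z$ with $D((A+z)^*)=D(A^*)$, valid for every $z\in\C$ and every densely defined $A$: this is a one-line unwinding of the definition of the adjoint, since $\langle\psi\mid(A+z)\alpha\rangle=\langle\psi\mid A\alpha\rangle+\langle\overline z\,\psi\mid\alpha\rangle$. Applying range--kernel orthogonality to $B:=A+z$ then gives $\ker(A^*+\overline z)=\ran(A+z)^\perp$, and swapping $z\leftrightarrow\overline z$ gives $\ker(A^*+z)=\ran(A+\overline z)^\perp$. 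Since a linear subspace of $\Hcal$ is dense iff its orthogonal complement is $\{0\}$, the kernel condition in the theorem is \emph{exactly} the assertion that $\ran(A+z)$ and $\ran(A+\overline z)$ are both dense.

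\textbf{From density to all of $\Hcal$.} For $z=a+ib$ with $b\neq0$ and $A$ symmetric, expanding $\|(A+z)\psi\|^2$ for $\psi\in D(A)$ and using symmetry of $A+a$ to kill the cross term yields
\[
\|(A+z)\psi\|^2=\|(A+a)\psi\|^2+b^2\|\psi\|^2\ \ge\ b^2\|\psi\|^2 .
\]
Thus $A+z$ is bounded below, and the same bound passes to the closure $\overline A$ (which is symmetric, with $(\overline A)^*=A^*$, using $\overline A=A^{**}$ together with closedness of $A^*$). A bounded-below closed operator has closed range, so $\ran(\overline A+z)$ is closed; and every element of $\ran(\overline A+z)$ is a limit of elements of $\ran(A+z)$ (approximate $\psi\in D(\overline A)$ by a graph-convergent sequence in $D(A)$), whence $\ran(\overline A+z)=\overline{\ran(A+z)}$. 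Therefore $\ran(A+z)$ is dense iff $\ran(\overline A+z)=\Hcal$, and likewise with $\overline z$.

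\textbf{Assembling the equivalence.} By definition $A$ is essentially self-adjoint iff $\overline A$ is self-adjoint. If $\overline A$ is self-adjoint then $A^*=(\overline A)^*=\overline A$, so for any non-real $z$ the symmetric operator $\overline A$ has no non-real eigenvalue, giving $\ker(A^*+z)=\ker(\overline A+z)=\{0\}$ and likewise for $\overline z$ --- this supplies the required $z$. Conversely, if $\ker(A^*+z)=\{0\}=\ker(A^*+\overline z)$ for some non-real $z$, then by the two preceding paragraphs $\ran(\overline A+z)=\Hcal=\ran(\overline A+\overline z)$, so the ``sufficient criterion for self-adjointness'' applied to the symmetric operator $\overline A$ forces $\overline A$ to be self-adjoint, i.e.\ $A$ is essentially self-adjoint. (Alternatively, one may feed the density of the two ranges directly into the preceding ``criterion for essential self-adjointness''.)

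\textbf{Main obstacle.} The only genuine subtlety is the gap between $\ran(A+z)$ being dense and being all of $\Hcal$: because $A$ itself need not be closed, the kernel condition only yields density, and one must descend to the closure $\overline A$ --- where the lower bound $\|(\overline A+z)\psi\|\ge|b|\,\|\psi\|$ upgrades density of the range to surjectivity --- before invoking the self-adjointness criterion. Everything else is routine bookkeeping with adjoints and orthogonal complements.
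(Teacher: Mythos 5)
Your proof is correct, and in fact more careful than the paper's. The paper's own proof stops after showing $\ker(A^*+\overline z)=\{0\}$ iff $\overline{\ran(A+z)}=\Hcal$ and then cites ``the previous theorem'' (the unproved \emph{Criterion for essential self-adjointness}), whose hypothesis is stated as $\ran(A+z)=\Hcal$ rather than $\overline{\ran(A+z)}=\Hcal$ --- so there is a genuine mismatch between what the paper's orthogonal-complement argument delivers (density of range) and what the cited theorem demands (surjectivity of range). You noticed exactly this gap, and your route closes it: you pass to the closure $\overline A$, use the symmetry of $A+a$ to derive the lower bound $\|(\overline A+z)\psi\|^2\ge b^2\|\psi\|^2$, observe that a closed operator bounded below has closed range, establish $\ran(\overline A+z)=\overline{\ran(A+z)}$, and thereby upgrade density to surjectivity for $\overline A+z$. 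You then invoke the \emph{proved} ``sufficient criterion for self-adjointness'' applied to the closed symmetric operator $\overline A$, and handle the converse direction directly via the fact that self-adjoint operators have real spectrum and $(\overline A)^*=A^*$. The net effect is that your argument is self-contained and relies only on the previously proved range criterion for self-adjointness, whereas the paper's relies on an unproved (and, as written, imprecisely stated) criterion. Both proofs begin with the same orthogonal-complement step $\ker(A^*+\overline z)=\ran(A+z)^\perp$, but the bridge from ``dense range'' to ``essentially self-adjoint'' is executed differently, and yours is the rigorous one.
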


\begin{proof}
By the range--kernel orthogonality, for any densely defined $T$ we have
$\ker(T^*)=\ran(T)^\perp$. Apply this with $T:=A+z$ and note $(A+z)^*=A^*+\overline z$.
Then
\[
\ran(A+z)^\perp=\ker\big((A+z)^*\big)=\ker(A^*+\overline z),
\qquad
\ran(A+\overline z)^\perp=\ker(A^*+z).
\]
Taking orthogonal complements (and using $M^{\perp\perp}=\overline M$ for subspaces) gives
\[
\overline{\ran(A+z)}=\ker(A^*+\overline z)^\perp,\qquad
\overline{\ran(A+\overline z)}=\ker(A^*+z)^\perp.
\]
Hence $\ker(A^*+\overline z)=\{0\}$ iff $\overline{\ran(A+z)}=\Hcal$, and similarly for the conjugate.
Comparing with the previous theorem yields the equivalence.
\end{proof}

\subsection{A finance-oriented addendum: \texorpdfstring{$S\ge 0$, $\log S$, and $g(S)$}{S\string>=0, log S, and g(S)}}

In the quantum-pricing chapters, (discounted) prices are modeled as (possibly unbounded)
self-adjoint operators $S$ on $\Hcal$.

\begin{definition}[Positive (semi-)definite operator]
A self-adjoint operator $S$ is \emph{positive} (write $S\ge 0$) if
\[
\langle \psi|S\psi\rangle \ge 0,\qquad \forall \psi\in D(S).
\]
If, moreover, there exists $c>0$ such that $\langle\psi|S\psi\rangle\ge c\|\psi\|^2$ for all $\psi\in D(S)$,
we write $S\ge cI$ (strict positivity / bounded below).
\end{definition}

\begin{remark}[When $\log S$ is unproblematic]
If $S\ge cI$ for some $c>0$, then $\sigma(S)\subset[c,\infty)$ and $\log$ is bounded on $\sigma(S)$.
Consequently $\log(S)$ can be defined as a bounded self-adjoint operator via the functional calculus
(to be developed in the spectral chapter).
\end{remark}

\begin{remark}[Domain of $\log S$ in the general case]
If $S\ge 0$ but $0$ lies in $\sigma(S)$, then $\log$ is unbounded near $0$ and $\log(S)$,
when defined, is typically unbounded. In that case one must specify a domain
$D(\log S)\subset \Hcal$ (given by square-integrability of $\log$ against the spectral measure).
In particular, if $\ker(S)\neq\{0\}$ then any definition of $\log(S)$ forces $D(\log S)$ to be orthogonal
to (at least part of) $\ker(S)$; hence, for pricing applications one often imposes either $S\ge cI$
or works with bounded truncations.
\end{remark}

\begin{remark}[Definability of payoffs $g(S)$]
If $g:\R\to\R$ is bounded Borel, then $g(S)$ is bounded self-adjoint.
For unbounded $g$ (e.g. linear growth), $g(S)$ is defined as an unbounded operator with domain
\[
D(g(S)):=\Big\{\psi\in\Hcal:\int_{\R}|g(\lambda)|^2\,d\mu_\psi^S(\lambda)<\infty\Big\},
\qquad
\mu_\psi^S(\Delta):=\langle\psi|P_S(\Delta)\psi\rangle,
\]
where $P_S$ is the spectral measure of $S$.
In practice (and consistent with the ``local information'' philosophy), it is often convenient to
replace $g$ by bounded truncations $g_n$ so that $g_n(S)\in\BH$ and all conditional expectations
remain everywhere-defined.
\end{remark}


\section{Spectra and Perturbation Theory}

\subsection{Resolvent map and spectrum}

\begin{definition}[Resolvent set and resolvent map]
Let $\Hcal$ be a complex Hilbert space and $A:D(A)\subset \Hcal\to \Hcal$ be a linear operator.
The \emph{resolvent set} of $A$ is
\[
\rho(A):=\bigl\{ z\in\mathbb C : (A-z)^{-1}\in \mathcal L(\Hcal)\bigr\}.
\]
The \emph{resolvent map} of $A$ is the map
\[
R_A:\rho(A)\to \mathcal L(\Hcal),\qquad z\mapsto (A-z)^{-1}.
\]
\end{definition}

\begin{remark}[Closed operators and invertibility]
If $A$ is closed, then by the closed graph theorem one has
\[
(A-z)^{-1}\in \mathcal L(\Hcal)
\quad\Longleftrightarrow\quad
A-z:D(A)\to\Hcal\ \text{is bijective}.
\]
Thus, for closed operators the condition $z\in\rho(A)$ is equivalent to bijectivity of $A-z$.
\end{remark}

\begin{definition}[Spectrum]
The \emph{spectrum} of $A$ is defined as
\[
\sigma(A):=\mathbb C\setminus \rho(A).
\]
\end{definition}

\begin{definition}[Eigenvalues]
A complex number $\lambda\in\mathbb C$ is an \emph{eigenvalue} of $A$ if there exists $\psi\in D(A)\setminus\{0\}$
such that $A\psi=\lambda\psi$. Such a vector $\psi$ is called an \emph{eigenvector} of $A$ associated to $\lambda$.
\end{definition}

\begin{proposition}[Eigenvalues belong to the spectrum]
If $\lambda$ is an eigenvalue of $A$, then $\lambda\in\sigma(A)$.
\end{proposition}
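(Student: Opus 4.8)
The plan is to argue by contradiction, using the very definition of the resolvent set: membership in $\rho(A)$ forces $A-\lambda$ to be boundedly invertible, hence in particular injective on $D(A)$, whereas an eigenvalue supplies a nonzero element of $\ker(A-\lambda)$.

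Concretely, first I would fix an eigenvalue $\lambda$ of $A$ and, by Definition of eigenvalue, choose an eigenvector $\psi\in D(A)\setminus\{0\}$ with $A\psi=\lambda\psi$, i.e. $(A-\lambda)\psi=0$. Next I would suppose for contradiction that $\lambda\in\rho(A)$. Then by definition of the resolvent set $(A-\lambda)^{-1}\in\mathcal L(\Hcal)$; in particular $A-\lambda:D(A)\to\Hcal$ possesses a two-sided inverse and is therefore injective. Applying $(A-\lambda)^{-1}$ to the identity $(A-\lambda)\psi=0$ gives $\psi=(A-\lambda)^{-1}(A-\lambda)\psi=(A-\lambda)^{-1}0=0$, contradicting $\psi\neq 0$. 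Hence $\lambda\notin\rho(A)$, and by the definition $\sigma(A)=\mathbb C\setminus\rho(A)$ we conclude $\lambda\in\sigma(A)$.

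There is essentially no obstacle here: the statement is a direct unwinding of the definitions of eigenvalue, resolvent set, and spectrum, and does not require the closed-graph remark or any perturbation theory. The only point worth stating carefully is that $(A-\lambda)^{-1}\in\mathcal L(\Hcal)$ entails injectivity of $A-\lambda$ (which is immediate, since a map with a left inverse is injective), so no appeal to closedness of $A$ is needed.
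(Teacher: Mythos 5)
Your proof is correct and essentially identical to the paper's: both observe that an eigenvector places a nonzero element in $\ker(A-\lambda)$, so $A-\lambda$ cannot be injective and hence cannot have a bounded inverse; you phrase it as an explicit contradiction by applying $(A-\lambda)^{-1}$, while the paper states non-injectivity directly, but the content is the same.
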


\begin{proof}
If $A\psi=\lambda\psi$ for some $\psi\neq 0$, then $(A-\lambda)\psi=0$ and hence $\ker(A-\lambda)\neq \{0\}$.
Therefore $A-\lambda$ is not injective, hence not invertible, so $\lambda\notin \rho(A)$.
Equivalently, $\lambda\in\sigma(A)$.
\end{proof}

\subsection{Spectrum of a self-adjoint operator}

Throughout this subsection, let $A=A^*$ be self-adjoint on $\Hcal$.

\begin{definition}[Refined spectrum for self-adjoint operators]
Define the following subsets of $\sigma(A)$:
\begin{align*}
\sigma_{\mathrm{pp}}(A)
&:=\bigl\{ z\in\mathbb C:\ \overline{\ran(A-z)}=\ran(A-z)\neq \Hcal\bigr\},\\
\sigma_{\mathrm{pec}}(A)
&:=\bigl\{ z\in\mathbb C:\ \overline{\ran(A-z)}\neq \ran(A-z)\neq \Hcal\bigr\},\\
\sigma_{\mathrm{pc}}(A)
&:=\bigl\{ z\in\mathbb C:\ {\ran(A-z)}\neq \overline{\ran(A-z)}= \Hcal\bigr\}.
\end{align*}
These sets are pairwise disjoint and their union is $\sigma(A)$.
\end{definition}

\begin{definition}[Point spectrum and continuous spectrum]
Define
\[
\sigma_{\mathrm{p}}(A):=\sigma_{\mathrm{pp}}(A)\cup \sigma_{\mathrm{pec}}(A)
=\bigl\{ z\in\mathbb C:\overline{\ran(A-z)}\neq \Hcal\bigr\},
\]
and
\[
\sigma_{\mathrm{c}}(A):=\sigma_{\mathrm{pec}}(A)\cup \sigma_{\mathrm{pc}}(A)
=\bigl\{ z\in\mathbb C:\overline{\ran(A-z)}\neq \ran(A-z)\bigr\}.
\]
Then $\sigma_{\mathrm{p}}(A)\cup \sigma_{\mathrm{c}}(A)=\sigma(A)$, but $\sigma_{\mathrm{p}}(A)\cap \sigma_{\mathrm{c}}(A)=\sigma_{\mathrm{pec}}(A)$ may be nonempty.
\end{definition}

\begin{lemma}[Eigenvalues of self-adjoint operators are real]
If $\lambda$ is an eigenvalue of $A$, then $\lambda\in\mathbb R$.
\end{lemma}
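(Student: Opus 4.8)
The plan is to reduce the claim to a single inner-product computation, using only symmetry of $A$ (which is immediate from self-adjointness via the earlier proposition that self-adjoint operators are symmetric). Let $\lambda$ be an eigenvalue with eigenvector $\psi\in D(A)\setminus\{0\}$, so that $A\psi=\lambda\psi$. Since $\psi\in D(A)$, the quantity $\langle\psi\mid A\psi\rangle$ is defined, and I would evaluate it in two ways.

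First, using linearity of $\langle\cdot\mid\cdot\rangle$ in the second argument together with $A\psi=\lambda\psi$, one gets $\langle\psi\mid A\psi\rangle=\lambda\langle\psi\mid\psi\rangle=\lambda\|\psi\|^2$. Second, applying symmetry (i.e.\ $\langle\alpha\mid A\beta\rangle=\langle A\alpha\mid\beta\rangle$ for $\alpha,\beta\in D(A)$, here with $\alpha=\beta=\psi$) and then conjugate-linearity in the first argument, one gets $\langle\psi\mid A\psi\rangle=\langle A\psi\mid\psi\rangle=\langle\lambda\psi\mid\psi\rangle=\overline\lambda\,\|\psi\|^2$. Equating the two expressions yields $\lambda\|\psi\|^2=\overline\lambda\|\psi\|^2$.

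The final step is to observe that $\psi\neq 0$ forces $\|\psi\|^2>0$ by positive definiteness of the inner product, so we may divide to obtain $\lambda=\overline\lambda$, i.e.\ $\lambda\in\mathbb R$. There is no genuine obstacle here — the entire argument is a two-line computation — so the only thing to be careful about is bookkeeping with the inner-product convention (conjugate-linear in the first slot, linear in the second) so that the conjugate lands on $\lambda$ in the symmetric expression rather than the linear one. No appeal to the full strength of self-adjointness, the spectral theorem, or anything about the range of $A-z$ is needed; symmetry suffices, which is why the statement is placed as a lemma before the resolvent-based characterizations of the spectrum.
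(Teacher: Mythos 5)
Your proof is correct and is essentially the same argument as the paper's: both evaluate $\langle\psi\mid A\psi\rangle$ via symmetry of $A$ and the sesquilinearity convention to get $\lambda\|\psi\|^2=\overline\lambda\|\psi\|^2$, then divide by $\|\psi\|^2>0$. The only cosmetic difference is that the paper writes it as a single chain of equalities while you compute the same inner product in two ways.
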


\begin{proof}
Let $A\psi=\lambda\psi$ with $\psi\neq 0$. Then
\[
\lambda\langle \psi,\psi\rangle
=\langle \psi,\lambda\psi\rangle
=\langle \psi,A\psi\rangle
=\langle A\psi,\psi\rangle
=\langle \lambda\psi,\psi\rangle
=\overline{\lambda}\langle \psi,\psi\rangle.
\]
Since $\langle\psi,\psi\rangle>0$, it follows that $\lambda=\overline{\lambda}$, i.e. $\lambda\in\mathbb R$.
\end{proof}

\begin{theorem}[Point spectrum equals the set of eigenvalues for self-adjoint operators]
For self-adjoint $A$, the elements of $\sigma_{\mathrm{p}}(A)$ are precisely the eigenvalues of $A$.
\end{theorem}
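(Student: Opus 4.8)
The plan is to reduce the statement to the range--kernel orthogonality relation $\ker(T^*)=\ran(T)^\perp$ (valid for any densely defined $T$) applied to $T=A-z$, together with the lemma that eigenvalues of a self-adjoint operator are real. First I would record the elementary identity $(A-z)^*=A^*-\overline z=A-\overline z$, which holds because $A=A^*$ and $A$ is in particular closed and densely defined, so the adjoint calculus is available. From this, $\ran(A-z)^\perp=\ker\bigl((A-z)^*\bigr)=\ker(A-\overline z)$, and taking orthogonal complements gives $\overline{\ran(A-z)}=\ker(A-\overline z)^\perp$. Hence $z\in\sigma_{\mathrm p}(A)$, i.e.\ $\overline{\ran(A-z)}\neq\Hcal$, if and only if $\ker(A-\overline z)\neq\{0\}$, i.e.\ if and only if $\overline z$ is an eigenvalue of $A$.

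Next I would close the loop between ``$\overline z$ is an eigenvalue'' and ``$z$ is an eigenvalue'' using reality. For the forward direction ($z\in\sigma_{\mathrm p}(A)\Rightarrow z$ is an eigenvalue): if $\ker(A-\overline z)\neq\{0\}$ then $\overline z$ is an eigenvalue, so by the lemma $\overline z\in\mathbb R$, whence $z=\overline z$ and $z$ is an eigenvalue. For the converse ($z$ an eigenvalue $\Rightarrow z\in\sigma_{\mathrm p}(A)$): if $z$ is an eigenvalue then $z\in\mathbb R$ by the lemma, so $\overline z=z$ and $\ker(A-\overline z)=\ker(A-z)\neq\{0\}$, which by the displayed equivalence gives $\overline{\ran(A-z)}\neq\Hcal$, i.e.\ $z\in\sigma_{\mathrm p}(A)$.

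I expect no real obstacle here; the argument is a short chain of equivalences. The one point requiring mild care is the conjugation bookkeeping: $\sigma_{\mathrm p}(A)$ is defined through $\ran(A-z)$ but the orthogonality relation produces $\ker(A-\overline z)$, so the reality of eigenvalues is exactly what is needed to identify $z$ with $\overline z$ and avoid concluding only that $\overline z$ (rather than $z$) is an eigenvalue. I would also note in passing that this identification is automatic since $\sigma_{\mathrm p}(A)\subseteq\sigma(A)$ and, by the previous lemma, every eigenvalue is real, so both sides of the claimed equality are subsets of $\mathbb R$.
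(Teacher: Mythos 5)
Your argument is correct. One small clarifying remark: the paper states this theorem without supplying a proof (the statement is immediately followed by the orthogonality-of-eigenvectors lemma), so there is no in-paper argument to compare against. Your route is nevertheless exactly the one the paper gears up for elsewhere: the appendix proves the range--kernel orthogonality $\ker(T^*)=\ran(T)^\perp$ for densely defined $T$, proves $\overline{\mathrm{span}}(M)=(M^\perp)^\perp$, and proves reality of eigenvalues for self-adjoint operators, and indeed the proof of the ``kernel criterion'' for essential self-adjointness in the same section applies $\ker(T^*)=\ran(T)^\perp$ to $T=A+z$ and uses $(A+z)^*=A^*+\overline{z}$ in precisely the way you do. The conjugation bookkeeping you flag (the orthogonality relation yields $\ker(A-\overline z)$, not $\ker(A-z)$, and reality of eigenvalues is what collapses $\overline z$ back to $z$) is the genuinely non-trivial point, and you handle it cleanly in both directions. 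The only minor gap in self-containment is the identity $(A-z)^*=A^*-\overline z$, which the paper uses but never proves; since $-zI$ is bounded this is the standard fact that adding a bounded operator to a densely defined operator shifts the adjoint by the adjoint of the bounded part without changing the adjoint's domain, and it would be worth a one-line citation or remark. Otherwise the proof is complete and would fit seamlessly into the paper's framework.
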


\begin{lemma}[Orthogonality of eigenvectors]
Eigenvectors associated to distinct eigenvalues of a self-adjoint operator are orthogonal.
\end{lemma}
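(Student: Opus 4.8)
The plan is to reduce the statement to the one-line symmetry identity $\langle\varphi,A\psi\rangle=\langle A\varphi,\psi\rangle$, which is available because a self-adjoint operator is in particular symmetric on its domain, together with the fact—just established in the preceding lemma—that every eigenvalue of $A$ is real. Concretely, let $\lambda,\mu$ be distinct eigenvalues of $A$ with eigenvectors $\psi,\varphi\in D(A)\setminus\{0\}$, so that $A\psi=\lambda\psi$ and $A\varphi=\mu\varphi$.

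First I would record that $\lambda,\mu\in\mathbb R$ by the real-eigenvalue lemma, and that since $\psi,\varphi\in D(A)$ the symmetry relation $\langle\varphi,A\psi\rangle=\langle A\varphi,\psi\rangle$ applies. Then I would compute $\langle\varphi,A\psi\rangle$ in two ways: on the one hand, using linearity of the inner product in the second slot,
\[
\langle\varphi,A\psi\rangle=\langle\varphi,\lambda\psi\rangle=\lambda\,\langle\varphi,\psi\rangle;
\]
on the other hand, using symmetry, conjugate-linearity in the first slot, and $\mu=\overline{\mu}$,
\[
\langle\varphi,A\psi\rangle=\langle A\varphi,\psi\rangle=\langle\mu\varphi,\psi\rangle=\overline{\mu}\,\langle\varphi,\psi\rangle=\mu\,\langle\varphi,\psi\rangle.
\]
Subtracting gives $(\lambda-\mu)\langle\varphi,\psi\rangle=0$, and since $\lambda\neq\mu$ we conclude $\langle\varphi,\psi\rangle=0$, i.e.\ $\psi\perp\varphi$.

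There is essentially no serious obstacle here: the argument is a direct computation. The only points requiring a moment's care are (i) invoking the realness of eigenvalues so that the scalar $\mu$ may be pulled out of the first slot without a complex conjugate obstructing the cancellation, and (ii) noting that both eigenvectors lie in $D(A)$, so that the symmetry identity is legitimately applicable. If desired, one can phrase the whole computation in a single display chain $\lambda\langle\varphi,\psi\rangle=\langle\varphi,A\psi\rangle=\langle A\varphi,\psi\rangle=\mu\langle\varphi,\psi\rangle$ and then factor.
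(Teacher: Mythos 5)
Your argument is correct and is essentially the one the paper gives: both proofs boil down to evaluating the symmetric form $\langle\cdot,A\cdot\rangle$ in two ways, pulling eigenvalues out of each slot, using the preceding realness lemma to cancel a conjugate, and factoring $(\lambda-\mu)\langle\varphi,\psi\rangle=0$. The only cosmetic difference is that the paper writes the chain as $(\lambda-\lambda')\langle\psi,\varphi\rangle=\langle A\psi,\varphi\rangle-\langle\psi,A\varphi\rangle=0$ (invoking realness implicitly in the first equality), while you make the appeal to realness explicit — a small improvement in clarity.
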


\begin{proof}
Let $A\psi=\lambda\psi$ and $A\varphi=\lambda'\varphi$ with $\lambda\neq \lambda'$.
Then
\[
(\lambda-\lambda')\langle \psi,\varphi\rangle
=\langle A\psi,\varphi\rangle-\langle \psi,A\varphi\rangle
=\langle \psi,A\varphi\rangle-\langle \psi,A\varphi\rangle
=0,
\]
hence $\langle \psi,\varphi\rangle=0$.
\end{proof}

\subsection{Pure point price spectrum (working hypothesis)}

\begin{axiom}[Pure point price spectrum]
In the quantum-pricing chapters, the (discounted) price observable is modeled by a self-adjoint operator
$S=S^*$ on $\Hcal$ whose spectrum is \emph{pure point}:
\[
\sigma(S)=\sigma_{\mathrm{pp}}(S)\subset \mathbb R.
\]
Moreover, we assume each eigenspace $\Eig_S(\lambda):=\{\psi\in D(S):S\psi=\lambda\psi\}$ is finite-dimensional.
\end{axiom}

\begin{remark}[Interpretation]
This hypothesis idealizes price formation as taking values in a discrete set of admissible levels (a ''grid''),
and will be consistent with a projection-valued spectral decomposition and discrete induced distributions.
Continuous-spectrum models can be interpreted as scaling limits of increasingly fine grids.
\end{remark}

\begin{proposition}[Spectral decomposition under pure point hypothesis]
Assume the above axiom holds and enumerate the eigenvalues (with multiplicity spaces)
as $(\lambda_n)_{n\in I}\subset \mathbb R$ (countable index set), and denote by $P_n$ the orthogonal projection onto
$\Eig_S(\lambda_n)$. Then:
\begin{enumerate}[label=(\roman*)]
\item $P_nP_m=0$ for $n\neq m$, and $P_n=P_n^*=P_n^2$;
\item $S=\sum_{n\in I}\lambda_n P_n$ in the strong sense on $D(S)$;
\item for any bounded Borel function $g:\mathbb R\to\mathbb C$, the functional calculus satisfies
\[
g(S)=\sum_{n\in I} g(\lambda_n) P_n
\quad\text{(strong operator convergence)}.
\]
\end{enumerate}
\end{proposition}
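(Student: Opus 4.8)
The plan is to reduce the entire statement to the spectral theorem for self-adjoint operators (Theorem~\ref{thm:spectral}), reading the standing ``pure point'' hypothesis as the assertion that the projection-valued measure $E_S$ of $S$ is purely atomic, with atoms exactly $\{\lambda_n\}_{n\in I}$ and $E_S(\{\lambda_n\})$ equal to the orthogonal projection $P_n$ onto $\Eig_S(\lambda_n)$. Granting this, the three items become bookkeeping with the spectral measure.

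First, for (i): the Lemma on orthogonality of eigenvectors proved above gives $\Eig_S(\lambda_n)\perp\Eig_S(\lambda_m)$ whenever $\lambda_n\neq\lambda_m$; since each eigenspace is finite-dimensional, hence closed, Theorem~\ref{thm:projectors} shows that each $P_n$ is a bona fide orthogonal projection, so $P_n=P_n^\ast=P_n^2$, and mutual orthogonality of the ranges forces $P_nP_m=0$ for $n\neq m$. I would also record here the identification $P_n=E_S(\{\lambda_n\})$: applying the defining relation $S\,E_S(\Delta)=\int_\Delta\lambda\,E_S(d\lambda)$ with $\Delta=\{\lambda_n\}$ shows $\ran E_S(\{\lambda_n\})\subseteq\ker(S-\lambda_n)$, and conversely any eigenvector $\psi$ with $S\psi=\lambda_n\psi$ has $\mu_\psi^S$ concentrated at $\lambda_n$, whence $E_S(\{\lambda_n\})\psi=\psi$. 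Combined with the atomicity hypothesis (so that $E_S(\R\setminus\{\lambda_n:n\in I\})=0$) and countable additivity of $E_S$ in the strong operator topology, this yields the resolution of identity $\sum_{n\in I}P_n=E_S(\R)=I$ (strong convergence); countability of $I$ is automatic from separability of $\Hcal$ together with mutual orthogonality of the $\ran P_n$.

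Next, (ii): fix $\psi\in D(S)$. By the standard domain characterization in the spectral theorem, $\sum_{n\in I}\lambda_n^2\|P_n\psi\|^2<\infty$, and $S\psi=\int_\R\lambda\,E_S(d\lambda)\psi=\sum_{n\in I}\lambda_n P_n\psi$ with the partial sums converging in norm, since $\bigl\|S\psi-\sum_{n\le N}\lambda_nP_n\psi\bigr\|^2=\sum_{n>N}\lambda_n^2\|P_n\psi\|^2\to0$. For (iii) and bounded Borel $g$, the bounded functional calculus of Theorem~\ref{thm:spectral} gives $g(S)=\int_\R g(\lambda)\,E_S(d\lambda)=\sum_{n\in I}g(\lambda_n)P_n$, and for every $\psi\in\Hcal$ one has $\bigl\|g(S)\psi-\sum_{n\le N}g(\lambda_n)P_n\psi\bigr\|^2=\sum_{n>N}|g(\lambda_n)|^2\|P_n\psi\|^2\le\|g\|_\infty^2\sum_{n>N}\|P_n\psi\|^2\to0$, using $\sum_{n\in I}\|P_n\psi\|^2=\|\psi\|^2<\infty$ from the resolution of identity; this is exactly strong operator convergence.

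The only nontrivial input --- and hence the main obstacle --- is the resolution of identity $\sum_nP_n=I$, i.e.\ completeness of the eigenvectors. This is precisely where ``pure point'' must be used, and there are two routes. The clean one, which I would adopt, is to take as the working definition that every scalar spectral measure $\mu_\psi^S(\cdot)=\langle\psi\mid E_S(\cdot)\psi\rangle$ is purely atomic and supported on $\{\lambda_n:n\in I\}$; then $E_S(\R\setminus\{\lambda_n\})=0$ is immediate and the proof is complete. The alternative is to start from the range-based definition of $\sigma_{\mathrm{pp}}(S)$ given earlier and show that $\sigma(S)=\sigma_{\mathrm{pp}}(S)$ in that sense forces the absolutely continuous and singular continuous parts of $E_S$ to vanish; this is true but requires a genuine argument (decomposing $\Hcal$ into spectral subspaces and ruling out nontrivial continuous parts), so I would relegate it to a remark rather than the main line. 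A minor secondary point is the role of the finite-multiplicity assumption: it is not needed for (i)--(iii), since countable additivity of $E_S$ handles infinite-dimensional eigenspaces equally well, but it keeps each $P_n$ elementary and is retained only for later convenience.
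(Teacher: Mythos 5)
The paper states this proposition without proof, so there is no authorial argument to compare against; your write-up supplies one. Your reduction is the right one: establish the resolution of identity $\sum_{n\in I}P_n=I$ together with the identification $P_n=E_S(\{\lambda_n\})$, and then (ii) and (iii) fall out of the domain characterization $\sum_n\lambda_n^2\|P_n\psi\|^2<\infty$ and the $\ell^2$-tail estimate $\sum_{n>N}|g(\lambda_n)|^2\|P_n\psi\|^2\le\|g\|_\infty^2\sum_{n>N}\|P_n\psi\|^2$. Both directions of the identification $P_n=E_S(\{\lambda_n\})$ are argued correctly, and you are also right that the finite-multiplicity clause of the axiom plays no role in (i)--(iii).

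Where I would push back is the hedge about whether the paper's range-based definition of $\sigma_{\mathrm{pp}}$ actually delivers atomicity of the PVM, and in particular your proposal to adopt atomicity of the scalar spectral measures as a ``working definition.'' That substitution is not needed, and the alternative route is shorter than you estimate. The hypothesis $\sigma(S)=\sigma_{\mathrm{pp}}(S)$ says every spectral point is an eigenvalue; in a separable $\Hcal$, the mutual orthogonality of distinct eigenspaces (which you already invoke) forces the eigenvalue set to be countable, hence $\sigma(S)=\{\lambda_n\}_{n\in I}$ is a countable set. Since the spectral measure of a self-adjoint operator is supported on its spectrum, $E_S(\R\setminus\sigma(S))=0$, and countable additivity gives
\[
I=E_S(\sigma(S))=\sum_{n\in I}E_S(\{\lambda_n\})=\sum_{n\in I}P_n
\]
directly; there is no absolutely-continuous or singular-continuous part to rule out, because those measure classes cannot live on a countable set. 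If you insert this short argument in place of the proposed redefinition, the proof closes completely under the paper's stated hypotheses without touching the definition of $\sigma_{\mathrm{pp}}$.
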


\begin{remark}[Discrete induced distribution and pricing functional]
Let $\rho$ be a normal state (density operator) and define $p_n:=\Tr(\rho P_n)\in[0,1]$.
Then $\sum_{n\in I} p_n=1$ and, for bounded $g$,
\[
\Tr\bigl(\rho\, g(S)\bigr)=\sum_{n\in I} g(\lambda_n)\, p_n.
\]
Thus, under pure point spectrum the spectral measure induces an \emph{atomic} probability distribution on the
eigenvalues, and pricing reduces to a discrete spectral sum.
\end{remark}

\subsubsection*{Rigorous first-order perturbation near an isolated eigenvalue cluster}

\begin{definition}[Spectral gap (isolated eigenvalue cluster)]
Fix $n$ and set $E:=\mathrm{Eig}_{H_0}(h_n)$, $d:=\dim E<\infty$.
We say that $h_n$ is \emph{isolated} (has a spectral gap) if
\[
\gamma_n := \mathrm{dist}\bigl(h_n,\ \sigma(H_0)\setminus\{h_n\}\bigr) \;>\; 0.
\]
Equivalently, there exists $r\in(0,\gamma_n)$ such that
$\sigma(H_0)\cap (h_n-r,\ h_n+r)=\{h_n\}$ (counting multiplicity).
\end{definition}

\begin{lemma}[Resolvent bound]
\label{lem:resolvent-bound}
Let $A=A^*$ be self-adjoint and let $z\in\mathbb C\setminus\sigma(A)$.
Then $(A-z)^{-1}\in\mathcal B(H)$ and
\[
\|(A-z)^{-1}\| \;=\; \sup_{\lambda\in\sigma(A)}\frac{1}{|\lambda-z|}
\;=\; \frac{1}{\mathrm{dist}(z,\sigma(A))}.
\]
\end{lemma}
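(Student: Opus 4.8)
The plan is to reduce the resolvent norm identity for a self-adjoint operator to a statement about the bounded functional calculus applied to the function $\lambda\mapsto (\lambda-z)^{-1}$ on $\sigma(A)$, and then to compute the norm of a normal/self-adjoint-type operator via its spectrum. Concretely, since $z\notin\sigma(A)$, the function $g_z(\lambda):=(\lambda-z)^{-1}$ is bounded and continuous on $\sigma(A)$, with $\sup_{\lambda\in\sigma(A)}|g_z(\lambda)| = 1/\mathrm{dist}(z,\sigma(A))$ (the supremum of $|g_z|$ over a closed set is attained or approached, and $|g_z(\lambda)| = 1/|\lambda-z| \le 1/\mathrm{dist}(z,\sigma(A))$ with equality approached by $\lambda$ near the point of $\sigma(A)$ closest to $z$). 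By the spectral theorem for self-adjoint operators (Theorem~\ref{thm:spectral}) and the bounded Borel functional calculus, $(A-z)^{-1} = g_z(A) = \int_{\sigma(A)} g_z(\lambda)\,E_A(d\lambda)$, so it suffices to prove the general spectral-radius-type bound $\|f(A)\| = \sup_{\lambda\in\sigma(A)}|f(\lambda)|$ for bounded Borel $f$ (or at least for this particular continuous $f$).

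First I would establish the upper bound $\|f(A)\|\le \|f\|_{\infty,\sigma(A)}$. This is immediate from the functional calculus: for $f$ bounded Borel, $f(A)^*f(A) = (\overline f f)(A)$ and one has $\|f(A)\psi\|^2 = \langle \psi, (\overline f f)(A)\psi\rangle = \int |f(\lambda)|^2 \, d\mu_\psi(\lambda) \le \|f\|_\infty^2 \|\psi\|^2$, where $\mu_\psi(\Delta) = \langle\psi, E_A(\Delta)\psi\rangle$ is the (positive) spectral measure with total mass $\|\psi\|^2$; since $E_A$ is supported on $\sigma(A)$, only the sup over $\sigma(A)$ enters. Then I would establish the lower bound $\|f(A)\|\ge |f(\lambda_0)| - \varepsilon$ for any $\lambda_0\in\sigma(A)$ and any $\varepsilon>0$ (assuming $f$ continuous at $\lambda_0$, which holds for $g_z$): pick a small interval $\Delta$ around $\lambda_0$ on which $|f - f(\lambda_0)| < \varepsilon$; since $\lambda_0\in\sigma(A)$, the spectral projection $E_A(\Delta)\ne 0$, so choose a unit vector $\psi\in \mathrm{ran}\,E_A(\Delta)$; then $\mu_\psi$ is supported in $\Delta$ and $\|f(A)\psi\|^2 = \int_\Delta |f|^2\,d\mu_\psi \ge (|f(\lambda_0)|-\varepsilon)^2$. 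Taking $\lambda_0$ to approach the point of $\sigma(A)$ minimizing $|\lambda-z|$ and letting $\varepsilon\to 0$ gives $\|(A-z)^{-1}\| \ge 1/\mathrm{dist}(z,\sigma(A))$.

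The boundedness of $(A-z)^{-1}$ itself (the claim that it lies in $\mathcal B(H)$) follows because $z\in\mathbb C\setminus\sigma(A) = \rho(A)$ by definition of the resolvent set, so $(A-z)^{-1}\in\mathcal L(H)$ is built in; alternatively it follows from the upper bound just proved. Combining the two inequalities yields $\|(A-z)^{-1}\| = 1/\mathrm{dist}(z,\sigma(A)) = \sup_{\lambda\in\sigma(A)} 1/|\lambda-z|$, as claimed.

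The main obstacle is really a bookkeeping point rather than a deep one: one must be careful that $\sigma(A)$ is exactly the support of the spectral measure $E_A$, so that the supremum over $\sigma(A)$ (and not over all of $\mathbb R$) is the right quantity both for the upper and the lower bound. This is where the nonemptiness $E_A(\Delta)\ne 0$ for every open $\Delta$ meeting $\sigma(A)$ is used (for the lower bound), and where $E_A(\mathbb R\setminus\sigma(A)) = 0$ is used (to restrict the integral in the upper bound). If one wanted a fully self-contained argument one would prove these two facts about the support of $E_A$ from the spectral theorem; but since Theorem~\ref{thm:spectral} and the surrounding functional-calculus machinery are taken as given, this can be invoked directly. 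A secondary minor point is to note $\mathrm{dist}(z,\sigma(A)) = \inf_{\lambda\in\sigma(A)}|\lambda - z| = 1/\sup_{\lambda\in\sigma(A)}(1/|\lambda-z|)$, which is just the elementary observation that $t\mapsto 1/t$ is order-reversing on $(0,\infty)$.
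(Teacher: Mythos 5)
Your proof is correct and follows the same route as the paper: both reduce the claim to the identity $\|g_z(A)\| = \sup_{\lambda\in\sigma(A)}|g_z(\lambda)|$ for $g_z(\lambda)=(\lambda-z)^{-1}$ via the bounded functional calculus. The paper simply cites this norm identity as a standard fact, whereas you unpack it into the two inequalities (spectral-measure integral estimate for the upper bound, a unit vector in the range of $E_A(\Delta)$ near the nearest spectral point for the lower bound) and correctly flag the subtlety that the equality $\|f(A)\|=\sup_{\sigma(A)}|f|$ requires continuity of $f$ (for general bounded Borel $f$ only the essential supremum is guaranteed), which is the one place where a careless reader might go wrong.
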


\begin{proof}
For self-adjoint $A$, the bounded Borel functional calculus gives
$(A-z)^{-1}=g_z(A)$ with $g_z(\lambda):=(\lambda-z)^{-1}$ bounded on $\sigma(A)$.
Hence $\|(A-z)^{-1}\|=\|g_z(A)\|=\sup_{\lambda\in\sigma(A)}|g_z(\lambda)|
=\sup_{\lambda\in\sigma(A)}\frac{1}{|\lambda-z|}$, which equals $1/\mathrm{dist}(z,\sigma(A))$.
\end{proof}

\begin{lemma}[Riesz projection for the perturbed cluster]
\label{lem:riesz-proj}
Assume $H_0=H_0^*$ and $W=W^*\in\mathcal B(H)$, and define
$H_\varepsilon:=H_0+\varepsilon W$ on $D(H_\varepsilon):=D(H_0)$.
Fix $n$ and choose $r\in(0,\gamma_n/2)$, and let
\[
\Gamma:=\{z\in\mathbb C:\ |z-h_n|=r\}.
\]
Then there exists $\varepsilon_0>0$ such that for all $|\varepsilon|<\varepsilon_0$:
\begin{itemize}
\item $z\in\Gamma$ implies $z\in\rho(H_\varepsilon)$, and $(H_\varepsilon-z)^{-1}\in\mathcal B(H)$;
\item the operator
\[
P(\varepsilon):=\frac{1}{2\pi i}\oint_{\Gamma}(H_\varepsilon-z)^{-1}\,dz
\]
is a bounded projection commuting with $H_\varepsilon$;
\item $P(\varepsilon)$ depends continuously on $\varepsilon$ in operator norm and
$\mathrm{rank}\,P(\varepsilon)=\mathrm{rank}\,P(0)=d$ for $|\varepsilon|$ small.
\end{itemize}
\end{lemma}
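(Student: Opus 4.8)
The plan is to establish the three claimed properties of the Riesz projection $P(\varepsilon)$ via a standard Neumann-series / contour-integral argument, broken into the following steps.

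First I would control the resolvent along $\Gamma$. For $z\in\Gamma$ we have $\mathrm{dist}(z,\sigma(H_0))\ge r$ (since $|z-h_n|=r$ and all other spectral points are at distance $\ge\gamma_n-r>r$ from $z$), so Lemma~\ref{lem:resolvent-bound} gives $\|(H_0-z)^{-1}\|\le 1/r$ uniformly on $\Gamma$. Write $H_\varepsilon-z=(H_0-z)\bigl(I+\varepsilon(H_0-z)^{-1}W\bigr)$. Setting $\varepsilon_0:=r/(2\|W\|+1)$, for $|\varepsilon|<\varepsilon_0$ and $z\in\Gamma$ the operator $\varepsilon(H_0-z)^{-1}W$ has norm $\le|\varepsilon|\,\|W\|/r<1/2$, so $I+\varepsilon(H_0-z)^{-1}W$ is boundedly invertible by the Neumann series, hence so is $H_\varepsilon-z$, and we obtain the resolvent identity
\[
(H_\varepsilon-z)^{-1}=\bigl(I+\varepsilon(H_0-z)^{-1}W\bigr)^{-1}(H_0-z)^{-1},
\]
with norm bounded by $(2/r)$ on $\Gamma$. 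This proves $z\in\rho(H_\varepsilon)$ for all $z\in\Gamma$.

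Next I would show $P(\varepsilon)$ is a bounded projection commuting with $H_\varepsilon$. Boundedness is immediate from the uniform resolvent bound and compactness of $\Gamma$. Commutation with $H_\varepsilon$ follows because $(H_\varepsilon-z)^{-1}$ commutes with $H_\varepsilon$ on $D(H_0)$ for each $z$, and this passes through the contour integral. Idempotence $P(\varepsilon)^2=P(\varepsilon)$ is the classical Riesz-projection computation: take two concentric circles $\Gamma,\Gamma'$ (radii $r<r'<\gamma_n/2$), use the first resolvent identity $(H_\varepsilon-z)^{-1}(H_\varepsilon-w)^{-1}=\bigl((H_\varepsilon-w)^{-1}-(H_\varepsilon-z)^{-1}\bigr)/(z-w)$, and apply the Cauchy integral formula (the term with $z$ on the inner circle picks up a residue, the term with $w$ on the outer circle integrates to zero). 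Finally, norm-continuity of $\varepsilon\mapsto P(\varepsilon)$ follows from norm-continuity of $\varepsilon\mapsto(H_\varepsilon-z)^{-1}$ uniformly in $z\in\Gamma$, which in turn follows from the Neumann-series representation above and differentiating (or just estimating) the geometric series in $\varepsilon$.

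For the rank statement, the key observation is that $\varepsilon\mapsto P(\varepsilon)$ is a norm-continuous family of projections on $[-\varepsilon_0',\varepsilon_0']$ (shrinking $\varepsilon_0$ if needed); since $\|P(\varepsilon)-P(0)\|<1$ for $|\varepsilon|$ small, a standard lemma (two projections at norm-distance $<1$ are unitarily equivalent, e.g.\ via the Nagy/Kato formula $U(\varepsilon)=\bigl(I-(P(\varepsilon)-P(0))^2\bigr)^{-1/2}\bigl(P(\varepsilon)P(0)+(I-P(\varepsilon))(I-P(0))\bigr)$) gives $U(\varepsilon)P(0)U(\varepsilon)^{-1}=P(\varepsilon)$, hence $\mathrm{rank}\,P(\varepsilon)=\mathrm{rank}\,P(0)=d$. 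The main obstacle I anticipate is purely bookkeeping rather than conceptual: one must be careful that $H_\varepsilon$ has a common domain $D(H_0)$ with $W$ bounded (so no Kato--Rellich subtleties arise), and that the spectral-gap constant $r$ is chosen strictly smaller than $\gamma_n/2$ so that both circles $\Gamma$ and the perturbed spectrum inside $\Gamma$ stay separated from the rest of $\sigma(H_\varepsilon)$ uniformly for small $\varepsilon$; tracking these constants and verifying the contour deformations are licit is the part that needs genuine care, though it is entirely routine in the bounded-perturbation setting.
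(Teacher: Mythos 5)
Your proposal is correct and follows essentially the same approach as the paper: a uniform resolvent bound on $\Gamma$, a Neumann-series factorization to invert $H_\varepsilon - z$, the classical Riesz-projection argument for idempotence and commutation, an $O(\varepsilon)$ estimate on $\|P(\varepsilon)-P(0)\|$, and rank-invariance via the fact that projections at norm-distance $<1$ are unitarily equivalent.

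In two places your writeup is actually more careful than the paper's. First, for $z\in\Gamma$ one has $\mathrm{dist}(z,\sigma(H_0))=r$, not $\ge\gamma_n-r$ as the paper asserts: the eigenvalue $h_n$ itself lies at distance exactly $r$ from $\Gamma$, and since $r<\gamma_n/2<\gamma_n-r$ this is the binding constraint. Consequently the correct uniform resolvent bound is $\|(H_0-z)^{-1}\|=1/r$, not the paper's $\le 2/\gamma_n$, and your threshold $\varepsilon_0$ correctly scales with $r$ rather than with $\gamma_n$. Second, your factorization $H_\varepsilon - z = (H_0-z)\bigl(I+\varepsilon(H_0-z)^{-1}W\bigr)$ is the right one; the paper writes $(H_0-z)\bigl(I+\varepsilon W(H_0-z)^{-1}\bigr)$, which does not multiply out to $H_\varepsilon - z$ unless $W$ commutes with $H_0$ (the correct ``$W$ on the left'' factorization would be $\bigl(I+\varepsilon W(H_0-z)^{-1}\bigr)(H_0-z)$). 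Both slips in the paper are harmless for the overall structure of the argument but your version has the algebra and the constants right.
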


\begin{proof}
For $z\in\Gamma$, we have $\mathrm{dist}(z,\sigma(H_0))\ge \gamma_n-r \ge \gamma_n/2$,
hence by Lemma~\ref{lem:resolvent-bound},
\[
\|(H_0-z)^{-1}\|\le \frac{2}{\gamma_n}.
\]
Choose $\varepsilon_0:=\gamma_n/(4\|W\|)$.
Then for $|\varepsilon|<\varepsilon_0$ and all $z\in\Gamma$,
\[
\|\varepsilon W(H_0-z)^{-1}\|\le |\varepsilon|\|W\|\|(H_0-z)^{-1}\|
\le \frac{1}{2}.
\]
Hence $I+\varepsilon W(H_0-z)^{-1}$ is invertible and admits a Neumann series.
Using
\[
H_\varepsilon-z = (H_0-z)\bigl(I+\varepsilon W(H_0-z)^{-1}\bigr),
\]
we obtain $(H_\varepsilon-z)^{-1}$ exists and is bounded for all $z\in\Gamma$.
Define $P(\varepsilon)$ by the contour integral; standard resolvent identities show
$P(\varepsilon)^2=P(\varepsilon)$ and $P(\varepsilon)H_\varepsilon=H_\varepsilon P(\varepsilon)$.

Moreover, by the Neumann series,
\[
(H_\varepsilon-z)^{-1}-(H_0-z)^{-1}
=-(H_0-z)^{-1}\varepsilon W(H_\varepsilon-z)^{-1},
\]
so $\sup_{z\in\Gamma}\|(H_\varepsilon-z)^{-1}-(H_0-z)^{-1}\|=O(|\varepsilon|)$, hence
$\|P(\varepsilon)-P(0)\|=O(|\varepsilon|)$.
For projections $P,Q$, if $\|P-Q\|<1$ then $\mathrm{rank}\,P=\mathrm{rank}\,Q$.
Thus for $|\varepsilon|$ small we have $\mathrm{rank}\,P(\varepsilon)=\mathrm{rank}\,P(0)=d$.
\end{proof}

\begin{theorem}[Rigorous first-order splitting of an isolated eigenvalue cluster]
\label{thm:first-order-rigorous}
Assume the setting of Lemma~\ref{lem:riesz-proj} and that $h_n$ is isolated with
$E=\mathrm{Eig}_{H_0}(h_n)$, $\dim E=d<\infty$.
Let $P:=P(0)$ be the orthogonal projection onto $E$.
Consider the compression (finite-dimensional self-adjoint operator on $E$)
\[
K:=PWP\big|_{E}\;:\;E\to E.
\]
Let $\mu_1,\dots,\mu_d\in\mathbb R$ be the eigenvalues of $K$ (counting multiplicity).
Then there exists $\varepsilon_0>0$ such that for all $|\varepsilon|<\varepsilon_0$:

\begin{itemize}
\item The spectrum of $H_\varepsilon$ in the interval $(h_n-r,h_n+r)$ consists of exactly $d$
(real) eigenvalues (counting multiplicity), denoted $\theta_{n,1}(\varepsilon),\dots,\theta_{n,d}(\varepsilon)$.
\item As $\varepsilon\to 0$,
\[
\theta_{n,j}(\varepsilon)= h_n + \varepsilon\,\mu_j + o(\varepsilon),
\qquad j=1,\dots,d,
\]
i.e.\ the multiset of first-order shifts is precisely $\mathrm{spec}(K)$.
\end{itemize}

In particular, if one chooses an orthonormal basis $(e_{n\delta})_{\delta=1}^d$ of $E$
diagonalizing $K$ (equivalently diagonalizing the matrix $\langle e_{n\alpha},We_{n\beta}\rangle$),
so that $Ke_{n\delta}=\mu_\delta e_{n\delta}$, then
\[
\theta_{n,\delta}(\varepsilon)=h_n+\varepsilon\langle e_{n\delta},We_{n\delta}\rangle+o(\varepsilon).
\]
\end{theorem}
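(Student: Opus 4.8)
The plan is to combine the Riesz-projection reduction already set up in Lemma~\ref{lem:riesz-proj} with a continuity/eigenvalue-tracking argument for the compressed operator. First I would invoke Lemma~\ref{lem:riesz-proj} to obtain, for $|\varepsilon|<\varepsilon_0$, the bounded projection $P(\varepsilon)$ with $\|P(\varepsilon)-P\|=O(|\varepsilon|)$ and $\operatorname{rank}P(\varepsilon)=d$, commuting with $H_\varepsilon$. This immediately gives the first bullet: the part of $\sigma(H_\varepsilon)$ inside the open disc $\{|z-h_n|<r\}$ equals $\sigma(H_\varepsilon\restriction_{\operatorname{ran}P(\varepsilon)})$, a $d\times d$ self-adjoint matrix (self-adjointness of the compression follows since $H_\varepsilon=H_\varepsilon^*$ and $P(\varepsilon)$ is a spectral projection, hence $P(\varepsilon)=P(\varepsilon)^*$), so it consists of exactly $d$ real eigenvalues counted with multiplicity, and by the resolvent bound on $\Gamma$ none of them can lie on $\Gamma$ itself.

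Second, I would transport the perturbed eigenproblem back to the fixed space $E=\operatorname{ran}P$ by a standard intertwining unitary (or just the operator $U(\varepsilon):=(P P(\varepsilon)P)^{-1/2}P(\varepsilon)P\restriction_E$, which is well-defined and equals $I+O(\varepsilon)$ for small $\varepsilon$, maps $E$ isomorphically onto $\operatorname{ran}P(\varepsilon)$, and is close to a unitary). Conjugating $H_\varepsilon\restriction_{\operatorname{ran}P(\varepsilon)}$ by $U(\varepsilon)$ yields a self-adjoint operator $M(\varepsilon)$ on the fixed space $E$ with $M(0)=h_n I_E$ and the same eigenvalues (with multiplicity) as $H_\varepsilon$ inside the disc. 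The key computation is the first-order expansion $M(\varepsilon)=h_n I_E+\varepsilon\,PWP\restriction_E+o(\varepsilon)$ in operator norm on the finite-dimensional space $E$; I would get this by writing $M(\varepsilon)=U(\varepsilon)P(\varepsilon)H_\varepsilon P(\varepsilon)U(\varepsilon)^{*}$, expanding $P(\varepsilon)=P-\varepsilon\,[(H_0-h_n)^{-1}_\perp W P + P W (H_0-h_n)^{-1}_\perp]+o(\varepsilon)$ from the contour integral and the Neumann series in Lemma~\ref{lem:riesz-proj} (here $(H_0-h_n)^{-1}_\perp$ denotes the reduced resolvent on $\operatorname{ran}(I-P)$), and $U(\varepsilon)=I+O(\varepsilon)$, then collecting terms: the cross terms involving $(H_0-h_n)^{-1}_\perp$ are killed upon sandwiching by $P$ on both sides because $P(H_0-h_n)_\perp^{-1}=0$, leaving exactly $h_n I_E+\varepsilon PWP$ to first order.

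Third, once $M(\varepsilon)=h_n I_E+\varepsilon K+o(\varepsilon)$ with $K=PWP\restriction_E$, I would apply continuity of eigenvalues of finite-dimensional self-adjoint matrices (e.g.\ Lipschitz continuity in operator norm via Weyl's inequality, which is elementary in finite dimensions): labeling eigenvalues in increasing order, $\lambda_j(M(\varepsilon))=\lambda_j(h_n I_E+\varepsilon K)+o(\varepsilon)=h_n+\varepsilon\mu_j+o(\varepsilon)$, where $\mu_1\le\cdots\le\mu_d$ are the eigenvalues of $K$. Since $\sigma(M(\varepsilon))=\sigma(H_\varepsilon)\cap\{|z-h_n|<r\}$, this is precisely the stated asymptotics $\theta_{n,j}(\varepsilon)=h_n+\varepsilon\mu_j+o(\varepsilon)$. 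The final sentence about a basis diagonalizing $K$ is then immediate: picking $(e_{n\delta})$ orthonormal in $E$ with $Ke_{n\delta}=\mu_\delta e_{n\delta}$ we have $\mu_\delta=\langle e_{n\delta},PWP e_{n\delta}\rangle=\langle e_{n\delta},We_{n\delta}\rangle$ since $Pe_{n\delta}=e_{n\delta}$, giving the displayed formula. The main obstacle is the bookkeeping in the first-order expansion of $M(\varepsilon)$ — one must be careful that the reduced resolvent contributions genuinely cancel after the double compression by $P$ and that the conjugation by $U(\varepsilon)$ contributes nothing at order $\varepsilon$ to the $E$-compressed operator (it does not, since $U(\varepsilon)^*M(0)$-type cross terms produce only $[P,h_nI]=0$); everything else is either a direct application of Lemma~\ref{lem:riesz-proj} or elementary finite-dimensional spectral perturbation.
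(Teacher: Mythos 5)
Your proof is correct and follows essentially the same architecture as the paper: Riesz projection to isolate a rank-$d$ spectral cluster, transport to the fixed space $E$, first-order expansion, and Lipschitz continuity of Hermitian eigenvalues (Weyl). The one substantive difference is in the transport step. The paper forms the double compression $\widetilde B(\varepsilon):=P\,P(\varepsilon)H_\varepsilon P(\varepsilon)P\big|_E$ and asserts it is ``similar to'' $B(\varepsilon)=H_\varepsilon\big|_{\operatorname{ran}P(\varepsilon)}$ via $P(\varepsilon)P$; strictly speaking this similarity is only approximate (the would-be inverse $P\big|_{\operatorname{ran}P(\varepsilon)}$ inverts $P(\varepsilon)P\big|_E$ only up to $O(\varepsilon^2)$), a looseness the paper glosses over even though the first-order conclusion survives. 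You instead conjugate by the Kato-style intertwiner $U(\varepsilon)=P(\varepsilon)P\,(PP(\varepsilon)P)^{-1/2}\big|_E$, a genuine isomorphism $E\to\operatorname{ran}P(\varepsilon)$ close to an isometry, so eigenvalues are transferred exactly; you then correctly observe that the conjugation contributes nothing at order $\varepsilon$ because $M(0)=h_nI_E$ is scalar, and that the reduced-resolvent terms in the expansion of $P(\varepsilon)$ are annihilated by the two-sided compression by $P$. This is the standard Kato route and is cleaner than the paper's double-compression shortcut, at the cost of one extra construction; the remaining ingredients ($PP'(0)P=0$, the expansion $M(\varepsilon)=h_nI_E+\varepsilon K+o(\varepsilon)$, and the Hermitian eigenvalue Lipschitz bound) coincide with the paper's.
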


\begin{proof}
Let $P(\varepsilon)$ be the Riesz projection from Lemma~\ref{lem:riesz-proj}.
Because $P(\varepsilon)$ commutes with $H_\varepsilon$, the subspace
$E(\varepsilon):=\mathrm{ran}\,P(\varepsilon)$ is $H_\varepsilon$-invariant and $\dim E(\varepsilon)=d$.
Hence the spectrum of $H_\varepsilon$ inside the disk bounded by $\Gamma$
is exactly the spectrum of the finite-dimensional self-adjoint operator
\[
B(\varepsilon):=H_\varepsilon\big|_{E(\varepsilon)} \;:\; E(\varepsilon)\to E(\varepsilon),
\]
and consists of $d$ real eigenvalues counting multiplicity.

We now compute the first-order behavior. Using $P(\varepsilon)^2=P(\varepsilon)$ and differentiating at $\varepsilon=0$
(one may justify differentiability at $0$ from the resolvent expansion in Lemma~\ref{lem:riesz-proj}),
we obtain the standard projection identity
\[
P\,P'(0)\,P = 0.
\]
Next, note that $H_0P = Ph_n$ on $E$ (since $E$ is the eigenspace at $h_n$), hence
\[
PH_0P = h_n P \quad\text{on }H.
\]
Consider the family of compressed operators on the fixed finite-dimensional space $E$:
\[
\widetilde B(\varepsilon):= P\,P(\varepsilon)\,H_\varepsilon\,P(\varepsilon)\,P\big|_{E} \;:\; E\to E.
\]
For $|\varepsilon|$ small, $P(\varepsilon)$ is close to $P$ in norm, hence $P(\varepsilon)P:E\to E(\varepsilon)$
is an isomorphism; therefore $\widetilde B(\varepsilon)$ is similar to $B(\varepsilon)$ and has the same eigenvalues
(counting multiplicity).

Expand $\widetilde B(\varepsilon)$ to first order:
\[
\widetilde B(\varepsilon)
= P P(\varepsilon)(H_0+\varepsilon W)P(\varepsilon)P
= P P(\varepsilon)H_0P(\varepsilon)P \;+\; \varepsilon\, P P(\varepsilon)W P(\varepsilon)P.
\]
Using $H_0P=h_nP$ and $PH_0=h_nP$ together with $P P'(0)P=0$, one checks that
\[
\frac{d}{d\varepsilon}\Big|_{\varepsilon=0}\Bigl(PP(\varepsilon)H_0P(\varepsilon)P\Bigr)=0
\quad\text{as an operator on }E.
\]
On the other hand,
\[
\frac{d}{d\varepsilon}\Big|_{\varepsilon=0}\Bigl(PP(\varepsilon)W P(\varepsilon)P\Bigr)
= PWP\big|_{E} = K.
\]
Therefore,
\[
\widetilde B(\varepsilon)= h_n I_E + \varepsilon K + o(\varepsilon)
\quad\text{in operator norm on }E.
\]
Now apply the elementary Lipschitz stability of eigenvalues for Hermitian matrices:
if $A,B$ are $d\times d$ Hermitian, then after ordering eigenvalues increasingly,
\[
\max_{1\le j\le d}|\lambda_j(A)-\lambda_j(B)|\le \|A-B\|.
\]
Hence the eigenvalues of $\widetilde B(\varepsilon)$ satisfy
\[
\theta_{n,j}(\varepsilon)
=\lambda_j(\widetilde B(\varepsilon))
=\lambda_j(h_n I_E+\varepsilon K)+o(\varepsilon)
= h_n+\varepsilon\mu_j+o(\varepsilon),
\]
as claimed. The final displayed formula follows once the basis diagonalizes $K$.
\end{proof}

\begin{proposition}[First-order eigenvector correction: $E^\perp$ component]
\label{prop:evec-firstorder-rigorous}
In the setting of Theorem~\ref{thm:first-order-rigorous}, fix an eigenvector $e_{n\delta}\in E$
of $K=PWP|_E$ with eigenvalue $\mu_\delta$.
Assume $\mu_\delta$ is simple (non-degenerate) for $K$.
Then there exists a normalized eigenpair $(\theta_{n,\delta}(\varepsilon),\psi_{n,\delta}(\varepsilon))$
for $H_\varepsilon$ with $\psi_{n,\delta}(0)=e_{n\delta}$ and
\[
\theta_{n,\delta}(\varepsilon)=h_n+\varepsilon\mu_\delta+o(\varepsilon),
\]
and the derivative at $0$ satisfies
\[
P^\perp \psi'_{n,\delta}(0)= -\,P^\perp (H_0-h_n)^{-1} P^\perp W e_{n\delta},
\]
where $(H_0-h_n)^{-1}$ is the bounded inverse of $H_0-h_n$ on $E^\perp$.
\end{proposition}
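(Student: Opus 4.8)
The plan is to first produce, by analytic perturbation theory, a $C^{1}$ normalized eigenbranch $(\theta_{n,\delta}(\varepsilon),\psi_{n,\delta}(\varepsilon))$ of $H_\varepsilon$ carrying the prescribed zeroth-order data, and then to extract the $E^\perp$-component of its first derivative by differentiating the eigenvalue equation at $\varepsilon=0$ and projecting with $P$ and $P^\perp$.

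For the construction I would use the Riesz projection $P(\varepsilon)$ of Lemma~\ref{lem:riesz-proj}: for $|\varepsilon|<\varepsilon_{0}$ it is bounded, of rank $d$, commutes with $H_\varepsilon$, and — since $W$ is bounded, so that $\varepsilon\mapsto H_\varepsilon$ is a self-adjoint holomorphic family of type (A) — it depends real-analytically on $\varepsilon$ in operator norm. The compression $\widetilde B(\varepsilon):=P\,P(\varepsilon)\,H_\varepsilon\,P(\varepsilon)\,P|_{E}$ is then a real-analytic family of self-adjoint operators on the fixed $d$-dimensional space $E$, similar to $H_\varepsilon|_{\ran P(\varepsilon)}$ via the isomorphism $P(\varepsilon)P:E\to\ran P(\varepsilon)$, with expansion $\widetilde B(\varepsilon)=h_{n}I_{E}+\varepsilon K+o(\varepsilon)$ by Theorem~\ref{thm:first-order-rigorous}. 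Passing to the rescaled family $\widehat B(\varepsilon):=\varepsilon^{-1}(\widetilde B(\varepsilon)-h_{n}I_{E})$ for $\varepsilon\neq 0$ and $\widehat B(0):=K$ (real-analytic through $0$ by the above expansion) and using that $\mu_{\delta}$ is a \emph{simple} eigenvalue of $\widehat B(0)=K$, finite-dimensional perturbation theory (analyticity of the spectral projection attached to a simple eigenvalue) yields a real-analytic unit eigenvector $v(\varepsilon)$ of $\widehat B(\varepsilon)$ — hence of $\widetilde B(\varepsilon)$ — with $v(0)=e_{n,\delta}$; transporting $v(\varepsilon)$ back through the similarity and normalizing produces a $C^{1}$ normalized eigenbranch $\psi_{n,\delta}(\varepsilon)\in\ran P(\varepsilon)$ of $H_\varepsilon$ with eigenvalue $\theta_{n,\delta}(\varepsilon)$ and $\psi_{n,\delta}(0)=e_{n,\delta}$. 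Finally, from $H_{0}\psi_{n,\delta}(\varepsilon)=\bigl(\theta_{n,\delta}(\varepsilon)-\varepsilon W\bigr)\psi_{n,\delta}(\varepsilon)$, with $W$ bounded and $\theta_{n,\delta},\psi_{n,\delta}$ of class $C^{1}$ in $\C$ and $\Hcal$, one gets that $\varepsilon\mapsto H_{0}\psi_{n,\delta}(\varepsilon)$ is $C^{1}$ in $\Hcal$, so $\varepsilon\mapsto\psi_{n,\delta}(\varepsilon)$ is $C^{1}$ with values in $D(H_{0})$ in the graph norm, and (since $H_{0}$ is closed) $\tfrac{d}{d\varepsilon}H_{0}\psi_{n,\delta}=H_{0}\psi_{n,\delta}'$.

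Writing $\psi_{n,\delta}(\varepsilon)=e_{n,\delta}+\varepsilon\chi+o(\varepsilon)$ with $\chi:=\psi_{n,\delta}'(0)$ and $\theta_{n,\delta}(\varepsilon)=h_{n}+\varepsilon\theta_{1}+o(\varepsilon)$, differentiating $H_\varepsilon\psi_{n,\delta}(\varepsilon)=\theta_{n,\delta}(\varepsilon)\psi_{n,\delta}(\varepsilon)$ at $\varepsilon=0$ gives the first-order equation
\[
(H_{0}-h_{n})\chi=\theta_{1}\,e_{n,\delta}-W e_{n,\delta}.
\]
Applying $P$, which commutes with $H_{0}$ and satisfies $(H_{0}-h_{n})P=0$, the left side equals $(H_{0}-h_{n})(P\chi)=0$ because $P\chi\in E=\ker(H_{0}-h_{n})$, while the right side equals $\theta_{1}e_{n,\delta}-P W P e_{n,\delta}=(\theta_{1}-\mu_{\delta})e_{n,\delta}$; hence $\theta_{1}=\mu_{\delta}$, re-confirming $\theta_{n,\delta}(\varepsilon)=h_{n}+\varepsilon\mu_{\delta}+o(\varepsilon)$ (also contained in Theorem~\ref{thm:first-order-rigorous}). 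Applying $P^\perp$, using $P^\perp e_{n,\delta}=0$ and $[P^\perp,H_{0}]=0$, yields $(H_{0}-h_{n})(P^\perp\chi)=-P^\perp W e_{n,\delta}$. Because $h_{n}$ is isolated, $H_{0}-h_{n}$ restricts to a boundedly invertible operator on $\ran P^\perp$ (with inverse norm $\le 1/\gamma_{n}$ by Lemma~\ref{lem:resolvent-bound}), whose inverse is exactly $S:=P^\perp(H_{0}-h_{n})^{-1}P^\perp$; therefore
\[
P^\perp\psi_{n,\delta}'(0)=P^\perp\chi=-S\,W e_{n,\delta}=-P^\perp(H_{0}-h_{n})^{-1}P^\perp W e_{n,\delta},
\]
which is the asserted identity.

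The main obstacle is the first step: rigorously manufacturing a $C^{1}$ eigenbranch whose value at $\varepsilon=0$ is the prescribed vector $e_{n,\delta}$. The subtlety is that $\widetilde B(0)=h_{n}I_{E}$ is scalar, so $\widetilde B(\varepsilon)$ alone singles out no eigenvector at $\varepsilon=0$; one must descend to the rescaled family $\widehat B(\varepsilon)$ to expose $K$ at first order, and it is precisely the assumed simplicity of $\mu_{\delta}$ for $K$ that makes the associated eigenprojection — hence the eigenvector — analytic/$C^{1}$ through $\varepsilon=0$. A more self-contained variant, avoiding an appeal to Rellich's theorem, is to set up the ansatz directly and solve the perturbed eigenvalue problem on the $d$-dimensional space $\ran P(\varepsilon)$ by the implicit function theorem under the normalization $\langle\psi_{n,\delta}(\varepsilon),e_{n,\delta}\rangle\equiv 1$; either way, once $C^{1}$-dependence in the graph norm is in hand, the projection computation above is routine.
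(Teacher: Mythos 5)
Your proof is correct and follows the same overall route as the paper's: construct a differentiable eigenbranch via the Riesz projection and the compression to $E$, then differentiate the eigenvalue equation at $\varepsilon=0$ and project with $P^\perp$. You do, however, make explicit and fill a genuine gap that the paper's sketch glosses over: since $\widetilde B(0)=h_nI_E$ is scalar, the compression alone cannot single out an eigenvector at $\varepsilon=0$, and one must pass to the rescaled family $\widehat B(\varepsilon)=\varepsilon^{-1}(\widetilde B(\varepsilon)-h_nI_E)$ (with $\widehat B(0)=K$) to invoke simple-eigenvalue perturbation theory; your graph-norm argument justifying $\tfrac{d}{d\varepsilon}H_0\psi_{n,\delta}=H_0\psi'_{n,\delta}$ likewise makes precise a step the paper leaves implicit.
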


\begin{proof}
Because $\mu_\delta$ is simple for $K$, the corresponding branch in the finite-dimensional reduction
is isolated; lifting through the Riesz projection yields a corresponding eigenpair for $H_\varepsilon$
continuous (indeed differentiable) at $\varepsilon=0$.
Differentiate $(H_0+\varepsilon W)\psi_{n,\delta}(\varepsilon)=\theta_{n,\delta}(\varepsilon)\psi_{n,\delta}(\varepsilon)$
at $\varepsilon=0$:
\[
(H_0-h_n)\psi'_{n,\delta}(0)=-(W-\theta'_{n,\delta}(0))e_{n\delta}.
\]
Project to $E^\perp$ using $P^\perp$ and use $P^\perp e_{n\delta}=0$ to obtain
\[
P^\perp(H_0-h_n)P^\perp\, P^\perp\psi'_{n,\delta}(0)= -P^\perp W e_{n\delta}.
\]
Since $h_n$ is isolated, $H_0-h_n$ is invertible on $E^\perp$ with bounded inverse,
hence the stated formula follows.
\end{proof}

\begin{remark}[Discrete spectra and price levels (informal)]
Under the pure point hypothesis for price observables (affiliated with an abelian information algebra),
spectral values may be read as discrete admissible price levels (tick/jump idealization).
Continuous-spectrum models can be interpreted as continuum (diffusion-type) limits of refining grids.
\end{remark}

\begin{remark}[Perturbations]
A decomposition $H_\varepsilon=H_0+\varepsilon W$ be viewed as a baseline specification together with a weak correction,
where $\varepsilon$ encodes the magnitude of an information effect.
Perturbation theory then describes the first-order shifts of discrete spectral levels $\theta_{n\delta}(\varepsilon)$.
\end{remark}



\begin{thebibliography}{99}


\bibitem{Takesaki1}
M.~Takesaki,
\newblock \emph{Theory of Operator Algebras I},
\newblock 2nd ed., Encyclopaedia of Mathematical Sciences, Vol.~124,
Springer, 2002.

\bibitem{Segal1953}
I.~E.~Segal,
\newblock A non-commutative extension of abstract integration,
\newblock \emph{Annals of Mathematics} \textbf{57}(2) (1953), 401--457.

\bibitem{Nelson1974}
E.~Nelson,
\newblock Notes on non-commutative integration,
\newblock \emph{Journal of Functional Analysis} \textbf{15}(1) (1974), 103--116.

\bibitem{Sakai1971}
S.~Sakai,
\newblock \emph{$C^\ast$-Algebras and $W^\ast$-Algebras},
\newblock Springer, 1971.

\bibitem{Umegaki1954}
H.~Umegaki,
\newblock Conditional expectation in an operator algebra,
\newblock \emph{Tohoku Mathematical Journal (2)} \textbf{6} (1954), 177--181.

\bibitem{Takesaki1972}
M.~Takesaki,
\newblock Conditional expectations in von Neumann algebras,
\newblock \emph{Journal of Functional Analysis} \textbf{9}(3) (1972), 306--321.
\newblock doi:10.1016/0022-1236(72)90004-3.

\bibitem{KadisonRingroseI}
R.~V.~Kadison and J.~R.~Ringrose,
\newblock \emph{Fundamentals of the Theory of Operator Algebras, Vol.~1},
\newblock American Mathematical Society, 1997.


\bibitem{ReedSimonI}
M.~Reed and B.~Simon,
\newblock \emph{Methods of Modern Mathematical Physics I: Functional Analysis},
\newblock Academic Press, 1980.

\bibitem{ReedSimonII}
M.~Reed and B.~Simon,
\newblock \emph{Methods of Modern Mathematical Physics II: Fourier Analysis, Self-Adjointness},
\newblock Academic Press, 1975.

\bibitem{Hall2013}
B.~C.~Hall,
\newblock \emph{Quantum Theory for Mathematicians},
\newblock Springer, 2013.

\bibitem{vonNeumann1932}
J.~von Neumann,
\newblock \emph{Mathematische Grundlagen der Quantenmechanik},
\newblock Springer, Berlin, 1932.


\bibitem{Luders1951}
G.~L\"uders,
\newblock \"Uber die Zustands\"anderung durch den Me{\ss}proze{\ss},
\newblock \emph{Annalen der Physik (Leipzig)} \textbf{8} (1951), 322--328.
\newblock doi:10.1002/andp.19504430510.


\bibitem{EthierKurtz1986}
S.~N.~Ethier and T.~G.~Kurtz,
\newblock \emph{Markov Processes: Characterization and Convergence},
\newblock John Wiley \& Sons, New York, 1986.

\bibitem{StroockVaradhan1979}
D.~W.~Stroock and S.~R.~S.~Varadhan,
\newblock \emph{Multidimensional Diffusion Processes},
\newblock Springer, Berlin, 1979.

\bibitem{KaratzasShreve1991}
I.~Karatzas and S.~E.~Shreve,
\newblock \emph{Brownian Motion and Stochastic Calculus},
\newblock 2nd ed., Springer, New York, 1991.


\bibitem{Samuelson1965}
P.~A.~Samuelson,
\newblock Proof That Properly Anticipated Prices Fluctuate Randomly,
\newblock \emph{Industrial Management Review} \textbf{6}(2) (1965), 41--49.

\bibitem{Fama1970}
E.~F.~Fama,
\newblock Efficient Capital Markets: A Review of Theory and Empirical Work,
\newblock \emph{The Journal of Finance} \textbf{25}(2) (1970), 383--417.
\newblock doi:10.1111/j.1540-6261.1970.tb00518.x.

\bibitem{GrossmanStiglitz1980}
S.~J.~Grossman and J.~E.~Stiglitz,
\newblock On the Impossibility of Informationally Efficient Markets,
\newblock \emph{The American Economic Review} \textbf{70}(3) (1980), 393--408.

\bibitem{HarrisonKreps1979}
J.~M.~Harrison and D.~M.~Kreps,
\newblock Martingales and Arbitrage in Multiperiod Securities Markets,
\newblock \emph{Journal of Economic Theory} \textbf{20}(3) (1979), 381--408.
\newblock doi:10.1016/0022-0531(79)90043-7.

\bibitem{HarrisonPliska1981}
J.~M.~Harrison and S.~R.~Pliska,
\newblock Martingales and Stochastic Integrals in the Theory of Continuous Trading,
\newblock \emph{Stochastic Processes and their Applications} \textbf{11}(3) (1981), 215--260.
\newblock doi:10.1016/0304-4149(81)90026-0.

\bibitem{DelbaenSchachermayer1994}
F.~Delbaen and W.~Schachermayer,
\newblock A General Version of the Fundamental Theorem of Asset Pricing,
\newblock \emph{Mathematische Annalen} \textbf{300}(3) (1994), 463--520.
\newblock doi:10.1007/BF01450498.

\bibitem{DelbaenSchachermayer1995Numeraire}
F.~Delbaen and W.~Schachermayer,
\newblock The No-Arbitrage Property under a Change of Num\'eraire,
\newblock \emph{Stochastics and Stochastic Reports} \textbf{53}(3--4) (1995), 213--226.
\newblock doi:10.1080/17442509508833990.

\bibitem{GemanElKarouiRochet1995}
H.~Geman, N.~El~Karoui, and J.-C.~Rochet,
\newblock Changes of Num\'eraire, Changes of Probability Measure and Option Pricing,
\newblock \emph{Journal of Applied Probability} \textbf{32}(2) (1995), 443--458.
\newblock doi:10.1017/S002190020010289X.

\bibitem{Bjork2009}
T.~Bj{\"o}rk,
\newblock \emph{Arbitrage Theory in Continuous Time},
\newblock 3rd ed., Oxford University Press, Oxford, 2009.


\bibitem{Garman1985}
M.~B.~Garman,
\newblock Towards a Semigroup Pricing Theory,
\newblock \emph{The Journal of Finance} \textbf{40}(3) (1985), 847--861.


\bibitem{Herscovich2016}
E.~Herscovich,
\newblock Non-commutative Valuation of Options,
\newblock \emph{Reports on Mathematical Physics} \textbf{78}(3) (2016), 371--386.
\newblock doi:10.1016/S0034-4877(17)30015-0.

\bibitem{AccardiBoukas2006}
L.~Accardi and A.~Boukas,
\newblock The Quantum Black-Scholes Equation,
\newblock \emph{Global Journal of Pure and Applied Mathematics} \textbf{2}(2) (2006), 155--170.
\newblock Also available as arXiv:0706.1300.

\bibitem{AertsHavenSozzo2012}
D.~Aerts, E.~Haven, and S.~Sozzo,
\newblock A Quantum-like Approach to the Stock Market,
\newblock \emph{AIP Conference Proceedings} \textbf{1424} (2012), 495--506.

\bibitem{McCloud2014}
P.~McCloud,
\newblock In Search of Schr\"odinger's Cap: Pricing Derivatives with Quantum Probability,
\newblock SSRN Working Paper, 2014.

\bibitem{Su2016}
H.~Su,
\newblock Elements of Quantum Finance Models,
\newblock SSRN Working Paper, 2016.

\bibitem{Kreps1981}
D.~M.~Kreps,
\newblock Arbitrage and equilibrium in economies with infinitely many commodities,
\newblock \emph{Journal of Mathematical Economics} \textbf{8} (1981), 15--35.

\bibitem{MengGuoCao2004}
B.~Meng, M.-Z.~Guo, and X.-H.~Cao,
\newblock Free Fisher information and amalgamated freeness,
\newblock \emph{Applied Mathematics and Mechanics (English Edition)} \textbf{25}(10) (2004), 1100--1106.
\newblock doi:10.1007/BF02439862.

\bibitem{GKSL1976}
V.~Gorini, A.~Kossakowski, and E.~C.~G.~Sudarshan,
\newblock Completely positive dynamical semigroups of $N$-level systems,
\newblock \emph{Journal of Mathematical Physics} \textbf{17} (1976), 821--825.
\newblock doi:10.1063/1.522979.

\bibitem{Lindblad1976}
G.~Lindblad,
\newblock On the generators of quantum dynamical semigroups,
\newblock \emph{Communications in Mathematical Physics} \textbf{48} (1976), 119--130.
\newblock doi:10.1007/BF01608499.

\bibitem{Davies76}
E.~B.~Davies,
\newblock \emph{Quantum Theory of Open Systems},
\newblock Academic Press, London, 1976.

\bibitem{BratteliRobinsonII}
O.~Bratteli and D.~W.~Robinson,
\newblock \emph{Operator Algebras and Quantum Statistical Mechanics 2},
\newblock 2nd ed., Springer, Berlin, 1997.

\bibitem{Watatani90}
Y.~Watatani,
\newblock \emph{Index for $C^\ast$-Subalgebras},
\newblock \emph{Memoirs of the American Mathematical Society} \textbf{83} (1990), no.~424.
\newblock doi:10.1090/memo/0424.

\bibitem{Lance95}
E.~C.~Lance,
\newblock \emph{Hilbert $C^\ast$-Modules: A Toolkit for Operator Algebraists},
\newblock Cambridge University Press, 1995.

\bibitem{Speicher98}
R.~Speicher,
\newblock Combinatorial theory of the free product with amalgamation and operator-valued free probability theory,
\newblock \emph{Memoirs of the American Mathematical Society} \textbf{132} (1998), no.~627.
\newblock doi:10.1090/memo/0627.

\bibitem{NicaSpeicher06}
A.~Nica and R.~Speicher,
\newblock \emph{Lectures on the Combinatorics of Free Probability},
\newblock London Mathematical Society Lecture Note Series, vol.~335,
\newblock Cambridge University Press, 2006.

\bibitem{NSS02}
A.~Nica, D.~Shlyakhtenko, and R.~Speicher,
\newblock Operator-valued distributions. I. Characterizations of freeness,
\newblock \emph{International Mathematics Research Notices} \textbf{2002} (2002), no.~29, 1509--1538.
\newblock doi:10.1155/S1073792802201038.
\newblock Also available as arXiv:\href{https://arxiv.org/abs/math/0201001}{math/0201001}.

\bibitem{Voiculescu93}
D.~Voiculescu,
\newblock The analogues of entropy and of Fisher's information measure in free probability theory. I,
\newblock \emph{Communications in Mathematical Physics} \textbf{155} (1993), 71--92.
\newblock doi:10.1007/BF02100050.

\bibitem{Voiculescu94}
D.~Voiculescu,
\newblock The analogues of entropy and of Fisher's information measure in free probability theory, II,
\newblock \emph{Inventiones Mathematicae} \textbf{118} (1994), 411--440.
\newblock doi:10.1007/BF01231539.

\bibitem{Meng05PAMS}
B.~Meng,
\newblock Operator-valued free Fisher information and modular frames,
\newblock \emph{Proceedings of the American Mathematical Society} \textbf{133} (2005), no.~10, 3087--3096.
\newblock doi:10.1090/S0002-9939-05-08111-6.

\bibitem{Meng06arXiv}
B.~Meng,
\newblock Operator-valued free Fisher information of random matrices,
\newblock arXiv:\href{https://arxiv.org/abs/math/0601527}{math/0601527}, 2006.

\bibitem{AccardiFrigerioLewis82}
L.~Accardi, A.~Frigerio, and J.~T.~Lewis,
\newblock Quantum stochastic processes,
\newblock \emph{Publications of the Research Institute for Mathematical Sciences} \textbf{18} (1982), no.~1, 97--133.
\newblock doi:10.2977/PRIMS/1195184017.

\bibitem{AccardiCecchini82}
L.~Accardi and C.~Cecchini,
\newblock Conditional expectations in von Neumann algebras and a theorem of Takesaki,
\newblock \emph{Journal of Functional Analysis} \textbf{45} (1982), no.~2, 245--273.
\newblock doi:10.1016/0022-1236(82)90022-2.

\end{thebibliography}
\end{document}